\newcommand\org@hypertarget{}
\let\org@hypertarget\hypertarget
\renewcommand\hypertarget[2]{%
  \Hy@raisedlink{\org@hypertarget{#1}{}}#2%
} 
\DeclareSymbolFontAlphabet{\mathbb}{AMSb} % to ensure \mathbb does not change
\DeclareSymbolFontAlphabet{\mathbbl}{bbold}
\newcommand{\Prism}{{ \mathbbl{\Delta}}}
\newtheorem{theorem}{Theorem}[section]
\newtheorem{lemma}[theorem]{Lemma}
\newtheorem{corollary}[theorem]{Corollary}
\newtheorem{proposition}[theorem]{Proposition}
\theoremstyle{definition}
\newtheorem{definition}[theorem]{Definition}
\newtheorem{remark}[theorem]{Remark}
\newtheorem{example}[theorem]{Example}
\newcommand{\xysquare}[8]{
\[\xymatrix{
#1 \ar@{#5}[r] \ar@{#6}[d] & #2 \ar@{#7}[d]\\
#3 \ar@{#8}[r] & #4
}\]
}
\DeclareMathOperator*{\projlimf}{``\varprojlim''}
\newcommand{\al}{\alpha}
\newcommand{\bb}{\mathbb}
\newcommand{\blob}{\bullet}
\newcommand{\bx}{{\scalebox{0.5}{$\square$}}}
\newcommand{\comment}[1]{}
\newcommand{\ep}{\varepsilon}
\newcommand{\into}{\hookrightarrow}
\newcommand{\isoto}{\stackrel{\simeq}{\to}}
\newcommand{\Isoto}{\stackrel{\simeq}{\longrightarrow}}
\newcommand{\onto}{\twoheadrightarrow}
\newcommand{\op}{\operatorname}
\newcommand{\ot}{\leftarrow}
\newcommand{\pid}[1]{\langle #1 \rangle}
\renewcommand{\phi}{\varphi}
\newcommand{\q}{\textrm{q}}
\newcommand{\quis}{\stackrel{\sim}{\to}}
\newcommand{\res}{\overline}
\newcommand{\roi}{\mathcal{O}}
\newcommand{\roiA}{\mathcal{O}\!A}
\newcommand{\sub}[1]{{\mbox{\rm \scriptsize #1}}}
\renewcommand{\inf}{\sub{inf}}
\newcommand{\crys}{\sub{crys}}
\newcommand{\To}{\longrightarrow}
\newcommand{\ul}[1]{\underline{#1}}
\newcommand{\ol}[1]{\overline{#1}}
\newcommand{\xto}{\xrightarrow}
\newcommand{\xot}{\xleftarrow}
\DeclareMathOperator*{\quiss}{\stackrel{\sim}{\longrightarrow}}
\DeclareMathOperator*{\quissleft}{\stackrel{\sim}{\longleftarrow}}
\newcommand \id {{\rm id}}
\newcommand \ev {{\rm ev}}
\newcommand \gr {\text{\rm gr}}
\def \txb {\color{black}}
\DeclareMathOperator{\CRYS}{CRYS}
\DeclareMathOperator{\CR}{CR}
\renewcommand{\cal}{\mathcal}
\renewcommand{\hat}{\widehat}
\renewcommand{\frak}{\mathfrak}
\newcommand{\indlim}{\varinjlim}
\renewcommand{\tilde}{\widetilde}
\renewcommand{\Im}{\operatorname{Im}}
\renewcommand{\ker}{\operatorname{Ker}}
\renewcommand{\projlim}{\varprojlim}
\DeclareMathOperator{\Aut}{Aut}
\DeclareMathOperator{\dlog}{dlog}
\DeclareMathOperator{\End}{End}
\DeclareMathOperator{\Fil}{Fil}
\DeclareMathOperator{\Frac}{Frac}
\DeclareMathOperator{\Gal}{Gal}
\DeclareMathOperator{\GL}{GL}
\DeclareMathOperator{\Hom}{Hom}
\DeclareMathOperator{\HIG}{HIG}
\DeclareMathOperator{\MIC}{MIC}
\DeclareMathOperator{\Spec}{Spec}
\DeclareMathOperator{\Spa}{Spa}
\DeclareMathOperator{\Spf}{Spf}
\DeclareMathOperator{\Sp}{Sp}
\DeclareMathOperator{\BKF}{BKF}
\DeclareMathOperator{\Rep}{Rep}
\newcommand{\Strat} {\text{\rm Strat}}
\newcommand{\categ}[2]{\left\{\parbox{#1}{\rm #2}\right\}}
\newcommand{\TA}{T\!\!A}
\newcommand{\lbar}[1]{\begin{leftbar}#1\end{leftbar}}
\newcommand{\dotimes}{\otimes^{\bb L}}
\newcommand{\xTo}[1]{\stackrel{#1}{\To}}
\begin{document}
\itemsep0pt

\title{\vspace{-1cm}Generalised representations as  q-connections in integral $p$-adic Hodge theory}

\author{Matthew Morrow and Takeshi Tsuji}

%\classno{}
\date{}
%\extraline{}

\maketitle

\begin{abstract}
We relate various approaches to coefficient systems in relative integral $p$-adic Hodge theory, working in the geometric context over the ring of integers of a perfectoid field. These include small generalised representations over $A_{\text{inf}}$ inspired by Faltings, modules with q-connection in the sense of q-de Rham cohomology, crystals on the prismatic site of Bhatt--Scholze, and q-deformations of Higgs bundles.
\end{abstract}

\tableofcontents

\section*{Introduction}
The goal of this paper is to explore and relate different approaches to coefficient systems in relative integral $p$-adic Hodge theory: the generalised representations of Faltings \cite{Faltings2005, Faltings2011}, which have been systematically studied by Abbes--Gros--Tsuji \cite{AbbesGrosTsuji2016} and Tsuji \cite{Tsuji_simons}; modules with $q$-connection in the sense of $q$-de Rham cohomology \cite{Scholze2017}; and crystals on the prismatic site \cite{BhattScholze2019} of Bhatt--Scholze, or equivalently modules with ``q-Higgs field'' in our context. The equivalence between modules with q-connections and modules with q-Higgs field applies to a range of situations arising in the theory of q-de Rham cohomology and may be viewed as a local, q-deformed Simpson correspondence.

The set-up throughout the introduction is as follows, though we often work in greater generality in the body of the paper when we are in the context of q-de Rham cohomology. Let $\roi$ be the ring of integers of a characteristic zero perfectoid field $C$ containing all $p$-power roots of unity; we fix a choice of $p$-power roots of unity and use it to define the elements $\ep:=(1,\zeta_p,\zeta_{p^2},\dots)\in\roi^\flat$ and $\mu:=[\ep]-1\in A_\inf=W(\roi^\flat)$; then $\xi:=\mu/\phi^{-1}(\mu)$ is a generator of the kernel of Fontaine's map $\theta:A_\inf\to\roi$, and $\tilde\xi:=\phi(\xi)$ is a generator of the kernel of $\theta\circ\phi^{-1}$.

In the first part of the introduction, corresponding to Sections \ref{section_small_reps}--\ref{section_crystalline} of the paper, we focus on the local theory; so let $R$ be a $p$-adically complete, formally smooth $\roi$-algebra which is moreover small in the sense that there exists a formally \'etale map $\roi\pid{\ul T^{\pm1}}=\roi\pid{T_1^{\pm1},\dots,T_d^{\pm1}}\to R$ (called a ``framing''), a choice of which we fix. The following diagram, in which all arrows will be shown to be equivalences of categories, serves as a road map to the various categories of coefficients which we will study:
\begin{equation}\xymatrix@C=1.2cm{
\BKF(\Spf R,\phi)\ar[r]^-{\Gamma(U_\infty,-)}&\Rep_\Gamma^{<\mu}(A_\inf(R_\infty),\phi)&\\
&\Rep_\Gamma^\mu(A_\inf(R_\infty),\phi)\ar@{^(->}[u]&&\\
&\Rep_\Gamma^\mu(A_\inf^\bx(R),\phi)\ar[u]^{-\otimes_{A_\sub{inf}^\bx(R)}A_\sub{inf}(R_\infty)}\ar[r]&\mathrm{qMIC}(A_\inf^\bx(R),\phi)&\\
&&\mathrm{qHIG}(A_\inf^{\bx}(R)^{(1)},\phi)\ar[u]_{(F,F_\Omega)^*}& \op{F-CR}_{\Prism}(R^{(1)}/(A_{\inf},\tilde\xi))\ar[l]_-{\op{ev}_{A_\inf^{\bx}(R)^{(1)}}}
}\label{eqn_intro}\end{equation}

Firstly, let $R_\infty$ be the $p$-adic completion of $\indlim_jR\otimes_{\roi\pid{\ul T^{\pm1}}}\roi\pid{\ul T^{\pm1/p^j}}$, where the base change is taken with respect to the choice of framing. The $R$-algebra $R_\infty$ is perfectoid in the sense of \cite[\S3]{BhattMorrowScholze1} and is equipped with its usual $R$-linear continuous action by $\Gamma:=\bb Z_p(1)^d$; by naturality this action extends to Fontaine's ring $A_\inf(R_\infty)=W(R_\infty^\flat)$. Inspired by the smallness condition on generalised representations which appears in Faltings' work on the $p$-adic Simpson correspondence, we study the following categories of generalised representations:

\begin{definition}\label{definition_intro_gen_rep}
A {\em generalised representation} over $A_\inf(R_\infty)$ is a finite projective $A_\inf(R_\infty)$-module $M$ equipped with a continuous, semi-linear action by $\Gamma$. The category of such generalised representations is denoted by $\Rep_\Gamma(A_\inf(R_\infty))$.

Given $\al\in A_\inf$, we say that $M$ is {\em trivial modulo $\al$} (or {\em $\al$-small}) if the invariants $(M/\al)^\Gamma$ are a finite projective $(A_\inf(R_\infty)/\al)^\Gamma$-module and the canonical map $(M/\al)^\Gamma\otimes_{(A_\inf(R_\infty)/\al)^\Gamma}A_\inf(R_\infty)/\al\to M/\al$ is an isomorphism. %Informally this means that $M$ admits a basis on which the $\Delta$-action is $\equiv\op{id}$ mod $\al M$, but we prefer to formulate the condition in a way which makes sense in the projective, not necessary free, case.
We are particularly interested in this smallness condition with respect to the element $\al=\mu$, and so we denote by $\Rep_\Gamma^\mu(A_\inf(R_\infty))$ the category of generalised representations which are trivial modulo $\mu$. Also let \begin{equation}\Rep_\Gamma^{<\mu}(A_\inf(R_\infty))\supseteq\Rep_\Gamma^\mu(A_\inf(R_\infty))\label{intro_<mu}\end{equation}
be those generalised representations which satisfy the (a priori weaker) condition of being trivial modulo $\mu/\phi^{-r}(\mu)$ for all $r\ge1$.
\end{definition}

\begin{remark}[Frobenius structures]
When we speak of a {\em Frobenius structure} on such a generalised representation $M$, we mean an isomorphism $\phi_M:(\phi^*M)[\tfrac1{\tilde\xi}]\isoto M[\tfrac1{\tilde\xi}]$ of $A_\inf(R_\infty)[\tfrac1{\tilde\xi}]$-modules compatible with the $\Gamma$-actions, where $\phi^*$ denotes base change along the Frobenius automorphism of $A_\inf(R_\infty)$. We denote by $\Rep_\Gamma(A_\inf(R_\infty),\phi)$ the category of generalised representations equipped with a Frobenius structure, and similarly in the presence of a triviality condition. We will implicitly use analogous notation to add Frobenius structures to other categories of representations.
\end{remark}

Our main class of generalised representations of interest is $\Rep_\Gamma^\mu(A_\inf(R_\infty),\phi)$. The equivalences of diagram (\ref{eqn_intro}) will ultimately offer two framing-independent approaches to this category, both of which may be globalised: either as our relative Breuil--Kisin--Fargues modules, or as prismatic F-crystals. However, in the local context a framing-independent approach is already provided by the almost purity theorem, assuming $\Spec(R/pR)$ is connected. Namely, letting $\res R$ be the $p$-adic completion of the integral closure of $R$ inside the maximal ind-\'etale extension of $R[\tfrac1p]$ inside some fixed algebraic closure of $\Frac R$, so that $\Delta:=\pi_1^\sub{\'et}(R[\tfrac1p])$ acts on $\res R$, then the base change functor \[-\otimes_{A_\inf(R_\infty)}A_\inf(\res R):\Rep_\Gamma^{\mu}(A_\inf(R_\infty))\To \Rep_\Delta^{\mu}(A_\inf(\res R))\] is an equivalence of categories (and similarly with Frobenius structures). Here the category of generalised representations $\Rep_\Delta^{\mu}(A_\inf(\res R))$ is defined analogously to Definition \ref{definition_intro_gen_rep}, and we refer to Remark \ref{remark_Delta} for further details.

\begin{example}\label{intro_examples}
In \S\ref{ss_examples} we give the following examples of generalised representations.
\begin{enumerate}
\item Dieudonn\'e theory: there is a fully faithful embedding of the category of $p$-divisible groups over $R$ into $\Rep_\Gamma^\mu(A_\inf(R_\infty),\phi)$, whose essential image consists of those $(M,\phi_M)$ such that $M\subseteq\phi_M(M)\subseteq\tfrac{1}{\tilde\xi}M$. This may either be deduced from the general prismatic Dieudonn\'e theory of J.~Anschutz and A.-C.~Le Bras \cite{AnschutzLeBras2019}, using our comparison to prismatic crystals, or else may be proved (when $p>2$) by combining Lau's Dieudonn\'e theory \cite{Lau2018} over complete intersection semiperfect rings, Grothendieck--Messing deformation theory, and our filtered version of Theorem \ref{thm_crystalline_intro}.
\item Relative prismatic cohomology: even before establishing any relation to prismatic crystals, we show that the prismatic cohomologies \[H^i_{A_\inf}(\cal Y/R):=H^i_\Prism(\cal Y\times_RR_\infty/(A_\inf(R_\infty),\tilde\xi)),\] equipped with their natural $\Gamma$-action, define objects of $\Rep_\Gamma^\mu(A_\inf(R_\infty),\phi)$ for all $i\ge0$; here $\cal Y$ is a proper smooth $p$-adic formal $R$-scheme whose relative de Rham cohomology groups $H^*_\sub{dR}(\cal Y/R)$ are assumed to be finite projective $R$-modules.
\item Relative Fontaine--Laffaille modules: the second author has shown that Faltings' relative Fontaine--Laffaille modules give rise to objects of $\Rep_\Gamma^\mu(A_\inf(\res R),\phi)$ \cite{Tsuji_simons}.
\end{enumerate}
\end{example}

Unfortunately the ring $A_\inf(R_\infty)$ is rather unwieldy in practice and we are only able to study generalised representations over it when they descend to its subring $A_\inf^\bx(R)\subseteq A_\inf(R_\infty)$; here {\em the framed period ring} $A_\inf^\bx(R)$ is the unique formally smooth $A_\inf$-algebra lifting (along Fontaine's map $\theta:A_\inf\to\roi$) the formally smooth $\roi$-algebra $R$ in a manner compatible with the chosen framing (see \S\ref{ss_framed} for more details). The $\Gamma$-action on $A_\inf(R_\infty)$ restricts to $A_\inf^\bx(R)$ and one may mimic Definition \ref{definition_intro_gen_rep} to introduce the category $\Rep_\Gamma^\mu(A_\inf^\bx(R))$ of generalised representations over $A_\inf^\bx(R)$ which are trivial modulo $\mu$; the fact that $\Gamma$ acts as the identity on $A_\inf^\bx(R)/\mu$ makes these representations much more manageable than those over $A_\inf(R_\infty)$. Fortunately it turns out that any of our generalised representations over $A_\inf(R_\infty)$ may be uniquely descended to the framed period ring:

\begin{theorem}[Thms.~\ref{theorem_descent_to_framed} \& \ref{theorem_<mu_implies_mu}]\label{theorem_intro2}
The base change functor \[-\otimes_{A_\sub{inf}^\bx(R)}A_\sub{inf}(R_\infty):\Rep_\Gamma^\mu(A_\inf^\bx(R))\To \Rep_\Gamma^\mu(A_\inf(R_\infty))\] is an equivalence of categories; similarly with Frobenius structures. Moreover, a generalised representation over $A_\inf(R_\infty)$ which is trivial modulo $<\mu$ is automatically trivial modulo $\mu$, i.e., the inclusion (\ref{intro_<mu}) is in fact an equality.
\end{theorem}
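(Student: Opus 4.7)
Proof plan.

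The theorem comprises two assertions, which I address in turn.

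For the equivalence in Part 1, fully faithfulness is immediate from the identification $A_\inf(R_\infty)^\Gamma = A_\inf^\bx(R)$ together with finite projectivity of the modules in question. This identification follows from the decomposition
\[A_\inf(R_\infty) = \widehat{\bigoplus}_{\vec k \in \bb Z[1/p]^d/\bb Z^d} A_\inf^\bx(R) \cdot [\ul T^{\vec k}]\]
as $A_\inf^\bx(R)[\Gamma]$-modules, in which $\Gamma = \bb Z_p(1)^d$ acts on the $\vec k$-th summand through a character $\chi_{\vec k}$. For essential surjectivity, given a $\mu$-small $M$, my plan is a three-step construction of a descent $M^\bx$: \emph{(i)} only $\chi_0$ becomes trivial modulo $\mu$, so $(A_\inf(R_\infty)/\mu)^\Gamma = A_\inf^\bx(R)/\mu$, whence $\bar{M}^\bx := (M/\mu)^\Gamma$ is naturally a finite projective $A_\inf^\bx(R)/\mu$-module; \emph{(ii)} lift $\bar{M}^\bx$ to a finite projective $A_\inf^\bx(R)$-module $M^\bx$ using the $(p,\mu)$-adic completeness of $A_\inf^\bx(R)$; \emph{(iii)} construct a $\Gamma$-equivariant isomorphism $M^\bx \otimes_{A_\inf^\bx(R)} A_\inf(R_\infty) \isoto M$ by successive approximation, and descend the Frobenius structure by uniqueness (i.e.\ by the fully faithfulness just established).

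The main obstacle is step \emph{(iii)}. Choosing a basis of $M^\bx$ and viewing it in $M$, the operator $\gamma - 1$ maps it into $\mu M$, producing a 1-cocycle that must be killed by an iterative adjustment of the basis. Convergence of this correction process rests on the divisibility bound $(\gamma_i - 1) \cdot [\ul T^{\vec k}] = ([\ep^{k_i}] - 1) \cdot [\ul T^{\vec k}]$, which is a unit multiple of $\phi^{-n}(\mu) \cdot [\ul T^{\vec k}]$ when the $i$-th coordinate of $\vec k$ has $p$-adic denominator $p^n$; this yields the required smallness of $\Gamma$-cohomology on the non-trivial-character summands and hence the existence and uniqueness of the correction at each order.

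For Part 2, let $M \in \Rep_\Gamma^{<\mu}(A_\inf(R_\infty))$ with triviality data modulo $\al_r := \mu/\phi^{-r}(\mu)$ for each $r \geq 1$. The plan is to pass to an inverse limit over $r$. Since $(\al_r) \supseteq (\al_{r+1}) \supseteq \cdots \supseteq (\mu)$ with $\bigcap_r \al_r M = \mu M$ (using that $\phi^{-r}(\mu) \to 0$ in an appropriate topology on our $p$-adically complete $M$), one has $M/\mu = \projlim_r M/\al_r$. The same character argument as in Part 1 shows $(A_\inf(R_\infty)/\al_r)^\Gamma = A_\inf^\bx(R)/\al_r$, and by hypothesis each $(M/\al_r)^\Gamma$ is finite projective over this ring and generates $M/\al_r$. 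The transitions $(M/\al_{r+1})^\Gamma \to (M/\al_r)^\Gamma$ have kernels controlled by multiplication by $\al_{r+1}/\al_r = \phi^{-r}(\xi)$; a Mittag--Leffler argument combined with stabilization of the generic ranks should then identify the limit with $(M/\mu)^\Gamma$ as a finite projective $A_\inf^\bx(R)/\mu$-module generating $M/\mu$, establishing the $\mu$-triviality. The hard part is the verification of uniform projectivity and generation in the limit, which requires careful control of the transition maps across the filtration, presumably via the same cohomological divisibility bounds that drove Part 1.
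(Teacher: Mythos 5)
Your outline of Part~1 follows the same route as the paper (character decomposition of $A_\inf(R_\infty)$ over $A_\inf^\bx(R)$, iterative correction of a $1$-cocycle, convergence driven by the fact that $[\ep^{k_i}]-1$ divides $\phi^{-1}(\mu)$ for every non-integral exponent), but two of your stated identifications are false as written. First, $A_\inf(R_\infty)^\Gamma\neq A_\inf^\bx(R)$ and $(A_\inf(R_\infty)/\mu)^\Gamma\neq A_\inf^\bx(R)/\mu$: the group $\Gamma$ does \emph{not} act through a character alone on each summand $A_\inf^\bx(R)\cdot[\ul T^{\vec k}]$ — it also acts nontrivially (though trivially mod $\mu$) on the coefficient ring $A_\inf^\bx(R)$, and moreover the twisted summands contribute nonzero $\Gamma$-invariants mod $\mu$ (elements $fU^{\ul k}$ with $([\ep^{k_i}]-1)f=0$); the correct statement is only that $A_\inf^\bx(R)/\mu\to(A_\inf(R_\infty)/\mu)^\Gamma$ induces a bijection on finite projective modules. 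Consequently fully faithfulness is not ``immediate'': one must apply the perturbation $[\ep^{k_i}]\gamma_i-1=([\ep^{k_i}]-1)(1+\tfrac{[\ep^{1/p}]-1}{[\ep^{k_i}]-1}[\ep^{k_i}]\xi\delta_i)$ to the internal Hom (itself trivial mod $\mu$) to see that the nonzero-character components of a $\Gamma$-equivariant map vanish. You also need to say how to pass from the finite free case (where ``choose a basis'' makes sense) to the finite projective case; this requires a Zariski glueing argument over $\Spf R$ which is not automatic. These are all repairable, but they are not cosmetic.

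Part~2 has a genuine gap. Your key identity $M/\mu=\projlim_r M/\xi_r M$ is false, already for $M=A_\inf(R_\infty)$ itself: via the isomorphisms $\theta_r:A_\inf(R_\infty)/\xi_r\isoto W_r(R_\infty)$ one has $\projlim_r A_\inf(R_\infty)/\xi_r=W(R_\infty)$, and the map $A_\inf(R_\infty)/\mu\to W(R_\infty)$ is injective with cokernel $\Xi(R_\infty)$, a nonzero module killed by $W(\frak m^\flat)$. The intersection statement $\bigcap_r\xi_r M=\mu M$ only gives injectivity of $M/\mu\to\projlim_r M/\xi_r M$, never surjectivity, and the failure of surjectivity is exactly the ``non-zero almost-zero contributions'' that make Theorem~\ref{theorem_<mu_implies_mu} nontrivial. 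Your limit argument therefore proves only that $M\otimes_{A_\inf(R_\infty)}W(R_\infty)$ is a trivial representation, which does not by itself yield triviality of $M/\mu$. To close the gap one needs a second input: the paper first applies the descent theorem \emph{mod $p$} (for all $c_1=(\ep-1)/(\ep^{1/p^r}-1)$ simultaneously) to descend $M/p$ to a representation $N$ over $A_\inf^\bx(R)/p$, where the intersection $\bigcap_r\tfrac{\ep-1}{\ep^{1/p^r}-1}(A_\inf^\bx(R)/p)=(\ep-1)(A_\inf^\bx(R)/p)$ \emph{does} suffice because $\Gamma$ acts as the identity on $A_\inf^\bx(R)/(p,\ep-1)$; this gives triviality of $M$ mod $(p,\mu)$, and then a separate argument (Proposition~\ref{proposition_reduction_to_mod_p}, resting on $p$-torsion-freeness of $H^1(\bb Z^d,W(R_\infty))$ and of $H^1(\bb Z^d,A_\inf(R_\infty)/\mu)$) upgrades this to triviality mod $\mu$. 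A Mittag--Leffler/rank-stabilisation argument on the system $(M/\xi_r)^\Gamma$ cannot substitute for this, because the discrepancy it would need to control lives precisely in the almost-zero module $\Xi(R_\infty)\otimes M$.
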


%Theorem \ref{theorem_intro2} is proved via explicit cocycle arguments to gradually descend any given generalised representation from $A_\inf(R_\infty)$ to $A_\inf^\bx(R)$. 

Having descended our representations to the framed period ring, Section \ref{section_q_connections} reinterprets them as modules with q-connection in the sense of q-de Rham cohomology \cite{Scholze2017}. Using our sequence of compatible $p$-power roots of unity to trivialise $\bb Z_p(1)\cong\bb Z_p$ and $\Gamma\cong \bb Z_p^d$, let $\gamma_1,\dots,\gamma_d\in\Gamma$ be the corresponding generators.

\begin{definition}[{c.f., \cite[Def.~7.3]{Scholze2017}}]\label{def_intro_q_conn}
A {\em flat q-connection} on an $A_\inf^\bx(R)$-module $N$ is the data of $d$ commuting $A_\inf$-linear endomorphisms $\nabla_1^\sub{log}, \dots,\nabla_d^\sub{log}$ of $N$ which satisfy the ``q-Leibniz rule'' \[\nabla_i^\sub{log}(fn)=\gamma_i(f)\nabla_i^\sub{log}(n)+\frac{\gamma_i(f)-f}{\mu}n,\] for all $f\in A_\inf^\bx(R)$, $n\in N$, $i=1,\dots,d$. A {\em Frobenius structure} on such a module is an isomorphism $\phi_N:(\phi^*N)[\tfrac1{\tilde\xi}]\isoto N[\tfrac1{\tilde\xi}]$ of $A_\inf^\bx(R)[\tfrac1{\tilde\xi}]$-modules compatible with connections (see \S\ref{ss_Frobenius} for further details).

The category of finite projective $A_\inf^\bx(R)$-modules with q-connection is denoted by $\textrm{qMIC}(A_\inf^\bx(R))$; similarly with Frobenius structures.
\end{definition}

Of course, given a module $N$ with flat q-connection in the above sense, $R$-module $\res N:=N/\xi$ inherits a flat connection $\res\nabla:\res N\to\res N\otimes_R\hat\Omega^1_{R/\roi}$ in the usual sense; thus q-connections are deformations of connections over $R$.

Given $N\in\Rep_\Gamma^\mu(A_\inf^\bx(R))$, we may convert the action of $\Gamma\cong\bb Z_p^d$ on $N$ into a flat q-connection by setting $\nabla_i^\sub{log}:=\tfrac{\gamma_i-1}{\mu}:N\to N$. This defines a functor $\Rep_\Gamma^\mu(A_\inf^\bx(R))\to\textrm{qMIC}(A_\inf^\bx(R))$, which is easily checked to be an equivalence of categories (see Corollary \ref{corollary_reps_vs_q_connections} for details); similarly with Frobenius structures.

Next we discuss the functor $(F,F_\Omega)^*$ in diagram (\ref{eqn_intro}), which is a local q-deformation of the Simpson correspondence. Let $A_\inf^{\bx}(R)^{(1)}:=A_\inf\otimes_{\phi,A_\inf}A_\inf(R)$ be the Frobenius twist of $A_\inf(R)$; it inherits an action by $\Gamma$, which is the identity modulo $\phi(\mu)$. To offer an alternative description, note that the relative Frobenius $F:A_\inf^\bx(R)^{(1)}\to A_\inf^\bx(R)$ is a morphism of $A_\inf$-algebras identifying the Frobenius twist with the sub $A_\inf$-algebra $\phi(A_\inf^\bx(R))\subseteq A_\inf^\bx(R)$. Descending along this Frobenius twist, q-connections become q-Higgs fields in the following sense:

\begin{definition}
A {\em flat q-Higgs field} on a $A_\inf^\bx(R)^{(1)}$-module $H$ is the data of $d$ commuting $A_\inf$-linear endomorphisms $\Theta_1^\sub{log},\dots,\Theta_d^\sub{log}$ which satisfy \[\Theta_i^\sub{log}(fh)=\gamma_i(f)\Theta_i^\sub{log}(h)+\frac{\gamma_i(f)-f}{\mu}h\] for all $f\in A_\inf^\bx(R)^{(1)}$, $h\in H$.

The category of finite projective $A_\inf^\bx(R)^{(1)}$-modules with q-connection is denoted by $\textrm{qHIG}(A_\inf^\bx(R)^{(1)})$, and similarly with Frobenius structures (whose unsurprising definition in this context may be found in Definition \ref{definition_phi_on_Higgs}).
\end{definition}

Given a flat q-Higgs field on $H$, one may naturally equip the base change $H\otimes_{A_\inf^\bx(R)^{(1)},F}A_\inf^\bx(R)$ with a flat q-connection, defined by \[\nabla_i^\sub{log}(h\otimes f):= \Theta_i^\sub{log}(h)\otimes \gamma_i(f)+h\otimes \frac{\gamma_i(f)-f}\mu.\] This defines the functor $(F,F_\Omega)^*$ appearing in diagram (\ref{eqn_intro}); the hardest part of the following theorem is essential surjectivity, which amounts to the existence of a good $\phi(A_\inf^\bx(R))$-lattice inside any $A_\inf^\bx(R)$-module with connection. We note that a similar functor, albeit in a rather different context, has also been shown to be an equivalence by Gros--Le~Stum--Quir\'os \cite{GrosLeStumQuiros2010, Gros2019}.

\begin{theorem}[Local q-Simpson correspondence: Corol.~\ref{corollary_q_Simp}]\label{thm_intro_q}
The functor $(F,F_\Omega)^*$ in diagram (\ref{eqn_intro}) is an equivalence of categories.
\end{theorem}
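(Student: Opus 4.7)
The plan is to invert $(F,F_\Omega)^*$ explicitly: given $(N,\nabla,\phi_N)\in\textrm{qMIC}(A_\inf^\bx(R),\phi)$, I will construct a finite projective $A_\inf^\bx(R)^{(1)}$-submodule $H\subseteq N$ stable under the $\nabla_i^\sub{log}$, such that the inclusion induces an isomorphism $H\otimes_{A_\inf^\bx(R)^{(1)},F}A_\inf^\bx(R)\isoto N$; the q-Higgs field is then $\Theta_i^\sub{log}:=\nabla_i^\sub{log}|_H$. Before that, well-definedness of $(F,F_\Omega)^*$ is a direct computation: the formula for $\nabla_i^\sub{log}$ satisfies the q-Leibniz rule and the $\nabla_i^\sub{log}$'s mutually commute by straightforward algebraic identities in $A_\inf^\bx(R)$, and Frobenius structures transport via the canonical isomorphism $\phi^*(H\otimes_F A_\inf^\bx(R))\cong\phi^*H\otimes_F A_\inf^\bx(R)$.

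To construct the lattice $H$ for essential surjectivity, I would exploit the Frobenius to compensate for the fact that $N$ viewed as an $A_\inf^\bx(R)^{(1)}$-module via $F$ is far too large. The canonical $\phi$-semilinear map $\nu_N\colon N\to\phi^*N$ factors through the finite projective $A_\inf^\bx(R)^{(1)}$-module $N^{(1)}:=A_\inf^\bx(R)^{(1)}\otimes_{A_\inf^\bx(R)}N$, and I would take $H$ to be (a suitable $\phi(A_\inf^\bx(R))$-modification of) the image of the composition $N^{(1)}\to\phi^*N\xto{\phi_N}N[\tfrac{1}{\phi(\xi)}]$ inside $N$. The crucial identity for checking that $\nabla_i^\sub{log}$ restricts to a q-Higgs field on $H$ is $\phi(\mu)/\mu=\phi(\xi)$: for $f\in\phi(A_\inf^\bx(R))\subseteq A_\inf^\bx(R)$ one has $(\gamma_i(f)-f)/\mu=\phi(\xi)(\gamma_i(f)-f)/\phi(\mu)$, and since $\gamma_i$ acts trivially on $A_\inf^\bx(R)^{(1)}$ modulo $\phi(\mu)$, the q-connection Leibniz rule on $N$ restricts cleanly to the q-Higgs Leibniz rule on $H$. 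Full faithfulness follows formally once $H$ is canonically built from the Frobenius structure: any morphism in $\textrm{qMIC}(A_\inf^\bx(R),\phi)$ preserves these lattices and descends to $\textrm{qHIG}(A_\inf^\bx(R)^{(1)},\phi)$.

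The main obstacle, as the authors flag, is proving that the constructed $H$ is genuinely a $\phi(A_\inf^\bx(R))$-lattice of the correct rank: finite projective over $A_\inf^\bx(R)^{(1)}$, with $F$-base change equal to $N$ integrally rather than merely after inverting $\phi(\xi)$. My approach is to leverage Theorem \ref{theorem_intro2}, which identifies $\textrm{qMIC}(A_\inf^\bx(R),\phi)$ with $\Rep_\Gamma^\mu(A_\inf^\bx(R),\phi)$: the triviality of $N/\mu$ as a $\Gamma$-representation yields a canonical finite projective $(A_\inf^\bx(R)/\mu)^\Gamma$-lattice inside $N/\mu$, which will serve as the mod-$\mu$ reduction of $H$. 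Iterating the Frobenius to lift, and using the congruence $\phi(\xi)\equiv p\pmod\mu$ to obtain $(p,\mu)$-adic convergence of the lifting procedure, should then produce $H$ itself, with the required finite projectivity and the identification of the $F$-base change with $N$ dropping out by Nakayama.
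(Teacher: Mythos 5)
Your candidate lattice is essentially the right one: the paper's proof (Theorem \ref{theorem_Simpsons}, part II) also builds $H$ from the Frobenius structure, taking (up to a twist by a power of $[p]_q$) the preimage $\phi_N^{-1}([p]_q^bN)\cap W^*N$ inside the Frobenius twist $W^*N=N^{(1)}$ — the "image of $\phi_N$ inside $N$" point of view is the same object seen from the other side. Your reduction of well-definedness and full faithfulness to functoriality of this lattice is also fine in spirit (though note that the paper's full faithfulness is proved without Frobenius structures, only under $[p]_q$-adic quasi-nilpotence, which your sketch never invokes; in the presence of Frobenii it is automatic by Lemma \ref{lemma_phi_implies_xi_nilp}, so this is harmless).

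The genuine gap is in the step you yourself flag as the main obstacle, and your proposed fix does not close it. You claim that triviality of the $\Gamma$-action on $N/\mu$ "yields a canonical finite projective $(A_\inf^\bx(R)/\mu)^\Gamma$-lattice inside $N/\mu$ which will serve as the mod-$\mu$ reduction of $H$". But $(A_\inf^\bx(R)/\mu)^\Gamma=A_\inf^\bx(R)/\mu$, so that lattice is just $N/\mu$ itself as an $A_\inf^\bx(R)/\mu$-module — it is a lattice for the wrong ring. What you need is a finite projective module over $A_\inf^\bx(R)^{(1)}/\phi(\mu)$ whose base change along the relative Frobenius $F$ (which remains finite flat of degree $p^d$ modulo $\mu$, resp.\ modulo $p$) recovers $N/\mu$; producing it is a nontrivial Cartier/Frobenius-descent problem that the representation-theoretic triviality does not solve, and the subsequent "iterate the Frobenius and conclude by Nakayama" has nothing to lift. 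The paper fills exactly this hole with a different mechanism: it introduces the Nygaard-style filtrations $\Fil^j(\phi^*N)=\{n:\phi_N(n)\in[p]_q^jN\}$ and $\Fil^j(W^*N)=\Fil^j(\phi^*N)\cap W^*N$, and proves by induction on $j$ that $\Fil^j(W^*N)\otimes_{A_\inf^\bx(R)^{(1)},F}A_\inf^\bx(R)\to\Fil^j(\phi^*N)$ is an isomorphism; the inductive step reduces to injectivity on graded pieces, which is supplied by the rigidity Lemma \ref{lemma_injective_rigidity} — an explicit computation with the monomial decomposition $N=\bigoplus_{\ul k}H\,\ul U^{\ul k}$, the quasi-nilpotence of $\Theta_i^{\sub{log}}$, and the invertibility of the operators $q^{k}\Theta_i^{\sub{log}}+[k]_q$. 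Taking $j=b\gg0$ then identifies $\Fil^b(W^*N)\otimes_F A_\inf^\bx(R)$ with $[p]_q^bN\cong N$ integrally. Some argument of this kind (or an alternative Morita-type argument as in Gros--Le~Stum--Quir\'os) is indispensable; without it your proof is incomplete at its central point.
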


In Section \ref{section_prismatic_crystals} we relate the previous notions to crystals on the prismatic site of Bhatt--Scholze \cite{BhattScholze2019}. Viewing $R^{(1)}:=A_\inf/\tilde\xi\otimes_{A_\inf/\xi,\phi}R$ as a formally smooth $\roi=A_\inf/\tilde\xi$-algebra, let $\CR_{\Prism}(R^{(1)}/(A_{\inf},\tilde\xi))$ be the category of finite locally free crystals of $\roi_\Prism$-modules on the prismatic site of $\Spf R$ over the prism $(A_\inf,\tilde\xi)$, and let $\op{F-CR}_{\Prism}(R^{(1)}/(A_{\inf},\tilde\xi))$ be the category of such crystals equipped with the Frobenius structure. See Definition \ref{definiton_prismatic_crystal} and the paragraphs before Corollary \ref{corollary_main_thm_with_phi} for further details. Evaluating a crystal on the object $(A_\inf^\bx(R)^{(1)},\tilde\xi)$ of the prismatic site produces a finite projective $A_\inf^\bx(R)^{(1)}$-module; we show that the stratification data of the crystal corresponds  to a q-Higgs field, thereby defining a functor \begin{equation}\op{ev}_{A_\inf^\bx(R)^{(1)}}:\CR_{\Prism}(R^{(1)}/(A_{\inf},\tilde\xi))\To \mathrm{qHIG}(A_\inf^{\bx}(R)^{(1)})\label{eqn:ev}.\end{equation}  The main theorem of Section \ref{section_prismatic_crystals} identifies prismatic crystals in terms of modules with quasi-nilpotent q-connection as follows:

\begin{theorem}[Thm.~\ref{thm:main}]
The functor (\ref{eqn:ev}) is fully faithful, with essential image given by the objects of $\mathrm{qHIG}(A_\inf^{\bx}(R)^{(1)})$ for which the endomorphisms $\Theta_1^\sub{log},\dots,\Theta_d^\sub{log}$ are $(p,\mu)$-adically quasi-nilpotent.
\end{theorem}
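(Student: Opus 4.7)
The plan is to run the standard ``crystals as stratified modules'' argument in the prismatic setting, using the framed lift $(A_\inf^\bx(R)^{(1)},\tilde\xi)$ as a canonical coordinate chart. First I would show that this object is weakly final (in particular, covering) in the site $(R^{(1)}/(A_\inf,\tilde\xi))_\Prism$; this should be a formal consequence of the fact that $A_\inf^\bx(R)^{(1)}$ is the unique formally smooth $(A_\inf,\tilde\xi)$-lift of $R^{(1)}$ compatible with the framing, so that any prism over $R^{(1)}$ admits a morphism to our chosen one by lifting along smoothness, exactly as in the classical crystalline story. Granting this, a crystal $\cal E$ reduces to its evaluation $H := \op{ev}_{A_\inf^\bx(R)^{(1)}}(\cal E)$ together with a stratification over the self-products of this object.

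Next I would compute the self-products $P^{(n)}$ of $(A_\inf^\bx(R)^{(1)},\tilde\xi)$ in the prismatic site, following the model of $q$-de Rham and $q$-PD envelopes. Using the framing, $P^{(n)}$ should arise as the prismatic envelope of the diagonal ideal in the $(n+1)$-fold tensor product of $A_\inf^\bx(R)^{(1)}$ over $A_\inf$, and in the multiplicative coordinates $U_i^{(j)} := T_i^{(j)}/T_i^{(0)} - 1$ it becomes a $q$-deformation of a power series ring endowed with $q$-divided powers. A stratification on $H$ is then an $A_\inf^\bx(R)^{(1)}$-linear isomorphism $\epsilon : p_1^* H \isoto p_0^* H$ over $P^{(1)}$ satisfying the cocycle condition over $P^{(2)}$, and the content of the theorem is to match such $(H,\epsilon)$ with finite projective modules carrying a quasi-nilpotent q-Higgs field.

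Given $(H,\epsilon)$, I would expand $\epsilon(h \otimes 1) = \sum_\alpha \Theta^{\alpha}(h) \otimes u^{\alpha}$ as a Taylor series in the $U_i^{(1)}$, whose first-order coefficients define $A_\inf$-linear endomorphisms $\Theta_i^\sub{log}$ of $H$. The q-Leibniz rule of \emph{Higgs} type -- with plain $f$ in front of $\Theta_i^\sub{log}(h)$ rather than $\gamma_i(f)$ -- should emerge from the fact that the $\Gamma$-action on $A_\inf^\bx(R)^{(1)}$ is trivial modulo $\phi(\mu)$, so that in the multiplicative expansion the only surviving cross-term is $(\gamma_i(f)-f)/\mu$. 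The higher-order coefficients should work out to ordered compositions of the $\Theta_i^\sub{log}$ divided by appropriate $q$-factorials, and the cocycle condition on $P^{(2)}$ should correspond to commutativity of the operators together with standard $q$-binomial identities. Convergence of $\epsilon$ in the $(p,\mu)$-adic topology of $P^{(1)}$ is precisely equivalent to $(p,\mu)$-adic quasi-nilpotence of the $\Theta_i^\sub{log}$, pinning down the essential image. Conversely, a quasi-nilpotent q-Higgs field defines an $\epsilon$ via this Taylor formula whose cocycle verification is then formal, and full faithfulness follows automatically from the covering property established in the first step.

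The hardest part will be the explicit calculation of $P^{(1)}$ and $P^{(2)}$ as prismatic $q$-envelopes and the verification that their comultiplication structure matches the $q$-binomial combinatorics governing commutativity and the Leibniz rule for the $\Theta_i^\sub{log}$. In particular, extracting the ``no $\gamma_i$ on the argument'' feature of the Higgs rule directly from the trivialisation of $\Gamma$ modulo $\phi(\mu)$ will require careful bookkeeping, and this is precisely where the Frobenius twist plays an indispensable role: on the untwisted ring $A_\inf^\bx(R)$ the same recipe would yield a q-connection, not a q-Higgs field.
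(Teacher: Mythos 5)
Your overall architecture (covering of the final object, descent to a stratification, Taylor expansion, convergence $=$ quasi-nilpotence) matches the paper's, but there are two genuine gaps.

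First, the opening step is not ``a formal consequence'' of formal smoothness. In the prismatic site the objects carry $\delta$-structures, and formal smoothness of $A_\inf^\bx(R)^{(1)}$ over $A_\inf$ produces a \emph{ring} map to a given prism lifting $R^{(1)}\to B/IB$, but gives no control over compatibility with $\delta$-structures, so the classical crystalline lifting argument does not go through. The paper instead passes to the $(p,\xi)$-completed base change of $A^\bx$ along a free $\delta$-ring $\bb Z_p\{U_1,\dots,U_d\}$ (whose universal property supplies $\delta$-maps), then perfects the resulting prism and invokes Andr\'e's lemma to produce compatible $p$-power roots of the $T_i$, so that $A_\inf(R_\infty)$ — and hence, via the relative Frobenius, $A_\inf^\bx(R)^{(1)}$ — covers the final object. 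This step genuinely exploits the base prism being perfect and is flagged in the paper as the part of the argument that does not generalise naively; you need to supply it.

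Second, your description of the self-products misidentifies the coordinates, and this matters for getting a Higgs field rather than a connection. The prismatic envelope of the diagonal in the $(\nu+1)$-fold tensor product of $A_\inf^\bx(R)^{(1)}$ is generated not by the differences $p_j(U_i^{(1)})-p_1(U_i^{(1)})$ but by those differences \emph{divided by} $\tilde\xi$; it is exactly this division that makes the resulting first-order coefficients $\Theta_i^{\sub{log}}$ linear modulo $\tilde\xi$ (the Higgs-type Leibniz rule with the factor $[p]_q$), rather than $\gamma_i$-semilinear. Moreover, the tractable computation is not a direct $q$-binomial one: reducing modulo $\mu$ kills the $\Gamma$-action on the envelope and identifies it with a classical $p$-adically completed PD-polynomial algebra in these rescaled variables, after which the Taylor-expansion, cocycle, and quasi-nilpotence analysis is a classical char-$p$/PD computation, lifted back by $\mu$-torsion-freeness and $\mu$-adic completeness. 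Finally, note that the paper does not carry out the full two-sided construction on the Higgs side: it proves the equivalence for the q-connection functor $\op{ev}^\phi$ via q-PD envelopes of $A^\bx$ (including the inverse functor built from the q-Taylor expansion of a quasi-nilpotent connection), proves only the ``essential image is quasi-nilpotent'' direction directly for $\op{ev}_{A_\inf^\bx(R)^{(1)}}$, and deduces the rest from the q-Simpson correspondence $(F,F_\Omega)^*$. Your all-Higgs-side route is viable (the paper remarks as much), but then you must also construct the inverse functor on that side, i.e.\ show the Taylor series of a quasi-nilpotent q-Higgs field converges in the envelope and satisfies the cocycle condition, which is the bulk of the work.
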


We remark that the quasi-nilpotence condition is automatically satisfied in the presence of a Frobenius structure, whence it follows that the functor $\op{ev}_{A_\inf^\bx(R)^{(1)}}$ of diagram (\ref{eqn_intro}) is also an equivalence.

\begin{remark}\label{intro_bk}
We prove Theorem \ref{thm_intro_q} in greater generality than only for $A_\inf^\bx(R)$: the results in Section~\ref{section_q_connections} are established in a general framework for q-de Rham cohomology which applies in particular if $A_\inf$ is replaced by a flat q-PD pair over $\bb Z_p[[q-1]]$. The precise axiomatic hypotheses may be found in Remark~\ref{remark_q_via_deformation}, while a comparison to Bhatt--Scholze's framed q-PD pairs is discussed in Remark \ref{examples_qBK}.

Many of the arguments of Section \ref{section_prismatic_crystals} relating crystals, stratifications, and modules with q-connection or q-Higgs fields seem to work in a similar degree of generality, though we have not checked all the details. In the context of derived prismatic cohomology we refer to forthcoming work of Z.~Mao \cite{Mao2021} for more general results. Meanwhile, in the q-crystalline context, see the papers of A.~Chatzistamatiou~\cite{Chatzistamatiou2020} and K.~Li \cite{Li2021}. See Remark \ref{remark_general_prisms_base} for further discussion and references.

%In particular, our result applies to $p$-adically complete formally smooth algebras $R$ over $W$, where $W$ is the ring of Witt vectors of a perfect field of characteristic $p$. Let $A=W[[q-1]]$ equipped with the Frobenius endomorphism $\sigma$ given by the usual Witt vector Frobenius on $W$ and satisfying $\sigma(q)=q^p$; see Example \ref{examples_qBK}. Let $A^\bx$ be the unique $(p,q-1)$-adically complete formally smooth $A$-algebra lifting $R$ along $q\mapsto 1$ in a manner compatible with some fixed choice of framing. In this case Corollary \ref{corollary_q_Simp} gives an equivalence of categories between 
%\begin{enumerate}
%\item $\textrm{qHIG}( A^{\bx(1)},\phi)$, whose objects deform Higgs modules for the $W[\zeta_p]$-algebra $R^{(1)}:=W[\zeta_p]\otimes_{\sigma,W}R$,
%\item and $\text{qMIC}(A^\bx,\phi)$, whose objects deform modules with flat connection for the $W$-algebra $R$.
%\end{enumerate}
%This is a categorical version of the comparison between prismatic cohomology of a Frobenius twist and q-crystalline cohomology \cite[Thm.~16.17]{BhattScholze2019}: the category in (i) should correspond to F-crystals on the prismatic site of $R^{(1)}$ with respect to the prism $(W[[q-1]],\tfrac{q^p-1}{q-1})$, and the category in (ii) should correspond to F-crystals on the q-crystalline site of $R$ with respect to the q-PD pair $(W[[q-1]],(q-1))$.
\end{remark}

Section \ref{section_crystalline} explores generalised representations and connections over crystalline period rings, in particular an analogue of Faltings' associatedness. In order to work with geometric Galois representations independently of the framing, we phrase the results in terms of $\res R$ rather than $R_\infty$. Letting $\CR(R/A_\sub{crys})$ denote the category of locally finite free crystals on the big crystalline site of $R/A_\crys$, evaluation defines a functor \[\ev_{A_\crys^\bx(R)}:\CR(R/A_\crys)\To \Rep_\Delta^\mu(A_\crys(\res R))\] (the target being defined similarly to Definition \ref{definition_intro_gen_rep}), and we say that a crystal $\cal F\in \CR(R/A_\crys)$ is {\em associated} to some $M\in \Rep^\mu_\Delta(A_\inf(\res R))$ if there is an isomorphism $\ev_{A_\crys(\res R)}(\cal F)\cong M\otimes_{A_\inf(\res R)}A_\crys(\res R)$ in $\Rep^\mu_\Delta(A_\crys(\res R))$. The main goal of Section \ref{section_crystalline} is the following, stating in particular that $\cal F$ is uniquely determined by $M$ by the chosen isomorphism:

\begin{theorem}[Thms.~\ref{theorem_crystal_genrep} \& \ref{theorem_Admissibility}]\label{thm_crystalline_intro}
The functor $\ev_{A_\crys^\bx(R)}$ is fully faithful. Moreover, given any $M\in \Rep^\mu_\Delta(A_\inf(\res R))$ admitting a Frobenius structure, then there exists a unique crystal associated to $M$.
\end{theorem}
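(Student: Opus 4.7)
The plan is to treat full faithfulness and the moreover clause separately, exploiting the equivalences already assembled in diagram~(\ref{eqn_intro}), the framed descent of Theorem~\ref{theorem_intro2}, and the Galois-descent equivalence $\Rep_\Gamma^\mu(A_\inf(R_\infty)) \simeq \Rep_\Delta^\mu(A_\inf(\res R))$ recorded immediately after Definition~\ref{definition_intro_gen_rep}.

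For full faithfulness of $\ev_{A_\crys^\bx(R)}$: a finite locally free crystal on the big crystalline site is equivalent, by the standard crystal--stratification equivalence, to a finite projective $A_\crys^\bx(R)$-module with integrable topologically quasi-nilpotent connection. Evaluation at $A_\crys(\res R)$ then returns this module, now equipped with a $\Delta$-semilinear action encoding the stratification. Full faithfulness is then the composition of two fully faithful functors: first, classical full faithfulness of evaluation at the PD-thickening $A_\crys^\bx(R)$; second, full faithfulness of the further base change $-\otimes_{A_\crys^\bx(R)}A_\crys(\res R)$ with $\Delta$-action remembered, which is Galois descent along the pro-finite-\'etale cover $R[\tfrac1p] \to \res R[\tfrac1p]$ supplied by almost purity.

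For the moreover clause, uniqueness of the associated crystal is immediate from full faithfulness. Given $(M,\phi_M) \in \Rep_\Delta^\mu(A_\inf(\res R),\phi)$, I would construct the associated crystal by the following chain of descents. First, invert the Galois-descent equivalence above to obtain an object of $\Rep_\Gamma^\mu(A_\inf(R_\infty),\phi)$. Second, apply Theorem~\ref{theorem_intro2} to descend further to some $N \in \Rep_\Gamma^\mu(A_\inf^\bx(R),\phi)$. Third, reinterpret $N$ as an object of $\mathrm{qMIC}(A_\inf^\bx(R),\phi)$ by the q-connection equivalence of Section~\ref{section_q_connections}. Fourth, base change the q-connection along $A_\inf^\bx(R) \to A_\crys^\bx(R)$ and exponentiate into an ordinary $A_\crys^\bx(R)$-linear connection via the formal series $\sum_{n\ge0}\tfrac{(\log(1+\mu))^n}{n!}(\nabla_i^\sub{log})^n$, whose convergence is powered by the divided powers of $\xi$ available in $A_\crys$.

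The main obstacle will be verifying that the resulting connection on $A_\crys^\bx(R)$ is topologically quasi-nilpotent, as this is exactly what is needed to promote the module-with-connection to a genuine crystal on $R/A_\crys$. This is where the Frobenius structure enters essentially: iterating $\phi_M$ produces pullbacks of $\nabla^\sub{log}$ which, up to $\phi(\xi)$-denominators, are divisible by increasingly high powers of $\mu$; after base change to $A_\crys^\bx(R)$ these denominators are absorbed by the PD structure, forcing $(p,\xi)$-adic quasi-nilpotence of the iterates $(\nabla_i^\sub{log})^n$. This step is a q-deformed analogue of the classical Fontaine--Laffaille admissibility argument. Once quasi-nilpotence is secured, the module corresponds to a crystal $\cal F \in \CR(R/A_\crys)$, and unwinding the chain of equivalences shows that $\ev_{A_\crys^\bx(R)}(\cal F) \cong M \otimes_{A_\inf(\res R)} A_\crys(\res R)$ in $\Rep_\Delta^\mu(A_\crys(\res R))$, establishing associatedness.
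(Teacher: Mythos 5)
The full faithfulness half of your proposal has a genuine gap. You factor $\ev_{A_\crys(\res R)}$ through the classical equivalence $\CR(R/A_\crys)\simeq\MIC_\sub{conv}(A_\crys^\bx(R))$ and then assert that the remaining base change $-\otimes_{A_\crys^\bx(R)}A_\crys(\res R)$, with $\Delta$-action remembered, is fully faithful ``by Galois descent along the pro-finite-\'etale cover supplied by almost purity''. That assertion is essentially the content of the theorem, and the justification offered does not prove it: $A_\crys^\bx(R)\to A_\crys(\res R)$ is not a pro-finite-\'etale cover, nor is it obtained from $R[\tfrac1p]\to\res R[\tfrac1p]$ by any operation to which faithfully flat or Galois descent applies --- it is a $p$-completed divided-power envelope, and almost purity only yields almost-isomorphism statements that must then be upgraded (already over $A_\inf$ the analogous descent, Theorem~\ref{theorem_intro2}, consumed all of Section~\ref{section_small_reps}, with its $1$-cocycle manipulations and the decomposition into integral and non-integral parts). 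Reducing via internal Hom, what you actually need is that $(H\otimes_{A_\crys^\bx(R)}A_\crys(\res R))^\Delta=H^\Gamma$ for every $H\in\Rep^\mu_{\Gamma,\sub{conv}}(A_\crys^\bx(R))$, and nothing in your sketch establishes this. The paper's mechanism is different: it introduces the period ring $\roiA_\crys(\cal R)$ with its connection, filtration and $\Delta$-action, proves that every object of $\MIC_\sub{conv}(\cal R)$ is admissible (Proposition~\ref{proposition_MIC_Rbarrep}), and computes $\roiA_\crys(\cal R)^\Delta=\cal R$ (Proposition~\ref{proposition_Delta_fixed_points_of_OA}, which itself rests on the filtered almost-purity statement $A_\crys(R_\infty)\isoto A_\crys(\res R)^H$ of Lemma~\ref{lemma_H-fixed_points} and an explicit PD-power-series computation of $\Gamma$-invariants). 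Taking $\Delta$-invariants of the admissibility isomorphism then recovers $D$ functorially from $\bb M_\crys(D)$, whence full faithfulness. Some substitute for this circle of ideas is unavoidable; it is where the real work in the theorem lies.

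Your treatment of the ``moreover'' clause is essentially the paper's: uniqueness follows from full faithfulness, and existence is obtained by descending $M$ to the framed ring via Theorem~\ref{theorem_intro2}, base changing to $A_\crys^\bx(R)$, converting the $\Gamma$-action into an honest connection using the divided powers of $A_\crys$, and invoking the Frobenius structure to force $p$-adic quasi-nilpotence so that the module with connection is a genuine crystal (Remark~\ref{remark_Frob_implies_p_nilp} combined with Remark~\ref{remark_F_structures_Acrys} and diagram~(\ref{diagram_Ainfcrys_frame_nonframe_Rep})). One slip: the passage from the $\Gamma$-action to the connection is the logarithm $\nabla_{\cal L,i}^\sub{log}=\tfrac1t\log(\gamma_i)$ with $t=\log([\ep])$; the series you wrote, $\sum_{n\ge0}t^{[n]}(\nabla_i^\sub{log})^n=\exp(t\nabla_i^\sub{log})$, is the inverse construction recovering $\gamma_i$ from an honest connection. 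You should also note that identifying the resulting crystal's evaluation with $M\otimes_{A_\inf(\res R)}A_\crys(\res R)$ requires checking that the crystal's stratification reproduces the original $\Gamma$-action, which is the Taylor-expansion formula $\gamma_i=\sum_m\mu^{[m]}U_i^m\nabla_i^m$ of Proposition~\ref{lemma_Acrys_rep_formula}; this is plausible ``unwinding'' but not automatic.
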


%Over $A_\crys$ we may form the logarithm of a q-connection, thereby yielding an honest connection which underlies the second of the following equivalences of categories
%\[\Rep_{\Gamma}^\mu(A_\crys^\bx(R))\quis\textrm{\rm qMIC}(A_\crys^\bx(R))\quis\textrm{\rm MIC}(A_\crys^\bx(R)),\] where we refer to Theorem \ref{theorem_q-A_crys} for further details and notation. In particular, given a generalised representation $ N\in\Rep^\mu_\Gamma(A_\inf^\bx(R))$, its base change $N\otimes_{A_\inf^\bx(R)}A_\crys^\bx(R)$ may equivalently be viewed as a module with connection over the framed crystalline period ring; this in turn corresponds to a locally finite free F-crystal on the crystalline site $(R/A_\crys)_\sub{crys}$.

The remaining sections of the paper globalise aspects of the above local theory of generalised representations over $A_\inf$. Letting $X$ denote the adic generic fibre of $\frak X$ and $\bb A_{\inf, X}=W(\roi^+_{X^\flat})$ the infinitesimal period ring sheaf on its pro-\'etale site, we introduce the following class of coefficient systems:

\begin{definition}
A {\em relative Breuil--Kisin--Fargues module} (without Frobenius) on $\frak X$ is a locally finite free sheaf of $\bb A_{\inf,X}$-modules on $X$ which is ``trivial mod $<\mu$'' (to be explained below -- this triviality is the key part of the definition which depends on the model $\frak X$ rather than only on the generic fibre $X$); the category of these is denoted by $\BKF(\frak X)$.

A Frobenius structure is a Frobenius-semi-linear isomorphism $\phi_{\bb M}:\bb M[\tfrac1\xi]\isoto \bb M[\tfrac1{\tilde\xi}]$, and we denote by $\BKF(\frak X,\phi)$ the category of relative Breuil--Kisin--Fargues modules equipped with a Frobenius structure.
\end{definition}

As a natural modification of Definition \ref{definition_intro_gen_rep}, we say that a locally finite free sheaf $\bb M$ of $\bb A_{\inf, X}$-modules is trivial modulo $\al\in A_\inf$ if the quotient $\bb M/\al$ is completely determined by its restriction $\nu_*(\bb M/\al)$ to $\frak X$, where $\nu_*:X_\sub{pro\'et}\to\frak X_\sub{Zar}$ is the projection map of sites; more precisely, we ask that $\nu_*(\bb M/\al)$ be a finite locally free sheaf of $\nu_*(\bb A_{\inf,X}/\al)$-modules and that the counit map $\nu^{-1}\nu_*(\bb M/\al)\otimes_{\nu^{-1}\nu_*(\bb A_{\inf,X}/\al)}\bb A_{\inf,X}/\al\to \bb M/\al$ be an isomorphism. We say that $\bb M$ is {\em trivial modulo $<\mu$} if it is trivial modulo $\mu/\phi^{-r}(\mu)$ for all $r\ge1$; because of non-zero almost-zero contributions arising from the almost purity theorem, this seems to be strictly weaker than asking that it be trivial modulo $\mu$.

Under internal hom and tensor product of locally finite free sheaves, the category of relative Breuil--Kisin--Fargues modules $\BKF(\frak X)$ becomes a rigid symmetric monoidal $A_\inf$-linear category. Moreover, the association $\frak X\mapsto \BKF(\frak X)$ is stacky for the Zariski topology. That is, given a Zariski cover $\{\frak U_1,\frak U_2\}$ of $\frak X$, then a relative BKF module $\bb M$ on $\frak X$ is determined by its restrictions $\bb M_1$, $\bb M_2$ to $\frak U_1,\frak U_2$ and by the glueing isomorphism $\bb M_1|_{\frak U_1\times_{\frak X}\frak U_2}\cong \bb M_2|_{\frak U_1\times_{\frak X}\frak U_2}$. This permits relative Breuil--Kisin--Fargues modules to be described locally on $\frak X$, in which case they are given by the generalised representations of Definition~\ref{definition_intro_gen_rep}. Indeed, fixing an affine open $\Spf R\subseteq \frak X$ where $R$ is a small, $p$-adically complete, formally smooth $\roi$-algebra with a fixed choice of framing, then the tower of finite \'etale morphisms defining $R_\infty$ corresponds to an affinoid perfectoid cover $U_\infty$ of the rigid analytic generic fibre $\op{Spa}(R[\tfrac1p],R)$ of $\Spf R$ (see Example~\ref{example_affinoid_perfectoid_covers}), and we prove:

\begin{theorem}[Thms.~\ref{theorem_Ainf_reps_vs_pro_etale} \& \ref{theorem_local_with_phi}]\label{theorem_intro1}
The global sections functor \[\Gamma(U_\infty,-):\BKF(\Spf R)\To \Rep_\Gamma^{<\mu}(A_\inf(R_\infty))\] is a well-defined equivalence of categories.
\end{theorem}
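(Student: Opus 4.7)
The proof is by pro-\'etale Galois descent along the affinoid perfectoid cover $U_\infty\to X:=\Spa(R[\tfrac1p],R)$, which is pro-finite \'etale Galois with group $\Gamma$ and satisfies $\bb A_{\inf,X}(U_\infty)=A_\inf(R_\infty)$. Evaluation at $U_\infty$ therefore sets up an equivalence between sheaves of $\bb A_{\inf,X}$-modules on $X_\sub{pro\'et}$ and continuous semi-linear $\Gamma$-representations on $A_\inf(R_\infty)$-modules; the substance of the theorem is matching the \emph{locally finite free on $X_\sub{an}$} condition on the left with the \emph{finite projective} condition on the right, together with the two $<\mu$ triviality conditions.

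For well-definedness, given $\bb M\in\BKF(\Spf R)$, pick an analytic cover $\{V_j\}$ of $X$ trivialising $\bb M$. The fibre products $V_j\times_X U_\infty$ are affinoid perfectoid, on which $\bb M$ restricts to a finite free module, and faithfully flat descent along $A_\inf(R_\infty)\to\prod_j\bb A_{\inf,X}(V_j\times_X U_\infty)$ shows $M:=\Gamma(U_\infty,\bb M)$ is finite projective over $A_\inf(R_\infty)$. The continuous semi-linear $\Gamma$-action comes from the Galois action on $U_\infty$. For $\alpha:=\mu/\phi^{-r}(\mu)$, the compatibility of $\nu_*$ with $\Gamma(U_\infty,-)$ yields natural identifications $(M/\alpha)^\Gamma\cong\Gamma(\Spf R,\nu_*(\bb M/\alpha))$ and $(A_\inf(R_\infty)/\alpha)^\Gamma\cong\Gamma(\Spf R,\nu_*(\bb A_{\inf,X}/\alpha))$, translating the BKF triviality of $\bb M$ modulo $\alpha$ into the required triviality of $M$.

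Full faithfulness is again a direct consequence of Galois descent for the cover $U_\infty\to X$: a morphism of $\bb A_{\inf,X}$-modules on $X$ is exactly a $\Gamma$-equivariant morphism of the restrictions to $U_\infty$, equivalently a $\Gamma$-equivariant, $A_\inf(R_\infty)$-linear morphism of global sections.

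Essential surjectivity is the main obstacle, since one must construct a locally analytic-free sheaf from purely pro-\'etale data. The strategy is to first invoke Theorem \ref{theorem_intro2} to descend any $M\in\Rep_\Gamma^{<\mu}(A_\inf(R_\infty))$ to a $\Gamma$-equivariant, finite projective, mod-$\mu$-trivial module $N$ over the framed period ring $A_\inf^\bx(R)$. Since $A_\inf^\bx(R)$ is a formally smooth $A_\inf$-algebra lifting $R$, $N$ glues to a finite locally free $A_\inf^\bx(R)$-module sheaf on $\Spf R$; choose an affine cover $\{\Spf R_i\}$ on which it is free. Each $V_i:=\Spa(R_i[\tfrac1p],R_i)\subseteq X$ is an analytic open and admits the Galois $\Gamma$-cover $\widetilde V_i:=V_i\times_X U_\infty$ with $\bb A_{\inf,X}(\widetilde V_i)=A_\inf(R_{i,\infty})$; the finite free $A_\inf(R_{i,\infty})$-module $N\otimes_{A_\inf^\bx(R_i)}A_\inf(R_{i,\infty})$, equipped with its $\Gamma$-action, descends by Galois to a finite free $\bb A_{\inf,X}|_{V_i}$-module sheaf on $V_i$. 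These glue to a locally finite free sheaf $\bb M$ of $\bb A_{\inf,X}$-modules on $X$ with $\Gamma(U_\infty,\bb M)\cong M$ as $\Gamma$-representations, and the $<\mu$ triviality of $\bb M$ is inherited from that of $N$ via the same $\nu_*$ compatibility used in the first step.
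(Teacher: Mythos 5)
Your overall architecture (Galois descent along $U_\infty\to X$, plus descent to the framed period ring for essential surjectivity) is reasonable, but the proposal has a genuine gap running through every step: you treat pro-\'etale descent along $U_\infty\to X$ for $\bb A_{\inf,X}$-modules as automatic, when in fact it only holds \emph{almost} (via the almost purity theorem), and the entire difficulty of the theorem — flagged explicitly in the paper's introduction — is that there is no good theory of vector bundles over the integral structure sheaf, so one must use the smallness conditions to upgrade almost-isomorphisms to isomorphisms. Concretely: (a) your opening claim that evaluation at $U_\infty$ is an equivalence between sheaves of $\bb A_{\inf,X}$-modules and semi-linear $\Gamma$-representations is false without qualification; (b) in the well-definedness step, $\bb M$ is only locally free for the \emph{pro-\'etale} topology, so an analytic trivialising cover $\{V_j\}$ need not exist a priori (its existence is a \emph{consequence} of Proposition~\ref{proposition_small_Ainf_sheaves}(iv), itself proved via smallness), and the map $A_\inf(R_\infty)\to\prod_j\bb A_{\inf,X}(V_j\times_XU_\infty)$ is not known to be faithfully flat, so that descent argument does not get off the ground; (c) the "natural identification" $(M/\alpha)^\Gamma\cong\Gamma(\Spf R,\nu_*(\bb M/\alpha))$ conceals that $\Gamma(U_\infty,\bb M)/\alpha\to\Gamma(U_\infty,\bb M/\alpha)$ is a priori only almost surjective, and that $H^0_\sub{pro\'et}(X,-)=H^0(\Gamma,H^0_\sub{pro\'et}(U_\infty,-))$ requires the almost vanishing of $H^i_\sub{pro\'et}(U_\infty,-)$ plus a device to kill the almost-zero error terms — this is exactly Propositions~\ref{proposition_small_sheaves}(ii) and \ref{proposition_small_Ainf_sheaves}(ii); (d) full faithfulness needs $\bb M|_{U_\infty}$ to be generated by its global sections (Proposition~\ref{proposition_small_Ainf_sheaves}(iii)), again a smallness consequence, not a formality.

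The same issue undermines essential surjectivity: after invoking Theorems~\ref{theorem_descent_to_framed} and \ref{theorem_<mu_implies_mu} to produce $N$ over $A_\inf^\bx(R)$ (a legitimate and genuinely different route from the paper, which instead works mod $p^s$ and applies the discrete-sheaf construction of Theorem~\ref{theorem_correspondence_for_B} level by level), the phrase "descends by Galois to a finite free $\bb A_{\inf,X}|_{V_i}$-module sheaf" is precisely the effectivity of descent that must be proved. The candidate sheaf $V\mapsto(N\otimes\bb A_{\inf,X}(\widetilde V_i\times_{V_i}V))^\Gamma$ is indeed a sheaf, but to see that it is locally finite free, that its restriction to $\widetilde V_i$ recovers the free module (rather than something almost isomorphic to it), and that it is complete, one needs the vanishing of $H^1_\sub{cont}(\Gamma,-)$ up to $\frak m$-torsion (Lemma~\ref{lemma_almost_purity_group}) together with the triviality-mod-$\xi_r$ hypothesis to remove the torsion, exactly as in Steps~2--3 of the proof of Theorem~\ref{theorem_correspondence_for_B} and the inductive mod-$p^s$ bootstrap of Proposition~\ref{proposition_small_Ainf_sheaves}. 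To repair the proposal you would need to import essentially all of that almost-mathematics bookkeeping, at which point you have reproduced the paper's proof.
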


The theorem should not appear surprising, but its proof is rather technical. In particular, there is no good theory of vector bundles over the integral structure sheaf on adic spaces: without the smallness condition in the definition of a relative Breuil--Kisin--Fargues module, it would not be true (as far as we are aware) that $\Gamma(U_\infty,\bb M)$ is a finite projective $A_\inf(R_\infty)$-module.

\begin{remark}[``q-Riemann--Hilbert'']\label{remark_qRH}
Summarising the first four equivalences in diagram (\ref{eqn_intro}) shows that \[\BKF(\Spf R)\simeq \textrm{qMIC}(A_\inf^\bx(R)).\]  Since relative Breuil--Kisin--Fargues modules may be seen as deformations of local systems via the forthcoming specialisation functor $\sigma_\sub{\'et}^*$, one might view this equivalence as a Riemann--Hilbert style correspondence.

This equivalence also shows that the right side is independent of the chosen framing. This is a categorical analogue of the independence of $A\Omega_{R}$, which may be locally represented as a q-de Rham complex, on coordinates. As already mentioned in \cite[Rmk.~7.4]{Scholze2017}, the categorical independence does not seem to follow from the latter statement. %, as modules with q-connection are not the same as modules over the $E_\infty$-$A_\inf$-algebra $A\Omega_{R}$.
Combined with the specialisation functor $\sigma_\sub{\'et}^*$, which may now be used to associate a local system on $\Spa(R[\tfrac1p],R)$ to each module with q-connection, this establishes $A_\inf$-analogues of \cite[Conj.~7.5 \& Rmk.~7.6]{Scholze2017}.

A Riemann--Hilbert functor associating local systems to q-connections, in the context of formally smooth $\bb Z_p$-algebras, has been constructed and studied by L.~Mann \cite{Mann_masters}.
\end{remark}

Incorporating Frobenius structures, the overall equivalence $\BKF(\Spf R,\phi)\simeq \op{F-CR}_{\Prism}(R^{(1)}/(A_{\inf},\tilde\xi))$ of diagram (\ref{eqn_intro}) turns out not to depend on the chosen framing (even though all other categories of the diagram are defined in terms of the framing) and glues over small opens of $\frak X$ to produce a global comparison equivalence to prismatic crystals:

\begin{theorem}[{Thm.~\ref{theorem_local_with_phi}(ii)}]\label{thm_intro_BKF_prism}
For any separated, smooth, $p$-adic formal $\roi$-scheme $\frak X$, there is a natural equivalence of categories \[\op{F-CR}_{\Prism}(\frak X^{(1)}/(A_{\inf},\tilde\xi))\quis \op{BKF}(\frak X,\phi).\]
\end{theorem}

Although the functor of the theorem can be easily defined (see the proof of Theorem \ref{theorem_prism_to_BKF}), our proof that it really takes values in relative Breuil--Kisin--Fargues modules and is an equivalence ultimately passes through all the intermediate equivalencies of diagram (\ref{eqn_intro}). Using the equivalence of the theorem, the specialisation functors $\sigma^*_\sub{dR},\,\sigma^*_\sub{crys}$ below should then correspond to pulling back along morphisms of sites to $(\frak X^{(1)}/(A_\inf,\tilde\xi))_\Prism$ respectively from the crystalline site of $\frak X/\roi$ and the crystalline site of $\frak X_k/W(k)$; however, we have not made any attempt to verify these expectations.
%Apart from $\op{ev}_{A_\inf^\bx(R)^{(1)}}$, we show that the equivalences of diagram (\ref{eqn_intro}) are moreover compatible with the cohomology of a relative Breuil--Kisin--Fargues module. That is, given $\bb M\in\BKF(\frak X,\phi)$, there is a natural (having fixed our framing of the small affine open $\Spf R\subseteq \frak X$) equivalence \begin{equation}R\Gamma(\Spf R,A\Omega_\frak X(\bb M))\simeq H\otimes_{A_\inf^\bx(R)^{(1)}}\hat\Omega^\blob_{A_\inf^\bx(R)^{(1)}/A_\inf},\label{eqn_cohom_as_Higgs}\end{equation} where the right side denotes the ``q-Higgs complex'' of the $ A^{\bx}_\inf(R)^{(1)}$-module $H$ with q-Higgs field associated to $\bb M|_{\Spf R}\in\BKF(\Spf R)$ via the equivalences of diagram (\ref{eqn_intro}); see Theorem \ref{theorem_cohomology_as_q_dr_or_Higgs}. A modification of \cite[\S17]{BhattScholze2019} should show that the functor $\op{ev}_{A_\inf^\bx(R)^{(1)}}$ is also compatible with cohomology: namely, that the right side of (\ref{eqn_cohom_as_Higgs}) calculates the Frobenius twist of the prismatic cohomology of the corresponding crystal on $(R/(A_\inf,\tilde\xi))$.

\begin{remark}
Without Frobenius structures, it seems that $\BKF(\frak X)$ does not admit any naive prismatic interpretation. There is still a well-defined fully faithful functor $\op{CR}_{\Prism}(\frak X^{(1)}/(A_{\inf},\tilde\xi))\to \op{BKF}(\frak X)$ in the global context, but already for small affines the functor (\ref{eqn:ev}) is not essentially surjective. It remains to be seen whether there are any interesting examples of relative Breuil--Kisin--Fargues modules which do not satisfy the necessary quasi-nilpotence condition to come from prismatic crystals.
\end{remark}

\begin{example}[Global Dieudonn\'e theory: see Ex.~\ref{example_global_Dieudonne}]
Example \ref{intro_examples}(i) may be globalised to a fully faithfully embedding of the category of $p$-divisible groups over $\frak X$ into $\BKF(\frak X,\phi)$, whose essential image consists of those $(\bb M,\phi_{\bb M})$ such that $\bb M\subseteq\phi_{\bb M}(\bb M)\subseteq\tfrac{1}{\tilde\xi}\bb M$.
\end{example}

Having introduced relative Breuil--Kisin--Fargues modules as a natural globalisation of generalised representations and modules with q-connection, we show that they serve as coefficient systems for the $A_\inf$-cohomology theory developed in \cite{BhattMorrowScholze1} and reconstructed as prismatic cohomology in \cite{BhattScholze2019}.
%
% This local description is the technical heart of the present article, occupying Sections \ref{section_small_reps}-- \ref{section_prismatic_crystals}, but we first briefly summarise the results of Section \ref{section_cohomology} on cohomology and specialisations.
%
More precisely, to each $\bb M\in \BKF(\frak X,\phi)$ we functorially associate a sheaf of complexes  $A\Omega_{\frak X}(\bb M)$ of $A_\inf$-modules on $\frak X_\sub{\'et}$, equipped with a semi-linear Frobenius. This is an analogue of $\bb A\Omega_\frak X$ from \cite{BhattMorrowScholze1} (in particular, in the case of the trivial coefficient system $\bb M=\bb A_{\sub{inf},X}$ we recover $\bb A\Omega_{\frak X}(\bb A_{\sub{inf},X})=\bb A\Omega_\frak X$) and is constructed via the same nearby cycle formula $A\Omega_\frak X(\bb M):=L\eta_\mu\big(\hat{R\nu_*\bb M}\big)$, where $L\eta_\mu$ is the d\'eclage functor \cite[\S6]{BhattMorrowScholze1} and the hat denotes derived $p$-adic completion. There exist moreover three specialisation functors \[\sigma^*_\sub{dR},\,\sigma^*_\sub{crys},\,\sigma^*_\sub{\'et}:\BKF(\frak X,\phi)\To \categ{3cm}{vector bundles with $\roi$-linear flat connection on $\frak X$},\,\categ{2.5cm}{locally finite free $F$-crystals on $(\frak X_k/W(k))_\sub{crys}$},\,\categ{3cm}{locally free lisse $\bb Z_p$-sheaves on $X$},\] where $\frak X_k$ denotes the special fibre of $\frak X$, whence a relative Breuil--Kisin--Fargues module should be viewed as a means of encoding a vector bundle, F-crystal, and lisse $\bb Z_p$-sheaf which are necessarily related to each other in some fashion. Theorem \ref{thm_crystalline_intro} is a local manifestation of this relation, and we prove moreover that their cohomologies are intertwined by $A\Omega_\frak X(\bb M)$, analogously to \cite[Thm.~1.10]{BhattMorrowScholze1}:

\begin{theorem}[See Thm.~\ref{theorem_specialisations}]
There exist natural, for $\bb M\in\BKF(\frak X,\phi)$, equivalences
\begin{enumerate}
\item (de Rham) $A\Omega_{\frak X}(\bb M)\dotimes_{A_\inf,\theta}\roi\simeq \sigma^*_\sub{dR}\bb M\otimes_{\roi_\frak X}\Omega^\blob_{\frak X/\roi}$,
\item (crystalline) $\hat{A\Omega_{\frak X}(\bb M)\dotimes_{A_\inf}W(k)}\simeq Ru_*\sigma^*_\sub{crys}(\bb M)$, where $u:(\frak X_k/W(k))_\sub{crys}\to\frak X_{k\,\sub{Zar}}=\frak X_\sub{Zar}$ is the usual map of sites, and
\item (\'etale) $\hat{A\Omega_\frak X(\bb M)[\tfrac1\mu]}^{\phi=1}=R\nu_*(\sigma^*_\sub{\'et}\bb M)$,
\end{enumerate}
where the hats denote derived $p$-adic completions (taken after inversion of $\mu$, but before Frobenius-fixed points, in the \'etale comparison).
\end{theorem}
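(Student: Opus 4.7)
The plan is to reduce to a local computation on a small affine open $\Spf R\subseteq \frak X$ (with a chosen framing) and then exploit the descent chain of equivalences from diagram (\ref{eqn_intro}) combined with the d\'eclage calculation of \cite[\S 9]{BhattMorrowScholze1}. By the stacky nature of $\BKF$ and of all three specialisation targets, it suffices to prove the comparisons locally and check naturality; moreover, each comparison map will be constructed abstractly (via the universal properties of $\theta$, of the crystalline site, and of pro-\'etale nearby cycles respectively), so our task is to verify that each becomes an equivalence \'etale-locally.

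On such an affine piece, Theorem~\ref{theorem_intro1} identifies $\bb M$ with a generalised representation $M=\Gamma(U_\infty,\bb M)\in\Rep_\Gamma^{<\mu}(A_\inf(R_\infty),\phi)$, and Theorem~\ref{theorem_intro2} descends $M$ to a finite projective $A_\inf^\bx(R)$-module $N$ with flat q-connection and Frobenius. The key local computation is
\[
\hat{R\nu_*\bb M}|_{\Spf R}\simeq R\Gamma_{\sub{cts}}(\Gamma,M),
\]
which follows by the same argument as in \cite[\S 8]{BhattMorrowScholze1} once almost purity is promoted to small coefficients; here the hypothesis that $\bb M$ is trivial modulo $<\mu$ is precisely what controls the almost-zero error. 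Applying $L\eta_\mu$ and identifying $\Gamma\cong \bb Z_p^d$, the Koszul description of continuous group cohomology combined with the construction of the q-connection $\nabla_i^\sub{log}=(\gamma_i-1)/\mu$ identifies
\[
A\Omega_{\frak X}(\bb M)|_{\Spf R}\;\simeq\;q\Omega^\bullet_{N/A_\inf^\bx(R)},
\]
a q-de Rham complex for the module-with-q-connection $N$. This is the single technical core of the argument, directly generalising \cite[\S 9]{BhattMorrowScholze1} to the coefficient setting, and is the step I expect to absorb most of the work, because controlling the almost-zero terms and verifying that $L\eta_\mu$ kills them in the presence of coefficients requires a careful smallness analysis.

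Granted the local identification with $q\Omega^\bullet_N$, the three specialisations follow by direct base change.  For (i), $\theta$ factors through reduction modulo $\xi$; since the q-Leibniz rule modulo $\xi$ reduces to the ordinary Leibniz rule and $\bar N=N/\xi$ carries the flat connection $\sigma^*_\sub{dR}\bb M$, the q-de Rham complex specialises to the ordinary de Rham complex $\sigma^*_\sub{dR}\bb M\otimes_{\roi_\frak X}\Omega^\bullet_{\frak X/\roi}$. For (ii), after base change to $W(k)$ and $p$-completion, the framed period ring $A_\inf^\bx(R)$ becomes a PD-thickening of $R\otimes_\roi k$ inside the crystalline site, so Theorem~\ref{thm_crystalline_intro} (associatedness of a crystal to the Frobenius-equipped generalised representation) identifies the base-changed q-de Rham complex with the evaluation of $\sigma^*_\sub{crys}\bb M$ on a cover of the crystalline site; applying $Ru_*$ via the usual computation of crystalline cohomology by a de Rham complex on a PD-envelope concludes the case. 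For (iii), after inverting $\mu$ the generalised representation $M[\tfrac1\mu]$ with its Frobenius descends, by the classical theory of $\phi$-modules over $A_\inf[\tfrac1\mu]$, to a lisse $\bb Z_p$-sheaf $\sigma^*_\sub{\'et}\bb M$; combined with the local identification $\hat{A\Omega_\frak X(\bb M)[\tfrac1\mu]}\simeq R\Gamma_{\sub{cts}}(\Gamma,M[\tfrac1\mu])$ (noting $L\eta_\mu$ is an isomorphism after inverting $\mu$) and with $R\nu_*(\sigma^*_\sub{\'et}\bb M)\simeq R\Gamma_{\sub{cts}}(\Gamma,M[\tfrac1\mu]^{\phi=1})$, taking $\phi$-fixed points yields the \'etale comparison. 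Naturality and independence of framing then propagate via the stacky property of $\BKF(\frak X,\phi)$ and the intrinsic characterisation of each specialisation.
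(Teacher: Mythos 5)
Your local core — identifying $A\Omega_{\frak X}(\bb M)$ on a framed affine with $L\eta_\mu R\Gamma_\sub{cont}(\Gamma,M)$ and hence with the q-de Rham complex of the descended module $N$ — is exactly the paper's Proposition \ref{proposition_local_to_global} and Theorem \ref{theorem_cohomology_as_q_dr_or_Higgs}, and your reduction of (i) to the q-de Rham complex modulo $\xi$ is a legitimate variant of the paper's argument (the paper instead extracts the connection on $\nu_*(\bb M/\xi_r)$ as a Bockstein via the $p$-adic Cartier isomorphism, which has the advantage of being manifestly framing-independent). The problems are in (ii) and (iii). For (ii), Theorem \ref{thm_crystalline_intro} concerns crystals on the big crystalline site of $R/A_\crys$ and associatedness over $A_\crys(\res R)$; the statement you need is about an F-crystal on $(\frak X_k/W(k))_\sub{crys}$, a different site, and no bridge between the two is supplied. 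The paper's route is to construct a de Rham--Witt connection on $\nu_*(\bb M/\xi_r)$ for \emph{every} $r$ (not just $r=1$) via the Bockstein, prove compatibility in $r$ (Lemma \ref{lemma_compatibility_in_r}), prove Frobenius-compatibility and $p$-adic nilpotence of the reduced connection (Lemma \ref{lemma_drw_compatibility}, Corollary \ref{corollary_frob+nilp}), and only then invoke Bloch's theorem to produce the F-crystal and identify $Ru_*$ with the de Rham--Witt complex. These verifications are genuine content your sketch omits, and without the nilpotence and $F$-compatibility one does not even get a crystal.

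For (iii) there is an outright error: the identity $R\nu_*(\sigma^*_\sub{\'et}\bb M)\simeq R\Gamma_\sub{cont}(\Gamma,M[\tfrac1\mu]^{\phi=1})$ fails in general, because $U_\infty$ is not simply connected and $\sigma^*_\sub{\'et}\bb M$ need not trivialise on it (e.g.\ for a representation of $\Delta=\pi_1^\sub{\'et}(R[\tfrac1p])$ not factoring through $\Gamma$ one can have $\Gamma(U_\infty,\bb L)=0$ while $\bb L$ has positive rank). The paper's argument is sheaf-theoretic: from $\hat{\bb A_{\inf,X}[\tfrac1\mu]}=W(\hat\roi_{X^\flat})$ one gets $\hat{A\Omega_\frak X(\bb M)[\tfrac1\mu]}=R\nu_*(\bb M\otimes_{\bb A_{\inf,X}}W(\hat\roi_{X^\flat}))$ directly, and the passage to $\phi$-fixed points uses the exactness of $0\to\bb L\to\tfrac1{\mu^r}\bb M\xto{1-\phi^{-1}}\tfrac1{\mu^r}\bb M\to0$ as sheaves (Proposition \ref{proposition_etale}), proved by an Artin--Schreier--Witt argument on a basis of affinoid perfectoids over which all finite \'etale covers split, with the integrality of the kernel controlled by the absence of non-zero almost-zero elements in $A^+/p$. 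Your appeal to ``the classical theory of $\phi$-modules over $A_\inf[\tfrac1\mu]$'' does not substitute for this: the delicate point is precisely that the fixed points form a lisse $\bb Z_p$-sheaf sandwiched between $\mu^r\bb M$ and $\mu^s\bb M$ and that taking fixed points commutes with $R\nu_*$ at the level of the two-term complex.
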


%\begin{remark}[Relation to Gros et al.]
%If it were possible to ``mod out'' the two categories in Examples \ref{intro_bk} by $\tfrac{q^p-1}{q-1}$, then it would provide an equivalence between Higgs modules over $R^{(1)}$ and certain modules with a twisted connection over $A^\bx/\tfrac{q^p-1}{q-1}=W[\zeta_p]\otimes_WR$.
%\end{remark}

% In particular, we introduce {\em relative Breuil--Kisin--Fargues modules}, which may seen as a relative form of Fargues' version of Breuil--Kisin modules in a similar way as how Faltings' relative Fontaine modules \cite{Faltings1989} extends Fontaine--Laffaille theory \cite{FontaineLaffaille1982}. These relative Breuil--Kisin--Fargues modules are moreover closely related to , to the $A_\inf$-cohomology theory introduced in \cite{BhattMorrowScholze1}, to , and to . A large part of the present article is devoted to formulating and describing these relations.

%More precisely, we offer a number of equivalent descriptions of our coefficients systems: either as certain locally free sheaves on a pro-\'etale site, or in terms of generalised representations, or as modules with q-connection, or as modules with ``q-Higgs field''. The latter equivalence, between modules with q-connections and modules with q-Higgs field, applies to range of situations arising in the theory of q-de Rham cohomology; it may be viewed as a local, q-deformed Simpson correspondence. Finally, we also show that our coefficient systems satisfying a suitable quasi-nilpotence condition correspond to prismatic crystals.

\subsection*{Acknowledgements}
We thank Ahmed Abbes, Bhargav Bhatt, Ofer Gabber, Michel Gros, Zhouhang Mao, Arthur Ogus, and Peter Scholze for helpful discussions. The first author would like to thank the University of Tokyo for its support and hospitality during enjoyable visits in February 2017 and September 2018 during which this project was worked on. The second author was financially supported by JSPS Grants-in-Aid for Scientific Research, Grant Number 15H02050.

\subsection*{Notation}
We adopt the same notation as \cite{BhattMorrowScholze1}. In particular, $\roi$ denotes throughout the ring of integers of a mixed characteristic perfectoid field $C$ containing all $p$-power roots of unity, and $A_\inf=W(\roi^\flat)$ is Fontaine's ring. There are the maximal ideals $\frak m\subseteq\roi$ and $\frak m^\flat\subseteq\roi^\flat$, and the resulting ideal $W(\frak m^\flat):=\op{ker}(A_\inf\to W(k))$, where $k$ is the residue field of $\roi^\flat$ (equivalently, of $\roi$).

We fix throughout the paper a compatible sequence of $p$-power roots of unity (which will also be used to trivialise certain twists) and write $\ep:=(1,\zeta_p,\zeta_{p^2},\dots)\in\roi^\flat$, $\mu:=[\ep]-1\in A_\inf$, $\xi_r:=\tfrac{[\ep]-1}{[\ep^{1/p^r}]-1}\in A_\inf$, $\xi=\xi_1$, $\tilde\xi=\phi(\xi)$. The surjections $\theta_r,\tilde\theta_r:A_\inf\to W_r(\roi)$ are defined as in \cite[\S3]{BhattMorrowScholze1}.

\newpage
\section{Small generalised representations over $A_\sub{inf}(R_\infty)$}\label{section_small_reps}
Let $R$ be a $p$-adically complete, formally smooth $\roi$-algebra, which we assume is small in the following sense: there exists a formally \'etale\footnote{When we speak of ``formally \'etale'' maps we implicitly include the condition of being of topological finite presentation; in the cases in which we are interested, such maps are always completions of \'etale maps by \cite{Elkik1973}. The topology (always $p$-adic, $\pi$-adic, or $(p,\xi)$-adic) should be clear from the context.} map $\square:\roi\pid{\ul T^{\pm1}}=\roi\pid{T_1^{\pm1},\dots,T_d^{\pm1}}\to R$. Throughout \S\ref{section_small_reps} we moreover fix such a map, known as a {\em framing}, and let $R_\infty$ be the $p$-adic completion of \[\indlim_jR\otimes_{\roi\pid{\ul T^{\pm1}}}\roi\pid{\ul T^{\pm1/p^j}}.\] The $R$-algebra $R_\infty$ is equipped with the usual $R$-linear continuous action by $\Gamma:=\bb Z_p(1)^d$, characterised by \[\gamma\cdot T_1^{k_1}\cdots T_d^{k_d}:=\gamma(k_1,\dots,k_d)T_1^{k_1}\cdots T_d^{k_d}\text{ where }\gamma\in\Gamma=\Hom_{\bb Z_p}((\bb Q_p/\bb Z_p)^d,\mu_{p^\infty}) \text{ and }k_1,\dots,k_d\in\bb Z[\tfrac1p],\] which satisfies $R_\infty^\Gamma=R$. Our fixed choice of $p$-power roots of unity defines a preferred generator $\ep\in\bb Z_p(1)\subseteq\roi^{\flat\times}$, and we let $\gamma_1,\dots,\gamma_d\in \Gamma=\bb Z_p(1)^d$ be the corresponding basis; that is, for $i,j=1,\dots,d$, and $r\ge0$, we have
\[\gamma_i(T_j^{1/p^r})=\begin{cases} \zeta_{p^r}T_i^{1/p^r}&\text{if }i=j\\ T_j^{1/p^r} & \text{if }i\neq j\end{cases}.\]

The ring $R_\infty$ is perfectoid in the sense of \cite[\S3]{BhattMorrowScholze1}; we denote by $R_\infty^\flat$ its tilt and by $A_\inf(R_\infty):=W(R_\infty^\flat)$ Fontaine's infinitesimal period ring construction. We view $A_\inf(R_\infty)$, and modules over it, equipped with the $(p,\xi)$-adic $=(p,\mu)$-adic $=(p,[\pi])$-adic topology (where $\pi$ is any non-zero element of the maximal ideal $\frak m^\flat\subseteq\roi^\flat$).

We begin our discussion of generalised representations with a general definition:

\begin{definition}\label{definition_trivial}
Let $B$ be a ring equipped with an adic topology and a continuous action (via ring homomorphisms) by a topological group $G$. We write $\Rep_G(B)$ for the category of finite projective $B$-modules $M$ equipped with a semi-linear, continuous (wrt.~the adic topology on $M$) action by $G$. Following Faltings \cite{Faltings2005}, such $M$ are known as {\em generalised representations}.

Given $b\in B$, we say that a generalised representation $M$ is {\em trivial modulo $b$} if the $(B/b)^G$-module $(M/b)^G$ is finite projective and the canonical map $(M/b)^G\otimes_{(B/b)^G}B/b\to M/b$ is an isomorphism. We write $\Rep_G^{b}(B)\subseteq \Rep_G(B)$ for the full subcategory of such generalised representations which are trivial modulo $b$. This category has internal hom (see Remark \ref{remark_internal_hom}) and tensor product.

Observe that if $b\in b'B$ then ``trivial mod $b$'' implies ``trivial mod $b'$'', i.e., $\Rep_G^{b}(B)\subseteq \Rep_G^{b'}(B)$.
\end{definition}

The goal of this section is to study $\Rep_\Gamma(A_\inf(R_\infty))$, i.e., generalised representations in the case $B=A_\inf(R_\infty)$ and $G=\Gamma$. We will be particularly interested in those representations which are trivial modulo $\mu$, as well as those which are trivial modulo $\xi_r$ for all $r\ge1$; we say that the latter representations are {\em trivial modulo $<\mu$} and denote the category of them by $\Rep_\Gamma^{<\mu}(A_\inf(R_\infty))\supseteq \Rep_\Gamma^{\mu}(A_\inf(R_\infty))$. The main goals of this section are Theorem \ref{theorem_<mu_implies_mu} stating that the previous inclusion of categories is in fact an equality, and Theorem \ref{theorem_descent_to_framed} stating that such generalised representations descend uniquely to the framed period ring $A_\inf^\bx(R)$ (defined in \S\ref{ss_framed}).

\begin{remark}[$\res R$ in place of $R_\infty$]\label{remark_Delta}
Assuming $\Spec(R/pR)$ is connected, let $\res R$ denote (as often) the $p$-adic completion of the integral closure of $R$ inside the maximal ind-\'etale extension of $R[\tfrac1p]$ inside some fixed algebraic closure of $\Frac R$. Thus $\res R$ extends $R_\infty$ (the extension depends on a compatible choice of $p$-power roots of the $T_i$ inside $\res R$) and is equipped with a continuous $R$-action by $\Delta:=\pi_1^\sub{\'et}(R[\tfrac1p])$ (where we omit from the notation the point corresponding to the fixed algebraic closure of $\Frac R$) extending the $\Gamma$-action on $R_\infty$. The extension $\res R\supseteq R_\infty$ is the usual one to which Faltings almost purity theorem is applied, a consequence of which is that the base change functor \[-\otimes_{A_\inf(R_\infty)}A_\inf(\res R):\Rep_\Gamma^{<\mu}(A_\inf(R_\infty))\To \Rep_\Delta^{<\mu}(A_\inf(\res R))\] is an equivalence of categories: using the framework of the pro-\'etale site and Scholze's reformulation of the almost purity theorem, a complete proof of this equivalence is obtained from applying Theorem \ref{theorem_Ainf_reps_vs_pro_etale} simultaneously to both examples of Examples \ref{example_affinoid_perfectoid_covers}.

The reader who prefers framing-independent statements may therefore replace $\Rep_\Gamma^{<\mu}(A_\inf(R_\infty))$ by $\Rep_\Delta^{<\mu}(A_\inf(\res R))$ in the remainder of this section (nevertheless, the proofs rely on a choice of framing and the above equivalence). In particular, since the above base change functor sends $\Rep_\Gamma^{\mu}(A_\inf(R_\infty))$ to $\Rep_\Delta^{\mu}(A_\inf(\res R))$, it follows from Theorem~\ref{theorem_descent_to_framed} that also $\Rep_\Delta^{<\mu}(A_\inf(\res R))=\Rep_\Delta^{\mu}(A_\inf(\res R))$ (this was pointed out to us by Scholze) and therefore the base change equivalence may be rewritten \[\Rep_\Gamma^{\mu}(A_\inf(R_\infty))\quis \Rep_\Delta^{\mu}(A_\inf(\res R))\]
\end{remark}

We finish this preliminary material by recording that continuity of the action is often automatic for the group $\Gamma$:

\begin{lemma}[Automatic continuity]\label{lemma_automatic_continuity}
In the set-up of Definition \ref{definition_trivial}, suppose that $G$ is isomorphic as a topological group to $\bb Z_p^d$, and that the adic topology on $B$ is defined by an ideal $I$ containing $p$. Let $M$ be a finite projective $B$-module equipped with a semi-linear $G$-action which is trivial modulo $I$, in the sense that $(M/IM)^G$ is a finite $(B/I)^G$-module and the canonical map $(M/IM)^G\otimes_{(B/I)^G}B/I\to M/IM$ is an isomorphism. Then the $G$-action on $M$ is continuous with respect to the $I$-adic topology.
\end{lemma}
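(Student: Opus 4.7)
The strategy is to encode the semi-linear action as a cocycle and show that the cocycle is continuous. We work Zariski-locally on $B$ so that $M = B^r$ is free. Exploiting the hypothesis that $(M/IM)^G$ is finite projective over $(B/I)^G$ with $(M/IM)^G \otimes_{(B/I)^G} B/I \isoto M/IM$, we lift a basis of $(M/IM)^G$ to elements $e_1,\dots,e_r \in M$ which form a $B$-basis of $M$ (via Nakayama in the $I$-adically separated setting). Writing $\gamma \cdot e_j = \sum_k a_{kj}(\gamma) e_k$ defines a cocycle $A \colon G \to GL_r(B)$ satisfying $A(\gamma\delta) = A(\gamma)\gamma(A(\delta))$ and $A(\gamma) \equiv I_r \pmod{IM_r(B)}$ for every $\gamma \in G$. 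Expanding $(\gamma - 1)\bigl(\sum_j b_j e_j\bigr)$ by semi-linearity and invoking the given continuity of the $G$-action on $B$ for the coefficients $b_j$, we reduce the goal to showing that for each $n \geq 1$ the set $H_n := \{\gamma \in G : A(\gamma) \equiv I_r \pmod{I^n M_r(B)}\}$ is an open neighbourhood of the identity.

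A direct computation with the cocycle identity shows that each $H_n$ is a subgroup of $G$: for $\gamma, \delta \in H_n$,
\[ A(\gamma\delta) - I_r = (A(\gamma) - I_r) + \gamma(A(\delta) - I_r) + (A(\gamma) - I_r)\gamma(A(\delta) - I_r), \]
each summand lying in $I^n M_r(B)$ since $\gamma$ preserves $I^n$ (being a continuous ring automorphism), and similarly for inverses. Openness is then proved by induction on $n$, the base $n=1$ being trivial. For the inductive step, assume $H_n$ is open. Using pointwise continuity of the $G$-action on $B$ applied to the entries of $A(\delta) - I_r$ for $\delta$ ranging over a finite topological generating set of $H_n \cong \bb Z_p^d$, one constructs an open subgroup $K_n \subseteq H_n$ such that for all $\gamma, \delta \in K_n$ one has $(\gamma - 1)(A(\delta) - I_r) \in I^{n+1} M_r(B)$. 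That this condition propagates from generators to all of $K_n$ follows from an explicit cocycle calculation: the set of $\delta$ satisfying the condition is itself a subgroup of $G$ (crucially using that $G$ is abelian, so $\delta^{-1}\gamma\delta = \gamma$).

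On such a $K_n$, the cocycle identity simplifies to
\[ A(\gamma\delta) - I_r \equiv (A(\gamma) - I_r) + (A(\delta) - I_r) \pmod{I^{n+1} M_r(B)}, \]
so that $\chi_n \colon K_n \to I^n M_r(B)/I^{n+1} M_r(B)$, $\gamma \mapsto A(\gamma) - I_r \bmod I^{n+1}$, is a genuine group homomorphism. The crucial observation is that the target is $p$-torsion: since $p \in I$, we have $p \cdot I^n \subseteq I^{n+1}$. Hence $\chi_n$ factors through the finite quotient $K_n/pK_n \cong (\bb Z/p)^d$, so $\ker(\chi_n) \supseteq pK_n$ is open in $G$; setting $H_{n+1} := \ker(\chi_n)$ completes the induction.

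The main technical obstacle is the construction of $K_n$, which a priori demands uniform $(\gamma - 1)$-control on the entries of $A(\delta) - I_r$ as $\delta$ varies. This is resolved by the subgroup property of the set of ``good'' $\delta$, reducing the question to finitely many generators of $H_n$—for which pointwise continuity of the $G$-action on $B$ applies directly—together with standing finite-generation hypotheses on $I$ (satisfied in all cases of interest, e.g.\ $I = (p,\mu) \subseteq A_\inf(R_\infty)$). The rest of the argument is the clean interplay between the cocycle identity and the $p$-torsion nature of the successive quotients $I^n/I^{n+1}$.
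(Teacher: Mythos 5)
Your overall architecture --- reduce to the free case, encode the action as a cocycle $A\colon G\to \GL_r(B)$ with $A\equiv I_r \bmod I$, and show each $H_n=\{\gamma: A(\gamma)\equiv I_r \bmod I^nM_r(B)\}$ is open by an induction exploiting that $I^n/I^{n+1}$ is killed by $p$ --- is sound in outline, and the algebraic identities you use (the subgroup property of $H_n$, the linearisation of the cocycle identity modulo $I^{n+1}$, the factoring of $\chi_n$ through $K_n/pK_n$) are correct. The gap is in the construction of $K_n$. Pointwise continuity of the action on $B$ gives you an open subgroup $K$ with $(\gamma-1)(A(\delta)-I_r)\in I^{n+1}M_r(B)$ for all $\gamma\in K$ and $\delta$ in a finite \emph{topological} generating set of $H_n$, and you then invoke the fact that the set $S$ of good $\delta$ is a subgroup to ``propagate to all of $K_n$''. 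But a subgroup of $\bb Z_p^d$ containing a finite set of topological generators of an open subgroup is in general only \emph{dense} in it (compare $\bb Z\subseteq\bb Z_p$); it equals an open subgroup only if it is also closed, and closedness of $S$ would require precisely the continuity of $\delta\mapsto A(\delta)$ that you are trying to prove. So the induction never produces an open $K_n$ on which $\chi_n$ is defined; the same obstruction blocks the weaker diagonal statement $A(\delta^p)\equiv I_r\bmod I^{n+1}$ that would suffice for the conclusion.

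The paper's proof circumvents this by first reducing to the case $I^N=0$ (continuity is equivalent to continuity on each discrete quotient $M/I^NM$). In the discrete module $B$ every element has an \emph{open} stabiliser, and the cocycle identity $\gamma_i^{p^\ell}(c(\gamma))=c(\gamma_i^{p^\ell}\gamma)=c(\gamma\gamma_i^{p^\ell})=c(\gamma)$ transfers the stabiliser statement from the generators $\gamma_i$ to an \emph{arbitrary} $\gamma\in G$; then $c(\gamma^{p^{\ell+N-1}})=c(\gamma^{p^\ell})^{p^{N-1}}=1$ (using $p\in I$ and $I^N=0$) shows an open subgroup acts coordinate-wise on the chosen basis, which is manifestly continuous. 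If you want to salvage your argument, the cleanest fix is to make the same reduction to $I^N=0$ at the outset: the approximate statement ``$(\gamma-1)(b)\in I^{n+1}$ for $\gamma$ in an open subgroup'' is replaced by the exact statement ``$\gamma(b)=b$ for $\gamma$ in an open subgroup'', which does propagate from generators to arbitrary group elements by the commutativity/cocycle trick above. Two smaller points: lifting a basis of $(M/IM)^G$ to a $B$-basis of $M$ via Nakayama needs $I$ to lie in the Jacobson radical (automatic once $I$ is nilpotent after the reduction, but not from the bare hypothesis that the topology is $I$-adic); and the reduction to the free case should be done by localising at elements of $(B/I)^G$ and embedding $M$ into the product of the localisations, checking that the $G$-action extends to the localised rings.
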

\begin{proof}
It is equivalent to prove that the $G$-action on $M/I^NM$ (with the discrete topology) is continuous for all $N\ge1$, so we replace $B$ and $B/I^N$ and henceforth assume $I^N=0$.

We first treat the case that $(M/IM)^G$ is finite free module; lifting a basis to $M$ shows that $M$ is a finite free $B$-module with a basis $e_1,\dots,e_n$ such that $\gamma(e_i)\equiv e_i$ mod $IM$ for all $i=1,\dots,n$ and $\gamma\in G$. Let $c:\Gamma\to 1+IM_n(B)$ be the associated $1$-cocycle (see Remark \ref{remark_1cocycles}), so that $(\gamma(e_1),\dots,\gamma(e_n))=(e_1,\dots,e_n)c(\gamma)$ for all $\gamma\in G$.

By the continuity of the $G$-action on the discretely topologised $B$, there exists $m\ge1$ such that $\gamma^{p^m}(c(\gamma_i))=c(\gamma_i)$ for all $i=1,\dots,n$ and $\gamma\in G$. Using the $1$-cocycle identity, a simple induction shows that $\gamma_i^{p^m}(c(\gamma_i^a))=c(\gamma_i^a)$ for all $a\ge1$, and then a similar induction shows that $c(\gamma_i^{p^ma})=c(\gamma_i^{p^m})^a$ for all $a\ge 1$. Taking $a=p^{N-1}$ obtains $c(\gamma_i^{p^{m+N-1}})=c(\gamma_i^{p^m})^{p^{N-1}}=1$, where the final equality follows from the facts that $c(\gamma_i^{p^m})\in 1+IM_n(B)$ and $I^N=0$.

Set $\ell:=m+N-1$ and let $\gamma\in G$. The final conclusion of the previous paragraph shows that \[\gamma_i^{p^\ell}(c(\gamma))=c(\gamma_i^{p^\ell}\gamma)=c(\gamma\gamma_i^{p^\ell})=c(\gamma)\] for all $i=1,\dots,d$. But the stabiliser of the $G$-action on $c(\gamma)$ is an open subgroup of $G$, so this stabiliser therefore contains $G^{p^\ell}$. In particular $\gamma^{p^\ell}(c(\gamma))=c(\gamma)$, and then the same argument as the previous paragraph shows that $c(\gamma^{p^{\ell+N-1}})=1$; i.e., $\gamma^{p^{\ell+N-1}}$ fixes each element $e_1,\dots,e_n$.

This shows that the action of the open subgroup $G^{p^{\ell+N-1}}\subseteq G$ on $M\cong B^n$ is given by the coordinate-wise action; but this is continuous since the $G$ action on $B$ is continuous, and so completes the proof in the finite free case.

We now consider the general case that the $(B/I)^G$-module $(M/IM)^G$ is merely finite projective. Note that, given any $f\in (B/I)^G$ and lift $\tilde f\in B$, then the $G$-action on $B$ naturally extends to the localisation $B_{\tilde f}$ (and so the action on $M$ extends to $M_{\tilde f}$): indeed, for any $\gamma\in G$ one has $\gamma(\tilde f)\equiv \tilde f$ modulo the nilpotent ideal $I$, whence $\gamma(\tilde f)$ is also a unit in $B_{\tilde f}$. If moreover $f$ is chosen so that the localisation of $(M/IM)^G$ at $f$ is finite free over the localisation $(B/I)^G_{f}$, then the $G$-action on the $B_{\tilde f}$-module $M_{\tilde f}$ is trivial modulo $I$ in the finite free sense which we have already treated; so then the $G$-action on $M_{\tilde f}$ is continuous. Now pick $f_1,\dots,f\in (B/I)^G$, generating the unit ideal of this ring, such that each corresponding localisation of $(M/IM)^G$ is finite free; let $\tilde f_1,\dots,\tilde f_n\in B$ be arbitrary lifts, and note that these lifts generate the unit ideal of $B$ (since they generate it modulo a nilpotent ideal). Therefore $M$ embeds into $\prod_{i=1}^nM_{\tilde f_i}$, where we have seen that the $G$-action is continuous.
\end{proof}

\subsection{The framed period ring}\label{ss_framed}
Our study of generalised representations over $A_\inf(R_\infty)$ will heavily use the framed period ring, which we recall in this subsection and for which we refer to \cite[\S9.2]{BhattMorrowScholze1} \cite[\S12]{Tsuji_simons} for further details. Let $A_\inf\pid{\ul U^{\pm1}}$ denote the $(p,\xi)$-adic completion of $A_\inf[U_1^{\pm1},\dots,U_d^{\pm1}]$, and view $A_\inf(R_\infty)$ as an $A_\inf\pid{\ul U^{\pm1}}$-algebra via $U_i\mapsto [T_i^\flat]=$ the Teichm\"uller lift of $T^\flat_i:=(T_i,T_i^{1/p},\dots)\in R_\infty^\flat$. By formal \'etaleness of the framing $\square:\roi\pid{\ul T^{\pm1}}=A_\inf\pid{\ul U^{\pm1}}/\xi\to R$, there exists a unique $(p,\xi)$-adically complete, formally \'etale $A_\inf\pid{\ul U^{\pm1}}$-algebra $A_\inf^\bx(R)$ and a morphism of $A_\inf\pid{\ul U^{\pm1}}$-algebras $A_\inf^\bx(R)\to A_\inf(R_\infty)$ such that the square
\[\xymatrix{
A_\inf(\roi\pid{\ul T^{\pm1/p^\infty}})=A_\inf\pid{\ul U^{\pm1/p^\infty}}\ar[r] & A_\inf(R_\infty)\\
A_\inf\pid{\ul U^{\pm1}}\ar[u]\ar[r] & A_\inf^\bx(R)\ar[u]
}\] (here $\roi\pid{\ul T^{\pm1/p^\infty}}$, resp.~$A_\inf\pid{\ul U^{\pm1/p^\infty}}$, denotes the $p$-adic, resp.~$(p,\xi)$-adic, completion of $\roi[\ul T^{\pm1/p^\infty}]$, resp.~$A_\inf[\ul U^{\pm1/p^\infty}]$) identifies modulo $\xi$ with 
\[\xymatrix{
\roi\pid{\ul T^{\pm1/p^\infty}}\ar[r] & R_\infty\\
\roi\pid{\ul T^{\pm1}}\ar[u]\ar[r] & \,R\,.\ar[u]
}\] The first square is a pushout modulo any power of the ideal $(p,\xi)$, and therefore $A_\inf(R_\infty)$ may be identified with the $(p,\xi)$-adic completion of \[\bigoplus_{k_1,\dots,k_d\in\bb Z[\tfrac1p]\cap[0,1)}A_\inf^\bx(R)\,U_1^{k_1}\cdots U_d^{k_d}.\]

Regarding actions, we let $\Gamma=\bb Z_p(1)^d$ act via $A_\inf$-algebra homomorphisms on $A_\inf\pid{\ul U^{\pm1}}$ as \[(\ep_1,\dots,\ep_d)\cdot U_1^{k_1}\dots U_d^{k_d}:=[\ep_1^{k_1}\cdots\ep_d^{k_d}]U_1^{k_1}\dots U_d^{k_d}\]
(here we view $\bb Z_p(1)$ as a subgroup of the units of $\roi^\flat$). This is compatible, via the maps of the previous paragraph, with the trivial action on $R$ and the existing $\Gamma$-action on $A_\inf(R_\infty)$; in particular, by formal \'etaleness, it extends to a continuous action via $A_\inf$-algebra automorphisms on $A_\inf^\bx(R)$, still compatibly with all maps above. It is important to note that $\Gamma$ acts as the identity on the quotient $A_\inf^\bx(R)/\mu$ (unlike on $A_\inf(R_\infty)/\mu$). Indeed, since the action is by the identity on $R$, it is enough by formal \'etaleness to check that it is also the identity on $A_\inf\pid{\ul U^{\pm1}}/\mu$; but this is immediate from the definitions: each $\ep_1,\dots,\ep_d$ is a $\bb Z_p$-linear multiple of the fixed generator $\ep\in\bb Z_p(1)$, and then $[\ep_1^{k_1}\cdots\ep_d^{k_d}]-1$ is divisible by $\mu=[\ep]-1$.

Many of our arguments will involve passing between $A_\inf(R_\infty)$ and $A^\bx_\inf(R_\infty)$. To do this we note that there is a $\Gamma$-equivariant direct sum decomposition \[A_\inf(R_\infty)=A^\bx_\inf(R)\oplus A_\inf^\sub{n-i}(R_\infty)\] of $A^\bx_\inf(R)$-modules, where $A_\inf^\sub{n-i}(R_\infty)$ (``n-i'' stands for ``non-integral'') is the $(p,\xi)$-adic completion of \[\bigoplus_{\substack{k_1,\dots,k_d\in\bb Z[\tfrac1p]\cap[0,1)\\\sub{not all $0$}}}A_\inf^\bx(R)U_1^{k_1}\cdots U_d^{k_d}\] Our goal will often to be to show that $A_\inf^\sub{n-i}(R_\infty)$ can be discounted in some sense.

For convenience we collect here some topological properties of these rings which will occasionally be used without mention:

\begin{lemma}\label{lemma_topology}
Let $B$ denote $A_\inf(R_\infty)$, $A_\inf^\bx(R)$, or $A_\inf^\sub{n-i}(R_\infty)$ (in the latter case note that $B$ is not a ring, merely an $A_\inf^\bx(R)$-module); let $c\in A_\inf$ be any element whose image in $A_\inf/p=\roi^\flat$ is a non-zero element of $\frak m^\flat$ (equivalently, $c\notin A_\inf^\times\cup pA_\inf$). Then:
\begin{enumerate}
\item $B$ is complete and separated for the $(p,c)$-adic topology, which is the same as the $(p,\xi)$-adic topology.
\item $B$ and $B/c$ are $p$-torsion-free and $p$-adically complete and separated.
\item $B$ and $B/p$ are $c$-torsion-free and $c$-adically complete and separated.
\item $cB$ is closed in $B$ and $c(B/p)$ is closed in $B/p$.
\item $B\to\projlim_r B/\tilde\xi_r$ is an isomorphism.
\item The set-theoretic identification $A_\inf(R_\infty)=A^\bx_\inf(R)\times A_\inf^\sub{n-i}(R_\infty)$ is a homeomorphism (in which the three $A_\inf$-modules are equipped with the $(p,\xi)$-adic topology and the right side is given the product topology).
\end{enumerate}
\end{lemma}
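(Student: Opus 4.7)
The plan is to prove each property first for $B=A_\inf(R_\infty)=W(R_\infty^\flat)$, using standard Witt vector theory and the perfectoid structure of $R_\infty^\flat$, and then to transfer the properties to the other two rings. The framed period ring $A_\inf^\bx(R)$ inherits the relevant properties from $A_\inf$ because its construction as a $(p,\xi)$-adically complete formally étale $A_\inf\pid{\ul U^{\pm 1}}$-algebra makes it $A_\inf$-flat, while the ``non-integral'' complement $A_\inf^\sub{n-i}(R_\infty)$ inherits everything as a closed topological direct $A_\inf^\bx(R)$-summand of $A_\inf(R_\infty)$ (via the decomposition recorded just above the lemma). This observation already yields (vi): the direct sum decomposition is of $(p,\xi)$-adically complete $A_\inf^\bx(R)$-modules whose summands are preserved by the $(p,\xi)$-adic filtration, so the $(p,\xi)$-adic topology on the sum agrees with the product topology, and the projections are continuous.

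For (i), the equivalence of the $(p,c)$- and $(p,\xi)$-adic topologies is proved modulo $p$, where both $c$ and $\xi$ become nonzero elements of the valuation ring $\roi^\flat$; by comparability of their valuations, suitable powers of each lie in the ideal generated by the other, and cofinality of the filtrations then lifts to the full ring via $p$-adic completeness. Completeness for the $(p,[\pi^\flat])$-adic topology on $W(R_\infty^\flat)$ (for any pseudo-uniformizer $\pi^\flat\in R_\infty^\flat$) is a standard property of Witt vectors of perfectoid characteristic-$p$ rings, from which $(p,\xi)$-completeness follows by the equivalence of topologies just established.

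For (ii) and (iii), $p$-torsion-freeness of $W(R_\infty^\flat)$ is built into Witt vector theory; the $c$-torsion-freeness of $B$ follows from the facts that $c$ is a non-zerodivisor on $B/p=R_\infty^\flat$ (which is flat over the valuation ring $\roi^\flat$, where $c$ is a non-zero element) and that $B$ is $p$-torsion-free. The $p$-adic and $c$-adic completeness and separatedness of $B$, $B/c$, and $B/p$ is then extracted from $(p,\xi)$-adic completeness combined with torsion-freeness via the short exact sequences $0\to B\xrightarrow{c}B\to B/c\to 0$ and $0\to B\xrightarrow{p}B\to B/p\to 0$, using that derived completion coincides with classical completion for torsion-free modules. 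Assertion (iv) is then a formal consequence: $cB=\ker(B\to B/c)$ is the kernel of a continuous homomorphism into a Hausdorff ring (by (ii)), hence closed, and the analogous argument modulo $p$ gives closedness of $c(B/p)$ in $B/p$, since $B/(p,c)$ is discrete for the relevant quotient topology.

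The main subtlety is (v). Here the strategy is to identify each quotient $B/\tilde\xi_r$ via the extension of the surjection $\tilde\theta_r$ from \cite[\S3]{BhattMorrowScholze1}, thereby realising the tower $(B/\tilde\xi_r)_r$ as a natural inverse system. The comparison map $B\to\projlim_r B/\tilde\xi_r$ is then shown to be injective and surjective by combining the $(p,\xi)$-adic completeness of $B$ established in (i) with the product formula $\tilde\xi_r=\tilde\xi\cdot\xi\cdot\phi^{-1}(\xi)\cdots\phi^{2-r}(\xi)$ and the compatibility of the reductions $B/\tilde\xi_r$ with the decomposition (vi); the latter allows one to reduce the verification to the analogous statement on each summand, where a direct Witt-vector computation settles the matter.
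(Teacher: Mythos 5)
Your overall architecture agrees with the paper's: reduce (i)--(iii) and (v) to $B=A_\inf(R_\infty)$ (the paper does this purely by the direct-summand observation, which also covers $A_\inf^\bx(R)$, so your separate flatness argument for the framed ring is unnecessary), treat (vi) as formal for finite direct sums with adic topologies, and deduce (iv) from separatedness of the quotients. Parts (i)--(iv) and (vi) are essentially sound, with one caveat: the assertion that $R_\infty^\flat$ is flat (equivalently, torsion-free) over $\roi^\flat$ is exactly the non-formal content of (ii)--(iii), and you state it as if it were automatic. It is true, but it needs the argument the paper gives: $R_\infty/p$ is flat over $\roi/p$, so for $\res c=(c_0,c_1,\dots)\in\projlim_{x\mapsto x^p}\roi$ the $c_n$-torsion of $R_\infty/p$ is $(p/c_n)(R_\infty/p)$, which is killed by Frobenius for $n\gg0$, whence multiplication by $\res c$ is injective on $R_\infty^\flat=\projlim_\phi R_\infty/p$; the same limit description gives $\res c$-adic completeness since each $c_n$ is nilpotent in $R_\infty/p$. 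You should supply this (or an explicit citation) rather than invoking flatness as a black box.

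The genuine gap is in (v). First, your product formula is wrong: the factors $\tilde\xi\cdot\xi\cdot\phi^{-1}(\xi)\cdots\phi^{2-r}(\xi)$ telescope to $\phi(\mu)/\phi^{1-r}(\mu)=\phi(\xi_r)$, not $\tilde\xi_r=\phi^r(\xi_r)=\phi(\xi)\phi^2(\xi)\cdots\phi^r(\xi)=\phi^r(\mu)/\mu$. This is not a harmless typo: $\phi(\mu)$ lies in $(\phi(\xi_r))$ for every $r$, so $B\to\projlim_r B/\phi(\xi_r)$ is not even injective, and any argument built on that formula collapses. Second, and more importantly, $(p,\xi)$-adic completeness does not imply $B\isoto\projlim_r B/\tilde\xi_r$: the ideals $(\tilde\xi_r)$ are strictly coarser than the $(p,\xi)$-adic filtration (e.g.\ $\xi^N\notin(\tilde\xi_r)$ for $r\gg N$, as one sees from valuations modulo $p$), so surjectivity of the comparison map is a genuinely new statement. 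It is a fact about Witt vectors of perfectoid rings --- the identification $\tilde\theta_r:A_\inf(R_\infty)/\tilde\xi_r\isoto W_r(R_\infty)$ together with $W(R_\infty^\flat)\isoto\projlim_F W_r(R_\infty)$, i.e.\ \cite[Lems.~3.2 \& 3.12]{BhattMorrowScholze1}, which is exactly what the paper cites. Your proposed reduction ``to each summand, where a direct Witt-vector computation settles the matter'' is also backwards: the summand $A_\inf^\bx(R)$ is not a ring of Witt vectors, so the Witt-vector input must be applied to $A_\inf(R_\infty)$ first and then transferred to the summands (direct summands do inherit (v), since $\projlim_r$ commutes with finite direct sums).
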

\begin{proof}
Properties (i)--(iii) and (v) only depend on $B$ as an $A_\inf$-module and are clearly inherited by any direct summand, so it is enough to treat the case $B=A_\inf(R_\infty)$. Write $\res c\in\frak m^\flat\setminus\{0\}$ for the image of $c$, and expand it as $\res c=(c_0,c_1,\dots)\in\roi^\flat=\projlim_{x\mapsto x^p}\roi$. Since $R_\infty/p$ is a flat $\roi/p$-module, its $c_n$-torsion is given by multiples of $p/c_n$ (assuming that $p\in c_n\roi$, which is true for $n\gg0$), which is annihilated by $\phi:R_\infty/p\to R_\infty/p$. Passing to the limit over $\phi$ therefore shows that multiplication by $\res c$ is injective on $R_\infty^\flat=\projlim_\phi R_\infty/p$. Since each $c_n$ is nilpotent in $R_\infty/p$, the limit $R_\infty^\flat$ is moreover $\res c$-adically complete and separated.

The ring $A_\inf(R_\infty)=W(R_\infty^\flat)$ is $p$-torsion-free and $p$-adically complete and separated since it is the ring of Witt vectors of a perfect ring of characteristic $p$. From the short exact sequences \[0\to A_\inf(R_\infty)/p^{n-1}\to A_\inf(R_\infty)/p^n\to R_\infty^\flat\to 0\] an induction implies that each $A_\inf(R_\infty)/p^n$ is $c$-torsion-free and $c$-adically complete and separated, whence taking the limit as $n\to\infty$ shows that the same is true of $A_\inf(R_\infty)$, showing also that this ring is $(p,c)$-adically complete and separated. It now follows formally that $A_\inf(R_\infty)/c$ is $p$-torsion-free and $p$-adically complete and separated, which completes the proof of (i)--(iii). For (v), we refer the reader to \cite[Lems.~3.2 \& 3.12]{BhattMorrowScholze1}.

Part (vi) is completely formal: it holds for any finite direct sum of modules equipped with the adic topologies associated to an ideal. Part (iv) is a consequence of $B/c$ and $B/(c,p)$ being complete and separated for the $(c,p)$-adic topology.
\end{proof}

\subsection{Lemmas on generalised representations trivial mod $<\mu$}\label{subsection_mu_implies_phi-1mu}
In this subsection we establish two technical consequences of triviality modulo $<\mu$ for generalised representations over $A_\inf(R_\infty)$; see Proposition \ref{proposition_reduction_to_mod_p} and Corollary \ref{corollary_mu_smallness}.

\begin{lemma}\label{lemma_no_non_zero}
Let $B$ be a perfectoid ring (in the sense of \cite[\S3]{BhattMorrowScholze1}) over $\roi$ which is $p$-torsion-free; then the following conditions are equivalent:
\begin{enumerate}
\item The $\roi$-module $B/p$ contains no non-zero almost-zero elements.
\item $\bigcap_{r\ge 1}\tfrac{\zeta_p-1}{\zeta_{p^r}-1}B=(\zeta_p-1)B$;
\item $\bigcap_{r\ge 1}\tfrac{\ep-1}{\ep^{1/p^r}-1}B^\flat=(\ep-1)B^\flat$;
\end{enumerate}
Moreover, under these equivalent conditions, the isomorphisms $\theta_r:A_\inf(B)/\xi_r\isoto W_r(B)$ induce, upon passage to the limit as $r\to\infty$, a short exact sequence of $A_\sub{inf}(B)$-modules \[0\To A_\sub{inf}(B)/\mu\To W(B)\To \Xi(B)\To 0,\] where $\Xi(B)$ is an $A_\sub{inf}(B)$-module which is $p$-torsion-free but killed by $W(\frak m^\flat)$.
\end{lemma}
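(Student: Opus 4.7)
The plan is to first establish the equivalences (i) $\Leftrightarrow$ (ii) $\Leftrightarrow$ (iii) via tilting, and then construct the short exact sequence and analyse the cokernel.

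The equivalence (ii) $\Leftrightarrow$ (iii) reflects that divisibility in the perfectoid ring $B$ corresponds to divisibility in $B^\flat$: after multiplication by appropriate Teichm\"uller lifts, the chain of ideals in (ii) is transported to the chain in (iii) via the sharp map, and the limiting ideals correspond. For (i) $\Leftrightarrow$ (iii), one observes that $x \in \bigcap_r \tfrac{\ep-1}{\ep^{1/p^r}-1}B^\flat$ iff $(\ep^{1/p^r}-1)x \in (\ep-1)B^\flat$ for every $r$, iff the image $\bar x \in B^\flat/(\ep-1)$ is annihilated by every $\ep^{1/p^r}-1$, iff $\bar x$ is almost-zero (the elements $\ep^{1/p^r}-1$ forming a cofinal family in $\frak m^\flat$). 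Thus (iii) asserts that $B^\flat/(\ep-1)$ contains no non-zero almost-zero element; via the tilting identification $B/p \isoto B^\flat/\bar\xi$ together with the relation $(\ep-1)=\bar\xi\cdot(\ep^{1/p}-1)$ and an iteration exploiting perfectness of $B^\flat$, this is equivalent to the same assertion for $B/p$, which is (i).

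For the moreover part, the natural map $A_\inf(B)/\mu \To W(B) = \projlim_r W_r(B) = \projlim_r A_\inf(B)/\xi_r$ is well defined because $\mu = \xi_r\cdot([\ep^{1/p^r}]-1) \in \xi_r A_\inf(B)$ for every $r$. Its injectivity is equivalent to $\bigcap_r \xi_r A_\inf(B) = \mu A_\inf(B)$, which I would prove by $p$-adic successive approximation: given $a$ in the intersection, reduce modulo $p$ to $\bar a \in B^\flat$ lying in $\bigcap_r \bar\xi_r B^\flat$; by (iii), $\bar a \in \bar\mu B^\flat$, so $a = \mu y_0 + p a_1$ for some lift $y_0 \in A_\inf(B)$; the $p$-torsion-freeness of each $A_\inf(B)/\xi_r$ (inherited from the regularity of $\xi_r$) then implies $a_1 \in \bigcap_r \xi_r A_\inf(B)$, and iterating and summing in the $p$-adically complete ring $A_\inf(B)$ produces $y$ with $a = \mu y$. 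The cokernel $\Xi(B) := \op{coker}$ is $p$-torsion-free by reducing the short exact sequence modulo $p$: the resulting injectivity on the left is precisely (iii) again.

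The main obstacle is the assertion $W(\frak m^\flat)\cdot \Xi(B) = 0$. Identifying the kernel $\xi_r A_\inf(B)/\mu A_\inf(B)$ of $A_\inf(B)/\mu \onto A_\inf(B)/\xi_r$ with $A_\inf(B)/([\ep^{1/p^r}]-1)$ via division by $\xi_r$, one recognises $\Xi(B)$ as a derived-inverse-limit obstruction whose transition maps are multiplication by $\phi^{-r}(\xi) = ([\ep^{1/p^r}]-1)/([\ep^{1/p^{r+1}}]-1)$. I would then argue that for every $\pi \in \frak m^\flat$ and every $w = (w_r)_r \in W(B)$, multiplying by $[\pi]$ controls these ``almost-zero'' obstructions (using (i), now available), so that one can inductively construct $(p,\xi)$-adically convergent lifts of $[\pi]w_r$ to $A_\inf(B)$; combined with the $p$-torsion-freeness of $\Xi(B)$ (which handles the Verschiebung component of $W(\frak m^\flat)$) this yields the desired annihilation.
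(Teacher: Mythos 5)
Your proposal is correct and follows essentially the same route as the paper: the equivalences come down, as in the paper, to reformulating (i) and (iii) as the absence of non-zero almost-zero elements in $B/\pi$ resp.\ $B^\flat/\pi^\flat$ and matching these quotients under tilting, while for the exact sequence the paper simply cites \cite[Lem.~3.23]{BhattMorrowScholze1} (whose proof is said to work verbatim), and your successive-approximation argument for $\bigcap_r\xi_rA_\inf(B)=\mu A_\inf(B)$ together with the $\varprojlim^1$-description of $\Xi(B)$ is essentially a reconstruction of that cited argument. The only imprecision worth flagging is your parenthetical that $p$-torsion-freeness of $A_\inf(B)/\xi_r\cong W_r(B)$ is ``inherited from the regularity of $\xi_r$'': it is not a formal consequence of $\xi_r$ being a non-zero-divisor, but rather comes from $p$-torsion-freeness of $B$, via regularity of $\bar\xi_r$ on $B^\flat$ and d\'evissage along the $V$-filtration of $W_n(B^\flat)$ (compare the proof of Lemma~\ref{lemma_topology}).
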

\begin{proof}
Rescaling by $p/(\zeta_p-1)$, conditions (i) and (ii) are simply reformulations of each other; we could similarly replace (i) by the condition that the $\roi$-module $B/\pi$ contains no non-zero almost-zero elements, where $\pi$ is any chosen pseudo-uniformiser of $\roi$. Similarly, (iii) is equivalent to the assertion that the $\roi^\flat$-module $B^\flat/\pi^\flat$ contains no non-zero almost-zero elements, where $\pi^\flat$ is a chosen pseudo-uniformiser of $\roi^\flat$; but we may chose $\pi$ and $\pi^\flat$ so that the untilting map induces an identification $B^{\flat}/\pi^\flat=B/\pi$, which completes the proof that (ii) and (iii) are equivalent.

The short exact sequence was constructed in the case $B=\roi$ in \cite[Lem.~3.23]{BhattMorrowScholze1}; the proof there works verbatim for general $B$ satisfying (ii).
\end{proof}

\begin{example}\label{example_R_infty_no_almost_zero}
Here we check that the equivalent conditions of Lemma \ref{lemma_no_non_zero} are satisfied for all the perfectoid rings which we will encounter.
\begin{enumerate}
\item Firstly, the equivalent conditions are satisfied for $B=R_\infty$: we will show that all almost-zero elements of $R_\infty/p=\indlim_jR/p\otimes_{\roi[\ul T^{\pm1}]/p}\roi[\ul T^{\pm1/p^j}]/p$ are zero. Since the transition maps in this filtered colimit are injective (this is clear without the base change $R/p\otimes_{\roi[\ul T^{\pm1}]/p}-$, but this base change is flat), it is enough to show for each fixed $j\ge1$ that $R/p\otimes_{\roi[\ul T^{\pm1}]/p}\roi[\ul T^{\pm1/p^j}]/p$ contains no non-zero almost-zero elements; but this is smooth over $\roi/p$, hence is free as an $\roi/p$-module \cite[Lem.~8.10]{BhattMorrowScholze1}, so the claim reduces to the fact that $\roi/p$ contains no non-zero almost-zero elements (which follows from the usual valuation argument).

\item Suppose that $A$ is a perfectoid Tate ring over $C$; then we claim that $B=A^\circ$ (the subring of power bounded elements) satisfies the equivalent conditions. Indeed, it is equivalent to check that the inclusion $A':=\{f\in A:\frak mf\subseteq A^\circ\}\supseteq A^\circ$ is an equality, since $pA'/pA^\circ$ is precisely the set of almost zero elements of $A^\circ/p$. But $\frak mf\subseteq A^\circ$ if and only if $\frak mf\subseteq \frak mA^\circ$, whence $A'$ is a subring; since also $A'\subseteq\tfrac1pA^\circ$ and $A^\circ$ is bounded (as $A$ is perfectoid Tate), it follows that all elements of $A'$ are power bounded, i.e., $A'\subseteq A^\circ$.

\item In particular, suppose that $X$ is a reduced rigid analytic variety over $C$ and that $\cal U\in X_\sub{pro\'et}$ is an affinoid perfectoid with associated perfectoid Huber pair $(A,A^+)=(\Gamma(\cal U,\hat\roi_X),\Gamma(\cal U,\hat\roi_X^+))$. Then we claim that $B=A^+$ satisfies the equivalent conditions of the previous lemma. Thanks to the previous example, it is enough to check that $A^+=A^\circ$. Since $A$ is a perfectoid Tate ring, the inclusion $A^+\subseteq A^\circ$ is almost an equality \cite[Lem.~5.6]{Scholze2012}, and therefore $pA^\circ/pA^+$ consists of almost-zero elements of $A^+/p$; so it is enough to show that $A^+/pA^+$ contains no non-zero almost-zero elements. To do this we write $\cal U=\projlimf_i\Spa(A_i,A_i^+)$ as a cofiltered limit of affinoids along finite \'etale transition maps, so that $A^+$ is the $p$-adic completion of $\indlim_i A_i^+$ \cite[Lem.~4.10(iv)]{Scholze2013} and therefore $A^+/p=\indlim_i A_i^+/p$. The transition maps in the latter filtered colimit are injective (since each $A_i^+\to A_{i'}^+$ is an integral extension and $A_i^+$ is integrally closed in $A_i$), so the problem reduces to checking that each $A_i^+/p$ contains no non-zero almost-zero elements. Since $\Spa(A_i,A_i^+)$ is \'etale over the reduced rigid analytic variety $X$, it is the affinoid adic space associated to a reduced rigid affinoid, i.e., $A_i$ is topologically of finite type over $C$ and reduced, whence $A_i^+=A_i^\circ$ is the full subring of power bounded elements \cite[\S6.2.3 Prop.~1 \& \S6.2.4 Thm.~1]{BoschLutkebohmert1993}. But now we may complete the proof as in the previous example: $A'_i:=\{f\in A_i:\frak mf\subseteq A_i^\circ\}$ is again a subring of power bounded elements, hence equal to $A_i^\circ$, whence the almost-zero elements $pA'_i/pA^\circ\subseteq A^\circ/p$ are all zero.
%But $A_i$ is the subring of power bounded elements of the smooth Tate $C$-algebra $A_i[\tfrac1p]$, so any surjective morphism $C\pid{T_1,\dots,T_m}\to A_i[\tfrac1p]$ induces a finite morphism $\roi\pid{T_1,\dots,T_m}\to A_i$ by [BGR Corol.~6.4.5/1] at least if $C$ is algebraically closed\todo{Can the algebraically closed assumption be removed?} (Warning: more generally this works for ``stable'' $C$, but [BGR \S3.6.1] shows that the perfectoid field $\hat{\bb Q_2(2^{1/2^\infty})}$ is not stable). So in the composition $\roi/p\to \roi/p[T_1,\dots,T_m]\to A_i/p$, the first morphism is of finite presentation and the second morphism is finite; meanwhile the composition itself is flat, since $A_i$ is torsion-free hence flat over $\roi$. So Raynaud--Gruson \cite[Thm.~3.4.6]{RaynaudGruson1971} implies that the second morphism is of finite presentation, whence $A_i/p$ is a flat $\roi/p$-algebra of finite presentation, hence free as an $\roi/p$-module (by the same argument as \cite[Lem.~8.10]{Bhatt_Morrow_Scholze2}), once again reducing to the case of $\roi/p$-itself.
\end{enumerate}
\end{example}

Thanks to the previous example and lemma we have a short exact sequence \[0\To A_\sub{inf}(R_\infty)/\mu\To W(R_\infty)\To \Xi(R_\infty)\To 0\] on which $\Gamma$ acts. We wish to study the resulting group cohomology, but the actions are not continuous in the strictest sense: although $A_\inf(R_\infty)/(\mu,p^N)$ is a discrete $\Gamma$-module for any $N\ge1$ ({\em Proof}: it is isomorphic via $\tilde\theta_N$ to $W_N(R_\infty)/[\zeta_{p^N}]-1$, which equals $W_N(R_\infty/p^M)/[\zeta_{p^N}]-1$ for $M\gg0$.), this is not true of the quotients $W(R_\infty)/p^N$. Therefore we let $\gamma_1,\dots\gamma_d\in \Gamma=\bb Z_p(1)^d$ be the basis elements corresponding to our fixed compatible sequence of $p$-power roots of unity $\ep\in\bb Z_p(1)$ (see the Notation) and let $\bb Z^d\subseteq\Gamma$ be the dense subgroup they generate; for any abelian group $M$ equipped with an action by $\Gamma$, we will study the non-topological group cohomology $R\Gamma(\bb Z^d,M)$, which may be calculated by the Koszul complex $K(\gamma_1-1,\dots,\gamma_d-1;M)$. Note however that if $M$ can be expressed as an inverse limit $M=\projlim_r M_r$ along surjective transition maps of discrete $\Gamma$-modules, each of which is killed by a power of $p$, then $R\Gamma(\bb Z^d,M)\simeq R\Gamma_\sub{cont}(\Gamma,M)$ where $M$ is equipped with the inverse limit topology; we generally prefer to state our results in terms of continuous group cohomology when possible and will occasionally use this identification without explicit mention. See \cite[Lem.~7.3]{BhattMorrowScholze1} for further discussion of these matters.

\begin{lemma}\phantomsection\label{lemma_mu_1}
\begin{enumerate}
\item The maps \[H^i(\bb Z^d,A_\sub{inf}(R_\infty)/\mu)\To H^i(\bb Z^d,W(R_\infty))\] and \[H^i(\bb Z^d,A_\sub{inf}(R_\infty)/(\mu,p^N))\To H^i(\bb Z^d,W(R_\infty)/p^N)\] are injective for all $i,N\ge0$.
\item $H^i(\bb Z^d, A_\sub{inf}(R_\infty)/\phi^{-s}(\mu))$ is $p$-torsion-free for all $i\ge0$ and $s\in\bb Z$.
\item $H^1(\bb Z^d,W(R_\infty))$ is $p$-torsion-free.
\end{enumerate}
\end{lemma}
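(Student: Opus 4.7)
The plan is to combine the short exact sequence $0 \to A_\inf(R_\infty)/\mu \to W(R_\infty) \to \Xi(R_\infty) \to 0$ from Lemma \ref{lemma_no_non_zero} (applicable by Example \ref{example_R_infty_no_almost_zero}) with an explicit Koszul computation using the framed decomposition $A_\inf(R_\infty) = A_\inf^\bx(R) \oplus A_\inf^\sub{n-i}(R_\infty)$ of \S\ref{ss_framed}.

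For part (i), the strategy is to apply the long exact sequence of $\bb Z^d$-cohomology to the above short exact sequence, and to its mod-$p^N$ reduction (which stays short exact because $A_\inf(R_\infty)/\mu$, as a submodule of the $p$-torsion-free $W(R_\infty)$, and $\Xi(R_\infty)$ are both $p$-torsion-free). Injectivity amounts to the vanishing of the connecting homomorphism from $H^{i-1}$ of $\Xi(R_\infty)$, resp.\ $\Xi(R_\infty)/p^N$, whose image is annihilated by $W(\frak m^\flat)$ since the source is. The key point is therefore to show that $H^i(\bb Z^d, A_\inf(R_\infty)/\mu)$, and similarly mod $p^N$, has no nonzero $W(\frak m^\flat)$-annihilated element. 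This I would prove by a direct Koszul computation of $K(\gamma_1-1, \dots, \gamma_d-1;-)$ using the framed decomposition: on the integral summand $\Gamma$ acts trivially modulo $\mu$ (\S\ref{ss_framed}), so the cohomology is a direct sum of copies of $A_\inf^\bx(R)/\mu$, which embeds via Lemma \ref{lemma_no_non_zero} and flatness of $A_\inf^\bx(R)$ over $A_\inf$ into $A_\inf^\bx(R) \otimes_{A_\inf} W(\roi)$; on the non-integral summand the complex splits over tuples $(k_1, \dots, k_d) \in (\bb Z[\tfrac1p] \cap [0,1))^d \setminus \{0\}$, with $\gamma_i - 1$ acting on $A_\inf^\bx(R) U_1^{k_1}\cdots U_d^{k_d}$ as multiplication by $[\ep^{k_i}] - 1$, which for $k_i = a_i/p^{r_i}$ (with $\gcd(a_i,p)=1$) generates the same $A_\inf$-ideal as $\phi^{-r_i}(\mu)$. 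Using the identity $\mu = \xi_{r_i} \phi^{-r_i}(\mu)$ to identify $\ker(\phi^{-r_i}(\mu): A_\inf^\bx(R)/\mu \to A_\inf^\bx(R)/\mu) \isoto A_\inf^\bx(R)/\phi^{-r_i}(\mu)$, a K\"unneth-type analysis expresses each Koszul cohomology group as a direct sum of copies of quotients $A_\inf^\bx(R)/\phi^{-r}(\mu)$. Each such quotient is linked back to $A_\inf^\bx(R)/\mu$ via the $\Gamma$-equivariant automorphism $\phi^{-r}$ of $A_\inf(R_\infty)$, which preserves $W(\frak m^\flat)$, so inherits the trivial $W(\frak m^\flat)$-socle (and $p$-torsion-freeness), completing the proof.

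Part (ii) for $s = 0$ is the $p$-torsion-freeness conclusion of this Koszul computation, and the general case $s \in \bb Z$ reduces to $s = 0$ via the $\Gamma$-equivariant ring automorphism $\phi$ of $A_\inf(R_\infty)$, which gives $A_\inf(R_\infty)/\mu \isoto A_\inf(R_\infty)/\phi^{-s}(\mu)$. For (iii), I would run the analogous Koszul computation directly on $W(R_\infty) = \projlim_r W_r(R_\infty) = \projlim_r A_\inf(R_\infty)/\xi_r$: the framed decomposition descends to $W_r(R_\infty) = A_\inf^\bx(R)/\xi_r \oplus A_\inf^\sub{n-i}(R_\infty)/\xi_r$ with $\Gamma$ acting trivially on the integral summand (as $\xi_r \mid \mu$) and by the same multipliers on the non-integral summand, and the resulting cohomology at each finite level $r$ is built from $p$-torsion-free pieces of the form $A_\inf^\bx(R)/(\xi_r, \phi^{-r_i}(\mu))$; since $p$-torsion-freeness is preserved by inverse limits, $H^1(\bb Z^d, W(R_\infty))$ is $p$-torsion-free. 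The main technical obstacle throughout is the careful Koszul bookkeeping on the non-integral summands---since each $\phi^{-r_i}(\mu)$ is a zero-divisor on $A_\inf^\bx(R)/\mu$, the cohomology groups are genuinely nontrivial subquotients rather than kernels of a regular sequence---and verifying that the iterated quotients $A_\inf^\bx(R)/\phi^{-r}(\mu)$ retain both $p$-torsion-freeness and trivial $W(\frak m^\flat)$-socle across the K\"unneth analysis.
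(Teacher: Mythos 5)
Your strategy for parts (i) and (ii) is essentially the paper's: take $\bb Z^d$-cohomology of $0\to A_\inf(R_\infty)/\mu\to W(R_\infty)\to\Xi(R_\infty)\to 0$, kill the boundary maps by showing that $H^i(\bb Z^d,A_\inf(R_\infty)/\mu)$ and its mod-$p^N$ version have no nonzero $W(\frak m^\flat)$-torsion, and compute these groups via the Koszul complex on the framed decomposition. One step is misstated, though: you cannot transport the trivial $W(\frak m^\flat)$-socle from $A_\inf^\bx(R)/\mu$ to $A_\inf^\bx(R)/\phi^{-r}(\mu)$ "via the automorphism $\phi^{-r}$ of $A_\inf(R_\infty)$", because $\phi^{-r}$ does not preserve the subring $A_\inf^\bx(R)$ (it sends $U_i$ to $U_i^{1/p^r}$); it identifies $A_\inf^\bx(R)/\mu$ with $\phi^{-r}(A_\inf^\bx(R))/\phi^{-r}(\mu)$, a different ring. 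This is repairable --- either embed $A_\inf^\bx(R)/(\phi^{-r}(\mu),p^N)$ as a direct summand of $A_\inf(R_\infty)/(\phi^{-r}(\mu),p^N)$, which $\phi^r$ does identify with $A_\inf(R_\infty)/(\mu,p^N)$, or argue as the paper does that $A_\inf^\bx(R)/([\ep^k]-1,p)$ is smooth, hence free, over $\roi^\flat/(\ep^k-1)$ --- but as written the link is broken.

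The genuine gap is in part (iii). The non-integral summands contribute to $H^i(\bb Z^d,W_r(R_\infty))$ pieces of the form $A_\inf^\bx(R)/(\xi_r,[\ep^k]-1)$, and these are very far from $p$-torsion-free: already for $d=1$, $r=1$, $k=1/p$ the relevant piece is $R/(\zeta_p-1)$, which is killed by $p$ since $p\in(\zeta_p-1)^{p-1}\roi$; in general each such quotient is killed by a power of $p$. So the premise that the finite-level cohomology is built from $p$-torsion-free pieces is false, $H^1(\bb Z^d,W_r(R_\infty))$ carries abundant $p$-torsion for every $r$, and the proposed passage to the inverse limit collapses. (You also do not address the ${\projlim_r}^1 H^0$ contribution to $H^1$ of the inverse limit, though that happens to vanish since $W_r(R_\infty)^\Gamma=W_r(R)$ has surjective transition maps.) The finite-level torsion genuinely dies only in the limit, and seeing this requires a different mechanism: the paper uses (i) and (ii) to get $H^1(\bb Z^d,W(R_\infty))[p]\cong H^1(\bb Z^d,\Xi(R_\infty))[p]$, which is killed by $W(\frak m^\flat)$, and then shows $H^1(\bb Z^d,W(R_\infty))$ has no nonzero elements killed by $W(\frak m)$ --- a property which, unlike $p$-torsion-freeness, does hold at each finite level $W_r(R_\infty)$ (\cite[Lem.~9.7(iii)]{BhattMorrowScholze1}) and does pass to the inverse limit. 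Some such detour through $\Xi(R_\infty)$, or another argument controlling the $W(\frak m)$-socle rather than the $p$-torsion at finite level, is needed; the direct limit argument does not close.
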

\begin{proof}
Taking group cohomology of the short exact sequence in the above paragraph (and of its reduction modulo $p^N$: this is still short exact since $A_\sub{inf}(R_\infty)/(\mu,p^N)$ has no non-zero elements killed by $W(\frak m^\flat)$ by an induction over $N$ and Example \ref{example_R_infty_no_almost_zero}(i), whereas $\Xi(R_\infty)/p^N$ is killed by $W(\frak m^\flat)$ by Lemma \ref{lemma_no_non_zero}), the injectivity claims will follow if we can show that the resulting boundary maps are zero; since $\Xi(R_\infty)$ is killed by $W(\frak m^\flat)$, it is therefore enough to show that $H^i(\bb Z^d,A_\sub{inf}(R_\infty)/\mu)$ and $H^i(\bb Z^d,A_\sub{inf}(R_\infty)/(\mu,p^N))$ have no non-zero elements killed by $W(\frak m^\flat)$. This is a standard type of Koszul complex calculation which we include for convenience of the reader; the calculation will also establish part (ii) (which, by applying $\phi^s$, reduces to the case $s=0$).

As we recalled in \S\ref{ss_framed}, $A_\inf(R_\infty)$ is the $(p,\xi)$-adic $=(p,\mu)$-adic completion of $\bigoplus_{\ul k}A^\bx_\inf(R)U_1^{k_1}\cdots U_d^{k_d}$. Going modulo $\mu$, we may therefore identify $A_\inf(R_\infty)/\mu$ with the $p$-adic completion of \[\bigoplus_{k_1,\dots,k_d\in\bb Z[\tfrac1p]\cap[0,1)}(A_\inf^\bx(R)/\mu)\,U_1^{k_1}\cdots U_d^{k_d},\] where the generators $\gamma_1,\dots,\gamma_d$ of $\Gamma$ act $A_\inf^\bx(R)/\mu$-linearly on the summands on the right as multiplication by $[\ep^{k_1}],\dots,[\ep^{k_d}]$ respectively.

A standard calculation of Koszul cohomology shows that each cohomology group of \[R\Gamma(\bb Z^d,(A_\inf^\bx(R)/\mu)\,U_1^{k_1}\cdots U_d^{k_d})\simeq K([\ep^{k_1}]-1,\dots,[\ep^{k_d}]-1;A^\bx_\inf(R)/\mu)\] is a finite direct sum of copies of torsion and quotients, namely \[(A_\inf^\bx(R)/\mu)[[\ep^k]-1]\qquad\text{and}\qquad A_\inf^\bx(R)/[\ep^k]-1\] for a certain value of $k\in\bb Z[\tfrac1p]\cap(0,1]$ (more precisely, this calculation follows from Lemma \ref{lemma_Koszul}(ii), with $k$ given by whichever of $k_1,\dots,k_d$ has the smallest $p$-adic valuation, or by $k=1$ if all the $k_i$ are zero). These modules are isomorphic via $\times\tfrac{[\ep]-1}{[\ep^k]-1}:A_\inf^\bx(R)/[\ep^k]-1\isoto (A_\inf^\bx(R)/\mu)[[\ep^k]-1]$ %; on the other hand if $s>0$ then the left module is simply $A_\inf^\bx(R)/\mu= A_\inf^\bx(R)/[\ep]-1$.
and are $p$-torsion-free by Lemma \ref{lemma_topology}(ii). Writing \[R\Gamma(\bb Z^d,A_\inf(R_\infty)/\mu)\simeq\op{Rlim}_N\bigoplus_{k_1,\dots,k_d\in\bb Z[\tfrac1p]\cap[0,1)}K([\ep^{k_1}]-1,\dots,[\ep^{k_d}]-1;A^\bx_\inf(R)/\mu)/p^N,\] one obtains the following conclusions:
\begin{enumerate}
\item $H^i(\bb Z^d,A_\inf(R_\infty)/\mu)$ is $p$-torsion-free;
\item $\displaystyle H^i(\bb Z^d,A_\inf(R_\infty)/\mu)/p^N=H^i(\bb Z^d,A_\inf(R_\infty)/(\mu,p^N))\cong \bigoplus_{\sub{various }k\in\bb Z[\tfrac1p]\cap(0,1]}A_\inf^\bx(R)/([\ep^k]-1,p^N)$.
\item $H^i(\bb Z^d,A_\inf(R_\infty)/\mu)\isoto\projlim_NH^i(\bb Z^d,A_\inf(R_\infty)/\mu)/p^N$.
\end{enumerate}

To complete the proof of (i) it is therefore enough to show that each summand $A_\inf^\bx(R)/([\ep^k]-1,p^N)$ contains no non-zero almost-zero elements. By induction on $N$ this reduces to the case $N=1$, in which case $A_\inf^\bx(R)/([\ep^k]-1,p)$ is \'etale over $(\roi^\flat/(\ep^k-1))[\ul U^{\pm1}]$ and in particular smooth over $\roi^\flat/(\ep^k-1)$; it follows that $A_\inf^\bx(R)/([\ep^k]-1,p)$ is free as a $\roi^\flat/(\ep^k-1)$-module (see \cite[Lem.~8.10]{BhattMorrowScholze1} and its proof). But $\roi^\flat/(\ep^k-1)$ itself has no non-zero almost-zero elements by a valuation argument, thereby completing the proof of the injectivity claims (and of the first $p$-torsion-freeness claim).

It remains to show that $H^1(\bb Z^d,W(R_\infty))$ is $p$-torsion-free. From the already established injectivities and $p$-torsion-freeness, we obtain an isomorphism $H^1(\bb Z^d,W(R_\infty))[p]\cong H^1(\bb Z^d,\Xi(R_\infty))[p]$. Since the latter module is killed by $W(\frak m^\flat)$, it is sufficient to show that $H^1(\bb Z^d,W(R_\infty))$ has no non-zero elements killed by $W(\frak m)$. Since the projective system $W(R_\infty)=\projlim_rW_r(R_\infty)$ has surjective transition maps we may identify $R\Gamma(\bb Z^d,W(R_\infty))$ with $\op{Rlim}_rR\Gamma(\bb Z^d,W_r(R_\infty))$ and thus obtain a short exact sequence \[0\to{\projlim_r}^1(W_r(R_\infty)^\Gamma)\To H^1(\bb Z^d,W(R_\infty))\To \projlim_rH^1(\bb Z^d,W_r(R_\infty))\To 0.\] The initial $\lim^1$ term vanishes since the transition maps are surjective: indeed, we have $W_r(R_\infty)^\Gamma=W_r(R)$. Therefore it is enough to check that each $H^1(\bb Z^d,W_r(R_\infty))$ has no non-zero elements killed by $W_r(\frak m)$; this may be proved in the same way as in the first part of the proof, and in any case may be found in \cite[Lem.~9.7(iii)]{BhattMorrowScholze1}.
\end{proof}

\begin{corollary}\label{corollary_mod_p_mu}
The canonical map $(A_\inf(R_\infty)/\mu)^\Gamma/p\to (A_\inf(R_\infty)/(\mu,p))^\Gamma$ is an isomorphism.
\end{corollary}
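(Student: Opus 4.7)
The plan is to exploit the short exact sequence
\[
0 \To A_\inf(R_\infty)/\mu \xto{\,\cdot p\,} A_\inf(R_\infty)/\mu \To A_\inf(R_\infty)/(\mu,p) \To 0,
\]
which makes sense because $A_\inf(R_\infty)/\mu$ is $p$-torsion-free (Lemma~\ref{lemma_topology}(ii), with $c=\mu$; equivalently, $\xi$). Since $\Gamma$-invariants agree with $\bb Z^d$-invariants (the action being continuous and $\bb Z^d\subseteq\Gamma$ dense), applying $R\Gamma(\bb Z^d,-)$ yields an exact sequence
\[
0 \To (A_\inf(R_\infty)/\mu)^\Gamma \xto{\,\cdot p\,} (A_\inf(R_\infty)/\mu)^\Gamma \To (A_\inf(R_\infty)/(\mu,p))^\Gamma \To H^1(\bb Z^d, A_\inf(R_\infty)/\mu)[p].
\]
In other words, the canonical map in the statement is automatically injective, and its cokernel injects into the $p$-torsion of $H^1(\bb Z^d, A_\inf(R_\infty)/\mu)$.

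The key input is now immediate: Lemma~\ref{lemma_mu_1}(ii), specialised to $s=0$, asserts precisely that $H^1(\bb Z^d, A_\inf(R_\infty)/\mu)$ is $p$-torsion-free. Hence its $p$-torsion subgroup vanishes, so the cokernel of the map is zero and the map is an isomorphism. There is no real obstacle: the entire argument is a three-line long exact sequence chase, and all of the genuine work — namely the Koszul-complex computation establishing $p$-torsion-freeness of $H^1(\bb Z^d, A_\inf(R_\infty)/\mu)$ — has already been carried out in Lemma~\ref{lemma_mu_1}.
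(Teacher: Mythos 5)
Your proof is correct and is essentially the paper's own argument: the paper likewise observes that the statement is equivalent to $p$-torsion-freeness of $H^1_{\mathrm{cont}}(\Gamma,A_\inf(R_\infty)/\mu)$ (via the same long exact sequence coming from $p$-torsion-freeness of $A_\inf(R_\infty)/\mu$) and then cites Lemma~\ref{lemma_mu_1}(ii). You have merely written out the exact-sequence chase that the paper leaves implicit.
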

\begin{proof}
It is equivalent to show that $H^1_\sub{cont}(\Gamma,A_\inf(R_\infty)/\mu)$ is $p$-torsion-free, which is Lemma \ref{lemma_mu_1}(ii).
\end{proof}

In order to pass to the limit in the proof of Proposition \ref{proposition_reduction_to_mod_p} we will need the following lemma, which we presume is well-known but for which we could not find any reference:

\begin{lemma}\label{lemma_limits_of_finite_projective}
Let $B$ be a commutative ring and $B\supseteq I_1\supseteq I_2\supseteq\cdots$ a decreasing chain of ideals such that $B\isoto\projlim_rB/I_r$; assume also that $I_r/I_{r+1}$ is contained in the Jacobson radical of $B/I_{r+1}$ for each $r\ge1$. Then:
\begin{enumerate}
\item Given a $B$-module $M$ such that $M\to\projlim_rM/I_rM$ is an isomorphism, then $M$ is finite projective if and only each $M/I_rM$ is a finite projective $B/I_r$-module for each $r\ge1$.
\item There is an equivalence of categories \[\categ{3cm}{Finite projective $B$-modules $M$}\simeq\categ{8cm}{compatible inverse systems $(M_r)_{r\ge1}$ of finite projective $B/I_r$-modules such that $M_{r+1}\otimes_{B/I_{r+1}}B/I_r\isoto M_r$ for all $r\ge1$}\] given by $M\mapsto(M/I_rM)_{r\ge1}$ and $(M_r)_{r\ge1}\mapsto\projlim_rM_r$.
\end{enumerate}
\end{lemma}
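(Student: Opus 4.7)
My plan is to first verify that every $I_r$ sits inside the Jacobson radical of $B$, making Nakayama-style lifting arguments available, and then to prove (ii) directly by realising $M := \projlim_r M_r$ as a direct summand of a free $B$-module via a compatible family of splittings; part (i) will then follow as a formal consequence of (ii).

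For the preliminary observation I would prove that $I_1/I_{r+1}$ lies in the Jacobson radical of $B/I_{r+1}$ by induction on $r$, using the standard fact that for a surjection $A \twoheadrightarrow A'$ whose kernel lies in the Jacobson radical of $A$, the preimage of the Jacobson radical of $A'$ equals that of $A$. Applied to $B/I_{r+1} \twoheadrightarrow B/I_r$, whose kernel $I_r/I_{r+1}$ lies in the Jacobson radical by hypothesis, this reduces the inductive step to the inductive hypothesis; passing to the inverse limit via $B = \projlim_r B/I_r$ then shows that each $I_r$ lies in the Jacobson radical of $B$.

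For (ii) itself, given a compatible system $(M_r)$ I would choose generators of $M_1$ and lift them through the automatically surjective transition maps $M_{r+1} \twoheadrightarrow M_r$ to produce elements $m_1, \ldots, m_n \in M$ whose images generate each $M_r$ by Nakayama. The resulting compatible surjections $\pi_r : (B/I_r)^n \twoheadrightarrow M_r$ fit into short exact sequences $0 \to K_r \to (B/I_r)^n \to M_r \to 0$, each of which is split because $M_r$ is projective. Tensoring the sequence at level $r{+}1$ with $B/I_r$ preserves exactness and identifies $K_{r+1} \otimes_{B/I_{r+1}} B/I_r$ with $K_r$, so the transition maps on $K_\bullet$ are surjective, $\projlim^1_r K_r = 0$, and one obtains an exact sequence $0 \to K \to B^n \to M \to 0$ with $K := \projlim_r K_r$.

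The principal technical point, and the step I expect to be the main obstacle, is the compatible choice of splittings of the $\pi_r$. Inductively, given a splitting $s_r$ of $\pi_r$, any splitting $s'_{r+1}$ of $\pi_{r+1}$ reduces modulo $I_r$ to a splitting of $\pi_r$ differing from $s_r$ by an element of $\operatorname{Hom}_{B/I_r}(M_r, K_r)$; projectivity of $M_{r+1}$ combined with surjectivity of $K_{r+1} \twoheadrightarrow K_r$ ensures that this difference lifts along the surjection $\operatorname{Hom}_{B/I_{r+1}}(M_{r+1}, K_{r+1}) \twoheadrightarrow \operatorname{Hom}_{B/I_r}(M_r, K_r)$, and correcting $s'_{r+1}$ accordingly yields a splitting extending $s_r$. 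Taking the limit produces a splitting $s : M \to B^n$ of $\pi$, exhibiting $M$ as a direct summand of $B^n$ and hence as a finite projective $B$-module; the decomposition $B^n = M \oplus K$ also delivers $M/I_r M \cong M_r$. The quasi-inverse in (ii) is then $M \mapsto (M/I_r M)_r$, well-defined because any finite projective $B$-module is $(I_r)$-adically complete as a direct summand of a free one. Finally (i) is a formal corollary of (ii): its hypotheses on $M$ are exactly what it means for $(M/I_r M)_r$ to lie in the source of the equivalence, whose quasi-inverse is canonically $M$.
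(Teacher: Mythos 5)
Your proposal is correct and follows essentially the same route as the paper's proof: lift generators of $M_1$ to a compatible surjection from a free module, show by Nakayama that it is surjective at every level, and then inductively lift splittings using projectivity of $M_{r+1}$ together with the surjectivity of the induced maps on kernels, before passing to the limit. The only differences are presentational — you make the Jacobson-radical induction and the $\varprojlim^1$ vanishing explicit and phrase the correction of splittings via surjectivity of $\Hom_{B/I_{r+1}}(M_{r+1},K_{r+1})\to\Hom_{B/I_r}(M_r,K_r)$, whereas the paper corrects the section directly by a map into $I_rF/I_{r+1}F$ — but the underlying argument is the same.
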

\begin{proof}
For part (i) the necessity is clear and the sufficiency will follow from part (ii). In order to prove (ii), the only difficulty is showing that, given $(M_r)_{r\ge1}$ in the right category, then $M:=\projlim_rM_r$ is a finite projective $B$-module and the canonical maps $M/I_rM\to M_r$ are isomorphism for all $r\ge1$; we will now check this.

Let $F$ be a finite free $B$-module equipped with a surjection $F/I_1\to M_1$ (by picking a finite set of generators of $M_1$). By freeness of $F$, we may lift the morphism $F/I_1\to M_1$ to a morphism $f:F\to M$ (note that $M\to M_1$ is surjective since all the transition maps $M_{r+1}\to M_r$ are surjective). Each induced map $f_r:F/I_r\to M_r$ is surjective by Nakayama's lemma, since it is surjective mod $I_1/I_r$ and this ideal belongs to the Jacobson radical of $B/I_r$.

Let $\op{Sec}(f_r)$ denote the set of $B$-linear sections of $f_r$. We claim that the reduction map $\op{Sec}(f_{r+1})\to \op{Sec}(f_r)$ is surjective. So suppose $s:M_r\to F/I_r$ is a section of $f_r$, and use projectivity of $M_{r+1}$ to lift the composition $M_{r+1}\to M_r\xto{s}F/I_r$ to a morphism $M_{r+1}\xto{s'}F/I_{r+1}$. Then $s'$ is not necessarily a section of $f_{r+1}$, but it is modulo $I_r/I_{r+1}$, i.e., the morphism $\op{id}-f_{r+1}s':M_{r+1}\to M_{r+1}$ has image in $I_r M_{r+1}$; this latter module is a quotient of $I_rF/I_{r+1}F$ via $f_{r+1}$, so projectivity of $M_{r+1}$ yields a morphism $h:M_{r+1}\to I_rF/I_{r+1}F$ satisfying $f_{r+1}h=\op{id}-f_{r+1}s'$. Therefore $s'+h$ is a section of $f_{r+1}$ lifting $s$; this completes the proof of the claim.

Thanks to the claim (and the fact that $\op{Sec}(f_1)$ is non-empty since $M_1$ is finite projective), there exists a compatible family of sections $s_r:M_r\to F/I_r$ for all $r\ge1$. Upon taking the limit this produces a morphism $s:M:=\projlim_rM_r\to F$ which splits $f$ (since it splits $f$ modulo $I_r$ for all $r$). This proves that $M$ is a finite projective $B$-module. It also shows that the surjection $M/I_r\to M_r$ is injective (since, after composing with $s_r$, it identifies with the split injection ``$s$ mod $I_r$''), whence is an isomorphism.
\end{proof}

\begin{example}
Let $B$ be a commutative ring which is $p$-adically complete and separated. We claim that the lemma applies to $W(B)$ and its chain of ideals $VW(B)\supseteq V^2W(B)\supseteq\cdots$. Since $W(B)=\projlim_rW_r(B)$, we just need to check the Jacobson radical condition.

For $r\ge 1$ fixed, the ring $W_r(B)$ is $p$-adically complete and separated, and $W_r(B)/p=W_r(B/p^N)/p$ for $N\gg0$ \cite[Lem.~3.2(ii)]{BhattMorrowScholze1}. To show that $V^{r-1}(B)$ is contained in the Jacobson radical of $W_r(B)$, we may therefore assume that $p$ is nilpotent in $B$; but in that case the ideal $V^{r-1}(B)$ is even nilpotent.
\end{example}

In Theorem \ref{theorem_<mu_implies_mu} we will show that triviality modulo $<\mu$ actually implies triviality modulo $\mu$; the proof will require the following weaker statement:

\begin{proposition}\label{proposition_reduction_to_mod_p}
Let $M\in\Rep_\Gamma(A_\inf(R_\infty))$ be a generalised representation which is trivial modulo $<\mu$ and also trivial modulo $(p,\mu)$.\footnote{Technically this has not been defined though the definition should be obvious; it means that $(M/(p,\mu))^\Gamma$ is a finite projective $(A_\inf(R_\infty)/(p,\mu))^\Gamma$-module and that the canonical map $(M/(p,\mu))^\Gamma\otimes_{(A_\inf(R_\infty)/(p,\mu))^\Gamma}A_\inf(R_\infty)/(p,\mu)\to M/(p,\mu)$ is an isomorphism.} Then $M$ is trivial modulo $\mu$.
\end{proposition}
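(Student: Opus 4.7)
My plan is to upgrade the given triviality modulo $(p,\mu)$ to triviality modulo $\mu$ by a $p$-adic limit argument, using the triviality modulo $\xi_r$ (for all $r$) to kill the possible obstructions.

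First I would apply Lemma~\ref{lemma_limits_of_finite_projective} to the ring $A_\inf(R_\infty)/\mu$ equipped with its $p$-adic topology. This ring is $p$-torsion-free and $p$-adically complete and separated by Lemma~\ref{lemma_topology}(ii), and $p$ lies in the Jacobson radical of each quotient $A_\inf(R_\infty)/(\mu,p^n)$, since the latter is complete for the $(p,[\pi])$-adic topology for any non-zero $\pi\in\frak m^\flat$. It therefore suffices to establish by induction on $n\ge1$ that $(M/(\mu,p^n))^\Gamma$ is a finite projective module over $(A_\inf(R_\infty)/(\mu,p^n))^\Gamma$ whose base change recovers $M/(\mu,p^n)$, and that this system is compatible under reduction in $n$.

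The base case $n=1$ is the hypothesis of triviality modulo $(p,\mu)$. For the inductive step, the short exact sequence $0\to M/\mu\xrightarrow{p^n}M/\mu\to M/(\mu,p^n)\to 0$ (valid because $M/\mu$ is $p$-torsion-free as a finite projective module over the $p$-torsion-free ring $A_\inf(R_\infty)/\mu$) produces, via its long exact sequence, a natural injection $(M/\mu)^\Gamma/p^n\hookrightarrow (M/(\mu,p^n))^\Gamma$ whose cokernel is $H^1(\Gamma,M/\mu)[p^n]$; an analogous identification holds after base change to $A_\inf(R_\infty)/(\mu,p^n)$. Therefore the entire induction, as well as the required compatibility of the system, reduces to showing that $H^1(\Gamma,M/\mu)$ is $p$-torsion-free.

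To establish this $p$-torsion-freeness I would exploit the triviality modulo $\xi_r$ via the short exact sequence
\[0\to M/\phi^{-r}(\mu)\xrightarrow{\cdot\,\xi_r}M/\mu\to M/\xi_r\to 0\]
arising from the factorisation $\mu=\xi_r\cdot\phi^{-r}(\mu)$ and the flatness of $M$. Triviality of $M/\xi_r$ identifies $R\Gamma(\Gamma,M/\xi_r)$ with a base change of $R\Gamma(\Gamma,A_\inf(R_\infty)/\xi_r)$, whose relevant $p$-torsion-freeness properties are supplied by the Koszul computations underlying Lemma~\ref{lemma_mu_1} (applied with $\phi^{-r}(\mu)$ in place of $\mu$). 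By chasing a hypothetical $p$-torsion class in $H^1(\Gamma,M/\mu)$ through these long exact sequences for all $r$ simultaneously, and using that $\bigcap_r(\xi_r)=(\mu)$ modulo $p$ in $R_\infty^\flat$ (by a valuation argument) combined with $p$-adic completeness, one should deduce that the class vanishes. The main obstacle is precisely this transfer of $p$-torsion-freeness from the trivial coefficient system to the generalised representation $M$, ensuring that the limit behaviour as $r\to\infty$ is controlled in a manner compatible with the $\Gamma$-action and with the residue of $\Xi(R_\infty)\otimes M/\mu$ of Lemma~\ref{lemma_no_non_zero}.
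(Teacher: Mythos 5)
The first half of your argument is fine and essentially matches the paper's endgame: you correctly reduce everything to the $p$-torsion-freeness of $H^1(\Gamma,M/\mu)$ (equivalently, surjectivity of $(M/\mu)^\Gamma\to(M/(\mu,p))^\Gamma$), granted the identification $(A_\inf(R_\infty)/\mu)^\Gamma/p^n\cong(A_\inf(R_\infty)/(\mu,p^n))^\Gamma$ supplied by Lemma~\ref{lemma_mu_1}(ii). The gap is in your proof of that torsion-freeness. You propose to chase a $p$-torsion class through the sequences $0\to M/\phi^{-r}(\mu)\xrightarrow{\xi_r}M/\mu\to M/\xi_r\to 0$, but the third term does not have the torsion-freeness you need: by triviality mod $\xi_r$ one has $H^1(\bb Z^d,M/\xi_r)\cong(M/\xi_r)^\Gamma\otimes_{W_r(R)}H^1(\bb Z^d,W_r(R_\infty))$, and $H^1(\bb Z^d,W_r(R_\infty))$ is \emph{not} $p$-torsion-free — already for $r=1$ it contains summands which are quotients of (\'etale extensions of) $R$ by elements $\zeta_{p^j}^k-1$ dividing $p$, hence killed by $p$. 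The Koszul computation you invoke, Lemma~\ref{lemma_mu_1}(ii), gives $p$-torsion-freeness only for the quotients $A_\inf(R_\infty)/\phi^{-s}(\mu)$, not for $A_\inf(R_\infty)/\xi_r\cong W_r(R_\infty)$; you are conflating the two. In addition, the image of $\xi_r\colon H^1(\Gamma,M/\phi^{-r}(\mu))\to H^1(\Gamma,M/\mu)$ is not $\xi_r H^1(\Gamma,M/\mu)$, so the module-level identity $\bigcap_r\xi_r A_\inf(R_\infty)=\mu A_\inf(R_\infty)$ (which, incidentally, is not a bare valuation argument for $R_\infty$: it is the ``no nonzero almost-zero elements'' property of Example~\ref{example_R_infty_no_almost_zero}(i)) does not transfer to $H^1$. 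A warning sign: in your sketch the hypothesis of triviality modulo $(p,\mu)$ is used only in the base case of the induction, whereas it must also enter the torsion-freeness argument itself.

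The paper closes this gap by a detour through $W(R_\infty)$: it first shows that $M\otimes_{A_\inf(R_\infty)}W(R_\infty)$ is a \emph{trivial} representation, by applying Lemma~\ref{lemma_limits_of_finite_projective} to $W(R_\infty)=\projlim_rW_r(R_\infty)=\projlim_rA_\inf(R_\infty)/\xi_r$ together with the triviality mod $\xi_r$ for all $r$. It then uses the $p$-torsion-freeness of $H^1(\bb Z^d,W(R_\infty))$ (Lemma~\ref{lemma_mu_1}(iii)) to get surjectivity of $(M\otimes W(R_\infty)/p^{N+1})^{\bb Z^d}\to(M\otimes W(R_\infty)/p^N)^{\bb Z^d}$, and transfers this back to $M/\mu$ via the injection $H^1(\bb Z^d,M/(\mu,p))\into H^1(\bb Z^d,M\otimes W(R_\infty)/p)$ — which is exactly where the triviality modulo $(p,\mu)$ is used a second time, via Lemma~\ref{lemma_mu_1}(i) and the short exact sequence $0\to A_\inf(R_\infty)/\mu\to W(R_\infty)\to\Xi(R_\infty)\to 0$ whose cokernel is killed by $W(\frak m^\flat)$. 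You need some version of this comparison with $W(R_\infty)$ (or an equivalent device) to make your second step work.
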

\begin{proof}
We claim first that $M\otimes_{A_\inf(R_\infty)}W(R_\infty)$ is a trivial representation, i.e., that $(M\otimes_{A_\inf}W(R_\infty))^\Gamma$ is a finite projective $W(R)=W(R_\infty)^\Gamma$-module and that
\[(M\otimes_{A_\inf}W(R_\infty))^\Gamma\otimes_{W(R)}W(R_\infty)\To M\otimes_{A_\inf}W(R_\infty)\] is an isomorphism. For each $r\ge1$, the triviality of $M$ modulo $\xi_r$ means that $(M/\xi_r)^\Gamma$ is a finite projective $W_r(R)=(A_\inf(R_\infty)/\xi_r)^\Gamma$-module and that $(M/\xi_r)^\Gamma\otimes_{W_r(R)}W_r(R_\infty)\isoto M/\xi_r$. Considering this isomorphism for $r+1$ instead of $r$ and then going modulo $\xi_r$ also shows that $(M/\xi_{r+1})^\Gamma\otimes_{W_{r+1}(R)}W_r(R)\isoto (M/\xi_r)^\Gamma$. We may therefore apply the previous lemma and example to see that $(M\otimes_{A_\inf(R)}W(R_\infty))^\Gamma$, which equals $\projlim_r(M/\xi_{r+1})^\Gamma$, is indeed a finite projective $W(R)$-module and that $(M\otimes_{A_\inf(R)}W(R_\infty))^\Gamma\otimes_{W(R)}W_r(R)\isoto (M/\xi_r)^\Gamma$ for each $r\ge 1$. Combining this isomorphism with the triviality of $M$ modulo $\xi_r$ completes the proof of the claim.
%Assembling the identities also shows that the map of finite projective $W(R_\infty)$-modules \[(M\otimes_{A_\inf}W(R_\infty))^\Gamma\otimes_{W(R)}W(R_\infty)\To M\otimes_{A_\inf}W(R_\infty)\] is an isomorphism after $-\otimes_{W(R_\infty)}W_r(R_\infty)$ for any $r\ge1$, hence is an isomorphism.

The $p$-torsion-freeness of $H^1(\bb Z^d,W(R_\infty))$ from Lemma \ref{lemma_mu_1}(iii) means that $(W(R_\infty)/p^{N+1})^{\bb Z^d}\to (W(R_\infty)/p^N)^{\bb Z^d}$ is surjective for any $N\ge1$. Combined with the triviality claim proved in the previous paragraph, it follows that
 \[ (M\otimes_{\bb A_\inf(R_\infty)}W(R_\infty)/p^{N+1})^{\bb Z^d}\To ( M\otimes_{\bb A_\inf(R_\infty)}W(R_\infty)/p^N)^{\bb Z^d}\] is also surjective, whence $p^N:H^1(\bb Z^d, M\otimes_{A_\inf(R_\infty)}W(R_\infty)/p)\to H^1(\bb Z^d, M\otimes_{A_\inf(R_\infty)}W(R_\infty)/p^{N+1})$ is injective. But $M/(\mu,p)$ is trivial by assumption so we know from Lemma \ref{lemma_mu_1} that $H^1(\bb Z^d, M/(\mu,p))\into H^1(\bb Z^d, M\otimes_{A_\inf(R_\infty)}W(R_\infty)/p)$, whence we deduce that $p^N:H^1(\bb Z^d, M/(\mu,p))\to H^1(\bb Z^d, M/(\mu,p^{N+1}))$ is also injective, or equivalently that $(M/(\mu,p^{N+1}))^{\bb Z^d}\to(M/(\mu,p^N))^{\bb Z^d}$ is surjective.

By taking the limit as $N\to \infty$ it follows that $(M/\mu)^\Gamma\to (M/(\mu,p))^\Gamma$ is surjective, whence $(M/\mu)^\Gamma/p$ identifies with the finite projective module $(M/(\mu,p))^\Gamma$ over $(A_\inf(R_\infty)/(\mu,p))^\Gamma=((A_\inf(R_\infty)/\mu)^\Gamma/p$ (this equality was proved in Corollary \ref{corollary_mod_p_mu}). By $p$-completeness and $p$-torsion-freeness of both $(M/\mu)^\Gamma$ and $(A_\inf(R_\infty)/\mu)^\Gamma$ this implies that $(M/\mu)^\Gamma$ is finite projective over $(A_\inf(R_\infty)/\mu)^\Gamma$, and also that the canonical map \[(M/\mu)^\Gamma\otimes_{(A_\inf(R_\infty)/\mu)^\Gamma}A_\inf(R_\infty)/\mu\To M/\mu\] is an isomorphism since it is an isomorphism modulo $p$.
\end{proof}

We note the following consequence, which in any case will later be improved when we show that trivial modulo $<\mu$ implies trivial modulo $\mu$:

\begin{corollary}\label{corollary_mu_smallness}
Let $M\in\Rep_\Gamma(A_\inf(R_\infty))$ be a generalised representation which is trivial modulo $<\mu$. Then it is trivial modulo $\phi^{-1}(\mu)$.
\end{corollary}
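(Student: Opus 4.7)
The plan is to adapt the proof of Proposition \ref{proposition_reduction_to_mod_p}, replacing $\mu$ by $\phi^{-1}(\mu)$ throughout. The crucial point that lets us bypass the auxiliary hypothesis ``trivial modulo $(p,\mu)$'' of that proposition is that the corresponding mod-$p$ hypothesis in our setting is \emph{automatic}: reducing $\xi_1=\sum_{i=0}^{p-1}[\epsilon^{1/p}]^i$ modulo $\phi^{-1}(\mu)=[\epsilon^{1/p}]-1$ yields $p$, so $\xi_1\in(p,\phi^{-1}(\mu))A_\inf$. Hence triviality modulo $\xi_1$ (which is part of the hypothesis triviality modulo ${<}\mu$) already implies triviality modulo $(p,\phi^{-1}(\mu))$, by the containment of ideals observation at the end of Definition \ref{definition_trivial}.

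First, exactly as in the first paragraph of the proof of Proposition \ref{proposition_reduction_to_mod_p}, the hypothesis triviality modulo ${<}\mu$ yields that $M\otimes_{A_\inf(R_\infty)}W(R_\infty)$ is a trivial $\Gamma$-representation, with $N:=(M\otimes W(R_\infty))^\Gamma$ finite projective over $W(R)$ and the canonical base change map an isomorphism.

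Second, one needs the analogue of Lemma \ref{lemma_mu_1}(i) with $\phi^{-1}(\mu)$ in place of $\mu$: the map $H^i(\bb Z^d,A_\inf(R_\infty)/(\phi^{-1}(\mu),p^N))\to H^i(\bb Z^d,W(R_\infty)/p^N)$ should be injective. This can be derived from Lemma \ref{lemma_mu_1}(i) itself by composing with the injection $A_\inf(R_\infty)/\phi^{-1}(\mu)\into A_\inf(R_\infty)/\mu$ provided by multiplication by $\xi_1$ (well-defined and injective because $\xi_1\phi^{-1}(\mu)=\mu$ in the domain $A_\inf(R_\infty)$); alternatively, by repeating the Koszul-complex computation of Lemma \ref{lemma_mu_1}(i)(ii) verbatim with the quotient by $\phi^{-1}(\mu)$, which reduces matters to the fact that the modules $A_\inf^\bx(R)/([\epsilon^k]-1,\phi^{-1}(\mu))$ contain no non-zero almost-zero elements (an \'etaleness argument in the style of the end of the proof of Lemma \ref{lemma_mu_1}).

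Third, one runs the rest of the argument of Proposition \ref{proposition_reduction_to_mod_p} verbatim: the surjectivity of $(M\otimes W(R_\infty)/p^{N+1})^\Gamma\to (M\otimes W(R_\infty)/p^N)^\Gamma$ (coming from $p$-torsion-freeness of $H^1(\bb Z^d,W(R_\infty))$, Lemma \ref{lemma_mu_1}(iii)) combined with the injection of the previous step implies the injectivity of $p^N$ on $H^1(\bb Z^d,M/(\phi^{-1}(\mu),p))\to H^1(\bb Z^d,M/(\phi^{-1}(\mu),p^{N+1}))$, equivalently the surjectivity of $(M/(\phi^{-1}(\mu),p^{N+1}))^\Gamma\to(M/(\phi^{-1}(\mu),p^N))^\Gamma$. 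Passing to the limit in $N$ yields a surjection $(M/\phi^{-1}(\mu))^\Gamma\to(M/(\phi^{-1}(\mu),p))^\Gamma$; combining this with the automatic triviality modulo $(p,\phi^{-1}(\mu))$ obtained at the outset and with the analogue of Corollary \ref{corollary_mod_p_mu} for $\phi^{-1}(\mu)$ (which is immediate from Lemma \ref{lemma_mu_1}(ii) at $s=1$), a standard $p$-completeness and $p$-torsion-freeness argument concludes. The only real obstacle is the Koszul verification in the second step, but this is essentially routine given the template provided by Lemma \ref{lemma_mu_1}.
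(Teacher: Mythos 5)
Your overall strategy — run the proof of Proposition \ref{proposition_reduction_to_mod_p} with $\phi^{-1}(\mu)$ in place of $\mu$, noting that triviality modulo $(p,\phi^{-1}(\mu))$ is automatic because $\xi\equiv p\bmod\phi^{-1}(\mu)$ — is a genuinely different route from the paper's, and your first and third steps are fine. But the second step contains a real gap: the injectivity of $H^i(\bb Z^d,A_\inf(R_\infty)/(\phi^{-1}(\mu),p^N))\to H^i(\bb Z^d,W(R_\infty)/p^N)$ that you need is \emph{false}. The only map available here is the composite of multiplication by $\xi$ into $A_\inf(R_\infty)/(\mu,p^N)$ with the map of Lemma \ref{lemma_mu_1}(i), and on every non-integral Koszul summand $(A_\inf^\bx(R)/-)\,U_1^{k_1}\cdots U_d^{k_d}$ the element $\xi$ acts as $p$ (since $\xi\equiv p\bmod\phi^{-1}(\mu)$, hence modulo $[\ep^{k}]-1$ for any $k\in\bb Z[\tfrac1p]\cap(0,1)$). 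Concretely, for $d=1$ and the summand $k=1/p$, the induced map on $H^1$ is multiplication by $p$ on $A_\inf^\bx(R)/(\phi^{-1}(\mu),p^N)$, which is the zero map on a non-zero module when $N=1$ — and $N=1$ is exactly the case used in the crucial step ``$H^1(\bb Z^d,M/(\cdot,p))\into H^1(\bb Z^d,M\otimes W(R_\infty)/p)$'' of Proposition \ref{proposition_reduction_to_mod_p}. Neither of your two proposed justifications repairs this: injectivity of the coefficient map $\times\xi$ does not pass to group cohomology (the obstruction is the boundary map from $H^{i-1}$ of the cokernel), and the ``no non-zero almost-zero elements'' computation only kills boundary maps coming from an almost-zero cokernel — here the cokernel of $A_\inf(R_\infty)/\phi^{-1}(\mu)\xto{\xi}W(R_\infty)$ contains $A_\inf(R_\infty)/\xi=R_\infty$, which is not almost zero.

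The paper avoids this entirely by a Frobenius twist: replacing $M$ by $\phi_*M$ turns the hypothesis into triviality modulo $\phi(\xi_r)$ for all $r$ and the goal into triviality modulo $\mu$. Since $\phi(\xi_r)=\tilde\xi\,\xi_{r-1}$, the twisted module is again trivial modulo $<\mu$, and since $\phi(\xi)\equiv p\bmod\mu$ it is trivial modulo $(p,\mu)$; Proposition \ref{proposition_reduction_to_mod_p} then applies verbatim, with no need for any $\phi^{-1}(\mu)$-analogue of Lemma \ref{lemma_mu_1}. If you want to keep your untwisted setup, you would have to replace the embedding into $M\otimes W(R_\infty)$ by an embedding into a $\phi^{-1}$-twisted Witt vector ring, which is exactly what the twist accomplishes more cleanly.
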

\begin{proof}
Replacing $ M$ by a Frobenius twist, we may instead assume that it is trivial modulo $\phi(\xi_r)$ for all $r\ge1$ and we must prove it is trivial modulo $\mu$. 

Triviality modulo $\phi(\xi_r)$ implies triviality modulo $\xi_{r-1}$, and triviality modulo $\tilde\xi$ implies triviality modulo $(\mu,\tilde\xi)=(\mu,p)$; we then apply Proposition \ref{proposition_reduction_to_mod_p}.
\end{proof}

\subsection{Descent to the framed period ring}\label{subsection_descent}
The first main result of this subsection asserts that generalised representations over $A_\inf(R_\infty)$ which are trivial modulo $\mu$ descend uniquely to the framed period ring:

\begin{theorem}\label{theorem_descent_to_framed}
The base change functor \[-\otimes_{A_\sub{inf}^\bx(R)}A_\sub{inf}(R_\infty):\Rep_\Gamma^\mu(A_\inf^\bx(R))\To \Rep_\Gamma^\mu(A_\inf(R_\infty))\] is an equivalence of categories.
\end{theorem}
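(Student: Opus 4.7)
The plan is to construct an explicit quasi-inverse $G$ to the base change functor, taking as the main structural input the $\Gamma$-equivariant $A_\inf^\bx(R)$-linear direct sum decomposition $A_\inf(R_\infty) = A_\inf^\bx(R) \oplus A_\inf^\sub{n-i}(R_\infty)$ recalled in \S\ref{ss_framed}, and using the Koszul-cohomological machinery already developed in \S\ref{subsection_mu_implies_phi-1mu}.

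For fully faithfulness, I would first pass to internal Hom to reduce the claim to showing that, for every $P \in \Rep_\Gamma^\mu(A_\inf^\bx(R))$, the canonical map $P^\Gamma \to (P \otimes_{A_\inf^\bx(R)} A_\inf(R_\infty))^\Gamma$ is an isomorphism. Via the direct sum decomposition this further reduces to the vanishing $(P \otimes_{A_\inf^\bx(R)} A_\inf^\sub{n-i}(R_\infty))^\Gamma = 0$. An invariant element is a convergent sum $\sum_{\ul k \ne 0} p_{\ul k} U^{\ul k}$ satisfying $\gamma_j(p_{\ul k}) = [\ep^{-k_j}] p_{\ul k}$ for each $j$. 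Since $\Gamma$ acts trivially modulo $\mu$ on $P$, and $[\ep^{k_j}]-1$ is a unit multiple of $\mu/\xi_{r_j}$ (where $r_j$ is determined by the denominator of $k_j$), reduction mod $\mu$ forces $p_{\ul k} \in \xi_{r^*} P$ for $r^* = \max_{j:\,k_j\ne 0} r_j$. Writing $p_{\ul k} = \xi_{r^*} q$ and using $\xi_{r^*}$-torsion freeness of $P$, the element $q$ satisfies the same equation as $p_{\ul k}$; iterating and invoking $(p,\mu)$-adic separatedness (Lemma \ref{lemma_topology}) then gives $p_{\ul k} = 0$.

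For essential surjectivity, given $M \in \Rep_\Gamma^\mu(A_\inf(R_\infty))$, I would construct a descent by successive approximation: build a compatible inverse system of finite projective $A_\inf^\bx(R)/(p,\mu)^n$-modules $N_n$ with $\Gamma$-action trivial mod $\mu$ and $\Gamma$-equivariant isomorphisms $N_n \otimes_{A_\inf^\bx(R)} A_\inf(R_\infty)/(p,\mu)^n \cong M/(p,\mu)^n$, then pass to the limit via Lemma \ref{lemma_limits_of_finite_projective}. The base case modulo $(p,\mu)$ descends $M/(p,\mu)$ from a module over $(A_\inf(R_\infty)/(p,\mu))^\Gamma$ to one over $A_\inf^\bx(R)/(p,\mu)$ using the $A_\inf^\bx(R)$-linear projection $\pi:A_\inf(R_\infty)\twoheadrightarrow A_\inf^\bx(R)$ coming from the decomposition; the inductive step lifts $N_n$ to $N_{n+1}$ with obstructions in group cohomology groups that vanish by the same Koszul input as in the fully-faithful part.

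The principal obstacle is the base case: because $(A_\inf(R_\infty)/\mu)^\Gamma$ is strictly larger than $A_\inf^\bx(R)/\mu$ (it picks up extra $\xi_{r_{\ul k}}$-torsion contributions from the summands indexed by $\ul k \ne 0$), one cannot identify the descent with naive $\Gamma$-invariants. Showing that projection along $\pi$ yields a well-defined, functorial, finite projective $A_\inf^\bx(R)/(p,\mu)$-module $N_1$ whose base change recovers $M/(p,\mu)$ is the key technical point, and requires the same $\mu$-divisibility estimates used in the fully-faithful step.
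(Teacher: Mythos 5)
Your fully faithfulness argument is correct and is essentially the argument of the paper (Proposition \ref{proposition_ff}): both reduce via internal Hom and the decomposition $A_\inf(R_\infty)=A_\inf^\bx(R)\oplus A_\inf^\sub{n-i}(R_\infty)$ to killing the components $p_{\ul k}$, $\ul k\neq 0$; your iterated divisibility $p_{\ul k}\in\bigcap_m\xi_{r^*}^mP=0$ is just the unrolled form of the paper's observation that $[\ep^{k_i}]\gamma_i-1$ factors as the non-zero-divisor $[\ep^{k_i}]-1$ times an automorphism.

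The gap is in essential surjectivity, specifically in your inductive step. You assert that the obstructions to lifting $N_n$ to $N_{n+1}$ ``vanish by the same Koszul input as in the fully-faithful part,'' but the Koszul computation does \emph{not} give vanishing: the groups $H^i_\sub{cont}(\Gamma,A_\inf^\sub{n-i}(R_\infty)/(p,\mu))$ are nonzero --- they are direct sums of copies of $A_\inf^\bx(R)/([\ep^k]-1,p)$ for various $k\in\bb Z[\tfrac1p]\cap(0,1)$ (cf.\ the proof of Lemma \ref{lemma_mu_1} and Lemma \ref{lemma_vanishing2}) --- and are merely killed by $\phi^{-1}(\mu)$. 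Since $(p,\mu)^n/(p,\mu)^{n+1}$ is a free $A_\inf(R_\infty)/(p,\mu)$-module, your obstruction and ambiguity groups are sums of copies of exactly these nonzero groups, so the induction does not close as stated. Worse, the ``killed by $\phi^{-1}(\mu)$'' property cannot be absorbed by the $(p,\mu)$-adic filtration: $\phi^{-1}(\mu)\cdot(p,\mu)^n\not\subseteq(p,\mu)^{n+1}$, so knowing an obstruction class is annihilated by $\phi^{-1}(\mu)$ buys you nothing at the next stage. This is precisely why the paper's successive approximation (Lemmas \ref{lemma_ess_surj1}--\ref{lemma_ess_surj2}) runs along the $\xi$-adic filtration inside the fixed ideal $\mu M_n(A_\inf(R_\infty))$: the identity $\xi\cdot\phi^{-1}(\mu)=\mu$ converts ``killed by $\phi^{-1}(\mu)$'' into a gain of exactly one power of $c_0=\xi$ per step, and one then needs a separate glueing argument (Lemmas \ref{lemma_proj-isoms}, \ref{lemma_localisation}) because the cocycle manipulation requires a free module. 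Your base case, by contrast, is fixable as stated --- not via the linear projection $\pi$ (which is not a ring map) but via the fact that $A_\inf^\bx(R)/(p,\mu)\to(A_\inf(R_\infty)/(p,\mu))^\Gamma$ is an isomorphism modulo nilpotents, hence a proj-isomorphism (Lemma \ref{lemma_proj-isoms}(i)) --- but without a corrected inductive step the proof of essential surjectivity is incomplete.
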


The second main goal is the following:

\begin{theorem}\label{theorem_<mu_implies_mu}
Let $M\in\Rep_\Gamma(A_\inf(R_\infty))$. Then $M$ is trivial modulo $\mu$ if and only if it is trivial modulo~$<\mu$, i.e., $\Rep_\Gamma^{<\mu}(A_\inf(R_\infty))=\Rep_\Gamma^\mu(A_\inf(R_\infty))$.
\end{theorem}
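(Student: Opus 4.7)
The plan is to reduce via Proposition~\ref{proposition_reduction_to_mod_p} to showing that $M$ is additionally trivial modulo $(p,\mu)$: the $<\mu$-smallness hypothesis is given, and by Corollary~\ref{corollary_mu_smallness} we already have triviality modulo $\phi^{-1}(\mu)$ (and hence modulo $(p,\phi^{-1}(\mu))$, which is a priori weaker than triviality modulo $(p,\mu)$ since $(p,\mu)\subsetneq(p,\phi^{-1}(\mu))$). The remaining work is to bridge this last gap.

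To accomplish this, I would exploit the short exact sequence
\[ 0 \to M/(p,\xi) \xrightarrow{\phi^{-1}(\mu)\cdot} M/(p,\mu) \to M/(p,\phi^{-1}(\mu)) \to 0 \]
coming from the factorisation $\mu = \phi^{-1}(\mu)\cdot \xi$; exactness uses that $\phi^{-1}(\mu)$ maps to $\zeta_p-1$, a non-zero-divisor on the projective $R_\infty$-module $M/\xi$. Both outer terms are trivial representations: $M/(p,\xi)$ by the $\xi_1$-case of the $<\mu$-hypothesis (which passes to a mod-$p$ quotient via $p$-torsion-freeness arguments analogous to Lemma~\ref{lemma_mu_1}, using that $M$ is a direct summand of a free $A_\inf(R_\infty)$-module), and $M/(p,\phi^{-1}(\mu))$ similarly from Corollary~\ref{corollary_mu_smallness}. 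Taking $\Gamma$-invariants yields a long exact sequence in which $(M/(p,\mu))^\Gamma$ is sandwiched between the two known trivialised pieces, subject to vanishing of the boundary map $\partial\colon (M/(p,\phi^{-1}(\mu)))^\Gamma \to H^1(\Gamma, M/(p,\xi))$.

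The vanishing of $\partial$ is the main obstacle. The idea is to use the full strength of the $<\mu$-hypothesis (triviality mod $\xi_r$ for \emph{all} $r\ge 1$, not only $r=1$): the analogous short exact sequences
\[ 0 \to M/(p,\xi_r) \xrightarrow{\phi^{-r}(\mu)\cdot} M/(p,\mu) \to M/(p,\phi^{-r}(\mu)) \to 0 \]
assemble into an inverse system. Passing to the limit via the embedding $A_\inf(R_\infty)/\mu \hookrightarrow W(R_\infty) = \projlim_r A_\inf(R_\infty)/\xi_r$ from Lemma~\ref{lemma_no_non_zero}, and invoking the $p$-torsion-freeness results of Lemma~\ref{lemma_mu_1} (extended to $M$-coefficients by a direct summand argument), the boundary is forced to factor through the almost-zero module $\Xi(R_\infty)$, which is killed by $W(\frak m^\flat)$. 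Combined with Example~\ref{example_R_infty_no_almost_zero}(i) (absence of non-zero almost-zero elements in $R_\infty/p$), one concludes $\partial=0$.

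Once the boundary vanishes, a standard extension argument combined with the $p$-adic completeness/separation of Lemma~\ref{lemma_topology} shows that $(M/(p,\mu))^\Gamma$ is finite projective over $(A_\inf(R_\infty)/(p,\mu))^\Gamma$ and that the canonical map $(M/(p,\mu))^\Gamma\otimes_{(A_\inf(R_\infty)/(p,\mu))^\Gamma} A_\inf(R_\infty)/(p,\mu)\to M/(p,\mu)$ is an isomorphism. Proposition~\ref{proposition_reduction_to_mod_p} then upgrades this triviality mod $(p,\mu)$, together with the $<\mu$-hypothesis, to the desired triviality mod $\mu$.
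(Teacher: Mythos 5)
Your reduction to Proposition \ref{proposition_reduction_to_mod_p} coincides with the paper's, but the route you take to triviality modulo $(p,\mu)$ has two genuine gaps. The crux of your argument is the vanishing of the boundary map $\partial\colon (M/(p,\phi^{-1}(\mu)))^\Gamma\to H^1(\Gamma,M/(p,\xi))$, and the sketch does not establish it: your short exact sequences for varying $r$ do not assemble into an inverse system with the identity on the middle term (the compatible self-map of $M/(p,\mu)$ would have to be multiplication by $\phi^{-r-1}(\xi)$), and $\Xi(R_\infty)$ arises from the entirely different sequence $0\to A_\inf(R_\infty)/\mu\to W(R_\infty)\to\Xi(R_\infty)\to 0$, so there is no reason for your boundary to factor through it. More fundamentally, your d\'evissage never converts the hypothesis ``trivial modulo $\xi_r$'' into the congruence $\gamma(m)-m\in\xi_rM$: over $A_\inf(R_\infty)$ that implication is false, because $\Gamma$ acts nontrivially on $A_\inf(R_\infty)/\xi_r$. (A side issue: $H^1(\Gamma,M/\xi)$ is \emph{not} $p$-torsion-free --- the Koszul computation modulo $\xi$ produces summands of the form $R/(\zeta_{p^r}^a-1)$, which have $p$-torsion --- so the analogy with Lemma \ref{lemma_mu_1} fails for the sub-object term, although its triviality modulo $(p,\xi)$ can be salvaged by a direct base-change argument.)

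Second, even granting $\partial=0$, the concluding ``standard extension argument'' does not go through. The three terms of the invariants sequence are modules over three different rings, namely $(A_\inf(R_\infty)/(p,\xi))^\Gamma$, $(A_\inf(R_\infty)/(p,\mu))^\Gamma$ and $(A_\inf(R_\infty)/(p,\phi^{-1}(\mu)))^\Gamma$; an extension of trivial generalised representations need not be trivial, and neither finite projectivity of $(M/(p,\mu))^\Gamma$ over the middle (almost-element-laden) invariant ring nor the base-change isomorphism follows from exactness of invariants. The paper supplies exactly the missing ingredient: it applies the mod-$p$ case of Theorem \ref{theorem_descent_framed_general}, for every $r$, to descend $M/p$ to a representation $N$ over $A_\inf^\bx(R)/p$, where $\Gamma$ \emph{does} act trivially modulo $\mu$, so that triviality modulo $\xi_r$ becomes the literal congruence $\gamma(n)-n\in\tfrac{\ep-1}{\ep^{1/p^r}-1}N$; intersecting over $r$ (legitimate because $A_\inf^\bx(R)/(p,\ep-1)$ has no nonzero almost-zero elements) then yields triviality modulo $(p,\mu)$. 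If you wish to avoid the descent to the framed ring, you must find another mechanism that genuinely uses the whole tower of $\xi_r$'s; as written, your argument only uses $r=1$ plus an unjustified limit.
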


The proofs of Theorems \ref{theorem_descent_to_framed} and \ref{theorem_<mu_implies_mu} will require simultaneously establishing analogous results modulo $p$. Therefore for the remainder of the subsection we adopt one of the following two situations:
\[A:=A_\inf,\qquad A_\infty:=A_\inf(R_\infty),\qquad A^\square:=A^\bx_\inf(R),\qquad A_\infty^\sub{n-i}:=A_\inf^\sub{n-i}(R_\infty),\qquad \qquad\tag{``integral case''}\]
or
\[A:=A_\inf/p=\roi^\flat,\quad A_\infty:=A_\inf(R_\infty)/p=R_\infty^\flat,\quad A^\square:=A^\bx_\inf(R)/p,\quad A_\infty^\sub{n-i}:=A_\inf^\sub{n-i}(R_\infty)/p\quad\tag{``mod $p$ case''}\]
We moreover fix an element $c_1\in A$ such that $\mu A\subseteq c_1A\subseteq\phi^{-1}(\mu)A$; then we set $c_0:=c_1/\phi^{-1}(\mu)$, which divides the element $\xi\in A$, and we assume that $c_0$ is not a unit of $A$. In writing $\mu$, $\xi$ here we bring to the reader's attention a slight abuse of notation which we will use throughout the rest of the subsection in order to give uniform proofs: in the mod $p$ case we sometimes write $[\ep]$ to mean $\ep\in \roi^\flat$ and similarly $\mu$, $\xi$ in place of $\ep-1,(\ep-1)/(\ep^{1/p}-1) \in \roi^\flat$; this should not cause any confusion. Our uniform proofs will also sometimes include redundancies in the mod $p$ case, for example the appearance of $p$-adic completions.

\begin{example}
In the mod $p$ case the above hypotheses are quite flexible: $c_0$ may be any element of $\roi^\flat$ with valuation satisfying $0<\nu(c_0)\le\nu((\ep-1)/(\ep^{1/p}-1))$. In the integral case the hypotheses are more restrictive, but in fact the only case of interest will be $c_0=\xi$ and $c_1=\mu$.
\end{example}

We will prove the following in several steps:

\begin{theorem}\label{theorem_descent_framed_general}
The base change functor \[-\otimes_{A^\bx}A_\infty:\Rep_\Gamma^{c_1}(A^\bx)\To \Rep_\Gamma^{c_1}(A_\infty)\] is an equivalence of categories.
\end{theorem}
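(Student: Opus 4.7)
The proof breaks into full faithfulness and essential surjectivity, each leveraging two facts: the $\Gamma$-equivariant $A^\bx$-module decomposition $A_\infty = A^\bx \oplus A_\infty^{\sub{n-i}}$ from \S\ref{ss_framed}, and the triviality of the $\Gamma$-action on $A^\bx/c_1$, which follows from $c_1\mid\mu$ in $A$ and the trivial action of $\Gamma$ on $A^\bx/\mu$.

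\emph{Full faithfulness.} Via internal Hom this reduces to showing that for every $L \in \Rep_\Gamma^{c_1}(A^\bx)$ the canonical map $L^\Gamma \to (L \otimes_{A^\bx} A_\infty)^\Gamma$ is an isomorphism. Tensoring the equivariant decomposition of $A_\infty$ with $L$ splits the right-hand side as $L^\Gamma \oplus (L \otimes_{A^\bx} A_\infty^{\sub{n-i}})^\Gamma$, so only the vanishing of the second summand needs proof. Writing $A_\infty^{\sub{n-i}}$ as the $(p,\xi)$-adic completion of $\bigoplus_{\ul k \neq 0} A^\bx U^{\ul k}$ and, for each $\ul k$, choosing an index $i$ with $k_i \neq 0$, the generator $\gamma_i$ acts on $L \otimes A^\bx U^{\ul k}$ as the $\gamma_i$-action on $L$ (trivial modulo $c_1$) composed with multiplication by $[\ep^{k_i}]$. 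Modulo $c_1$, the operator $\gamma_i - 1$ thus reduces to multiplication by the non-zerodivisor $[\ep^{k_i}] - 1$, which acts injectively on the finite projective $A^\bx/c_1$-module $L/c_1$; a $c_1$-adic successive approximation, in the spirit of the Koszul calculation of Lemma \ref{lemma_mu_1}, bootstraps this to injectivity of $\gamma_i - 1$ on $L \otimes A^\bx U^{\ul k}$, and compatibility with the $(p,\xi)$-adic completion of the direct sum gives the desired vanishing.

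\emph{Essential surjectivity.} Given $M \in \Rep_\Gamma^{c_1}(A_\infty)$, the candidate descent $N$ is built inside $M$ by choosing an $A_\infty$-basis of $M$ that is as close to $\Gamma$-fixed as possible. The triviality hypothesis supplies, after localising on $\Spec (A_\infty/c_1)^\Gamma$, a basis $\bar e_1,\dots,\bar e_n$ of $(M/c_1)^\Gamma$ whose image in $M/c_1$ generates $M/c_1$ over $A_\infty/c_1$. Lift these to $e_1,\dots,e_n \in M$, which by Nakayama form an $A_\infty$-basis; set $N := A^\bx e_1 \oplus \cdots \oplus A^\bx e_n$, so that $N \otimes_{A^\bx} A_\infty \isoto M$ tautologically. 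The obstruction to $\Gamma$-stability of $N$ lies in the non-integral part of the cocycle $\gamma \mapsto (\gamma(e_i)_j)_{i,j} \in M_n(A_\infty)$: modulo $c_1$ it vanishes by construction, and at each successive step $c_1^r/c_1^{r+1}$ one adjusts the $e_i$ within $M$, the correction being uniquely determined by the injectivity of $\gamma_i - 1$ on the non-integral summands from the full faithfulness argument. Passing to the limit $(p,\xi)$-adically using Lemma \ref{lemma_topology} and the finite projectivity criterion of Lemma \ref{lemma_limits_of_finite_projective} produces a $\Gamma$-stable $N$ with $N \otimes_{A^\bx} A_\infty \cong M$; continuity of the $\Gamma$-action on $N$ follows from Lemma \ref{lemma_automatic_continuity}, and triviality modulo $c_1$ is immediate, so $N \in \Rep_\Gamma^{c_1}(A^\bx)$.

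\emph{Main obstacle.} The principal difficulty lies in the inductive correction of the basis in essential surjectivity: there is no direct ring retraction $(A_\infty/c_1)^\Gamma \twoheadrightarrow A^\bx/c_1$ to descend via tensor product, because the non-integral invariants are not an ideal (products $U^{\ul k}\cdot U^{\ul k'}$ with $\ul k + \ul k' \in \bb Z^d$ re-enter $A^\bx$), so the descent must be performed step by step at the level of submodules of $M$. At each stage, the precise unique correction is governed by injectivity of $\gamma_i - 1$ on $L \otimes A^\bx U^{\ul k}$, which is exactly the vanishing established for full faithfulness; ensuring that these corrections are compatible across levels and summable in the $(p,\xi)$-adic topology is the main bookkeeping burden.
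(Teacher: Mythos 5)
Your overall architecture (internal Hom for fullness, basis-lifting plus step-by-step correction for essential surjectivity, with the non-integral part $A_\infty^{\sub{n-i}}$ as the thing to be discounted) matches the paper's, but two of your key steps are wrong as stated, and the second one is a genuine gap.

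First, in the full faithfulness argument you assert that, modulo $c_1$, the operator on $L\otimes A^\bx U^{\ul k}$ becomes multiplication by $[\ep^{k_i}]-1$, ``which acts injectively on the finite projective $A^\bx/c_1$-module $L/c_1$.'' It does not: for $k_i\in\bb Z[\tfrac1p]\cap(0,1)$ the element $[\ep^{k_i}]-1$ divides $\phi^{-1}(\mu)$, hence divides $c_1$, so its image in $A^\bx/c_1$ is a zero-divisor and multiplication by it kills $\tfrac{c_1}{[\ep^{k_i}]-1}(L/c_1)\neq 0$. Consequently the $c_1$-adic successive approximation you invoke cannot even get started. The conclusion (injectivity of $[\ep^{k_i}]\gamma_i-1$ on $L$) is nevertheless true, but the proof has to exploit the divisibility more finely: writing $\gamma_i=1+c_1\delta_i$ one factors $[\ep^{k_i}]\gamma_i-1=([\ep^{k_i}]-1)\bigl(1+\tfrac{[\ep^{1/p}]-1}{[\ep^{k_i}]-1}[\ep^{k_i}]c_0\delta_i\bigr)$, and the second factor is an automorphism by $c_0$-adic completeness while the first is a non-zero-divisor on the projective module $L$. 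So this part is repairable, but not by the route you describe.

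The more serious gap is in essential surjectivity. You say the correction of the basis at each stage ``is uniquely determined by the injectivity of $\gamma_i-1$ on the non-integral summands.'' Injectivity of $\gamma_i-1$ is an $H^0$-vanishing statement; it gives you \emph{uniqueness} of a correction if one exists, but the issue at each inductive step is \emph{existence}: the obstruction to improving the cocycle $\gamma\mapsto(\gamma(e_j))$ lives in $H^1_{\sub{cont}}(\Gamma, M_n(A_\infty^{\sub{n-i}}/c_1))$, and nothing in your argument shows this obstruction can be trivialised. In fact that $H^1$ does \emph{not} vanish --- the Koszul computation shows it is a completed direct sum of copies of $A^\bx/([\ep^k]-1)$ --- it is merely killed by $\phi^{-1}(\mu)$. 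This has two consequences your proposal misses: (a) one must prove the $H^1$-annihilation statement (the analogue of Lemma \ref{lemma_vanishing2}(ii)) as a separate input, and (b) because the obstruction is only killed by $\phi^{-1}(\mu)=c_1/c_0$, each step of the iteration only improves the cocycle by a factor of $c_0$, not $c_1$; the induction must run in powers of $c_0$ and one must then check convergence and that $\bigcap_m(c_1A^\bx+c_1c_0^mA_\infty)=c_1A^\bx$. As written, your induction ``at each successive step $c_1^r/c_1^{r+1}$'' has no mechanism to produce the corrections and would not close. (Separately, your reduction of the general projective case to the free case by ``localising on $\Spec(A_\infty/c_1)^\Gamma$'' needs the glueing to be justified --- that the relevant localisations of $A^\bx/c_1$ and $(A_\infty/c_1)^\Gamma$ match up and that fully faithfulness lets you glue the local descents --- but this is a lesser omission than the missing $H^1$ input.)
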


Theorem \ref{theorem_descent_framed_general} easily implies the main theorems:

\begin{proof}[Proof of Theorem \ref{theorem_descent_to_framed}]
This is simply Theorem \ref{theorem_descent_framed_general} in the integral case with $c_0=\xi$ and $c_1=\mu$.
\end{proof}

\begin{proof}[Proof of Theorem \ref{theorem_<mu_implies_mu}]
Suppose that $M\in\Rep_\Gamma(A_\inf(R_\infty))$ is trivial modulo $<\mu$, i.e., trivial modulo $\xi_r$ for all $r\ge 1$. Therefore the generalised representation $M/p \in\Rep_\Gamma(R_\infty^\flat)$ is trivial modulo $(\ep-1)/(\ep^{1/p^r}-1)$ for all $r\ge 1$, and so Theorem \ref{theorem_descent_framed_general} (in the mod $p$ case, with $c_1=(\ep-1)/(\ep^{1/p^r}-1)$ for each $r\ge 1$) shows that $M/p=N\otimes_{A^\bx_\inf(R)/p}R_\infty^\flat$ for some $N\in\Rep_\Gamma(A^\bx_\inf(R)/p)$ which is trivial modulo $(\ep-1)/(\ep^{1/p^r}-1)$ for all $r\ge 1$.

We claim that $N$ is automatically trivial modulo $\ep-1$. Indeed, since $\Gamma$ acts as the identity on $A^\bx_\inf(R)/p$ modulo $\ep-1$ (hence also modulo $(\ep-1)/(\ep^{1/p^r}-1)$ for any $r\ge 1$), triviality modulo $(\ep-1)/(\ep^{1/p^r}-1)$ of the $\Gamma$-action on $N$ means that $\gamma(n)-n\in \tfrac{\ep-1}{\ep^{1/p^r}-1}N$ for all $\gamma\in \Gamma$ and $n\in N$. Letting $r\to\infty$ shows that $\gamma(n)-n\in (\ep-1)N$ for all $\gamma\in \Gamma$ and $n\in N$ (here we use that the $\roi^\flat$-algebra $R':=A^\bx_\inf(R)/p$ has the property $\bigcap_{r\ge1}\tfrac{\ep-1}{\ep^{1/p^r}-1}R'=(\ep-1)R'$; but $R'$ is formally smooth over $\roi^\flat$, hence $\ep-1$-torsion-free, so it is equivalent to check that the smooth $\roi^\flat/(\ep-1)$-algebra $R'/(\ep-1)$ has no non-zero almost-zero elements; as in Example \ref{example_R_infty_no_almost_zero}(i) this reduces to the analogous assertion for $\roi^\flat/(\ep-1)$ itself), thereby establishing triviality of the $\Gamma$-action on $N/(\ep-1)$.

We have shown that $M$ is trivial modulo $(p,\mu)$; but therefore it is trivial modulo $\mu$ by Proposition~\ref{proposition_reduction_to_mod_p}, completing the proof.
\end{proof}

We remind the reader that we have fixed $A$, $A_\infty$, $A^\bx$, $c_0$, and $c_1$. The rest of this subsection is dedicated to proving Theorem \ref{theorem_descent_framed_general}, first by establishing fully faithfulness then essential surjectivity.

\begin{proposition}\label{proposition_ff}
The base change functor of Theorem \ref{theorem_descent_framed_general} is fully faithful.
\end{proposition}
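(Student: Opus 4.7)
The plan is to reduce, via internal hom, the fully faithfulness assertion to a vanishing statement for $\Gamma$-invariants on the ``non-integral'' summand of $A_\infty$, and then to exploit a divisibility comparison between $c_1$ and the elements $[\ep^{k_i}]-1$.

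First, given $M,N\in\Rep_\Gamma^{c_1}(A^\bx)$ the finite projectivity of $M$ yields
\[\Hom_{A_\infty}(M\otimes_{A^\bx}A_\infty,\,N\otimes_{A^\bx}A_\infty)\;=\;P\otimes_{A^\bx}A_\infty,\qquad P:=\Hom_{A^\bx}(M,N),\]
so full faithfulness amounts to the equality of $\Gamma$-invariants $P^\Gamma=(P\otimes_{A^\bx}A_\infty)^\Gamma$. Using the $\Gamma$-equivariant decomposition $A_\infty=A^\bx\oplus A_\infty^\sub{n-i}$ from \S\ref{ss_framed}, this reduces to the vanishing
\[(P\otimes_{A^\bx}A_\infty^\sub{n-i})^\Gamma=0.\]
A preliminary observation, used repeatedly below, is that since $\mu\in c_1A$ and $\Gamma$ acts trivially on $A^\bx/\mu$, the triviality of $M,N$ modulo $c_1$ simply means that $\Gamma$ acts trivially on $M/c_1$ and $N/c_1$, and this passes to the internal hom to give $(\gamma-1)(P)\subseteq c_1P$ for every $\gamma\in\Gamma$.

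Next I would expand $A_\infty^\sub{n-i}$ as the $(p,\xi)$-adic completion of $\bigoplus A^\bx\,U_1^{k_1}\cdots U_d^{k_d}$, indexed by $\ul k=(k_1,\dots,k_d)\in(\bb Z[\tfrac1p]\cap[0,1))^d\setminus\{0\}$, with $\gamma_i$ acting on the $\ul k$-summand via multiplication by $[\ep^{k_i}]$. For a $\Gamma$-invariant element $x=\sum p_{\ul k}U_1^{k_1}\cdots U_d^{k_d}$, reduction modulo $(p,\xi)^N$ splits the completed sum into an honest finite direct sum, so the $\gamma_i$-invariance forces $\gamma_i(p_{\ul k})\,[\ep^{k_i}]\equiv p_{\ul k}$ mod $(p,\xi)^NP$ for every $\ul k$, $i$, and $N$. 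Letting $N\to\infty$ and using $(p,\xi)$-adic separation of $P$ yields
\[p_{\ul k}\,([\ep^{k_i}]-1)\;=\;-(\gamma_i-1)(p_{\ul k})\cdot[\ep^{k_i}]\qquad\text{in }P,\]
whose right-hand side lies in $c_1P$ by the preliminary observation and the fact that $[\ep^{k_i}]$ is a unit.

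Finally, for each $\ul k\neq0$ I would fix $i$ with $k_i\neq0$ and write $k_i=a/p^r$ in lowest terms, so $\gcd(a,p)=1$ and $r\ge1$. From the factorisation
\[[\ep^{k_i}]-1=([\ep^{1/p^r}]-1)\cdot(1+[\ep^{1/p^r}]+\cdots+[\ep^{1/p^r}]^{a-1})\]
one sees that the second factor is congruent to the integer $a$ modulo the maximal ideal of the local ring $A$ (since $\phi^{-r}(\mu)=[\ep^{1/p^r}]-1$ lies in it), hence is a unit of $A$, and so $[\ep^{k_i}]-1=\phi^{-r}(\mu)\cdot u$ with $u\in A^\times$. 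Combined with $c_1=c_0\phi^{-1}(\mu)$ and the divisibility $\phi^{-r}(\mu)\mid\phi^{-1}(\mu)$ valid for $r\ge1$, the resulting containment $p_{\ul k}\phi^{-r}(\mu)\in c_1P$ may be divided through by the non-zerodivisor $\phi^{-r}(\mu)$ (acting injectively on $P$ by $A$-flatness of $P$) to give $p_{\ul k}\in c_0P$. Writing $p_{\ul k}=c_0q$ and using $\gamma_i(c_0)=c_0$, the same equation holds with $q$ in place of $p_{\ul k}$; iterating yields $p_{\ul k}\in\bigcap_{n\ge0}c_0^nP$, which vanishes because $P$ is $c_0$-adically separated (by Lemma \ref{lemma_topology}(iii) applied to $A^\bx$). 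Hence $x=0$, completing the proof. The main technical point will be the uniform divisibility comparison between $c_1$ and $[\ep^{k_i}]-1$ as $r$ varies, which forces the detour through the above factorisation; the topological step separating the components of the completed sum is routine once one reduces modulo $(p,\xi)^N$.
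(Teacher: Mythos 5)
Your proof is correct and follows essentially the same route as the paper's: reduce via the internal hom $P=\Hom_{A^\bx}(M,N)$ and the decomposition $A_\infty=A^\bx\oplus A_\infty^{\sub{n-i}}$ to showing that the twisted invariance relation $[\ep^{k_i}]\gamma_i(p_{\ul k})=p_{\ul k}$ forces $p_{\ul k}=0$, using that $[\ep^{k_i}]-1$ is a unit multiple of $\phi^{-r}(\mu)$, which divides $\phi^{-1}(\mu)=c_1/c_0$. Your successive-approximation step $p_{\ul k}\in\bigcap_n c_0^nP=0$ is just the unwound form of the paper's observation that $[\ep^{k_i}]\gamma_i-1=([\ep^{k_i}]-1)\bigl(1+\tfrac{[\ep^{1/p}]-1}{[\ep^{k_i}]-1}[\ep^{k_i}]c_0\delta_i\bigr)$ is injective because the second factor is an automorphism by $c_0$-adic completeness (and the injectivity of $\phi^{-r}(\mu)$ on $P$ is best justified by Lemma \ref{lemma_topology} applied to $A^\bx$ rather than by flatness over $A$).
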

\begin{proof}
The functor is faithful since $A^\bx\to A_\infty$ is injective. To prove fullness, let $N_1, N_2\in\Rep_\Gamma^{c_1}(A^\bx)$ and let $f:N_1\otimes_{A^\bx}A_\infty\to N_2\otimes_{A^\bx}A_\infty$ be an $A_\infty$-linear, $\Gamma$-equivariant map. It is enough to show that $f(N_1)\subseteq N_2$: indeed, by $A_\infty$-linearity this forces $f$ to agree with $f|_{N_1}\otimes_{A^\bx}A_\infty$.

By writing $N_2\otimes_{A^\bx}A_\infty$ as the $(p,\mu)$-adic completion of $\bigoplus_{k_1,\dots,k_d\in\bb Z[\tfrac1p]\cap[0,1)}N_2\,U_1^{k_1}\cdots U_d^{k_d}$, we may express $f(n)$, for each $n\in N_1$, as a $(p,\mu)$-adically convergent sum \[f(n)=\sum_{\ul k}f_{\ul k}(n)U_1^{k_1}\cdots U_d^{k_d}\] where $f_{\ul k}(n)\in N_2$ (in fact, the convergence of the sum will be irrelevant, we are merely using that $N_2\otimes_{A^\bx}A_\infty\into \prod_{\ul k}N_2\,U_1^{k_1}\cdots U_d^{k_d}$). For each $\ul k$, the morphism $f_{\ul k}:N_1\to N_2$ is $A^\bx$-linear, and the $\Gamma$-equivariance of $f$ implies that \[f_{\ul k}(\gamma_i n)=[\ep^{k_i}]\gamma_i(f_{\ul k}(n))\] for all $n\in N_1$ and $i=1,\dots,d$.

In other words, setting $N:=\Hom_{A^\bx}(N_1,N_2)\in\Rep_\Gamma^{c_1}(A^\bx)$ (see Remark \ref{remark_internal_hom}), the elements $f_{\ul k}\in N$ satisfy $([\ep^{k_i}]\gamma_i-1)f_{\ul k}=0$ for $i=1,\dots,d$. The $\Gamma$-action on $N$ is trivial modulo $c_1$, so we may write $\gamma_i=1+c_1\delta_i$ for some unique $\delta_i\in\End_{A}(N)$ for each $i$; here we crucially use that $c_1|\mu$ so that $\Gamma$ acts as the identity on $A^\bx/c_1$. But then, assuming that $k_i\neq 0$ for some $i$ (whence $[\ep^{k_i}]-1$ divides $[\ep^{1/p}]-1$), we deduce that \[[\ep^{k_i}]\gamma_i-1=([\ep^{k_i}]-1)(1+\tfrac{[\ep^{1/p}]-1}{[\ep^{k_i}]-1}[\ep^{k_i}]c_0\delta_i)\] is injective since $1+\tfrac{[\ep^{1/p}]-1}{[\ep^{k_i}]-1}[\ep^{k_i}]c_0\delta_i$ is an automorphism by $c_0$-adic completeness of $N$; therefore $f_{\ul k}=0$.

We have shown that the only possibly non-zero $f_{\ul k}$ occurs when $\ul k=0$; but this exactly means that $f(N_1)\subseteq N_2$, as required.
\end{proof}

\begin{remark}\label{remark_internal_hom}
In the context of Definition \ref{definition_trivial}, note that $\Rep_G(B)$ has an internal hom: given $M_1,M_2\in\Rep_G(B)$, we equip $\Hom_B(M_1,M_2)$ with action $\gamma f:m\mapsto \gamma(f(\gamma^{-1}(m)))$, so that $\Hom_B(M_1,M_2)^G=\Hom_{B,G}(M_1,M_2)$. Furthermore, elementary manipulations of Hom modules show that if $M_1,M_2$ are trivial mod $b$, for some fixed $b\in B$, then so is $\Hom_B(M_1,M_2)$.
\end{remark}

It remains to prove that the functor in Theorem \ref{theorem_descent_framed_general} is essentially surjective; this proceeds via explicit manipulations of $1$-cocycles, so we being with a reminder on such issues.

\begin{remark}[Reminder on semi-linear actions and $1$-cocycles]\label{remark_1cocycles}
Let $B$ be a ring equipped with an action (via ring homomorphisms) by a group $G$. Let $M$ be a $B$-module equipped with a semi-linear $G$-action, and denote by $G\times\Aut_B(M)\to\Aut_B(M)$, $(\gamma,X)\mapsto {}^\gamma\! X=\gamma X\gamma^{-1}$ the resulting conjugation action.

Recall that a $1$-cocycle $\al:G\to\Aut_B(M)$ is a function satisfying $\al(\gamma\delta)=\al(\gamma)\circ{}^\gamma\!\al(\delta)$ for all $\gamma,\delta\in G$; the set of all $1$-cocycles (pointed by the $1$-cocycle $\gamma\mapsto\op{id}$) is denoted by $Z^1(G,\Aut_B(M))$. There is a right action on $Z^1(G,\Aut_B(M))$ by $\Aut_B(M)$ given by $(\al X)(\gamma):=X^{-1}\circ \al(\gamma)\circ {}^\gamma\!X$, where $\al$ is a $1$-cocycle and $X\in \Aut_B(M)$. The resulting quotient set is $H^1(G,\Aut_B(M))=Z^1(G,\Aut_B(M))/\Aut_B(M)$.

Any $1$-cocycle $\al$ defines a new semi-linear $G$-action $\star_\al$ on $B$ via the rule $\gamma\star_\al b:=\al(\gamma)(\gamma(b))$; conversely, any semi-linear $G$-action $\star$ on $B$ defines a $1$-cocycle \[\al_\star:G\to\Aut_B(M),\qquad \gamma\mapsto \langle b\mapsto\gamma\star(\gamma^{-1}b)\rangle,\] and these two processes invert each other. Given $1$-cocycles $\al$, $\al'$ with associated semi-linear actions $\star$,~$\star'$, then $\al$ and $\al'$ are cohomologous (i.e., define the same class in $H^1(G,\Aut_B(M))$) if and only if there exists a $G$-equivariant isomorphism of $B$-modules $(M,\star)\cong (M,\star')$.

The above formalism will be applied in the case $M=B^n$ equipped with the coordinate-wise $G$-action, where $G$ will be a topological group and its action on $B$ will be continuous for the adic topology associated to some ideal. Then the conjugate action of $G$ on $\Aut_B(M)=\GL_n(B)$ is of course given by applying $G$ to the terms of the matrix, and the continuous $1$-cocycles $\al:Z\to\GL_n(B)$ classify continuous semi-linear $G$-actions on $B^n$.% Given $b\in B$,  such an action is trivial modulo $b$ if and only if the corresponding $1$-cocycle $\al:G\to\GL_n(B)$ satisfies $\al(\gamma)\equiv \gamma$ mod $b$, i.e., $\al(\gamma)\in 1+b M_n(B)$, for all $\gamma\in G$ [iff is not quite right]
\end{remark}

%Before proceeding we note the following, which is the first of several results in which we will have to assume that $\mu\in c_1A$ (or equivalently that $c_0$ divides $([\ep]-1)/([\ep^{1/p}]-1)$ in $A$; since we have already assumed that $c_0$ mod $p$ divides $(\ep-1)/(\ep^{1/p}-1)$, this new condition is automatically satisfied when we are in the mod $p$ case; it is thus only relevant in the integral case):

\begin{lemma}\label{lemma_vanishing2}
\begin{enumerate}
\item In the integral case, the group cohomologies $H^i_\sub{cont}(\Gamma,A_\inf^\sub{n-i}(R_\infty)/c_1)$ are $p$-torsion-free for all $i\ge0$; also $H^1_\sub{cont}(\Gamma, A_\inf(R_\infty)/c_1)$ and $H^1_\sub{cont}(\Gamma, A_\inf^\bx(R)/c_1)$ are $p$-torsion-free.
\item In both the integral and mod $p$ cases, the group cohomologies $H^i_\sub{cont}(\Gamma,A_\inf^\sub{n-i}/c_1)$ are killed by $\phi^{-1}(\mu)$ for all $i\ge0$. 
\end{enumerate}
\end{lemma}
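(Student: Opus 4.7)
The plan is to analyze the continuous cohomology of $A_\inf^\sub{n-i}(R_\infty)/c_1$ (in either the integral or mod-$p$ case) via an explicit Koszul complex decomposition modelled on the proof of Lemma~\ref{lemma_mu_1}; this will establish both part~(ii) and the main assertion of part~(i). The remaining assertions of~(i) concerning $A_\inf(R_\infty)/c_1$ and $A_\inf^\bx(R)/c_1$ will then follow formally from the $\Gamma$-equivariant direct sum decomposition $A_\inf(R_\infty)=A_\inf^\bx(R)\oplus A_\inf^\sub{n-i}(R_\infty)$ together with the fact that $\Gamma$ acts trivially on $A_\inf^\bx(R)$ modulo $\mu$.

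Concretely, I would write $A_\inf^\sub{n-i}(R_\infty)/c_1$ as the $(p,\mu)$-adic completion of $\bigoplus_{\ul k\neq 0}(A^\bx/c_1)U_1^{k_1}\cdots U_d^{k_d}$, with $\ul k=(k_1,\dots,k_d)$ running over nonzero tuples in $(\bb Z[\tfrac1p]\cap[0,1))^d$ and $\gamma_j$ acting as multiplication by $[\ep^{k_j}]$, and then express $R\Gamma_\sub{cont}(\Gamma,A_\inf^\sub{n-i}(R_\infty)/c_1)$ as the $R\projlim_N$ of the direct sum over $\ul k\neq 0$ of Koszul complexes $K([\ep^{k_1}]-1,\dots,[\ep^{k_d}]-1;A^\bx/(c_1,p^N))$. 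For each $\ul k\neq 0$, let $k=k_{j_0}$ be the entry with smallest $p$-adic valuation, necessarily of the form $a/p^n$ with $\gcd(a,p)=1$ and $n\ge 1$; the same Koszul calculation (Lemma~\ref{lemma_Koszul}(ii)) used in the proof of Lemma~\ref{lemma_mu_1} then presents each cohomology group as a finite direct sum of copies of $(A^\bx/(c_1,p^N))[[\ep^k]-1]$ and $A^\bx/([\ep^k]-1,c_1,p^N)$.

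The key arithmetic input is that $[\ep^k]-1$ and $[\ep^{1/p^n}]-1$ generate the same ideal (from $[\ep^k]=[\ep^{1/p^n}]^a$, with the difference-of-powers cofactor $\sum_{j=0}^{a-1}[\ep^{1/p^n}]^j$ being a unit because it reduces to $a$ modulo $\frak m^\flat$), and that $[\ep^{1/p^n}]-1$ divides $[\ep^{1/p}]-1=\phi^{-1}(\mu)$ via $[\ep^{1/p}]=[\ep^{1/p^n}]^{p^{n-1}}$. This yields part~(ii) at once in both cases: each of the two types of summands is annihilated by $[\ep^k]-1$ and hence by $\phi^{-1}(\mu)$, uniformly in $\ul k$ and $N$, and this annihilation is preserved by direct sums and by $R\projlim$.

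For the first assertion of~(i) in the integral case, $\phi^{-1}(\mu)\,|\,c_1$ combined with the above divisibilities gives $[\ep^k]-1\,|\,c_1$, whence $A^\bx/([\ep^k]-1,c_1)=A^\bx/([\ep^k]-1)$ and multiplication by $c_1/([\ep^k]-1)$ identifies $(A^\bx/c_1)[[\ep^k]-1]$ with $A^\bx/([\ep^k]-1)$; both are $p$-torsion-free by Lemma~\ref{lemma_topology}(ii) applied to the element $[\ep^k]-1$, whose image in $\roi^\flat$ is a nonzero element of $\frak m^\flat$. Surjectivity of the reductions $H^i(K^\bullet_{N+1})\to H^i(K^\bullet_N)$ kills $\lim^1$, so $H^i_\sub{cont}(\Gamma,A_\inf^\sub{n-i}(R_\infty)/c_1)\cong\projlim_N H^i(K^\bullet_N)$ is $p$-torsion-free. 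The remaining claims follow easily: since $c_1\,|\,\mu$, the trivial $\Gamma$-action on $A_\inf^\bx(R)/\mu$ descends to $A_\inf^\bx(R)/c_1$, giving $H^i_\sub{cont}(\Gamma,A_\inf^\bx(R)/c_1)\cong\wedge^i(A_\inf^\bx(R)/c_1)^d$, again $p$-torsion-free by Lemma~\ref{lemma_topology}(ii); the $\Gamma$-equivariant splitting then transfers $p$-torsion-freeness to $H^i_\sub{cont}(\Gamma,A_\inf(R_\infty)/c_1)$. The main obstacle is purely bookkeeping---keeping track of the associate-ideal relations among the various $[\ep^k]-1$ and $[\ep^{1/p^n}]-1$---and once these are in hand the argument is essentially a reuse of the Koszul machinery of Lemma~\ref{lemma_mu_1}.
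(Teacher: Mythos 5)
Your proof is correct and follows essentially the same route as the paper: both reduce to the Koszul-complex decomposition over the summands $U_1^{k_1}\cdots U_d^{k_d}$ established in the proof of Lemma \ref{lemma_mu_1}, identify the torsion and quotient pieces as copies of $A_\inf^\bx(R)/([\ep^k]-1)$ killed by $\phi^{-1}(\mu)$, and invoke Lemma \ref{lemma_topology}(ii) for $p$-torsion-freeness. The only cosmetic difference is in the final assertions of (i), where the paper simply notes that $(A_\inf^\bx(R)/c_1)^\Gamma\to(A_\inf^\bx(R)/(c_1,p))^\Gamma$ is surjective because $\Gamma$ acts trivially, whereas you compute the full cohomology of the trivial action as exterior powers; both rest on the same observation that $c_1\mid\mu$.
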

\begin{proof}
%It is sufficient to consider the integral case, as part (i) will show that the cohomologies in the mod $p$ case are simply the cohomologies in the integral case taken modulo $p$ (in the mod $p$ case we may always pick some lift of $c_0\in\roi^\flat$ to $A_\inf$ which still divides $([\ep]-1)/([\ep^{1/p}]-1)$).

We begin by proving (ii) and the first part of (i) in the integral case. The second and third paragraphs of the proof of Lemma \ref{lemma_mu_1}, replacing $\mu$ by $c_1$ and using that $\phi^{-1}(\mu)|c_1|\mu$, show that each group cohomology $H^i_\sub{cont}(\Gamma,A_\inf^\sub{n-i}(R_\infty)/c_1)$ is the $p$-adic completion of a direct sum of copies of  $A^\bx_\inf(R)/([\ep^k]-1)$ for various $k\in\bb Z[\tfrac1p]\cap(0,1)$. Such a module is indeed $p$-torsion-free and killed by $\phi^{-1}(\mu)=[\ep^{1/p}]-1$.

This argument may be repeated in the mod $p$ case, replacing Lemma \ref{lemma_Koszul} by the more general calculation when the module $N$ has $g$-torsion \cite[Lem.~7.10(ii)]{BhattMorrowScholze1}, to show that $H^i_\sub{cont}(\Gamma,(A_\inf^\sub{n-i}(R_\infty)/p)/c_1)$ is a direct sum of copies of $(A^\bx_\inf(R)/(p,\ep^k-1)$ and $(A^\bx_\inf(R)/p)[\ep^k-1] $ for various $k\in\bb Z[\tfrac1p]\cap(0,1)$; again, this is killed by $\ep^{1/p}-1$.

It remains to prove the statements in (i) about $H^1$; since $A_\inf(R_\infty)=A_\inf^\bx(R)\oplus A_\inf^\sub{n-i}(R_\infty)$, it is enough to show that $H^1_\sub{cont}(\Gamma, A_\inf^\bx(R)/c_1)$ is $p$-torsion-free, i.e., that $(A_\inf^\bx(R)/c_1)^{\Gamma}\to (A_\inf^\bx(R)/(c_1,p))^\Gamma$ is surjective. But $\Gamma$ acts as the identity on both sides since $c_1|\mu$, so this is trivial.
\end{proof}

The following two lemmas show that any $1$-cocycle representing a semi-linear action over $A_\infty$ is homologous to one over $A^\bx$:

\begin{lemma}\label{lemma_ess_surj1}
Let $m\ge 0$ and let $\al:\Gamma\to \GL_n(A_\infty)$ be a continuous $1$-cocycle such that \[\al(\gamma)\in 1+c_1 M_n(A^\bx)+c_1c_0^m M_n(A_\infty)\] for all $\gamma\in\Gamma$. Then there exists $X\in 1+c_0^{m+1}M_n(A_\infty)$ such that the cohomologous $1$-cocycle $\al':\Gamma\to \GL_n(A_\infty)$, $\gamma\mapsto X^{-1}\al(\gamma)\gamma(X)$ satisfies \[\al'(\gamma)\in 1+c_1 M_n(A^\bx)+c_1c_0^{m+1} M_n(A_\infty)\] %and \[\al'(\gamma)\equiv \al(\gamma)\mod c_1c_0^{m+1}M_n(A_\infty)\]
for all $\gamma\in\Gamma$.
\end{lemma}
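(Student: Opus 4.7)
The plan is to isolate the non-integral part of $\alpha(\gamma)-1$ using the $\Gamma$-stable decomposition $A_\infty = A^\bx \oplus A_\infty^\sub{n-i}$, show it descends to a $1$-cocycle modulo $c_1$, and cobound its $\phi^{-1}(\mu)$-multiple via Lemma~\ref{lemma_vanishing2}(ii). Concretely, I would begin by writing
\[
\alpha(\gamma)-1 = c_1\beta(\gamma)+c_1c_0^m\eta(\gamma)
\]
with $\beta(\gamma)\in M_n(A^\bx)$ and $\eta(\gamma)\in M_n(A_\infty^\sub{n-i})$, the decomposition being obtained by absorbing the $A^\bx$-component of the $c_1c_0^m$-contribution into $\beta$.

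Expanding the cocycle identity $\alpha(\gamma\delta)=\alpha(\gamma)\cdot\gamma(\alpha(\delta))$ and projecting to the direct summand $M_n(A_\infty^\sub{n-i})$, the contributions linear in $\beta$ vanish, since $\Gamma$ preserves both $A^\bx$ and $A_\infty^\sub{n-i}$; each of the quadratic terms lies in $c_1^2 c_0^m M_n(A_\infty^\sub{n-i})$, using that $A_\infty^\sub{n-i}$ is an $A^\bx$-submodule of $A_\infty$ so that $A^\bx\cdot A_\infty^\sub{n-i}\subseteq A_\infty^\sub{n-i}$. Since $A_\infty^\sub{n-i}$ is $c_1c_0^m$-torsion-free (inherited from $A_\infty$ via Lemma~\ref{lemma_topology}), dividing through by $c_1 c_0^m$ yields
\[
\eta(\gamma\delta)-\eta(\gamma)-\gamma\eta(\delta)\in c_1 M_n(A_\infty^\sub{n-i}),
\]
so $\eta$ defines a class $[\eta]\in H^1_\sub{cont}(\Gamma, M_n(A_\infty^\sub{n-i})/c_1)$. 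By Lemma~\ref{lemma_vanishing2}(ii) this class is killed by $\phi^{-1}(\mu)$, producing $Y\in M_n(A_\infty^\sub{n-i})$ with $\gamma Y - Y \equiv -\phi^{-1}(\mu)\eta(\gamma)\pmod{c_1 M_n(A_\infty^\sub{n-i})}$ for all $\gamma$. Using $c_1=\phi^{-1}(\mu)c_0$, this in particular forces $\gamma Y-Y\in\phi^{-1}(\mu)M_n(A_\infty^\sub{n-i})$.

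Setting $X:=1+c_0^{m+1}Y$, so that $X-1$ and $X^{-1}-1$ both lie in $c_0^{m+1}M_n(A_\infty)$ by $(p,\xi)$-adic completeness, I would verify the conclusion through the decomposition
\[
\alpha'(\gamma)-1 = (\alpha(\gamma)-1)+c_0^{m+1}(\gamma Y-Y)+c_0^{m+1}(\alpha(\gamma)-1)\gamma Y + (X^{-1}-1)(\alpha(\gamma)\gamma X - X).
\]
The leading correction satisfies $c_0^{m+1}(\gamma Y-Y)=-c_1c_0^m\eta(\gamma)+c_1c_0^{m+1}(\cdot)$, exactly cancelling the non-integral part $c_1c_0^m\eta(\gamma)$ of $\alpha(\gamma)-1$ and leaving the integral piece $c_1\beta(\gamma)\in c_1M_n(A^\bx)$; the remaining error terms all acquire an explicit factor of $c_1c_0^{m+1}$ and hence lie in $c_1c_0^{m+1}M_n(A_\infty)$. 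The main step I expect to be subtle is establishing that $\eta$ is a $1$-cocycle modulo $c_1$ rather than merely modulo $c_0$: this is where the $\Gamma$-stability and $A^\bx$-module structure of $A_\infty^\sub{n-i}$ are crucially exploited to secure the extra factor of $c_1$ in the quadratic error, which then allows Lemma~\ref{lemma_vanishing2}(ii) to be applied directly without sacrificing a further factor of $\phi^{-1}(\mu)$.
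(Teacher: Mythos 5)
Your proposal is correct and follows essentially the same route as the paper: decompose along $A_\infty=A^\bx\oplus A_\infty^{\text{n-i}}$, observe that the non-integral component of $\tfrac{\al(\gamma)-1}{c_1c_0^m}$ is an additive $1$-cocycle valued in $M_n(A_\infty^{\text{n-i}}/c_1)$ (the paper packages this as the $\Gamma$-equivariant group homomorphism $\rho$, which unpacks to exactly your expansion of the cocycle identity), kill its class by $\phi^{-1}(\mu)$ via Lemma~\ref{lemma_vanishing2}(ii), and set $X=1+c_0^{m+1}Y$ so that $c_0^{m+1}(\gamma Y-Y)=c_1c_0^mZ$ with $Z\equiv-\eta(\gamma)\bmod c_0$ cancels the offending term. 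The only difference is bookkeeping in the final verification, and your identity for $\al'(\gamma)-1$ checks out.
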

\begin{proof}
Recall the decomposition $A_\infty=A^\bx\oplus A^\sub{n-i}$. One sees without difficulty there is a well-defined $\Gamma$-equivariant map of groups \begin{align*}\rho:1+c_1 M_n(A^\bx)+c_1c_0^m M_n(A_\infty)&\To M_n(A_\infty/c_1)/M_n(A^\bx/c_1)=M_n(A^\sub{n-i}/c_1).\\1+c_1 P+c_1c_0^m Q&\longmapsto Q\end{align*}
The $1$-cocycle $\rho \al:\Gamma\to M_n(A^\sub{n-i}/c_1) $ defines a class in $H^1_\sub{cont}(\Gamma, M_n(A^\sub{n-i}/c_1))$, which is killed by $\phi^{-1}(\mu)$ by Lemma \ref{lemma_vanishing2}(ii). That is, there exists $Y\in M_n(A^\sub{n-i})$ such that $\gamma(Y)-Y\equiv-\phi^{-1}(\mu)\rho(\al(\gamma))$ mod $c_1 M_n(A^\sub{n-i})$ for all $\gamma\in\Gamma$.

Set $X:=1+c_0^{m+1} Y$. Fixing $\gamma\in \Gamma$ we may write \[\al(\gamma)=1+c_1P+c_1c_0^mQ,\qquad \gamma(Y)-Y=\phi^{-1}(\mu)Z\] for some $P\in M_n(A^\bx)$, $Q\in M_n(A^\sub{n-i})$, $Z\in M_n(A^\sub{n-i})$, so that $Z\equiv -Q$ mod $c_0M_n(A^\sub{n-i})$; then
\begin{align*}
X^{-1}\al(\gamma)\gamma(X)&= (1+c_0^{m+1} Y)^{-1}(1+c_1 P+c_1c_0^m Q)(1+c_0^{m+1}Y+c_1c_0^m Z)\\
&=(1+c_0^{m+1} Y)^{-1}\big(1+c_0^{m+1}Y+c_1c_0^mZ+(c_1P+c_1c_0^mQ)(1+c_0^{m+1}Y+c_1c_0^mZ)\big)\\
&=1+(1+c_0^{m+1} Y)^{-1}\big(c_1c_0^mZ+(c_1P+c_1c_0^mQ)(1+c_0^{m+1}Y+c_1c_0^mZ)\big)\\
&\equiv 1+\big(c_1c_0^mZ+(c_1P+c_1c_0^mQ)(1+c_0^{m+1}Y+c_1c_0^mY)\big)\mod{c_1c_0^{m+1}M_n(A_\infty)}\\
&\equiv1+c_1c_0^mZ+c_1P+c_1c_0^mQ\mod{c_1c_0^{m+1}M_n(A_\infty)}\\
&\equiv1+c_1P\mod{c_1c_0^{m+1}M_n(A_\infty)}
\end{align*}
as required.
\end{proof}

\begin{lemma}\label{lemma_ess_surj2}
Let $m\ge 1$ and let $\al:\Gamma\to\GL_n(A_\infty)$ be a continuous $1$-cocycle such that $\al(\gamma)\in 1+c_1M_n(A_\infty)$ for all $\gamma\in\Gamma$. Then there exists $X\in 1+c_0M_n(A_\infty)$ such that the cohomologous $1$-cocycle $\al':\Gamma\to\GL_n(A_\infty)$, $\gamma\mapsto X^{-1}\al(\gamma)\gamma(X)$ satisfies $\al'(\gamma)\in 1+c_1M_n(A^\bx)$ for all $\gamma\in\Gamma$.
\end{lemma}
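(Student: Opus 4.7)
The strategy is a standard successive-approximation argument: iterate Lemma~\ref{lemma_ess_surj1} with ever larger values of its parameter $m$ and take a $c_0$-adic limit. Starting with $\al_0 := \al$, which by hypothesis satisfies $\al_0(\gamma)\in 1+c_1M_n(A^\bx)+c_1c_0^0M_n(A_\infty)=1+c_1M_n(A_\infty)$ and so verifies the hypothesis of Lemma~\ref{lemma_ess_surj1} in the case $m=0$, I construct inductively matrices $X_m\in 1+c_0^{m+1}M_n(A_\infty)$ and continuous $1$-cocycles
\[
\al_{m+1}(\gamma):=X_m^{-1}\al_m(\gamma)\gamma(X_m)\in 1+c_1M_n(A^\bx)+c_1c_0^{m+1}M_n(A_\infty)
\]
for every $m\ge0$. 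Setting $Y_m:=X_0X_1\cdots X_m$, we have $\al_{m+1}(\gamma)=Y_m^{-1}\al(\gamma)\gamma(Y_m)$.

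The second step is to pass to the limit in $m$. Since $Y_m-Y_{m-1}=Y_{m-1}(X_m-1)\in c_0^{m+1}M_n(A_\infty)$, the sequence $(Y_m)$ is Cauchy in the $c_0$-adic topology on $M_n(A_\infty)$. Note that $c_0$ has nonzero image in $\frak m^\flat$ (it divides $\xi$ and is a nonunit, so it cannot lie in $pA_\inf$ either), so Lemma~\ref{lemma_topology}(iii) applies and $A_\infty$ is $c_0$-adically complete. Thus $Y_m\to X$ for some $X\in 1+c_0M_n(A_\infty)$, and $X$ is automatically invertible because $1+c_0M_n(A_\infty)$ is a multiplicative subgroup of $\GL_n(A_\infty)$. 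Define $\al'(\gamma):=X^{-1}\al(\gamma)\gamma(X)$; then $\al'$ is a continuous $1$-cocycle cohomologous to $\al$, and $\al_m(\gamma)\to\al'(\gamma)$ in the $c_0$-adic topology.

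The last step is to verify that $\al'(\gamma)$ lies in $1+c_1M_n(A^\bx)$. Use the topological direct sum decomposition $A_\infty=A^\bx\oplus A_\infty^\sub{n-i}$ from Lemma~\ref{lemma_topology}(vi), and write $\al_m(\gamma)-1=P_m+N_m$ with $P_m\in c_1M_n(A^\bx)$ and $N_m\in c_1c_0^mM_n(A_\infty^\sub{n-i})$. By continuity of the two projections, $P_m\to P$ and $N_m\to N$ with $\al'(\gamma)-1=P+N$. The submodule $c_1M_n(A^\bx)$ is closed in $M_n(A^\bx)$ by Lemma~\ref{lemma_topology}(iv) (since $c_1$ lies between $\phi^{-1}(\mu)$ and $\mu$, so reduces to a nonzero element of $\frak m^\flat$), forcing $P\in c_1M_n(A^\bx)$. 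On the other hand, $N_{m'}\in c_0^mM_n(A_\infty^\sub{n-i})$ for all $m'\ge m$, and $A_\infty^\sub{n-i}$ is $c_0$-adically separated (Lemma~\ref{lemma_topology}(iii)), so $N\in\bigcap_m c_0^mM_n(A_\infty^\sub{n-i})=0$. Hence $\al'(\gamma)=1+P\in 1+c_1M_n(A^\bx)$, as required.

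\textbf{Main obstacle.} The genuine content is contained in Lemma~\ref{lemma_ess_surj1}, whose proof relied on the vanishing (up to $\phi^{-1}(\mu)$) of $H^1_\sub{cont}(\Gamma,A_\infty^\sub{n-i}/c_1)$ from Lemma~\ref{lemma_vanishing2}(ii). Given that input, the present lemma is a routine completion-and-assembly exercise; the only point requiring attention is ensuring that the limit $\al'(\gamma)$ has trivial component in the non-integral summand $A_\infty^\sub{n-i}$, which is handled by the $c_0$-adic separatedness of that summand combined with the improving exponent $c_1c_0^m$ at each stage.
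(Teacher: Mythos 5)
Your proof is correct and follows essentially the same route as the paper's: iterate Lemma~\ref{lemma_ess_surj1}, take the $c_0$-adic limit of the products $Y_m$ using the completeness and closedness facts from Lemma~\ref{lemma_topology}, and then kill the non-integral component. Your final step (projecting onto $A^\bx\oplus A_\infty^{\sub{n-i}}$ and passing to the limit componentwise) is just a reorganisation of the paper's step of intersecting the closed subsets $c_1M_n(A^\bx)+c_1c_0^{m+1}M_n(A_\infty)$ over all $m$, which is proved there by the same decomposition.
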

\begin{proof}
By applying Lemma \ref{lemma_ess_surj1} repeatedly we obtain a sequence of elements $X_m\in1+c_0^{m+1}M_n(A_\infty)$, $m\ge0$, such that the elements $Y_m:=X_0\cdots X_m$ satisfy the following: \[Y_m^{-1}\al(\gamma)\gamma(Y_m)\in 1+c_1M_n(A^\bx)+c_1c_0^{m+1}M_n(A_\infty)\] for all $\gamma\in\Gamma$ and $m\ge0$.

Since $Y_{m+1}-Y_m=X_0\cdots X_m(X_{m+1}-1)\in c_0^{m+1}M_n(A_\infty)$, the sequence $Y_m$ converges as $m\to\infty$ to some $X\in M_n(A_\infty)$; note that $X$ lies in $1+c_0M_n(A_\infty)$ (since all the $Y_m$ do and $c_0A_\infty$ is closed in $A_\infty$ by Lemma \ref{lemma_topology}) and hence is invertible. Moreover, the same argument shows that the sequence $Y_m^{-1}$ also converges, with limit necessarily given by $X^{-1}$ since multiplication is continuous.

 Taking the limit of the previous identity shows that \[X^{-1}\al(\gamma)\gamma(X)\in 1+c_1M_n(A^\bx)+c_1c_0^{m+1}M_n(A_\infty)\] for all $m\ge0$, since this subset is closed in $M_n(A_\infty)$ by Lemma \ref{lemma_topology}. Intersecting over all $m\ge0$ and using $\bigcap_{m\ge1}(c_1A^\bx+c_1c_0^mA_\infty)=c_1A^\bx$ (write $A_\infty=A^\bx\oplus A^\sub{n-i}$ and use $\bigcap_{m\ge1}c_1c_0^mA^\sub{n-i}=0$) then shows that in fact $X^{-1}\al(\gamma)\gamma(X)\in 1+c_1M_n(A^\bx)$, as desired.
\end{proof}

To prove essential surjectivity we need the following additional lemmas, which will allow us glue from the free case to obtain the general finite projective case. Let us say that a homomorphism of commutative rings $B\to C$ is a ``proj-isomorphism'' if and only if $P\mapsto P\otimes_BC$ induces a bijection from isomorphism classes of finite projective $B$-modules to isomorphism classes of finite projective $C$-modules. It is well-known that $B\to B_\sub{red}$ is a proj-isomorphism, and that if $B$ is $I$-adically complete and separated with respect to some ideal $I$ then $B\to B/I$ is a proj-isomorphism.

\begin{lemma}\label{lemma_proj-isoms}
\begin{enumerate}
\item The canonical map $A^\bx/c_1=(A^\bx/c_1)^\Gamma\to (A_\infty/c_1)^\Gamma$ is a proj-isomorphism.
\item The maps $A^\bx/(c_1,p)\to (A_\infty/c_1)^\Gamma/p\to A_\infty/(c_1,p)$ induce homeomorphisms on $\Spec$.
\end{enumerate}
\end{lemma}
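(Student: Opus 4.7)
The plan is to prove (ii) first and then use the nilpotence structure it reveals to deduce (i).

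For (ii), I first observe that all three rings inject into $A_\infty/(c_1,p)$: the rightmost map is injective by $p$-torsion-freeness of $A_\infty/c_1$ (Lemma~\ref{lemma_topology}), since taking $\Gamma$-invariants preserves this property; and the leftmost map is injective by $p$-torsion-freeness of the cokernel $(A_\infty^{\sub{n-i}}/c_1)^\Gamma$, established in Lemma~\ref{lemma_vanishing2}(i) (or its direct analogue in the mod-$p$ case). It therefore suffices to show that the outer composition $A^\bx/(c_1,p) \to A_\infty/(c_1,p)$ is a universal homeomorphism on Spec. Using the topological direct sum $A_\infty = A^\bx \oplus A_\infty^{\sub{n-i}}$, every generator $U^{\vec k}$ with $\vec k \in \bb Z[\tfrac 1p]^d$ satisfies $(U^{\vec k})^{p^n} \in A^\bx$ for $n$ large, making the extension integral; in characteristic $p$ this forces it to be radicial, hence a universal homeomorphism. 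Both intermediate maps then inherit this property since sub-extensions of an integral radicial extension in characteristic $p$ are themselves integral and radicial over the base.

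For (i), $A^\bx/c_1$ and $B := (A_\infty/c_1)^\Gamma$ are both $p$-adically complete and $p$-torsion-free, so Lemma~\ref{lemma_limits_of_finite_projective} combined with standard lifting across the nilpotent thickenings $B/p^{n+1} \twoheadrightarrow B/p^n$ reduces the claim to showing that $A^\bx/(c_1,p) \to B/p$ is a proj-isomorphism (in the mod-$p$ situation this reduction is vacuous). The key structural input is two-fold: by Lemma~\ref{lemma_vanishing2}(ii), $C_0 := (A_\infty^{\sub{n-i}}/c_1)^\Gamma$ is annihilated by $\phi^{-1}(\mu)$, and since the $\phi^{-1}(\mu)$-torsion of $A^\bx/c_1$ equals $c_0 \cdot (A^\bx/c_1)$ where $c_0 := c_1/\phi^{-1}(\mu)$ (by flatness of $A^\bx$ over the domain $A_\inf$, in which the $\phi^{-1}(\mu)$-torsion of $A_\inf/c_1$ is exactly $c_0 A_\inf/c_1$), one obtains the inclusion $C_0 \subseteq c_0 \cdot A_\infty/c_1$; and moreover $c_0$ is nilpotent modulo $(c_1,p)$, which follows from the valuation computation in $\roi^\flat$ giving $c_0^n \in c_1\roi^\flat$ for $n$ sufficiently large (since $c_0 \mid \xi$ with $c_0$ a non-unit), propagated to $A^\bx/(c_1,p)$ and $B/p$ by flatness.

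Combining these, the $B/p$-ideal $J$ generated by $C_0/p$ is contained in the nilpotent ideal $c_0 \cdot B/p$, hence is itself nilpotent. The $A^\bx/(c_1,p)$-linear splitting $B/p = A^\bx/(c_1,p) \oplus C_0/p$ then shows that the composition $A^\bx/(c_1,p) \hookrightarrow B/p \twoheadrightarrow (B/p)/J$ is surjective, with nilpotent kernel $I := A^\bx/(c_1,p) \cap J$ (nilpotent because contained in $J$). Standard lifting of idempotents and finite projective modules across the nilpotent thickenings $A^\bx/(c_1,p) \twoheadrightarrow A^\bx/(c_1,p)/I$ and $B/p \twoheadrightarrow (B/p)/J$ then identifies the categories of finite projective modules over $A^\bx/(c_1,p)$ and $B/p$ via the given map, yielding the proj-isomorphism. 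The main obstacle is precisely this passage from (ii) to (i): Spec-homeomorphism alone does not imply proj-isomorphism (e.g.\ cuspidal curves), so the extra nilpotence ingredient from the $\phi^{-1}(\mu)$-torsion of $C_0$ together with the nilpotence of $c_0$ mod $(c_1,p)$ is essential.
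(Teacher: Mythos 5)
Your argument is correct and follows essentially the same route as the paper's: reduce (i) to the mod-$p$ case via $p$-adic completeness, observe that the non-integral $\Gamma$-invariants are annihilated by nilpotent elements (the paper computes directly that they are multiples of $c_1/(\ep^{k}-1)$, which is a multiple of your $c_0$, while you invoke Lemma \ref{lemma_vanishing2}(ii) whose proof is that same computation), and use pure inseparability in characteristic $p$ for the homeomorphisms on $\Spec$ in (ii). The only imprecision is the claim $J\subseteq c_0\cdot(B/p)$: the inclusion $C_0\subseteq c_0\cdot(A_\infty/c_1)$ exhibits elements of $C_0$ as $c_0$ times elements of $A_\infty/c_1$ that need not lie in $(A_\infty/c_1)^\Gamma$, but since $B/p$ injects into $A_\infty/(c_1,p)$ where $c_0$ is nilpotent, the ideal $J$ is still nilpotent and your conclusion stands.
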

\begin{proof}
%Since $(A^\bx_\inf(R)/c_1)^\Gamma$ and $(A_\inf(R_\infty)/c_1)^\Gamma$ are $p$-adically complete and separated, and $(A^\bx_\inf(R)/c_1)^\Gamma/p\isoto (A^\bx_\inf(R)/(c_1,p))^\Gamma $ and $(A_\inf(R_\infty)/c_1)^\Gamma/p\isoto (A_\inf(R_\infty)/(c_1,p))^\Gamma$ by Lemma \ref{lemma_vanishing2}(i), we see that the maps $(A^\bx_\inf(R)/c_1)^\Gamma\to (A^\bx_\inf(R)/(c_1,p))^\Gamma$ and $(A_\inf(R_\infty)/c_1)^\Gamma\to (A_\inf(R_\infty)/(c_1,p))^\Gamma$ are proj-isomorphisms.
$(A_\inf(R_\infty)/c_1)^\Gamma$ is $p$-adically complete and separated, and $(A_\inf(R_\infty)/c_1)^\Gamma/p\isoto (A_\inf(R_\infty)/(c_1,p))^\Gamma$ by Lemma \ref{lemma_vanishing2}(i), so the map $(A_\inf(R_\infty)/c_1)^\Gamma\to (A_\inf(R_\infty)/(c_1,p))^\Gamma$ is a proj-isomorphism. Similarly $A^\bx_\inf(R)/c_1\to A^\bx_\inf(R)/(c_1,p)$ is a proj-isomorphism by Lemma \ref{lemma_topology}(ii). Therefore part (i) reduces to the mod $p$ case, which we prove by checking that the inclusion $(A^\bx_\inf(R)/p)/c_1\subseteq ((A_\inf(R_\infty)/p)/c_1)^\Gamma$ becomes an isomorphism when we mod out by the nilradicals on both sides (and so is a proj-isomorphism). Indeed, ignoring contributions from the left side, any element on the right side is a finite sum of elements of the form $fU_1^{k_1}\cdots U_d^{k_d}$, where $k_1,\dots,k_d\in \bb Z[\tfrac 1p]\cap [0,1)$ are not all zero, $f\in (A^\bx_\inf(R)/p)/c_1$, and $(\ep^{k_i}-1)f=0$ in $(A^\bx_\inf(R)/p)/c_1$. Picking some $i$ for which $k_i\neq 0$, then the annihilator of $\ep^{k_i}-1$ in $(A^\bx_\inf(R)/p)/c_1$ is given by multiples of the element $\tfrac{c_1}{\ep^{k_i}-1}$, which is nilpotent in $\roi^\flat/c_1$ as desired.

We have also proved that the first arrow in part (ii) is a homeomorphism on $\Spec$, since the two sides have the same underlying reduced subschemes. But $A^\bx/(c_1,p)\to A_\infty/(c_1,p)$ is an extension of rings in characteristic $p$ such that any element $f\in A_\infty/(c_1,p)$ satisfies $f^{p^m}\in A^\bx/(c_1,p)$ for $m\gg0$; therefore it also induces a homeomorphism on $\Spec$.
\end{proof}

\begin{lemma}\label{lemma_localisation}
Let $R'$ be the $p$-completion of a localisation of $R$, and set $R_\infty':=R'\hat\otimes_RR_\infty$, i.e., the usual extension of $R'$ induced by the framing $\square:\roi\pid{\ul T^{\pm1}}\to R\to R'$. Write $A_\infty'=A_\inf(R_\infty')$ (resp.~$A_\inf(R_\infty')/p$) and $A^{\bx\prime}=A_\inf^\bx(R')$ (resp.~$A_\inf^\bx(R')/p$) in the integral (resp.~mod $p$) case. Then:
\begin{enumerate}
\item the canonical map \[\big(A_\infty/(c_1,p)\big)^\Gamma \otimes_{A^\bx/(c_1,p)} A^{\bx\prime}/(c_1,p)\To \big(A_\infty'/(c_1,p)\big)^\Gamma\]
is an isomorphism;
\item the canonical maps $A^\bx/(c_1,p)\to A^{\bx\prime}/(c_1,p)$ and $\big(A_\infty/(c_1,p)\big)^\Gamma\to \big(A_\infty'/(c_1,p)\big)^\Gamma$ are localisations.
\end{enumerate}
\end{lemma}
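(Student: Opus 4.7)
The key structural input that I would set up first is a good model for $A^{\bx\prime}$: it should be expressible as the $(p,\xi)$-adic completion of a localization $\tilde S^{-1}A^\bx$, where $\tilde S\subseteq A^\bx$ is a multiplicative set of lifts of the multiplicative set $S\subseteq R$ at which $R'=(S^{-1}R)^{\wedge p}$ is formed. This follows from the uniqueness of $(p,\xi)$-adically formally étale lifts over $A_\inf\pid{\ul U^{\pm1}}$: localization preserves formal étaleness, and $((\tilde S^{-1}A^\bx)^{\wedge (p,\xi)})/\xi$ computes to $(S^{-1}R)^{\wedge p}=R'$. Analogously, via the decomposition $A_\infty=A^\bx\oplus A^\sub{n-i}$ from \S\ref{ss_framed}, one gets $A'_\infty=A^{\bx\prime}\oplus A^{\sub{n-i}\prime}$, where $A^{\sub{n-i}\prime}$ is the $(p,\xi)$-adic completion of $\bigoplus_{\ul k\neq 0}A^{\bx\prime}U^{\ul k}$, i.e.\ the $(p,\xi)$-completed base change of $A^\sub{n-i}$ along $A^\bx\to A^{\bx\prime}$.

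For (i) I would take $\Gamma$-invariants in this decomposition modulo $(c_1,p)$. On the $A^\bx$-summand $\Gamma$ acts trivially, since the $\Gamma$-action on $A^\bx$ is trivial modulo $\mu$ and $c_1\mid\mu$; this contributes $A^\bx/(c_1,p)$ on the left, matched by $A^{\bx\prime}/(c_1,p)$ on the right. For the non-integral summand I would work grading-piece by grading-piece as in the proof of Lemma~\ref{lemma_vanishing2}: each $\gamma_i$ acts on $A^\bx U^{\ul k}$ by multiplication by $[\ep^{k_i}]$, so the $\Gamma$-invariants in the $\ul k$-graded piece identify with the joint annihilator of the $[\ep^{k_i}]-1$ in $A^\bx/(c_1,p)$; this annihilator is a cyclic module, obtained by multiplication by an element of the base ring $A$ (namely a suitable quotient of $c_1$). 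Such cyclic quotients of $A^\bx/(c_1,p)$ by ideals of $A$ are preserved under the flat base change $A^\bx/(c_1,p)\to A^{\bx\prime}/(c_1,p)$, so the $\ul k$-graded invariants match on both sides.

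For (ii), the first statement I would obtain by reducing the identification $A^{\bx\prime}=(\tilde S^{-1}A^\bx)^{\wedge(p,\xi)}$ modulo $(c_1,p)$: one checks that the natural map $\tilde S^{-1}(A^\bx/(c_1,p))\to A^{\bx\prime}/(c_1,p)$ is an isomorphism, e.g.\ by realising its target as the unique formally étale lift of the localization $S^{-1}(R/p)=R'/p$ of $R/p$ over $A^\bx/(c_1,p,\xi)=R/p$. The second statement follows formally from (i): once $A^{\bx\prime}/(c_1,p)$ is known to be a localization of $A^\bx/(c_1,p)$, tensoring $(A_\infty/(c_1,p))^\Gamma$ with it over $A^\bx/(c_1,p)$ automatically yields a localization, which by (i) is precisely $(A'_\infty/(c_1,p))^\Gamma$.

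The main obstacle I foresee is the interaction of the $(p,\xi)$-adic completion with localization and with taking $\Gamma$-invariants, since a priori completion does not commute with either. The resolution in each place relies on the fact that modding out by $(c_1,p)$ leaves only the graded direct-sum structure, and that the Koszul-type computations underlying Lemma~\ref{lemma_mu_1} show the $\Gamma$-invariants on each graded piece are a single cyclic module over $A^\bx/(c_1,p)$; consequently the apparent infinite completion is tame enough that the localization tensor product and the graded direct sum commute on the $\Gamma$-invariants, which is all that is needed for (i).
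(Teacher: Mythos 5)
Your proposal is correct and follows essentially the same route as the paper: decompose $A_\infty/(c_1,p)$ into its $\Gamma$-stable graded pieces, identify the invariants of each nonzero piece as the cyclic submodule $\tfrac{c_1}{[\ep^k]-1}(A^\bx/(c_1,p))\,U^{\ul k}$ (your "joint annihilator"), and conclude (i) by flatness of $A^\bx/(c_1,p)\to A^{\bx\prime}/(c_1,p)$, while (ii) follows from étaleness plus the fact that the map is a localisation modulo the nilpotent $\xi$. The only slip is cosmetic: $A^\bx/(c_1,p,\xi)$ equals $R/(p,\bar c_1)$ rather than $R/p$ when $\xi\nmid c_1$ (e.g.\ $c_1=\phi^{-1}(\mu)$ for $p$ odd), but the lifting argument is unaffected since $R/(p,\bar c_1)\to R'/(p,\bar c_1)$ is still a localisation.
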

\begin{proof}
It suffices to treat the mod $p$ case (indeed, the assertions in fact only concern the mod $p$ case). 

(i): The $A^{\bx}/c_1$-module $A_{\infty}/c_1$ is the
direct sum of the free $A^{\bx}/c_1$-submodules $(A^{\bx}/c_1)U_1^{k_1}\cdots U_d^{k_d}$, for $k_1,\ldots,k_d\in \bb Z[\frac{1}{p}]\cap [0,1)$, and each direct summand is $\Gamma$-stable with the action of $\gamma_i$ given by multiplication by $[\varepsilon^{k_i}]$ for $i=1,\dots,d$. As $c_1A\subset \varphi^{-1}(\mu) A$, we see that the $\Gamma$-invariant part of each direct summand is $A^{\bx}/c_1$ if all $k_i$ are $0$, and $\frac{c_1}{[\varepsilon^k]-1}(A^{\bx}/c_1)U_1^{k_1}\cdots U_d^{k_d}$ otherwise, where $k$ is one of $k_1,\dots,k_d$ with the smallest $p$-adic valuation. The same computation applies to $A'_{\infty}/c_1$, and so  claim (i) follows from the flatness of $A^{\bx}/c_1\to A^{\bx\prime}/c_1$.

(ii): The map $A^{\bx}/c_1\to A^{\bx\prime}/c_1$ is \'etale by construction, and modulo the nilpotent element $\xi$ it is a localisation (namely, a quotient of the localisation $R/p\to R'/p$). Therefore it is a localisation; the same follows for the second map in claim (ii) by using (i).
%of the \'etale homomorphisms $(\cal O/p)/\ol c_1\langle\ul T^{\pm 1}\rangle\to (R/p)/\ol c_1\to (R'/p)/\ol c_1$. This shows that $A^{\bx}/c_1\to A^{\bx\prime}/c_1$ is a localisation, and then the claim (i) implies that so is $(A_{\infty}/c_1)^{\Gamma}\to (A'_{\infty}/c_1)^{\Gamma}$. Let us prove 
\end{proof}

We may now prove essential surjectivity of the base change functor:

\begin{proposition}\label{proposition_ess_surj}
%In the integral case assume in addition that $\mu\in c_1A$.
The base change functor of Theorem \ref{theorem_descent_framed_general} is essentially surjective.
\end{proposition}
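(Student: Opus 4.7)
Given $M\in\Rep_\Gamma^{c_1}(A_\infty)$, the strategy is to first handle the case in which $M$ is free over $A_\infty$ with $(M/c_1)^\Gamma$ free over $(A_\infty/c_1)^\Gamma$, and then to reduce the general finite projective case to this one by Zariski-localizing on $\Spec R$ and gluing.

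In the free case, pick a basis of $(M/c_1)^\Gamma$ over $(A_\infty/c_1)^\Gamma$ and lift it to $M$; by triviality of $M$ mod $c_1$ together with Nakayama's lemma and the $(p,c_0)$-adic completeness of $M$, one obtains a basis $e_1,\dots,e_n$ of $M$ with respect to which the semi-linear $\Gamma$-action is encoded by a continuous $1$-cocycle $\alpha:\Gamma\to \GL_n(A_\infty)$ satisfying $\alpha(\gamma)\in 1+c_1 M_n(A_\infty)$. Lemma \ref{lemma_ess_surj2} then furnishes an element $X\in 1+c_0 M_n(A_\infty)$ so that the cohomologous cocycle $\alpha'(\gamma):=X^{-1}\alpha(\gamma)\gamma(X)$ takes values in $1+c_1 M_n(A^\bx)\subseteq \GL_n(A^\bx)$. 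Setting $N:=(A^\bx)^n$ equipped with the semi-linear $\Gamma$-action determined by $\alpha'$ gives an object of $\Rep_\Gamma^{c_1}(A^\bx)$ (it is trivial mod $c_1$ because $\alpha'\equiv 1$ mod $c_1$), and multiplication by $X$ exhibits a $\Gamma$-equivariant $A_\infty$-linear isomorphism $N\otimes_{A^\bx}A_\infty\isoto M$.

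For the general projective case, the goal is to choose a Zariski cover over which the free-case hypothesis holds. Via Lemma \ref{lemma_proj-isoms}(i), the finite projective $(A_\infty/c_1)^\Gamma$-module $(M/c_1)^\Gamma$ corresponds to a finite projective $A^\bx/c_1$-module, which is Zariski-locally free on $\Spec(A^\bx/c_1)$; reducing modulo $p$ and invoking the homeomorphism of spectra from Lemma \ref{lemma_proj-isoms}(ii), such a trivializing cover may be chosen to arise from a Zariski cover of $\Spec R$ by basic localizations $R\to R_i$ (using $p$-adic completeness of $R$ to lift idempotents/units). Passing to the corresponding framed period rings $A^{\bx\prime}_i:=A_\inf^\bx(R_i)$ and period rings $A'_{\infty,i}:=A_\inf(R_\infty\hat\otimes_R R_i)$, Lemma \ref{lemma_localisation} ensures that $\Gamma$-invariants-formation and these localizations are mutually compatible; thus the free case applied to each $M\otimes_{A_\infty}A'_{\infty,i}$ yields $N_i\in\Rep_\Gamma^{c_1}(A^{\bx\prime}_i)$ together with a chosen isomorphism $N_i\otimes_{A^{\bx\prime}_i}A'_{\infty,i}\isoto M\otimes_{A_\infty}A'_{\infty,i}$.

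On double overlaps, composing the above isomorphisms yields $\Gamma$-equivariant identifications between the two base changes of $N_i$ and $N_j$ over the relevant period ring on $R_i\hat\otimes_R R_j$; by the fully faithfulness of Proposition \ref{proposition_ff} applied to that overlap, these descend to canonical $\Gamma$-equivariant isomorphisms between the restrictions of $N_i$ and $N_j$ to $A_\inf^\bx(R_i\hat\otimes_R R_j)$, and the cocycle condition on triple overlaps likewise follows from fully faithfulness. Standard Zariski descent for finite projective modules then glues the $N_i$ into a single finite projective $A^\bx$-module $N$ with semi-linear continuous $\Gamma$-action in $\Rep_\Gamma^{c_1}(A^\bx)$ satisfying $N\otimes_{A^\bx}A_\infty\cong M$. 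I expect the main technical obstacle to be ensuring that the trivializing cover of $(M/c_1)^\Gamma$, which initially lives over $A^\bx/c_1$, genuinely arises from a Zariski cover of $\Spec R$ that behaves compatibly with $A_\infty$ and with $\Gamma$-invariants — this is exactly what Lemmas \ref{lemma_proj-isoms}(ii) and \ref{lemma_localisation} are designed to make possible, and is the step where the specific structure of the framed period ring is really being used.
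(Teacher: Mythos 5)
Your proposal is correct and follows essentially the same route as the paper: the free case via $1$-cocycles and Lemma \ref{lemma_ess_surj2}, followed by Zariski glueing over $\Spf R$ using Lemmas \ref{lemma_proj-isoms} and \ref{lemma_localisation} together with the fully faithfulness of Proposition \ref{proposition_ff}. The only detail you elide is verifying at the end that the $\Gamma$-action on the glued module $N$ is continuous, which the paper disposes of by citing Lemma \ref{lemma_automatic_continuity}.
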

\begin{proof}
Let $M\in\Rep_\Gamma^{c_1}(A_\infty)$, and assume initially that the finite projective $(A_\infty/c_1)^\Gamma$-module $(M/c_1)^\Gamma$ is actually finite free. Lifting a choice of a basis (which is also a basis of the finite free $A_\infty/c_1$-module $M/c_1=(M/c_1)^\Gamma\otimes_{(A_\infty/c_1)^\Gamma}A_\infty/c_1$) shows that $M$ is a finite free $A_\infty$-module; in fact, it lets us identify $M$ with $A_\infty^n$ in such a way that $(M/c_1)^\Gamma$ identifies compatibly with $((A_\infty/c_1)^\Gamma)^n\subseteq (A_\infty/c_1)^n$.

%The two proj-isomorphism $A^\bx\to A^\bx/c_1\to (A_\infty/c_1)^\Gamma$ (the first by $c_1$-adic completeness, the second by Lemma \ref{lemma_proj-isoms}) imply the existence of a finite free $A^\bx$-module $F$ such that $F\otimes_{A^\bx}(A_\infty/c_1)^\Gamma\cong(M/c_1)^\Gamma$ and therefore $F\otimes_{A^\bx}A_\infty/c_1\cong M/c_1$. From the proj-isomorphism $A_\infty\to A_\infty/c_1$ we then deduce that $F\otimes_{A^\bx}A_\infty\cong M$. In other words, picking a basis for $F$, we may identify the $A_\infty$-module 

Letting $\Gamma$ act via the standard basis of $A_\infty^n$, the induced $1$-cocycle $\al:\Gamma\to \GL_n(A_\infty)$ representing the given $\Gamma$-action on $M$ therefore satisfies $\al(\gamma)\in1+c_1M_n(A_\infty)$ for all $\gamma\in\Gamma$.
%
%, and let $M\into F$ be given by Lemma \ref{lemma_reduce_to_free}; pick a basis $e_1,\dots,e_n$ for $F$ which modulo $c_1$ is a basis for the free $(A_\infty/c_1)^\Gamma$-module $(F/c_1)^\Gamma$. Then the $\Gamma$-action on $F$ corresponds to a continuous $1$-cocycle
%Pick a finite projective $A_\infty$-module $M'$ such that $M\oplus M'$ identifies as a module with $A_\infty^n$ for some $n\ge0$. By equipping $M'$ with a trivial $\Gamma$-action (e.g., by restricting the trivial $\Gamma$-action on $A_\infty^n$) we obtain a $\Gamma$-action on $M\oplus M'$ which is trivial modulo $c_1$. This corresponds to a $1$-cocycle 
%$\al:\Gamma\to \GL_n(A_\infty)$ such that $\al(\gamma)\in 1+c_1M_n(A_\infty)$ for all $\gamma\in\Gamma$.
%
By Lemma \ref{lemma_ess_surj2}, $\al$ is therefore cohomologous to a continuous $1$-cocycle $\al':\Gamma\to\GL_n(A_\infty)$ such that $\al'(\gamma)\in 1+c_1 M_n(A^\bx)$ for all $\gamma\in\Gamma$. In other words, $M$ is $A_\infty$-linearly and $\Gamma$-equivariantly isomorphic to $A_\infty^n$ equipped with new $\Gamma$-action $\gamma\star\ul x:=\al'(\gamma)(\gamma(\ul x))$ (where $\gamma(\ul x)$ refers to the coordinate-wise action of $\Gamma$ on $A_\infty^n$); but this is the base change of $N:=A^{\bx n}$ equipped with the analogous $\Gamma$-action. In conclusion, we have found $N\in \Rep_\Gamma^{c_1}(A^\bx)$ such that $M\cong N\otimes_{A^\bx}A_\infty$, which completes the proof of essential surjectivity under the extra assumption that $(M/c_1)^\Gamma$ is finite free.

% is a direct summand of $N\otimes_{A^\bx}A_\infty$. By the fully faithfulness of Proposition \ref{proposition_ff}, the idempotent $N\otimes_{A^\bx}A_\infty\to M\to N\otimes_{A^\bx}A_\infty$ is induced by a direct summand of $N$, which gives the desired descent of $M$.

We now proceed by a glueing argument to show that the base change functor is essentially surjective in general. The maps
\[\xymatrix{
A^\bx/c_1\ar@{=}[r] & (A^\bx/c_1)^\Gamma\ar[r] & (A_\infty/c_1)^\Gamma\ar[r] & A_\infty/c_1\\
A^\bx\ar[u]\ar[rrr] &&& A_\infty\ar[u]
}\]
all induce homeomorphisms on $\Spf$ by Lemma \ref{lemma_proj-isoms}(ii) (in the integral case, we take formal spectra on the bottom row with respect to the $(p,c_1)$-adic topology and on the top row with respect to the $p$-adic topology; in the mod $p$-case, the formal spectra on the bottom row are with respect to the $c_1$-adic topology and on the top row the formal spectra are actually usual spectra); the common topological space is moreover isomorphic to $\frak X:=\Spf(R)$, since the $c_1$-adic and $\xi$-adic topologies on $A^\bx/p$ are the same and $A^\bx/(\xi,p) =R/p$. Therefore $\frak X$ admits various structure sheaves : \[\roi_{A^\bx},\quad\roi_{A^\bx/c_1}=\roi_{A^\bx}/c_1,\quad \roi_{A_\infty},\quad \roi_{A_\infty}/c_1=\roi_{A_\infty/c_1},\quad \roi_{(A_\infty/c_1)^\Gamma}\] Moreover, sheafifiying the construction of the framed period ring from \S\ref{ss_framed} shows that, for any $p$-completed localisation $R\to R'$, with corresponding open $\frak U=\Spf R'\subseteq\frak X$, the sections on $\frak U$ of these sheaves are the corresponding period ring construction for $R'$ in place of $R$ with respect to the induced framing $\square:\roi\pid{\ul T^{\pm1}}\to R\to R'$. This also shows that the first four sheaves are equipped with $\Gamma$-actions (though the action on $\roi_{A^\bx/c_1}$ is trivial) and, using Lemma \ref{lemma_localisation}, that $(\roi_{A_\infty/c_1})^\Gamma=\roi_{(A_\infty/c_1)^\Gamma}$.

Glueing is now a formal process in which the main possible source of confusion is the notation. Let $M\in\Rep_\Gamma^{c_1}(A_\infty)$, and let $\cal M:=M\otimes_{A_\infty}\roi_{A_\infty}$ be the corresponding locally free sheaf of $\roi_{A_\infty}$-modules. By writing $M/c_1=(M/c_1)^\Gamma\otimes_{(A_\infty/c_1)^\Gamma}A_\infty/c_1$ we see that $(\cal M/c_1)^\Gamma=(M/c_1)^\Gamma\otimes_{(A_\infty/c_1)^\Gamma}\roi_{(A_\infty/c_1)^\Gamma}$ is the locally free sheaf of $\roi_{(A_\infty/c_1)^\Gamma}$-modules associated to the finite projective module $(M/c_1)^\Gamma$. There exists a finite open cover $\{\frak U_\lambda\}_\lambda$ of $\frak X$, where each $\frak U_\lambda$ corresponds to some $p$-completed localisation $R_\lambda$ of $R$, such that each $\Gamma(\frak U_\lambda, (\cal M/c_1)^\Gamma)$ is a finite free $\Gamma(\frak U_\lambda,\roi_{(A_\infty/c_1)^\Gamma})$-module. Therefore we may apply the finite free case of essential surjectivity to $M\otimes_{A_\infty}\Gamma(\frak U_\lambda,\roi_{A_\infty})\in\Rep_\Gamma^{c_1}(\Gamma(\frak U_\lambda,\roi_{A_\infty}))$ to obtain $N_\lambda\in \Rep_\Gamma^{c_1}(\Gamma(\frak U_\lambda,\roi_{A^\bx}))$ satisfying $N_\lambda\otimes_{\Gamma(\frak U_\lambda,\roi_{A^\bx})}\Gamma(\frak U_\lambda,\roi_{A_\infty})=M\otimes_{A_\infty}\Gamma(\frak U_\lambda,\roi_{A_\infty})$. Furthermore, since the base change functor has already been shown to be fully faithful in Proposition \ref{proposition_ff} (not only for $R$, but also for all the $R_\lambda$ and the $R_\lambda\hat\otimes_R R_{\lambda'}$), the $N_\lambda$ are unique and may be glued: the result is a locally free sheaf $\cal N$ of $\roi_{A^\bx}$-modules, equipped with a $\Gamma$-action which is the identity modulo $c_1$ (since it is the identity modulo $c_1$ after restriction to each $\frak U_\lambda$) and an isomorphism $\cal N\otimes_{\roi_{A^\bx}}\roi_{A_\infty}\cong \cal M$.

Then $N:=\Gamma(\frak X,\cal N)$ is a finite projective $A^\bx$-module equipped with semi-linear $\Gamma$-action and satisfying $N\otimes_{A^\bx}\roi_{A^\bx}=\cal N$. Taking global sections of the identities of previous paragraph shows moreover that $N\otimes_{A^\bx}A_\infty\cong M$ and that the $\Gamma$-action on $N$ is the identity modulo $c_1$. It remains only to check that the $\Gamma$-action on $N$ is continuous, as then we will have $N\in\Rep_\Gamma^{c_1}(A^\bx)$ and $N\otimes_{A^\bx}A_\infty\cong M$ as required; but the continuity is automatic by Lemma \ref{lemma_automatic_continuity}.
\end{proof}

\subsection{Additional lemmas on small generalised representations}
In this subsection we state some variants on results from \cite{BhattMorrowScholze1}, firstly that $L\eta$ can transform almost quasi-isomorphisms into quasi-isomorphisms:

\begin{lemma}\label{lemma_good}
Let $\frak M\subseteq A$ be an ideal of a ring and $f\in \frak M$ a non-zero-divisor. If $C\to D$ is a morphism of complexes of $A$-modules such that all cohomology groups of the cone are killed by $\frak M$ and all cohomology groups of $C\dotimes_AA/fA$ contain no non-zero elements killed by $\frak M^2$, then $L\eta_f C\to L\eta_fD$ is a quasi-isomorphism.
\end{lemma}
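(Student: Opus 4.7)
The plan is to use the identification $H^i(L\eta_f X) \cong H^i(X)/H^i(X)[f]$ from \cite[Lem.~6.4]{BhattMorrowScholze1} and check that the induced map on each cohomology group is an isomorphism. Let $E$ denote the cone of $C\to D$; by the long exact sequence of the triangle, the kernel $K^i$ and cokernel $Q^i$ of $H^i(C)\to H^i(D)$ are subquotients of $H^*(E)$, hence killed by $\frak M$, and in particular $K^i\subseteq H^i(C)[f]$ since $f\in\frak M$. Tensoring the triangle with $A/f$, the map $f: H^*(E)\to H^*(E)$ is zero, so the long exact sequence for $\times f$ presents $H^*(E\otimes^L_A A/f)$ as an extension of two $\frak M$-killed modules; in particular it is killed by $\frak M^2$.

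Next I would combine this with the hypothesis: from the long exact sequence $\cdots\to H^{i-1}(E/f)\to H^i(C/f)\to H^i(D/f)\to H^i(E/f)\to\cdots$ (writing $X/f:=X\dotimes_A A/f$), the connecting maps into $H^*(C/f)$ have image that is $\frak M^2$-killed, hence zero by hypothesis. This yields short exact sequences $0\to H^i(C/f)\to H^i(D/f)\to H^i(E/f)\to 0$. Via the Bockstein SES $0\to H^i(C)/f\to H^i(C/f)\to H^{i+1}(C)[f]\to 0$, the submodule $H^i(C)/f$ inherits the no-$\frak M^2$-torsion property for all $i$.

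To prove surjectivity of $H^i(C)/H^i(C)[f]\to H^i(D)/H^i(D)[f]$, I would observe that its cokernel equals $H^i(D)/(\phi(H^i(C))+H^i(D)[f])$, a quotient of $Q^i$. A snake lemma argument (applied to $\times f$ on $0\to \mathrm{im}\,\phi^i\to H^i(D)\to Q^i\to 0$) shows this cokernel is the image of a connecting map $Q^i\to H^i(C)/f$, whose image is $\frak M$-killed and hence $\frak M^2$-killed; by the hypothesis it must vanish. For injectivity, a direct diagram chase using the already-established inclusion $K^i\subseteq fH^i(C)$ (another consequence of $H^i(C)/f$ being $\frak M^2$-torsion-free, applied to the image of $K^i$ there) identifies the kernel of the induced map with $K^i$ itself, so injectivity is equivalent to $K^i=0$.

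The main obstacle is thus showing $K^i=0$. For this I would exploit the SES $0\to H^{i-1}(C)/f\to H^{i-1}(C/f)\to H^i(C)[f]\to 0$: given $k\in K^i$, lift to $\tilde k\in H^{i-1}(C/f)$. Since $\frak M k=0$, the subset $\frak M\tilde k$ lies in $H^{i-1}(C)/f$, and I would argue (adjusting $\tilde k$ within its coset modulo $H^{i-1}(C)/f$, and using the $\frak M^2$-torsion-freeness of $H^{i-1}(C/f)$) that $\tilde k$ can be chosen to be $\frak M$-torsion in $H^{i-1}(C/f)$, hence zero, forcing $k=0$. An alternative, perhaps cleaner, route is to instead establish the quasi-isomorphism modulo $f$ via the Bockstein cohomology description $L\eta_f X\dotimes_A A/f\simeq (H^*(X/f),\beta)$ of \cite[Prop.~6.12]{BhattMorrowScholze1}, noting that the SES above is compatible with Bocksteins and that the Bockstein cohomology of $H^*(E/f)$ remains $\frak M^2$-killed; one then upgrades from mod-$f$ to the unconditional statement via iterated reductions modulo $f^n$. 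The most delicate step is in either case the same: controlling the $\frak M^2$-torsion in the relevant Bockstein-type subquotient.
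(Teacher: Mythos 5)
The paper's own ``proof'' is a one-line citation of Bhatt's Lemma~5.14, so you are in effect reproving that lemma from scratch. Most of your outline is sound and follows the standard route: the reduction via $H^i(L\eta_f X)\cong H^i(X)/H^i(X)[f]$, the observation that $H^*(E\dotimes_AA/f)$ is killed by $\frak M^2$ and hence that each connecting map into $H^*(C\dotimes_AA/f)$ vanishes, the resulting inclusion $K^i\subseteq fH^i(C)$, the surjectivity argument, and the identification of $\ker\big(H^i(C)/H^i(C)[f]\to H^i(D)/H^i(D)[f]\big)$ with $K^i$ are all correct.

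The gap is the final step, $K^i=0$. Your argument there uses only that $k\in H^i(C)[f]$ and $\frak Mk=0$, together with the $\frak M^2$-torsion-freeness of $H^{i-1}(C\dotimes_AA/f)$. But $H^i(C)[f]$ is a \emph{quotient} of $H^{i-1}(C\dotimes_AA/f)$ (by $H^{i-1}(C)/f$), not a submodule, and quotients of $\frak M^2$-torsion-free modules can certainly contain nonzero $\frak M$-torsion; no adjustment of $\tilde k$ within its coset will make it $\frak M$-torsion in general (you know $\frak M\tilde k\subseteq H^{i-1}(C)/f$, but you would need $\frak M\tilde k=\frak M y$ for a single $y$). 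In particular your argument never uses that $k$ lies in the image of the boundary map $\partial\colon H^{i-1}(E)\to H^i(C)$, which is where the content is: the hypotheses do \emph{not} imply that every $\frak M$-killed element of $H^i(C)[f]$ vanishes. The correct mechanism is the compatibility of $\partial$ with the Bockstein maps for $A\xto{f}A\to A/f$. Since $H^{i-1}(E)$ is killed by $f$, the Bockstein $\beta_E\colon H^{i-2}(E\dotimes_AA/f)\onto H^{i-1}(E)[f]=H^{i-1}(E)$ is surjective, and for any preimage $\tilde e$ of $e\in H^{i-1}(E)$ one has $\partial(e)=\pm\beta_C(\bar\partial(\tilde e))$, where $\bar\partial\colon H^{i-2}(E\dotimes_AA/f)\to H^{i-1}(C\dotimes_AA/f)$ is the boundary map mod $f$ and $\beta_C$ is the Bockstein of $C$. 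As $\bar\partial$ has source killed by $\frak M^2$ and target with no $\frak M^2$-torsion, $\bar\partial=0$, whence $\partial=0$ and $K^i=0$. (Your alternative ``mod $f$ plus iterated reductions'' route has the same missing ingredient, and in addition one must first note that the cone of $L\eta_fC\to L\eta_fD$ has $f$-power-torsion cohomology before any mod-$f$ statement can be upgraded to the integral one.)
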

\begin{proof}
This follows by applying \cite[Lem.~5.14]{Bhatt2016} to $C\to D\to\op{cone}(C\to D)$.
\end{proof}

\begin{lemma}\label{lemma_Leta_isom}
\begin{enumerate}
\item Let $M\in\Rep_\Gamma^b(W_r(R_\infty))$ for some $b\in W_r(\frak m)$ and let $D\in D(W_r(\roi))$. Then any almost quasi-isomorphism $R\Gamma_\sub{cont}(\Gamma,M)\to D$ becomes a quasi-isomorphism after applying $L\eta_\pi$ for any non-zero-divisor $\pi\in W_r(\frak m)$.
\item Let $M\in\Rep_\Gamma^{<\mu}(A_\inf(R_\infty))$ and let $D\in D(A_\inf)$. Then any morphism $R\Gamma_\sub{cont}(\Gamma,M)\to D$, such that all cohomology groups of the cone are killed by $W(\frak m^\flat)$, becomes a quasi-isomorphism after applying $L\eta_\pi$ for any non-zero-divisor $\pi\in\bigcup_{j\ge0}\phi^{-j}(\mu)A_\inf$.
\end{enumerate}
%Let $M$ be a finite free module over $W_r(R_\infty)$ (resp.~$\bb A_\inf(R_\infty)$), equipped with a continuous, semi-linear $\bb Z_p(1)^d$-action which is trivial modulo some element of $W_r(\frak m)$ (resp.~$\bigcup_{j\ge0}\phi^{-j}(\mu)\bb A_\inf$); let $D$ be a complex of $W_r(R_\infty)$- (resp.~$\bb A_\inf(R_\infty)$-) modules. Then any almost quasi-isomorphism $R\Gamma_\sub{cont}(\Gamma,M)\to D$  (remark: in the $\bb A_\sub{inf}$-case, ``almost'' means that all cohomology of the cone are killed by $W(\frak m^\flat)$) becomes a quasi-isomorphism after applying $L\eta_\pi$ for any non-zero-divisor $\pi\in W_r(\frak m)$ (resp.~$\pi\in\bigcup_{j\ge0}\phi^{-j}(\mu)\bb A_\inf$).
\end{lemma}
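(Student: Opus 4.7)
The plan is to deduce both parts from Lemma~\ref{lemma_good}: in part~(i) we apply it with $A=W_r(\roi)$, $f=\pi$, and $\frak M=W_r(\frak m)$; in part~(ii) with $A=A_\inf$, $f=\pi$, and $\frak M=W(\frak m^\flat)$. In each case the first hypothesis of Lemma~\ref{lemma_good}---that the cone of the given morphism be killed by $\frak M$---is exactly the given hypothesis, since an almost quasi-isomorphism in~(i) is precisely one whose cone is killed by $W_r(\frak m)$. It remains to verify the second hypothesis: the cohomology of $C\dotimes_A A/\pi$ contains no non-zero element killed by $\frak M^2$, where $C:=R\Gamma_\sub{cont}(\Gamma,M)$. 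An elementary ideal-theoretic argument (if $\frak M^2 x=0$ then $\frak M x$ is $\frak M$-killed, hence zero if the module has no non-zero $\frak M$-torsion, forcing $x=0$) shows that it suffices to establish the a priori stronger statement that each cohomology group contains no non-zero element killed by $\frak M$ itself.

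To do this I would compute the cohomology of $C\dotimes_A A/\pi$ explicitly via the Koszul complex. Writing $C\simeq K(\gamma_1-1,\dots,\gamma_d-1;M)$ and using flatness of $M$ over $A$, we have $C\dotimes_A A/\pi\simeq K(\gamma_1-1,\dots,\gamma_d-1;M/\pi)$. In part~(ii), first upgrade the hypothesis ``trivial modulo $<\mu$'' to ``trivial modulo~$\mu$'' using Theorem~\ref{theorem_<mu_implies_mu}. Combining the triviality of $M$ modulo $b$ (resp.~$\mu$) with the $\Gamma$-equivariant direct-sum decomposition
\[
A_\inf(R_\infty)=\widehat{\bigoplus}_{\ul k\in(\bb Z[\tfrac1p]\cap[0,1))^d}A_\inf^\bx(R)\,U_1^{k_1}\cdots U_d^{k_d}
\]
(and its integer analogue of $W_r(R_\infty)$ over $W_r(R)$), on whose $\ul k$-th summand $\gamma_i$ acts as multiplication by $[\ep]^{k_i}$, reduces the Koszul cohomology calculation to the same situation as in the proof of Lemma~\ref{lemma_mu_1}. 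The outcome is that each cohomology group of $C\dotimes_A A/\pi$ is a $p$-adic inverse limit of direct sums of elementary summands of the form $W_r(R)/\pi$ in part~(i), or of the form $A_\inf^\bx(R)/(\pi,[\ep^k]-1)$ for various $k\in\bb Z[\tfrac1p]\cap(0,1]$ in part~(ii).

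The main step, which I expect to be the principal obstacle, is then to show that each such summand contains no non-zero element killed by $\frak M$. I would argue just as at the end of the proof of Lemma~\ref{lemma_mu_1}(i): the rings in question are formally smooth over appropriate nontrivial proper quotients of $\roi$ (resp.~$\roi^\flat$), hence are free modules over those bases by \cite[Lem.~8.10]{BhattMorrowScholze1}; the bases themselves contain no non-zero almost-zero elements by the standard valuation argument. The divisibility hypothesis $\pi\in\phi^{-j}(\mu)A_\inf$ in part~(ii) is used precisely to guarantee that upon reducing modulo~$p$ one lands in a perfectoid-type quotient of $\roi^\flat$ where this smoothness-over-valuation-quotient argument applies.
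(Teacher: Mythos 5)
There is a genuine gap: you apply Lemma~\ref{lemma_good} directly with $f=\pi$ for the \emph{arbitrary} non-zero-divisor $\pi$, but the Koszul computation you then invoke is only valid when $M/\pi$ is a trivial representation, and nothing guarantees that. The paper's proof begins with a reduction you have omitted: after shrinking $b$ to a common divisor of $b$ and $\pi$ of the form $[\varpi]$ (resp.\ noting that $M$ is trivial modulo $\phi^{-j}(\mu)$ and that $\phi^{-j}(\mu)\mid\pi$ in part~(ii)), one writes $L\eta_\pi=L\eta_{\pi/b}\circ L\eta_b$ and observes that it suffices to prove the claim for $L\eta_b$, since applying a further $L\eta$ preserves quasi-isomorphisms. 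Only after this reduction is the relevant element one modulo which $M$ is actually trivial, so that $H^i_\sub{cont}(\Gamma,M/\pi)\cong(M/\pi)^\Gamma\otimes H^i_\sub{cont}(\Gamma,\text{trivial rep})$ and the verification collapses to the known calculation for the structure sheaf.

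Without that reduction your claimed list of elementary summands is wrong. Take for instance $\pi=\mu^2$ in part~(ii), with $M$ trivial modulo $\mu$ but not modulo $\mu^2$: the integral summand of the framed decomposition contributes $K(\gamma_1-1,\dots,\gamma_d-1;N/\mu^2)$ with $N/\mu^2$ a genuinely non-trivial representation, whose cohomology is an extension involving a \emph{quotient} of $H^*(\bb Z^d,N/\mu)$ by the image of a boundary map, and such quotients can acquire non-zero almost-zero elements even when $H^*(\bb Z^d,N/\mu)$ has none; it is certainly not of the form $A_\inf^\bx(R)/(\pi,[\ep^k]-1)$. In part~(i) there is the additional problem that no descent theorem from $W_r(R_\infty)$ to a framed ring is available in the paper (Theorem~\ref{theorem_descent_to_framed} is proved only for $A_\inf(R_\infty)$ and its mod-$p$ reduction), so the decomposition strategy cannot even be set up for a non-trivial $W_r(R_\infty)$-representation; the paper sidesteps this entirely by reducing to the trivial representation and citing \cite[Lem.~9.7(iii)]{BhattMorrowScholze1}. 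Your preliminary observation that ``no non-zero $\frak M$-torsion'' implies ``no non-zero $\frak M^2$-torsion'' is correct and fine, but the core of the proof needs the $L\eta$-multiplicativity reduction first.
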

\begin{proof}
(i) We begin by considering the $W_r(R_\infty)$ case. We are assuming that the generalised representation $M$ is trivial modulo $b$ for some $b\in W_r(\frak m)$; possibly after replacing $b$ by a factor of the form $[\varpi]$ for some small $\varpi\in\frak m\setminus\{0\}$, we may assume that the arbitrary element $\pi\in W_r(\frak m)$ is divisible by $b$. Since $L\eta_{\pi}=L\eta_{\pi/b}L\eta_{b}$ we may therefore assume $\pi=b$, i.e., henceforth $M$ is trivial modulo $\pi\in W_r(\frak m)$.

By Lemma \ref{lemma_good}, it is enough to show that the cohomology groups $H^i_\sub{cont}(\Gamma,M/\pi)$ contain no non-zero elements killed by $W_r(\frak m)$. But $M/\pi$ is now a trivial representation, thereby reducing the problem to case of the trivial representation $M=W_r(R_\infty)$, where it is a known calculation which was already used at the end of the proof of Lemma \ref{lemma_mu_1}; again see \cite[Lem.~9.7(iii)]{BhattMorrowScholze1}, bearing in mind that $\pi$ is a non-zero divisor.

(ii) Now we consider the case of $A_\sub{inf}(R_\infty)$. Since $M$ is trivial modulo $\phi^{-1}(\mu)$ by Corollary \ref{corollary_mu_smallness} (of course, it is even trivially modulo $\mu$ by Theorem \ref{theorem_<mu_implies_mu}, but we prefer to quote the weaker result), the same argument as in the $W_r(R_\infty)$-case allows us to fix some $j\ge1$, assume $M$ is trivial modulo $\phi^{-j}(\mu)$, and show that applying $L\eta_{\phi^{-j}}(\mu)$ yields a quasi-isomorphism. Replacing $M$ by $\phi^j_*M$ we may even suppose that $j=0$, though this has no effect on the proof other than simplifying notation. The same argument as in the previous case then reduces the problem to showing that the cohomology groups $H^i_\sub{cont}(\Gamma, A_\sub{inf}(R_\infty)/\mu)$ have no non-zero elements killed by $W(\frak m^\flat)^2$. Since these cohomology groups are $p$-adically complete and separated (by step (iii) in the proof of Lemma \ref{lemma_mu_1}), it is equivalently to check that they have no non-zero elements killed by $W(\frak m^\flat)$; but this is exactly what we showed in the proof of Lemma \ref{lemma_mu_1}.
\end{proof}

Secondly, we will need to know that $L\eta_\mu$ of certain group cohomologies can be computed in terms of the Koszul complexes:

\begin{lemma}\label{lemma_Koszul}
Let $g\in A$ be an element of a commutative ring, $N$ a $g$-torsion-free $A$-module, and $\delta_1,\dots,\delta_d$ pairwise commuting, $A$-linear endomorphisms of $N$; assume that $\delta_i\in g\End_A(N)$ for all $i$
\begin{enumerate}
\item There is a natural quasi-isomorphism $L\eta_g K(\delta_1,\dots,\delta_d;N)\simeq K(\tfrac{\delta_1}g,\dots,\tfrac{\delta_d}g;N)$.
\item If $\delta_i\in g\Aut_A(N)$ for at least one $i$, then there exists an isomorphism of $A$-modules \[H^n(K(\delta_1,\dots,\delta_d;N))\cong (N/gN)^{{d-1}\choose{n-1}}\] for each $n\ge 1$.
\end{enumerate}
\end{lemma}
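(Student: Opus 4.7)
For part (i), I will construct an isomorphism of complexes $\phi\colon K(\delta_1/g,\ldots,\delta_d/g;N)\isoto \eta_g K(\delta_1,\ldots,\delta_d;N)$ by multiplying degree $n$ by $g^n$. A direct check, using the identity $g\cdot(\delta_i/g)=\delta_i$ (valid because $N$ is $g$-torsion-free), shows that $\phi$ is a chain map: the extra factor of $g$ supplied by $g^{n+1}/g^n$ absorbs the discrepancy between $\delta_i/g$ and $\delta_i$. The image of $\phi$ in degree $n$ is $g^n K(\underline\delta;N)^n$, which is isomorphic to its source. Moreover, since each $\delta_i$ is already divisible by $g$, any element of $g^n K^n$ automatically has differential in $g^{n+1} K^{n+1}$, so $g^n K^n$ coincides with $(\eta_g K(\underline\delta;N))^n=\{x\in g^n K^n: dx\in g^{n+1} K^{n+1}\}$. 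Because $K(\underline\delta;N)$ has $g$-torsion-free terms, $L\eta_g=\eta_g$, completing (i).

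For part (ii), after reordering assume $\delta_d=gu$ with $u\in\Aut_A(N)$. View $K(\underline\delta;N)$ as the totalisation of a bicomplex $K^{p,q}$ with $p\in\{0,1\}$: the column $p=0$ consists of the summands $Ne_I$ with $d\notin I$, while $p=1$ consists of those with $d\in I$. The horizontal differential $d_h\colon K^{0,q}\to K^{1,q}$ is (up to sign) componentwise multiplication by $\delta_d$, and the vertical differential is the Koszul differential for $(\delta_1,\ldots,\delta_{d-1})$ acting componentwise on each row. I run the spectral sequence that computes horizontal cohomology first. Since $\delta_d=gu$ with $u$ an automorphism and $N$ is $g$-torsion-free, $\delta_d$ is injective with cokernel $N/guN=N/gN$, giving $E_1^{0,q}=0$ and $E_1^{1,q}=(N/gN)^{\binom{d-1}{q}}$.

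The $d_1$ differential on $E_1$ is induced by the Koszul differential for $(\delta_1,\ldots,\delta_{d-1})$ on $N$, passed to the quotient $N/gN$. Since each $\delta_i$ belongs to $g\End_A(N)$, this induced map vanishes, so $E_2=E_1$. As $E_2$ is supported in the single column $p=1$, all higher differentials vanish for dimensional reasons and the spectral sequence collapses to give $H^n(\op{Tot}\,K)=E_\infty^{1,n-1}=(N/gN)^{\binom{d-1}{n-1}}$. The main bookkeeping care is needed in setting up the signs of the bicomplex; after that, the horizontal cohomology computation and the vanishing of $d_1$ are both immediate consequences of the hypotheses $\delta_d\in g\Aut_A(N)$ and $\delta_i\in g\End_A(N)$.
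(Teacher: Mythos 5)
Your proof is correct, but it takes a different route from the paper's. The paper disposes of both parts in three lines by a transfer-of-structure trick: writing $\delta_i=g\delta_i'$, it observes that the $\delta_i'$ commute (using $g$-torsion-freeness), lets $A'$ be the commutative subalgebra of $\End_A(N)$ they generate (adjoining the inverse of the invertible $\delta_i'$ in case (ii)), regards $N$ as an $A'$-module on which the $\delta_i$ act by multiplication by elements of $A'$, and then simply cites \cite[Lems.~7.9--7.10]{BhattMorrowScholze1}, which treat exactly the case of Koszul complexes on elements of a commutative ring. You instead reprove the content of those two lemmas directly in the endomorphism setting: for (i) the explicit degreewise multiplication-by-$g^n$ isomorphism onto $\eta_gK$ (noting that $g$-torsion-freeness of the terms lets you compute $L\eta_g$ as $\eta_g$, and that divisibility of each $\delta_i$ by $g$ makes the subcomplex condition automatic), and for (ii) the two-column bicomplex $K(\delta_1,\dots,\delta_{d-1};N)\xto{\pm\delta_d}K(\delta_1,\dots,\delta_{d-1};N)$ with the row-first spectral sequence, where injectivity of $\delta_d=gu$ kills the $p=0$ column, $\op{coker}(\delta_d)=N/gN$ since $u$ is an automorphism, and $d_1$ vanishes because each $\delta_i$ induces zero on $N/gN$; degeneration is then forced since everything lives in a single column. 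Both arguments are sound; yours buys self-containedness (and in (i) actually an isomorphism of complexes, not just a quasi-isomorphism), while the paper's buys brevity and makes transparent that the lemma is a formal consequence of the commutative-ring case already in the literature.
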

\begin{proof}
Write $\delta_i=g\delta_i'$ for $i=1,\dots,d$, where $\delta_i\in\End_A(N)$, and note that the endomorphisms $\delta_i'$ commute (since $N$ is $g$-torsion-free, this follows at once from the commutativity of the $\delta_i$). Let $A'$ be the $A$-subalgebra of $\End_A(N)$ generated by $\delta_1',\dots,\delta_d'$; in case (ii) we also add as a generator the inverse of whichever $\delta_i'$ is invertible. Then the $A$-module structure on $N$ factors through the commutative $A$-algebra $A'$ so we may view $N$ as an $A'$-module and apply \cite[Lem.~7.9--7.10]{BhattMorrowScholze1}.
\end{proof}

\begin{proposition}\label{proposition_Leta_framed}
Let $N\in \Rep_\Gamma^\mu(A_\inf^\bx(R))$ and set $M:=N\otimes_{A_\sub{inf}^\bx(R)}A_\sub{inf}(R_\infty) \in\Rep_\Gamma^\mu(A_\inf(R_\infty))$; fix $s\ge0$ and set $N_s:=N\otimes_{A_\inf\pid{\ul U^{\pm1}}}A_\inf\pid{\ul U^{\pm1/p^s}}$. Then:
\begin{enumerate}
\item The canonical map \[R\Gamma_\sub{cont}(\Gamma,N_s)\To R\Gamma_\sub{cont}(\Gamma,M)\] becomes a quasi-isomorphism after applying $L\eta_{\phi^{-s-1}(\mu)}$ (hence also after applying $L\eta_{\phi^{-s}(\mu)}=L\eta_{\phi^{-s}(\xi)}L\eta_{\phi^{-s-1}(\mu)}$).
\item There is a natural quasi-isomorphism \[L\eta_{\phi^{-s}(\mu)} R\Gamma_\sub{cont}(\Gamma,N_s)\simeq K(\tfrac{\gamma_1-1}{\phi^{-s}(\mu)},\dots,\tfrac{\gamma_d-1}{\phi^{-s}(\mu)};N_s).\]
\end{enumerate}
\end{proposition}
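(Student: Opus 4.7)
Part (ii) will follow by a direct application of Lemma~\ref{lemma_Koszul}(i) to the standard Koszul model $R\Gamma_\sub{cont}(\Gamma, N_s) \simeq K(\gamma_1 - 1, \ldots, \gamma_d - 1; N_s)$ with $g = \phi^{-s}(\mu)$. The required $\phi^{-s}(\mu)$-torsion-freeness of $N_s$ is inherited from its projectivity over $A_\inf^\bx(R) \otimes_{A_\inf\pid{\ul U^{\pm 1}}} A_\inf\pid{\ul U^{\pm 1/p^s}}$, and the inclusion $(\gamma_i - 1) N_s \subseteq \phi^{-s}(\mu) N_s$ follows by the Leibniz identity $(\gamma_i-1)(n\otimes f) = (\gamma_i^N-1)(n)\otimes\gamma_i(f) + n\otimes(\gamma_i-1)(f)$, using $(\gamma_i^N - 1)(N) \subseteq \mu N$ and the computation $(\gamma_i - 1)(U_j^{a/p^s}) = ([\ep^{a\delta_{ij}/p^s}] - 1) U_j^{a/p^s} \in \phi^{-s}(\mu) A_\inf\pid{\ul U^{\pm 1/p^s}}$, both of which are divisible by $\phi^{-s}(\mu)$.

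For part (i), my strategy is to exploit the $\Gamma$-equivariant splitting $M \simeq N_s \oplus L$ of modules over $A_\inf^\bx(R) \otimes_{A_\inf\pid{\ul U^{\pm 1}}} A_\inf\pid{\ul U^{\pm 1/p^s}}$, where $L$ is the $(p, \mu)$-adic completion of $\bigoplus_{\ul k} N \cdot U^{\ul k}$, $\ul k \in (\bb Z[\tfrac 1 p] \cap [0, 1))^d$ ranging over tuples with at least one $k_i \notin \tfrac 1 {p^s} \bb Z$; the $\Gamma$-equivariance is immediate since $\gamma_i$ acts on $N \cdot U^{\ul k}$ as $[\ep^{k_i}]\gamma_i^N$, which preserves the summand. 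Under this splitting the map of the proposition becomes the inclusion of a direct summand of Koszul complexes of $A_\inf$-flat modules, so (since $L\eta_{\phi^{-s-1}(\mu)}$ is compatible with direct summands of flat complexes) the problem reduces to verifying $L\eta_{\phi^{-s-1}(\mu)} R\Gamma_\sub{cont}(\Gamma, L) \simeq 0$. As $K(\gamma_i - 1; L)$ has $\phi^{-s-1}(\mu)$-torsion-free terms, the standard identification $H^n(L\eta_f C) \cong H^n(C)/H^n(C)[f]$ reduces this further to showing that each $H^n R\Gamma_\sub{cont}(\Gamma, L)$ is $\phi^{-s-1}(\mu)$-torsion.

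To prove the torsion claim, I would write $R\Gamma_\sub{cont}(\Gamma, L) \simeq \op{Rlim}_N \bigoplus_{\ul k} K(\gamma_i - 1; L_{\ul k}/(p, \mu)^N)$ (the direct sum being honest modulo $(p, \mu)^N$, with $L_{\ul k} := N \cdot U^{\ul k}$) and then invoke the Milnor exact sequence, reducing to the assertion that each $H^n K(\gamma_i - 1; L_{\ul k}/(p, \mu)^N)$ is killed by $\phi^{-s-1}(\mu)$. For each such $\ul k$, let $r_i \ge 0$ denote the $p$-adic exponent of the denominator of $k_i$ and set $r^* := \max\{r_i : k_i \ne 0\} \ge s+1$ (by the hypothesis on $\ul k$). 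Using the factorisation $[\ep^{k_i}] - 1 = \phi^{-r_i}(\mu) \cdot w_{k_i}$ with $w_{k_i}$ a unit, together with the divisibilities $\phi^{-r^*}(\mu) \mid \phi^{-r_i}(\mu)$ and $\phi^{-r^*}(\mu) \mid \mu$ (both following from $r^* \ge r_i, 1$), the operators $\delta_i := [\ep^{k_i}] \gamma_i^N - 1$ on $L_{\ul k} \cong N$ all lie in $\phi^{-r^*}(\mu) \End(N)$; moreover, at the index $i^*$ with $r_{i^*} = r^*$ the quotient $\delta_{i^*}/\phi^{-r^*}(\mu) = w_{k_{i^*}} + [\ep^{k_{i^*}}](\gamma_{i^*}^N-1)/\phi^{-r^*}(\mu)$ is congruent to the unit $w_{k_{i^*}}\cdot\id_N$ modulo $(p, \xi)$, since the perturbing term is divisible by $\xi_{r^*} = \mu/\phi^{-r^*}(\mu) \in \xi A_\inf$, and hence is an automorphism by $(p,\xi)$-adic completeness. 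Lemma~\ref{lemma_Koszul}(ii) with $g = \phi^{-r^*}(\mu)$ then yields $H^n K(\delta_i; L_{\ul k}) \cong (N/\phi^{-r^*}(\mu) N)^{\binom{d-1}{n-1}}$, killed by $\phi^{-r^*}(\mu)$ and hence by $\phi^{-s-1}(\mu)$; the universal-coefficient/Tor spectral sequence transports the torsion to the derived tensor with $A_\inf/(p, \mu)^N$.

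The main obstacle will be the careful bookkeeping of divisibility relations among the elements $\phi^{-j}(\mu) = [\ep^{1/p^j}] - 1$ and $[\ep^k] - 1$ (with higher-index $\phi^{-j}(\mu)$ being the more divisible element, contrary to naive intuition) and verifying Lemma~\ref{lemma_Koszul}(ii)'s automorphism hypothesis via a reduction modulo $(p, \xi)$ in which $\xi_{r^*}$ vanishes. A secondary subtlety, namely the interchange of $L\eta$ with the completed direct sum defining $L$, is bypassed via the Rlim/Milnor reduction to the uncompleted sum modulo $(p, \mu)^N$.
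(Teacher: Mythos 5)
Part (ii) and the core Koszul computation in part (i) match the paper's proof: the splitting $M\simeq N_s\oplus L$, the identification of the summands of the complement with $K([\ep^{k_1}]\gamma_1-1,\dots,[\ep^{k_d}]\gamma_d-1;N)$, the selection of the index of smallest $p$-adic valuation, and the verification of the automorphism hypothesis of Lemma~\ref{lemma_Koszul}(ii) so that each cohomology group is a finite direct sum of copies of $N/([\ep^{k_{i^*}}]-1)N$, killed by $\phi^{-s-1}(\mu)$, are all exactly the paper's steps. The gap is in how you pass from the uncompleted direct sum $\bigoplus_{\ul k}N\,U^{\ul k}$ to its completion $L$. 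Both of your reduction devices --- the Milnor sequence for $\op{Rlim}_N$ and the Tor/universal-coefficient spectral sequence for $-\otimes^L_{A_\inf}A_\inf/(p,\mu)^N$ --- only replace ``killed by $f$'' (with $f=\phi^{-s-1}(\mu)$) by ``admits a finite filtration with graded pieces killed by $f$'', i.e.\ ``killed by $f^m$ for some $m\ge2$'': the Milnor sequence presents $H^n(\op{Rlim})$ as an extension of $\lim H^n$ by $\lim^1 H^{n-1}$ (and the surjectivity of transition maps needed to kill the $\lim^1$ is not available), while $(p,\mu)^N$ has no length-one resolution, so the Tor filtration has at least three steps. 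This is strictly weaker than what you need: $H^n(L\eta_fC)\cong H^n(C)/H^n(C)[f]$ vanishes only if $H^n(C)$ is killed by $f$ itself, and $L\eta_f$ applied to a torsion-free resolution of $A_\inf/f^2$ has $H^0\cong A_\inf/f\neq0$. So as written your argument does not show that $L\eta_{\phi^{-s-1}(\mu)}$ kills $R\Gamma_\sub{cont}(\Gamma,L)$.

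The missing ingredient, which is how the paper closes exactly this hole, is that the cohomology groups $N/([\ep^{k_{i^*}}]-1)N$ of the individual uncompleted Koszul summands are moreover \emph{$p$-torsion-free} (being finite projective over $A_\inf^\bx(R)/([\ep^{k_{i^*}}]-1)$, which is $p$-torsion-free by Lemma~\ref{lemma_topology}(ii)). One first invokes \cite[Lem.~6.20]{BhattMorrowScholze1} to commute $L\eta_{\phi^{-s-1}(\mu)}$ past the derived $\phi^{-s-1}(\mu)$-adic completion, reducing to the derived $p$-adic completion $\hat C$ of $C=R\Gamma(\bb Z^d,\bigoplus_{\ul k}N\,U^{\ul k})$. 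The $p$-torsion-freeness then makes the universal-coefficient sequence for $C/p^n$ degenerate and the relevant $\lim^1$ vanish, so that $H^i(\hat C)$ is the usual $p$-adic completion of $\bigoplus_{\ul k}H^i(K(\dots;N\,U^{\ul k}))$; this embeds in the product of the summands and is therefore genuinely killed by $\phi^{-s-1}(\mu)$, with no spurious power appearing. With that torsion-freeness statement (and the appeal to [BMS, Lem.~6.20]) inserted, your argument goes through; without it, the annihilator of the completed cohomology is only controlled up to an unspecified power of $\phi^{-s-1}(\mu)$, which $L\eta$ does not tolerate.
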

\begin{proof}
(i): As we recalled in \S\ref{ss_framed}, we may identify $A_\inf(R_\infty)$ with the $(p,\xi)=(p,\phi^{-s-1}(\mu))$-adic completion of $A_\inf^\bx(R)\otimes_{A_\inf\pid{\ul U^{\pm1}}}A_\inf\pid{\ul U^{\pm1/p^\infty}}$; therefore $N_s\to M$ is split injective (compatibly with the $\Gamma$-actions) with complement given by the $(p,\phi^{-s-1}(\mu))$-adic completion of \[P:=\bigoplus_{\substack{k_1,\dots,k_d\in\bb Z[\tfrac1p]\cap[0,1)\\\sub{not all in $p^{-s}\bb Z$}}}N_sU_1^{k_1}\cdots U_d^{k_d}.\] We now proceed differently to the homotopy argument of \cite[Lem.~9.6]{BhattMorrowScholze1}, though that argument can probably be adapted to the current situation.

We must show that $L\eta_{\phi^{-s-1}(\mu)}$ kills the derived $(p,\phi^{-s-1}(\mu))$-adic completion of $C:=R\Gamma(\bb Z^d,P)$. Since derived $(p,\phi^{-s-1}(\mu))$-adic completion is the same as derived $p$-adic completion followed by derived $\phi^{-s-1}(\mu)$-adic completion, and the latter commutes with $L\eta_{\phi^{-s-1}(\mu)}$ by \cite[Lem.~6.20]{BhattMorrowScholze1}, it is enough to show that $L\eta_{\phi^{-s-1}(\mu)}$ kills the derived $p$-adic completion of $C$. In other words, since $L\eta_{\phi^{-s-1}(\mu)}$ kills the $\phi^{-s-1}(\mu)$-torsion in each cohomology group \cite[Lem.~6.4]{BhattMorrowScholze1}, we should show that all cohomology groups of the derived $p$-adic completion $\hat C$ of $C$ are killed by $\phi^{-s-1}(\mu)$.

The group cohomology of each direct summand is computed by a Koszul complex \[R\Gamma(\bb Z^d,NU_1^{k_1}\cdots U_d^{k_d})\simeq K([\ep^{k_1}]\gamma_1-1,\dots,[\ep^{k_d}]\gamma_d-1;N)\tag{\dag}\] We claim that all cohomologies of this complex are $p$-torsion-free and killed by $\phi^{-s-1}(\mu)$. The $p$-torsion-freeness will imply that $H^i(C)=\hat\bigoplus_{k_1,\dots,k_d}H^i(\bb Z^d,NU_1^{k_1}\cdots U_d^{k_d})$ (where the hat denotes usual $p$-adic completion), which we then see is killed by $\phi^{-s-1}(\mu)$ since the same is true of all the summands. The claim is proved by a standard Koszul complex calculation, but we include the argument anyway for the sake of completeness.

Up to reordering, we may suppose that $k_1$ has the smallest $p$-adic valuation of $k_1,\dots,k_d$; since at least one of these terms is not in $p^{-s}\bb Z$, it follows that $\nu_p(k_1)\le-s-1$. Since our representation $N$ is assumed to be trivial mod $\mu$, we may write $\gamma_i=1+\mu\delta_i$ for some $\delta_i\in\End_{A_\sub{inf}}(N)$; thus we see that $[\ep^{k_i}]\gamma_i-1=[\ep^{k_i}]-1+\mu[\ep^{k_i}]\delta_i$ is divisible by $[\ep^{k_1}]-1$, for each $i$, and moreover that \[[\ep^{k_1}]\gamma_1-1=([\ep^{k_1}]-1)(1+\tfrac{\mu}{[\ep^{k_1}]-1}[\ep^{k_1}]\delta_1)\] differs from $[\ep^{k_1}]-1$ by an automorphism of $N$ (since $\tfrac{\mu}{[\ep^{k_1}]-1}$ is equal to $\xi_r$ up to a unit of $A_\inf$, where $r=-\nu_p(k_1)$, and $N$ is $\xi_r$-adically complete). It therefore follows from Lemma \ref{lemma_Koszul}(ii) that each cohomology group of the Koszul complex (\dag) is a finite direct sum of copies of $N/([\ep^{k_1}]-1)N$; this is indeed $p$-torsion-free and killed by $[\ep^{1/p^{s+1}}]-1=\phi^{-s-1}(\mu)$, as required.

(ii): Since the group cohomology is given by $K({\gamma_1-1},\dots,{\gamma_d-1};N_s)$ and $\Gamma$ acts as the identity on $A_\inf^\bx(R)\otimes_{A_\inf\pid{\ul U^{\pm1}}}A_\inf\pid{\ul U^{\pm1/p^s}}/\phi^{-s}(\mu)$ hence also as the identity on $N_s/\phi^{-s}(\mu)$, this is an instance of Lemma \ref{lemma_Koszul}(i).
\end{proof}

\begin{corollary}\label{corollary_completeness}
Let $M\in\Rep^\mu_\Gamma(A_\inf(R_\infty))$ and $s\ge0$. Then the canonical map \[L\eta_{\phi^{-s}(\mu)} R\Gamma_\sub{cont}(\Gamma,M)\To\op{Rlim}_r(L\eta_{\phi^{-s}(\mu)} R\Gamma_\sub{cont}(\Gamma,M))/\tilde\xi_r\] is an equivalence.
\end{corollary}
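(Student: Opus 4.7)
The plan is to represent the left-hand side explicitly as a bounded Koszul complex on a finite projective module over a $(p,\xi)$-adically complete ring, and then verify the derived completeness claim term-by-term.

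First I would invoke Theorem \ref{theorem_descent_to_framed} to write $M = N \otimes_{A^\bx_\inf(R)} A_\inf(R_\infty)$ for a unique $N \in \Rep_\Gamma^\mu(A^\bx_\inf(R))$. Applying both parts of Proposition \ref{proposition_Leta_framed}, the complex $C := L\eta_{\phi^{-s}(\mu)} R\Gamma_\sub{cont}(\Gamma,M)$ then becomes quasi-isomorphic to the bounded Koszul complex
\[
K^\bullet := K\Bigl(\tfrac{\gamma_1-1}{\phi^{-s}(\mu)},\dots,\tfrac{\gamma_d-1}{\phi^{-s}(\mu)};\, N_s\Bigr),
\]
each term of which is a finite direct sum of copies of $N_s$. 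Now $N_s$ is a finite projective module over the finite free extension $A^\bx_s := A^\bx_\inf(R) \otimes_{A_\inf\pid{\ul U^{\pm 1}}} A_\inf\pid{\ul U^{\pm 1/p^s}}$ of $A^\bx_\inf(R)$; hence every term of $K^\bullet$ is $(p,\xi)$-adically complete, and is moreover $\tilde\xi_r$-torsion-free for all $r$, being flat over $A_\inf$.

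Next I would verify the completeness claim levelwise. Fix a term $P := K^i$. Since $P$ is $\tilde\xi_r$-torsion-free, the reduction $P/\tilde\xi_r$ agrees with $P \dotimes_{A_\inf} A_\inf/\tilde\xi_r$; since $\tilde\xi_r \mid \tilde\xi_{r+1}$, the transition maps $P/\tilde\xi_{r+1} \twoheadrightarrow P/\tilde\xi_r$ are surjective, so Mittag--Leffler gives $\op{Rlim}_r P/\tilde\xi_r = \projlim_r P/\tilde\xi_r$. That this classical limit equals $P$ follows from the $(p,\xi)$-adic completeness of $P$ together with the cofinality of the ideals $(\tilde\xi_r)$ inside the powers of $(p,\xi)$, a consequence of Lemma \ref{lemma_topology}(v) applied to $A^\bx_\inf(R)$ and inherited by the finite free extension $A^\bx_s$ and its finite projective modules.

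Finally, since $K^\bullet$ is bounded and the inverse system $\{K^\bullet/\tilde\xi_r\}_r$ has vanishing $\projlim^1$ in every cohomological degree, the derived limit $\op{Rlim}_r K^\bullet/\tilde\xi_r$ is computed termwise by the classical inverse limit, which by the previous step is $K^\bullet$ itself. Chaining the equivalences yields $C \simeq K^\bullet \simeq \op{Rlim}_r K^\bullet/\tilde\xi_r \simeq \op{Rlim}_r C/\tilde\xi_r$. The only step deserving care is the passage through Proposition \ref{proposition_Leta_framed} to arrive at the Koszul presentation; once there, all remaining verifications are routine consequences of $(p,\xi)$-adic completeness, flatness, and Mittag--Leffler.
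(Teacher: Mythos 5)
Your proposal is correct and follows essentially the same route as the paper: descend to $N$ via Theorem \ref{theorem_descent_to_framed}, pass to the Koszul complex on $N_s$ via Proposition \ref{proposition_Leta_framed}, and reduce everything to the isomorphism $N_s\isoto\projlim_rN_s/\tilde\xi_r$, which the paper deduces from finite projectivity over $A_\inf^\bx(R)$ and Lemma \ref{lemma_topology}(v) exactly as you do (you merely make the termwise Mittag--Leffler step explicit). The only cosmetic remark is that the $\tilde\xi_r$-torsion-freeness of the terms is more safely cited from Lemma \ref{lemma_topology}(iii) than from flatness of the $(p,\xi)$-adic completion over the non-Noetherian ring $A_\inf$, but this does not affect the argument.
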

\begin{proof}
By Theorem \ref{theorem_descent_to_framed} there exists $N\in \Rep^\mu_\Gamma(A_\inf^\bx(R))$ such that $M=N\otimes_{A_\sub{inf}^\bx(R)}A_\sub{inf}(R_\infty)$. Then Proposition \ref{proposition_Leta_framed} describes $L\eta_{\phi^{-s}(\mu)} R\Gamma_\sub{cont}(\Gamma,M)$ in terms of a Koszul complex, each term of which is a finite direct sum of copies of $N_s=N\otimes_{A_\inf\pid{\ul U^{\pm1}}}A_\inf\pid{\ul U^{\pm1/p^s}}$; therefore it is enough to show that $N_s\isoto\projlim_rN_s/\tilde\xi_r$. Since $N_s$ is a finite projective $A_\inf^\bx(R)$-module, this reduces to the isomorphism $A_\inf^\bx(R)\isoto\projlim_r A_\inf^\bx(R)/\tilde\xi_r$ of Lemma \ref{lemma_topology}(v).
\end{proof}

\subsection{Frobenius structures on generalised representations}\label{ss_Frobenius}
The period rings $A_\inf$ and $A_\inf(R_\infty)$ are equipped with their usual Witt vector Frobenius automorphisms $\phi$; furthermore, elementary deformation arguments show that $\phi$ extends from $A_\inf$ to a unique ring endomorphism of $A_\inf^\bx(R)$ satisfying $\phi(U_i)=U_i^p$ for $i=1,\dots,d$ and lifting the absolute Frobenius modulo $p$. These Frobenii all commute with the $\Gamma$-actions and are compatible with the inclusions $A_\inf\subseteq A_\inf^\bx(R)\subseteq A_\inf(R_\infty)$. In this subsection we first prove an analogue of Theorem \ref{theorem_descent_to_framed} taking them into account, then establish moreover the compatibility with certain induced filtrations. The relevant categories are defined as follows:

\begin{definition}\label{definition_phi}
{\em Generalised $A_\inf(R_\infty)$-representations with Frobenius}: $\Rep_\Gamma^\mu(A_\inf(R_\infty),\phi)$ consists of pairs $(M,\phi_M)$ where $M\in \Rep_\Gamma^\mu(A_\inf(R_\infty))$ and $\phi_M:M[\tfrac1\xi]\to M[\tfrac1{\tilde\xi}]$ is a $\Gamma$-equivariant, $\phi$-semi-linear isomorphism (equivalently, $\phi_M:(\phi^*M)[\tfrac1{\tilde\xi}]\to M[\tfrac1{\tilde\xi}]$ is a $\Gamma$-equivariant, $A_\inf(R_\infty)[\tfrac1{\tilde\xi}]$-linear isomorphism, where $\phi^*M:=M\otimes_{A_\inf(R_\infty),\phi}A_\inf(R_\infty)$).

{\em Generalised $A_\inf^\bx(R)$-representations with Frobenius}: $\Rep_\Gamma^\mu(A_\inf^\bx(R),\phi)$ consists of pairs $(N,\phi_N)$ where $N\in \Rep_\Gamma^\mu(A_\inf^\bx(R))$ and $\phi_N:(\phi^*N)[\tfrac1{\tilde\xi}]\to N[\tfrac1{\tilde\xi}]$ is a $\Gamma$-equivariant isomorphism of $A_\inf^\bx(R)[\tfrac1{\tilde\xi}]$-modules, where $\phi^*N:=N\otimes_{A_\inf^\bx(R),\phi}A_\inf^\bx(R)$.
\end{definition}

The following is the desired analogue of Theorem \ref{theorem_descent_to_framed}:

\begin{theorem}\label{theorem_descent_to_framed_with_phi}:
The base change functor \[-\otimes_{A_\sub{inf}^\bx(R)}A_\sub{inf}(R_\infty):\Rep_\Gamma^\mu(A_\inf^\bx(R),\phi)\To \Rep_\Gamma^\mu(A_\inf(R_\infty),\phi)\] is an equivalence of categories.
\end{theorem}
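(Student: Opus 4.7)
The plan is to leverage Theorem~\ref{theorem_descent_to_framed} (without Frobenius) and descend the Frobenius structure directly. Given $(M, \phi_M) \in \Rep_\Gamma^\mu(A_\inf(R_\infty), \phi)$, I first descend $M$ to the unique $N \in \Rep_\Gamma^\mu(A_\inf^\bx(R))$ provided by Theorem~\ref{theorem_descent_to_framed}. Next, I check that the Frobenius twists $\phi^*M$ and $\phi^*N$ still lie in the $\mu$-trivial categories and correspond to each other: since $\phi$ on $A_\inf(R_\infty)$ restricts to $\phi$ on $A_\inf^\bx(R)$ (as recalled at the start of \S\ref{ss_Frobenius}), we have $\phi^*M \cong \phi^*N \otimes_{A_\inf^\bx(R)} A_\inf(R_\infty)$; triviality of $N$ modulo $\mu$ implies triviality of $\phi^*N$ modulo $\phi(\mu) = \tilde\xi \mu$, and hence modulo $\mu$ (since $\phi(\mu) \in \mu A_\inf$). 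So both $(\phi^*M, M)$ correspond to $(\phi^*N, N)$ under Theorem~\ref{theorem_descent_to_framed}.

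The hard part will be descending $\phi_M$ itself. Since $\phi^*M$ is finitely generated over $A_\inf(R_\infty)$, I can choose $k \ge 0$ large enough that $\tilde\xi^k \phi_M : \phi^*M \to M[\tfrac{1}{\tilde\xi}]$ actually takes values in $M$. This produces a $\Gamma$-equivariant, $A_\inf(R_\infty)$-linear map $\tilde\xi^k \phi_M : \phi^*M \to M$ between objects of $\Rep_\Gamma^\mu(A_\inf(R_\infty))$. Applying the fully faithfulness of the base-change functor (Proposition~\ref{proposition_ff}), this map descends uniquely to an $A_\inf^\bx(R)$-linear, $\Gamma$-equivariant morphism $f_k : \phi^*N \to N$, which must then coincide with the restriction of $\tilde\xi^k \phi_M$ to $\phi^*N \subseteq \phi^*M$. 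Hence $\phi_M(\phi^*N) \subseteq \tilde\xi^{-k} N \subseteq N[\tfrac{1}{\tilde\xi}]$, so by $A_\inf^\bx(R)[\tfrac{1}{\tilde\xi}]$-linearity $\phi_M$ restricts to the desired $\phi_N : \phi^*N[\tfrac{1}{\tilde\xi}] \to N[\tfrac{1}{\tilde\xi}]$. This is the crux of the argument: without invoking Proposition~\ref{proposition_ff}, the restriction of $\phi_M$ to $\phi^*N$ would only be guaranteed to land in $M[\tfrac{1}{\tilde\xi}]$, not in $N[\tfrac{1}{\tilde\xi}]$.

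To verify that $\phi_N$ is an isomorphism, I will apply the same procedure to $\phi_M^{-1}$ to obtain $\psi : N[\tfrac{1}{\tilde\xi}] \to \phi^*N[\tfrac{1}{\tilde\xi}]$; both compositions $\psi \circ \phi_N$ and $\phi_N \circ \psi$ base change to identities over $A_\inf(R_\infty)[\tfrac{1}{\tilde\xi}]$, and since the decomposition $A_\inf(R_\infty) = A_\inf^\bx(R) \oplus A_\inf^\sub{n-i}(R_\infty)$ from \S\ref{ss_framed} makes $A_\inf^\bx(R) \hookrightarrow A_\inf(R_\infty)$ split injective (as $A_\inf^\bx(R)$-modules), this injectivity persists after inverting $\tilde\xi$ and the compositions must themselves be the identities. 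Finally, fully faithfulness of the Frobenius-enriched functor is immediate: a Frobenius-respecting morphism between base changes is in particular a morphism of generalised representations, which descends uniquely by Theorem~\ref{theorem_descent_to_framed}, and the descended morphism respects Frobenius by the same split-injectivity argument.
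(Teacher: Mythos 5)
Your proof is correct and follows essentially the same route as the paper's: twist $\phi_M$ by a power of $\tilde\xi$ to make it effective, note that $\phi^*N$ is trivial modulo $\phi(\mu)=\tilde\xi\mu$ hence modulo $\mu$, descend via the fully faithfulness of the Frobenius-free equivalence, and descend $\phi_M^{-1}$ likewise to check invertibility, with fully faithfulness reducing to injectivity of $A_\inf^\bx(R)\to A_\inf(R_\infty)$.
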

\begin{proof}
%Note first that if $(M,\phi_M)\in \Rep_\Gamma^\mu(A_\inf^\bx(R),\phi)$, then $\phi^*M$ (equipped with its diagonal $\Gamma$-action) also belongs to $\Rep_\Gamma^\mu(A_\inf^\bx(R))$, because the $\Gamma$-action on it is trivial modulo $\phi(\mu)=\tilde\xi\mu$, hence also trivial modulo $\mu$. The same observation applies to $\Rep_\Gamma^\mu(A_\inf(\res R),\phi)$.

To check fully faithfulness we fix $(N_i,\phi_{N_i})\in \Rep_\Gamma^\mu(A_\inf^\bx(R),\phi)$, $i=1,2$, and a morphism $f:N_1\to N_2$ in $\Rep_\Gamma^\mu(A_\inf^\bx(R))$; since we already know that the functor is fully faithful without Frobenius structure, we must show that $f$ respects the $\phi_{N_i}$ if it does so after the base change $-\otimes_{A_\inf^\bx(R)}A_\inf(R_\infty)$. This is an immediate consequence of the fact that $A_\inf^\bx(R)\to A_\inf(R_\infty)$ is injective.

Next we check essential surjectivity. Given $(M,\phi_M)\in \Rep_\Gamma^\mu(A_\inf(R_\infty),\phi)$, we already know that $M$ maybe written as $N\otimes_{A_\inf^\bx(R)}A_\inf(R_\infty)$ for some unique $N\in\Rep_\Gamma^\mu(A_\inf^\bx(R))$; we must show that $\phi_M$ also descends to $N$. We may replace $\phi_M$ by ${\tilde{\xi}}^r\phi_M$ for $r\gg0$ so that it is induced by an $A_\inf(R_\infty)$-linear map $\phi_M:\phi^*M=(\phi^*N)\otimes_{A_\inf^\bx(R)}A_\inf(R_\infty)\to M=N\otimes_{A_\inf^\bx(R)}A_\inf(R_\infty)$, i.e., so that the Frobenius is ``effective''. But $\phi^*N$, equipped with its diagonal $\Gamma$-action,  belongs to $\Rep_\Gamma^\mu(A_\inf^\bx(R))$ since the $\Gamma$-action on it is trivial modulo $\phi(\mu)=\tilde\xi\mu$, hence also trivial modulo $\mu$. So the known equivalence without Frobenius structures indeed implies that $\phi_M$ is induced by a unique morphism $\phi_N:\phi^*N\to N$; note that $\phi_N$ is necessarily an isomorphism after inverting $\tilde\xi$, since there exists $\psi_M:M\to\phi^*M$ such that $\psi_M\circ\phi_M$ and $\phi_M\circ\psi_M$ are multiplication by $\tilde\xi^s$ for some $s\gg0$, and we may descend $\psi_M$ to $N$ by arguing as we did for $\phi_M$.
%the domain of this latter  is a generalised representation which is trivial modulo $\phi(\mu)=\tilde\xi\mu$, hence trivial modulo $\mu$ and so belongs to $\Rep_\Gamma^\mu(A_\inf(R_\infty))$, whence the Frobenius descends by the known equivalence $\Rep_\Gamma^\mu(A_\inf^\bx(R))\simeq \Rep_\Gamma^\mu(A_\inf(R_\infty))$.
\end{proof}

\begin{remark}\label{remark_cats_with_non-fixed_phi}
Whenever possible we prefer to establish our results first without Frobenius and then add the Frobenius structure; the second step is typically straightforward, as the previous proof illustrates. For example, in the context of Remark \ref{remark_Delta}, the base change equivalence $\Rep_\Gamma^{\mu}(A_\inf(R_\infty))\quis \Rep_\Delta^{\mu}(A_\inf(\res R))$ (by combining Theorem \ref{theorem_<mu_implies_mu} with the results of that remark) is easily checked to admit an analogue with Frobenius structures \begin{equation}\Rep_\Gamma^{\mu}(A_\inf(R_\infty),\phi)\quis \Rep_\Delta^{\mu}(A_\inf(\res R),\phi),\label{eqn_Gamma_vs_Delta_with_phi}\end{equation} where $\Rep_\Delta^{\mu}(A_\inf(\res R),\phi)$ is defined analogously to $\Rep_\Delta^{\mu}(A_\inf(R_\infty),\phi)$.

Nevertheless, several results will require additional properties of the representation which follow from the existence of some Frobenius, without actually requiring any fixed choice of it. Therefore we denote by $\Rep_\Gamma^\mu(A_\inf(R_\infty),\exists\phi)$ the essential image of the forgetful functor \[\Rep_\Gamma^\mu(A_\inf(R_\infty),\phi)\To \Rep_\Gamma^\mu(A_\inf(R_\infty)),\qquad (M,\phi)\mapsto M,\] i.e., it is the full subcategory consisting of $M\in \Rep_\Gamma^\mu(A_\inf(R_\infty))$ for which there exists some $\phi_M:M[\tfrac1\xi]\isoto M[\tfrac1{\tilde\xi}]$ (which we do not fix). The category $\Rep_\Gamma^\mu(A_\inf^\bx(R),\exists\phi)$ is defined analogously, and clearly the equivalence of the previous theorem also holds for these categories.

For example, Proposition \ref{proposition_reps_vs_q_connections_with_phi} and Lemma \ref{lemma_phi_implies_xi_nilp}(iii) show that if $N\in \Rep_\Gamma^\mu(A_\inf^\bx(R),\exists\phi)$ then the operators $\tfrac1\mu(\gamma_i-1)$ are automatically $\tilde\xi$-adically quasi-nilpotent on $N$; this property is intrinsic to the $\Gamma$-action but is not satisfied for arbitrary $N\in \Rep_\Gamma^\mu(A_\inf^\bx(R))$.
\end{remark}

\subsection{Examples of relative Breuil--Kisin--Fargues modules}\label{ss_examples}
We continue to let $R$ denote a $p$-adically complete, small, formally smooth $\roi$-algebra with a fixed choice of framing. In Section \ref{section_BKF_on_proetale} we will reinterpret $\Rep_\Gamma^\mu(A_\inf(R_\infty),\phi)$ as {\em relative Breuil--Kisin--Fargues modules on $\Spf R$} and show in particular that it does not depend on the choice of framing (assuming $\Spec(R/pR)$ is connected, this already follows up to the ambiguity of an algebraic closure of $\Frac R$ from the equivalence~(\ref{eqn_Gamma_vs_Delta_with_phi})). Here we present examples of such relative Breuil--Kisin--Fargues modules from the point of view of generalised representations.

\subsubsection{Dieudonn\'e theory}\label{sss_Dieudonne}
Our first examples of small generalised representations over $A_\inf(R_\infty)$ come from $p$-divisible groups via the following Dieudonn\'e classification theorem where $\op{BT}(-)$ denotes the category of $p$-divisible groups over a ring:

\begin{theorem}\label{theorem_p_div}
There is a fully faithful embedding \[\Phi^\sub{BKF}_R:\op{BT}(R)\To\Rep_\Gamma^\mu(A_\inf(R_\infty),\phi)\] whose essential image consists of those $(M,\phi_M)$ such that $M\subseteq\phi_M(M)\subseteq\tfrac{1}{\tilde\xi}M$.
\end{theorem}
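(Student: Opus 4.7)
The plan is to deduce Theorem \ref{theorem_p_div} by combining the prismatic Dieudonn\'e theory of Anschütz--Le Bras \cite{AnschutzLeBras2019} with the chain of equivalences assembled in diagram (\ref{eqn_intro}). Their main theorem will provide an equivalence between $\op{BT}(R)$ and the category of \emph{admissible} prismatic Dieudonn\'e crystals, namely those $(\mathcal{F},\phi_{\mathcal{F}}) \in \op{F-CR}_{\Prism}(R^{(1)}/(A_{\inf},\tilde\xi))$ whose Frobenius is effective and minuscule, in the sense that $\mathcal{F} \subseteq \phi_{\mathcal{F}}(\mathcal{F}) \subseteq \tilde\xi^{-1}\mathcal{F}$ after evaluation on any prism. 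Post-composing with the chain
\[
\op{F-CR}_{\Prism}(R^{(1)}/(A_{\inf},\tilde\xi))\simeq\mathrm{qHIG}(A_\inf^{\bx}(R)^{(1)},\phi)\simeq\mathrm{qMIC}(A_\inf^\bx(R),\phi)\simeq\Rep_\Gamma^\mu(A_\inf^\bx(R),\phi)\simeq\Rep_\Gamma^\mu(A_\inf(R_\infty),\phi)
\]
from diagram (\ref{eqn_intro}) (the rightmost step being Theorem \ref{theorem_descent_to_framed_with_phi}) will then give the candidate functor $\Phi^\sub{BKF}_R$ along with its full faithfulness for free.

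The remaining task will be to match the essential images. I would first verify that the admissibility condition on $(\mathcal{F},\phi_{\mathcal{F}})$ can be checked after evaluating on the single prism $(A_\inf^\bx(R)^{(1)},\tilde\xi)$; this produces an $A_\inf^\bx(R)^{(1)}$-module with q-Higgs Frobenius, and base change along the relative Frobenius $F:A_\inf^\bx(R)^{(1)}\to A_\inf^\bx(R)$ yields the $A_\inf^\bx(R)$-module $N$ with Frobenius $\phi_N$. Since $F$ sends $\tilde\xi$ to $\tilde\xi$ up to a unit, the sandwich condition is preserved under this base change. Finally, $A_\inf^\bx(R) \hookrightarrow A_\inf(R_\infty)$ is the inclusion of a direct summand of $A_\inf^\bx(R)$-modules (see \S\ref{ss_framed}), so the condition $N\subseteq\phi_N(N)\subseteq\tilde\xi^{-1}N$ transfers to its base change $M = N\otimes_{A_\inf^\bx(R)}A_\inf(R_\infty)$ and conversely.

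The main obstacle I anticipate is justifying this prism-wise verification of admissibility: Anschütz--Le Bras formulate the sandwich condition on every prism in the site, whereas diagram (\ref{eqn_intro}) only records data at $(A_\inf^\bx(R)^{(1)},\tilde\xi)$. This step will rely on the faithful flatness of that prism among covering prisms of $R^{(1)}$ together with the fact that effective-minusculeness of an F-crystal can be tested after a flat cover, which is by now standard in \cite{BhattScholze2019}.

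As an alternative strategy available when $p>2$, one can start from Lau's Dieudonn\'e theory \cite{Lau2018} over the complete intersection semiperfect ring $\bar R/p$, equip the resulting $A_\crys$-Dieudonn\'e modules with their Hodge filtration via Grothendieck--Messing deformation theory, and then apply a filtered refinement of Theorem \ref{thm_crystalline_intro} to reach $\Rep_\Delta^\mu(A_\inf(\bar R),\phi)$; translation to $\Rep_\Gamma^\mu(A_\inf(R_\infty),\phi)$ then proceeds via (\ref{eqn_Gamma_vs_Delta_with_phi}). The subtle point in this second approach would be ensuring that the filtered associatedness statement can detect the minuscule sandwich at the $A_\inf$-level.
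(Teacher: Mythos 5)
Your primary route (Anschütz--Le Bras' prismatic Dieudonn\'e theory composed with the chain of equivalences of diagram (\ref{eqn_intro}), with the sandwich condition tracked through the evaluation at $(A_\inf^\bx(R)^{(1)},\tilde\xi)$ and the base changes along $F$ and $A_\inf^\bx(R)\hookrightarrow A_\inf(R_\infty)$) is exactly the paper's first proof, and your fallback via Lau's theory, Grothendieck--Messing, and the filtered crystalline comparison for $p>2$ is the paper's second, sketched proof. The approach is correct and essentially identical to the paper's.
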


We begin by explaining a prismatic proof of the theorem using J.~Anschutz and A.-C.~Le Bras' \cite{AnschutzLeBras2019} general classification theorem for $p$-divisible groups in terms of certain filtered prismatic crystals:

\begin{proof}[Proof via prismatic crystals]
Appealing to the equivalences of diagram (\ref{eqn_intro}) of the introduction (most of which remain to be proved in Sections \ref{section_q_connections} and \ref{section_prismatic_crystals}), the goal is to show that $\op{BT}(R)$ is equivalent to the category of those locally finite free $F$-crystals $(\cal F,\phi_\cal F)\in \op{CR}_{\Prism}(R/(A_\inf,\xi), \phi)$ such that $\cal F\subseteq\phi_\cal F(\cal F)\subseteq\tfrac1{\xi}\cal F$ (we refer ahead to Corollary \ref{corollary_main_thm_with_phi} for the notation, and use the same Frobenius-identification as in the first line of the proof of Proposition \ref{lem:PrismFinObjCov}). 

Since the absolute prismatic site of $\roi$ has initial object $(A_\inf,\xi)$, we see that the prismatic site of $R$ over $(A_\inf,\xi)$ is the same as its absolute prismatic site \cite[Lem.~4.7]{BhattScholze2019}. Therefore the category of crystals in the previous paragraph is antiequivalent to Anschutz--Le Bras' prismatic Dieudonn\'e crystals $\op{DM}(R)$, and our desired equivalence is their antiequivalence $\cal M_\Prism:\op{BT}(R)\quis \op{DM}(R)$ \cite[Thm.~4.6.9]{AnschutzLeBras2019} (more precisely, Anschutz--Le Bras' general result is formulated in terms of filtered crystals, but in this case the filtration is irrelevant, i.e., $\op{DF}(R)\quis \op{DM}(R)$, by \cite[Prop.~5.2.3]{AnschutzLeBras2019} since $R$ admits the perfectoid quasi-syntomic cover $R_\infty$).
\end{proof}

In the remainder of this subsection we sketch a more classical, non-prismatic proof of Theorem~\ref{theorem_p_div} in the case $p\neq 2$, as it was part of our original motivation for studying the category of generalised representations $\Rep_\Gamma^\mu(A_\inf(R_\infty),\phi)$.

\begin{remark}\label{remark_BT_without_framing}
We remark that we may also state Theorem \ref{theorem_p_div} independently of any choice of framing: there exists a fully faithful embedding $\Phi^\sub{BKF}_{\res R/R}:\op{BT}(R)\to\Rep_\Delta^\mu(A_\inf(\res R),\phi)$ whose essential image consists of those $(M,\phi_M)$ such that $M\subseteq\phi_M(M)\subseteq\tfrac{1}{\tilde\xi}M$. Here $\res R$ and $\Delta$ are as in Remark \ref{remark_Delta} (and so we implicitly assume that $\Spec(R/pR)$ is connected). Indeed, the naturality of Theorem \ref{theorem_p_div_perfectoid} below refers to the fact that, given a morphism $A\to B$ of perfectoid rings and a $p$-divisible group $G$ over $A$, there is a functorial identification $\Phi_{B}^{\sub{inf}}(G\otimes_AB)\cong\Phi_A^\sub{inf}(A)\otimes_{A_\inf(A)}A_\inf(B)$. Therefore, using the definition $\Phi_R^\sub{BKF}:=\Phi^\sub{inf}_{R_\infty}(-\otimes_RR_\infty)$ below, we see that the composition \[\op{BT}(R)\xto{\Phi_R^\sub{BKF}}\Rep_\Gamma^\mu(A_\inf(R_\infty),\phi)\stackrel{\sub{(\ref{eqn_Gamma_vs_Delta_with_phi})}}\quis \Rep_\Delta^\mu(A_\inf(\res R),\phi)\] is the framing-independent functor $\Phi^\sub{BKF}_{\res R/R}$. Nevertheless, we must use the choice of framing to show that the functor $\Phi_R^\sub{BKF}$ is indeed an equivalence.
\end{remark}

The case $R=\roi$ of the theorem (in which case the codomain is the category of usual Breuil--Kisin--Fargues modules) is due to Fargues (see \cite[Thm.~14.4.1]{ScholzeWeinstein2020}). The following already known generalisation of Fargues' result are an input to the proof of Theorem \ref{theorem_p_div}:

\begin{theorem}[Lau et al.]\label{theorem_p_div_perfectoid}
Let $A$ be a perfectoid ring. Then there is a natural equivalence of categories \[\Phi_{A}^{\sub{inf}}:\op{BT}(A)\quis\categ{6cm}{finite projective $A_\inf(A)$-modules $M$ equipped with a Frobenius semi-linear isomorphism $\phi_M:M[\tfrac1\xi]\isoto M[\tfrac1{\tilde\xi}]$ such that $M\subseteq\phi_M(M)\subseteq\tfrac{1}{\tilde\xi}M$}\] (where $\xi\in A_\inf(A)$ is a generator of the kernel of Fontaine's map $A_\inf(A)\to A$). Moreover, this equivalence is compatible with crystalline Dieudonn\'e theory, as will be explained in the proof of Lemma~\ref{lemma_PhiBKF_defined}.
\end{theorem}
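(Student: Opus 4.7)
The plan is to construct the functor $\Phi_A^\inf$ via crystalline Dieudonn\'e theory descended along the Frobenius, and then establish the equivalence by combining the classical case of perfect $\mathbb F_p$-algebras with Grothendieck--Messing deformation theory across the surjection $\theta : A_\inf(A) \to A$.

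First, I would construct $\Phi_A^\inf$ as follows. Let $A_\crys(A)$ denote the $p$-adically completed divided-power envelope of $\ker\theta \subseteq A_\inf(A)$, so that $\Spec A \hookrightarrow \Spec A_\crys(A)$ is a PD thickening. Given $G \in \op{BT}(A)$, evaluating its contravariant crystalline Dieudonn\'e crystal at this thickening produces a finite projective $A_\crys(A)$-module $\bb D(G)$ equipped with a $\phi$-semi-linear Frobenius $F$ and a Verschiebung $V$ with $FV = VF = p$. The point is that $F$ allows one to descend along the inclusion $A_\inf(A) \hookrightarrow A_\crys(A)$: there exists a unique finite projective $A_\inf(A)$-module $M$ with a $\phi$-semi-linear $\phi_M$ such that $(M, \phi_M) \otimes_{A_\inf(A)} A_\crys(A) \simeq (\bb D(G), F)$. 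The inclusions $M \subseteq \phi_M(M) \subseteq \tfrac{1}{\tilde\xi} M$ then encode, modulo $\tilde\xi$, the Hodge filtration $\op{Fil}^1 \subseteq M/\xi M$ coming from the exact sequence $0 \to \omega_G \to \bb D(G)_A \to \op{Lie}(G^\vee) \to 0$ together with its admissibility; equivalently, $\phi_M$ takes effective values after twisting by $\tilde\xi$.

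Second, I would prove the equivalence in two stages. In the perfect-of-characteristic-$p$ case, $A_\inf(A) = W(A)$ and $\tilde\xi = p$, so the stated condition becomes $pM \subseteq p\phi_M(M) \subseteq M$; after renormalising, this recovers the classical Dieudonn\'e condition and one invokes the equivalence between $p$-divisible groups over perfect rings and finite projective Dieudonn\'e modules (Berthelot, Gabber). In the general perfectoid case, the reduction $A/p$ is semiperfect, and one first applies Lau's Dieudonn\'e classification over such rings (built from the crystalline Dieudonn\'e crystal together with the universal PD thickening $A_\crys(A/p) \to A/p$) to classify $\op{BT}(A/p)$ by modules over $W(A^\flat) = A_\inf(A)$; then one lifts from $A/p$ to $A$ via Grothendieck--Messing, which identifies $\op{BT}(A)$ with pairs consisting of an object of $\op{BT}(A/p)$ together with an admissible Hodge filtration on $\bb D(G_0) \otimes_{A_\crys(A/p)} A_\crys(A)$ lifting that over $A/p$. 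This lifted filtration is precisely what is recorded by the refinement $\phi_M(M) \subseteq \tfrac{1}{\tilde\xi} M$.

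The main obstacle is the second stage: establishing Lau's classification over semiperfect $\mathbb F_p$-algebras and matching its output with the condition $M \subseteq \phi_M(M) \subseteq \tfrac{1}{\tilde\xi}M$. The descent from $A_\crys(A)$ back to $A_\inf(A)$ is not formal---it rests on the fact that the Frobenius on $A_\crys(A)$ factors as $A_\crys(A) \xrightarrow{\phi} A_\inf(A) \hookrightarrow A_\crys(A)$ modulo the divided-power filtration, so that an $F$-crystal over $A_\crys(A)$ canonically recovers an $A_\inf(A)$-lattice via its image under Frobenius. Once this is set up, functoriality in $A$ (yielding the compatibility claim used in Remark~\ref{remark_BT_without_framing}) is automatic, since all constructions---the crystalline crystal, the PD envelope, and the Frobenius descent---are manifestly natural in morphisms of perfectoid rings.
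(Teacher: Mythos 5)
Note first that the paper does not actually prove this theorem: its ``proof'' is a list of attributions (Berthelot for perfect valuation rings, Gabber for general perfect $\bb F_p$-algebras, Fargues for $\roi_C$, Lau for arbitrary perfectoid rings when $p\neq 2$, and Scholze--Weinstein for the general case by v-descent from Berthelot's result). Your sketch is essentially a compression of Lau's route, and it contains two genuine gaps. The first is the descent from $A_\crys(A)$ to $A_\inf(A)$. The mechanism you offer --- that Frobenius on $A_\crys(A)$ factors through $A_\inf(A)$ --- is false: already for $A=\roi$ one has $\phi(\xi^p/p!)=\tilde\xi^p/p!$, and since $\tilde\xi\equiv\mu^{p-1}\bmod p$ this element contains the summand $\mu^{p(p-1)}/p!$, which does not lie in $A_\inf$ because $A_\inf/\mu$ is $p$-torsion-free. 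The hedge ``modulo the divided-power filtration'' does not rescue this, since $A_\crys(A)/\Fil^1=A=A_\inf(A)/\xi$, so the factorisation there is vacuous and retains no lattice information. Recovering the $A_\inf(A)$-lattice $M$ from the $F$-crystal $\bb D(G)(A_\crys(A))$ is precisely the hard content of the theorem: in the relative setting this uniqueness statement is the subject of the paper's Section \ref{section_crystalline} (Theorems \ref{theorem_crystal_genrep} and \ref{theorem_Admissibility}), proved there via the period ring $\roiA_\crys(\cal R)$ with connection and a nontrivial admissibility argument, and Lau's actual construction proceeds through frames and windows/displays rather than through the naive image of Frobenius.

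The second gap is that the Grothendieck--Messing step requires the divided powers on the kernel of the thickening over $A/p$ to be (topologically) nilpotent, which fails for $p=2$; this is exactly why Lau's theorem is restricted to $p\neq 2$, and why the statement for every prime, as asserted here, is obtained in the literature only via Scholze's v-descent argument reducing to Berthelot's case of perfect valuation rings. Your proposal therefore reaches at best the $p\neq2$ case, and that only modulo the first gap. The construction of the functor from the crystalline Dieudonn\'e crystal, the translation of the classical Dieudonn\'e condition in the perfect characteristic-$p$ case, and the remark that naturality in $A$ would be automatic from such a construction are all fine.
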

\begin{proof}[Remarks on the proof]
The case in which $A$ is a perfect field of characteristic $p$ is classical. Berthelot \cite{Berthelot1980} treated the case of perfect valuation rings of characteristic $p$, from which Gabber then deduced the case of a general perfect $\bb F_p$-algebra $A$ using descent methods. In mixed characteristic, we have already mentioned that the theorem is due to Fargues when $A$ is the ring of integers of a complete, non-archimedean, algebraically closed field, and in general when $p\neq2$ it is due to Lau \cite{Lau2018}.

Meanwhile, Scholze \cite[Thm.~17.5.2]{ScholzeWeinstein2020} has shown that the theorem follows in general from Berthelot's result, using v-descent in a similar fashion to Gabber and using  the results of \cite{BhattMorrowScholze1} to control the relationship to usual crystalline Dieudonn\'e theory.\phantom\qedhere
\end{proof}

\begin{definition}\label{definition_BKF_of_perfectoid}
For a perfectoid ring $A$, we denote by $\BKF(A,\phi)$ the category of Breuil--Kisin--Fargues modules over $A$, i.e., finite projective $A_\inf(A)$-modules $M$ equipped with a Frobenius semi-linear isomorphism $\phi_M:M[\tfrac1\xi]\isoto M[\tfrac1{\tilde\xi}]$. Also let $\BKF(A,\phi,[-1,0])$ denote its full subcategory where the height condition $M\subseteq\phi_M(M)\subseteq\tfrac{1}{\tilde\xi}M$ holds, i.e., the target category in Theorem \ref{theorem_p_div_perfectoid}.
\end{definition}

We now begin to explain the non-prismatic proof of Theorem \ref{theorem_p_div}; we let
\[\Rep^\mu_\Gamma(A_\inf(R_\infty),\phi,[-1,0])\subseteq \Rep^\mu_\Gamma(A_\inf(R_\infty),\phi)\] denote the full subcategory consisting of those $(M,\phi_M)$ such that $M\subseteq\phi_M(M)\subseteq\tfrac{1}{\tilde\xi}M$, i.e., the desired image in Theorem \ref{theorem_p_div}.
Given a $p$-divisible group $G$ over $R$, the corresponding $p$-divisible group $G\otimes_RR_\infty$ inherits a $\Gamma$-action, which in turn implies by functoriality that the $A_\inf(R_\infty)$-module $\Phi_{R_\infty}^\inf(G\otimes_RR_\infty)$ is equipped with a semi-linear $\Gamma$-action; here we are applying Theorem \ref{theorem_p_div_perfectoid} to the perfectoid ring $A=R_\infty$. In order to show that this defines a functor \[\Phi^\sub{BKF}_R:=\Phi_{R_\infty}^\inf(-\otimes_RR_\infty):\op{BT}(R)\To \Rep^\mu_\Gamma(A_\inf(R_\infty),\phi,[-1,0]),\] we must check the following:

\begin{lemma}\label{lemma_PhiBKF_defined}
For any $p$-divisible group $G$ over $R$, the induced $\Gamma$-action on $\Phi_{R_\infty}^\inf(G\otimes_RR_\infty)$ is continuous and trivial modulo $\mu$.
\end{lemma}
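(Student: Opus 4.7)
Plan. Continuity of the $\Gamma$-action will be automatic once we establish triviality modulo $\mu$: the topology on $A_\inf(R_\infty)$ is $(p,\mu)$-adic with $p$ in the defining ideal, and $\Gamma \cong \bb Z_p^d$, so Lemma \ref{lemma_automatic_continuity} applies directly. The substantive content is therefore triviality modulo $\mu$, which by Theorem \ref{theorem_descent_to_framed_with_phi}---together with the fact, recorded in \S\ref{ss_framed}, that $\Gamma$ acts trivially on $A_\inf^\bx(R)/\mu$---is equivalent to showing that $M := \Phi^{\inf}_{R_\infty}(G\otimes_R R_\infty)$, with its Frobenius and $\Gamma$-action, descends to an object of $\Rep_\Gamma^\mu(A_\inf^\bx(R),\phi)$ along the base change $A_\inf^\bx(R)\to A_\inf(R_\infty)$.

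To construct this descent I would invoke the compatibility of $\Phi^{\inf}$ with crystalline Dieudonn\'e theory asserted in Theorem \ref{theorem_p_div_perfectoid}: after base change to the crystalline period ring $A_\crys(R_\infty)$, the module $M$ identifies (up to a Frobenius twist) with the crystalline Dieudonn\'e module $\bb D(G_\infty)(A_\crys(R_\infty))$, where $G_\infty := G\otimes_R R_\infty$. Since $G_\infty$ is pulled back from $G/R$ and $\Gamma$ fixes $R$, the crystal property of $\bb D(G)$ supplies a canonical $\Gamma$-equivariant isomorphism
\[
\bb D(G_\infty)(A_\crys(R_\infty)) \;\cong\; \bb D(G)(A_\crys^\bx(R))\,\otimes_{A_\crys^\bx(R)}\,A_\crys(R_\infty),
\]
where $A_\crys^\bx(R)$ denotes the $p$-completed PD-envelope of $A_\inf^\bx(R)\twoheadrightarrow R$ and $\Gamma$ acts trivially on the first tensor factor. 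This already exhibits a $\Gamma$-stable, $\Gamma$-trivial $A_\crys^\bx(R)$-lattice inside $M\otimes_{A_\inf(R_\infty)}A_\crys(R_\infty)$.

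The remaining step is to descend this lattice further from $A_\crys^\bx(R)$ down to $A_\inf^\bx(R)$. For this I would use the Frobenius $\phi_M$ together with the lattice condition $M\subseteq \phi_M(M)\subseteq \tfrac{1}{\tilde\xi}M$ characterising the image of $\Phi^{\inf}_{R_\infty}$ in Theorem \ref{theorem_p_div_perfectoid}: applying $\phi_M$ absorbs the divided powers present in the PD-envelope, and an appropriate iterate produces an $A_\inf^\bx(R)$-lattice $N\subseteq M$ with $N\otimes_{A_\inf^\bx(R)}A_\inf(R_\infty)=M$. Such an $N$ is automatically $\Gamma$-stable and trivial modulo $\mu$, since $\Gamma$ acts trivially on $A_\inf^\bx(R)/\mu$.

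The main obstacle is precisely this Frobenius descent from $A_\crys^\bx(R)$ to $A_\inf^\bx(R)$: it is a relative avatar of the classical Breuil--Kisin lattice construction, and essentially matches the ``filtered version of Theorem \ref{thm_crystalline_intro}'' alluded to in the alternative proof of Theorem \ref{theorem_p_div} valid for $p>2$. One must verify that the lattice produced by contracting divided powers is finite projective and genuinely independent of the choices involved; in particular, verifying finite projectivity will rely on the input that $\bb D(G)(A_\crys^\bx(R))$ is itself finite projective, which comes from the known crystalline Dieudonn\'e theory for $p$-divisible groups over the smooth $\roi$-algebra $R$.
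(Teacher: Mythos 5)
The first half of your argument is sound and matches the paper's strategy: reduce to exhibiting $M/\mu$ as a base change of a $\Gamma$-trivial module over $A_\inf^\bx(R)/\mu$, get there via the compatibility of $\Phi^{\inf}$ with crystalline Dieudonn\'e theory, and deduce continuity afterwards from Lemma \ref{lemma_automatic_continuity}. The gap is in your last two paragraphs. You pass to $A_\crys(R_\infty)$ and then try to come back to $A_\inf$ by ``applying $\phi_M$ to absorb the divided powers'' and produce an honest $A_\inf^\bx(R)$-lattice $N$ with $N\otimes_{A_\inf^\bx(R)}A_\inf(R_\infty)=M$. That step is not a proof: iterating the Frobenius on a crystalline lattice does not in any evident way converge to an $A_\inf$-lattice, and the statement you are really invoking --- that a filtered $F$-crystal associated to $M$ produces a Breuil--Kisin--Fargues descent of $M$ itself --- is essentially Theorem \ref{theorem_Admissibility} together with Theorem \ref{theorem_descent_to_framed_with_phi}, i.e.\ the deep content of Sections \ref{section_small_reps} and \ref{section_crystalline}. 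Structurally this is also backwards: in the paper the present lemma is an \emph{input} needed just to show that $\Phi^\sub{BKF}_R$ is well-defined, before any of that machinery is brought to bear on $p$-divisible groups, so one should not lean on it here.

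The point you are missing, and which makes the whole detour unnecessary, is that one never needs to undo the divided powers: the kernel of $A_\inf(R_\infty)/\mu\to R_\infty$ \emph{already has} divided powers (since $\xi^p\equiv\phi(\xi)\mu^{p-1}\equiv 0$ in $A_\inf/(p,\mu)$, so $\xi^p\in pA_\inf/\mu$), hence $A_\inf(R_\infty)/(p^n,\mu)$ and $A_\inf^\bx(R)/(p^n,\mu)$ are themselves PD-thickenings of $R_\infty/p^n$ and $R/p^n$, and the quotient $A_\inf(R_\infty)\to A_\inf(R_\infty)/\mu$ factors through $A_\crys(R_\infty)$. One can therefore evaluate the Dieudonn\'e crystal $\bb D(G\otimes_R R/p^n)$ directly on $A_\inf^\bx(R)/(p^n,\mu)$ and use the crystal property exactly as in your second paragraph to get
\[
M_\inf(G\otimes_RR_\infty)/(p^n,\mu)\cong \Gamma\big(\Spec R/p^n\hookrightarrow \Spec A_\inf^\bx(R)/(p^n,\mu),\,\bb D(G\otimes_RR/p^n)\big)\otimes_{A_\inf^\bx(R)/(p^n,\mu)}A_\inf(R_\infty)/(p^n,\mu),
\]
which is $\Gamma$-equivariant with $\Gamma$ acting trivially on the first factor; passing to the limit over $n$ gives triviality modulo $\mu$ with no integral descent and no use of the Frobenius structure at all. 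So: right ingredients, but the route back from the crystalline side is both unjustified as written and avoidable.
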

\begin{proof}
The compatibility of the perfectoid Dieudonn\'e theory with the classical crystalline theory of \cite{BerthelotBreenMessing1982} amounts to the following statement: for any $p$-divisible group $H$ over $R_\infty$, the base change $\Phi_{R_\infty}^\inf(H)\otimes_{A_\inf(R_\infty)}A_\crys(R_\infty)/p^n$ is given by evaluating $\bb D(H\otimes_{R_\infty}R_\infty/p^n)$ (namely, the crystalline Dieudonn\'e module of the $p$-divisible group $H\otimes_{R_\infty}R_\infty/p^n$ over $R_\infty/p^n$; this is a crystal on the crystalline site of $R_\infty/p^n$ over $\bb Z_p$) on the pd-thickening $A_\crys(R_\infty)/p^n$ of $R_\infty/p^n$. That is,
\[\Phi_{R_\infty}^\inf(H)\otimes_{A_\inf(R_\infty)}A_\crys(R_\infty)/p^n=\Gamma\big(\Spec R_\infty/p^n\into \Spec A_\crys(R_\infty)/p^n,\, \bb D(H\otimes_{R_\infty}R_\infty/p^n)\big).\] But the kernel of the map $A_\inf(R_\infty)/\mu\to R_\infty$ has divided powers (since $A_\inf(R_\infty)/\mu$ and $R_\infty$ are $p$-torsion-free, an examination of the the $p$-adic valuations of factorials, e.g., \cite[Lem.~2.35]{BhattScholze2019}, reduces this claim to checking that $\xi^p/p!\in A_\inf(R_\infty)/\mu$; but in $A_\inf$ we have $\xi^p\equiv \tilde\xi\equiv\mu^{p-1}$ mod $p$, so $\xi^p$ is divisible by $p$ in $A_\inf/\mu$), whence the quotient map $A_\inf(R_\infty)\to A_\inf(R_\infty)/\mu$ factors through $A_\sub{crys}(R_\infty)$; base changing the above expression through this factorisation and using the the crystal property, we see that \[\Phi_{R_\infty}^\inf(H)/(p^n,\mu)=\Gamma\big(\Spec R_\infty/p^n\into \Spec A_\inf(R_\infty)/(p^n,\mu),\, \bb D(H\otimes_{R_\infty}R_\infty/p^n)\big).\]

We will apply this to $H=G\otimes_RR_\infty$, in which case we can also evaluate the Dieudonn\'e crystal of $G\otimes_RR/p^n$ on the pd-thickening $A_\inf^\bx(R)/(p^n,\mu)\to R/p^n$; from base change one deduces that \[\Phi_{R_\infty}^\inf(G\otimes_RR_\infty)/(p^n,\mu)=\Gamma\big(\Spec R/p^n\into \Spec A_\inf^\bx(R)/(p^n,\mu),\, \bb D(G\otimes_RR/p^n)\big)\otimes_{A_\inf^\bx(R)/(p^n,\mu)}A_\inf(R_\infty)/(p^n,\mu).\] By naturality this identification is compatible with the $\Gamma$-actions, where we recall from the discussion of \S\ref{ss_framed} that $\Gamma$ acts trivially on $A^\bx_\sub{inf}(R)/\mu$. Base change also shows that such identifications are compatible as $n$ increases, whence one obtains \[\Phi_{R_\infty}^\inf(G\otimes_RR_\infty)/\mu=\bigg(\projlim_n\Gamma\big(\Spec R/p^n\into \Spec A_\inf^\bx(R)/(p^n,\mu),\, \bb D(G\otimes_RR/p^n)\big)\bigg)\otimes_{A_\inf^\bx(R)/\mu}A_\inf(R_\infty)/\mu,\] witnessing that the $\Gamma$-action on $\Phi_{R_\infty}^\inf(G\otimes_RR_\infty)$ is trivial modulo $\mu$.

%Finally recall that the kernel of the map $A_\inf/\mu\to A_\inf$ has divided powers, similarly for $A_\infty^\bx(R)/\mu\to R$ and ; in particular, the canonical quotient maps from $A_\infty^\bx(R)$ and $A_\inf(R_\infty)$ respectively factor through $A_\sub{crys}^\bx(R)$ and $A_\sub{crys}(R_\infty)$.

%ideal $\mu A_\inf\subseteq A_\inf$ admits divided powers [ADD EXPLANATION], similarly for $\mu A_\infty^\bx(R)$ and $\mu A_\inf(R_\infty)$; therefore the quotient maps $A_\infty^\bx(R)\to A_\infty^\bx(R)/\mu$ and $A_\inf(R_\infty)\to A_\inf(R_\infty)/(\mu,p^n)$ factor respectively through $A_\crys^\bx(R)$ and $A_\crys(R_\infty)$. Combined with the previous line, it follows that $M_\inf(G\otimes_RR_\infty)/\mu$ is naturally given by base changing a module along $A_\infty^\bx(R)/\mu\to A_\infty^\bx(R)/\mu$, at least modulo any power of $p$. This description is compatible with the $\Gamma$-actions, which is 

Finally, the continuity of the $\Gamma$-action on $\Phi_{R_\infty}^\inf(G\otimes_RR_\infty)$ is automatic by Lemma \ref{lemma_automatic_continuity}.
\end{proof}

Now that our desired functor $\Phi^\sub{BKF}_R$ has been shown to be well-defined, we may easily check that it is fully faithful:

\begin{proof}[Proof that $\Phi^\sub{BKF}_R$ is fully faithful]
By construction there is a commutative diagram
\[\xymatrix@C=2cm{
\op{BT}(R_\infty)\ar[r]^-{\Phi_{R_\infty}^\inf} &\BKF(R_\infty,\phi,[-1,0])\\
\op{BT}(R)\ar[r]_-{\Phi_R^\sub{BKF}}\ar[u]^{-\otimes_RR_\infty} & \Rep^\mu_\Gamma(A_\inf(R_\infty),\phi,[-1,0])\ar[u]_{\text{forget $\Gamma$ action}}
}\]
Since $-\otimes_RR_\infty$ is faithful, so is $\Phi_R^\sub{BKF}$.

Next, since $\Phi_{R_\infty}^\inf$ is an equivalence of categories by Theorem \ref{theorem_p_div_perfectoid} one sees that, given $G_1,G_2\in\op{BT}(R)$, then any morphism $\Phi_R^\sub{BKF}(G_1)\to \Phi_R^\sub{BKF}(G_2)$ naturally induces a $\Gamma$-equivariant morphism $G_1\otimes_RR_\infty\to G_2\otimes_RR_\infty$; passing to $\Gamma$-invariants this descends to the desired morphism $G_1\to G_2$ and so establishes fullness.
\end{proof}

\begin{proof}[Sketch that $\Phi_R^\sub{BKF}$ is essentially surjective]
Given $M\in\Rep^\mu_\Gamma(A_\inf(R_\infty),\phi,[-1,0])$ and forgetting the $\Gamma$-action on $M$, Theorem \ref{theorem_p_div_perfectoid} shows that there exists a unique $p$-divisible group $G$ over $R_\infty$ such that $\Phi^\sub{inf}_{R_\infty}(G)=M$. To prove essential surjectivity it must be shown in particular that $G$ descends from $R_\infty$ to $R$; that is, we must construct a descent isomorphism $G\otimes_{R_\infty,p_1}R_\infty(1)\cong G\otimes_{R_\infty,p_2}R_\infty(1)$, where $R_\infty(1)$ denotes the $p$-adic completion of $R_\infty\otimes_RR_\infty$. There are three key ingredients to this construction of this descent: Grothendieck--Messing's deformation theory \cite{Messing1972} to partly reduce the problem to $R_\infty(1)/p$; Lau's Dieudonn\'e theory \cite{Lau2018} over complete intersection semiperfect rings such as $R_\infty(1)/p$ (which is where we require $p\neq 2$); and the existence of a filtered crystal $\cal F$ on the big crystalline site of $R$ associated to $M$ (in the terminology of Definition \ref{definition_associated} and appealing to Theorems \ref{theorem_crystal_genrep} and  \ref{theorem_Admissibility}, $\cal F\in \CR(R/A_\crys,\phi,\op{SatFil})$ is the unique F-crystal with saturated filtration which is associated to $M$). The detailed proof appears in \S\ref{ss_DieudonneII}.
\end{proof}

We finish our discussion of Dieudonn\'e theory by explaining its compatibility with Tate modules. We adopt the framing-independent point of view of Remark \ref{remark_BT_without_framing} (and so assume that $\Spec(R/pR)$ is connected), which provides us with an equivalence
of categories
$$\Phi_{\res R/R}^{\BKF}\colon \op{BT}(R)\xrightarrow{\sim} \Rep_{\Delta}^{\mu}(A_\inf(\res R),\varphi,[-1,0])$$
defined by $\Phi_{\res R}^\inf(-\otimes_{R}\res R)$.

%Let $R$ be a $p$-adically complete, small, formally smooth $\cal O$-algebra such that $\Spec(R/pR)$ is connected. Let $\Rep_{\Delta,[-1,0]}^{\mu}(A_\inf(\res R),\varphi)$ be the full subcategory of $\Rep_{\Delta}^{\mu}(A_\inf(\res R),\varphi)$ consisting of $(M,\varphi_M)$ satisfying $M\subset \varphi_M(M)\subset \tilde\xi^{-1}M$, where $\res{R}$ and $\Delta$ are as in Remark \ref{remark_Delta}. By Theorem \ref{theorem_p_div} and Remark \ref{remark_BT_without_framing}, we have a (framing-independent) equivalence of categories $$\Phi_{\res R/R}^{\BKF}\colon \op{BT}(R)\xrightarrow{\sim} \Rep_{\Delta,[-1,0]}^{\mu}(A_\inf(\res R),\varphi)$$ defined by $\Phi_{\res R}^\inf(-\times_{\Spec(R)}\Spec(\res R))$ (see Theorem \ref{theorem_p_div_perfectoid} for the functor $\Phi_A^{\inf}$).

\begin{proposition}\label{prop:TateModule}
Let $G$ be a $p$-divisible group over $R$, let $M:=\Phi_{\res R/R}^{\BKF}(G)$ be its image under the functor immediately above, and let $TG:=\varprojlim_n G[p^n](\res R)$ be its 
Tate module. Then there is a natural (in~$G$) $\Delta$-equivariant isomorphism of $\mathbb Z_p$-modules $$TG\cong M^{\varphi=1}=(M[\tfrac{1}{\mu}])^{\varphi=1}.$$ 
\end{proposition}

\begin{proof}
The second equality follows from $\varphi_M(M)\supset M$
and the proof of Proposition \ref{proposition_etale}(ii). Let $H$ be the constant \'etale $p$-divisible
group over $R$ associated to $\bb Q/\bb Z$, and let $H_{\res R}$ and $G_{\res R}$
be the base changes of $H$ and $G$ to $\res R$. Then, by the
fully faithfulness of $\Phi_{\res R}^{\inf}$, we have
isomorphisms of $\bb Z_p$-modules
\begin{multline*}
TG\cong\Hom_{\op{BT}(\res R)}(H_{\res R},G_{\res R})
\cong \Hom_{\BKF(\res R,\varphi, [-1,0])}(\Phi_{\res R}^{\inf}(H_{\res R}),
\Phi_{\res R}^{\inf}(G_{\res R}))\\
=\Hom_{\BKF(\res R,\varphi,[-1,0])}(A_\inf(\res R),M)\cong M^{\varphi=1}.
\end{multline*}
where the final isomorphism is given by $g\mapsto g(1)$.
We must prove that this isomorphism is $\Delta$-equivariant.
For an element $t\in TG$, let $f_t$ and $g_t$ denote the morphisms
$H_{\res R}\to G_{\res R}$ and $A_\inf(\res R)\to M$ corresponding to 
$t$ under the above isomorphisms. Then, for $\delta\in \Delta$, 
the base change of $f_t$ under the morphism 
$\Spec(\delta)\colon \Spec(\res R)\to \Spec(\res R)$
coincides with $f_{\delta(t)}$.
(Here we use the following fact: for $P\in G[p^n](\res R)$, letting
$f_P\colon \Spec(\res R)\to G[p^n]_{\res R}$ denote the morphism 
of schemes over $\Spec(\res R)$ corresponding to $P$, 
then the base change of $f_P$ under $\Spec(\delta)\colon \Spec(\res R)
\to \Spec(\res R)$ coincides with $f_{\delta(P)}$.)
By the compatibility of the functor $\Phi_{\res R}^{\inf}$ with
the base change under $\delta$, we obtain
$g_{\delta(t)}\circ \delta=\delta\circ g_t$, which gives
$g_{\delta(t)}(1)=\delta(g_t(1))$ as required.
\end{proof}

\subsubsection{Relative prismatic cohmology}\label{sssection_relative}
Throughout this subsection we fix a proper smooth $p$-adic formal $R$-scheme $f:\cal Y\to \Spf R$. Our goal is to construct natural ``relative $A_\inf$-cohomology groups'' $H^*_{A_\inf}(\cal Y/R)\in\Rep_\Gamma^\mu(A_\inf(R_\infty),\phi)$, under suitable flatness hypotheses on the relative de Rham cohomology of $\cal Y$ over $R$.

I.~Gaisin and T.~Koshikawa have also studied relative $A_\inf$-cohomology, as well as the relative Hodge--Tate spectral sequence, via products of topoi; for example they also prove Lemma \ref{lemma_de_rham_tf_implies_prism_tf}(i), by a different method.

\begin{remark}\label{remark_untwist}
From Section \ref{section_prismatic_crystals} onwards when we compare generalised representations and relative Breuil--Kisin--Fargues modules to prismatic crystals, it will be clearer to distinguish the two $A_\inf$-algebra structures on $\roi$ (and more generally on any $\roi$-algebra), namely the default structure via $\theta$ or the Frobenius-twised structure via $\tilde\theta$; the latter, namely $A_\inf/\tilde\xi$, should be more correctly denoted by $\roi^{(1)}$. See the start of Section \ref{section_prismatic_crystals}.

However, to simplify notation in this self-contained subsection, here we instead make the convention that the default structure is the Frobenius-twisted one. For compatibility with our later notation in prismatic cohomology, the expression (\ref{equation_prismatic_cohoml}) should be replaced by $R\Gamma_\Prism(\cal Y^{(1)}\times_{ R^{(1)}} R_\infty^{(1)}/(A_\inf(R_\infty),\tilde\xi))$ and similarly throughout the rest of the subsection.
\end{remark}

The shortest definition of these generalised representations will be via the prismatic cohomology \cite{BhattScholze2019}
\begin{equation}R\Gamma_\Prism(\cal Y\times_{ R} R_\infty/(A_\inf(R_\infty),\tilde\xi)),\label{equation_prismatic_cohoml}\end{equation} where the base prism is the perfect prism $(A_\inf(R_\infty),(\tilde\xi))$ equipped with the $\delta$-structure induced by its Frobenius. This perfect complex of $A_\inf(R_\infty)$-modules should be viewed as the prototypical example of a ``derived relative Breuil--Kisin--Fargues module''; this notion can presumably be made precise via crystals of perfect complexes on the prismatic site, but here we content ourselves with the following consequences:
\begin{equation}R\Gamma_\Prism(\cal Y\times_{ R} R_\infty/(A_\inf(R_\infty),\tilde\xi))=R\Gamma_\Prism(\cal Y/(A_\inf^\bx(R),\xi))\otimes_{A_\inf^\bx(R),\phi}A_\inf(R_\infty)\label{equation_framed_prismatic}\end{equation}
\begin{equation}R\Gamma_\Prism(\cal Y\times_{ R} R_\infty/(A_\inf(R_\infty),\tilde\xi))/\mu=R\Gamma_\Prism(\cal Y/(A_\inf^\bx(R)/\phi^{-1}(\mu),p))\otimes_{A_\inf^\bx(R)/\phi^{-1}(\mu),\phi}A_\inf(R_\infty)/\mu\label{equation_prismatic_trivial}\end{equation}
These are proved by applying base change in prismatic cohomology \cite[Thm.~1.8(5)]{BhattScholze2019} along the maps of prisms:
\[\xymatrix{
(A_\inf^\bx(R),\xi)\ar[r]^\phi\ar[d]&(A_\inf(R_\infty),\tilde\xi)\ar[d] \\
(A_\inf^\bx(R)/\phi^{-1}(\mu),p)\ar[r]_\phi&(A_\inf(R_\infty)/\mu,p)
}\]
Identity (\ref{equation_framed_prismatic}) is a derived version of our forthcoming descent of generalised representations to the level of q-Higgs bundles (Corollary \ref{corollary_descent_of_reps_to_phi_twist}), while (\ref{equation_prismatic_trivial}) is a derived version of a generalised representation being trivial modulo $\mu$.

Now we impose conditions on the relative de Rham cohomology to get actual (non-derived) relative Breuill--Kisin--Fargues modules:

\begin{lemma}\label{lemma_de_rham_tf_implies_prism_tf}
Assume that the relative de Rham cohomologies $H^i_\sub{dR}(\cal Y/R)$ are finite projective $R$-modules for all $i\ge0$. Then:
\begin{enumerate}
\item the cohomology groups of (\ref{equation_prismatic_cohoml}) are all finite projective $A_\inf(R_\infty)$-modules;
\item the $\Gamma$-action on each of these cohomology groups (induced by functoriality from the $\Gamma$-action on the base prism $(A_\inf(R_\infty),\tilde\xi)$) is trivial modulo $\mu$.
\end{enumerate}
\end{lemma}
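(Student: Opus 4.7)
The plan is to deduce both assertions from the base change identities (\ref{equation_framed_prismatic}) and (\ref{equation_prismatic_trivial}), combined with the perfectness of prismatic cohomology and the de Rham comparison from \cite{BhattScholze2019}.

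For part (i), I would first show that $C := R\Gamma_\Prism(\cal Y/(A_\inf^\bx(R), \xi))$ is a perfect complex of $A_\inf^\bx(R)$-modules with finite projective cohomology groups. Perfectness is provided by Bhatt--Scholze; the finite projectivity follows from the de Rham comparison $\phi^*C \otimes^L_{A_\inf^\bx(R)} R \simeq R\Gamma_\sub{dR}(\cal Y/R)$ together with the hypothesis that the right-hand side has finite projective cohomology concentrated in a bounded range, via a derived Nakayama argument on the $(p,\xi)$-adically complete ring $A_\inf^\bx(R)$ (and using the faithful flatness of $\phi:A_\inf^\bx(R)\to A_\inf^\bx(R)$, locally modelled on $A_\inf\pid{\ul U^{\pm1}} \to A_\inf\pid{\ul U^{\pm 1/p}}$, to descend from $\phi^*C$ to $C$). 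Part (i) then follows from (\ref{equation_framed_prismatic}) upon base-changing along the flat map $A_\inf^\bx(R) \xrightarrow{\phi} A_\inf^\bx(R) \hookrightarrow A_\inf(R_\infty)$, where the second arrow is faithfully flat by the splitting described in \S\ref{ss_framed}.

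For part (ii), identity (\ref{equation_prismatic_trivial}) expresses $H^i(D)/\mu$ as $H^i(P) \otimes_{A_\inf^\bx(R)/\phi^{-1}(\mu),\phi} A_\inf(R_\infty)/\mu$, where $D$ denotes the prismatic cohomology complex of interest and $P := R\Gamma_\Prism(\cal Y/(A_\inf^\bx(R)/\phi^{-1}(\mu), p))$ has finite projective cohomology by the same argument as in part (i). Because $\mu = \xi\cdot \phi^{-1}(\mu)$, the ideal $(\phi^{-1}(\mu))$ contains $(\mu)$, and since $\Gamma$ acts trivially on $A_\inf^\bx(R)/\mu$ (by \S\ref{ss_framed}) it also acts trivially on $A_\inf^\bx(R)/\phi^{-1}(\mu)$. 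Hence the $\Gamma$-action on $P$ is trivial, and the entire $\Gamma$-action on $H^i(D)/\mu$ stems from that on $A_\inf(R_\infty)/\mu$. Using the identification $(A_\inf(R_\infty)/\mu)^\Gamma = A_\inf^\bx(R)/\mu$ (Lemma \ref{lemma_proj-isoms}(i)) and flat base change along $A_\inf^\bx(R)/\mu \to A_\inf(R_\infty)/\mu$, one obtains $(H^i(D)/\mu)^\Gamma = H^i(P)\otimes_{A_\inf^\bx(R)/\phi^{-1}(\mu),\phi} A_\inf^\bx(R)/\mu$, which is finite projective over $A_\inf^\bx(R)/\mu$, and the canonical map $(H^i(D)/\mu)^\Gamma\otimes_{A_\inf^\bx(R)/\mu} A_\inf(R_\infty)/\mu\to H^i(D)/\mu$ is an isomorphism, which is exactly the triviality modulo $\mu$.

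I expect the main obstacle to be the finite projectivity step in part (i), where Bhatt--Scholze's perfectness of $C$ must be upgraded to finite projectivity of the individual cohomology groups by exploiting the de Rham comparison. This demands a careful Tor-amplitude control via derived Nakayama after reduction modulo $(p,\xi)$, exactly as in the analogous argument in \cite{BhattMorrowScholze1}. A secondary and milder subtlety is that the tensor products in (\ref{equation_framed_prismatic}) and (\ref{equation_prismatic_trivial}) should ultimately be read as underived, which is automatic once the finite projectivity has been established.
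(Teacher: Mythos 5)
The main step of part (i) — passing from perfectness of $C=R\Gamma_\Prism(\cal Y/(A_\inf^\bx(R),\xi))$ plus finite projectivity of the de Rham cohomology to finite projectivity of the individual prismatic cohomology groups — does not follow from "derived Nakayama / Tor-amplitude control" as you claim, and this is precisely where the real work lies. The obstruction: a perfect complex over a complete ring whose derived reduction modulo the ideal has finite projective cohomology \emph{in each degree} can still have torsion in its own cohomology. Take $A=\bb Z_p$, $I=(p)$ and $C=[\bb Z_p\xto{p}\bb Z_p]$; then $C\otimes^{\bb L}_A\bb F_p$ has cohomology $\bb F_p$ (finite projective) in two degrees, yet $H^0(C)=\bb Z/p$ is not projective. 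Derived Nakayama only rescues you when the reduction is concentrated in a single degree. So the de Rham comparison plus the hypothesis on $H^*_\sub{dR}(\cal Y/R)$ gives you, at best and after a descending induction to kill higher Tors, that the \emph{top surviving} cohomology $M=H^j_\Prism$ is finitely presented with $M/\phi^{-1}(\xi)$ finite projective; you then get a surjection $f:M'\to M$ from a finite projective lift $M'$, but nothing in your argument shows $f$ is injective. The paper closes this gap by a genuinely different input: it base changes along $A_\inf^\bx(R)\xto{\phi}A_\inf(R_\infty)\to A_\inf(\hat W)$, where $\hat W$ is the $p$-adic completion of the localisation of $R_\infty$ at $\frak m R_\infty$ (a rank one perfectoid valuation ring), and invokes the result of \cite{BhattMorrowScholze1} that the $A_\inf$-cohomology of a proper smooth formal scheme over such a ring with torsion-free de Rham cohomology is finite free, hence $\xi$-torsion-free; injectivity of $f$ follows. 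Without some such torsion-freeness input at a valuation-ring point, the statement is simply not a formal consequence of perfectness and the de Rham comparison.

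Your treatment of part (ii) follows the paper's route (base change along $(A_\inf^\bx(R),\xi)\to(A_\inf^\bx(R)/\phi^{-1}(\mu),p)$ and triviality of the $\Gamma$-action on $A_\inf^\bx(R)/\phi^{-1}(\mu)$), but note that your identification $(A_\inf(R_\infty)/\mu)^\Gamma=A_\inf^\bx(R)/\mu$ is not literally correct: the invariants contain extra contributions $\tfrac{\mu}{[\ep^k]-1}(A_\inf^\bx(R)/\mu)\,\ul U^{\ul k}$ from the non-integral summands (cf.\ the computation in Lemma \ref{lemma_localisation}(i)); the map is only a proj-isomorphism (Lemma \ref{lemma_proj-isoms}(i)). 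The correct way to conclude triviality modulo $\mu$ in the sense of Definition \ref{definition_intro_gen_rep} is to observe that $H^i(D)/\mu$ is the base change of a module with trivial $\Gamma$-action over $A_\inf^\bx(R)/\mu$ and then appeal to the fact that such base changes land in $\Rep_\Gamma^\mu(A_\inf(R_\infty))$, as established by the descent machinery of Section \ref{section_small_reps}. This is a minor repair compared with the gap in part (i).
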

\begin{proof}
We assume, without any loss of generality, that $\Spec (R/pR)$ is connected so that the various finite projective modules which appear will have constant rank.

(i): We will actually prove that the cohomology groups of $R\Gamma_\Prism(\cal Y/(A_\inf^\bx(R),\xi))$ are all finite projective $A_\inf^\bx(R)$-modules; claim (i) then follows from  (\ref{equation_framed_prismatic}).

We begin with a base change comment. Let $S$ be a perfectoid ring and $R_\infty\to S$ a homomorphism, thereby inducing a map of prisms $(A_\inf(R_\infty),\tilde\xi)\to (A_\inf(S),\tilde\xi)$. Then the base change of $R\Gamma_\Prism(\cal Y/(A_\inf^\bx(R),\xi))$ along  $A_\inf^\bx(R)\xto{\phi}A_\inf(R_\infty)\to A_\inf(S)$ is the prismatic cohomology $R\Gamma_\Prism(\cal Y\times_RS/(A_\inf(S),(\tilde\xi)))$, which modulo $\xi$ is given by $R\Gamma_\sub{dR}(\cal Y/R)\dotimes_RS$ by the de Rham comparison \cite[Thm.~1.8(3)]{BhattScholze2019}. Note that the cohomologies of $R\Gamma_\sub{dR}(\cal Y/R)\dotimes_RS$ are the finite projective $S$-modules $H^*_\sub{dR}(\cal Y/R)\otimes_RS$.

By a descending induction we may suppose that all degree $> j$ cohomologies of $\Gamma_\Prism(\cal Y/(A_\inf^\bx(R),\xi))$ are finite projective $A_\inf^\bx(R)$-modules, and we must prove it in degree $j$, that is for $M:=H^j_\Prism(\cal Y/(A_\inf^\bx(R),\xi))$. Thanks to the inductive hypothesis, we see that (1) $M$ is a finitely presented module (it is the top degree of a perfect complex), that (2) $M/\phi^{-1}(\xi)\otimes_{A_\inf^\bx(R)/\phi^{-1}(\xi),\phi}A_\inf^\bx(R)/\xi=H^j_\sub{dR}(\cal Y/R)$ (by the de Rham comparison and vanishing of the higher Tors), whence $M/\phi^{-1}(\xi)$ is a finite projective $A_\inf^\bx(R)/\phi^{-1}(\xi)$-modules (as the Frobenius endomophism of $A_\inf^\bx(R)$ is finite flat), and that (3) for any $R_\infty\to S$ as in the first paragraph, we have $M\otimes_{A_\inf^\bx(R),\phi}A_\inf(S)=H^j_\Prism(\cal Y\times_{ R} S/A_\inf(S))$ (again since there are no higher Tors).

Since $A_\inf^\bx(R)$ is $\phi^{-1}(\xi)$-adically complete, there exists a finite projective $A_\inf(R_\infty)$-module $M'$ lifting $M/\phi^{-1}(\xi)$, whence there exists a map $f:M'\to M$ which is an isomorphism mod $\phi^{-1}(\xi)$: this map is surjective by Nakayama's lemma. %, since its cokernel is both derived $\xi$-adically complete and $\xi$-divisible, hence zero.
To prove that it is injective, we reduce to the case of a valuation ring as follows (the argument is inspired by v or arc descent, but easier in this special case): let $W$ be the localisation of $R_\infty$ outside its prime ideal $\frak m R_\infty$, so that $W$ is a height one valuation ring. Its $p$-adic completion $\hat W$ is a rank one perfectoid valuation ring containing $R_\infty$.

% The ring $R_\infty^\flat$ is a perfect integral domain with non-trivial ideal $\xi R_\infty^\flat$, so there exists a valuation subring $V$ of its field of fractions satisfying $V\supset R_\infty^\flat$ and $\frak m_V\supseteq \xi R_\infty^\flat$; by now replacing $V$ by its localisation at the prime ideal $\sqrt{\xi V}$ we may suppose that $V$ has rank one. The $\xi$-adic completion of $V$ is then an integral perfectoid $R_\infty^\flat$-algebra, whose untilt we denote by $V^\sharp$. Thus $V^\sharp$ is a rank one perfectoid valuation ring containing $R_\infty$.

To prove that $f$ is injective, it is now enough to check that it is injective after base change along the inclusion $A_\inf^\bx(R)\xto{\phi}A_\inf(R_\infty)\to A_\inf(W)$. By our base change observations above, we must therefore show that \[M'\otimes_{A_\inf^\bx(R),\phi}A_\inf(W)\To H^j_\Prism(\cal Y\times_{ R} W/A_\inf(W))\] is injective (or equivalently, an isomorphism); but since this is an isomorphism modulo $\xi$, the problem finally reduces to showing that the target has no $\xi$-torsion. But the target identifies with the $A_\inf$-cohomology of the proper smooth formal $W$-scheme $\cal Y\times_{\Spf R}\Spf W$, thanks to the comparison of \cite[\S17]{BhattScholze2019}, and we have assumed that this scheme has torsion-free de Rham cohomologies; so the $A_\inf$-cohomologies are all finite free $A_\inf(W)$-modules by \cite[Thm.~1.8, Corol.~4.17 \& Lem.~4.18]{BhattMorrowScholze1}.

(ii): Combined with the finite projectivity established in the proof of part (i), base change along the map of prisms $(A_\inf^\bx(R),\xi)\to (A_\inf^\bx(R)/\phi^{-1}(\mu),p)$ shows that the prismatic cohomologies of $\cal Y$ with respect to the latter prism are also finite projective, and that \[H^i_\Prism(\cal Y\times_RR_\infty/A_\inf(R_\infty))/\mu=H^i_\Prism(\cal Y/A_\inf^\bx(R)/\phi^{-1}(\mu))\otimes_{A_\inf^\bx(R)/\phi^{-1}(\mu),\phi}A_\inf(R_\infty)/\mu\] (this is obtained by taking cohomology of (\ref{equation_prismatic_trivial})). But $\Gamma$ acts as the identity on $A_\inf^\bx(R)/\phi^{-1}(\mu)$, hence also on the prismatic cohomology group on the right; this shows that the $\Gamma$-action on $H^i_\Prism(\cal Y\times_RR_\infty/A_\inf(R_\infty))$ is indeed trivial mod $\mu$, as desired.

\end{proof}

Continuing to assume that the relative de Rham cohomologies $H^*_\sub{dR}(\cal Y/R)$ are finite projective $R$-modules, we may now define our desired generalised representations \[H^i_{A_\inf}(\cal Y/R):=H^i_\Prism(\cal Y\times_{ R} R_\infty/(A_\inf(R_\infty),\tilde\xi))\] for each $i\ge0$. These are finite projective $A_\inf(R_\infty)$-modules (by Lemma \ref{lemma_de_rham_tf_implies_prism_tf}(i)) equipped with an action by $\Gamma$ (induced by functoriality via the action on the base $(A_\inf(R_\infty),\tilde\xi)$) which is trivial modulo $\mu$ (by Lemma \ref{lemma_de_rham_tf_implies_prism_tf}(ii)) and automatically continuous (Lemma \ref{lemma_automatic_continuity}), and with a Frobenius structure (induced by the Frobenius on prismatic cohomology \cite[Thm.~1.8(6)]{BhattScholze2019}). In short, we have constructed
\[H^i_{A_\inf}(\cal Y/R)\in \Rep^\mu_\Gamma(A_\inf(R_\infty),\phi),\]
which fulfils the goal of this subsection.

\begin{remark}
%\item From base change (\ref{equation_framed_prismatic}) we can write $H^i_{A_\inf}(\cal Y/R)=H^i_\Prism(\cal Y/(A_\inf^\bx(R),\xi))\otimes_{A_\inf^\bx(R),\phi}A_\inf(R_\infty)$.
%
%\item The prisms used in the above construction are all either perfect (and lying over $(A_\inf,\tilde\xi)$) or crystalline. The prismatic cohomology (\ref{equation_prismatic_cohoml}) can therefore be constructed by left Kan extending the constructions of \cite{BhattMorrowScholze1}, as in \cite[Cons.~9.5]{BhattMorrowScholze2}, and the base change results can mostly be reduced to the crystalline comparison theorem of \cite{BhattMorrowScholze1}. Although this was the sort of argument we had in mind when first preparing the article, the prismatic approach adopted here is much more straightforward.
Similarly to Remark \ref{remark_BT_without_framing}, we could alternatively appeal to the prismatic cohomology of $\cal Y\times_R\res R$ over $(A_\inf(\res R),\tilde\xi)$ to directly construct $H_{A_\inf}^*(\cal Y/R)\in\Rep^{\mu}_\Delta(A_\inf(\res R),\phi)$ in a framing-independent manner.
\end{remark}

\subsubsection{Faltings' relative Fontaine--Laffaille modules}\label{sssection_Fontaine-Laffaille}
In this short subsection we fix a $p$-adically complete, formally smooth $W(k)$-algebra $R_W$-algebra and an isomorphism $R_W\hat\otimes_W\roi\cong R$. Note that such an $R_W$ always exists up to non-unique isomorphism: pick a formally smooth lift $R_W$ of the smooth $k$-algebra $R\otimes_\roi k$, and use formal smoothness to construct an isomorphism lifting the identification $(R_W\hat\otimes_W\roi)\otimes_\roi k=R\otimes_\roi k$.

Let $\op{MF}^\nabla_{[0,p-2],\sub{fr}}(R_W,\Phi)$ denote the category of Faltings' relative Fontaine--Laffaille modules with underlying free module \cite[\S II.d]{Faltings1989}; an object of this category is a quadruple $M=(M,\Fil^\blob M,\nabla,\Phi)$ where:
\begin{itemize}
\item $M$ is a finite free $R_W$-module equipped with a decreasing filtration, whose graded pieces are free modules, such that $\Fil^0M=M$ and $\Fil^{p-1}M=0$;
\item $\nabla:M\to M\otimes_{R_W}\hat\Omega^1_{R_W/W}$ is a $p$-adically quasi-nilpotent connection on $M$ which satisfies Griffiths transversality with respect to the filtration.
\item $\Phi$ is a horizontal Frobenius whose precise definition we do not recall here.
\end{itemize}
For further details we refer the reader to \cite[\S4]{Tsuji_simons}.

Relative Fontaine--Laffaille modules were shown by Faltings to provide a class of coefficients in relative integral $p$-adic Hodge theory. They may be seen as special cases of relative Breuil--Kisin--Fargues modules thanks to the following theorem, which for the sake of naturality we state using $\res R$ rather than $R_\infty$ (see Remark \ref{remark_Delta}). We must also take into account the arithmetic part of the fundamental group of $R_W$, so set $\cal G:=\Aut(\res R/R_W)=\pi_1^\sub{\'et}(R_W[\tfrac1p])$. Given a relative Fontaine--Laffaille module $M$, let $T_\sub{crys}(M)$ denote the (dual of the) ``crystalline'' Galois representation associated to it by Faltings; this is a finite free $\bb Z_p$-module equipped with a continuous action by $\cal G$; we again refer to \cite[\S4]{Tsuji_simons} for further details.

\begin{theorem}[Tsuji]\label{theorem_FL}
Let $M=(M,\Fil^\blob M,\nabla,\Phi)\in \op{MF}^\nabla_{[0,p-2],\sub{fr}}(R_W,\Phi)$; then there exists a unique $\cal G$-stable, finite projective $A_\inf(\res R)$-submodule $\TA_\inf(M)\subseteq T_\sub{crys}(M)\otimes_{\bb Z_p}A_\inf(\res R)$ such that the $\cal G$-action on $\TA_\inf(M)$ is trivial modulo $\mu$. In particular, restricting to the action of $\Delta\subseteq \cal G$ we obtain $\TA_\inf(M)\in\Rep^\mu_\Delta(A_\inf(\res R))$.
\end{theorem}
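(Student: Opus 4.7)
The overall strategy is to prove existence by constructing $\TA_\inf(M)$ explicitly from the Fontaine--Laffaille data via a period map refining Faltings' crystalline comparison, and to prove uniqueness by combining the triviality mod $\mu$ condition with the descent result of Theorem \ref{theorem_descent_to_framed} (and its analogue for $\res R$ described in Remark \ref{remark_Delta}). Since the target category $\Rep_\Delta^\mu(A_\inf(\res R))$ is equivalent via base change to its framed counterpart, most of the work can be done at the level of $A_\inf^\bx(R)$, where the group $\Gamma$ acts trivially modulo $\mu$, and only at the last step transported to $A_\inf(\res R)$.

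For existence, the first step is to construct, from $M = (M, \Fil^\bullet M, \nabla, \Phi)$, a finite projective $A_\inf^\bx(R)$-module $\bb M$ equipped with a semi-linear $\Gamma$-action trivial modulo $\mu$, together with a $\Gamma$-equivariant embedding $\bb M \otimes_{A_\inf^\bx(R)} A_\inf(\res R) \hookrightarrow T_\crys(M) \otimes_{\bb Z_p} A_\inf(\res R)$ whose image is the desired $\TA_\inf(M)$. The construction of $\bb M$ uses the relative comparison isomorphism of Faltings (as made precise in \cite{Tsuji_simons}): there is a natural identification of $M \otimes_{R_W} A_\crys(\res R)$ with $T_\crys(M) \otimes_{\bb Z_p} A_\crys(\res R)$ which is compatible with filtration, Frobenius, and $\cal G$-action. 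Restricting to an appropriate $A_\inf^\bx(R)$-lattice --- roughly, the image of a Frobenius twist $\phi^*(M) \otimes_{R_W} A_\inf^\bx(R)$ under the inverse of the period map --- produces $\bb M$, and the triviality mod $\mu$ is inherited from the fact that $\Gamma$ acts as the identity on $A_\inf^\bx(R)/\mu$ (as recalled in \S\ref{ss_framed}) and that $\bb M/\mu$ arises by base change from the $R_W$-module $M/\phi^{-1}(\mu)M$ via the map $R_W \to A_\inf^\bx(R)/\mu$. One then extends scalars to $A_\inf(\res R)$ and verifies, using almost purity as in Remark \ref{remark_Delta}, that the $\Delta$-action extends canonically to the full arithmetic group $\cal G$.

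For uniqueness, suppose $N_1, N_2 \subseteq T_\crys(M) \otimes_{\bb Z_p} A_\inf(\res R)$ are two such lattices. Each descends, by the $\cal G$-analogue of Theorem \ref{theorem_descent_to_framed}, to a finite projective $A_\inf^\bx(R)$-module $\bb N_i$ inside a common ambient module over $A_\inf^\bx(\res R)$; since $\Gamma$ acts trivially on $A_\inf^\bx(R)/\mu$, the $\cal G$-invariants of $\bb N_i/\mu$ produce a finite projective module over the framed ring that is identified with the Fontaine--Laffaille input through the comparison of the existence proof, forcing $\bb N_1/\mu = \bb N_2/\mu$ inside $T_\crys(M) \otimes A_\inf^\bx(R)/\mu$. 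A standard Nakayama/deformation argument, using $(p,\mu)$-adic completeness of $A_\inf^\bx(R)$, then upgrades this equality modulo $\mu$ to $\bb N_1 = \bb N_2$, and hence $N_1 = N_2$ after base change.

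The main obstacle I expect is the careful construction of the period map in the existence step, and in particular verifying that the resulting $A_\inf^\bx(R)$-submodule really does land in $T_\crys(M) \otimes A_\inf(\res R)$ rather than merely in $T_\crys(M) \otimes A_\crys(\res R)$: this requires controlling the interaction of the divided-power structure of $A_\crys$ with the filtration-shift introduced by $\Phi$, together with the bound $\Fil^{p-1} M = 0$ which ensures that the relevant denominators are absorbed. The remaining technicalities --- continuity of the $\cal G$-action, finite projectivity, and compatibility with Frobenius --- are essentially formal, either automatic by Lemma \ref{lemma_automatic_continuity} or reducible to properties of the crystalline comparison.
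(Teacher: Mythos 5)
First, a point of reference: the paper does not prove Theorem \ref{theorem_FL} at all --- it is imported verbatim from \cite{Tsuji_simons} (see the citation of \cite[Lem.~64]{Tsuji_simons} immediately afterwards), so there is no internal argument to compare your sketch against. Judged on its own merits, your existence step rests on a ``natural identification of $M\otimes_{R_W}A_\crys(\res R)$ with $T_\sub{crys}(M)\otimes_{\bb Z_p}A_\crys(\res R)$'' that does not exist integrally: the Fontaine--Laffaille period map is an isomorphism only up to a factor of $\mu^{p-2}$ (equivalently $t^{p-2}$), which is precisely why the paper records the sandwich $\mu^{p-2}T_\sub{crys}(M)\otimes_{\bb Z_p}A_\inf(\res R)\subseteq \TA_\inf(M)\subseteq T_\sub{crys}(M)\otimes_{\bb Z_p}A_\inf(\res R)$ rather than an equality. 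The genuinely hard content --- extracting from this $A_\crys$-level comparison a finite projective, $\cal G$-stable lattice over $A_\inf(\res R)$ (a much smaller ring, with no divided powers to absorb the denominators coming from $\Phi$ and the filtration) --- is exactly what you defer to ``roughly, the image of a Frobenius twist\dots under the inverse of the period map'', so the existence proof is not really there.

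The uniqueness argument has a more structural flaw. Knowing that $\bb N_1/\mu$ and $\bb N_2/\mu$ are each ``identified with the Fontaine--Laffaille input'' gives an abstract isomorphism of $\cal G$-modules, not an equality of submodules of $T_\sub{crys}(M)\otimes A_\inf(\res R)/\mu$, so the step ``forcing $\bb N_1/\mu=\bb N_2/\mu$'' is a non sequitur; and without containment in one direction, agreement modulo $\mu$ would not Nakayama up to equality anyway. More seriously, your route runs entirely through the geometric descent machinery (Theorem \ref{theorem_descent_to_framed}, Remark \ref{remark_Delta}), which only sees the action of $\Delta$ (or $\Gamma$) --- and for that group the uniqueness statement is \emph{false}: since $\Delta$ acts trivially on $A_\inf\subseteq A_\inf(\res R)$, the submodule $\xi^nN$ is, for every $n\ge1$, another $\Delta$-stable finite projective submodule which is $\Delta$-equivariantly isomorphic to $N$ and hence again trivial modulo $\mu$. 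Uniqueness therefore must exploit the arithmetic part of $\cal G$ (the cyclotomic twist on $\mu A_\inf(\res R)/\mu^2A_\inf(\res R)$, which rules out such rescalings and is what makes ``$\cal G$-action trivial mod $\mu$'' a rigid condition), typically via a vanishing of $\cal G$-invariants of $\mu^{-a}A_\inf(\res R)/A_\inf(\res R)$ applied to an internal Hom of the two lattices. That essential input is absent from your argument as written.
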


In fact, more is true: firstly, the submodule $\TA_\inf(M)$ is known to be sandwiched as follows \[\mu^{p-2}T_\sub{crys}(M)\otimes_{\bb Z_p}A_\inf(\res R)\subseteq \TA_\inf(M)\subseteq T_\sub{crys}(M)\otimes_{\bb Z_p}A_\inf(\res R).\] Secondly, the triviality modulo $\mu$ can be explicitly witnessed by a $\cal G$-equivariant isomorphism \[\TA_\inf(M)/\mu\cong M\otimes_{R_W,\al}A_\inf(\res R)/\mu.\] Here the map $\al:R_W\to A_\inf(\res R)$, which depends on the framing, is the composition $R_W\to A_\inf^\bx(R)\to A_\inf(\res R)$, where the second arrow is the canonical one from \S\ref{ss_framed} and the first arrow is the unique $W$-algebra homomorphism sending $T_i$ to $[T_i^\flat]$ (again, see \S\ref{ss_framed} for a reminder on the notation) \cite[Lem.~64]{Tsuji_simons}.

In \cite{Tsuji_simons}, the second author uses the point of view of generalised representations to re-establish certain aspects of the theory of relative Fontaine--Laffaille modules, notably fully faithfulness of the functor $T_\sub{crys}$. 

Combining Theorem \ref{theorem_FL} with Theorems \ref{theorem_descent_to_framed}, \ref{theorem_<mu_implies_mu} and Remark \ref{remark_Delta}, we see that any relative Fontaine--Laffaille module gives rise to a module with $q$-connection over $A_\inf^\bx(R)$; it might be interesting to have an explicit description of this process.

\newpage
\section{Generalised representations as q-connections}\label{section_q_connections}
In Section \ref{section_small_reps} we studied generalised representations over $A_\inf(R_\infty)$ and $A_\inf^\bx(R)$, culminating in the equivalence of categories of Theorem \ref{theorem_descent_to_framed} that $\Rep_\Gamma^\mu(A_\inf^\bx(R))\isoto \Rep_\Gamma^\mu(A_\inf(R_\infty))$; in particular, the generalised representations $\Rep_\Gamma^\mu(A_\inf^\bx(R))$ offer a framed approach to our relative Breuil--Kisin--Fargues modules.

The goal of this section is to interpret $\Rep_\Gamma^\mu(A_\inf^\bx(R))$ as modules with q-connection, in the sense of \cite{Scholze2017}, by observing in Corollary \ref{corollary_reps_vs_q_connections} that there is an equivalence of categories \[\Rep_\Gamma^\mu(A^\bx_\inf(R))\simeq \op{qMIC}(A_\inf^\bx(R))\] where $\op{qMIC}(A_\inf^\bx(R))$ is the category of finite projective $A_\inf^\bx(R)$-modules with flat q-connection; this equivalence will be a relatively formal consequence of the definitions. However, in the presence of a Frobenius structure we then descend further from $A_\inf^\bx(R)$ to its Frobenius twist $A_\inf^\bx(R)^{(1)}:=A_\inf\otimes_{\phi,A_\inf}A_\inf^\bx(R)$, by establishing in Corollary \ref{corollary_q_Simp} an equivalence \[(F,F_\Omega)^*:\text{\rm qHIG}( A^{\bx(1)},\phi)\quis\text{\rm qMIC}(A^\bx,\phi),\] where $\text{\rm qHIG}( A^{\bx(1)},\phi)$ is the category of finite projective modules over the Frobenius twist $ A^{\bx(1)}$ equipped with q-Higgs field and Frobenius structure; this equivalence may be viewed as a q-deformed Simpson correspondence.

We work throughout under an axiomatic set-up for q-de Rham cohomology in the presence of a framing which is sufficiently general to encompass the smooth cases of Bhatt--Scholze's framed q-PD data (see Remark~\ref{examples_qBK}).

\subsection{q-connections}\label{ss_modules_with_q}
In this subsection we introduce the basic notation and terminology of q-de Rham cohomology from \cite[\S9.2 \& \S12.1]{BhattMorrowScholze1} \cite{Scholze2017} \cite{BhattScholze2019}, often adopting the axiomatic point of view from Andr\'e's general theory of non-commutative differential rings \cite{Andre2001}. We initially adopt the following set-up:

\begin{quote} (qDR1) Let $A$ be a commutative base ring and $A^\bx$ a commutative $A$-algebra equipped with $d$ commuting $A$-algebra automorphisms $\gamma_1,\dots,\gamma_d$, i.e., an action of $\bb Z^d$. Also fix an element $q\in A$ such that $q-1$ is a non-zero-divisor of $A^\bx$, and assume that $\gamma_i\equiv \op{id}$ mod $q-1$ for each $i=1,\dots,d$.
\end{quote}

\begin{definition}[The $q$-de Rham complex]
Let $\q\Omega^\blob_{A^\bx/A}:=\bigoplus_{n=0}^d \q\Omega^n_{A^\bx/A}$ be the differential graded $A$-algebra defined as follows:
\begin{itemize}
\item $\q\Omega^0_{A^\bx/A}:=A^\bx$;
\item $\q\Omega^1_{A^\bx/A}$ is the free left $A^\bx$-module on formal basis elements $\dlog(U_1),\dots,\dlog(U_d)$;
\item the right $A^\bx$-module structure on $\q\Omega^1_{A^\bx/A}$ is twisted by the rule $\dlog(U_i)\cdot f:=\gamma_i(f)\dlog(U_i)$ for all $f\in A^\bx$ and $i=1,\dots,d$;
\item $\dlog(U_i)\dlog(U_j)=-\dlog(U_j)\dlog(U_i)$ if $i\neq j$ and $=0$ if $i=j$;
\item The map $\bigoplus_{1\le i_1<\cdots<i_n\le d}A^\bx\to\q\Omega^n_{A^\bx/A}$, $(f_{\ul i})\mapsto\sum_{1\le i_1<\cdots<i_n\le d}f_{\ul i}\dlog(U_{i_1})\cdots\dlog(U_{i_n})$ is an isomorphism of left $A^\bx$-modules.
%, where the $A^\bx$-module structure on the wedge power is given by the structure on the left-most factor, i.e., $a\cdot \omega_1\wedge\cdots\wedge\omega_n:=(a\omega_1)\wedge\cdots\wedge\omega_n$;

% as left $A^\bx$-modules, where $\dlog(U_{i_1})\wedge\cdots\wedge\dlog(U_{i_n})$ on the left corresponds to $\dlog(U_{i_1})\wedge\cdots\wedge\dlog(U_{i_n})$ on the right;
\item The $0^\sub{th}$ differential is given by $d_q:A\to \q\Omega^1_{A^\bx/A}$, $f\mapsto \sum_{i=1}^d\frac{\gamma_i(f)-f}{q-1}\dlog(U_i)$;
\item The elements $\dlog(U_i)\in\q\Omega^1_{A^\bx/A}$ are cocycles, for $i=1,\dots,d$.
\end{itemize}
In other words, as a graded left $A$-module, $\q\Omega^\blob_{A^\bx/A}$ is the same as the exterior algebra on $\dlog(U_1),\dots,\dlog(U_d)$ over $A^\bx$, but multiplication $\cdot$ is given by \begin{align*}f\dlog(U_{i_1})\wedge\cdots\wedge&\dlog(U_{i_n})\cdot g\dlog(U_{j_1})\wedge\cdots\wedge\dlog(U_{j_m})\\&=f\gamma_{i_1}\circ\cdots\circ\gamma_{i_n}(g)\dlog(U_{i_1})\wedge\cdots\wedge\dlog(U_{i_n})\wedge \dlog(U_{j_1})\wedge\cdots\wedge\dlog(U_{j_m})\end{align*}
and the differential is induced by $d_q$ in the necessary way to produce a differential graded algebra such that the $\dlog(U_i)$ are cocycles.

The data $d_q:A^\bx\to\q\Omega^1_{A^\bx/A}$ forms a {\em differential ring} (over $A$) in the sense of Andr\'e \cite[2.1.2.1]{Andre2001}; that is, $\q\Omega^1_{A^\bx/A}$ is an $A^\bx$-bimodule (on which the $A$ action is symmetric) and the Leibniz rule $d_q(fg)=d_q(f)g+fd_q(g)$ is satisfied (and $d_q$ is $A$-linear).
\end{definition}

We next recall the formalism of modules with connection in this set-up (contrary to \cite{Andre2001}, we work with right rather than left modules):

\begin{definition}\label{definition_q_connections}
A {\em module with \q-connection} over $A^\bx$ is a right $A^\bx$-module $N$ equipped with an $A$-linear map $\nabla:N\to N\otimes_{A^\bx}\q\Omega^1_{A^\bx/A}$ satisfying the Leibniz rule $\nabla(nf)=\nabla(n)f+n\otimes d_q(f)$ for all $n\in N$ and $f\in A^\bx$. (To lighten notation we will tend to denote $q$-connections by the familiar connection symbol $\nabla$, rather than $\nabla_q$ as found in \cite{Scholze2017}.)
\end{definition}

In Andr\'e's framework \cite[2.2.1]{Andre2001}, a module with q-connection $(N,\nabla)$ over $A^\bx$ is precisely a module with connection over the differential ring $d_q:A^\bx\to\q\Omega^1_{A^\bx/A}$. Therefore \cite[Lem.~2.2.1.3]{Andre2001} shows that $\nabla$ extends uniquely to a map of graded $A$-modules $\nabla:N\otimes_{A^\bx}\q\Omega^\blob_{A^\bx/A}\to N\otimes_{A^\bx}\q\Omega^{\blob+1}_{A^\bx/A}$ satisfying \[\nabla((n\otimes\omega)\cdot\omega')=\nabla(n\otimes\omega)\cdot\omega'+(-1)^{\deg\omega}(n\otimes\omega)\cdot d_q(\omega').\]  One says that the $q$-connection $\nabla$ is {\em flat} or {\em integrable} if $\nabla^2=0$ (as in the classical case, it is enough to check that $\nabla^2:N\to N\otimes_{A^\bx}\q\Omega^2_{A^\bx/A}$ vanishes \cite[Lem.~2.2.3.1]{Andre2001}), in which case \[N\otimes_{A^\bx}\q\Omega^\blob_{A^\bx/A}=[N\xto{\nabla}\q\Omega^1_{A^\bx/A}\xto{\nabla}\q\Omega^2_{A^\bx/A}\xto{\nabla}\cdots]\] is a complex of $A$-modules known as the associated {\em q-de Rham complex}. We write $\textrm{qMIC}(A^\bx)$ for the category of finite projective modules\footnote{Many of the results in this section, even those which we only state for the category $\textrm{qMIC}(A^\bx)$, do work in greater generality; but for our ultimate aim of studying generalised representations the finite projective case is sufficient.} with flat q-connection. (The notation is not ideal, as it depends on the data of $q\in A$, the $\bb Z^d$-action, and the chosen basis $\dlog U_1,\dots,\dlog U_d$ of $q\Omega^1_{A^\bx/A}$.)

\begin{lemma}\label{lemma_q_coordinates}
Let $(N,\nabla)$ be a module with q-connection over $A^\bx$ and denote by $\nabla_{i}^\sub{log}:N\to N$ the $A$-linear maps\footnote{Henceforth called the ``logarithmic coordinates'' of the q-connection; in particular, we write $d_{q,i}^\sub{log}:=\tfrac{\gamma_i-1}{q-1}:A^\bx\to A^\bx$.} characterised by $\nabla(n)=\sum_{i=1}^d\nabla_{i}^\sub{log}(n)\otimes\dlog(U_i)$ for all $n\in N$. Then the q-connection $\nabla$ is flat if and only if the maps $\nabla_{1}^\sub{log},\dots,\nabla_{d}^\sub{log}$ pairwise commute.
\end{lemma}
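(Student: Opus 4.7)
The plan is to prove the lemma by a direct computation of $\nabla^2$ on $N$, invoking Andr\'e's extension of a q-connection to the associated q-de Rham complex and using the hypothesis that the basis forms $\dlog(U_i)$ are cocycles. Recall from the paragraph preceding the lemma that, since $(N,\nabla)$ is a module with connection over the differential ring $d_q:A^\bx\to q\Omega^1_{A^\bx/A}$, Andr\'e's formalism supplies a unique extension of $\nabla$ to a degree $+1$ map on $N\otimes_{A^\bx}q\Omega^{\blob}_{A^\bx/A}$ satisfying the graded Leibniz rule, and moreover flatness of $\nabla$ is equivalent to the vanishing of $\nabla^2$ on $N$ itself.

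First I would expand $\nabla(n)=\sum_{i=1}^d\nabla_i^\sub{log}(n)\otimes\dlog(U_i)$ and then apply $\nabla$ a second time. Using the graded Leibniz rule together with the fact that each $\dlog(U_i)$ is a cocycle in $q\Omega^\blob_{A^\bx/A}$, one obtains
\[
\nabla\bigl(\nabla_i^\sub{log}(n)\otimes\dlog(U_i)\bigr)
=\nabla\bigl(\nabla_i^\sub{log}(n)\bigr)\cdot\dlog(U_i).
\]
Expanding the inner $\nabla$ in turn and summing over $i$ yields
\[
\nabla^2(n)=\sum_{i,j=1}^d\nabla_j^\sub{log}\bigl(\nabla_i^\sub{log}(n)\bigr)\otimes\dlog(U_j)\wedge\dlog(U_i).
\]

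Next I would collect terms using the antisymmetry $\dlog(U_j)\wedge\dlog(U_i)=-\dlog(U_i)\wedge\dlog(U_j)$ for $i\neq j$ and the vanishing $\dlog(U_i)\wedge\dlog(U_i)=0$, rewriting the double sum as
\[
\nabla^2(n)=\sum_{1\le j<i\le d}\bigl[\nabla_j^\sub{log}\nabla_i^\sub{log}-\nabla_i^\sub{log}\nabla_j^\sub{log}\bigr](n)\otimes\dlog(U_j)\wedge\dlog(U_i).
\]
Since the wedge products $\dlog(U_j)\wedge\dlog(U_i)$ for $j<i$ form a free basis of $q\Omega^2_{A^\bx/A}$ as a left $A^\bx$-module, the induced elements give a direct sum decomposition $N\otimes_{A^\bx}q\Omega^2_{A^\bx/A}\cong\bigoplus_{j<i}N$. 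Hence $\nabla^2$ vanishes on $N$ if and only if each commutator $[\nabla_j^\sub{log},\nabla_i^\sub{log}]$ is zero, which is the desired equivalence.

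I do not anticipate a serious obstacle; the only subtlety is careful bookkeeping of the bimodule structure on $q\Omega^1_{A^\bx/A}$ (the twisted right action $\dlog(U_i)\cdot f=\gamma_i(f)\dlog(U_i)$) when performing the Leibniz computation, but this twisting is absorbed once one expresses the answer in terms of the $A^\bx$-linear combinations of the free basis of $q\Omega^2_{A^\bx/A}$.
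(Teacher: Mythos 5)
Your proof is correct and follows exactly the route the paper takes: the paper's proof simply states "one checks directly" that $\nabla^2(n)=\sum_{i,j}\nabla_j^\sub{log}\nabla_i^\sub{log}(n)\otimes\dlog(U_j)\wedge\dlog(U_i)$ and concludes from the freeness of $\q\Omega^2_{A^\bx/A}$, and your argument supplies precisely that direct check (via the graded Leibniz rule and the cocycle condition on the $\dlog(U_i)$) together with the same freeness argument. No gaps.
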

\begin{proof}
One checks directly that $\nabla^2:N\to N\otimes_{A^\bx} \q\Omega^2_{A^\bx/A}$ is given by \[n\mapsto \sum_{i,j=1}^d\nabla_{j}^\sub{log}\nabla_{i}^\sub{log}(n)\otimes\dlog(U_j)\wedge\dlog(U_i),\] which vanishes if and only if $\nabla_{i}^\sub{log}\nabla_{j}^\sub{log}(n)=\nabla_{j}^\sub{log}\nabla_{i}^\sub{log}(n)$ for all $i,j$, since $\q\Omega^2_{A^\bx/A}$ is the exterior square of the left free $A^\bx$-module on $\dlog(U_1),\dots,\dlog(U_d)$.
\end{proof}

\begin{remark}[Reduction mod $q-1$]\label{remark_connection_mod_mu}
In the main case of interest, namely when $A^\bx$ arises by deformation theory as will be explained in Remark \ref{remark_q_via_deformation}, the reduction mod $q-1$ of the differential ring $d_q:A^\bx\to\Omega^1_{A^\bx/A}$ is precisely the usual de Rham derivative $d:R\to \hat\Omega^1_{R/\res A}$ taking value in the $p$-adically completed module of K\"ahler differentials of the $p$-adic formally smooth $\res A:=A/(q-1)$-algebra $R:=A^\bx/(q-1)$. Similarly, the reduction mod $q-1$ of a module with q-connection over $A^\bx$ is a module with connection (with respect to the $p$-adically complete module of differentials) over the $\res A$-algebra $R$.
\end{remark}

\begin{remark}[Tensor product and enriched Hom]\label{remark_tensor}
In this remark we add the following hypothesis, which will be true in our cases of interest, to the set-up (qDR1):

\begin{quote} (qDR2) Suppose we are given units $U_1,\dots,U_d\in A^\bx$ such that $d_q(U_i)=U_i\dlog(U_i)$ for $i=1,\dots,d$; in other words, \[\gamma_i(U_j)=\begin{cases}qU_j& \text{if }i=j\\ U_j &\text{if }i\neq j.\end{cases}\]
\end{quote}
Then, in Andr\'e's terminology, the differential ring $d_q:A^\bx\to\q\Omega^1_{A^\bx/A}$ is  {\em semi-classical} \cite[2.4.1]{Andre2001}: that is, $A^\bx$ is commutative and the differential ring is {\em reduced} \cite[2.1.2.2]{Andre2001} in the sense that $\q\Omega^1_{A^\bx/A}$ is generated as a right $A^\bx$-module by $d_q(A^\bx)$. The latter condition implies that the differential ring $d_q:A^\bx\to\q\Omega^1_{A^\bx/A}$ extends universally to a differential graded $A$-algebra \cite[2.1.2.4]{Andre2001}, which is easily seen to be nothing other than the q-de Rham complex $\q\Omega^\blob_{A^\bx/A}$ (when checking this identification, it is helpful to note that $U_id_q(U_j) = d_q(U_j) U_i$ for $i\neq j$ in $\q\Omega^1_{A^\bx/A}$, whence differentiating shows that the commutativity relation $d_q(U_i) d_q(U_j) = d_q(U_j) d_q(U_i)$ also holds in the universal case).

It follows from the semi-classicality and reducedness that the category of modules with $q$-connection forms a symmetric monoidal $A$-linear abelian category, and that the subcategory of finite projective modules with q-connection and having invertible ``volte'' is exact and self-dual. We now explicitly spell out these notions in our case.

Given a module with q-connection $(N,\nabla)$, its {\em volte} is the map of $A^\bx$-bimodules \[\frak r(\nabla):\q\Omega^1_{A^\bx/A}\otimes_{A^\bx}N\To N\otimes_{A^\bx}\q\Omega^1_{A^\bx/A},\qquad fd_q(g)\otimes m\mapsto f\nabla(gm)-fg\nabla(m)\]
(here we view $N$, which is a priori a right $A^\bx$-module, as a symmetric $A^\bx$-bimodule; secondly, we will denote the volte by $\frak r(\nabla)$ rather than Andr\'e's notation $\phi(\nabla)$ to avoid confusion with Frobenius maps); this is well-defined by \cite[2.2.4.1, 2.2.4.2, 2.4.1.1]{Andre2001}. In terms of the logarithmic coordinates of Lemma \ref{lemma_q_coordinates}, this is easily checked to be given explicitly by \[\frak r(\nabla):\dlog(U_i)\otimes n\mapsto ((q-1)\nabla_{i}^\sub{log}(n)+n)\otimes\dlog(U_i)\] for $i=1,\dots,d$. Therefore the volte is an isomorphism if and only if each endomorphism $\op{id}+(q-1)\nabla_{i}^\sub{log}$ of $N$ is an automorphism, which in particular holds if $N$ is $q-1$-adically complete.

For the next two items we fix modules with q-connections $(N,\nabla)$, $(N',\nabla')$.
\begin{enumerate}
\item Their tensor product \cite[2.3.1.1]{Andre2001} has underlying $A^\bx$-module $N\otimes_{A^\bx}N'$ with $q$-connection given by \[\op{id}\otimes\nabla'+(\op{id}\otimes\phi(\nabla'))\circ(\nabla\otimes\op{id}):N\otimes_{A^\bx}N'\To N\otimes_{A^\bx}N'\otimes_{A^\bx}\q\Omega^1_{A^\bx/A}.\] In terms of logarithmic coordinates, this is \[n\otimes n'\mapsto\sum_{i=1}^d\left(n\otimes\nabla_{i}'^{\sub{log}}(n')+\nabla_{i}^{\sub{log}}(n)\otimes \big((q-1)\nabla_{i}'^{\sub{log}}(n')+n'\big)\right)\otimes\dlog(U_i).\]
\item Next we construct the enriched Hom structure \cite[2.4.4]{Andre2001}; we assume that the volte $\frak r(\nabla)$ is invertible (i.e., each $\op{id}+(q-1)\nabla_{i}^\sub{log}$ is an isomorphism). Then we may equip $\Hom_{A^\bx}(N,N')$ (note that $N$ and $N'$ are viewed as symmetric $A^\bx$-bimodules, so this $\Hom$ is unambiguous) with a q-connection $\nabla_\sub{Hom}=\sum_{i=1}^d\nabla_{\sub{Hom},i}(\cdot)\otimes\dlog(U_i)$ characterised by \[\nabla_{\sub{Hom},i}^\sub{log}(f)\circ(\op{id}+(q-1)\nabla_{i}^\sub{log})=\nabla_{i}^{\sub{log}'}\circ f-f\circ\nabla_{i}^\sub{log}\] for $f\in \Hom_{A^\bx}(N,N')$ and $i=1,\dots,d$. This is easily seen to be the unique q-connection on $\Hom_{A^\bx}(N,N')$ for which the evaluation map $N\otimes_{A^\bx}\Hom_{A^\bx}(N,N')\to N'$ is compatible with q-connections, equipping the domain with the tensor product $q$-connection of the previous item.

Note that the categorical hom $\Hom((N,\nabla_q),(N',\nabla_q))$, i.e., morphisms of $A^\bx$-modules $N\to N'$ which commute with the q-connections, is then equal to the flat sections $\{f\in \Hom_{A^\bx}(N,N'):\nabla_\sub{Hom}(f)=0\}$.
\end{enumerate}
\end{remark}

We have not yet used in a very essential way that the differential ring $d_q:A^\bx\to\q\Omega^1_{A^\bx/A}$ was constructed from an action of $\bb Z^d$ on $A^\bx$, but it is crucial to the following correspondence:

\begin{proposition}\label{proposition_reps_vs_q_connections}
Adopt hypotheses (qDR1) and (qDR2). Then, given a generalised representation $N\in\Rep_{\bb Z^q}^{q-1}(A^\bx)$, the map \[\nabla:N\to N\otimes_{A^\bx}\q\Omega^1_{A^\bx/A},\quad n\mapsto\sum_{i=1}^d\frac{\gamma_i-1}{q-1}(n)\otimes\dlog(U_i)\] is a flat \q-connection whose associated \q-de Rham complex is naturally (in $N$) quasi-isomorphic to $L\eta_{q-1}R\Gamma(\bb Z^d,N)$. The resulting functor \[\Rep_{\bb Z^d}^{q-1}(A^\bx)\To\categ{5cm}{finite projective modules over $A^\bx$ with flat \q-connection and invertible volte}\] is an equivalence of symmetric monoidal categories which is compatible with internal homs.

If $A^\bx$ is $(p,q-1)$-adically complete and separated, then the action of $\bb Z^d$ extends uniquely to a semi-linear continuous action of $\bb Z_p^d$ on $A^\bx$ and the aforementioned equivalence may be rewritten as \[\Rep_{\bb Z_p^d}^{q-1}(A^\bx)\quis \text{\rm qMIC}(A^\bx).\]
\end{proposition}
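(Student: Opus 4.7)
The plan is to exhibit mutually inverse functors at the level of $\bb Z^d$-representations, verify their compatibility with the monoidal and hom structures, and then upgrade the result to $\bb Z_p^d$ via a formal binomial argument. First, given $N \in \Rep_{\bb Z^d}^{q-1}(A^\bx)$, the maps $\nabla_i^{\sub{log}} := (\gamma_i - 1)/(q-1)$ are well-defined $A$-linear endomorphisms of $N$, since the action is trivial mod $q-1$ and $q-1$ acts as a non-zero-divisor on the finite projective $A^\bx$-module $N$; they commute pairwise because the $\gamma_i$ do. A direct calculation from the semi-linearity $\gamma_i(nf) = \gamma_i(n)\gamma_i(f)$ yields $\nabla_i^{\sub{log}}(nf) = \nabla_i^{\sub{log}}(n)\gamma_i(f) + n\,d_{q,i}^{\sub{log}}(f)$, which is the $i$-th component of the q-Leibniz rule, so $\nabla$ is a q-connection; flatness then follows from Lemma \ref{lemma_q_coordinates}. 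Since $d_q(\dlog U_i) = 0$, the associated q-de Rham complex is literally the Koszul complex $K(\nabla_1^{\sub{log}}, \dots, \nabla_d^{\sub{log}}; N)$, and Lemma \ref{lemma_Koszul}(i) with $g = q-1$, $\delta_i = \gamma_i - 1$ identifies it with $L\eta_{q-1}K(\gamma_1 - 1, \dots, \gamma_d - 1; N) = L\eta_{q-1} R\Gamma(\bb Z^d, N)$.

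For the inverse functor, given $(N, \nabla)$ with invertible volte, set $\gamma_i := \id + (q-1)\nabla_i^{\sub{log}}$ on $N$; by the explicit formula for $\frak r(\nabla)$ in Remark \ref{remark_tensor}, invertibility of the volte is exactly the statement that each $\gamma_i$ is an automorphism. The q-Leibniz rule translates directly into the semi-linearity identity $\gamma_i(nf) = \gamma_i(n)\gamma_i(f)$ (using $\gamma_i(f) = f + (q-1)d_{q,i}^{\sub{log}}(f)$ for the $\bb Z^d$-action on $A^\bx$), commutativity of $\gamma_i$ and $\gamma_j$ follows from flatness together with $(q-1)$ being a non-zero-divisor on $N$, and the congruence $\gamma_i \equiv \id$ mod $q-1$ automatically renders the action trivial modulo $q-1$ (since then $\bb Z^d$ acts trivially on $N/(q-1)$ and on $A^\bx/(q-1)$). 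The two constructions are visibly mutually inverse.

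Compatibility with tensor product and internal hom is a direct comparison of the explicit formulas from Remark \ref{remark_tensor} with the coordinatewise and conjugation actions on $\bb Z^d$-representations: a short calculation shows that the $i$-th logarithmic coordinate of the tensor product q-connection, pushed through $\gamma_i = \id + (q-1)\nabla_i^{\sub{log}}$, gives $\gamma_i(n) \otimes \gamma_i'(n')$; similarly, multiplying the defining identity of $\nabla_{\sub{Hom},i}^{\sub{log}}$ by $q-1$ and rearranging yields $\gamma_{i,\sub{Hom}}(f) = \gamma_i' \circ f \circ \gamma_i^{-1}$, which is the standard action on $\Hom_{A^\bx}(N, N')$; the invertibility of the volte on the source is precisely what makes $\gamma_i^{-1}$ available.

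Finally, assume $A^\bx$ is $(p, q-1)$-adically complete and separated. For $N$ equal either to $A^\bx$ itself or to a finite projective module carrying a semi-linear $\bb Z^d$-action trivial modulo $q-1$, the endomorphisms $\delta_i := \nabla_i^{\sub{log}}$ are $(p, q-1)$-adically continuous, so the binomial series $\gamma_i^a := \sum_{n \geq 0} \binom{a}{n}(q-1)^n \delta_i^n$ converges $(p, q-1)$-adically for every $a \in \bb Z_p$ (since $\binom{a}{n} \in \bb Z_p$ and $(q-1)^n \to 0$); this produces the unique continuous extension of the $\bb Z^d$-action to $\bb Z_p^d$, uniqueness following from density of $\bb Z^d$ and separatedness, and restriction to $\bb Z^d$ is the obvious inverse. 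The invertibility of the volte on any finite projective $A^\bx$-module is automatic under $(q-1)$-adic completeness (end of Remark \ref{remark_tensor}), so the previous equivalence upgrades to $\Rep_{\bb Z_p^d}^{q-1}(A^\bx) \simeq \op{qMIC}(A^\bx)$. The main technical input is the identification of the q-de Rham complex with the Koszul complex on the $\nabla_i^{\sub{log}}$ combined with Lemma \ref{lemma_Koszul}(i); once that is in hand the rest is essentially bookkeeping, the only other subtlety being the role of the volte in matching the conditions ``$\gamma_i$ is an automorphism'' on the representation side with the definition of $\op{qMIC}$ on the connection side.
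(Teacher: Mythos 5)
Your proposal is correct and follows essentially the same route as the paper's proof: the dictionary $\gamma_i = \mathrm{id} + (q-1)\nabla_i^{\sub{log}}$ matching semi-linearity/commutativity/volte-invertibility with the q-Leibniz rule/flatness/automorphism conditions, the identification of the q-de Rham complex with the Koszul complex via Lemma \ref{lemma_Koszul}(i), and the explicit formulas of Remark \ref{remark_tensor} for the monoidal and hom compatibilities. The only cosmetic difference is in the final step, where you extend the action to $\bb Z_p^d$ via the convergent binomial series $\sum_n \binom{a}{n}(q-1)^n\delta_i^n$, whereas the paper observes that $\gamma\equiv\op{id}$ mod $q-1$ forces $\gamma^{p^m}\equiv\op{id}$ mod $(p,q-1)^m$; the two arguments are equivalent.
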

\begin{proof}
In terms of logarithmic coordinates, $\nabla$ is given by the rule $\nabla_{i}^\sub{log}=\tfrac{\gamma_i-1}{q-1}$; it is easy to check that $\nabla$ being a q-connection (i.e., satisfying the \q-Leibniz rule) corresponds exactly to the $\gamma_i$ being semi-linear, and then that flatness of $\nabla$ corresponds exactly to the $\gamma_i$ commuting (by Lemma \ref{lemma_q_coordinates}). By construction the volte of $\nabla$ is given by $\sum_{i=1}^d \dlog(U_i)\otimes n_i\mapsto \sum_{i=1}^d \gamma_i(n_i)\otimes \dlog(U_i)$, whose invertibility corresponds exactly to the $\gamma_i$ being automorphisms. This shows that the functor is an equivalence of categories.

To check that the functor is compatible with the tensor products on both sides, let $N,N'\in\Rep_{\bb Z^d}^{q-1}(A^\bx)$. In view of the explicit formula for the tensor product \q-connection on $N\otimes_{A^\bx}N'$ from Remark \ref{remark_tensor}(i), it must be shown that \[\frac{1}{q-1}(\gamma_i(n)\otimes\gamma_i(n')-n\otimes n')=n\otimes\nabla_{i}^{\sub{log}'}(n')+\nabla_{i}^{\sub{log}}(n)\otimes \big((q-1)\nabla_{i}^{\sub{log}'}(n')+n'\big)\]
for all $n\in N$, $n'\in N'$, and $i=1,\dots,d$. This follows easily by writing $\gamma_i(n)=n+(q-1)\nabla_{i}^\sub{log}(n)$, $\gamma_i(n')=n'+(q-1)\nabla_{i}^{\sub{log}'}(n)$, and expanding the left side.

For internal homs, Remark \ref{remark_tensor}(ii) shows that the $\Gamma$-action associated to the hom $q$-connection on $\Hom_{A^\bx}(N,N')$ satisfies \[\frac1{q-1}(\gamma_i(f)-f)(\gamma_i(n))=\nabla_{i}^{\sub{log}'}(f(n))-f(\nabla_{i}^\sub{log}(n))\] for all $f\in \Hom_{A^\bx}(N,N')$, $n\in N$, and $i=1,\dots,d$. Expanding shows that $\gamma_i(f)(\gamma_i(n))=\gamma_i(f(n))$, i.e., the $\bb Z^d$-action on $\Hom_{A^\bx}(N,N')$ is the usual one of Remark \ref{remark_internal_hom} given by $\gamma(f):=\gamma f\gamma^{-1}$, as desired.

Representing the group cohomology by the Koszul complex $K(\gamma_1-1,\dots,\gamma_d-1;N)$, Lemma \ref{lemma_Koszul}(i) shows that $L\eta_{q-1}R\Gamma(\bb Z^d,N)\simeq K(\tfrac{\gamma_1-1}{q-1},\dots,\tfrac{\gamma_d-1}{q-1};N)$, which is given by \[N\To\bigoplus_{1\le i\le d}N\To\cdots\To\bigoplus_{1\le i_1<\cdots<i_n\le d}N\To\cdots\] with certain explicit differentials involving alternating sums of the maps $\tfrac{\gamma_i-1}{q-1}$ \cite[Def.~7.1]{BhattMorrowScholze1}. Identifying the copy of $N$ in slot $1\le i_1<\cdots<i_n\le d$ with $N\otimes\dlog(U_{i_1})\cdots \dlog(U_{i_d})$ produces the desired isomorphism with the q-de Rham complex $N\otimes_{A^\bx}\q\Omega^\blob_{A^\bx/A}$.

Finally, assume that $A$ is $(p,q-1)$-adically complete and separated. As already mentioned above, the $q-1$-adic completeness implies that the volte is always invertible. Secondly, any $A$-linear automorphism $\gamma$ of a $(p,q-1)$-adically complete module $N$ which is $\equiv\id$ mod $q-1$ automatically and uniquely extends to a continuous action of $\bb Z_p$: indeed the fact that $\gamma\equiv\op{id}$ mod $q-1$ implies that $\gamma^{p^m}\equiv\op{id}$ mod $(p,q-1)^m$ for any $m\ge1$.
\end{proof}

We finish the subsection by summarising the result in the case which interests us, namely $A=A_\inf$, $A^\bx=A_\inf^\bx(R)$, $q=[\ep]$, where we follow the notation of \S\ref{ss_framed}. Since $A_\inf^\bx(R)$ is formally \'etale over $A_\inf\pid{U_1^{\pm1},\dots,U_d^{\pm1}}$, the completed module of K\"ahler differentials $\hat\Omega^1_{A_\inf^\bx(R)/A_\inf}$ is the free $A_\inf^\bx(R)$-module with basis elements $\dlog(U_i)$, $i=1,\dots,d$, and so identifies with $\q\Omega^1_{A_\inf^\bx(R)/A_\inf}$ as defined above. This set-up clearly satisfies the hypotheses (qDR1) and (qDR2), and Proposition \ref{proposition_reps_vs_q_connections} specialises to the following:

\begin{corollary}\label{corollary_reps_vs_q_connections}
Given a generalised representation $N\in\Rep_\Gamma^\mu(A_\inf^\bx(R))$, the map \[\nabla:N\to N\otimes_{A^\bx_\inf(R)}\hat\Omega^1_{A^\bx_\inf(R)/A_\inf},\quad n\mapsto\sum_{i=1}^d\frac1\mu(\gamma_i(n)-n)\otimes\dlog(U_i)\] is a flat $q$-connection whose associated $q$-de Rham complex is naturally (in $N$) quasi-isomorphic to $L\eta_{\mu}R\Gamma_\sub{cont}(\Gamma,N)$. The resulting functor \[\Rep_{\Gamma}^\mu(A_\inf^\bx(R))\To\text{\rm qMIC}(A_\inf^\bx(R))\]
is an equivalence of symmetric monoidal categories.
\end{corollary}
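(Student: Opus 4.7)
The plan is to deduce this as a direct specialisation of Proposition~\ref{proposition_reps_vs_q_connections}, so the main task is to verify that the set-up $A=A_\inf$, $A^\bx=A^\bx_\inf(R)$, $q=[\ep]$ satisfies hypotheses (qDR1) and (qDR2), together with enough completeness to guarantee invertibility of the volte and the passage from $\bb Z^d$- to $\bb Z_p^d$-actions. First I would check (qDR1): the ring $A^\bx_\inf(R)$ is an $A_\inf$-algebra equipped, as recalled in \S\ref{ss_framed}, with commuting $A_\inf$-linear automorphisms $\gamma_1,\dots,\gamma_d$ coming from the fixed basis of $\Gamma=\bb Z_p(1)^d$, which act as the identity modulo $\mu=[\ep]-1$. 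Moreover $\mu$ is a non-zero-divisor in $A^\bx_\inf(R)$ since $A^\bx_\inf(R)$ is $(p,\xi)$-adically formally \'etale over $A_\inf\pid{\ul U^{\pm1}}$, in which $\mu=\phi^{-1}(\mu)\xi$ is a non-zero-divisor by Lemma~\ref{lemma_topology}(ii) applied to $A_\inf$.

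Second, I would check (qDR2) with the distinguished units $U_1,\dots,U_d\in A^\bx_\inf(R)$ of \S\ref{ss_framed}: by construction of the $\Gamma$-action the relations $\gamma_i(U_j)=[\ep]U_j$ for $i=j$ and $\gamma_i(U_j)=U_j$ for $i\neq j$ hold as required. Under these hypotheses the formally \'etale $A_\inf\pid{\ul U^{\pm1}}$-algebra structure identifies $\hat\Omega^1_{A^\bx_\inf(R)/A_\inf}$ with the free left $A^\bx_\inf(R)$-module on the symbols $\dlog(U_i)$, so that it coincides with $\q\Omega^1_{A^\bx_\inf(R)/A_\inf}$ as constructed abstractly in \S\ref{ss_modules_with_q}; the explicit formula for $\nabla$ then matches the one given by Proposition~\ref{proposition_reps_vs_q_connections}, and the quasi-isomorphism with $L\eta_\mu R\Gamma_\sub{cont}(\Gamma,N)$ follows from the analogous statement proved there.

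Third, I would note two points of automatic simplification. Any finite projective $A^\bx_\inf(R)$-module $N$ is $(p,\mu)$-adically complete and separated (since $A^\bx_\inf(R)$ is), so every endomorphism of the form $\op{id}+\mu\nabla_i^\sub{log}$ is an automorphism by geometric series. Consequently every object of $\text{\rm qMIC}(A^\bx_\inf(R))$ has invertible volte, so the target category in Proposition~\ref{proposition_reps_vs_q_connections} is exactly $\text{\rm qMIC}(A^\bx_\inf(R))$. Similarly, the same $(p,\mu)$-adic completeness shows that any $\bb Z^d$-action with $\gamma_i\equiv\id$ modulo $\mu$ extends uniquely and continuously to $\bb Z_p^d=\Gamma$, as explained in the final paragraph of Proposition~\ref{proposition_reps_vs_q_connections}, so $\Rep_\Gamma^\mu(A^\bx_\inf(R))$ is identified with the corresponding $\bb Z^d$-category.

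There is essentially no hard step, since the heavy lifting—compatibility with tensor products and internal Homs, the formulae for $\nabla_i^\sub{log}$ and the volte, and the d\'ecalage calculation via the Koszul complex—is already carried out in the axiomatic Proposition~\ref{proposition_reps_vs_q_connections}. The only mild subtlety is keeping track of the continuity upgrade from $\bb Z^d$ to $\bb Z_p^d$, but this is handled uniformly by the $(p,\mu)$-adic completeness argument recalled above.
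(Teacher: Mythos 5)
Your proposal is correct and follows essentially the same route as the paper, which likewise deduces the corollary by verifying (qDR1), (qDR2), identifying $\hat\Omega^1_{A^\bx_\inf(R)/A_\inf}$ with $\q\Omega^1_{A^\bx_\inf(R)/A_\inf}$ via formal \'etaleness over $A_\inf\pid{\ul U^{\pm1}}$, and then specialising Proposition~\ref{proposition_reps_vs_q_connections} with $\Gamma\cong\bb Z_p^d$. (The only quibble is that the non-zero-divisor property of $\mu$ in $A^\bx_\inf(R)$ is Lemma~\ref{lemma_topology}(iii), applied directly to $B=A^\bx_\inf(R)$, rather than part (ii).)
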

\begin{proof}
As already mentioned, this is a special case of Proposition \ref{proposition_reps_vs_q_connections}, identifying $\Gamma$ with $\bb Z_p^d$.
\end{proof}

\begin{remark}\label{remark_dependence_on_ep}
Both sides of the equivalence $\Rep_{\Gamma}^\mu(A_\inf^\bx(R))\simeq\text{\rm qMIC}(A_\inf^\bx(R))$ depend on the chosen framing by definition, and the equivalence depends on our chosen compatible sequence of $p$-power roots of unity. But this latter dependence is mild, in the following sense. Changing the sequence of $p$-power roots of unity has the effect of changing $\ep\in\bb Z_p(1)\subseteq\roi^{\flat\times}$ into $\ep^a$ for some $a\in\bb Z_p^\times$, and similarly $\gamma_i$ into $\gamma_i^a$. Then $\gamma_i^a=1+a(\gamma_i-1)+\mu^2 \beta_i$ for some $\beta_i\in\End_{A_\inf}(N)$ belonging to the $p$-adic closure of the submodule generated by $\gamma_i$. (To prove this consider the map $\bb Z_p[[T]]\to \End_{A_\inf}(N)$, $T\mapsto\gamma_i-1$, which makes sense since $N$ is $(p,\mu)$-adically complete, and note that $(1+T)^a\equiv 1+aT$ mod $T^2$.) It follows that \[\frac{\gamma_i^a-1}{[\ep^a]-1}=\al_i\circ \frac{\gamma_i-1}{[\ep]-1}\] for some automorphism $\al_i\in \End_{A_\inf}(N)$ in the $p$-adic closure of the submodule generated by $\gamma_i$.
\end{remark}

\subsection{Frobenius structure on modules with q-connection}\label{ss_Frobenius}
We now incorporate a Frobenius into the theory of modules with $q$-connection. So, in addition to hypotheses (qDR1) and (qDR2), we assume that:
\begin{quote}
(qDR$\phi$) We are given compatible ring endomorphisms $\phi:A\to A$ and $\phi:A^\bx\to A^\bx$ (where compatible means that the obvious diagram commutes) such that $\phi(q)=q^p$ in $A$ and such that $\phi$ commutes with $\gamma_1,\dots,\gamma_d$ on $A^\bx$.
\end{quote}
We hope that the simultaneous use of $\phi$ for the two endomorphisms does not cause confusion.

\begin{remark}
In practice the endomorphism of $A$ will be part of the data (e.g., the usual Frobenius automorphism of $A_\inf$) whereas that of $A^\bx$ will involve a choice (e.g., a ring automorphism of $A_\inf^\bx(R)$ lifting the absolute Frobenius); it is not necessary to assume that $\phi(U_i)=U_i^p$ for $i=1,\dots,d$, though in practice this will often be true and thereby simplify some of the formulae which appear.
\end{remark}

\begin{remark}[Via deformation from a framing]\label{remark_q_via_deformation}
The standard way in which to fulfil all our hypothesis is to ``\q-deform'' a smooth algebra equipped with a framing by adopting the following set-up:

\begin{quote}
Let $A$ be a ring, $q\in A$ a unit such that $q-1$ is a non-zero-divisor, and assume that $A$ is $(p,q-1)$-adically complete and separated. Let $A\pid{\ul U^{\pm1}}=A\pid{U_1^{\pm1},\dots,U_d^{\pm1}}$ denote the $(p,q-1)$-adic completion of the Laurent polynomial algebra $A[U_1^{\pm1},\dots,U_d^{\pm1}]$, and fix a $(p,q-1)$-formally \'etale $A\pid{\ul U^{\pm1}}$-algebra $A^\bx$. (In practice $A^\bx$ arises by lifting an \'etale algebra over $A/(p,q-1)[U_1^{\pm1},\dots,U_d^{\pm1}]$, c.f., Remark \ref{examples_qBK}). Assume further that $A$ is equipped with an endomorphism $\phi$ which lifts the absolute Frobenius on $A/p$ and which satisfies $\phi(q)=q^p$.
\end{quote}
Then, by the same deformation arguments as in \S\ref{ss_framed}, there is a unique continuous action of $\bb Z_p^d$ on $A^\bx$ via $A$-algebra homomorphisms under which the usual basis elements $\gamma_1,\dots,\gamma_d\in\bb Z_p^d$ act as \[\gamma_i(U_j)=\begin{cases}qU_j & i=j \\ U_j & i\neq j\end{cases}.\] The data \[A\to A^\bx\circlearrowleft \gamma_1,\dots,\gamma_d,\qquad q\in A,\qquad U_1,\dots,U_d\in A^\bx\] is thus a set-up for \q-de Rham cohomology satisfying hypotheses (qDR1) and (qDR2). Moreover, in this case $\q\Omega^1_{A^\bx/A}$ identifies as a left $A^\bx$-modules with the $(p,q-1)$-adic completion $\hat\Omega^1_{A^\bx/A}$ of $\Omega^1_{A^\bx/A}$.

%, and so Proposition \ref{proposition_reps_vs_q_connections} states that there is an equivalence of categories $\Rep_{\bb Z_p}^{q-1}(A^\bx)\quis \MIC_q(A^\bx)$. Our goal in this subsection is to incorporate the Frobenius $\phi$ on $A$ into this equivalence.

Similar deformation arguments show that $\phi$ extends uniquely to an endomorphism $\phi$ of $A^\bx$ which lifts the absolute Frobenius on $A^\bx/p$ and which satisfies $\phi(U_i)=U_i^p$ for $i=1,\dots,d$, and in this way we also satisfy condition (qDR$\phi$).

 %This in turn induces an endomorphism of $\Omega^1_{A^\bx/A}$, which we denote by $\phi_\Omega$ to distinguish it from $\phi$, given by definition by \[\phi_\Omega(\sum_{i=1}^df_id_q(U_i)):=\sum_{i=1}^d\phi(f_i)d_q(U_i^p)=\tilde\xi \sum_{i=1}^d\phi(f_i)d_q(U_i)\] where $\tilde\xi=1+q+\cdots+q^{p-1}\in A$. Alternatively, in terms of logarithmic coordinates, $\phi_\Omega$ is given by $\phi_\Omega(\sum_{i=1}^df_i\dlog(U_i))=\tilde \xi \sum_{i=1}^d\phi(f_i)\dlog(U_i)$.
\end{remark}

\begin{remark}[Relation to framed algebras over q-PD pairs]\label{examples_qBK}
We explain the relation to Bhatt--Scholze's framed q-PD data. Let $(A,I)$ be a q-PD pair in the sense of \cite[Def.~16.2]{BhattScholze2019} (minor notational warning: our $A$ is Bhatt--Scholze's $D$, whereas they use $A$ to denote $\bb Z_p[[q-1]]$), and $R$ a $p$-completely smooth $A/I$-algebra which is small in the sense that there exists an \'etale map $A/(I,p)[\ul U^{\pm1}]:=A/(I,p)[U_1^{\pm1},\dots,U_d^{\pm1}]\to R/p$, which we fix. Assume in addition that $A$ is $q-1$-torsion-free (which we note is also imposed in the theory of q-de Rham cohomology \cite[\S16.3]{BhattScholze2019}). Then:
\begin{enumerate}
\item Firstly, $(A,[p]_q)$ is a bounded prism, by definition of a q-PD pair, and so $A$ is $(p,[p]_q)$-adically$=(p,q-1)$-adically complete and separated by \cite[Lem.~3.7]{BhattScholze2019}.
\item Next recall that $q-1\in I$ and $\phi(I)\subseteq ([p]_q)$, where $\phi$ is the Frobenius on $A$ induced by its $\delta$-structure, so $(q^p-1,p)\subseteq (I^{(p)},p)\subseteq ([p]_q,p)\subseteq (q-1,p)\subseteq (I,p)$, where $I^{(p)}=\{a^p:a\in I\}$; the maps $A/(p,[p]_q)\to A/(p,q-1)\to A/(p,I)$ are therefore quotients by nil-ideals, whence base changing along the maps $A/(p,[p]_q)[\ul U^{\pm1}]\to A/(p,q-1)[\ul U^{\pm1}]\to A/(p,I)[\ul U^{\pm1}]$ induce equivalences on the categories of \'etale algebras.
\end{enumerate}
So the fixed framing may be used to construct a $(p,q-1)$-formally \'etale $A\pid{\ul U^{\pm1}}$-algebra $A^\bx$ (denoted by $P$ in \cite[Const.~16.19]{BhattScholze2019}) such that $A^\bx/(p,I)\isoto R/p$. The deformation arguments of Remark \ref{remark_q_via_deformation} or \cite[Const.~16.18]{BhattScholze2019} show that $A\to A^\bx$ (equipped with their Frobenii and the framing) satisfy the set-up of Remark \ref{remark_q_via_deformation}. Assuming in addition that $A$ is flat over $\bb Z_p[[q-1]]$(equivalently $(p,q-1)$-completely flat by the argument of \cite[Lem.~4.30]{BhattMorrowScholze1}), this data also forms a framed q-PD datum in the sense of \cite[Const.~16.19]{BhattScholze2019}.

More precisely, this argument shows any framed q-PD datum $(P,S,J)$ for which $J=IP$ (i.e., $P/IP=R$) provides an example of the set-up of Remark \ref{remark_q_via_deformation}, and so is covered by our main results.
%Using the terminology and set-up of \cite[Def.~16.2]{BhattScholze2019}, let $(A,I)$ be a q-PD pair such that $A$ is flat over $\bb Z_p[[q-1]]$\footnote{equivalently, by the argument of \cite[Lem.~4.30]{BhattMorrowScholze1}, $(p,q-1)$-completely flat over $\bb Z_p[[q-1]]$}, and $R$ a $p$-completely smooth $A/I$-algebra 

%The facts that $q-1\in I$ and $\phi(I)\subseteq ([p]_q)$, where $\phi$ is the Frobenius on $A$ corresponding to its $\delta$-structure, show that $(I^{(p)},p)\subseteq ([p]_q,p)\subseteq (q-1,p)\subseteq (I,p)$.

%Then $A/[p]_q$ is a bounded prism, by definition of a q-PD pair, and so $A$ is $(p,[p]_q)$-adically$=(p,q-1)$-adically complete and separated by \cite[Lem.~3.7]{BhattScholze2019}.

%For example consider $A=W[[q-1]]$, where $W$ is the ring of Witt vectors of some perfect field of characteristic $p$. Let $\sigma$ be the unique endomorphism of $W[[q-1]]$ which is given by the usual Witt vector Frobenius on $W$ and satisfies $\sigma(q)=q^p$. Then the data $W[[q-1]]$, $q$, $\sigma$ satisfies all the conditions of Remark \ref{remark_q_via_deformation}. In particular, taking $W=\bb Z_p$ gives the standard base ring for q-crystalline cohomology \cite[\S16]{BhattScholze2019}.
\end{remark}

Returning to the axiomatic set-up (qDR$\phi$), the endomorphism $\phi$ induces an endomorphism of $\Omega^1_{A^\bx/A}$, which we denote by $\phi_\Omega$ to distinguish it from $\phi$, given by definition by \[\phi_\Omega(\sum_{i=1}^df_id_q(U_i)):=\sum_{i=1}^d\phi(f_i)d_q(\phi(U_i))=[p]_q \sum_{i=1}^d\phi(f_i)\phi(U_i)U_i^{-1}d_q(U_i)\] where $[p]_q=1+q+\cdots+q^{p-1}\in A$ is the ``q-analogue of $p$''. Alternatively, in terms of logarithmic coordinates, $\phi_\Omega$ is given more cleanly by $\phi_\Omega(\sum_{i=1}^df_i\dlog(U_i))=[p]_q \sum_{i=1}^d\phi(f_i)\dlog(U_i)$.

The precise meaning of the following lemma will be explained in the proof:

\begin{lemma}\label{lemma_phi_on_qdR}
Adopt the set-up (qDR1), (qDR2), (qDR$\phi$). Then the pair $(\phi,\phi_\Omega)$ is an endomorphism of the differential ring $d_q:A^\bx\to\q\Omega^1_{A^\bx/A}$.
\end{lemma}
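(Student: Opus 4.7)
Being an endomorphism of the differential ring $d_q:A^\bx\to\q\Omega^1_{A^\bx/A}$ means, by Andr\'e's definition (\cite[2.1.2.1]{Andre2001}), that $\phi_\Omega$ should be additive, should be compatible with both the left and right $A^\bx$-module structures via $\phi$ (i.e. $\phi_\Omega(f\omega g)=\phi(f)\phi_\Omega(\omega)\phi(g)$), and that the square $\phi_\Omega\circ d_q=d_q\circ\phi$ should commute. The plan is to check each of these from the defining formula $\phi_\Omega\!\left(\sum_i f_i\,\dlog(U_i)\right)=[p]_q\sum_i\phi(f_i)\dlog(U_i)$, using in an essential way only the hypotheses of (qDR$\phi$), namely $\phi(q)=q^p$ and $\phi\circ\gamma_i=\gamma_i\circ\phi$.

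First I would deal with the $A^\bx$-linearity: left linearity through $\phi$ is immediate from the formula, as $\q\Omega^1_{A^\bx/A}$ is left-free on the $\dlog(U_i)$. For right linearity, recall that the right action is the twisted one $\dlog(U_i)\cdot f=\gamma_i(f)\dlog(U_i)$. A direct computation gives
\[\phi_\Omega(\dlog(U_i)\cdot f)=\phi_\Omega(\gamma_i(f)\dlog(U_i))=[p]_q\phi(\gamma_i(f))\dlog(U_i),\]
\[\phi_\Omega(\dlog(U_i))\cdot\phi(f)=[p]_q\dlog(U_i)\cdot\phi(f)=[p]_q\gamma_i(\phi(f))\dlog(U_i),\]
and the two sides agree because $\phi\gamma_i=\gamma_i\phi$. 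Extending additively and by left linearity yields the full right-module compatibility.

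Next I would verify the differential identity. Fix $f\in A^\bx$ and set $g_i:=(\gamma_i(f)-f)/(q-1)\in A^\bx$, which makes sense because $\gamma_i\equiv\id$ mod $q-1$. Then on the one hand $\phi_\Omega(d_q(f))=[p]_q\sum_i\phi(g_i)\dlog(U_i)$. On the other hand, applying $\phi$ to the defining equation $(q-1)g_i=\gamma_i(f)-f$ and using $\phi(q-1)=q^p-1=(q-1)[p]_q$ together with $\phi\gamma_i=\gamma_i\phi$ yields
\[(q-1)[p]_q\,\phi(g_i)=\gamma_i(\phi(f))-\phi(f),\]
so $[p]_q\phi(g_i)=(\gamma_i(\phi(f))-\phi(f))/(q-1)$, which is exactly the $\dlog(U_i)$-coefficient of $d_q(\phi(f))$. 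Summing over $i$ gives $\phi_\Omega(d_q(f))=d_q(\phi(f))$.

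The main ``content'' of the lemma is really just the appearance of the factor $[p]_q$ in the definition of $\phi_\Omega$: it is forced by the requirement that $\phi_\Omega$ intertwine $d_q$ with $\phi$ (since $\phi$ multiplies the denominator $q-1$ of $d_q$ by $[p]_q$), and it is simultaneously compatible with the right module structure because $\phi$ and $\gamma_i$ commute. So I expect no real obstacle; the only point requiring any attention is keeping track of which side of the bimodule one is working with when verifying right-linearity, since the right action is twisted by $\gamma_i$.
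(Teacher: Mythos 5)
Your proof is correct and follows exactly the route the paper takes: the paper simply asserts that the bimodule compatibility and the commutation of the square $\phi_\Omega\circ d_q=d_q\circ\phi$ are "clear from the definition of $\phi_\Omega$ and the assumption that $\phi$ commutes with $\gamma_1,\dots,\gamma_d$", and your computation (using $\phi(q-1)=(q-1)[p]_q$ for the differential identity, which is legitimate since $q-1$ is a non-zero-divisor by (qDR1), and $\phi\gamma_i=\gamma_i\phi$ for the twisted right action) is precisely the spelled-out version of that assertion.
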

\begin{proof}
According to the general theory of differential rings, this means that the following diagram commutes
\[\xymatrix{
A^\bx\ar[r]^-{d_q}\ar[d]_\phi & \q\Omega^1_{A^\bx/A}\ar[d]^{\phi_\Omega}\\
A^\bx\ar[r]^-{d_q} &\q\Omega^1_{A^\bx/A}
}\]
and that $\phi_\Omega$ is compatible with the bimodule structure on $\q\Omega^1_{A^\bx/A}$. Both these assertions are clear from the definition of $\phi_\Omega$ and the assumption that $\phi$ commutes with $\gamma_1,\dots,\gamma_d$.
\end{proof}

It follows formally from Lemma \ref{lemma_phi_on_qdR} that there is a well-defined base change functor \[\categ{3.4cm}{modules with q-connection over $A^\bx$}\xto{(\phi,\phi_\Omega)^*}\categ{3.4cm}{modules with q-connection over $A^\bx$},\qquad (N,\nabla)\mapsto (\phi^*N,\phi^*\nabla)\] defined as follows \cite[2.2.2]{Andre2001}: a q-connection $\nabla$ on an $A^\bx$-module $N$ gives a q-connection $\phi^*\nabla$ on $\phi^*N=N\otimes_{A^\bx,\phi}A^\bx$ by 
\begin{align*}
\phi^*\nabla:\phi^*N&\To N\otimes_{A^\bx,\phi}\q\Omega^1_{A^\bx/A}\;(=\phi^*N\otimes_{A^\bx}\q\Omega^1_{A^\bx/A})\\ n\otimes f&\mapsto (\op{id}_N\otimes\phi_\Omega)(\nabla n)\cdot f+n\otimes d_q(f)
\end{align*}
(where $n\in N$, $f\in A^\bx$). More explicitly, writing $\nabla=\sum_{i=1}^d\nabla_{i}(-)\otimes d_q(U_i)$ in terms of non-logarithmic coordinates, then the analogous coordinates of $\phi^*\nabla$ are \[(\phi^*\nabla)_{i}:\phi^*N\To\phi^*N,\qquad n\otimes f\mapsto \nabla_{i}(n)\otimes [p]_q \phi(U_i)U_i^{-1} \gamma_i(f)+n\otimes d_{q,i}(f),\] where $d_{q,i}(f):=\tfrac{\gamma_i(f)-f}{U_i(q-1)}$.
Alternatively, in terms of the logarithmic coordinates $\nabla_{i}^\sub{log}(-)=\nabla_{i}(-)U_i$, the logarithmic coordinates of $\phi^*\nabla$ are given by \[(\phi^*\nabla)^\sub{log}_i:\phi^*N\To\phi^*N,\qquad n\otimes f\mapsto \nabla_{i}^\sub{log}(n)\otimes [p]_q\gamma_i(f)+n\otimes d_{q,i}^\sub{log}(f).\] Moreover, since the pair  $(\phi,\phi_\Omega)$ extends to an endomorphism of the q-de Rham complex $\q\Omega^\blob_{A^\bx/A}$ (as can either be checked explicitly, or by appealing to the universality mentioned just after hypothesis (qDR2)), the functor $(\phi,\phi_\Omega)^*$ preserves flatness (either by direct inspection or by \cite[Lem.~2.2.3.2]{Andre2001}), the tensor product of Remark \ref{remark_tensor}(i), and the enriched Hom of Remark \ref{remark_tensor}(ii) when $N$ is finite projective (to ensure that $\phi^*\Hom_{A^\bx}(N,N')\isoto\Hom_{A^\bx}(\phi^*N,\phi^*N')$).

An $A^\bx$-linear morphism $\phi_N:\phi^*N\to N$ is a morphism of modules with q-connections, or said to be ``horizontal'' with respect to the q-connection, if the obvious diagram
\[\xymatrix{
\phi^*N\ar[r]^-{\phi^*\nabla}\ar[d]_{\phi_N} & \phi^*N\otimes_{A^\bx}\q\Omega^1_{A^\bx/A}\ar[d]^{\phi_N\otimes\op{id}}\\
N\ar[r]^-{\nabla} & N\otimes_{A^\bx}\q\Omega^1_{A^\bx/A}
}\]
commutes. Explicitly, this means that $\nabla_{i}\phi_N(n\otimes 1)=\phi_N(\nabla_{i}(n)\otimes 1)[p]_q \phi(U_i)U_i^{-1}$ for all $i=1,\dots,d$ and $n\in N$, or in terms of logarithmic coordinates $\nabla_{i}^\sub{log}\phi_N(n\otimes 1)=\phi_N(\nabla_{i}^\sub{log}(n)\otimes 1)[p]_q$. More generally, these assertions about morphisms of modules with q-connection remain true if we invert any element of $A$, as we shall do in the next definition.

\begin{definition}
Let $\textrm{qMIC}(A^\bx,\phi)$ be the category consisting of pairs $(N,\phi_N)$, where $N\in \textrm{qMIC}(A^\bx)$ and $\phi_N:(\phi^*N)[\tfrac1{[p]_q}]\to N[\tfrac1{[p]_q}]$ is an isomorphism of modules with $q$-connection over $A^\bx[\tfrac1{[p]_q}]$. (Such a pair is really a triple $(N,\nabla,\phi_N)$, but then the notation risks becoming heavy.)
\end{definition}

We similarly consider the category of generalised representations with Frobenius $\Rep_{\bb Z^d}^{q-1}(A^\bx,\phi)$, consisting of pairs $(N,\phi_N)$ where $N\in \Rep_{\bb Z^d}^{q-1}(A^\bx)$ and $\phi_N:(\phi^*N)[\tfrac1{[p]_q}]\to N[\tfrac1{[p]_q}]$ is a $\bb Z^d$-equivariant isomorphism of $A^\bx[\tfrac1{[p]_q}]$-modules; when $A^\bx$ is $(p,q-1)$-adically complete we use the analogous notions for $\bb Z_p^d$ in place of $\bb Z^d$. It is immediate that the equivalence of Proposition \ref{proposition_reps_vs_q_connections} may be modified to incorporate the additional Frobenius structures; we state the resulting equivalence in the complete case, as that is what interests us:

\begin{proposition}\label{proposition_reps_vs_q_connections_with_phi}
Adopt the set-up (qDR1), (qDR2), (qDR$\phi$) and assume that $A^\bx$ is $(p,q-1)$-adically complete and separated. Then the functor of Proposition \ref{proposition_reps_vs_q_connections} induces an equivalences of categories \[\textrm{\rm qMIC}(A^\bx,\phi)\Isoto\Rep_{\bb Z_p^d}^{q-1}(A^\bx,\phi).\]
\end{proposition}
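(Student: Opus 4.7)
The plan is to upgrade the equivalence of Proposition~\ref{proposition_reps_vs_q_connections} to incorporate Frobenius structures directly. Both $\textrm{qMIC}(A^\bx,\phi)$ and $\Rep_{\bb Z_p^d}^{q-1}(A^\bx,\phi)$ add the same kind of data to an underlying object, namely an $A^\bx[\tfrac{1}{[p]_q}]$-linear isomorphism $\phi_N:(\phi^*N)[\tfrac{1}{[p]_q}]\to N[\tfrac{1}{[p]_q}]$. On the representation side $\phi_N$ is required to be $\bb Z_p^d$-equivariant, and on the q-connection side it is required to be horizontal with respect to the pullback q-connection $(\phi^*\nabla)$ defined in \S\ref{ss_Frobenius}. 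Since ``isomorphism'' in either category simply means bijectivity on the underlying $A^\bx[\tfrac{1}{[p]_q}]$-module, the entire content of the proposition reduces to showing that these two compatibility conditions on $\phi_N$ coincide.

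To compare them, I will repackage $\phi_N$ as the $\phi$-semi-linear endomorphism $\Phi:N[\tfrac{1}{[p]_q}]\to N[\tfrac{1}{[p]_q}]$ defined by $\Phi(n):=\phi_N(n\otimes 1)$; this just records $A^\bx$-linearity of $\phi_N$ together with the fact that $\phi^*N$ is generated over $A^\bx$ by pure tensors $n\otimes 1$. Using $\gamma_i(1)=1$, the $\bb Z_p^d$-equivariance of $\phi_N$ unpacks to the identity $\Phi\circ\gamma_i=\gamma_i\circ\Phi$. On the other hand, the explicit formula $(\phi^*\nabla)_i^\sub{log}(n\otimes 1)=\nabla_i^\sub{log}(n)\otimes[p]_q$ from \S\ref{ss_Frobenius}, together with $A^\bx$-linearity of $\phi_N$ (and a brief check using Leibniz that it then suffices to verify horizontality on pure tensors), reduces horizontality of $\phi_N$ to the identity $\nabla_i^\sub{log}\circ\Phi=[p]_q\cdot\Phi\circ\nabla_i^\sub{log}$.

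The core calculation, and the only non-formal step, is the passage between these two identities on $\Phi$. Using $\gamma_i-1=(q-1)\nabla_i^\sub{log}$ on $N$ (via Proposition~\ref{proposition_reps_vs_q_connections}), together with the $\phi$-semi-linearity identity $\Phi((q-1)m)=\phi(q-1)\Phi(m)=(q^p-1)\Phi(m)=(q-1)[p]_q\Phi(m)$, the equivariance condition $\Phi(\gamma_i n)=\gamma_i\Phi(n)$ rearranges (after subtracting $\Phi(n)$ from both sides) to $(q-1)[p]_q\Phi(\nabla_i^\sub{log}(n))=(q-1)\nabla_i^\sub{log}\Phi(n)$. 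Cancelling the non-zero-divisor $q-1$ -- which remains a non-zero-divisor after inverting $[p]_q$ -- delivers exactly the horizontality condition, and every step in the chain is reversible. There is no serious obstacle here; the proof is essentially a bookkeeping exercise, whose only subtle point is tracking the factor of $[p]_q$ introduced when $\phi$-semi-linearity is applied to the element $q-1\in A$, which is precisely why the Frobenius is required to be an isomorphism only after inverting $[p]_q$.
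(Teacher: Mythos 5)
Your proposal is correct and follows essentially the same route as the paper: the paper's proof likewise observes that the underlying equivalence is compatible with the two pullback functors $\phi^*$ and then asserts that $\phi_N$ commutes with the $\gamma_i$ if and only if it is horizontal, leaving the computation implicit. You have simply written out that final verification in detail (via $\gamma_i=1+(q-1)\nabla_i^\sub{log}$, the semi-linearity identity $\Phi((q-1)m)=(q-1)[p]_q\Phi(m)$, and cancellation of the non-zero-divisor $q-1$), which is exactly the intended argument.
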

\begin{proof}
The functor does not change the underlying module $N$ but simply replaces the $\bb Z_p^d$-action by the q-connection $\nabla$ having logarithmic coordinates $\nabla_{i}^\sub{log}=\tfrac{\gamma_i-1}{q-1}$, as in Proposition \ref{proposition_reps_vs_q_connections}. Moreover, the final equivalence of Proposition \ref{proposition_reps_vs_q_connections} is compatible with the two pullback functors $\phi^*$. So it remains only to notice that $\phi_N$ commutes with the $\gamma_i$ if and only if it is horizontal with respect to $\nabla$.
\end{proof}

The previous proposition implies in particular to our main case of interest, namely $A=A_\inf$, $A^\bx=A_\inf^\bx(R)$, $q=[\ep]$, (in which case $[p]_q=\tilde\xi$) thereby providing an analogue of Corollary \ref{corollary_reps_vs_q_connections} with Frobenii; the precise statement may be found later as the final equivalence of Theorem \ref{theorem_local_with_phi}.

\begin{remark}[Tensor product and enriched Hom]
Let $(N,\phi_N),(N',\phi_{N'})\in \op{qMIC}(A^\bx,\phi)$. The q-connections on $N\otimes_{A^\bx}N'$ and $\Hom_{A^\bx}(N,N')$ were defined in Remark \ref{remark_tensor}. It is clear that they also admit Frobenii \[\phi_N\otimes\phi_{N'}:\phi^*(N\otimes_{A^\bx}N')[\tfrac{1}{[p]_q}]=(\phi^*N\otimes_{A^\bx}\phi^*N')[\tfrac{1}{[p]_q}]\To N\otimes_{A^\bx}N'[\tfrac{1}{[p]_q}]\] and \begin{align*}\phi_\sub{Hom}:\phi^*(\Hom_{A^\bx}(N,N'))[\tfrac{1}{[p]_q}]=\Hom_{A^\bx}((\phi^*N)[\tfrac{1}{[p]_q}],(\phi^* N')[\tfrac{1}{[p]_q}])&\To \Hom_{A^\bx}(N,N')[\tfrac1{[p]_q}]\\  f&\mapsto \phi_{N'}\circ \phi^*f\circ\phi_N^{-1},\end{align*} thereby upgrading them to objects of $\op{qMIC}(A^\bx,\phi)$. The categorical $\Hom((N,\phi_N),(N',\phi_{N'}))$ then equals the flat sections fixed by the Frobenius in the enriched Hom, namely \[\{f\in\Hom_{A^\bx}(N,N'):\nabla_\sub{Hom}(f)=0\textrm{ and }\phi_\sub{Hom}(f)=f\}.\]
\end{remark}

\subsection{q-Higgs fields and a q-Simpson correspondence}\label{ss_q_Higgs}
We continue to work under the hypotheses (qDR1), (qDR2), (qDR$\phi$); in this subsection we introduce q-Higgs fields over the Frobenius twist of $A^\bx$, which will then be used to establish a q-Simpson correspondence.

We begin by introducing the Frobenius twist $A^{\bx(1)}$, which is another example of a differential ring. Let $ A^{\bx(1)}=A\otimes_{\phi,A}A^\bx$ denote the Frobenius pullback of $A^\bx$, fitting into the usual commutative diagram
\[\xymatrix{
A^\bx\ar[r]^{W}\ar@/^10mm/[rr]^\phi &  A^{\bx(1)}\ar[r]^F& A^\bx\\
A\ar[u]\ar[r]_\phi & A\ar[u]\ar[ur] &
}\]
(the notation $W,F$ being standard in the theory of the relative Frobenius); the $A$-algebra automorphisms $\gamma_1,\dots,\gamma_d$ of $A^\bx$ induce $A$-algebra automorphisms $\op{id}\otimes\gamma_1,\dots,\op{id}\otimes\gamma_d$ of $ A^{\bx(1)}$, which are the identity modulo $q^p-1$. Also let $\q\Omega^1_{ A^{\bx(1)}/A}$ be the $ A^{\bx(1)}$-bimodule obtained by base changing $\q\Omega^1_{A^\bx/A}$ along $W:A^\bx\to A^{\bx(1)}$; in other words, $\q\Omega^1_{ A^{\bx(1)}/A}$ is the free left $ A^{\bx(1)}$-module on generators $\dlog(U_1), \dots, \dlog(U_d)$, with right action twisted by the $\op{id}\otimes\gamma_i$, just as we defined the right module structure on $\q\Omega^1_{A^\bx/A}$. Thus the data
\begin{equation}A\to A^{\bx(1)}:=A\otimes_{\phi,A}A^\bx\circlearrowleft \op{id}\otimes\gamma_1,\dots,\op{id}\otimes\gamma_d,\qquad q^p\in A\qquad 1\otimes U_1,\dots,1\otimes U_d\in A^{\bx(1)}\label{eqn_Higgs}\end{equation} is a new set-up for q-de Rham cohomology satisfying (qDR1)\footnote{To be precise, it might not be true that $q^p-1$ is a non-zero-divisor of $ A^{\bx(1)}$, though it always will be in our cases of interest. However this does not matter, since the denominators can be avoided when defining the q-derivative $d_{q^p}$: instead define it as $\op{id}\otimes d_q:A\otimes_{\phi,A}A^\bx=A^{\bx(1)}\to A\otimes_{\phi,A}\Omega^1_{A^\bx/A}=\Omega^1_{ A^{\bx(1)}/A}$.} and (qDR2), where the q-parameter is $q^p$. 

Therefore, according to the general theory presented in \S\ref{ss_modules_with_q}, there is an associated differential ring \begin{equation}d_{q^p}: A^{\bx(1)}\To\q\Omega^1_{ A^{\bx(1)}/A},\qquad f\mapsto\sum_{i=1}^d\frac{\op{id}\otimes\gamma_i-1}{q^p-1}(f)\dlog(U_i),\label{eqn_qdr_twist}\end{equation} over which modules with connection are precisely $ A^{\bx(1)}$-modules with q-connection. However, we are now more interested in the differential ring  \begin{equation}[p]_q d_{q^p}: A^{\bx(1)}\To\q\Omega^1_{ A^{\bx(1)}/A},\qquad f\mapsto\sum_{i=1}^d\frac{\op{id}\otimes\gamma_i-1}{q-1}(f)\dlog(U_i).\label{eqn_Higg}\end{equation} Modules with connection over this differential ring are precisely the objects of the following definition:

\begin{definition}
A {\em module with q-Higgs field} over $ A^{\bx(1)}$ is a right $ A^{\bx(1)}$-module $H$ equipped with an $A$-linear map $\Theta:H\to H\otimes_{ A^{\bx(1)}}\q\Omega^1_{ A^{\bx(1)}/A}$ satisfying $\Theta(hf)=\Theta(h)f+h\otimes[p]_q d_{q^p}(f)$ for $h\in H$ and $f\in A^{\bx(1)}$.

We write $\op{qHIG}( A^{\bx(1)})$ for the category of finite projective $ A^{\bx(1)}$-modules with flat q-Higgs field, where ``flat'' is explained immediately below.
\end{definition}

As in the case of q-connections, the general theory of differential rings shows that a q-Higgs field $\Theta$ extends uniquely to a map of graded $A$-modules $\Theta:H\otimes_{ A^{\bx(1)}}\q\Omega^\blob_{ A^{\bx(1)}/A}\to H\otimes_{ A^{\bx(1)}}\q\Omega^{\blob+1}_{ A^{\bx(1)}/A}$ satisfying \[\Theta((h\otimes\omega)\cdot\omega')=\Theta(h\otimes\omega)\cdot\omega'+(-1)^{\deg\omega}(h\otimes\omega)\cdot [p]_q d_{q^p}(\omega').\]  One says that the $q$-Higgs field $\Theta$ is {\em flat} or {\em integrable} if $\Theta^2=0$ (again, it is enough to check that $\Theta^2:H\to H\otimes_{ A^{\bx(1)}}\q\Omega^2_{ A^{\bx(1)}/A}$ vanishes), in which case \[H\otimes_{ A^{\bx(1)}}\q\Omega^\blob_{ A^{\bx(1)}/A}=[H\xto{\Theta}H\otimes_{ A^{\bx(1)}}\q\Omega^1_{A^{\bx(1)}/A}\xto{\Theta}H\otimes_{ A^{\bx(1)}}\q\Omega^2_{ A^{\bx(1)}/A}\xto{\Theta}\cdots]\] is a complex of $A$-modules which we will call the associated {\em $q$-Higgs complex}. In particular, we have the $q$-Higgs complex $A^{\bx(1)}\to \q\Omega^1_{ A^{\bx(1)}/A}\to \q\Omega^2_{ A^{\bx(1)}/A}\to\cdots$ of $A^{\bx(1)}$ itself, which may be explicitly described by base changing the q-de Rham complex $q\Omega^\blob_{A^\bx/A}$ along $\phi:A\to A$ and then multiplying the $n^\sub{th}$ differential by $[p]_q^n$.

The logarithmic coordinates $\Theta_i^\sub{log}:H\to H$ of the q-Higgs field are given by the formula \[\Theta(h)=\sum_{i=1}^d\Theta_i^\sub{log}(h)\otimes\dlog(U_i),\] and as in Lemma \ref{lemma_q_coordinates} one sees that $\Theta$ is flat if and only if the $\Theta_i$ pairwise commute.

\begin{remark}[{Specialisations modulo $q-1$ and $[p]_q$}]\label{remark_specialisation_mod}
Assume in this remark that our set-up for q-de Rham cohomology arises by deformation theory as in Remark \ref{remark_q_via_deformation}, and let $(H,\Theta)$ be a $ A^{\bx(1)}$-module with q-Higgs field. Then, modulo $q-1$ the q-Higgs field induces an $A/(q-1)$-linear $p$-connection on the $A/(q-1)\otimes_{\phi,A}A^\bx$-module $H/(q-1)$. Similarly, modulo $[p]_q$ it induces an $A/[p]_q$-linear Higgs field on the $A/[p]_q\otimes_{\phi,A}A^\bx$-module $H/[p]_q$.
\end{remark}

Next we explain how modules with q-Higgs fields over $ A^{\bx(1)}$ are related to modules with q-connection over $A^\bx$, ultimately leading to a q-Simpson correspondence. The key initial observation is that the Frobenius endomorphism $(\phi,\phi_\Omega)$ of the differential ring $d_q:A^\bx\to\q\Omega^1_{A^\bx/A}$ (see Lemma \ref{lemma_phi_on_qdR}) factors through the differential ring $[p]_q d_{q^p}: A^{\bx(1)}\To\q\Omega^1_{ A^{\bx(1)}/A}$ governing q-Higgs fields. Indeed, there is a commutative diagram
\begin{equation}\xymatrix{
A^\bx\ar[r]^-{d_{q}}\ar[d]_{W} \ar@/_1cm/[dd]_{\phi} & \q\Omega^1_{A^\bx/A}\ar[d]^{[p]_q W_\Omega}\ar@/^1cm/[dd]^{\phi_\Omega}\\
 A^{\bx(1)}\ar[r]^-{[p]_q d_{q^p}}\ar[d]_F & \q\Omega^1_{ A^{\bx(1)}/A}\ar[d]^{F_\Omega}\\
A^\bx\ar[r]^-{d_{q}} &\q\Omega^1_{A^\bx/A}
}\label{eqn_WF0}\end{equation}
where \[W_\Omega:\Omega^1_{A^\bx/A}\to \Omega^1_{ A^{\bx(1)}/A},\qquad \sum_{i=1}^df_i\dlog(U_i)\mapsto \sum_{i=1}^dW(f_i)\dlog(U_i)\]
and \[F_\Omega:\Omega^1_{ A^{\bx(1)}/A}\to \Omega^1_{A^\bx/A},\qquad \sum_{i=1}^df_i\dlog(U_i)\mapsto \sum_{i=1}^dF(f_i)\dlog(U_i)\] are the obvious maps induced by $W$ and $F$. The vertical maps in (\ref{eqn_WF0}) define morphisms of differential rings, as is easily checked in the same way as Lemma \ref{lemma_phi_on_qdR}, which extend uniquely to morphisms of differential graded $A$-algebras between the q-de Rham and q-Higgs complexes; there are therefore induced base change functors \begin{equation}\categ{3.3cm}{modules with flat q-connection over $A^\bx$}\xto{(W,[p]_q W_\Omega)^*}\categ{3.3cm}{modules with flat q-Higgs field over $ A^{\bx(1)}$}\xto{(F,F_\Omega)^*} \categ{3.3cm}{modules with flat q-connection over $A^\bx$}\label{eqn_WF}\end{equation} whose composition is precisely the functor $(N,\nabla)\mapsto (\phi^*N,\phi^*\nabla)$ which was studied in \S\ref{ss_Frobenius}.

A putative ``q-Simpson correspondence'' would ideally predict, at least under favourable circumstances, that the base change functor $(F,F_\Omega)^*$ establishes an equivalence between the full subcategories consisting of objects whose q-Higgs field, resp.\ q-connection, is $[p]_q$-adically quasi-nilpotent; we will prove this in the presence of Frobenius structures (see Corollary \ref{corollary_q_Simp}). We first define this nilpotence condition and then, for the sake of clarity, explicitly spell out the functor $(F,F_\Omega)^*$ in terms of coordinates.

\begin{definition}\label{definition_nilpotent_connection}
Let $\Theta=\sum_{i=1}^d\Theta_i^\sub{log}(-)\otimes\dlog(U_i):H\to H\otimes_{ A^{\bx(1)}}\q\Omega^1_{ A^{\bx(1)}/A}$ be a q-Higgs field on a $ A^{\bx(1)}$-module $H$; we say that $\Theta$ is {\em $[p]_q$-adically quasi-nilpotent} if the operator $\Theta_i:=U_i^{-1}\Theta_i^\sub{log}:H\to H$ is $[p]_q$-adically quasi-nilpotent in the usual sense for each $i=1,\dots,d$, i.e., since the operator is $A$-linear this means that for each $h\in H$ there exists $m\ge0$ such that $\Theta_i^m(h)\in[p]_q H$. (Remark: since $\Theta_i^\sub{log}$ modulo $[p]_q$ is $( A^{\bx(1)})/[p]_q$-linear, it is  equivalent to ask that $\Theta_i^\sub{log}$ itself be $[p]_q$-adically quasi-nilpotent.)

Similarly, a q-connection $\nabla=\sum_{i=1}^d\nabla_i^\sub{log}(-)\otimes\dlog(U_i):N\to N\otimes_{A^\bx}\Omega^1_{A^\bx/A}$ on an $A^\bx$-module $N$ is said to be {\em $[p]_q$-adically quasi-nilpotent} if each $A$-linear endomorphism $\nabla_i:=U_i^{-1}\nabla_i^\sub{log}$ is {\em $[p]_q$-adically quasi-nilpotent}. (In this case it is essential that the nilpotence condition is imposed on the non-logarithmic coordinate $\nabla_i$ rather than $\nabla_i^\sub{log}$.)
\end{definition}

\begin{remark}\label{remark_nilpotent_connection}
In Section \ref{section_prismatic_crystals} we will be interested in the weaker condition that the q-Higgs field $\Theta$ or q-connection $\nabla$ is {\em $(p,[p]_q)$-adically quasi-nilpotent} (equivalently, $(p,q-1)$-adically quasi-nilpotent). In other words, this means that the Higgs field $\Theta$ mod $[p]_q$ or connection $\nabla$ mod $q-1$ is $p$-adically quasi-nipotent.
\end{remark}

In terms of logarithmic coordinates, the functor $(F,F_\Omega)^*$ is given explicitly by \[(H,\Theta=\sum_{i=1}^d\Theta_i^\sub{log}(-)\otimes\dlog(U_i))\mapsto (N=H\otimes_{ A^{\bx(1)}}A^\bx,\nabla=\sum_{i=1}^d\nabla_i^\sub{log}(-)\otimes\dlog(U_i))\] where the components are given by \begin{equation}\nabla_i^\sub{log}:H\otimes_{ A^{\bx(1)}}A^\bx\To H\otimes_{ A^{\bx(1)}}A^\bx,\qquad h\otimes f\mapsto \Theta_i^\sub{log}(h)\otimes \gamma_i(f)+h\otimes d_{q,i}^\sub{log}(f).\label{eqn_Higg_to_conn}\end{equation} The main technical result of this subsection is the following analysis of the functor $(F,F_\Omega)^*$:

\begin{theorem}\label{theorem_Simpsons}
Assume that our set-up for q-de Rham cohomology satisfying (qDR1), (qDR2), (qDR$\phi$) arises by deformation theory as in Remark \ref{remark_q_via_deformation}. Then base change along $F$ defines a fully faithful functor
\[(F,F_\Omega)^*:\categ{6.2cm}{$[p]_q$-adically complete and separated modules with $[p]_q$-adically quasi-nilpotent, flat q-Higgs field over $ A^{\bx(1)}$}\To \categ{6cm}{$[p]_q$-adically complete and separated modules with $[p]_q$-adically quasi-nilpotent, flat q-connection over $A^\bx$}.\]
Moreover:
\begin{enumerate}
\item Given $(H,\Theta)$ in the domain category and its image $(N,\nabla)$ in the codomain, there is a natural injective quasi-isomorphism of complexes of $A$-modules \[H\otimes_{ A^{\bx(1)}}\mathrm{q}\Omega^\blob_{ A^{\bx(1)}/A}\quis N\otimes_{A^\bx}\mathrm{q}\Omega^\blob_{A^\bx/A}\] from the q-Higgs complex of $H$ to the q-de Rham complex of $N$.
\item The essential image of $(F,F_\Omega)^*$ contains any object $(N,\nabla)$ of the codomain category satisfying the following condition: $N$ is $[p]_q$-torsion-free and there exists a morphism of $A^\bx$-modules with q-connection $\phi_N:\phi^*N\to N$ such that $\op{coker}\phi_N$ is killed by a power of $[p]_q$.
\end{enumerate}
\end{theorem}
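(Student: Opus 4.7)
The plan is to prove the three assertions in sequence, exploiting the decomposition of $A^\bx$ as a finite free $A^{\bx(1)}$-module of rank $p^d$ with basis $\{\ul U^{\ul a}\}_{\ul a \in \{0,\ldots,p-1\}^d}$; this gives $N = H \otimes_{A^{\bx(1)}} A^\bx = \bigoplus_{\ul a} H \cdot \ul U^{\ul a}$ as an $A^{\bx(1)}$-module, and from the formula (\ref{eqn_Higg_to_conn}) the q-connection acts on $H \cdot \ul U^{\ul a}$ as $\nabla_i^\sub{log} = q^{a_i} \Theta_i^\sub{log} + [a_i]_q \cdot \op{id}$. Crucially, for $0 < a_i < p$ the scalar $[a_i]_q$ is a unit in $A$: it reduces to the integer $a_i$ modulo $q-1$, and $a_i$ is a unit in $A/(p, q-1)$ for $0 < a_i < p$, hence in $A$ by $(p, q-1)$-adic completeness.

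For fully faithfulness, adjunction decomposes every $A^\bx$-linear map $f : N_1 \to N_2$ as $f = \sum_{\ul a} g_{\ul a}\,\ul U^{\ul a}$ with $g_{\ul a} \in \Hom_{A^{\bx(1)}}(H_1, H_2)$, and horizontality with respect to the q-connections yields the relations $g_{\ul a} \circ \Theta_i^{\sub{log},1} = q^{a_i} \Theta_i^{\sub{log},2} \circ g_{\ul a} + [a_i]_q\, g_{\ul a}$ for each index $i$. The $\ul a = 0$ case expresses Higgs-horizontality of $g_0$. For $\ul a \neq 0$ and any $i$ with $a_i \neq 0$, one rewrites the relation as $g_{\ul a} = [a_i]_q^{-1}(g_{\ul a}\Theta_i^{\sub{log},1} - q^{a_i} \Theta_i^{\sub{log},2} g_{\ul a})$ and iterates; the $[p]_q$-adic quasi-nilpotence of $\Theta_i^{\sub{log},j}$ (which upgrades to a uniform statement $(\Theta_i^{\sub{log},j})^{km_0}(H_j) \subseteq [p]_q^k H_j$ using finite generation modulo $[p]_q$) together with $[p]_q$-adic completeness and separatedness of $H_2$ forces $g_{\ul a} \in \bigcap_n [p]_q^n H_2 = 0$. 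Part (i) follows from the same decomposition: $N \otimes_{A^\bx} \q\Omega^\blob_{A^\bx/A}$ splits as $\bigoplus_{\ul a} K(q^{a_1}\Theta_1^\sub{log} + [a_1]_q, \dots, q^{a_d}\Theta_d^\sub{log} + [a_d]_q; H)$, the $\ul a = 0$ summand being exactly the q-Higgs complex of $H$; for $\ul a \neq 0$ and any $i$ with $a_i > 0$, the operator $q^{a_i}\Theta_i^\sub{log} + [a_i]_q = [a_i]_q(1 + [a_i]_q^{-1} q^{a_i} \Theta_i^\sub{log})$ is invertible by Neumann series, so the corresponding Koszul complex is contractible. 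The natural inclusion onto the $\ul a = 0$ summand is thus the claimed injective quasi-isomorphism.

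For essential surjectivity in (ii), given $(N, \nabla, \phi_N)$ with $\op{coker}\phi_N$ killed by $[p]_q^m$, use the factorisation $\phi = F \circ W$ to set $H_0 := W^* N = (W, [p]_q W_\Omega)^*(N, \nabla) \in \op{qHIG}(A^{\bx(1)})$, so that $\phi^* N = F^* H_0$. The horizontal map $\phi_N$ becomes an isomorphism of modules with q-connection after inverting $[p]_q$, providing a canonical identification $N[1/[p]_q] \cong F^*(H_0)[1/[p]_q] = \bigoplus_{\ul a} H_0[1/[p]_q]\,\ul U^{\ul a}$. Composing with the $A^{\bx(1)}[1/[p]_q]$-linear projection onto the $\ul U^{\ul 0}$-summand gives an $A^{\bx(1)}$-linear map $\tilde\pi : N \to H_0[1/[p]_q]$, and one sets $H := \tilde\pi(N) \subseteq H_0[1/[p]_q]$. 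The inclusions $\phi^* N \subseteq \phi_N^{-1}(N) \subseteq [p]_q^{-m}\phi^* N$ (the right-hand one from the cokernel hypothesis) yield the sandwich $H_0 \subseteq H \subseteq [p]_q^{-m}H_0$, so $H$ is a finitely generated $A^{\bx(1)}$-submodule of $H_0[1/[p]_q]$. The horizontality of $\phi_N$ ensures that $H$ is preserved by the q-Higgs field of $H_0[1/[p]_q]$, yielding a q-Higgs field $\Theta$ on $H$ that inherits $[p]_q$-adic quasi-nilpotence from $\nabla$, and by construction the natural map $F^* H \to N$ is an isomorphism.

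The main obstacle is the verification in (ii) that the candidate $H$ is finite projective over $A^{\bx(1)}$: one reduces via the sandwich $H_0 \subseteq H \subseteq [p]_q^{-m} H_0$ and the $[p]_q$-adic quasi-nilpotence framework to a faithfully flat descent argument along $F$, using that the base change $F^* H = N$ is finite projective over $A^\bx$. Fully faithfulness and (i), by contrast, both reduce cleanly to the same Neumann-series / Koszul-contractibility analysis once the basis decomposition is in hand.
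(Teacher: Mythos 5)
Your treatment of fully faithfulness and of part (i) follows the paper's own route: decompose $N=\bigoplus_{\ul a}H\,\ul U^{\ul a}$, observe that on components with $a_i\neq 0$ the operator $\al_i^{(a_i)}=q^{a_i}\Theta_i^{\sub{log}}+[a_i]_q$ is an automorphism (unit plus topologically nilpotent perturbation), kill the off-diagonal components of a horizontal map, and contract the corresponding Koszul summands for the quasi-isomorphism of complexes. This is correct, with one caveat: your rearranged iteration $g_{\ul a}=[a_i]_q^{-1}(g_{\ul a}\Theta^{1}_i-q^{a_i}\Theta^{2}_i g_{\ul a})$ produces terms of the form $(\Theta^{2}_i)^{j}g_{\ul a}((\Theta^{1}_i)^{n-j}(h))$ with $j$ large applied to varying elements, so it genuinely needs the uniform quasi-nilpotence you invoke via ``finite generation modulo $[p]_q$'' --- but the categories in the theorem consist of complete separated modules with no finiteness hypothesis. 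The fix is to use the unrearranged relation $g_{\ul a}\circ\Theta_i^{\sub{log},1}=\al_i^{(a_i)}\circ g_{\ul a}$, giving $g_{\ul a}(h)=(\al_i^{(a_i)})^{-m}g_{\ul a}((\Theta_i^{\sub{log},1})^{m}(h))$, which only needs pointwise quasi-nilpotence. (You also do not verify that the functor is well defined, i.e.\ that the q-connection on $F^*H$ is again $[p]_q$-adically quasi-nilpotent; the paper checks this by a block-matrix computation.)

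The genuine gap is in part (ii). You set $H:=\pi_0(N')$ where $N':=\phi_N^{-1}(N)\subseteq F^*(H_0)[\tfrac1{[p]_q}]=\bigoplus_{\ul a}H_0[\tfrac1{[p]_q}]\,\ul U^{\ul a}$, and then assert that ``by construction the natural map $F^*H\to N$ is an isomorphism.'' This is not by construction: it is equivalent to the assertion that the $A^\bx$-submodule $N'$ is \emph{graded} for this decomposition, i.e.\ $N'=\bigoplus_{\ul a}\pi_{\ul a}(N')\ul U^{\ul a}$, and a submodule stable under operators acting diagonally on the grading need not contain the projections of its elements. Establishing exactly this is the hard half of the theorem. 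The paper does it by introducing the Nygaard-type filtrations $\Fil^j(\phi^*N)=\phi_N^{-1}([p]_q^jN)$ and $\Fil^j(W^*N)=W^*N\cap\Fil^j(\phi^*N)$ and proving by induction on $j$ that $\Fil^j(W^*N)\otimes_{A^{\bx(1)},F}A^\bx\isoto\Fil^j(\phi^*N)$, the inductive step resting on the rigidity Lemma \ref{lemma_injective_rigidity} (injectivity of a horizontal map out of $F^*$ of a $[p]_q$-power-torsion q-Higgs module is preserved under base change, using that $\al_i^{(k)}$ is invertible on the nonzero components while $\Theta_i^{\sub{log}}$ is quasi-nilpotent on the zero component). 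Your proposal contains no substitute for this step. Relatedly, the ``main obstacle'' you single out --- finite projectivity of $H$ --- is not required by the statement at all, whereas the step you need is the one you have assumed; note also that your candidate $\pi_0(N')$ (a projection) differs a priori from the paper's $W^*N\cap\Fil^b(\phi^*N)$ (an intersection, suitably rescaled), and the two agree precisely when the gradedness you are missing holds.
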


\begin{remark}
\begin{enumerate}
\item It seems possible to weaken the hypotheses of the theorem slightly, in particular to drop the requirement that the Frobenius lift $\phi$ satisfies $\phi(U_i)=U_i^p$ for $i=1,\dots,d$, but it complicates the proof and we have no use for it.
\item A similar such functor has been shown to be an equivalence by Gros--Le~Stum--Quir\'os \cite{GrosLeStumQuiros2010, Gros2019}, in the case that $q^p=1$ and $d=1$, by proving that the non-commutative algebras governing the two categories are Morita equivalent. It is natural to ask whether their method could be extended to our situation; our current proof of the surjectivity assertion is completely different, using the Frobenius to directly construct the descent of the q-connection to a q-Higgs field.
\end{enumerate}
\end{remark}

\begin{proof}[Proof of Theorem \ref{theorem_Simpsons}, part I]
We begin by fixing some notation which will be used throughout the proof. Let $(H,\Theta=\sum_{i=1}^d\Theta_i^\sub{log}(-)\dlog(U_i))$ be a module with $[p]_q$-adic quasi-nilpotent q-Higgs field over $ A^{\bx(1)}$ and let $\Theta_i:=\Theta_i^\sub{log}(-)U_i^{-1}:H\to H$ be its non-logarithmic coordinates. Letting $\nabla_i:H\otimes_{ A^{\bx(1)}}A^\bx\to H\otimes_{ A^{\bx(1)}}A^\bx$ denote the non-logarithmic coordinates for the associated q-connection, the above explicit formula (\ref{eqn_Higg_to_conn}) for $\nabla_i$ shows that \[\nabla_i(h\otimes \ul U^{\ul k})=\begin{cases}(q^{k_i}\Theta_i^\sub{log}(h)+[k_i]_qh)\otimes \ul U^{\ul{k}}U_i^{-1} & 0<k_i<p\\ \Theta_i(h)\otimes \ul U^{\ul{k}}U_i^{p-1}&k_i=0\end{cases}\] where $0\le k_1,\dots,k_d<p$ and $h\in H$; here $[k]_q:=1+q+\cdots+q^{k-1}$ is the $q$-analogue of $k$, and we write $\ul U^{\ul k}:=U_1^{k_1}\cdots U_d^{k_d}$. For each $i=1,\dots,d$ and $0<k<p$, we will denote the operator appearing in the first case by \[\al_i^{(k)}:=q^k\Theta_i^\sub{log}(-)+[k]_q\op{id}:H\to H.\] We note that $\al_i^{(k)}$ is an $A$-linear automorphism of $H$: firstly, $[k]_q$ is invertible in $A$ since it is $\equiv k$ mod $q-1$ and $A$ is $(p,q-1)$-adically complete; secondly, $\Theta_i^\sub{log}$ is $[p]_q$-adically quasi-nilpotent and $H$ is $[p]_q$-adically complete.

We are now prepared to prove that the functor in the theorem is well-defined, namely that it preserves $[p]_q$-adic quasi-nilpotence. This holds vacuously in the context of Corollary \ref{corollary_q_Simp} since $[p]_q$-adic quasi-nilpotence is automatic in the presence of Frobenius structures (by Lemma \ref{lemma_phi_implies_xi_nilp}), so for simplicity of notation we will only treat the case $d=1$ (the general case being treated by examining a block matrix built out of $p^{d-1}$ copies of the one below. The above formula for $\nabla_1$ shows that the matrix representing $\nabla_1$ as an element of $\End(H\otimes_{ A^{\bx(1)}}A^\bx)=\End (\bigoplus_{k=0}^{p-1}HU_1^k)=\op{M}_p(\End(H))$ is
\[\begin{pmatrix}
0 & \al_1^{(1)} & 0 & \cdots &0\\
0 & 0 & \al_1^{(2)} & \cdots &0\\
0 & 0 & \ddots & 0 &0\\
0 & 0 & \cdots & \al_1^{(p-2)} &0\\
0 & 0 & \cdots & 0 & \al_1^{(p-1)}\\
\Theta_1 & 0&\cdots &0&0
\end{pmatrix}\]
The $p$-fold composition $\nabla_1^p$ is therefore represented by the diagonal matrix with entries \[\al_1^{(1)}\cdots\al_1^{(p-1)}\Theta_1,\quad \al_1^{(2)}\cdots\al_1^{(p-1)}\Theta_1\al_1^{(1)},\quad\dots,\quad \al_1^{(p-1)}\Theta_1\al_1^{(1)}\cdots\al_1^{(p-2)},\quad\Theta_1\al_1^{(1)}\cdots\al_1^{(p-1)},\] and so $\nabla_1^{pm}$ is represented by the diagonal matrix with entries given by the $m$-fold compositions of these operators. The key is now to recall that $\Theta_1$ is $ A^{\bx(1)}$-linear modulo $[p]_q$, whence $\Theta_1$ and $\Theta_1^\sub{log}=U_1\Theta_1$ commute modulo $[p]_q$; therefore the operators $\al_1^{(1)},\dots,\al_1^{(p-1)},\Theta_1$ are pairwise commuting modulo $[p]_q$ and so $\nabla_1^{pm}$ is congruent modulo $[p]_q$ to the operator \[\bigoplus_{k=0}^{p-1}HU_1^k\to \bigoplus_{k=0}^{p-1}HU_1^k,\qquad\sum_{k=0}^{p-1}h_kU_1^k\mapsto\sum_{k=0}^{p-1}(\al_1^{(1)}\cdots\al_1^{(p-1)})^m\Theta_1^m(h_k) U_1^k.\] But the right side vanishes modulo $[p]_q$ for $m\gg0$, since $\Theta_1$ is $[p]_q$-adically quasi-nilpotent by hypothesis; therefore the same is true of $\nabla$, completing the proof that the functor is well-defined.

We now return to the case of general $d\ge1$ and show that the functor is fully faithful; so let $(H,\Theta)$, $(H',\Theta')$ belong to the domain category and let $f:H\otimes_{ A^{\bx(1)}}A^\bx\to H'\otimes_{ A^{\bx(1)}}A^\bx$ be an $A^\bx$-linear morphism compatible with the corresponding q-connections $\nabla$, $\nabla'$. Write $f|_{H}=\bigoplus_{0\le k_1,\dots,k_d<p}f_{\ul k}(-)U_{\ul k}$, i.e., $f(h)=\bigoplus_{\ul k}f_{\ul k}(h)U_{\ul k}$ for each $h\in H$, where the maps $f_{\ul k}:H\to H'$ are $ A^{\bx(1)}$-linear. Then, given $h\in H$, we have
\begin{align*}
f(\nabla_i(h))&=f(\Theta_i(h)\otimes U_i^{p-1})\\
&=\sum_{\ul k}f_{\ul k}(\Theta_i(h))\otimes U_{\ul k}U_i^{p-1}\\
&=\sum_{\substack{\ul k\\\sub{s.t. }k_i=0}}f_{\ul k}(\Theta_i(h))\otimes U_{\ul k}U_i^{p-1}
+\sum_{\substack{\ul k\\\sub{s.t. }k_i\neq 0}}f_{\ul k}(\Theta_i(h))U_i\otimes U_{\ul k}U_i^{-1}
\end{align*} and similarly
\begin{align*}
\nabla_i'(f(h))&=\nabla_i'(\sum_{\ul k}f_{\ul k}(h)\otimes \ul U^{\ul k})\\
&=\sum_{\substack{\ul k\\\sub{s.t. }k_i=0}}\Theta_i'(f_{\ul k}(h))\otimes U_{\ul k}U_i^{p-1}
+\sum_{\substack{\ul k\\\sub{s.t. }k_i\neq0}}\al_i^{(k_i)}(f_{\ul k}(h))\otimes U_{\ul k}U_i^{-1}.
\end{align*}

But $f$ commutes with each $\nabla_i$ so we deduce at once that $f_{\ul k}$ commutes with $\Theta_i$ whenever $k_i=0$; in particular $f_0:H\to H'$ commutes with $\Theta_1,\dots,\Theta_d$ and hence is a morphism of $ A^{\bx(1)}$-modules with q-Higgs fields. We similarly deduce that $U_if_{\ul k}\Theta_i=\al_i^{(k_i)}f_{\ul k}$ whenever $k_i\neq0$. Fixing a non-zero multi-index $\ul k$ (i.e., $k_i\neq0$ for some $i$, which we fix for the remainder of the paragraph) our goal is to show that $f_{\ul k}=0$; indeed, this will imply that $f$ is simply the base change of the morphism $f_0$. By the $ A^{\bx(1)}$-linearity of $f_{\ul k}$ we may rewrite the previous relation as $f_{\ul k}\circ (U_i\Theta_i)=\al_i^{(k)}\circ f_{\ul k}$, whence we deduce $f_{\ul k}\circ(U_i\Theta_i)^m=\al_i^{(k_i)\,m}\circ f_{\ul k}$ for any $m\ge0$; the $[p]_q$-adic quasi-nilpotence of $U_i\Theta_i$ now implies that for any $h\in H$ and $m\ge0$ there exists $m'\gg0$ such that $\al_i^{(k)\,m'}(f_{\ul k}(h))\in[p]_q^{m}H'$. But we have explained above that $\al_i^{(k)}$ is an $A$-linear automorphism of $H'$, so it follows that $f_{\ul k}(h)\in [p]_q^mH'$; since $h\in H$ and $m\ge0$ were arbitrary (and $H'$ is assumed to be $[p]_q$-adically separated), we have indeed shown that $f_{\ul k}=0$, as desired.

Next we establish part (i) of the theorem, namely the relation between the q-Higgs and q-de Rham complexes; so let $(H,\Theta)$ be in the domain category, with image $(N,\nabla)$ under the base change functor $(F,F_\Omega)^*$. Write $F_H:H\into N=H\otimes_{ A^{\bx(1)},F}A^\bx$, $h\mapsto h\otimes 1$ for the canonical inclusion; then it follows formally from the fact that $(F,F_\Omega)$ is a morphism of differential rings that $F_H$ extends from an embedding of the q-Higgs complex $H\otimes_{ A^{\bx(1)}}\q\Omega^\blob_{ A^{\bx(1)}/A}$ into the q-de Rham complex:

\[\xymatrix{
H\ar@{^(->}[d]_{F_H}\ar[r]^-{\Theta}&N\otimes_{ A^{\bx(1)}}\q\Omega^1_{ A^{\bx(1)}/A}\ar@{^(->}[d]_{F_H\otimes F_\Omega}\ar[r]^-{\Theta}&H\otimes_{A^{\square(1)}}\q\Omega^2_{ A^{\bx(1)}/A}\ar@{^(->}[d]_{F_H\otimes F_\Omega}\ar[r]^-\Theta&\cdots\\
N\ar[r]_-{\nabla} &N\otimes_{A^\bx}\q\Omega^1_{A^\bx/A}\ar[r]_-{\nabla} &N\otimes_{A^\bx}\q\Omega^2_{A^\bx/A}\ar[r]_-\nabla&\cdots
}\]
(by a slight abuse of notation we continue to write $F_\Omega:\q\Omega^n_{ A^{\bx(1)}/A}\to \q\Omega^n_{A^\bx/A}$ for the map induced by $F_\Omega$ on the higher wedge powers, characterised by $f\dlog(U_{i_1})\wedge\cdots\wedge\dlog(U_{i_n})\mapsto F(f)\dlog(U_{i_1})\wedge\cdots\wedge\dlog(U_{i_n})$). We want to prove that this canonical inclusion is a quasi-isomorphism. In fact, as in Proposition \ref{proposition_reps_vs_q_connections}, the q-de Rham complex identifies with the Koszul complex $K(\nabla_1^\sub{log},\dots,\nabla_d^\sub{log};N)$; similarly, the q-Higgs complex identifies with $K(\Theta_1^\sub{log},\dots,\Theta_d^\sub{log};H)$, and the above inclusion is the canonical inclusion of Koszul complexes induced by $F_H$, which makes sense since $F_H\Theta_i^\sub{log}=\nabla_i^\sub{log}F_H$ for each $i=1,\dots,n$. But this inclusion of Koszul complexes is split, with complement \[\bigoplus_{\substack{0\le k_1,\dots,k_d<p\\\sub{not all zero}}}K(\nabla_1^\sub{log},\dots,\nabla_d^\sub{log};H\ul U^{\ul k}),\] so it is enough to check that each of these latter Koszul complexes is acyclic: picking $i$ for which $k_i\neq 0$, then the action of $\nabla_i^\sub{log}$ on $H\ul U^{\ul k}\subseteq N$ is given by $\al_i^{(k_i)}$ on $H$, which we already know is an isomorphism and which therefore implies the desired acyclicity.
\end{proof}

It remains to prove the surjectivity part of Theorem \ref{theorem_Simpsons}; for this we will need the following rigidity result:

\begin{lemma}\label{lemma_injective_rigidity}
Let $(H,\Theta)$ be a $ A^{\bx(1)}$-module with $[p]_q$-adically quasi-nilpotent q-Higgs field, $(N,\nabla)$ an $A^\bx$-module with $[p]_q$-adically quasi-nilpotent q-connection, and $f:H\otimes_{ A^{\bx(1)},F}A^\bx\to N$ a morphism of $A^\bx$-modules with q-connection; assume also that the module $H$ is killed by a power of $[p]_q$. If the restriction $f|_H:H\to N$ is injective, then so is $f$.
\end{lemma}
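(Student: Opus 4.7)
The plan is to write $x = \sum_{\ul k\in\{0,\dots,p-1\}^d} h_{\ul k}\otimes \ul U^{\ul k}\in\ker f$ and show each $h_{\ul k}=0$, by applying $\nabla_i^\sub{log}$ repeatedly to $f(x)=0$ and using a Vandermonde argument in the scalars $[k]_q\in A$, together with an induction on a $\Theta$-depth filtration of $H$. The crucial input is that the hypotheses force the operators $\Theta_i^\sub{log}$ to be \emph{locally nilpotent} on $H$: since $H$ is killed by some $[p]_q^n$, iterating the $[p]_q$-adic quasi-nilpotence of $\Theta_i^\sub{log}$ yields $(\Theta_i^\sub{log})^m(h)\in [p]_q^n H=0$ for $m\gg0$ depending on $h$.

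First I would establish the key commutation identity in $N$: by the q-Leibniz rule, $\nabla_i^\sub{log}\circ L_{\ul U^{\ul k}} = L_{\ul U^{\ul k}}\circ (q^{k_i}\nabla_i^\sub{log}+[k_i]_q)$, where $L_{\ul U^{\ul k}}$ is right multiplication by $\ul U^{\ul k}$. Iterating, and using horizontality of $f$ (so that $\nabla_i^\sub{log}\circ f|_H = f|_H\circ \Theta_i^\sub{log}$), one obtains for every multi-index $\ul j\in\bb Z_{\ge0}^d$ the identity
\[
\sum_{\ul k} f|_H\Bigl(\prod_i \al_i^{(k_i)\,j_i}(h_{\ul k})\Bigr)\,\ul U^{\ul k}\;=\;0\quad\text{in } N,
\]
where $\al_i^{(k)}=q^k\Theta_i^\sub{log}+[k]_q$ are the operators already appearing in the proof of Theorem \ref{theorem_Simpsons}.

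Next I would introduce the exhaustive, increasing filtration $F^m H := \{h\in H:\Theta_1^{\sub{log}\,\ell_1}\cdots\Theta_d^{\sub{log}\,\ell_d}(h)=0\ \forall\,|\ul\ell|\ge m\}$, note that each $\Theta_j^\sub{log}$ sends $F^m H$ into $F^{m-1}H$, and set $F^m M := F^m H\otimes_{A^{\bx(1)},F}A^\bx$. Since $F^m H/F^{m-1}H$ is annihilated by every $\Theta_i^\sub{log}$, the operator $\al_i^{(k_i)}$ reduces modulo $F^{m-1}H$ to multiplication by $[k_i]_q$. The goal becomes the inductive claim $\ker f\cap F^m M = 0$ for every $m$; since any $x\in\ker f$ has finite support and hence lies in some $F^m M$, this implies $\ker f=0$. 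The base case $m=0$ is trivial.

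For the inductive step, assuming $\ker f\cap F^{m-1}M=0$, reducing the identity above modulo $F^{m-1}H$ gives $\sum_{\ul k}\prod_i[k_i]_q^{j_i} f|_H(h_{\ul k})\,\ul U^{\ul k}\in f|_H(F^{m-1}H)\cdot A^\bx$ for all $\ul j$. The one-variable Vandermonde matrix $([k]_q^j)_{0\le j,k\le p-1}$ has determinant $\prod_{k>k'}q^{k'}[k-k']_q$, which is a unit in $A$ because each $[k-k']_q\equiv k-k'\pmod{q-1}$ is a unit (as $0<k-k'<p$ and $A$ is $(p,q-1)$-adically complete); the $d$-fold tensor Vandermonde is therefore also invertible. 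Taking $A$-linear combinations yields, for each fixed $\ul k^*$, the conclusion that $f|_H(h_{\ul k^*})\ul U^{\ul k^*}$ lies in $f|_H(F^{m-1}H)\cdot A^\bx$. Writing this as $f\bigl(\sum_j h'_j\otimes g_j\bigr)$ with $h'_j\in F^{m-1}H$, the element $u_{\ul k^*} := h_{\ul k^*}\otimes \ul U^{\ul k^*}-\sum_j h'_j\otimes g_j$ lies in $\ker f\cap F^m M$ and differs from $h_{\ul k^*}\otimes\ul U^{\ul k^*}$ by an element of $F^{m-1}M$. Summing over $\ul k^*$ and comparing with $x$ produces an element of $\ker f\cap F^{m-1}M$, which vanishes by induction; unpacking the relations gives $h_{\ul k}\in F^{m-1}H$ for all $\ul k$, whence $x\in F^{m-1}M\cap\ker f=0$.

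The principal technical difficulty lies in the last step: carefully bookkeeping the "correction terms" produced by the Vandermonde inversion and confirming that the accumulated corrections, subtracted from $x$, really give an element of $\ker f\cap F^{m-1}M$ so that the inductive hypothesis applies. All the rest (the commutation relation, the Vandermonde determinant being a unit, the construction of the filtration) is essentially formal.
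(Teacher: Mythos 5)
Your key computational inputs are all correct and are exactly the ones the paper uses: the identity $\nabla_i^\sub{log}(f(h)\ul U^{\ul k})=f(\al_i^{(k_i)}(h))\ul U^{\ul k}$ (with $\al_i^{(0)}=\Theta_i^\sub{log}$), the local nilpotence of the $\Theta_i^\sub{log}$ coming from quasi-nilpotence plus $[p]_q^nH=0$, and the invertibility of $[k]_q-[k']_q=q^{k'}[k-k']_q$ in $A$. But the inductive scheme built on the depth filtration does not close. The Vandermonde inversion produces, for each $\ul k^*$, an element $y_{\ul k^*}=h_{\ul k^*}\otimes\ul U^{\ul k^*}+z_{\ul k^*}\in\ker f$ with $z_{\ul k^*}\in F^{m-1}M$; however $y_{\ul k^*}$ itself still has depth $m$, so the inductive hypothesis $\ker f\cap F^{m-1}M=0$ does not apply to it, and summing over $\ul k^*$ only yields $\sum_{\ul k^*}z_{\ul k^*}=0$, not the asserted $h_{\ul k}\in F^{m-1}H$. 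Trying to probe $h_{\ul k^*}$ further by applying more $\nabla_j^\sub{log}$ fails precisely in the slots where $k_j^*\neq0$, because there the operator acting on that component is $\al_j^{(k_j^*)}$, an automorphism, which gives no new information; and dividing by $\ul U^{\ul k^*}$ to move $h_{\ul k^*}$ into the $\ul 0$-slot destroys membership in the filtration, because $F^mH$ is only an $A$-submodule (the $\Theta_i^\sub{log}$ are $A$-linear but only $A^{\bx(1)}$-linear modulo $[p]_q$, which is not enough when $[p]_q^2\nmid 0$ on $H$), so the objects $F^mH\otimes_{A^{\bx(1)},F}A^\bx$ and $f|_H(F^{m-1}H)\cdot A^\bx$ are not the well-behaved submodules your bookkeeping requires. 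So the step you flag as ``the principal technical difficulty'' is a genuine gap, not a routine verification.

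The paper avoids all of this with a minimal-counterexample argument that needs no filtration and no Vandermonde: take a nonzero relation $\sum_{\ul k}f(h_{\ul k})\ul U^{\ul k}=0$ of minimal support; after dividing by powers of the (invertible) $U_i$ one may assume that for every $i$ some nonzero term has $k_i=0$, and injectivity of $f|_H$ forces some $i$ to also have a nonzero term with $k_i\neq0$. Applying $(\nabla_i^\sub{log})^m$ for $m\gg0$ annihilates the entire $k_i=0$ block at once (local nilpotence of $\Theta_i^\sub{log}$), leaving a strictly shorter relation with coefficients $(\al_i^{(k_i)})^m(h_{\ul k})$; minimality forces these to vanish, and since each $\al_i^{(k_i)}$ with $k_i\neq0$ is an automorphism, the corresponding $h_{\ul k}$ vanish, a contradiction. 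I would recommend switching to this extremal argument rather than attempting to repair the filtration induction.
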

\begin{proof}
First we note that, in terms of logarithmic coordinates, the compatibility of $f$ with the q-connections on each side amounts to the following explicit formulae:
\[\nabla_i^\sub{log}(f(h)\ul U^{\ul k})=\begin{cases}f(\al_i^{(k_i)}(h))\ul U^{\ul k} & 0<k_i<p\\f(\Theta_i^\sub{log}(h))\ul U^{\ul k} & k_i=0\end{cases}\] for $0\le k_1,\dots,k_d<p$ and $h\in H$. Here the $\al_i^{(k)}$ are as defined in the first part of the proof of Theorem \ref{theorem_Simpsons}.

Now, among all linear relations of the form $\sum_{\ul k}f(h_{\ul k})\ul U^{\ul k}=0$ in $N$, for $h_{\ul k}\in H$, let us consider one which minimises the cardinality of the support $\{\ul k:h_{\ul k}\neq 0\}$. Furthermore, for each $i$ we may assume that $h_{\ul k}\neq 0$ for some $\ul k$ satisfying $k_i=0$: otherwise we may just divide out the relation by a positive power of $U_i$. But it is also true that the linear relation is not simply given by $f(h_{\ul 0})=0$, as $f|_H$ is injective; therefore there exists some $i$ (which we now fix) for which there is a multi-index $\ul k$ such that $k_i\neq 0$ and $h_{\ul k}\neq 0$. In conclusion, we arrive at a linear relation $\sum_{\ul k}f(h_{\ul k})\ul U^{\ul k}=0$ of shortest possible length and a value of $i$ such that, rewriting the relation as
\[\sum_{\substack{\ul k\\\sub{s.t.}k_i=0}}f(h_{\ul k})\ul U^{\ul k}+\sum_{\substack{\ul k\\\sub{s.t.}k_i\neq0}}f(h_{\ul k})\ul U^{\ul k}=0,\] each summation is non-empty (hence strictly shorter).

Applying $(\nabla_i^\sub{log})^m$ and using the formulae from the first paragraph, we deduce that
\[\sum_{\substack{\ul k\\\sub{s.t.}k_i=0}}f((\Theta_i^\sub{log})^m(h_{\ul k}))\ul U^{\ul k}+\sum_{\substack{\ul k\\\sub{s.t.}k_i\neq0}}f((\al_i^{(k_i)})^m(h_{\ul k}))\ul U^{\ul k}=0\]
By the hypotheses (namely, $\Theta_i^\sub{log}$ is $[p]_q$-adically quasi-nilpotent and $H$ is killed by a power of $[p]_q$) we may choose $m\gg0$ such that all terms in the first summation vanish. We therefore obtain a strictly shorter relation $\sum_{\ul k\sub{ s.t. }k_i\neq0}f((\al_i^{(k_i)})^m(h_{\ul k}))\ul U^{\ul k}=0$ and so deduce that $(\al_i^{(k_i)})^m(h_{\ul k})=0$ for all multi-indexes $\ul k$ appearing in this sum; but each $(\al_i^{(k_i)})^m$ is an automorphism (as in the first part of the proof of Theorem \ref{theorem_Simpsons}), so in fact these $h_{\ul k}$ all vanish. But this contradicts the choice of our linear relation and so completes the proof.
\end{proof}

We can now complete the proof of Theorem \ref{theorem_Simpsons}:

\begin{proof}[Proof of Theorem \ref{theorem_Simpsons}, part II]
We now turn to the sujectivity assertion, so let $(N,\nabla)$ belong to the codomain category and assume that $N$ is $[p]_q$-torsion-free and there exists a morphism of $A^\bx$-modules with q-connection $\phi_N:\phi^*N\to N$ such that $\op{coker}\phi_N$ is killed by a power of $[p]_q$. Our goal is to construct an object $(H,\Theta)$ in the domain category such that $H\otimes_{ A^{\bx(1)},F}A^\bx=N$; the fully faithfulness which we have already proved shows that $H$ depends only on $N$ itself, but we will use the Frobenius $\phi_N$ in the construction.

We first introduce filtrations on $W^*N=N\otimes_{A^\bx,W} A^{\bx(1)}$ and on $\phi^*N=N\otimes_{A^\bx,\phi}A^\bx=W^*N\otimes_{ A^{\bx(1)},F}A^\bx$ by pulling back the $[p]_q$-adic filtration on $N$ along $\phi_N$, i.e., \[\Fil^j(W^*N)=\{h\in W^*N:\phi_N(h\otimes1)\in[p]_q^jN\}\] and \[\Fil^j(\phi^*N)=\{n\in \phi^*N:\phi_N(n)\in[p]_q^jN\}.\] Since the submodules $[p]_q^jN\subseteq N$ are clearly preserved by the $\nabla_i^\sub{log}$ and $\phi_N:(\phi^*N,\phi^*\nabla)\to(N,\nabla)$ is a morphism of $A^\bx$-modules with q-connection, the submodules $\Fil^j(\phi^*N)$ are preserved by the $\nabla_i^\sub{log}$. Recall moreover that the $ A^{\bx(1)}$-module $W^*N$ may be equipped with a q-Higgs field $\Theta$, by base changing the q-connection on $N$ along the morphism of differential rings $(W,[p]_q W_\Omega)$ as in diagram (\ref{eqn_WF0}); since base changing this further along $(F,F_\Omega)$ gives the q-connection $\phi^*\nabla$ on $\phi^*N$, the explicit formula (\ref{eqn_Higg_to_conn}) shows that the coordinates of $\Theta$ are given by \[\Theta_i^\sub{log}=(\phi^*\nabla)_i^\sub{log}|_{W^*N},\] where we view $W^*N$ as a submodule of $\phi^*N=W^*N\otimes_{A^{\bx(1)},F}A^\bx$ via the canonical inclusion. Intersecting $\Fil^j(\phi^*N)$ along this canonical inclusion gives $\Fil^j(W^*N)$ by definition, and so we conclude that $\Fil^j(W^*N)$ is preserved by the $\Theta_i^\sub{log}$; in short, each $\Fil^j(W^*N)$ is a sub q-Higgs module of $W^*N$. Furthermore, the proof of Lemma \ref{lemma_phi_implies_xi_nilp}(i) shows that $\Theta_i^\sub{log}(W^*N)\subseteq[p]_q W^*N$

The key to the proof is establishing the following claim:\footnote{This claim is not merely a technical step in the proof of the theorem, but of independent interest: namely, it states that the Nygaard-style filtration $\Fil^j(\phi^*N)$ on the module with q-connection $\phi^*N$ is compatible with its descent to the module with q-Higgs field $W^*N$.} the canonical map \[\Fil^j(W^*N)\otimes_{ A^{\bx(1)},F}A^\bx\To \Fil^j(\phi^*N)\] is an isomorphism of $A^\bx$-modules for each $j\in\bb Z$. Before proving this claim we explain how it completes the proof. Let $b\gg0$ be large enough so that $\phi_N(\phi^*N)\supseteq[p]_q^bN$; then it is clear from the definition of the filtration that $\phi_N$ restricts to an isomorphism $\phi_N:\Fil^b(\phi^*N)\isoto [p]_q^bN$. The claim allows us to rewrite the domain of this latter map as $\Fil^b(W^*N)\otimes_{ A^{\bx(1)},F}A^\bx,$ whence \begin{equation}\Fil^b(W^*N)\otimes_{A^{\bx(1)},F}A\isoto [p]_q^bN\cong N,\label{eqn_descent_of_filtration}\end{equation} thereby writing $N$ as the base change of a q-Higgs module along $(F,F_\Omega)$, as desired.

It remains to prove the claim, which we do by induction on $j$ noting that it is trivial when $j\le0$ as then $\Fil^j(W^*N)=W^*N$ and $\Fil^j(\phi^*N)=\phi^*N$. Assuming that the claim is true for some $j\ge0$, we consider the diagram
\[\xymatrix{
0\ar[r] & \Fil^{j+1}(W^*N)\otimes_{ A^{\bx(1)},F}A^\bx \ar[r]\ar[d]& \Fil^{j}(W^*N)\otimes_{ A^{\bx(1)},F}A^\bx\ar[r]\ar[d]_{\cong} &  \op{gr}^{j}(W^*N)\otimes_{ A^{\bx(1)},F}A^\bx\ar[r]\ar[d]&0\\
0\ar[r] & \Fil^{j+1}(\phi^*N) \ar[r]& \Fil^{j}(\phi^*N)\ar[r] &  \op{gr}^{j}(\phi^*N)\ar[r]&0
}\]
 and so deduce that it is enough to prove injectivity of the right vertical arrow. Since the filtration on $W^*N$ is the intersection of the filtration on $\phi^*N$ along the canonical inclusion $W^*N\into\phi^*N$, the induced map on gradeds $\op{gr}^j(W^*N)\to \op{gr}^j(\phi^*N)$ is certainly injective. But now we note that the conditions of Lemma \ref{lemma_injective_rigidity} are all satisfied: the q-Higgs field $\Theta$ on $W^*N$ induces a q-Higgs field on $\op{gr}^j(W^*N)$, and similarly the q-connection $\phi^*\nabla$ on $\phi^*N$ induces a q-connection on $\op{gr}^j(\phi^*N)$; the morphism $\op{gr}^{j}(W^*N)\otimes_{ A^{\bx(1)},F}A^\bx\to \op{gr}^{j}(\phi^*N)$ is compatible with the q-connections on each side because the same is formally true of the identification $W^*N\otimes_{ A^{\bx(1)},F}A^\bx=\phi^*N$; finally, $\op{gr}^j(\phi^*N)$ is killed by $[p]_q$ by definition. So that lemma implies that the right vertical arrow is indeed injective, thereby completing the proof of the claim and hence of part (ii) of the theorem.
%
% both $\op{gr}^j(W^*N)$ and $\op{gr}^j(\phi^*N)$ may be embedded via $\phi_N$ into $\tilde\xi^jN/\tilde\xi^{j+1}N$, it is equivalent to prove the following: the $ A^{\bx(1)}$-submodule $ H':=\phi_N(\op{gr}^i(W^*N))$ of the $A^\bx$-module $N':=\tilde\xi^iN/\tilde\xi^{i+1}N$ has the property that its linearisation \[H'\otimes_{ A^{\bx(1)},F}A^\bx\To N'\] is still injective; or, in other words, the sum $\sum_{0\le k_1,\dots,k_d<p} H'U_1^{k_1}\cdots U_d^{k_d}\subseteq  N'$ is direct.
\end{proof}

To obtain a clear q-Simpson correspondence from the theorem we will consider Frobenius structures on q-Higgs bundles. So let $\phi:=W\circ F$ be the induced Frobenius endomorphism of $ A^{\bx(1)}$; a q-Higgs field $\Theta$ on a $ A^{\bx(1)}$-module $H$ induces a q-Higgs field $\phi^*\Theta$ on the Frobenius pullback $\phi^*H=H\otimes_{ A^{\bx(1)},\phi} A^{\bx(1)}$, by base changing along the morphism of differential rings
\[\xymatrix{
 A^{\bx(1)}\ar[r]^-{d_q}\ar[d]_\phi & \Omega^1_{ A^{\bx(1)}/A}\ar[d]^{\phi_\Omega:\sum_{i=1}^df_i\dlog(U_i)\mapsto[p]_q \sum_{i=1}^d\phi(f_i)\dlog(U_i)}\\
 A^{\bx(1)}\ar[r]^-{d_q} &\Omega^1_{ A^{\bx(1)}/A}
}\]
(this is simply the composition $(W,[p]_qW_\Omega)\circ(F,F_\Omega)$ of the two morphisms from (\ref{eqn_WF0})). We can then formulate the obvious analogue for q-Higgs modules which we have already considered for modules with q-connection:

\begin{definition}\label{definition_phi_on_Higgs}
Let $\textrm{qHIG}(A^\bx,\phi)$ denote the category of pairs $(H,\phi_H)$, where $H\in \textrm{qHIG}( A^{\bx(1)})$ and $\phi_H:(\phi^*H)[\tfrac1{[p]_q}]\to H[\tfrac1{[p]_q}]$ is an isomorphism of modules with q-Higgs field over $ A^{\bx(1)}[\tfrac1{[p]_q}]$.
\end{definition}

In order for the previous definition to be reasonable we need $H$ to be included in $H[\tfrac1{[p]_q}]$; therefore we will assume in Lemma \ref{lemma_phi_implies_xi_nilp} and Corollary \ref{corollary_q_Simp} that $[p]_q$ is a non-zero-divisor of $ A^{\bx(1)}$. This assumption could be avoided by restricting to modules with an effective Frobenius (i.e., already defined as $\phi_H:\phi^*H\to H$, rather than inverting $[\tfrac1{[p]_q}]$), but in any case it is satisfied in our cases of interest, especially that of Corollary \ref{corollary_descent_of_reps_to_phi_twist}.

In the presence of a Frobenius structure, $[p]_q$-adic quasi-nilpotence is automatic:

\begin{lemma}\label{lemma_phi_implies_xi_nilp}
Assume in parts (ii) and (iii) that $[p]_q$ is a non-zero-divisor of $ A^{\bx(1)}$.
\begin{enumerate}
\item Given a module with flat q-connection $(N,\nabla)$ over $A^\bx$, then the q-Higgs field on its base change along $(W,[p]_qW_\Omega)$ is $[p]_q$-adically quasi-nilpotent.
\item Given $(H,\phi_H)\in \textrm{\rm qHIG}( A^{\bx(1)},\phi)$, then the q-Higgs field on $H$ is $[p]_q$-adically quasi-nilpotent.
\item Assume that $A^\bx$ is generated as a module over $ A^{\bx(1)}$ (via the morphism $F$) by the elements $U_1^{k_1}\cdots U_d^{k_1}$ for $0\le k_1,\dots,k_d<p$ (e.g., this holds if our set-up arises as in Remark \ref{remark_q_via_deformation}); then, given $(N,\phi_N)\in\textrm{\rm qMIC}(A^\bx,\phi)$, the q-connection on $N$ is $[p]_q$-adically quasi-nilpotent.
\end{enumerate}
\end{lemma}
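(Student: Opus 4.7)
The plan is to treat the three parts in sequence: (i) is a direct calculation, and (ii) uses (i) together with the Frobenius horizontality via an iteration argument, while (iii) routes through the q-Higgs side and invokes (ii).

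For (i), the argument will be a direct calculation. Unwinding the definition of base change along the morphism of differential rings $(W,[p]_qW_\Omega)$, the factor of $[p]_q$ on the map between $\Omega^1$'s appears explicitly in the logarithmic coordinates of $\Theta$, yielding $\Theta_i^\sub{log}(W^*N)\subseteq[p]_qW^*N$ (this was already noted in passing at the end of part~I of the proof of Theorem~\ref{theorem_Simpsons}). Since $U_i$ is a unit, the non-logarithmic coordinate $\Theta_i=U_i^{-1}\Theta_i^\sub{log}$ satisfies the same inclusion, so $\Theta_i(h)\in[p]_qW^*N$ for every $h\in W^*N$, giving $[p]_q$-adic quasi-nilpotence trivially with $m=1$.

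For (ii), I would apply part~(i) to the q-connection on $N:=F^*H$ to deduce that $\phi^*H=W^*F^*H$ carries a q-Higgs field whose logarithmic coordinates satisfy $(\phi^*\Theta)_i^\sub{log}(\phi^*H)\subseteq[p]_q\phi^*H$. After clearing denominators (replacing $\phi_H$ by $\tilde\phi_H:=[p]_q^b\phi_H:\phi^*H\to H$ with $[p]_q^c H\subseteq\tilde\phi_H(\phi^*H)$ for suitable $b,c\ge 0$), horizontality gives
\[\Theta_i^\sub{log}\bigl(\tilde\phi_H(\phi^*H)\bigr)=\tilde\phi_H\bigl((\phi^*\Theta)_i^\sub{log}(\phi^*H)\bigr)\subseteq\tilde\phi_H\bigl([p]_q\phi^*H\bigr)=[p]_q\tilde\phi_H(\phi^*H).\]
Since $\Theta_i^\sub{log}$ is $A$-linear and $[p]_q\in A$, this inclusion iterates to $\Theta_i^{\sub{log},m}(\tilde\phi_H(\phi^*H))\subseteq[p]_q^m\tilde\phi_H(\phi^*H)\subseteq[p]_q^m H$. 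For arbitrary $h\in H$, writing $[p]_q^c h=\tilde\phi_H(x)$ and applying this yields $[p]_q^c\Theta_i^{\sub{log},m}(h)\in[p]_q^m H$; the $[p]_q$-torsion-freeness of $H$ then forces $\Theta_i^{\sub{log},m}(h)\in[p]_q^{m-c}H\subseteq[p]_qH$ as soon as $m>c$. The remark following Definition~\ref{definition_nilpotent_connection} transfers this to quasi-nilpotence of $\Theta_i$.

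For (iii), the plan is to route through the q-Higgs side. Given $(N,\phi_N)\in\op{qMIC}(A^\bx,\phi)$, form $W^*N\in\op{qHIG}(A^{\bx(1)})$ as in part~(i). Using the identifications $\phi^*N=F^*W^*N$ (from $\phi=F\circ W$ on $A^\bx$) and $W^*\phi^*N=\phi^{(1)*}W^*N$ (from $\phi^{(1)}=W\circ F$ on $A^{\bx(1)}$), the Frobenius $\phi_N$ pulls back under $W^*$ to a Frobenius $\phi_{W^*N}:\phi^{(1)*}W^*N[\tfrac{1}{[p]_q}]\isoto W^*N[\tfrac{1}{[p]_q}]$, upgrading $W^*N$ to an object of $\op{qHIG}(A^{\bx(1)},\phi)$. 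Part~(ii) then gives $[p]_q$-adic quasi-nilpotence of the q-Higgs field on $W^*N$. The block-matrix calculation from part~I of the proof of Theorem~\ref{theorem_Simpsons}, which is precisely where the hypothesis that $A^\bx$ is free over $A^{\bx(1)}$ on the monomials $U_1^{k_1}\cdots U_d^{k_d}$ ($0\le k_i<p$) enters, shows that $(F,F_\Omega)^*$ preserves $[p]_q$-adic quasi-nilpotence; hence the q-connection on $\phi^*N=F^*W^*N$ is $[p]_q$-adically quasi-nilpotent. Horizontality of $\phi_N$ combined with the same iteration scheme as in part~(ii) then transfers the quasi-nilpotence from $\phi^*N$ to $N$. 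The main obstacle will be ensuring that the key inclusion $\Theta_i^\sub{log}(\tilde\phi_H(\phi^*H))\subseteq[p]_q\tilde\phi_H(\phi^*H)$ holds inside the Frobenius-image submodule rather than merely inside $H$, since only then does the factor of $[p]_q$ multiply with each iterate; this stability is ensured by horizontality combined with part~(i), but it forces one to track denominators in $\phi_H$ carefully throughout.
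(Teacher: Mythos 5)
Parts (i) and (ii) of your proposal are correct and follow essentially the paper's own route. In (i) the inclusion $\Theta_i^{\sub{log}}(W^*N)\subseteq[p]_qW^*N$ is exactly the paper's observation; in (ii) you derive the key inclusion $(\phi^*\Theta)_i^{\sub{log}}(\phi^*H)\subseteq[p]_q\phi^*H$ by applying (i) to $F^*H$ rather than reading it off the explicit formula for $(\phi^*\Theta)_i^{\sub{log}}$ as the paper does, but both are fine, and your iteration with $\tilde\phi_H$ and the sandwich $[p]_q^cH\subseteq\tilde\phi_H(\phi^*H)$ is the same argument as the paper's with $a\le b$.

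Part (iii) contains a genuine error at the step ``upgrading $W^*N$ to an object of $\op{qHIG}(A^{\bx(1)},\phi)$''. By construction of the Frobenius twist, $W\colon A^\bx\to A^{\bx(1)}$ lies over $\phi\colon A\to A$, so $W([p]_q)=\phi([p]_q)=[p]_{q^p}$; consequently $W^*\phi_N$ is an isomorphism only after inverting $[p]_{q^p}$, not $[p]_q$, and these localisations are not comparable. For instance, when $A=A_\inf$ one has $[p]_q=\tilde\xi$ and $[p]_{q^p}=\phi(\tilde\xi)$, whose image under $\tilde\theta$ in $\roi=A_\inf/\tilde\xi$ is $p$, which is not nilpotent; so $[p]_{q^p}$ does not lie in the radical of $([p]_q)$. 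Hence $(W^*N,W^*\phi_N)$ is not an object of $\op{qHIG}(A^{\bx(1)},\phi)$ and part (ii) cannot be invoked for it. (The genuine Frobenius descent of $N$ produced by Corollary~\ref{corollary_q_Simp} is the Nygaard-type lattice $\Fil^b(W^*N)$, not $W^*N$ itself --- and that corollary depends on the present lemma, so appealing to it here would in any case be circular.)

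The error is easily excised, because the only thing you extract from (ii) is quasi-nilpotence of the q-Higgs field on $W^*N$, and part (i) --- which needs no Frobenius input --- already gives the stronger inclusion $\Theta_i^{\sub{log}}(W^*N)\subseteq[p]_qW^*N$. Feeding that into the block-matrix computation from the proof of Theorem~\ref{theorem_Simpsons} yields $(\phi^*\nabla)_i^p(\phi^*N)\subseteq[p]_q\,\phi^*N$, after which your horizontality-plus-iteration step transfers quasi-nilpotence to $N$ exactly as in (ii). With that repair, your (iii) becomes a valid variant of the paper's argument, which instead verifies $(\phi^*\nabla)_i^p(\phi^*N)\subseteq[p]_q\,\phi^*N$ directly from the identity $d_{q,i}^{p}(U_1^{k_1}\cdots U_d^{k_d})=0$. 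One small loss of generality in your version: the block-matrix decomposition uses that $A^\bx$ is \emph{free} over $A^{\bx(1)}$ on the monomials $U_1^{k_1}\cdots U_d^{k_d}$, whereas the stated hypothesis of (iii) and the paper's computation only require these monomials to generate.
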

\begin{proof}
(i): Let $(N,\nabla)$ be an $A^\bx$-module with q-connection and let $(H,\Theta)$ be the q-Higgs field over $ A^{\bx(1)}$ obtained by base changing it along $(W,[p]_qW_\Omega)$. Identifying $H=N\otimes_{A^\bx,W} A^{\bx(1)}$ with $N\otimes_{A,\phi}A$, the components of the q-Higgs field on $H$ are simply given by \[\Theta_i^\sub{log}(n\otimes\lambda)=\nabla_i^\sub{log}(n)\otimes[p]_q\lambda\] for $n\in N$ and $\lambda\in A$. In particular, $\Theta_i^\sub{log}(H)\subseteq[p]_q H$, which is a much stronger condition on $\Theta$ than $[p]_q$-adic quasi-nilpotence.

(ii): The q-Higgs field $\phi^*\Theta$ on $\phi^*H$ has logarithmic coordinates given by \[(\phi^*\Theta)_i^\sub{log}(h\otimes f)=\Theta_i^\sub{log}(h)\otimes[p]_q\gamma_i(f)+h\otimes[p]_q d_{q^p,i}^\sub{log}(f),\] for $h\in H$ and $f\in A^{\bx(1)}$; in particular, $(\phi^*\Theta)_i^\sub{log}$ has image contained in $[p]_q \phi^*H$ (and so is $[p]_q$-adically quasi-nilpotent, though this is not relevant at the moment). By iterating we see that $((\phi^*\Theta)_i^\sub{log})^m(\phi^*H)\subseteq[p]_q^m \phi^*H$ for any $m\ge1$, whence applying $\phi_H$ shows that \[(\Theta_i^\sub{log})^m(\phi_H(\phi^*H))\subseteq[p]_q^m\phi_H(\phi^*H).\] Now pick integers $a\le b$ satisfying $[p]_q^bH\subseteq\phi_H(\phi^*H)\subseteq [p]_q^aH$, and apply the previous inclusion with $m=b-a+1$ to deduce that $(\Theta_i^\sub{log})^{b-a+1}(H)\subseteq[p]_qH$, as desired.

(iii): The q-connection on $\phi^*N$ has non-logarithmic components given by \[(\phi^*\nabla)_{i}(n\otimes f)=\nabla_{i}(n)\otimes [p]_q \phi(U_i)U_i^{-1}\gamma_i(f)+ n\otimes d_{q,i}(f)\] for $n\in N$ and $f\in A^\bx$. We will check in a moment that $d_{q,i}^{p}(A^\bx)\subseteq A^\bx[p]_q$ whence $(\phi^*\nabla)_i^p(\phi^*N)\subseteq (\phi^*N)[p]_q$. As in part (ii), we now pick integers $a\le b$ such that $[p]_q^bN\subseteq\phi_N(\phi^*N)\subseteq [p]_q^aN$ and deduce that $\nabla_i^{p(b-a+1)}(N)\subseteq[p]_qN$.

It just remains to check that $d_{q,i}^{p}(A^\bx)\subseteq A^\bx[p]_q$. Using the extra hypothesis on the structure of $F: A^{\bx(1)}\to A^\bx$ and the fact that $d_{q,i}(\phi(f))\in A^\bx[p]_q$ for any $f\in A^\bx$, this reduces easily to checking that $d_{q,i}^{p}(U_1^{k_1}\cdots U_d^{k_d})\in A^\bx[p]_q$ for all $0\le k_1,\dots,k_d<p$. But a direct calculation shows that $d_{q,i}^{p}(U_1^{k_1}\cdots U_d^{k_d})$ even vanishes.
\end{proof}

The following is our q-Simpson correspondence:

\begin{corollary}[$q$-Simpson correspondence]\label{corollary_q_Simp}
Assume that our set-up for q-de Rham cohomology satisfying (qDR1), (qDR2), (qDR$\phi$) arises by deformation theory as in Remark \ref{remark_q_via_deformation}; assume also that $[p]_q$ is a non-zero-divisor of $ A^{\bx(1)}$. Then the functor $(F,F_\Omega)^*$ induces an equivalence of categories \[(F,F_\Omega)^*:\text{\rm qHIG}( A^{\bx(1)},\phi)\quis\text{\rm qMIC}(A^\bx,\phi).\] Moreover, given $(H,\phi_H)\in\text{\rm qHIG}( A^{\bx(1)},\phi)$ with image $(N,\phi_N)\in\text{\rm qMIC}(A^\bx,\phi)$, there is a natural injective quasi-isomorphism of complexes of $A$-modules \[H\otimes_{ A^{\bx(1)}}\q\Omega^\blob_{ A^{\bx(1)}/A}\quis N\otimes_{A^\bx}\q\Omega^\blob_{A^\bx/A}.\]
\end{corollary}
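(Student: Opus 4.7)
The plan is to deduce the corollary by combining Theorem~\ref{theorem_Simpsons} with Lemma~\ref{lemma_phi_implies_xi_nilp}. By hypothesis $[p]_q$ is a non-zero-divisor of $A^{\bx(1)}$, and since the relative Frobenius $F:A^{\bx(1)}\to A^\bx$ is $(p,q-1)$-completely flat in the deformation setup (mod $p$ this follows from Kunz's theorem applied to the smooth $A^{\bx(1)}/(p,q-1)$-algebra $A^\bx/(p,q-1)$, and lifts by $(p,q-1)$-adic completeness), the element $[p]_q$ is also a non-zero-divisor of $A^\bx$. Hence every finite projective module over either ring is $[p]_q$-torsion-free and, as $[p]_q\in(p,q-1)$, automatically $[p]_q$-adically complete and separated. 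By Lemma~\ref{lemma_phi_implies_xi_nilp}(ii)--(iii), objects of $\text{qHIG}(A^{\bx(1)},\phi)$ and $\text{qMIC}(A^\bx,\phi)$ automatically have $[p]_q$-adically quasi-nilpotent q-Higgs field, resp.~q-connection, so both categories fall within the scope of Theorem~\ref{theorem_Simpsons}.

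Fully faithfulness then follows immediately from Theorem~\ref{theorem_Simpsons}: morphisms in either Frobenius category are morphisms in the underlying non-Frobenius category which commute with the Frobenius, and $(F,F_\Omega)^*$ is already fully faithful on the non-Frobenius categories. The injective quasi-isomorphism of q-Higgs and q-de Rham complexes transfers verbatim from Theorem~\ref{theorem_Simpsons}(i). For essential surjectivity, given $(N,\phi_N)\in\text{qMIC}(A^\bx,\phi)$, I would first replace $\phi_N$ by $[p]_q^a\phi_N$ for $a\gg 0$ to obtain an effective Frobenius $\phi_N:\phi^*N\to N$ with cokernel killed by a power of $[p]_q$ (using $[p]_q$-torsion-freeness of $N$). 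Theorem~\ref{theorem_Simpsons}(ii) then produces a descent $(H,\Theta)$ with $(F,F_\Omega)^*(H,\Theta)\cong(N,\nabla)$; finite projectivity of $H$ over $A^{\bx(1)}$ follows from that of $F^*H=N$ by faithful flatness of $F$.

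It remains to equip $H$ with a Frobenius structure. From the proof of Theorem~\ref{theorem_Simpsons}(ii), $H$ is identified with the Nygaard-type submodule $\Fil^b(W^*N)\subseteq W^*N$ for some $b\gg 0$, and $W^*N/\Fil^b(W^*N)$ is killed by $[p]_q^b$. Under the identification $\phi^*H=W^*F^*H=W^*N$ (induced by the canonical isomorphism $F^*H\cong N$ of equation~(\ref{eqn_descent_of_filtration})), the inclusion $H\into\phi^*H$ is an equality after inverting $[p]_q$, and its inverse furnishes the desired $\phi_H:(\phi^*H)[\tfrac1{[p]_q}]\isoto H[\tfrac1{[p]_q}]$. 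Compatibility with the q-Higgs field $\Theta$ is clear since $\Fil^b(W^*N)$ was shown in the proof of Theorem~\ref{theorem_Simpsons}(ii) to be a sub q-Higgs module of $W^*N$, and compatibility $(F,F_\Omega)^*\phi_H=\phi_N$ (after inverting $[p]_q$) follows from the fact that (\ref{eqn_descent_of_filtration}) was defined precisely via restriction of $\phi_N$ to the Nygaard filtration.

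The main obstacle will be the verification of these Frobenius compatibilities: one must carefully track the factor $[p]_q^b$ implicit in the identification $F^*H\cong N$ appearing in~(\ref{eqn_descent_of_filtration}), and then confirm that the resulting $\phi_H$ is genuinely horizontal with respect to $\Theta$ and pulls back under $F$ to the original $\phi_N$ after inverting $[p]_q$. Once these are established the corollary is complete, the quasi-isomorphism statement being immediate from Theorem~\ref{theorem_Simpsons}(i) applied to $(H,\Theta)$.
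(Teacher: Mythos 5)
Your proof is correct, and its skeleton coincides with the paper's: both reduce everything to Theorem \ref{theorem_Simpsons} via the automatic $[p]_q$-adic quasi-nilpotence of Lemma \ref{lemma_phi_implies_xi_nilp}(ii)--(iii), and fully faithfulness plus the injective quasi-isomorphism of complexes transfer verbatim. The one place you genuinely diverge is in equipping the descended module $H$ with its Frobenius. The paper's route is shorter: since $\phi^*N=(\phi^*H)\otimes_{A^{\bx(1)},F}A^\bx$ as modules with q-connection and the q-Higgs field on $\phi^*H$ was already seen to be $[p]_q$-adically quasi-nilpotent in the proof of Lemma \ref{lemma_phi_implies_xi_nilp}(ii), one simply applies the fully faithfulness of Theorem \ref{theorem_Simpsons} to the morphism $\phi_N\colon\phi^*N\to N$ to produce a unique $\phi_H\colon\phi^*H\to H$ inducing it; horizontality and the compatibility $(F,F_\Omega)^*\phi_H=\phi_N$ are then built in, and invertibility of $\phi_H$ after inverting $[p]_q$ follows by descending a quasi-inverse $\psi_N$ with $\psi_N\phi_N=[p]_q^s$ in the same way. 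Your alternative — reading $\phi_H$ off the inclusion $H=\Fil^b(W^*N)\into W^*N=\phi^*H$ of the Nygaard-type filtration — does work: the cokernel is killed by $[p]_q^b$ since $[p]_q^bW^*N\subseteq\Fil^b(W^*N)$, the inclusion is a map of q-Higgs modules because $\Fil^b(W^*N)$ was shown to be a sub q-Higgs module and the Higgs field on $W^*N$ is exactly $\phi^*\Theta$ (as $(W,[p]_qW_\Omega)\circ(F,F_\Omega)=(\phi,\phi_\Omega)$), and $F^*$ of its inverse recovers $[p]_q^{-b}\phi_N$, so you must rescale by $[p]_q^b$ exactly as you anticipate. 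What your construction buys is an explicit formula and invertibility for free; what it costs is precisely the bookkeeping you flag, all of which the fully-faithfulness argument makes disappear. Two small corrections: the finite flatness of $F$ in the deformation setup is immediate from the explicit basis $U_1^{k_1}\cdots U_d^{k_d}$, $0\le k_i<p$ (invoking Kunz's theorem, which concerns the absolute Frobenius of regular rings, is not the right reference), and your appeal to faithfully flat descent for the finite projectivity of $H$ is a point the paper leaves implicit but which is indeed needed, since the domain of Theorem \ref{theorem_Simpsons} consists only of $[p]_q$-adically complete modules.
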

\begin{proof}
We begin by remarking that the functor is clearly well-defined (even without appealing to the well-definedness part of Theorem \ref{theorem_Simpsons}) since the Frobenii on the differential rings $[p]_q d_{q^p}: A^{\bx(1)}\to \Omega^1_{ A^{\bx(1)}/A}$ and $d_q:A^\bx\to\Omega^1_{A^\bx/A}$ are compatible with the morphism of differential rings $(F,F_\Omega)$.

To check fully faithfulness we fix $(H_i,\phi_{H_i})\in \text{\rm qHIG}( A^{\bx(1)},\phi)$ for $i=1,2$, and a morphism $f:H_1\otimes_{ A^{\bx(1)},F}A^\bx\to H_2\otimes_{ A^{\bx(1)},F}A^\bx$ of $A^\bx$-modules which is compatible with the q-connections and the induced Frobenii. Since the q-Higgs fields on $H_1$ and $H_2$ are $[p]_q$-adically quasi-nilpotent by Lemma \ref{lemma_phi_implies_xi_nilp}(ii), the fully faithfulness in Theorem \ref{theorem_Simpsons} implies that $f$ is induced by a unique morphism of q-Higgs bundles $H_1\to H_2$. We only need to check that this latter morphism is compatible with the $\phi_{H_i}$, but this trivially follows from the compatibility after base change to $A^\bx$ thanks to the inclusions $H_i\subseteq H_i\otimes_{ A^{\bx(1)},F}A^\bx$ and $\phi^*H_i\subseteq\phi^*(H_i\otimes_{ A^{\bx(1)},F}A^\bx)$.

To check essentially surjectivity, let $(N,\phi_N)\in\text{\rm qMIC}(A^\bx,\phi)$; by twisting we reduce to the case that $\phi_N$ is effective, i.e., restricts to $\phi_N:\phi^*N\to N$. Since the q-connection is $[p]_q$-adically quasi-nilpotent by Lemma \ref{lemma_phi_implies_xi_nilp}(iii), Theorem \ref{theorem_Simpsons} shows that $N$ has the form $N=H\otimes_{ A^{\bx(1)},F}A^\bx$ for some $H\in\q\HIG( A^{\bx(1)})$ on which the q-Higgs field is $[p]_q$-adically quasi-nilpotent. We must show that the Frobenius structure $\phi_N:\phi^*N\to N$ is induced by a Frobenius $\phi_H$ on $H$; but the identification of modules with q-connection $\phi^*N=(\phi^*H)\otimes_{ A^{\bx(1)},F}A^\bx$ and the fully faithfullness of Theorem \ref{theorem_Simpsons} (note that the theorem applies, since the q-Higgs field on $\phi^*H$ was seen to be $[p]_q$-adically quasi-nilpotent in the proof of Lemma \ref{lemma_phi_implies_xi_nilp}(ii)) imply that $\phi_N$ is indeed induced by some $\phi_H$, as required.

Finally, the quasi-isomorphism between the q-Higgs and q-de Rham complexes follows from the same statement in Theorem \ref{theorem_Simpsons}, again using the q-Higgs field on any object of $\text{\rm qHIG}( A^{\bx(1)},\phi)$ is necessarily $[p]_q$-adically quasi-nilpotent.
\end{proof}

We finish the subsection by returning to our main case of interest, namely $A=A_\inf$, $A^\bx=A_\inf^\bx(R)$, $q=[\ep]$ from \S\ref{ss_framed}; note that in this case $[p]_q=\tilde\xi$. The Frobenius structure in this set-up was explained at the start of \S\ref{ss_Frobenius}. Corollary \ref{corollary_q_Simp} takes the form of an equivalence of categories \begin{equation}(F,F_\Omega)^*:\text{\rm qHIG}(A_\inf^\bx(R)^{(1)},\phi)\quis\text{\rm qMIC}(A^\bx_\inf(R),\phi),\label{eqn_q_Simpsons_for_A_inf}\end{equation} compatible with the q-Higgs and q-de Rham complexes. This may be rephrased in terms of representations as follows:

\begin{corollary}\label{corollary_descent_of_reps_to_phi_twist}
The base change functor \[-\otimes_{A_\inf^\bx(R^{(1)}),F}A_\inf^\bx(R):\Rep_\Gamma^\mu(A_\inf^\bx(R)^{(1)},\phi)\To \Rep_\Gamma^\mu(A_\inf^\bx(R),\phi)\] is an equivalence of categories compatible with group cohomology.
\end{corollary}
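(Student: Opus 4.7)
The plan is to realise the desired equivalence as a composition of three previously established equivalences. Before composing, however, I need to upgrade Corollary~\ref{corollary_reps_vs_q_connections}/Proposition~\ref{proposition_reps_vs_q_connections_with_phi} to the Frobenius twist: the data (\ref{eqn_Higgs})--(\ref{eqn_Higg}) present $A_\sub{inf}^\bx(R)^{(1)}$, equipped with its action by $\id\otimes\gamma_1,\dots,\id\otimes\gamma_d$, as a valid q-Higgs set-up, and moreover $\Gamma$ acts as the identity on $A_\sub{inf}^\bx(R)^{(1)}$ modulo $\phi(\mu)=\tilde\xi\mu$ (hence a fortiori modulo $\mu$). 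I claim that the assignment $H\mapsto(H,\,\Theta_i^\sub{log}:=(\gamma_i-1)/\mu)$ produces an equivalence $\Rep_\Gamma^\mu(A_\sub{inf}^\bx(R)^{(1)},\phi)\isoto\text{qHIG}(A_\sub{inf}^\bx(R)^{(1)},\phi)$. The only non-formal point is to match the q-Higgs Leibniz rule with $\Gamma$-semi-linearity: since the right $A_\sub{inf}^\bx(R)^{(1)}$-module structure on $\q\Omega^1_{A_\sub{inf}^\bx(R)^{(1)}/A_\sub{inf}}$ twists by $\id\otimes\gamma_i$, expanding
\[\frac{\gamma_i(hf)-hf}{\mu}=\frac{\gamma_i(h)-h}{\mu}\cdot(\id\otimes\gamma_i)(f)+h\cdot\frac{(\id\otimes\gamma_i)(f)-f}{\mu}\]
produces exactly the identity $\Theta_i^\sub{log}(hf)=\Theta_i^\sub{log}(h)\cdot f+h\otimes[p]_q d_{q^p,i}^\sub{log}(f)$ (using $[p]_q/(q^p-1)=1/\mu$). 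The remaining verifications (flatness, invertibility of the volte, compatibility with Frobenius) are formally identical to those in the proofs of the cited results.

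Next I would compose this equivalence with the q-Simpson correspondence (\ref{eqn_q_Simpsons_for_A_inf}) from Corollary~\ref{corollary_q_Simp} and with Corollary~\ref{corollary_reps_vs_q_connections} (in its Frobenius version, Proposition~\ref{proposition_reps_vs_q_connections_with_phi}), giving the diagram
\[\Rep_\Gamma^\mu(A_\sub{inf}^\bx(R)^{(1)},\phi)\isoto\text{qHIG}(A_\sub{inf}^\bx(R)^{(1)},\phi)\xto{(F,F_\Omega)^*}\text{qMIC}(A_\sub{inf}^\bx(R),\phi)\isoto\Rep_\Gamma^\mu(A_\sub{inf}^\bx(R),\phi).\]
It remains to identify the composite with the base change functor along $F$ carrying the diagonal $\Gamma$-action. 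Tracing the construction, $H$ with action $\gamma_i$ is sent to the q-Higgs field $\Theta_i^\sub{log}=(\gamma_i-1)/\mu$, then to $N:=H\otimes_{A_\sub{inf}^\bx(R)^{(1)},F}A_\sub{inf}^\bx(R)$ with the q-connection (\ref{eqn_Higg_to_conn}), and finally to the $\Gamma$-action $\gamma_i^N:=1+\mu\nabla_i^\sub{log}$ on $N$. A direct computation yields
\[\gamma_i^N(h\otimes f)=h\otimes f+(\gamma_i(h)-h)\otimes\gamma_i(f)+h\otimes(\gamma_i(f)-f)=\gamma_i(h)\otimes\gamma_i(f),\]
which is exactly the diagonal $\Gamma$-action on the base change, and the Frobenius structures clearly match.

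Finally, for compatibility with group cohomology, observe that (by the same Koszul-complex and Lemma~\ref{lemma_Koszul}(i) argument proving Proposition~\ref{proposition_reps_vs_q_connections}) the q-Higgs complex of $H$ computes $L\eta_\mu R\Gamma_\sub{cont}(\Gamma,H)$, while the q-de Rham complex of $N$ computes $L\eta_\mu R\Gamma_\sub{cont}(\Gamma,N)$. The natural injective quasi-isomorphism from the q-Higgs complex to the q-de Rham complex provided by Corollary~\ref{corollary_q_Simp}(i) then furnishes the desired quasi-isomorphism $L\eta_\mu R\Gamma_\sub{cont}(\Gamma,H)\quis L\eta_\mu R\Gamma_\sub{cont}(\Gamma,N)$, induced on cochains by the $\Gamma$-equivariant inclusion $H\into N$, $h\mapsto h\otimes 1$. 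The corollary is thus essentially an assembly of existing results; the only real subtlety is the careful bookkeeping of the twisted right module structure on $\q\Omega^1_{A_\sub{inf}^\bx(R)^{(1)}/A_\sub{inf}}$ in the first step, without which the q-Higgs Leibniz rule would not correspond to the naive semi-linearity of the $\Gamma$-action.
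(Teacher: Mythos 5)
Your proposal is correct and follows essentially the same route as the paper: identify $\Rep_\Gamma^\mu(A_\inf^\bx(R)^{(1)},\phi)$ with $\op{qHIG}(A_\inf^\bx(R)^{(1)},\phi)$ via $\Theta_i^\sub{log}=(\gamma_i-1)/\mu$ (arguing as in Propositions \ref{proposition_reps_vs_q_connections} and \ref{proposition_reps_vs_q_connections_with_phi}) and then identify the base change functor with the q-Simpson equivalence (\ref{eqn_q_Simpsons_for_A_inf}), whose compatibility with the q-Higgs and q-de Rham (hence Koszul/group-cohomology) complexes gives the cohomological statement. The extra details you supply — the twisted-Leibniz-rule check and the verification that $\gamma_i^N(h\otimes f)=\gamma_i(h)\otimes\gamma_i(f)$ recovers the diagonal action — are exactly what the paper leaves implicit, and they check out.
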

\begin{proof}
Although the domain category has not been defined, it should be clear: it consists of finite projective $ A^{\bx}_\inf(R)^{(1)}$-modules $H$ equipped both with a $(p,\xi)$-adically continuous $\Gamma$-action which is the identity modulo $\mu$ and with an isomorphism $\phi_H:(\phi^*H)[\tfrac1{\tilde\xi}]\isoto H[\tfrac1{\tilde\xi}]$. Arguing just as in Propositions \ref{proposition_reps_vs_q_connections} and \ref{proposition_reps_vs_q_connections_with_phi}, one checks the following: given a generalised representation $H\in\Rep^{\mu}_{\Gamma}(A_\inf^\bx(R)^{(1)})$, the map \[\Theta:H\to H\otimes_{A_\inf^\bx(R)^{(1)}}\q\Omega^1_{ A^{\bx}_\inf(R)^{(1)}/A_\inf},\quad h\mapsto\sum_{i=1}^d\frac{\gamma_i-1}{q-1}(h)\otimes\dlog(U_i)\] is a flat q-Higgs field, and this defines an equivalence of categories $\textrm{qHIG}(A_\inf^\bx(R))^{(1)}\quis \Rep_{\Gamma}^\mu(A_\inf^\bx(R)^{(1)})$ (and similarly with Frobenius structures).

In this way the functor in the statement of the corollary may be identified with the equivalence~(\ref{eqn_q_Simpsons_for_A_inf}).
\end{proof}

\begin{remark}\label{remark_descent_of_reps_to_phi_twist}
To understand the previous corollary, it may be helpful to identify the relative Frobenius $F:A_\inf^\bx(R)^{(1)}\to A^\bx_\inf(R)$ as the inclusion of $A_\inf$-algebras $\phi(A_\inf^\bx(R))\subseteq A_\inf^\bx(R)$ (this is allowed since the endomorphism $\phi$ is an automorphism of $A_\inf$ and $\phi$ is injective on $A_\inf^\bx(R)$). From this point of view, the corollary states that a generalised representation in $\Rep_\Gamma^\mu(A_\inf^\bx(R),\phi)$ admits a unique descent to the subring $\phi(A_\inf^\bx(R))$. It is likely that this can be proved by using the $\tilde\xi$-adic quasi-nilpotence to modify the 1-cocycle arguments of \S\ref{subsection_descent}, but the reformulation in terms of a q-Simpson correspondence is more general and seems more intuitive.
\end{remark}

\newpage
%%%%%
%%%%% PRISMATIC CRYSTALS
%%%%%
\section{Generalised representations as prismatic crystals}\label{section_prismatic_crystals}
In this section we reinterpret our generalised representations of interest in terms of prismatic crystals, in the following sense:

\begin{definition}\label{definiton_prismatic_crystal}
Let $(A,I)$ be a bounded prism, and let $\frak X$ be a
smooth $p$-adic formal scheme over $A/I$. A {\em locally finite free crystal} on the prismatic site\footnote{When $\frak X=\Spf R$ is affine, we will adopt affine notation as in \cite{BhattScholze2019}, writing $(R/(A,I))_\Prism$:=$(\Spf R/(A,I))_\Prism^\sub{op}$ and denoting a typical object by $(B\to B/IB\ot R)$.} $(\frak X/(A,I))_{\Prism}$ is a sheaf of $\cal O_{\Prism}$-modules $\cal F$ such that $\cal F(\frak B)$ is a finite projective $B=\cal O_\Prism(\frak B)$-module for every object $\frak B=(\Spf(B)\leftarrow \Spf(B/IB) \to \frak X)$ of  $(\frak X/(A,I))_{\Prism}$, and such that the pull-back homomorphism $\cal F(\frak B)\otimes_BB'\to \cal F(\frak B')$ is an isomorphism for every morphism $\frak B'=(\Spf(B')\leftarrow \Spf(B'/IB')\to \frak X)) \to \frak B$ in $(\frak X/(A,I))_{\Prism}$. We write $\CR_{\Prism}(\frak X/(A,I))$ for the category of locally finite free crystals on $(\frak X/(A,I))_{\Prism}$.
\end{definition}

Throughout this section we adopt the local framework of Section \ref{section_small_reps}: so let $R$ be a $p$-adically complete, small, formally smooth $\roi$-algebra with a fixed choice of framing. We will freely use the objects and notation introduced at the start of \S\ref{ss_framed}, though we write $A^\bx=A_{\inf}^{\bx}(R)$ and $A^\bx_{\infty}=A_{\inf}(R_{\infty})$ for the sake of brevity.

Given an $A_{\inf}$-algebra $B$, we write $B^{(1)}:=A_{\inf}\otimes_{\phi,A_{\inf}}B$ for its base change along the Frobenius $\phi\colon A_{\inf}\xrightarrow{\cong}A_{\inf}$. If $B$ is itself equipped with a lifting $\phi$ of the absolute Frobenius on $B/p$, compatibly with the Frobenius of $A_{\inf}$, then there is an induced map of $A_\inf$-algebras $F:=1\otimes \phi:B^{(1)}\to B$ which is called the relative Frobenius (c.f., the beginning of \S\ref{ss_q_Higgs}). By default we regard any $\cal O$-algebra $C$ as an $A_{\inf}$-algebra via the morphism $\theta:A_{\inf}\to A_{\inf}/\xi \cong\cal O$, and we thus have $C^{(1)}= A_\inf/\tilde\xi\otimes_{\phi,A_{\inf}/\xi}C $.
In particular, if $C$ is a perfectoid ring over $\cal O$, then 
we regard $C^{(1)}$ as a quotient of $A_{\inf}(C)$ by the $A_{\inf}$-homomorphism
$A_{\inf}(C)\xto{\tilde\theta} A_{\inf}(C)/\tilde\xi \xto{\phi^{-1}\,\cong}(A_{\inf}(C)/\xi)^{(1)}= C^{(1)}$.

%\subsection{Formulation of the main comparisons}
%\label{subsec:PrismToBKF}
We now introduce some objects of the prismatic site $(R^{(1)}/(A_{\inf},\tilde\xi))_{\Prism}$, on which we will then evaluate our crystals; diagram (\ref{eqn_crystal_diagram}) provides a summary. Firstly, the Frobenius lift on $A^\bx$ induces one on $A^{\bx(1)}$, whence we may view these rings as $\delta$-rings. In particular, we regard $(A^{\bx(1)} \to R^{(1)}\xot= R^{(1)})$ as an object of $(R^{(1)}/(A_{\inf},\tilde\xi))_{\Prism}$ equipped with a right action of $\Gamma$. Its reduction modulo $\mu$, namely $(A^{\bx(1)}/\mu\to R^{(1)}/(\zeta_p-1)\ot  R^{(1)})$ is also an object of $(R^{(1)}/(A_{\inf},\tilde\xi))_{\Prism}$, on which the induced 
$\Gamma$-action is by the identity. Therefore evaluation on the first of these objects defines a functor $$\op{ev}_{A^{\bx(1)}}\colon \CR_{\Prism}(R^{(1)}/(A_{\inf},\tilde\xi))
\to \Rep^{\mu}_{\Gamma}(A^{\bx(1)}),\qquad \cal F\mapsto \cal F(A^{\bx(1)}\to R^{(1)}\xot= R^{(1)}).$$ We also introduce the composite functor 
$$\op{ev}_{A^{\bx(1)}}^\phi\colon \CR_{\Prism}(R^{(1)}/(A_{\inf},\tilde\xi)) \xto{\op{ev}_{A^{\bx(1)}}} \Rep^{\mu}_{\Gamma}(A^{\bx(1)})\xto{-\otimes_{A^{\bx(1)},F}A^\bx}\Rep^{\mu}_{\Gamma}(A^\bx).$$

Similarly, the Frobenius lift on $A^\bx_{\infty}$ allows us to view it as a $\delta$-ring, and we regard $(A^\bx_{\infty}\to R_{\infty}^{(1)}\ot R^{(1)})$ as an object of $(R^{(1)}/(A_{\inf},\tilde\xi))_{\Prism}$ equipped with a right action of $\Gamma$. Now note that the composition of the relative Frobenius $F:A^{\bx(1)}\to A^\bx$ with the canonical injection $A^\bx\hookrightarrow A^\bx_{\infty}$ is a $\Gamma$-equivariant map of $\delta$-rings over $A_{\inf}$, whose reduction modulo $\tilde\xi$ gives the map $R^{(1)}\to R_{\infty}^{(1)}$ induced by the inclusion map $R\hookrightarrow R_{\infty}$. Hence it defines a natural $\Gamma$-equivariant morphism 
\begin{equation}\label{eq:PrismAinfAinfframe}
(A^{\bx(1)}\to R^{(1)}\xot= R^{(1)})\To (A^\bx_{\infty}\to R_{\infty}^{(1)}\ot R^{(1)})
\end{equation}
in $(R^{(1)}/(A_{\inf},\tilde\xi))_{\Prism}$, and evaluation on the right object in (\ref{eq:PrismAinfAinfframe}) defines a functor
$$\op{ev}_{A^\bx_{\infty}}\colon \CR_{\Prism}(R^{(1)}/(A_{\inf},\tilde\xi))
\to \Rep^{\mu}_{\Gamma}(A^\bx_{\infty}),\qquad 
\cal F\mapsto \cal F(A^\bx_{\infty}\to R_{\infty}^{(1)}\ot R^{(1)})
$$
such that  the left part of the following diagram is commutative up to canonical isomorphism:
\begin{equation}\xymatrix@C=2cm{
&\Rep_\Gamma^\mu(A^{\bx(1)})\ar[d]^{-\otimes_{A^{\bx(1)},F}A^\bx}\ar[r]^\simeq&\op{qHIG}(A^{\bx(1)})\ar[d]^{(F,F_\Omega)^*}\\
\CR_{\Prism}(R^{(1)}/(A_{\inf},\tilde\xi))\ar[r]^-{\op{ev}^\phi_{A^{\bx(1)}}}\ar@/^5mm/[ur]^-{\op{ev}_{A^{\bx(1)}}}
\ar@/_5mm/[dr]_{\op{ev}_{A^\bx_{\infty}}}& 
\Rep_\Gamma^\mu(A^\bx)\ar[d]_{\simeq}^{-\otimes_{A^\bx}A^\bx_{\infty}}\ar[r]^\simeq&\op{qMIC}(A^\bx)\\
&\Rep_\Gamma^\mu(A^\bx_{\infty})&
}\label{eqn_crystal_diagram}
\end{equation}
The right square of the diagram was explained just before, and in the proof of, Corollary \ref{corollary_descent_of_reps_to_phi_twist}. Recall also that the bottom vertical functor in the diagram is an equivalence of categories by Theorem \ref{theorem_descent_to_framed}.

The goal of this section is to identify prismatic crystals as follows:

\begin{theorem}\label{thm:main}
The functors $\op{ev}_{A^{\bx(1)}}$ and $\op{ev}_{A^{\bx(1)}}^\phi$ are fully faithful, with essential image given by those generalised representations for which the corresponding q-Higgs field (resp.~q-connection) is $(p,\mu)$-adically quasi-nilpotent.
\end{theorem}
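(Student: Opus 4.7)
The plan is to use the classical crystal--stratification dictionary adapted to the prismatic site. First I would verify that $(A^{\bx(1)}\to R^{(1)}\xot= R^{(1)})$ is a covering object of $(R^{(1)}/(A_{\inf},\tilde\xi))_{\Prism}$: given any prism $(B,I)$ of the site with structural map $R^{(1)}\to B/I$, the formal \'etaleness of $A^{\bx(1)}$ over $A_\inf\pid{\ul U^{\pm 1}}$ lets one lift the images of $U_1,\dots,U_d$ to units of $B$ and extend to an $A_{\inf}$-algebra map $A^{\bx(1)}\to B$ compatible with the structural maps to $R^{(1)}$. Descent then identifies $\CR_{\Prism}(R^{(1)}/(A_{\inf},\tilde\xi))$ with finite projective $A^{\bx(1)}$-modules equipped with stratification data relative to the \v{C}ech nerve of the cover, so that fully faithfulness of $\op{ev}_{A^{\bx(1)}}$ is immediate.

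The main technical step is an explicit computation of the prismatic self-products. Let $A^{\bx(1)}\otimes^{\Prism}A^{\bx(1)}$ denote the two-fold coproduct in the prismatic site: it is the $(p,\tilde\xi)$-adically completed prismatic envelope of $A^{\bx(1)}\otimes_{A_{\inf}}A^{\bx(1)}$ along the ideal generated by $\tilde\xi$ and by the differences $z_i:=U_i\otimes 1-1\otimes U_i$ (which cut out the diagonal to $R^{(1)}$). I would present this envelope as the $(p,\tilde\xi)$-adic completion of a $\delta$-algebra over $A^{\bx(1)}$ generated by iteratively adjoining the $\delta$-theoretic divided powers of the $z_i/\tilde\xi$; this is essentially the framed $q$-PD construction of Bhatt--Scholze discussed in Remark~\ref{examples_qBK}. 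An analogous presentation handles the triple self-product. Unpacking the stratification on $H:=\cal F(A^{\bx(1)})$ then produces $d$ pairwise commuting $A_{\inf}$-linear endomorphisms $\Theta_1^{\sub{log}},\dots,\Theta_d^{\sub{log}}$ of $H$ --- read off from the first-order coefficients of the $z_i$ --- satisfying the q-Leibniz rule from the defining relations of the envelope and commuting by the cocycle identity on the triple self-product, i.e., defining a flat q-Higgs field.

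For the essential image, given a flat q-Higgs field $(H,\Theta)$ one writes down a formal $q$-Taylor expansion $h\mapsto \sum_{\ul m}\prod_i(\Theta_i^{\sub{log}})^{m_i}(h)\otimes (\ul z)^{\langle\ul m\rangle}$ using the divided power generators constructed in the second step; this series defines a stratification on $H$ if and only if it converges in the $(p,\tilde\xi)$-adic topology of the envelope, which is equivalent to $(p,\tilde\xi)$-adic quasi-nilpotence of the $\Theta_i^{\sub{log}}$. Since $\tilde\xi\equiv\mu^{p-1}\pmod p$, this coincides with $(p,\mu)$-adic quasi-nilpotence as in Remark~\ref{remark_nilpotent_connection}, giving the characterisation claimed in the theorem. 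The corresponding assertion for $\op{ev}^\phi_{A^{\bx(1)}}$ then follows from that for $\op{ev}_{A^{\bx(1)}}$ together with the right-hand square of diagram~(\ref{eqn_crystal_diagram}): the functor $-\otimes_{A^{\bx(1)},F}A^\bx$ is identified with the q-Simpson functor $(F,F_\Omega)^*$ of Section~\ref{ss_q_Higgs}, and direct inspection of formula~(\ref{eqn_Higg_to_conn}) shows that it matches $(p,\mu)$-adic quasi-nilpotence of q-Higgs fields with the same condition for q-connections. The main obstacle is the second step: pinning down an explicit, coordinate-friendly presentation of the prismatic envelope and verifying that its $\delta$-structure reflects the q-Leibniz rule and the cocycle condition coming from the triple self-product.
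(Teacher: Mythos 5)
Your overall strategy --- identify crystals with stratifications over the \v{C}ech nerve of a covering of the final object, read off a q-Higgs field from the first-order part of the stratification, and characterise the essential image by convergence of a q-Taylor series --- is the same as the paper's. However, two of your steps contain genuine gaps.

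First, the covering property. Lifting the $U_i$ to units of $B$ via formal \'etaleness produces an $A_\inf$-algebra map $A^{\bx(1)}\to B$, but morphisms in the prismatic site are maps of $\delta$-rings, and an arbitrary such lift will not satisfy $\delta(U_i)=0$, so your map need not respect the $\delta$-structures. This is exactly why the paper's proof of Proposition \ref{lem:PrismFinObjCov} first maps out of the free $\delta$-ring $A^\bx_{\delta}$, then passes to its perfection and invokes Andr\'e's Lemma to produce, after a flat cover, a perfectoid target receiving $R_\infty$; the covering property of $(A^{\bx(1)}\to R^{(1)}\xot{=}R^{(1)})$ is then deduced via the morphism (\ref{eq:PrismAinfAinfframe}). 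Remark \ref{remark_general_prisms_base} flags this as the one step that genuinely exploits perfectness of the base prism, so it cannot be dispatched by a lifting argument.

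Second, your deduction of the statement for $\op{ev}^\phi_{A^{\bx(1)}}$ from the one for $\op{ev}_{A^{\bx(1)}}$ is circular. It requires $(F,F_\Omega)^*$ to be essentially surjective from $(p,\mu)$-adically quasi-nilpotent q-Higgs fields onto $(p,\mu)$-adically quasi-nilpotent q-connections, but Theorem \ref{theorem_Simpsons}(ii) only gives essential surjectivity in the presence of a Frobenius structure; without one, this surjectivity is precisely a consequence of the two halves of Theorem \ref{thm:main} and is not otherwise available. The paper therefore argues in the opposite order: it proves the statement for $\op{ev}^\phi_{A^{\bx(1)}}$ directly, using the \v{C}ech nerve $\cal D(\bullet)$ of $(A^\bx\to R_1\ot R^{(1)})$ (\S\ref{ss_crystals_as_strats}--\ref{ss_proof_of_main_thm1}), and only then deduces the statement for $\op{ev}_{A^{\bx(1)}}$ from the fully faithful direction of the q-Simpson correspondence together with Theorem \ref{thm:ConvqHiggStrat}. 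Finally, on the step you yourself flag as the main obstacle: the divided powers of $z_i/\tilde\xi$ exist only modulo $\mu$ (Lemma \ref{lem:E(1)Description}), so the q-Taylor series cannot be written down in the $(p,\tilde\xi)$-adic topology of the envelope itself; the paper instead characterises the stratification as the inverse of the isomorphism of Proposition \ref{prop:qPDTaylerExp}, proved by reducing modulo $\mu$ to a classical crystalline computation and lifting by $\mu$-adic completeness.
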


\begin{remark}\label{remark_general_prisms_base}
Although we have not checked all details, the theorem (and bulk of proof) appears to hold more generally for any set-up for q-de Rham cohomology arising from deformation theory as in Remark~\ref{remark_q_via_deformation}, under the additional assumption that $A$ is flat over $\bb Z_p[[q-1]]$ (so, in particular, for the framed q-PD data of the sort mentioned at the end of Remark \ref{examples_qBK}). The paper of Chatzistamatiou \cite{Chatzistamatiou2020} works in this degree of generality, though it remains to compare his $\cal D$-module style definition of q-connections and his quasi-nilpotence condition \cite[Defs.~2.1.2 \& 2.1.3]{Chatzistamatiou2020} to those we adopt, namely Definition \ref{definition_q_connections} and Remark \ref{remark_nilpotent_connection}. For a correspondence between crystals on prismatic and q-crystalline sites of higher level, see Li \cite{Li2021}.

The main step of our argument which does not work in general as written is the proof of Proposition \ref{lem:PrismFinObjCov}, where we exploit the base prism being perfect to ultimately show in Corollary \ref{cor:FinObjCovqHiggs} that the smooth lift $A^{\bx(1)}$ defines a cover of the final object. But this can be overcome with alternative arguments: see forthcoming work of Z.~Mao \cite{Mao2021} for a general result about animated $\delta$-pairs in derived prismatic cohomology, \cite[Prop.~1.1.2]{Chatzistamatiou2020} in the q-crystalline context, or the recent talks by A.~Ogus and Y.~Tian at the 2021 IHES conference on arithmetic geometry in honor of Luc Illusie.

Recent work of Bhatt--Scholze \cite{BhattScholze2021} identifies absolute prismatic crystals in the arithmetic context with lattices in crystalline Galois representations.
\end{remark}

We already mentioned the notion of $(p,\mu)$-adic=$(p,\tilde\xi)$-adic quasi-nilpotence of q-Higgs fields and q-connections in Remark \ref{remark_nilpotent_connection}. Explicitly, given $H\in \Rep_\Gamma^\mu(A^{\bx(1)})$ (resp.~$N\in\Rep_\Gamma^\mu(A^\bx)$), recall that the non-logarithmic coordinates of the corresponding q-Higgs field $\Theta$ (resp.~q-connection $\nabla$) are given by $\Theta_i:=\tfrac{\gamma_i-1}{U_i\mu}$ (resp.~$\nabla_i:=\tfrac{\gamma_i-1}{U_i\mu}$); the nilpotence condition is asking for these operators to be $(p,\mu)$-adically quasi-nilpotent. We will write \[\op{qHIG}_{\sub{conv}}(A^\bx)\subseteq \op{qHIG}(A^\bx)\qquad\op{qMIC}_{\sub{conv}}(A^\bx)\subseteq \op{qMIC}(A^\bx)\] to denote the full subcategories of objects satisfying this quasi-nilpotence condition, and \[\Rep_{\Gamma,\sub{conv}}^\mu(A^{\bx(1)})\subseteq \Rep_{\Gamma}^\mu(A^{\bx(1)})\qquad \Rep_{\Gamma,\sub{conv}}^\mu(A^\bx)\subseteq \Rep_{\Gamma}^\mu(A^\bx)\] for the corresponding full subcategories on the generalised representation side.\footnote{The subcategory $\Rep_{\Gamma,\sub{conv}}^\mu(A_\inf^\bx(R))\subseteq \Rep_{\Gamma}^\mu(A_\inf^\bx(R))$ does not depend on the chosen compatible sequence of $p$-power roots of unity (and similarly for $\Rep_{\Gamma,\sub{conv}}^\mu(A_\inf^\bx(R)^{(1)})$). Indeed, we saw in Remark \ref{remark_dependence_on_ep} that changing the compatible sequence of $p$-power roots of unity has the effect of changing $\tfrac{\gamma_i-1}{\mu U_i}$ into $\tfrac{\gamma_i^a-1}{U_i([\ep^a]-1)}=\tfrac{a\mu}{[\ep^a]-1}\tfrac{\gamma_i-1}{U_i\mu}+\mu\tfrac{\mu}{U([\ep^a]-1)}\beta_i$, which is clearly $(p,\mu)$-adically quasi-nilpotent if and only if $\tfrac{\gamma_i-1}{U_i\mu}$ is. One can argue similarly for $\Rep_{\Gamma,\sub{conv}}^\mu(A_\inf^\bx(R)^{(1)})$.} Theorem \ref{thm:main} states that the evaluation functors define equivalences of categories
\[\op{ev}_{A^{\bx(1)}}\colon \CR_{\Prism}(R^{(1)}/(A_{\inf},\tilde\xi))
\xrightarrow{\sim} \Rep_{\Gamma,\sub{conv}}^\mu(A^{\bx(1)}),\qquad \op{ev}^\phi_{A^{\bx(1)}}\colon \CR_{\Prism}(R^{(1)}/(A_{\inf},\tilde\xi))
\xrightarrow{\sim} \Rep_{\Gamma,\sub{conv}}^\mu(A^\bx).\]

The faithfulness of $\op{ev}_{A^{\bx(1)}}$ is an immediate consequence of Corollary \ref{cor:FinObjCovqHiggs} below (an object $\frak B$ of a site is a {\em covering of the final object} precisely when, for any other object $\frak B'$, there is a cover $\frak B''\to\frak B'$ which admits a morphism $\frak B''\to\frak B$):

\begin{proposition}\label{lem:PrismFinObjCov}
The object $(A^\bx_{\infty}\to R_{\infty}^{(1)}\ot R^{(1)})$ of the prismatic site $(R^{(1)}/(A_{\inf},\tilde\xi))_{\Prism}^\sub{op}$ is a covering of the final object
\end{proposition}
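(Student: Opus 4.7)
The plan is to show that every object $\frak B' = (B \to B/\tilde\xi B \leftarrow R^{(1)})$ of the prismatic site $(R^{(1)}/(A_\inf, \tilde\xi))_\Prism$ admits a $(p,\tilde\xi)$-completely faithfully flat cover $B \to B''$ in the site, equipped with a morphism of prisms $A^\bx_\infty \to B''$ over $(A_\inf, \tilde\xi)$ compatible with the structure maps from $R^{(1)}$ and $R_\infty^{(1)}$. The construction factors through the smooth intermediate prism $A^{\bx(1)}$: first lift the structure map to $A^{\bx(1)} \to B$, then form the prismatic base change of $B$ along $A^{\bx(1)} \to A^\bx_\infty$.

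For the first step, observe that $A^{\bx(1)}$ is a $(p,\tilde\xi)$-adically complete, formally smooth $\delta$-$A_\inf$-algebra whose reduction modulo $\tilde\xi$ is $R^{(1)}$. Standard $\delta$-ring deformation theory produces a lift $A^{\bx(1)} \to B$ as a map of $\delta$-$A_\inf$-algebras, obtained by choosing units $t_i \in B^\times$ lifting the images of $1 \otimes T_i \in R^{(1)}$ and arranging $\phi_B(t_i) = t_i^p$ (compatibility with the $\delta$-structure $\delta(U_i^{(1)}) = 0$ on $A^{\bx(1)}$), and then extending uniquely along the formally \'etale part of $A_\inf\pid{\ul U^{(1)\pm 1}} \to A^{\bx(1)}$. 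If such Frobenius-compatible lifts of the units are not immediately available, one first replaces $B$ by a preliminary $(p,\tilde\xi)$-completely faithfully flat cover in which Teichm\"uller-style lifts of the $t_i$'s have been prismatically adjoined; this is harmless, since it only composes with our eventual cover.

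For the second step, form the prismatic pushout $B'' := B \hat\otimes^L_{A^{\bx(1)}} A^\bx_\infty$, understood as the derived $(p,\tilde\xi)$-completed tensor product of $\delta$-$A_\inf$-algebras. The pivotal claim is that $A^{\bx(1)} \to A^\bx_\infty$ (the composite $A^{\bx(1)} \xto{F} A^\bx \hookrightarrow A^\bx_\infty$) is $(p,\tilde\xi)$-completely faithfully flat: modulo $(p,\tilde\xi)$ this map identifies under the Frobenius-twist isomorphism with $R/p \to R_\infty/p$, a filtered colimit of finite \'etale Kummer extensions $R/p \to (R/p)\pid{\ul T^{\pm 1/p^j}}$, which is faithfully flat; combined with $p$- and $\tilde\xi$-torsion-freeness on both sides, this gives the full $(p,\tilde\xi)$-complete faithful flatness. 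Hence $B''$ is concentrated in degree zero, defines a prism over $(A_\inf,\tilde\xi)$, the map $B \to B''$ is a cover in the prismatic site, and $B''/\tilde\xi B'' = (B/\tilde\xi B)\hat\otimes_{R^{(1)}} R_\infty^{(1)}$ receives the natural extension $R_\infty^{(1)} \to B''/\tilde\xi B''$ of the given structure map. The canonical map $A^\bx_\infty \to B''$ furnishes the required morphism $\frak B'' \to (A^\bx_\infty \to R_\infty^{(1)} \leftarrow R^{(1)})$.

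The main obstacle is the $\delta$-ring lift in the first step: arranging $\phi_B(t_i) = t_i^p$ for units $t_i \in B$ lifting the images of $1\otimes T_i$ is not automatic and requires either a careful successive-approximation argument exploiting the $p$- and $\tilde\xi$-adic completeness of $B$, or (more robustly) the passage to a preliminary faithfully flat cover. The second step is then essentially formal once the faithful flatness of $A^{\bx(1)} \to A^\bx_\infty$---itself straightforward after unwinding the Frobenius-twist identification---has been established.
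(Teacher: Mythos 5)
Your route is genuinely different from the paper's, and its second half is sound, but the first half has a real gap. You propose to (1) lift the structure map $R^{(1)}\to B/\tilde\xi B$ of an arbitrary object to a map of $\delta$-rings $A^{\bx(1)}\to B$, and then (2) form the $(p,\tilde\xi)$-completed base change of $B$ along $A^{\bx(1)}\xto{F}A^\bx\into A^\bx_\infty$. Step (2) is correct: that composite reduces modulo $\tilde\xi$ to $R^{(1)}\to R_\infty^{(1)}$ and is $(p,\tilde\xi)$-completely faithfully flat, so the completed pushout is again a bounded prism, $B\to B''$ is a flat cover, and $B''$ receives $A^\bx_\infty$ compatibly with the structure maps; this is exactly the content of the paper's Lemma \ref{lem:PrismFibProd} applied to the morphism (\ref{eq:PrismAinfAinfframe}).

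The gap is in step (1). A $\delta$-map $A^{\bx(1)}\to B$ forces the images $t_i$ of the coordinates to satisfy $\delta(t_i)=0$, i.e.\ $\phi_B(t_i)=t_i^p$, and for a general prism $B$ there is no reason that the unit $\overline{1\otimes T_i}\in B/\tilde\xi B$ admits such a ``Teichm\"uller'' lift: this is not ``standard deformation theory'' but a genuine obstruction. Your fallback --- pass first to a faithfully flat cover in which such lifts ``have been prismatically adjoined'' --- is exactly where the work lies, and you do not construct it. One would have to adjoin free $\delta$-variables $X_i^{\pm1}$, kill the $\delta$-ideal generated by the $\delta(X_i)$, and then take a prismatic envelope forcing $X_i\equiv \overline{1\otimes T_i}$ modulo $\tilde\xi$, verifying at each stage (via the regularity hypotheses of \cite[Prop.~3.13]{BhattScholze2019}) that the result is a bounded prism faithfully flat over $B$. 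This is precisely the difficulty the paper's proof is engineered to avoid: it replaces $A^{\bx(1)}$ by the \emph{free} $\delta$-thickening $A^\bx_\delta$, whose generators carry no $\delta$-constraint and hence map to \emph{arbitrary} lifts in any object $B$ with no preliminary cover needed; the price is that $A^\bx_\delta$ has no direct relation to $A^\bx_\infty$, which the paper recovers by perfecting the prism and invoking Andr\'e's lemma to adjoin $p$-power roots of the $T_i$. So either carry out the Teichm\"uller-adjunction cover in full, or switch to the free-$\delta$-ring/perfection/Andr\'e route; as written, the argument does not yet close.
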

\begin{proof}
We can identify $(R/(A_{\inf},\xi))_{\Prism}$ with $(R^{(1)}/(A_{\inf},\tilde\xi))_{\Prism}$ by the base change under the isomorphism $\phi\colon A_{\inf}\xrightarrow{\cong}A_{\inf}$, and the base change of the object $(A^\bx_{\infty}\to R_{\infty}\ot R)$ of $(R/(A_{\inf},\xi))_{\Prism}$ can be identified with $(A^\bx_{\infty}\to R_{\infty}^{(1)}\ot R^{(1)})$ via the isomorphism $\phi\colon A_\infty^{\bx(1)}\xrightarrow{\cong}A^\bx_{\infty}$ of $\delta$-rings over $A_{\inf}$. Hence it is equivalent to prove that $(A^\bx_{\infty}\to R_{\infty}\ot R)$  is a covering of the final object of the site $(R/(A_{\inf},\xi))_{\Prism}^\sub{op}$.

Let $A^\bx_{\delta}$ be the $(p,\xi)$-adic completion of $A^\bx\otimes_{\bb Z_p[U_1,\ldots, U_d]}\bb Z_p\{U_1,\ldots, U_d\}$, where the final ring denotes a free $\delta$-ring, and put $R_{\delta}=A_{\delta}^\bx/\xi$. Since $A_{\inf}[U_1,\ldots, U_d]/(p,\xi)^n\to A^\bx/(p,\xi)^n$ is \'etale, the $\delta$-ring structure on $A_{\inf}\{U_1,\ldots, U_d\}$ uniquely
extends to that on $A^\bx_{\delta}$ \cite[Lem.~2.18]{BhattScholze2019}.
By the formal smoothness of $A_{\inf}\to A^\bx$ and the universal property of free $\delta$-rings, we see that any object of $(R/(A_{\inf},\xi))_{\Prism}$ receives a map from $(A^\bx_{\delta}\to R_{\delta}\ot R)$ of $(R/(A_{\inf},\xi))_{\Prism}$.
Therefore $(A^\bx_{\delta}\to R_{\delta}\ot R)$ is a covering of the final object.

Let $A_{\delta\sub{perf}}^{\bx}$ be the perfection of the prism $(A^\bx_{\delta}, \xi)$ \cite[Lem.~3.9]{BhattScholze2019}, and put $R_{\delta\sub{perf}}:=A^\bx_{\delta\sub{perf}}/\xi$. Then $R_{\delta\sub{perf}}$ is a perfectoid ring, we have $A^\bx_{\delta\sub{perf}}=A_{\inf}(\cal R^{\sub{perf}}_{\delta})$ \cite[Thm.~3.10]{BhattScholze2019}, and the morphism $(A^\bx_{\delta}\to R_{\delta}\ot R)\to (A^\bx_{\delta\sub{perf}}\to R_{\delta\sub{perf}}\ot R)$ in $(R/(A_{\inf},\xi))_{\Prism}$ is a flat cover \cite[Lem.~2.11]{BhattScholze2019}. Therefore $(A^\bx_{\delta\sub{perf}}\to R_{\delta\sub{perf}}\ot R)$ is also a covering of the final object.

By Andr\'e's Lemma \cite[Thm.~7.12]{BhattScholze2019}, there exists a flat cover $(A^\bx_{\delta\sub{perf}}\to R_{\delta\sub{perf}}\ot R)\to (A_\inf(R')\to R'\ot R)$ in $(R/(A_{\inf},\xi))_{\Prism}$ such that $R'$ is a perfectoid ring and there exists a compatible system of $p$-power roots of $T_i$ in $R'$. Then $(A_\inf(R')\to R'\ot R)$ is also a covering of the final object. But since $R_{\infty}$ is the $p$-adic completion of $R\otimes_{\bb Z_p[T_1,\ldots,T_d]}\bb Z_p[T_1^{1/p^{\infty}},\ldots, T_d^{1/p^{\infty}}]$, the $A_{\inf}$-homomorphism $R\to R'$ extends to a homomorphism $R_{\infty}\to R'$, which induces a morphism 
$(A^\bx_{\infty}\to R_{\infty}\ot R)\to (A_\inf(R')\to R'\ot R)$. Thus we see that $(A^\bx_{\infty}\to R_{\infty}\ot R)$ is also a covering of the final object.
\end{proof}

Since we have a morphism \eqref{eq:PrismAinfAinfframe}
in $(R^{(1)}/(A_{\inf},\tilde\xi))_{\Prism}$, we obtain the following corollary:

\begin{corollary}\label{cor:FinObjCovqHiggs}
The object $(A^{\bx(1)}\to R^{(1)}\xot= R^{(1)})$ is a covering of the final object of the site $(R^{(1)}/(A_{\inf},\tilde\xi))_{\Prism}^\sub{op}$.
\end{corollary}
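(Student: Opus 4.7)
The plan is to derive the corollary directly from Proposition \ref{lem:PrismFinObjCov} together with the morphism \eqref{eq:PrismAinfAinfframe}, as the text preceding the corollary suggests. The key observation is a general principle about sites: if an object $U$ covers the final object and there is a morphism $U\to V$ in the site, then $V$ also covers the final object, because the cover witnessed by $U$ can be post-composed with $U\to V$ to yield a cover witnessed by $V$.

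Concretely, I would fix an arbitrary object $\frak B$ of $(R^{(1)}/(A_{\inf},\tilde\xi))_{\Prism}^\sub{op}$. By Proposition \ref{lem:PrismFinObjCov}, there exists a covering family $\{\frak B_i\to\frak B\}$ in this site together with morphisms $\frak B_i\to (A^\bx_\infty\to R_\infty^{(1)}\ot R^{(1)})$. The morphism \eqref{eq:PrismAinfAinfframe}, originally formulated as a morphism $(A^{\bx(1)}\to R^{(1)}\xot= R^{(1)})\To (A^\bx_\infty\to R_\infty^{(1)}\ot R^{(1)})$ in $(R^{(1)}/(A_{\inf},\tilde\xi))_{\Prism}$, becomes a morphism in the opposite direction when transported to the opposite site $(R^{(1)}/(A_{\inf},\tilde\xi))_{\Prism}^\sub{op}$, namely from $(A^\bx_\infty\to R_\infty^{(1)}\ot R^{(1)})$ to $(A^{\bx(1)}\to R^{(1)}\xot= R^{(1)})$. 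Composing each $\frak B_i\to(A^\bx_\infty\to R_\infty^{(1)}\ot R^{(1)})$ with this morphism yields $\frak B_i\to(A^{\bx(1)}\to R^{(1)}\xot= R^{(1)})$ for each $i$, which is exactly the condition required for $(A^{\bx(1)}\to R^{(1)}\xot= R^{(1)})$ to cover the final object.

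The argument is purely formal, amounting to nothing more than a composition of morphisms already exhibited in the site. All of the substantive work, in particular the invocation of Andr\'e's Lemma to produce a perfectoid cover with compatible $p$-power roots of the coordinates $T_i$, has already been carried out in the proof of Proposition \ref{lem:PrismFinObjCov}, so there is no remaining obstacle. The only point requiring mild care is bookkeeping of the direction of \eqref{eq:PrismAinfAinfframe} relative to the opposite convention of Definition \ref{definiton_prismatic_crystal}, but this has no content beyond that already made explicit in the remark preceding the corollary statement.
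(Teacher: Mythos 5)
Your proposal is correct and is exactly the argument the paper intends: the paper's entire "proof" of this corollary is the one-line remark that the morphism \eqref{eq:PrismAinfAinfframe} exists in $(R^{(1)}/(A_{\inf},\tilde\xi))_{\Prism}$, and you have simply spelled out the resulting formal composition (with the correct bookkeeping of directions between the affine category and its opposite).
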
 

Also let $R_1:=A^\bx/\tilde\xi$, regarded as an $R^{(1)}$-algebra by reducing $F:A^{\bx(1)}\to A^\bx$ modulo $\tilde\xi$. Note that the reduction modulo $\tilde\xi$ of the canonical injection $A^\bx\hookrightarrow A^\bx_{\infty}$ defines an $R^{(1)}$-homomorphism $R_1\to R_{\infty}^{(1)}$, whence the injection defines a morphism 
$$(A^\bx\to R_1\ot R^{(1)})\To (A^\bx_{\infty} \to R^{(1)}_{\infty}\ot R^{(1)})$$ in $(R^{(1)}/(A_{\inf},\tilde\xi))_{\Prism}$. So from Proposition \ref{lem:PrismFinObjCov} we also obtain the following corollary which will be needed later:

\begin{corollary}\label{corol:DphiFinObjCov}
The object $(A^\bx\to R_1\ot R^{(1)})$ is a covering of the final object of the site $(R^{(1)}/(A_{\inf},\tilde\xi))_{\Prism}^\sub{op}$
\end{corollary}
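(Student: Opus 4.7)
The plan is to invoke Proposition \ref{lem:PrismFinObjCov} together with the morphism $(A^\bx \to R_1 \ot R^{(1)}) \to (A^\bx_{\infty} \to R^{(1)}_{\infty} \ot R^{(1)})$ constructed in the paragraph immediately preceding the corollary. The argument follows the exact pattern of the final step of the proof of Proposition \ref{lem:PrismFinObjCov}, in which $(A^\bx_{\infty} \to R_{\infty} \ot R)$ was shown to be a covering of the final object by exhibiting a morphism from it to $(A_\inf(R') \to R' \ot R)$.

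The general principle in play (implicit in the last step of the proof of Proposition \ref{lem:PrismFinObjCov}) is: if $\frak B \to \frak C$ is any morphism in the site $(R^{(1)}/(A_\inf, \tilde\xi))_\Prism^\sub{op}$ and $\frak C$ is a covering of the final object, then so is $\frak B$. Indeed, given any object $\frak B'$ of the site, the covering hypothesis on $\frak C$ yields, by definition, a covering $\frak B'' \to \frak B'$ together with a morphism $\frak C \to \frak B''$; pre-composing with the given $\frak B \to \frak C$ produces a morphism $\frak B \to \frak B''$, which is exactly what is needed to witness that $\frak B$ also covers the final object.

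Applying this principle with $\frak B = (A^\bx \to R_1 \ot R^{(1)})$ and $\frak C = (A^\bx_{\infty} \to R^{(1)}_{\infty} \ot R^{(1)})$ — where the required morphism $\frak B \to \frak C$ is the one already constructed in the text from the inclusion of $\delta$-rings $A^\bx \hookrightarrow A^\bx_{\infty}$, and where $\frak C$ is known to cover the final object by Proposition \ref{lem:PrismFinObjCov} — yields the corollary. There is no serious obstacle; all the substantive content (the universal property of free $\delta$-rings to handle arbitrary small objects, the passage to perfections, and the appeal to Andr\'e's lemma to enlarge to perfectoids containing $p$-power roots of the $T_i$) has already been absorbed into the proof of Proposition \ref{lem:PrismFinObjCov}, and this corollary is a purely formal consequence.
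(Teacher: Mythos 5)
Your proposal is correct and coincides with the paper's own argument: the paper deduces the corollary in exactly this way, by composing with the morphism $(A^\bx\to R_1\ot R^{(1)})\to(A^\bx_\infty\to R_\infty^{(1)}\ot R^{(1)})$ induced by the $\delta$-ring inclusion $A^\bx\hookrightarrow A^\bx_\infty$ and invoking Proposition \ref{lem:PrismFinObjCov}. Just take care with the variance: that morphism lives in $(R^{(1)}/(A_\inf,\tilde\xi))_\Prism$ (the affine, ring-map direction) rather than in its opposite, so the formal transitivity step really amounts to composing the ring map $A^\bx\hookrightarrow A^\bx_\infty$ with the witnessing ring maps $A^\bx_\infty\to B''$ supplied by the proposition.
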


\subsection{Crystals as $A^\square$-modules with stratification}\label{ss_crystals_as_strats}
To prove Theorem \ref{thm:main} we need to reinterpret crystals in the usual way, namely in terms of modules with stratification.

We begin by verifying a couple of lemmas concerning base change and tensor product of prisms, since there is no dedicated reference in \cite{BhattScholze2019}. They yield Corollary \ref{cor:DescentFlatCover}, namely flat descent in the prismatic site, which is required to then identify crystals with stratifications.

\begin{lemma}\label{lem:CompletionOverPrism}
Let $(A,I) \to (A',I')$ be a map of bounded prisms and let $M$ be a $(p,I)$-completely flat $A$-module.

\begin{enumerate}
\item The natural morphism $M\otimes_A^LA'\to M\hat\otimes_AA':=\projlim_n (M\otimes_AA')/(p,I')^n$ witnesses the domain as the derived $(p,I')$-completion of the codomain. Moreover,  $M\hat\otimes_AA'$ is $(p,I')$-completely flat, and the natural homomorphism
$(M\hat\otimes_AA')/(p,I')^n\to (M\otimes_AA')/(p,I')^n$ is an isomorphism for any $n\ge0$.

\item Let $(A',I')\to (A'',I'')$ be another map of bounded prisms. Then the homomorphism $M\otimes_AA''\cong(M\otimes_AA')\otimes_AA''\to (M\hat\otimes_AA')\otimes_AA''$ induces an isomorphism $M\hat\otimes_AA''\xrightarrow{\cong} (M\hat\otimes_AA')\hat\otimes_AA''$.
\end{enumerate}
\end{lemma}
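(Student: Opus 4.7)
The plan is to reduce everything to computations modulo powers of $(p,I')$, exploiting the fact that a map of bounded prisms satisfies $IA'=I'$, so the $(p,I)$-adic and $(p,I')$-adic topologies on $A'$ coincide; bounded prisms moreover have bounded $(p,I)^\infty$-torsion, which will be essential when commuting limits and tensor products.

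For part (i), I would first observe that by $(p,I)$-complete flatness of $M$, for each $n\ge1$ the derived base change $(M\otimes_A^LA')\otimes_A^LA/(p,I)^n$ reduces to the classical $M/(p,I)^n\otimes_{A/(p,I)^n}A'/(p,I')^n$, which is concentrated in degree $0$ and flat over $A'/(p,I')^n$. Taking $R\lim_n$ then computes the derived $(p,I')$-completion of $M\otimes_A^LA'$; since the transition maps in this inverse system are surjective (they are base changes of the surjections $A/(p,I)^{n+1}\to A/(p,I)^n$), the derived limit is concentrated in degree $0$ and coincides with the classical inverse limit $M\hat\otimes_AA'$. This simultaneously identifies the derived completion with the classical one, gives the isomorphism $(M\hat\otimes_AA')/(p,I')^n\cong(M\otimes_AA')/(p,I')^n$ claimed at the end of (i), and shows $(p,I')$-complete flatness of $M\hat\otimes_AA'$ over $A'$, since each reduction $M/(p,I)^n\otimes_{A/(p,I)^n}A'/(p,I')^n$ is flat over $A'/(p,I')^n$ by flatness of the right-hand factor.

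For part (ii), I would apply (i) twice. By (i) applied to $(A,I)\to(A',I')$, the module $M\hat\otimes_AA'$ is $(p,I')$-completely flat over $A'$, so (i) for $(A',I')\to(A'',I'')$ identifies $(M\hat\otimes_AA')\hat\otimes_{A'}A''$ with the derived $(p,I'')$-completion of $(M\hat\otimes_AA')\otimes_{A'}^LA''$. Combining the associativity of derived base change with another invocation of (i) for the composite $(A,I)\to(A'',I'')$, both sides are seen to compute the derived $(p,I'')$-completion of $M\otimes_A^LA''$, namely $M\hat\otimes_AA''$; the comparison is verified at the level of reductions modulo $(p,I'')^n$ via the isomorphism from the last sentence of (i), and the interpretation of $\hat\otimes_A$ in the statement as $\hat\otimes_{A'}$ for objects already carrying an $A'$-structure is forced by this chain of identifications.

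The main obstacle is the degree-$0$ concentration of the $R\lim$ in (i): one must genuinely use $(p,I)$-complete flatness of $M$ (to kill higher Tors modulo each $(p,I)^n$) together with surjectivity of the transition maps (to force $R^1\!\lim$ to vanish), and the bounded-prism hypothesis in the background to keep $(p,I)^\infty$-torsion under control throughout. Once (i) is established, part (ii) is essentially formal manipulation with associativity of derived base change and the universal property of derived completion.
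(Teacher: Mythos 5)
Your reduction of $(M\otimes_A^LA')\otimes_{A'}^LA'/(p,I')^n$ to the classical, flat module $M/(p,I)^n\otimes_{A/(p,I)^n}A'/(p,I')^n$ is correct, and your part (ii) is fine granted (i) (the paper disposes of (ii) in one line as a formal consequence of the last claim of (i)). However, there is a genuine gap at the pivotal step of (i): the assertion that ``taking $R\!\projlim_n$ then computes the derived $(p,I')$-completion of $M\otimes_A^LA'$''. For a non-Noetherian ring the derived $J'$-completion of a complex $K$ is \emph{not} in general $R\!\projlim_n K\otimes^L_{A'}A'/J'^n$; it is $R\!\projlim_n K\otimes^L_{A'}\op{Kos}(A';f_1^n,\dots,f_r^n)$ for generators $f_i$ of $J'$ (equivalently, the value of the left adjoint to the inclusion of derived $J'$-complete objects). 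Identifying this with the limit of the adic reductions requires comparing the pro-systems $\{\op{Kos}(A';\ul f^n)\}_n$ and $\{A'/J'^n\}_n$, and this comparison is exactly where the boundedness of the prism must enter concretely, not merely ``in the background''. Your stated justification --- surjectivity of the transition maps forces $R^1\!\projlim$ to vanish --- only shows that $R\!\projlim_n(M\otimes_AA')/(p,I')^n$ is discrete and agrees with the classical limit; it says nothing about whether that limit is the derived completion. You have therefore misidentified the main obstacle: it is not the degree-$0$ concentration of the $R\!\projlim$, but the identification of that $R\!\projlim$ with the derived completion in the first place.

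The paper's proof runs in the opposite direction and avoids this issue: it forms the abstract derived completion $M\hat\otimes_A^LA'$, uses the adjunction identity $(M\hat\otimes_A^LA')\otimes_{A'}^LA'/J'^n\cong(M\otimes_A^LA')\otimes_{A'}^LA'/J'^n$ together with your mod-$(p,I)$ computation to see that $M\hat\otimes_A^LA'$ is $(p,I')$-completely flat, and then invokes \cite[Lem.~3.7(2)]{BhattScholze2019} --- whose proof is where the bounded-torsion hypothesis is actually consumed --- to conclude that a derived $(p,I')$-complete, $(p,I')$-completely flat object over a bounded prism is discrete and classically $(p,I')$-complete, whence it coincides with $\projlim_n(M\otimes_AA')/(p,I')^n$. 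To repair your argument you must either cite that lemma at the step in question or reprove it (via the pro-isomorphism of the Koszul and adic towers for bounded prisms); as written, the step is unjustified and can fail for a general pair $(A',J')$.
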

\begin{proof}
The claim (ii) immediately follows from the last claim of (i). 
Put $J'=pA'+I'$.  Let $M\hat\otimes^L_AA'$ be the derived $J'$-completion
of $M\otimes^L_AA'$. Then we have
\begin{equation}
(M\hat\otimes^L_AA')\otimes_A^L(A'/J^{\prime n})\xleftarrow{\cong}
(M\otimes_A^LA')\otimes^L_{A'}(A'/J^{\prime n})
\cong M\otimes_{A'}^L(A'/J^{\prime n}).
\label{eqn_complete}
\end{equation}
(We obtain the first isomorphism by considering the left adjoints of
$D(A'/J^{\prime n})\hookrightarrow D_{J'\text{-comp}}(A')
\hookrightarrow D(A')$.) Since $M$ is $(p,I)$-completely flat,
(\ref{eqn_complete}) for $n=1$ implies that $M\hat\otimes^L_AA'$ is $J'$-completely flat.
By \cite[Lem.~3.7(2)]{BhattScholze2019}, $M\hat\otimes^L_AA'$ is discrete
and $M':=H^0(M\hat\otimes^L_AA')$ is classically
$J'$-complete. The isomorphisms (\ref{eqn_complete}) imply
$M'/J^{\prime n}=M\otimes_{A}(A'/J^{\prime n})$. This completes the proof.
\end{proof}

\begin{lemma}[cf.~the proof of {\cite[Corol.~3.12]{BhattScholze2019}}]
\label{lem:PrismFibProd}
Let $(A_1,IA_1)\xleftarrow{f}(A,I)\xrightarrow{g}(A_2,IA_2)$ be maps of bounded prisms such that $f$ is flat (resp.~faithfully flat). Let $A_3:=\projlim_n(A_1\otimes_AA_2)/(p,I)^n$, equipped with the $\delta$-ring structure induced by those on $A,A_1, A_2$ as in \cite[Rem.~2.7 \& Lem.~2.17]{BhattScholze2019}. Then $(A_3,IA_3)$ is a bounded prism and represents the coproduct $(A_1,IA_1)\sqcup_{(A,I)}(A_2,IA_2)$ in the category of bounded prisms. Moreover, the canonical map $(A_2,IA_2)\to (A_3,IA_3)$ of bounded prisms is flat (resp.~faithfully flat), and $A_3/(p,I)^n \isoto A_1/(p,I)^n\otimes_{A/(p,I)^n}A_2/(p,I)^n$ for all $n\ge1$.
\end{lemma}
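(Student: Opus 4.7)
The plan is to deduce essentially everything from Lemma \ref{lem:CompletionOverPrism} applied with $M=A_1$. By hypothesis $f$ is flat as a map of prisms, meaning $A_1$ is $(p,I)$-completely flat over $A$, so part (i) of that lemma applies and identifies $A_3$ with the derived $(p,IA_2)$-completion $A_1\hat\otimes_A A_2$ over $A_2$, shows that $A_3$ is $(p,IA_2)$-completely flat over $A_2$, and gives the identity $A_3/(p,I)^n=A_1/(p,I)^n\otimes_{A/(p,I)^n}A_2/(p,I)^n$ directly. This already establishes the last assertion of the lemma.

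Next I would verify that $(A_3,IA_3)$ is a bounded prism. The $\delta$-ring structure on $A_3$ extending those on $A_1$ and $A_2$ is supplied by the cited results \cite[Rem.~2.7 \& Lem.~2.17]{BhattScholze2019} applied in the $(p,I)$-complete setting, and classical $(p,I)$-completeness of $A_3$ is automatic by construction. Writing $I=(d)$ for a distinguished element $d\in A$, its image in $A_3$ remains distinguished since $\delta(d)$ already has unit image in $A$; moreover $d$ is a non-zero-divisor in $A_3$ because $A_2\to A_3$ is flat in the $(p,I)$-complete sense and $d$ is a non-zero-divisor in $A_2$, so $IA_3=(d)$ is an invertible ideal. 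For boundedness of $A_3/I$, use that $A_3/I$ is $p$-completely flat over $A_2/I$ by Lemma \ref{lem:CompletionOverPrism}(i); by the standard transfer of bounded $p^\infty$-torsion under $p$-completely flat maps this propagates boundedness from $A_2/I$, which holds since $(A_2,IA_2)$ is a bounded prism. This boundedness step is the one technical point of the argument.

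For the coproduct property, given another bounded prism $(B,IB)$ receiving compatible prism maps from $(A_1,IA_1)$ and $(A_2,IA_2)$ over $(A,I)$, the underlying ring map $A_1\otimes_A A_2\to B$ extends uniquely to the classical $(p,I)$-completion $A_3$ because $B$ is $(p,IB)$-complete and the image of $I$ lies in $IB$; this extension is automatically a $\delta$-homomorphism by the universal property of the pushout $\delta$-structure, and sends $IA_3$ into $IB$. Finally, for the flatness (resp.\ faithful flatness) of $(A_2,IA_2)\to(A_3,IA_3)$: flatness is Lemma \ref{lem:CompletionOverPrism}(i) again, and faithful flatness follows because when $f$ is faithfully flat each $A_1/(p,I)^n\otimes_{A/(p,I)^n}A_2/(p,I)^n$ is faithfully flat over $A_2/(p,I)^n$, so the isomorphism of paragraph one gives $(p,IA_2)$-complete faithful flatness of $A_2\to A_3$. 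The main obstacle is the boundedness verification in the second paragraph; the rest is routine juggling of the universal properties of completion and of $\delta$-structures.
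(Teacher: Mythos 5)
Your argument is correct and follows essentially the same route as the paper: Lemma \ref{lem:CompletionOverPrism} applied to $M=A_1$ and the map $(A,I)\to(A_2,IA_2)$ does all the work, and the coproduct and (faithful) flatness claims are handled identically. The only point to tighten is that your two technical steps — that the image of $I$ consists of non-zero-divisors in $A_3$ and that $A_3/IA_3$ has bounded $p^\infty$-torsion — are not formal consequences of $(p,I)$-complete flatness alone but are exactly the content of \cite[Lem.~3.7(2)]{BhattScholze2019} applied to the $(p,IA_2)$-completely flat $A_2$-module $A_3$, which is what the paper cites and which also removes your unnecessary assumption that $I$ is principal.
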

\begin{proof}
By applying Lemma \ref{lem:CompletionOverPrism} to the $A$-module $A_1$ and the map $(A,I)\to (A_2,IA_2)$, we see that $A_3$ is derived $(p,IA_2)$-complete, $(p,IA_2)$-completely flat, and the natural homomorphism  $A_3/(p,IA_2)^n\to (A_1\otimes_AA_2)/(p,IA_2)^n$ is an isomorphism. By applying \cite[Lem.~3.7(2)]{BhattScholze2019} to the $A_2$-module $A_3$, we obtain $A_3[IA_2]=0$ and the $p^{\infty}$-torsion boundedness of $A_3/IA_3$. This implies that $IA_3$ is a Cartier divisor of $A_3$, $A_3$ is derived $(p,IA_3)$-complete, and therefore $(A_3,IA_3)$ is a bounded prism. Since every bounded prism $(B,J)$ is classically $(p,J)$-complete \cite[Lem.~3.7(1)]{BhattScholze2019}, we see that $(A_3,IA_3)$ represents the coproduct in the category of bounded prisms. The last isomorphism in the lemma implies that $A_2/(p,I)\to A_3/(p,I)$ is faithfully flat if $A/(p,I)\to A_1/(p,I)$ is faithfully flat. 
\end{proof}

\begin{corollary}\label{cor:DescentFlatCover}
Let $(A,I)$ be a bounded prism, and let $\frak X$ be a $p$-adic smooth formal scheme over $A/I$.
\begin{enumerate}
\item If a presheaf $\cal F$ on $(\frak X/(A,I))_{\Prism}$ satisfies
the conditions of Definition \ref{definiton_prismatic_crystal} (except for a priori not being a sheaf), then in fact $\cal F$ is automatically a sheaf, hence a crystal locally free of finite type.
\item Let $\frak B_1=(\Spf(B_1)\leftarrow\Spf(B_1/IB_1)\to \frak X)\to
\frak B=(\Spf(B)\leftarrow \Spf(B/IB)\to \frak X)$ be a flat cover in the site $(\frak X/(A,I))_{\Prism}$; then the following descent of finite projective modules holds. Letting $\frak B_{\nu}=(\Spf(B_{\nu})\leftarrow\Spf(B_{\nu}/IB_{\nu})\to \frak X)$ be the fiber product over $\frak B$ of $\nu$ copies of $\frak B_1$, for $\nu=2,3$, then the category of finite projective $B$-modules is equivalent to the category of the following data: a finite projective $B_1$-module $M$ equipped with an isomorphism of $B_2$-modules $c\colon p_2^*M\cong p_1^*M$ satisfying both $\Delta^*(c)=\op{id}_M$ and the cocycle condition on $B_3$. 
\end{enumerate}
\end{corollary}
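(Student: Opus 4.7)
The plan is to establish part (ii) — faithfully flat descent for finite projective modules in the prismatic setting — first, and then derive part (i) as a formal consequence.

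For (ii), I would exploit Lemma \ref{lem:PrismFibProd}, which gives the identification $B_\nu/(p,I)^n = B_1/(p,I)^n \otimes_{B/(p,I)^n} \cdots \otimes_{B/(p,I)^n} B_1/(p,I)^n$ and asserts the faithful flatness of $B/(p,I)^n \to B_1/(p,I)^n$ at each finite level. Starting from a descent datum $(M,c)$ with $M$ a finite projective $B_1$-module, reducing modulo $(p,I)^n$ yields a classical faithfully flat descent datum, and Grothendieck descent produces a compatible family of finite projective $B/(p,I)^n$-modules $M^{(n)}$ satisfying $M^{(n)} \otimes_{B/(p,I)^n} B_1/(p,I)^n \cong M/(p,I)^n$. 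Lemma \ref{lemma_limits_of_finite_projective}, applied to $B$ with the chain $(p,I)^n$ (whose successive quotients $(p,I)^n/(p,I)^{n+1}$ are nilpotent hence contained in the Jacobson radical of $B/(p,I)^{n+1}$), then assembles the $M^{(n)}$ into a finite projective $B$-module $M^0 := \projlim_n M^{(n)}$ with $M^0 \otimes_B B_1 \cong M$, recovering the isomorphism $c$ tautologically. Fully faithfulness of the functor follows from the corresponding level-by-level assertion.

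For (i), any covering family in the prismatic site admits (by quasi-compactness of $\Spf B$ at each finite level and by forming coproducts of bounded prisms via Lemma \ref{lem:PrismFibProd}) a refinement by a single flat cover $\frak B_1 \to \frak B$. Using the crystal pull-back isomorphisms $\cal F(\frak B_\nu) \cong \cal F(\frak B) \otimes_B B_\nu$ furnished by the presheaf hypothesis, the sheaf axiom for $\cal F$ on this cover reduces to the exactness of
\[
0 \to \cal F(\frak B) \to \cal F(\frak B) \otimes_B B_1 \rightrightarrows \cal F(\frak B) \otimes_B B_2,
\]
which is precisely the case of (ii) applied to the trivial descent datum coming from $\cal F(\frak B)$.

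The main obstacle I expect is handling the interaction of the completed tensor product defining $B_\nu$ with classical faithfully flat descent, since Grothendieck's theorem is formulated for ordinary (not completed) tensor products. The key point resolving this is precisely the final identity of Lemma \ref{lem:PrismFibProd}, which ensures that modulo $(p,I)^n$ the relevant tensor product is classical, so that descent can be performed at each finite level before reassembling via Lemma \ref{lemma_limits_of_finite_projective}.
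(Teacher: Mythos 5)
Your proposal is correct and follows essentially the same route as the paper: reduce modulo $(p,I)^n$ using the final identity of Lemma \ref{lem:PrismFibProd}, apply classical faithfully flat descent at each finite level, and reassemble using the classical $(p,I)$-completeness of the prisms involved (the paper leaves the reassembly implicit, while you correctly make it explicit via Lemma \ref{lemma_limits_of_finite_projective}). No gaps.
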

\begin{proof}
Lemma \ref{lem:PrismFibProd} implies that $B_\nu/(p,I)^n$ is the tensor product over $B/(p,I)^n$ of $\nu$ copies of $B_1/(p,I)^n$, for $\nu=2,3$. Since $B$, and $B_\nu$ are classically $(p,I)$-complete for $\nu=1,2,3$ \cite[Lem.~3.7(1)]{BhattScholze2019}, the claim follows from the faithfully flat descent of finite projective modules with respect to the faithfully flat homomorphism $B/(p,I)^n\to B'/(p,I)^n$ for each integer $n\geq 1$.
\end{proof}

\begin{remark}[Flat crystals]
For a bounded prism $(B,J)$, the category of derived $(p, J)$-complete, $(p,J)$-completely flat $B$-modules $M$ is equivalent to the category of inverse systems $(M_n)_{n\geq 1}$ of flat $B/(p,J)^n$-modules satisfying   $M_{n+1}\otimes_{B}B/(p,J)^n\xrightarrow{\cong} M_n$ $(n\geq 1)$; the equivalence is given by $M\mapsto (M/(p,I)^nM)_{n\geq 1}$
and $(M_n)_{n\geq 1}\mapsto \varprojlim_nM_n$ (we do not prove this equivalence, as we do not need it). Therefore, by Lemma \ref{lem:CompletionOverPrism}, one can define flat crystals on $(\frak X/(A,I))_{\Prism}$ by using $(p,I)$-derived complete, $(p,I)$-completely flat modules (instead of finite projective modules) and the classically $(p,I)$-completed base change $-\hat\otimes_BB'$, and then prove an analogue of Corollary \ref{cor:DescentFlatCover} for such crystals and modules.
\end{remark}

Now we turn to modules with stratification, beginning by recalling the appropriate \v{C}ech--Alexander complex in this setting:

\begin{definition}[{c.f., \cite[Cons.~16.13, Thm.~16.17]{BhattScholze2019}}]\label{def_q_env}
For each $\nu\ge0$, let \[A^\bx(\nu):=\hat{A^\bx\otimes_{A_\inf}\cdots\otimes_{A_\inf}A^\bx}\] be the $(p,\xi)$-completed tensor product of $\nu+1$ copies of $A^\bx$ over $A_{\inf}$. Noting that the kernel of the multiplication map $\Delta:A^\bx(\nu)\to A^\bx$ is generated modulo $(p,\xi)$ by a Koszul regular sequence (since $A^\bx/(p,\xi)$ is \'etale over a Laurent polynomial algebra over $A_\inf/(p,\xi)$), the composition $A^\bx(\nu)\xto{\Delta}A^\bx\xto{\sub{mod }\xi}R$ therefore admits a q-PD-envelope $A^\bx(\nu)\to\cal D(\nu)$ over the q-PD-pair $(A_{\inf},\xi)$ \cite[Lem.~16.10]{BhattScholze2019}. We will write $J_{\cal D(\nu)}$ for the kernel of the canonical map $\cal D(\nu)\to R$.
\end{definition}

\begin{remark}\label{rem:qPDenvChec}
We record that the following hold thanks to \cite[Lem.~16.10]{BhattScholze2019}:
\begin{enumerate}
\item The algebra $\cal D(\nu)$ is $\mu$-torsion free, and $\mu$-adically complete and separated.
\item $\cal D(\nu)/\mu$ is the $p$-adically completed PD-envelope of $A^\bx(\nu)/\mu\to  R$. 
\end{enumerate}
\end{remark}

Note that, by definition of a q-PD-pair, $\cal D(\nu)$ is equipped with a Frobenius satisfying $\phi(J_{\cal D(\nu)})\subseteq(\tilde\xi)$; it therefore induces a relative Frobenius map $F:R^{(1)}=(\cal D(\nu)/J_{\cal D(\nu)})^{(1)}\to \cal D(\nu)/\tilde\xi$. Let \begin{equation}\frak D(\nu):=\bigl(\cal D(\nu)\to \cal D(\nu)/\tilde\xi \xot{F} R^{(1)}\bigr)\label{eqn_qpd_envelope}\end{equation}
be the resulting object of $(R^{(1)}/(A_{\inf},\tilde\xi))_{\Prism}$. 

\begin{lemma}\label{lemma_Cech}
The object $\frak D(\nu)$ is the coproduct of $\nu+1$ copies of $\frak D(0)=(A^\bx\to R_1\ot R^{(1)})$ (which is the covering of the final object from Corollary \ref{corol:DphiFinObjCov}).
\end{lemma}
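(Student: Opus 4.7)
\textbf{The plan} is to verify that $\frak D(\nu)$ satisfies the universal mapping property of the coproduct of $\nu+1$ copies of $\frak D(0)$ in $(R^{(1)}/(A_\inf,\tilde\xi))_\Prism$. Concretely, for any test object $\frak B = (B \to B/\tilde\xi \xleftarrow{s_B} R^{(1)})$ of the site, I will exhibit a natural bijection between $\Hom(\frak D(\nu),\frak B)$ and $\Hom(\frak D(0),\frak B)^{\nu+1}$.

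\textbf{Unpacking the right side.} A morphism $\frak D(0) \to \frak B$ is a $\delta$-$A_\inf$-algebra homomorphism $f\colon A^\bx \to B$ whose reduction $\bar f\colon R_1 \to B/\tilde\xi$ satisfies $\bar f \circ F = s_B$, where $F\colon R^{(1)} \to R_1$ is the relative Frobenius. By the universal property of the $(p,\tilde\xi)$-completed tensor product together with the $(p,\tilde\xi)$-completeness of $B$, a $(\nu+1)$-tuple $(f_0,\dots,f_\nu)$ of such morphisms corresponds uniquely to a single $\delta$-$A_\inf$-algebra map $f\colon A^\bx(\nu) \to B$ with $f\circ\iota_i = f_i$, where $\iota_i\colon A^\bx \to A^\bx(\nu)$ are the canonical inclusions. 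Since $F$ is induced by the Frobenius lift on $A^\bx$ and each $\iota_i$ is a $\delta$-map (hence commutes with $\phi$), the conditions $\bar f_i \circ F = s_B$ for varying $i$ amount to $f(\phi(\iota_i(y)) - \phi(\iota_j(y))) \in \tilde\xi B$ for all $y \in A^\bx$ and $i,j$. Because the ideal $J = \ker(A^\bx(\nu) \to R)$ is generated by the differences $\iota_i(y) - \iota_j(y)$, this condition is precisely $\phi(f(J)) \subseteq \tilde\xi B$, i.e., $f(J) \subseteq \phi^{-1}(\tilde\xi B)$.

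\textbf{Unpacking the left side and matching.} A morphism $\frak D(\nu) \to \frak B$ is a $\delta$-$A_\inf$-algebra map $g\colon \cal D(\nu) \to B$ whose reduction satisfies $\bar g \circ F_\nu = s_B$, where $F_\nu\colon R^{(1)} \to \cal D(\nu)/\tilde\xi$ is the structure map from (\ref{eqn_qpd_envelope}). The matching with the right side now follows from the universal property of the q-PD envelope \cite[Lem.~16.10]{BhattScholze2019}: any bounded prism $(B,(\tilde\xi))$ naturally carries the structure of a q-PD pair over $(A_\inf,\xi)$ with q-PD ideal $\phi^{-1}(\tilde\xi B)$, the required divided powers on the relevant quotient being provided by the $\delta$-structure on $B$ together with distinguishedness of $\tilde\xi$ (cf.~Remark~\ref{rem:qPDenvChec}(ii) applied to $B$ itself). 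Hence the map $f\colon A^\bx(\nu) \to B$ produced above extends uniquely to a $\delta$-$A_\inf$-algebra map $g\colon \cal D(\nu) \to B$. The inverse assignment is simply $g \mapsto (g \circ \iota_i)_{i=0,\dots,\nu}$, mutual inverseness is formal, and compatibility of $g$ with $F_\nu$ follows from the construction of $F_\nu$ as the relative Frobenius, which factors through each $\bar\iota_i \circ F$.

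\textbf{Main obstacle.} The crux of the argument is Step~3: interpreting an arbitrary bounded prism $(B,(\tilde\xi))$ as a q-PD pair over $(A_\inf,\xi)$ with q-PD ideal $\phi^{-1}(\tilde\xi B)$, and then precisely applying the universal property of the q-PD envelope $\cal D(\nu)$ in this form. This is the conceptual bridge between q-PD envelopes and iterated self-coproducts of smooth framed lifts in the prismatic site, which is what is developed in \cite[\S16]{BhattScholze2019}; everything else is formal.
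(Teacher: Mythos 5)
Your overall strategy --- verifying the coproduct's universal property directly against an arbitrary test object $\frak B$ --- is genuinely different from the paper's, and your reduction of $\Hom(\frak D(0),\frak B)^{\nu+1}$ to the single datum of a $\delta$-$A_\inf$-algebra map $f\colon A^\bx(\nu)\to B$ carrying $\ker(A^\bx(\nu)\to R)$ into $\phi^{-1}(\tilde\xi B)$ is essentially correct (note only that this kernel is generated by $\xi$ \emph{together with} the differences, though $f(\xi)=\xi\in\phi^{-1}(\tilde\xi B)$ makes the omission harmless). This amounts to saying that the coproduct is the prismatic envelope of the $\delta$-pair $(A^\bx(\nu),J_{A^\bx(\nu)})$ with $J_{A^\bx(\nu)}=\ker(A^\bx(\nu)\to R_1(\nu))$, which is where the paper also starts, via \cite[Prop.~3.13]{BhattScholze2019}. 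The gap is in your Step 3. The universal property of the q-PD envelope \cite[Lem.~16.10]{BhattScholze2019} is a universal property in the category of \emph{q-PD pairs}, and your assertion that an arbitrary bounded prism $(B,(\tilde\xi))$ of the site carries a q-PD structure with ideal $\phi^{-1}(\tilde\xi B)$ is precisely the point that needs proof: the axioms of \cite[Def.~16.2]{BhattScholze2019} include, beyond $\phi(I)\subseteq [p]_qB$, the stability of $I$ under $x\mapsto\phi(x)/[p]_q-\delta(x)$ and boundedness/completeness conditions on $B/(q-1)$ and $B/I$, none of which are supplied by the $\delta$-structure alone for a general object of the site ($B$ need not be $\mu$-torsion-free, $B/\mu$ need not have bounded $p^\infty$-torsion, and $B/\phi^{-1}(\tilde\xi B)$ is merely a subring of $B/\tilde\xi$). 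Your citation of Remark~\ref{rem:qPDenvChec}(ii) does not bear on this: that remark describes $\cal D(\nu)/\mu$ as a completed PD-envelope and says nothing about endowing arbitrary prisms with q-PD structures.

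The paper sidesteps this entirely: having identified the coproduct with the prismatic envelope of $(A^\bx(\nu),J_{A^\bx(\nu)})$ --- whose universal property is stated among \emph{all} prisms, exactly matching the site --- it identifies that prismatic envelope with $\cal D(\nu)$ not by matching universal properties over different categories, but by comparing the two explicit constructions. The cocartesian square (\ref{eqn_q_vs_prism}) shows that $J_{A^\bx(\nu)}=(\tilde\xi,\phi(x_1),\dots,\phi(x_r))$ where $(\xi,x_1,\dots,x_r)=\ker(A^\bx(\nu)\to R)$, and both the prismatic envelope for the former ideal and the q-PD envelope for the latter are obtained by $(p,\tilde\xi)$-completely adjoining the same elements $\phi(x_i)/\tilde\xi$. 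To repair your argument, either prove that $(B,\phi^{-1}(\tilde\xi B))$ is a q-PD pair for every object of the site (and then check that site morphisms out of $\cal D(\nu)$ coincide with q-PD morphisms), or --- more robustly --- replace the appeal to \cite[Lem.~16.10]{BhattScholze2019} by the construction comparison just described.
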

\begin{proof}
Let \[R_1(\nu):=R_1\otimes_{R^{(1)}}\cdots\otimes_{R^{(1)}}R_1\] be the (automatically $p$-complete, as the relative Frobenius is finite flat) tensor product of $\nu+1$ copies of $R_1$ over $R^{(1)}$. The surjection $A^\bx\to R_1=A^\bx/\tilde\xi$ of $A_\inf$-algebras induces a surjection $A^\bx(\nu)\to R_1(\nu)$, whose kernel we will denote by $J_{A^\bx(\nu)}$, which fits into a cocartesian diagram
\begin{equation}\xymatrix{
A^\bx(\nu)\ar[r] & R_1(\nu)\\
A^\bx(\nu)\ar[u]^\phi\ar[r] & R\ar[u]
}\label{eqn_q_vs_prism}\end{equation}
Here the left vertical arrow is the Frobenius induced by the Frobenius endomorphism on $A^\bx$.

Since coproducts in $(R^{(1)}/(A_{\inf},\tilde\xi))_{\Prism}$ are computed by prismatic envelopes (when they exist) as in \cite[Prop.~3.13]{BhattScholze2019}, we must show the following
\begin{enumerate}
\item the map of prisms $(A_\inf,\tilde\xi)\to (\cal D(\nu),\tilde\xi)$ is the prismatic envelope of the map of $\delta$-pairs $(A_\inf,\tilde\xi)\to (A^\bx(\nu),J_{A^\bx(\nu)})$;
\item the resulting canonical map $A^\bx(\nu)/J_{A^\bx(\nu)}\to D(\nu)/\tilde\xi$ fits into a commutative diagram
\[\xymatrix{
 & R_1(\nu)\ar@{=}[r] & A^\bx(\nu)/J_{A^\bx(\nu)}\ar[d]\\
R^{(1)}\ar@{=}[r]\ar[ur]&(\cal D(\nu)/J_{\cal D(\nu)})^{(1)}\ar[r]_-F & D(\nu)/\tilde\xi
}\]
\end{enumerate}

(i): As we have noted in Definition \ref{def_q_env}, there exists a sequence $x_1,\dots,x_r\in A^\bx(\nu)$ which is $(p,\xi)$-completely regular relative to $A_\inf$ (see \cite[\S2.6]{BhattScholze2019}) such that the kernel of the bottom horizontal arrow of (\ref{eqn_q_vs_prism}) is $(\xi,x_1,\dots,x_r)$. Since $\phi$ is flat (even finite flat), it follows that $\phi(x_1),\dots,\phi(x_r)\in A^\bx(\nu)$ is a $(p,\tilde\xi)$-complete regular sequence relative to $A_\inf$ such that the kernel of the top horizontal arrow is $(\tilde\xi,\phi(x_1),\dots,\phi(x_r))$. It follows from \cite[Prop.~3.13]{BhattScholze2019} that the map of $\delta$-pairs $(A_\inf,\tilde\xi)\to (A^\bx(\nu),J_{A^\bx(\nu)})$ admits a prismatic envelope; moreover, comparing the constructions of prismatic envelopes \cite[Prop.~3.13]{BhattScholze2019} of $q$-PD-envelopes \cite[Lem.~16.10]{BhattScholze2019} shows that this prismatic envelope is precisely $\cal D(\nu)$.

(ii): The commutativity of the diagram follows by noting that it receives a surjection from the obviously commutative diagram
\[\xymatrix{
& A^\bx(\nu)\ar[r] & A^\bx(\nu)\ar[d]\\
A^\bx(\nu)^{(1)}\ar@{=}[r]\ar[ur]^F&\cal D(\nu)^{(1)}\ar[r]_-F & D(\nu)
}\]
thanks to (\ref{eqn_q_vs_prism}).
\end{proof}

The $\frak D(\nu)$ of course assemble to form a cosimplicial object $\frak D(\blob)$ of $(R^{(1)}/(A_{\inf},\tilde\xi))_{\Prism}$, which Lemma~\ref{lemma_Cech} states is exactly the \v{C}ech construction associated to $\frak D(0)$ (similarly to \cite[Const.~4.16]{BhattScholze2019}, where a free $\delta$-$A_\inf$-algebra is used instead of $A^\bx$).

Regarding notation in this typical cosimplicial object
\[\xymatrix@C=3cm{
\frak D(\blob)=& \frak D(0) \ar@/_10mm/[r]_{p_1}\ar@/^10mm/[r]^{p_2}& \frak D(1)\ar[l]_\Delta 
\ar@/_10mm/[r]_{p_{12}}\ar@/^10mm/[r]^{p_{23}}\ar[r]^{p_{13}} & \frak D(2)\ar@{}[r]-{\cdots}\ar@/_6mm/[l]\ar@/^6mm/[l]&
}\]
we write $p_\nu:\frak D(0)\to\frak D(1)$, for $\nu=1,2$, for the canonical morphism corresponding to the map $[0]=\{0\}\to[1]=\{0,1\}$ with image $\{\nu-1\}$; similarly we write $p_{\nu\mu}\colon \frak D(1)\to \frak D(2)$, for $(\nu,\mu)=(1,2), (2,3), (1,3)$, for the canonical morphism corresponding to the increasing map $[1]=\{0,1\}\to [2]=\{0,1,2\}$ with image $\{\nu-1,\mu-1\}$; and $\Delta\colon \frak D(0)\to \frak D(1)$ for the morphism corresponding to the unique map $[1]=\{0,1\}\to [0]=\{0\}$; and $q_{\nu}\colon \frak D(0)\to \frak D(2)$, for $\nu=1,2,3$, for the morphism corresponding to the map $[0]\to [2]$ with image $\{\nu-1\}$.

We may now define stratifications in the usual way:

\begin{definition}\label{definition_stratification}
A {\em stratification} on an $A^\bx$-module $N$ with respect to $\cal D(\bullet)$ is a $\cal D(1)$-linear isomorphism $\varepsilon\colon N\otimes_{A^\bx,p_2}\cal D(1) \xrightarrow{\cong} N\otimes_{A^\bx,p_1}\cal D(1)$ satisfying the
following two conditions:
\begin{enumerate}
\item The scalar extension $\Delta^*(\varepsilon)$ of $\varepsilon$ by 
$\Delta\colon \cal D(1)\to A^\bx$
is the identity map of $N$.\par
\item We have $p_{12}^*(\varepsilon)
\circ p_{23}^*(\varepsilon)=p_{13}^*(\varepsilon)\colon 
N\otimes_{A^\bx,q_3}\cal D(2)
\xrightarrow{\cong} N\otimes_{A^\bx_,q_1}\cal D(2)$.
\end{enumerate}
Let $\Strat(\cal D(\bullet))$ be the category of finite projective
$A^\bx$-modules equipped with a stratification with
respect to $\cal D(\bullet)$.
\end{definition}

By Corollaries \ref{corol:DphiFinObjCov} \& \ref{cor:DescentFlatCover} and Lemma \ref{lemma_Cech}, the standard recipe describing crystals in terms of stratifications \cite[\S IV.1.6]{Berthelot1974} gives an equivalence of categories
\begin{equation}\label{eq:PrismCrysStratEquiv}
\op{ev}_{\frak D(\bullet)}\colon \CR_{\Prism}(R^{(1)}/(A_{\inf},\tilde\xi))
\xrightarrow{\sim}\\
\Strat(\cal D(\bullet))
\end{equation}
defined by sending a crystal $\cal F$ to the $A^\bx$-module $N=\cal F(\frak D(0))$ equipped with stratification defined as the composition
$$\varepsilon\colon N\otimes_{A^\bx,p_2}\cal D(1)\xrightarrow[p_2]{\cong}\cal  F(\frak D(1))\xrightarrow[p_1^{-1}]{\cong}N\otimes_{A^\bx_,p_1}\cal D(1).$$

Next we show that (\ref{eq:PrismCrysStratEquiv}) is suitably compatible with the functor $\op{ev}_{A^{\bx(1)}}^\phi$. To do this, we observe that the product $\Gamma^{\nu+1}$ of $\nu+1$ copies of the group $\Gamma$ naturally acts on the $\delta$-ring $A^\bx(\nu)$, and the homomorphism $A^\bx(\nu)\to R$ is invariant under the action. Taking the $q$-PD-envelope, we thus obtain an action of $\Gamma^{\nu+1}$ on the $q$-PD pair $(\cal D(\nu),J_{\cal D(\nu)})$. This action of $\Gamma^{\nu+1}$ on $\cal D(\nu)$ can be also constructed via the interpretation of $\cal D(\nu)$ as a prismatic envelope from the proof of Lemma \ref{lemma_Cech}: indeed, since $\phi\colon A^{\bx(1)}\to A^\bx$ is $\Gamma$-equivariant, the action of $\Gamma$ on $A^\bx$ induces an action on the $R^{(1)}$-algebra $R_1$, and then an action of $\Gamma^{\nu+1}$ on the $R^{(1)}$-algebra $R_1(\nu)$; the homomorphism $A^\bx(\nu)\to R_1(\nu)$ is  $\Gamma^{\nu+1}$-equivariant, and so the action extends to the prismatic envelope of this homomorphism over $(A_{\inf},\tilde\xi)$, which we saw was precisely $\cal D(\nu)$.

\begin{remark}\label{remark_Gamma_on_D}
\begin{enumerate}
\item The actions of $\Gamma^{\nu+1}$ on $\cal D(\nu)$ are simplicially compatible in $\nu$, i.e., there is an induced action of the simplicial group $E_\blob\Gamma$ on the simplicial formal scheme $\Spf \cal D(\blob)$.
\item Since the action of $\Gamma$ on $A^\bx/\mu$ is the identity, the action of $\Gamma^{\times (\nu+1)}$ on $A^\bx(\nu)/\mu$ is also the identity. Hence Remark \ref{rem:qPDenvChec}(ii) implies that the action of $\Gamma^{\nu+1}$ on $\cal D(\nu)/\mu$ is also the identity.
\end{enumerate}
\end{remark}

%Let $f\colon [r]=\{0,1,\ldots, r\}\to [r']=\{0,1,\ldots, r'\}$ be an increasing map. Then the corresponding morphism $\Spf(\cal D(r'))\to \Spf(\cal D(r))$ for the simplicial formal scheme $\Spf(\cal D(\bullet))$ is compatible with the actions of $\Gamma^{\times (r'+1)}$ and $\Gamma^{\times (r+1)}$ via the homomorphism $\Gamma^{\times (r'+1)}=\op{Map}([r'],\Gamma)\to \op{Map}([r],\Gamma)=\Gamma^{\times (r+1)}$ induced by $f$. 

Given an $A^\bx$-module $N$ with stratification with respect to $\cal D(\blob)$, the $\Gamma^2$-action on $\cal D(1)$ induces a semilinear $\Gamma$-action on $N$ as follows. For each $\gamma\in\Gamma$, the base change of $\varepsilon$ along $\cal D(1)\xto{(1,\gamma)}\cal D(1)\xto{\Delta}A^\bx$ defines an isomorphism $N\otimes_{A^\bx,\gamma} A^\bx \isoto N$, i.e., a semi-linear action of the element $\gamma$ on $N$. Furthermore, for $\gamma'\in \Gamma$, the base change of this isomorphism along $\gamma'\colon A^\bx\xrightarrow{\cong}A^\bx$ is the base change of $\varepsilon$ along
$\gamma'\circ\Delta^*\circ(1,\gamma) =\Delta^*\circ (\gamma',\gamma'\gamma)$; so conditions (i) and (ii) in Definition \ref{definition_stratification} show that the isomorphisms $N\otimes_{A^\bx,\gamma}A^\bx\xrightarrow{\cong}N$, for $\gamma\in\Gamma$, define a semi-linear action of $\Gamma$ on $N$. Moreover, since the action of $\Gamma^{2}$ on $\cal D(1)/\mu$ is the identity, the induced action of $\Gamma$ on $N/\mu$ is also the identity (whence the action on $N$ is continuous by Lemma \ref{lemma_automatic_continuity}). We have thus defined a functor 
\begin{equation}\op{ev}_{A^\bx}^{\Strat}\colon \Strat(\cal D(\bullet))
\To \Rep_\Gamma^{\mu}(A^\bx),\label{eqn_stat_to_rep}\end{equation} which is compatible with $\op{ev}^\phi_{A^{\bx(1)}}$ in the following sense:

\begin{lemma}\label{lem:PrismStratEvaluation}
The following diagram is commutative up to canonical isomorphism:
$$\xymatrix@C=50pt{
\op{CR}_{\Prism}(R^{(1)}/(A_{\inf},\tilde\xi))\ar[r]_(.55){\op{ev}_{\frak D(\bullet)}}^(.55){\sim}
\ar[dr]_{\op{ev}^\phi_{A^{\bx(1)}}}&
\Strat(\cal D(\bullet))\ar[d]^{\op{ev}_{A^\bx}^{\Strat}}\\
&\Rep_{\Gamma}^{\mu}(A^\bx)
}
$$
\end{lemma}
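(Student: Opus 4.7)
The plan is to produce, for any crystal $\cal F \in \op{CR}_\Prism(R^{(1)}/(A_\inf, \tilde\xi))$, a natural isomorphism of generalised representations between $\op{ev}^\phi_{A^{\bx(1)}}(\cal F) = \cal F(A^{\bx(1)} \to R^{(1)}) \otimes_{A^{\bx(1)}, F} A^\bx$ and $\op{ev}^{\Strat}_{A^\bx}(\op{ev}_{\frak D(\blob)}(\cal F))$. Writing $M := \cal F(A^{\bx(1)} \to R^{(1)})$ and $N := \cal F(\frak D(0))$, I would first observe that the morphism $(A^{\bx(1)} \to R^{(1)} \xot{=} R^{(1)}) \to (A^\bx \to R_1 \ot R^{(1)})$ in $(R^{(1)}/(A_\inf, \tilde\xi))_\Prism$ induced by $F: A^{\bx(1)} \to A^\bx$ (and its reduction $F \bmod \tilde\xi: R^{(1)} \to R_1$) is $\Gamma$-equivariant, so the crystal property of $\cal F$ furnishes a canonical $A^\bx$-linear isomorphism $M \otimes_{A^{\bx(1)}, F} A^\bx \isoto N$. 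The lemma then reduces to identifying the stratification-induced $\Gamma$-action on $N$ with the action obtained from functoriality of $\cal F$ applied to $\Gamma$-automorphisms of $\frak D(0)$ in the prismatic site.

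The key technical step, which I expect to be the main obstacle, is to verify that for each $\gamma \in \Gamma$ the automorphism $\gamma: A^\bx \to A^\bx$ actually defines an automorphism of $\frak D(0) = (A^\bx \to R_1 \ot R^{(1)})$ in the prismatic site. The non-trivial requirement is compatibility with the structure map $R^{(1)} \to R_1 = A^\bx/\tilde\xi$ coming from $F \bmod \tilde\xi$, which is not automatic since $\gamma$ need not act trivially on the whole of $R_1$. However, the compatibility follows from $\gamma \equiv \op{id} \pmod \mu$ on $A^\bx$ and $\phi\gamma = \gamma\phi$: for $x \in A^\bx$, $\gamma(\phi(x)) - \phi(x) = \phi(\gamma(x) - x) \in \phi(\mu A^\bx) = \tilde\xi \mu A^\bx$, so $\gamma$ is the identity modulo $\tilde\xi$ on $\phi(A^\bx) = F(A^{\bx(1)})$, and in particular on the image of $R^{(1)}$ in $R_1$.

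Granted this, functoriality of $\cal F$ endows $N$ with a continuous semi-linear $\Gamma$-action, and I would then check that the stratification-induced action agrees with it by exploiting the ring-level factorisation $\gamma = \Delta \circ (1, \gamma) \circ p_2$ together with $\Delta \circ (1, \gamma) \circ p_1 = \op{id}_{A^\bx}$. Since the stratification $\varepsilon = \alpha_{p_1}^{-1} \circ \alpha_{p_2}$ is itself produced by $\cal F$ applied to the morphisms $p_1,p_2$ of the prismatic site, base-changing $\varepsilon$ along $\Delta \circ (1, \gamma): \cal D(1) \to A^\bx$ yields precisely the isomorphism $\gamma^* N \isoto N$ that functoriality of $\cal F$ assigns to the automorphism $\gamma$ of $\frak D(0)$. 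Composing with the $\Gamma$-equivariant isomorphism of the first paragraph will complete the proof, giving the desired canonical isomorphism in $\Rep_\Gamma^\mu(A^\bx)$.
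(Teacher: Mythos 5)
Your proposal is correct and takes essentially the same route as the paper: the two ingredients are the ring-level identities $\Delta\circ(1,\gamma)\circ p_1=\op{id}$, $\Delta\circ(1,\gamma)\circ p_2=\gamma$ (showing the stratification-induced action agrees with the functoriality action on $\cal F(\frak D(0))$) and the $\Gamma$-equivariant morphism $(A^{\bx(1)}\to R^{(1)}\xot{=}R^{(1)})\to\frak D(0)$ induced by $F$, whose pullback gives the crystal isomorphism. The "key technical step" you flag — that each $\gamma$ really is an automorphism of $\frak D(0)$ over $R^{(1)}$ — is handled in the paper's setup before the lemma (where the $\Gamma^{\nu+1}$-action on $\cal D(\nu)$ is constructed via $\Gamma$-equivariance of $F$), and your computation $\gamma(\phi(x))-\phi(x)=\phi(\gamma(x)-x)\in\tilde\xi\mu A^\bx$ correctly justifies it.
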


\begin{proof}
Let $\cal F$ be an object of $\CR_{\Prism}(R^{(1)}/(A_{\inf},\tilde\xi))$ and put $(N,\varepsilon):=\op{ev}_{\frak D(\bullet)}(\cal F)$. Since the following two diagrams commute
\[\xymatrix{
\cal D(1)\ar[r]^{(1,\gamma)}&\cal D(1)\ar[r]^{\Delta}&\cal D(0)\\
\cal D(0)\ar[u]^{p_1\sub{, resp.~}p_2}\ar[urr]_{\op{id}\sub{, resp.~}\gamma}&&
}\]
we see that the action of $\gamma$ on $N=\cal F(\frak D(0))$ induced by the stratification $\varepsilon$ (as defined immediately before the lemma) coincides with the action of $\gamma$ on 
$\cal F(\frak D(0))$ induced by that on $\frak D(0)$.  

The $A_{\inf}$-algebra morphism $F: A^{\bx(1)}\to A^\bx$ induces a $\Gamma$-equivariant map $ (A^{\bx(1)}\to R^{(1)}\xot{=} R^{(1)})\to \frak D(0)$ in $(R^{(1)}/(A_{\inf},\tilde\xi))_{\Prism}$, and pulling back along this map induces a functorial isomorphism in $\Rep^\mu_{\Gamma}(A^\bx)$,
$$\op{ev}_{A^{\bx(1)}}^\phi(\cal F)=
\cal F(A^{\bx(1)}\to R^{(1)}\xot{=}R^{(1)})\otimes_{A^{\bx(1)},F}A^\bx
\xrightarrow{\cong} \cal F(\frak D(0))=\op{ev}_{A^\bx}^{\Strat}(N,\varepsilon),$$
as desired.
\end{proof}

\subsection{Generalised representations as $A^\square$-modules with stratification}\label{ss_gen_reps_as_strat}
In this subsection we associate a stratification to any $(p,\mu)$-adically quasi-nilpotent q-connection. In Proposition \ref{prop:BKFToStrat} this will be shown to invert the functor $\op{ev}_{A^\bx}^\sub{Strat}$ from (\ref{eqn_stat_to_rep}).

Given $N\in \Rep^{\mu}_\Gamma(A^\bx)$, we let $\Gamma$ act on $N\otimes_{A^\bx,p_1}\cal D(1)$ by tensoring the existing action on $N$ with the action on $\cal D(1)$ defined via the inclusion $\Gamma=\Gamma\times\{1\}\subseteq\Gamma^2$, where we recall that we defined a canonical action of $\Gamma^2$ on $\cal D(1)$ just before Remark \ref{remark_Gamma_on_D}. The inverse of the following isomorphism may be seen as a q-analogue of the Taylor expansion:

%Given $N\in \Rep^{\mu}_\Gamma(A^\bx)$, we let $\Gamma^{2}$ act on $N\otimes_{A^\bx,p_1}\cal D(1)$ by tensoring the canonical action on $\cal D(1)$ (as defined just before Remark \ref{remark_Gamma_on_D}) with the action on $N$ via projection to the first component; i.e., $(\gamma_1,\gamma_2)$ acts as $\gamma_1\otimes(\gamma_1,\gamma_2)$. %We recall that the $\Gamma$-action on $N$ may also be viewed as a q-connection $\nabla:N\to N\otimes_{A^\bx}\q\Omega^1_{A^\bx/A_\inf}$, via Corollary \ref{corollary_reps_vs_q_connections}.

\begin{proposition}\label{prop:qPDTaylerExp}
Let $N\in \Rep^\mu_{\Gamma,\sub{conv}}(A^\bx)$. Then the homomorphism $\Delta_N:N\otimes_{A^\bx,p_1}\cal D(1)\to N$ induced by $\Delta\colon \cal D(1)\to A^\bx$ restricts to an isomorphism $$(N\otimes_{A^\bx,p_1}\cal D(1))^\Gamma\isoto N$$ of $A^\bx$-modules, with the $A^\bx$-module structure on the domain defined via $p_2:A^\bx\to\cal D(1)$.
\end{proposition}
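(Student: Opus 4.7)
The strategy is to construct an explicit inverse of $\Delta_N$ via a $q$-Taylor expansion, with convergence supplied by the $(p,\mu)$-adic quasi-nilpotence of the $q$-connection on $N$. Morally, the proof is the $q$-analogue of the classical statement that for a module with $p$-adically quasi-nilpotent integrable connection, the section $n\mapsto \sum_{\ul k} \nabla_{\ul k}(n)\otimes z_{\ul k}^{[k]}$ identifies the module with the horizontal sections of its pullback to a PD-envelope of the diagonal.

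First I would make explicit the structure of $\cal D(1)$: the kernel of $A^\bx(1)\to R$ is generated by $\xi$ together with elements $V_i:= p_2(U_i)p_1(U_i)^{-1}-1$ for $i=1,\ldots,d$, and the $q$-PD envelope formally adjoins $q$-divided-power expressions $V_{\ul k}$ (with $V_{\ul 0}=1$) indexed by $\ul k\in\bb N^d$, in a way that makes $\cal D(1)$ topologically free over $A^\bx$ (both via $p_1$ and via $p_2$) on the basis $\{V_{\ul k}\}_{\ul k\in\bb N^d}$ in the $(p,\mu)$-adic topology, with $\Delta(V_{\ul k})=0$ for $\ul k\neq 0$. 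The explicit action of $\gamma_i\in\Gamma\times\{1\}$ on the $V_{\ul k}$ may be computed from $(\gamma_i,1)(V_j)= q^{-1}(1+V_i)-1$ if $i=j$ and $V_j$ otherwise; after unwinding the $q$-PD structure this yields a $q$-combinatorial transformation rule compatible with the $q$-Leibniz rule obeyed by $\nabla_i$ on $N$.

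Next I would define the $q$-Taylor map
\[
\sigma : N \To N\otimes_{A^\bx,p_1}\cal D(1),\qquad n\longmapsto \sum_{\ul k\in \bb N^d}\nabla_{\ul k}(n)\otimes V_{\ul k},
\]
where $\nabla_{\ul k}:=\nabla_1^{k_1}\cdots\nabla_d^{k_d}$ is the iterated non-logarithmic $q$-connection associated to the $\Gamma$-action. Convergence holds because the operators $\nabla_i$ are $(p,\mu)$-adically quasi-nilpotent on $N$ (by hypothesis), so $\nabla_{\ul k}(n)\in (p,\mu)^{m(\ul k)}N$ with $m(\ul k)\to\infty$ as $|\ul k|\to\infty$. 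The rest of the proof consists of verifying four properties of $\sigma$: (i) $\Delta_N\circ\sigma=\op{id}_N$, which is immediate from $\Delta(V_{\ul 0})=1$ and $\Delta(V_{\ul k})=0$ for $\ul k\neq 0$; (ii) $\sigma$ is $A^\bx$-linear with respect to the $p_2$-structure on the target, equivalent to a $q$-Leibniz identity that reduces to the $q$-Leibniz rule for $\nabla$ combined with the multiplicative structure of the $V_{\ul k}$ under the two $A^\bx$-structures on $\cal D(1)$; (iii) the image of $\sigma$ is $\Gamma$-invariant, checked by applying $\gamma_i$ term-by-term, using the explicit action on $V_{\ul k}$ from the previous paragraph together with the relation $\gamma_i(n)=n+\mu U_i \nabla_i(n)$ on $N$ and a telescoping rearrangement of the resulting sum; and (iv) $\sigma$ surjects onto $(N\otimes_{A^\bx,p_1}\cal D(1))^\Gamma$. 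For (iv) I would expand any invariant element $y$ in the topological basis as $y=\sum_{\ul k} y_{\ul k}\otimes V_{\ul k}$ with $y_{\ul k}\in N$, and use the $\Gamma$-invariance equations, read off coefficient-by-coefficient in this basis, to deduce inductively on $|\ul k|$ that $y_{\ul k}=\nabla_{\ul k}(y_{\ul 0})=\nabla_{\ul k}(\Delta_N(y))$, whence $y=\sigma(\Delta_N(y))$.

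The main obstacle will be step (iii), which requires pinning down the precise $q$-combinatorics of the $\Gamma$-action on the $V_{\ul k}$: the $q$-PD structure involves $q$-factorials, so the action of $\gamma_i$ on $V_{\ul k}$ will distribute across several indices in a $q$-Leibniz fashion, and matching this with the expansion $\gamma_i=1+\mu U_i\nabla_i$ on $N$ requires a careful bookkeeping of signs, shifts and $q$-binomial coefficients. Once this combinatorial identity is in hand, steps (i), (ii) and (iv) follow by formal analogues of the classical Taylor-expansion arguments, with the $(p,\mu)$-adic quasi-nilpotence used only to justify convergence of the various infinite sums.
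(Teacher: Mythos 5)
Your overall strategy — construct an explicit $q$-Taylor section of $\Delta_N$ and identify its image with the $\Gamma$-invariants — is the right idea morally, but the proposal has a genuine gap at its foundation: you assume an explicit integral description of $\cal D(1)$ as a topologically free $A^\bx$-module on a basis of ``$q$-divided-power expressions $V_{\ul k}$'' with a computable $\Gamma$-action. No such description is available. The $q$-PD envelope of \cite[Lem.~16.10]{BhattScholze2019} is built from the $\delta$-ring operation $x\mapsto \phi(x)/[p]_q-\delta(x)$ via a prismatic-envelope construction; it is $(p,[p]_q)$-completely flat, but it is not presented (and not obviously presentable) as a completed free module on naive $q$-divided powers of $V_i=p_2(U_i)p_1(U_i)^{-1}-1$. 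Everything downstream — the convergence of $\sigma$, the $q$-Leibniz verification (ii), the $\Gamma$-invariance (iii) which you yourself flag as the ``main obstacle'' and do not carry out, and the coefficient-by-coefficient induction in (iv) — depends on this unestablished structural claim, so the proof as written cannot be completed.

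The paper's proof is designed precisely to avoid needing the integral structure of $\cal D(1)$. It uses only two facts (Remark \ref{rem:qPDenvChec}): that $\cal D(1)$ is $\mu$-torsion-free and $\mu$-adically complete and separated, and that $\cal D(1)/\mu$ is the classical $p$-completed PD-envelope, i.e.\ a $p$-completed PD-polynomial algebra on $\ol\tau_1,\dots,\ol\tau_d$. One then dévisses along the $\mu$-adic filtration: via the exact sequences $0\to\ol L\xto{\mu^n}L/\mu^{n+1}\to L/\mu^n\to 0$ and the corresponding sequences of ($q$-)de Rham complexes, the statement reduces to (a) $\Delta_N:\ol L^{\nabla_{\ol L}=0}\isoto\ol N$ and (b) vanishing of $H^1$ of the de Rham complex of $\ol L$. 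Both (a) and (b) are then classical crystalline theory: the connection on $\ol L$ is identified with the tensor product of $\nabla_{\ol N}$ and Berthelot's canonical connection on $\ol{\cal D}(1)$, the $p$-adic quasi-nilpotence of $\nabla_{\ol N}$ yields the Taylor-expansion stratification $\varepsilon(y)=\sum_{\ul j}\prod_i\nabla_{\ol N,i}^{j_i}(y)\otimes\ol\tau^{[\ul j]}$ (your map $\sigma$, but mod $\mu$, where it makes sense), and the Poincaré lemma for PD-polynomial algebras gives the acyclicity. If you want to salvage your approach, the realistic route is exactly this: carry out your explicit computation only modulo $\mu$, where the basis you want actually exists, and replace the integral coefficient-matching by the $\mu$-adic dévissage plus the $H^1$-vanishing.
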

\begin{proof}
To simplify notation, we put $L:=N\otimes_{A^\bx,p_1}\cal D(1)$, and $\ol L=L/\mu$  to simplify the notation. Since the action of $\Gamma^{ 2}$ on $\cal D(1)$ is 
the identity modulo $\mu$, as observed in Remark \ref{remark_Gamma_on_D}(ii), the same is true of the action of $\Gamma$ on $L$. Therefore, regarding $L$ as an $A^\bx$-module via its action on $N$ (so that the $\Gamma$-action we defined before the proof is semi-linear), there is an induced flat q-connection $\nabla_L$ on $L$ given by the construction of Corollary \ref{corollary_reps_vs_q_connections}, namely $\nabla_L\colon L\to L\otimes_{A^\bx}q\Omega^1_{A^\bx/A_{\inf}}$, $\ell\mapsto\sum_{i=1}^d\tfrac1\mu(\gamma_i(\ell)-\ell)\otimes \dlog U_i$. Note that $(N\otimes_{A^\bx,p_1}\cal D(1))^\Gamma=L^\Gamma=L^{\nabla_L=0}$, where the second equality implicitly uses continuity of the $\Gamma$-action (Lemma \ref{lemma_automatic_continuity}). The reduction modulo $\mu$ of $\nabla_L$ defines a flat connection $\nabla_{\ol L}\colon \ol{L}\to \ol L\otimes_{\ol{A}^\bx}\Omega_{\ol{A}^\bx/\ol{A}_{\inf}}$ on $\ol L$, where we write $\ol A_\inf:=A_\inf/\mu$ and similarly for other $A_\inf$-algebras/modules.

%The homomorphism $\Delta_N:L\to N$ induced by $\Delta$ is $\Gamma$-equivariant, so is compatible with the $q$-connections $\nabla_L,\nabla$ on each side.
Recalling that $\cal D(1)$ is $\mu$-torsion-free as in Remark \ref{rem:qPDenvChec}, we have the following commutative diagram with exact rows
$$\xymatrix@C=30pt{
0\ar[r]& \ol L^{\nabla_{\ol L}=0}\ar[r]^-{\mu^n}
\ar[r]\ar[d]^{\Delta_N\sub{mod }\mu}
&(L/\mu^{n+1})^{\nabla_L=0}\ar[r]\ar[d]^{\Delta_N\sub{mod }\mu^{n+1}}
&(L/\mu^{n})^{\nabla_L=0}\ar[d]^{\Delta_N\sub{mod }\mu^n}
&\\
0\ar[r]&\ol{N}\ar[r]^-{\mu^n}
&N/\mu^{n+1}\ar[r]
&N/\mu^n\ar[r]&0
}$$
as well as an exact sequence of q-de Rham complexes (in fact, the left term is an honest de Rham complex)
$$0\To \ol L\otimes_{\ol A^\bx}\Omega^{\bullet}_{\ol A^\bx/\ol{A}_{\inf}}
\xrightarrow{\mu^n} L/\mu^{n+1}\otimes_{A^\bx}q\Omega^{\bullet}_{A^\bx/A_{\inf}}
\To L/\mu^n \otimes_{A^\bx}q\Omega^{\bullet}_{A^\bx/A_{\inf}}\To 0.$$
Since the $\cal D(1)$-module $L$ is $\mu$-adically complete and separated by Remark \ref{rem:qPDenvChec}(i), in order to establish the desired isomorphism of the proposition it suffices to show that $\Delta_N\colon \ol{L}^{\nabla_{\ol L}=0} \to \ol{N}$ is an isomorphism and that the de Rham complex $\ol L\otimes_{\ol A^\bx}\Omega^{\bullet}_{\ol{A}^\bx/\ol{A}_{\inf}}$ is acyclic in positive degree (in fact, we only need $H^1=0$). 

Put $\tau_i=p_2(U_i)-p_1(U_i)\in A^\bx(1)$ for $i=1,\dots,d$, and let $\ol{\tau}_i$ be its image in $\ol{A}^\bx(1):=A^\bx(1)/\mu$. By Remark \ref{rem:qPDenvChec} and the standard description of the PD-envelopes appearing in the \v{C}ech-Alexander complex in crystalline cohomology \cite[Corol.~I.4.5.3(ii)]{Berthelot1974}, the $\res A^\bx$-algebra $\res{\cal D}(1)$ (regarded as an algebra via $p_1$) is the $p$-adic completion of the PD-polynomial algebra over $\ol{A}^\bx$ in variables $\ol{\tau}_1,\dots,\ol{\tau}_d$. Therefore each element $\ell\in L$ may be uniquely written in the form \[\ell=\sum_{j_1,\dots,j_d\ge0}n_{\ul j}\otimes \ol{\tau}^{[\ul j]},\] where $n_{\ul j}$ is a sequence of elements of $\ol{N}$ which converges to $0$ $p$-adically as $\vert\ul j\vert:=\sum_{i=1}^dj_i$ tends to infinity, and $\res{\tau}^{[\ul j]}:=\prod_{1\leq i\leq d}\res \tau_i^{[j_i]}$.

Let $\nabla_{\res L,i}$ and $\nabla_{\res N,i}$, for $i=1,\dots,d$, be the non-logarithmic coordinates of the connections $\nabla_{\res L}$ and $\nabla_{\res N}$ respectively (where $\nabla_{\res N}$ denotes the reduction of $\nabla$ modulo $\mu$). Using the formulae in $\cal D(1)$
\[(\gamma_i,1)(\tau_j)=\begin{cases}
p_2(U_i)-[\ep]p_1(U_i)=\tau_i-\mu p_1(U_i) & \text{if }i=j,\\
\tau_j & \text{if }i\neq j,
\end{cases}
\]
we obtain for each $i=1,\dots,d$ that
\begin{equation}\label{eq:ND(1)NablaFormula}
\nabla_{\res L,i}(\ell)=\sum_{j_1,\dots,j_d\ge0}\nabla_{\res N,i}(n_{\ul j})\otimes\res\tau^{[\ul j]}
-\sum_{\substack{j_1,\dots,j_d\ge0\\j_i\neq 0}}n_{\ul j}\otimes \res\tau^{[\ul j-\ul 1_i]},
\end{equation}
where $\ul 1_i:=(0,\dots,1,\dots,0)$ denotes the multi-index with a single $1$ in the $i^\sub{th}$-place.

Let $\Omega_{\ol{A}^\bx(1)/\ol{A},2}^1=\ol A^\bx(1)\otimes_{p_1,\ol A^\bx}\Omega^1_{\ol{A}^\bx/\ol A_{\inf}}$ be the differential module of $p_2\colon \ol{A}^\bx\to \ol{A}^\bx(1)$. As mentioned in the last paragraph of \cite[IV \S1.3]{Berthelot1974}, there exists a  canonical integrable connection $\nabla_2\colon \ol{\cal D}(1)\to \ol{\cal D}(1)\otimes_{\ol{A}^\bx(1)} \Omega_{\ol{A}^\bx(1)/\ol{A},2}=\ol{\cal D}(1)\otimes_{p_1,\ol{A}^\bx} \Omega_{\ol{A}^\bx/\ol{A}_{\inf}}$. Since the corresponding HPD-stratification is a ring isomorphism \cite[IV Corol.~IV.1.3.5]{Berthelot1974}, this connection is a derivation. Furthermore, the formula \cite[IV (1.3.6)]{Berthelot1974} implies that $\nabla_2(p_2(a)\ol{\tau}^{[\ul{j}]})=p_2(a)\sum_{1\leq i\leq d, j_i\geq 1}\ol{\tau}^{[\ul{j}-\ul{1}_i]}\nabla_2(\ol{\tau}_i)=-p_2(a)\sum_{1\leq i\leq d,j_i\geq 1}\ol{\tau}^{[\ul{j}-\ul{1}_i]}\otimes dU_i$ for $i=1,2,\ldots, d$ and $n\geq 1$ (since $\ol{\cal D}(1)$ is $n!$-torsion free, this actually reduces to the claim for $\ul{j}=\ul{1}_i$). Therefore formula (\ref{eq:ND(1)NablaFormula}) implies that $\nabla_{\ol{L}}$ is the tensor product of the  flat connection $\nabla_{\ol{N}}$ on $\ol{N}$ and the flat  connection $\nabla_2$ on $\ol{\cal D}(1)$. Since the connection $\nabla_{\ol{N}}$ on $\ol{N}$ is $p$-adically quasi-nilpotent, it defines a stratification $\varepsilon\colon \ol{N}\otimes_{\ol{A}^\bx,p_2}\ol{\cal D}(1) \xrightarrow{\cong} \ol{N}\otimes_{\ol{A}^\bx,p_1}\ol{\cal D}(1)=\ol{L}$ on $\ol N$, and $\nabla_{\ol{L}}$ on $\ol{L}$ corresponds to $\op{id}\otimes \nabla_2$ on the source. (Since the connections on the both sides are compatible with the derivation $\nabla_2$ on $\ol{\cal D}(1)$, the claim is reduced to checking the inclusion $\varepsilon(\ol{N})\subset \ol{L}^{\nabla_{\ol{L}}=0}$, which follows from the explicit formula $\varepsilon(y)=\sum_{j_1,\dots,j_d\ge0}\prod_{1\leq i\leq d}\nabla_{\ol{N},i}^{j_i}(y)\otimes \ol \tau^{[\ul j]}$ and the explicit formula for $\nabla_2$ on $\ol{\cal D}(1)$ above.) The de Rham complex $\ol{\cal D}(1)\otimes_{p_1,\ol{A}^{\bx}}\Omega_{\ol{A}^\bx/\ol{A}_{\inf}}^{\bullet}$ regarded as a complex of $\ol{A}^\bx$-modules via $p_2\colon \ol{A}^\bx\to \ol A^\bx (1)$ is homotopic to $0$ by the proof of \cite[Lem.~V.2.1.2]{Berthelot1974} and the explicit description of $\nabla_2$ above. (We may apply the argument of the proof of  \cite[Lem.~V.2.1.2]{Berthelot1974} to $\ol{N}\otimes_{p_2,\ol{A}^{\bx}}(\ol{\cal D}(1)\otimes_{p_1,\ol{A}^{\bx}}\Omega_{\ol{A}^\bx/\ol{A}_{\inf}}^{\bullet})$ instead.) Hence the de Rham complex of $\ol{L}$ with respect to $\nabla_{\ol{L}}$ is acyclic in positive degree and $\varepsilon$ induces an isomorphism $\ol{N}\xrightarrow{\cong}\ol{L}^{\nabla_{\ol{L}}=0}$. This completes the proof because the composition of the last map with $\Delta_N\colon\ol{L}^{\nabla_{\ol{L}}=0}\to\ol{N}$ is the identity map.
\end{proof}

Thanks to Proposition \ref{prop:qPDTaylerExp} we may now state the existence of the stratification induced by the q-connection $\nabla$:

\begin{proposition}\label{prop:BKFToStrat}
Let $N\in \Rep^{\mu}_{\Gamma,\sub{conv}}(A^\bx)$, and let $\varepsilon:N\otimes_{A^\bx,p_2}\cal D(1)
\to N\otimes_{A^\bx,p_1}\cal D(1)$ be the $\cal D(1)$-linear homomorphism induced by the inverse of the isomorphism in Proposition \ref{prop:qPDTaylerExp}. Then:
\begin{enumerate}
\item The homomorphism $\varepsilon$ is a stratification on  $N$ with respect to $\cal D(\bullet)$.\par
\item The action of $\Gamma$ on $\op{ev}^{\Strat}_{A^\bx}(N,\varepsilon)$, whose underlying $A^\bx$-module is $N$, coincides with the original action of $\Gamma$ on $N$; i.e., $\op{ev}^{\Strat}_{A^\bx}(N,\varepsilon)=N$ in $\Rep^\mu_\Gamma(A^\bx)$.
\end{enumerate}
\end{proposition}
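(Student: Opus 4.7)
I would prove part (ii) first, then return to part (i); the concrete computation in (ii) clarifies the structure of $\alpha_n:=\varepsilon(n\otimes_{p_2}1)$ and suggests the characterization used in the cocycle argument. By the definition of $\op{ev}^{\Strat}_{A^\bx}$ recalled before Lemma \ref{lem:PrismStratEvaluation}, the $\gamma$-action on $N$ recovered from $\varepsilon$ sends $n\mapsto \Delta_N(\phi'_\gamma(\alpha_n))$, where $\phi'_\gamma$ denotes the $(1,\gamma)$-action on the $\cal D(1)$-factor of $N\otimes_{p_1}\cal D(1)$ (well-defined because $(1,\gamma)$ fixes $p_1(A^\bx)$). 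Writing $\alpha_n=\sum_i n_i\otimes d_i$, the $\Gamma$-invariance from Prop.~\ref{prop:qPDTaylerExp} gives $\sum\gamma(n_i)\otimes(\gamma,1)(d_i)=\alpha_n$; applying $\phi'_\gamma$ to both sides yields $\sum\gamma(n_i)\otimes(\gamma,\gamma)(d_i)=\sum n_i\otimes(1,\gamma)(d_i)$, and applying $\Delta_N$ together with the identity $\Delta\circ(\gamma,\gamma)=\gamma\circ\Delta$ (since $\Delta$ is multiplication and $\gamma$ is an $A_\inf$-algebra automorphism of $A^\bx$) turns the left side into $\gamma(n)$ and the right into $\Delta_N(\phi'_\gamma(\alpha_n))$, giving (ii).

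For part (i), the equality $\Delta^*(\varepsilon)=\id_N$ is immediate from $\Delta_N(\alpha_n)=n$ combined with $\Delta\circ p_1=\Delta\circ p_2=\id$. The cocycle $p_{12}^*(\varepsilon)\circ p_{23}^*(\varepsilon)=p_{13}^*(\varepsilon)$ is $\cal D(2)$-linear on both sides, so I would reduce to checking equality on $n\otimes_{q_3}1$. My strategy is to establish a $\cal D(2)$-analogue of Prop.~\ref{prop:qPDTaylerExp}: a suitable subspace of invariants inside $N\otimes_{q_1}\cal D(2)$ — cut out by an appropriate subgroup of $\Gamma^3$ acting via the $\Gamma$-action on $N$ combined with the $\Gamma^3$-action on $\cal D(2)$ inherited from $A^\bx(2)$ — should map isomorphically onto $N$ via $\Delta_N^{(2)}$. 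This analogue is to be obtained by adapting the proof of Prop.~\ref{prop:qPDTaylerExp}: modulo $\mu$, the algebra $\cal D(2)/\mu$ is a PD-polynomial ring over $A^\bx$ in two independent families of variables $\tau_{12,i}$ and $\tau_{23,i}$ coming from the cosimplicial face maps, the induced connection decomposes as a tensor product of $\nabla_{\ol N}$ with the two canonical PD-connections, and the standard Poincar\'e / de Rham acyclicity in two families of variables provides both the acyclicity and the identification of invariants. Granted the analogue, one verifies that both $p_{13}^*(\alpha_n)$ and the composition $p_{12}^*(\varepsilon)(p_{23}^*(\varepsilon)(n\otimes_{q_3}1))$ lie in the invariant subspace and have image $n$ under $\Delta_N^{(2)}$, using the equivariance relations $(\gamma_1,\gamma_2,1)\circ p_{13}=p_{13}\circ(\gamma_1,1)$ and analogous formulae for $p_{12}, p_{23}$, together with the $\Gamma$-invariances of $\alpha_n$ and of the intermediate $\alpha_{n_i}$'s appearing in the composition; uniqueness then forces equality.

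\textbf{Main obstacle.} The hard point is pinning down the correct invariance condition on $N\otimes_{q_1}\cal D(2)$. Direct computation shows that $p_{12}^*(\varepsilon)(p_{23}^*(\varepsilon)(n\otimes_{q_3}1))$ is, at first glance, invariant only under a smaller subgroup than expected: acting by $(\gamma_1,\gamma_2,1)$ produces an extra $(1,\gamma_2,1)$-twist on the $\cal D(2)$-factor that is independent of $\gamma_1$. Absorbing this twist and setting up the $\cal D(2)$-analogue of Prop.~\ref{prop:qPDTaylerExp} with the precisely right combination of actions on $N$ and on the three factors of $A^\bx(2)$ is the principal bookkeeping challenge; once this is done, the two-variable Poincar\'e lemma and uniqueness complete the proof routinely.
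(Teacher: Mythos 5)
Your plan follows the paper's own route essentially step for step: part (ii) by the direct computation $\Delta_N\circ(1,\gamma)\circ\varepsilon|_N=\gamma$, and the cocycle identity by landing both sides in a distinguished subspace of $N\otimes_{A^\bx,q_1}\cal D(2)$ on which the augmentation to $N$ is injective. That subspace is exactly the $\Gamma\times\Gamma\times\{1\}$-invariants, and the required injectivity is the paper's Proposition \ref{prop:qTaylorExpVariant}, proved by precisely the two-families-of-PD-variables computation you sketch (note you only need injectivity of $\Delta_N^{(2)}$ on the invariants, not an isomorphism, which shortens that step). The obstacle you flag is real but is resolved by one observation that your part-(ii) computation already contains: the identity $\sum_i\gamma(n_i)\otimes(\gamma,\gamma)(d_i)=\sum_i n_i\otimes(1,\gamma)(d_i)$ shows that $(1,\gamma)\cdot\alpha_n$ is again $\Gamma\times\{1\}$-invariant with $\Delta_N$-image $\gamma(n)$, so by the uniqueness in Proposition \ref{prop:qPDTaylerExp} it equals $\alpha_{\gamma(n)}$; this is exactly the statement that $\varepsilon$ is $\Gamma^{2}$-equivariant (the paper's Lemma \ref{lem:GammaEquivStrat}). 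Once that is in hand, each pullback $p_{\nu\mu}^*(\varepsilon)$ is $\Gamma^{3}$-equivariant, where $\Gamma^3$ acts on $N\otimes_{A^\bx,q_\nu}\cal D(2)$ through the $\nu$-th coordinate on $N$ and canonically on $\cal D(2)$; since $n\otimes_{q_3}1$ is visibly fixed by $\Gamma\times\Gamma\times\{1\}$, both $p_{13}^*(\varepsilon)(n\otimes 1)$ and $p_{12}^*(\varepsilon)(p_{23}^*(\varepsilon)(n\otimes 1))$ lie in the invariant subspace, and the residual $(1,\gamma_2,1)$-twist you worry about does not occur. Their common image under $\Delta_N^{(2)}$ is $n$ because $\Delta^{(2)}\circ p_{\nu\mu}=\Delta^{(1)}$ and $\Delta^{(1)*}(\varepsilon)=\op{id}_N$, so injectivity forces the cocycle identity.

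One omission: Definition \ref{definition_stratification} requires $\varepsilon$ to be an \emph{isomorphism}, and your write-up never addresses invertibility. This comes for free once conditions (i) and (ii) of that definition are established: base-changing $\varepsilon$ along the involution of $\cal D(1)$ that swaps the two factors produces a two-sided inverse. You should record this sentence to complete part (i).
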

The construction of the stratification $\varepsilon$ in Proposition \ref{prop:BKFToStrat} is obviously functorial in $N$, and so we obtain a functor $$\Strat_{\cal D(\bullet)}\colon \Rep_{\Gamma,\sub{conv}}^{\mu}(A^\bx)\to \Strat \cal D(\bullet).$$
The goal of the rest of this subsection is to prove Proposition \ref{prop:BKFToStrat}.

Just before Proposition \ref{prop:qPDTaylerExp} we introduced an action of $\Gamma$ on $N\otimes_{A^\bx,p_1}\cal D(1)$; this may be extended along $\Gamma=\Gamma\times\{1\}\subseteq\Gamma^2$ to an action of $\Gamma^2$, by tensoring the canonical action of $\Gamma^2$ on $\cal D(1)$ by the action of $\Gamma^2$ on $N$ given by projection to the first coordinate. In terms of this extended action, the conclusion of Proposition \ref{prop:qPDTaylerExp} is an isomorphism $(N\otimes_{A^\bx,p_1}\cal D(1))^{\Gamma\times\{1\}}\isoto N$. More generally, for integers $1\le \mu\le \nu+1$, we need to introduce a semi-linear action of $\Gamma^{\nu+1}$ on the $\cal D(\nu)$-module $N\otimes_{A^\bx,p_\mu}\cal D(\nu)$, by tensoring its canonical action on $\cal D(\nu)$ with its action on $N$ given by projection to the $\mu^\sub{th}$ component.

\begin{lemma}\label{lem:GammaEquivStrat}
Under the notation and assumption in Proposition \ref{prop:BKFToStrat}, the $\cal D(1)$-linear homomorphism $\varepsilon$ is $\Gamma^{2}$-equivariant.
\end{lemma}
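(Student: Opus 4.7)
The plan is to use $\cal D(1)$-linearity to reduce the $\Gamma^2$-equivariance of $\varepsilon$ to a simpler identity involving the map $\iota$, and then exploit the decomposition $(1,\gamma) = (\gamma,\gamma)(\gamma^{-1},1)$ in the abelian group $\Gamma^2$ together with the intertwining $\Delta\circ(\gamma,\gamma) = \gamma\circ\Delta$.

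Write $\iota\colon N \to (N\otimes_{A^\bx,p_1}\cal D(1))^{\Gamma\times\{1\}}$ for the inverse of the isomorphism of Proposition~\ref{prop:qPDTaylerExp}, so $\varepsilon(n\otimes 1) = \iota(n)$. By $\cal D(1)$-linearity of $\varepsilon$, the $\Gamma^2$-equivariance reduces to checking it on elements $n\otimes 1 \in N\otimes_{A^\bx,p_2}\cal D(1)$. One computes $(\gamma_1,\gamma_2)(n\otimes 1) = \gamma_2(n)\otimes 1$, and using the $\Gamma\times\{1\}$-invariance of $\iota(n)$ (combined with the commutativity of $\Gamma\times\{1\}$ and $\{1\}\times\Gamma$ in $\Gamma^2$), one simplifies $(\gamma_1,\gamma_2)\iota(n) = (1,\gamma_2)\iota(n)$. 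Thus the lemma reduces to the identity $\iota(\gamma n) = (1,\gamma)\iota(n)$ for all $\gamma\in\Gamma$ and $n\in N$.

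Both sides of this identity lie in $(N\otimes_{A^\bx,p_1}\cal D(1))^{\Gamma\times\{1\}}$ (for the right-hand side this again uses the commutativity above), so Proposition~\ref{prop:qPDTaylerExp} reduces the problem to showing that they have the same image under $\Delta_N$. The image of the left-hand side is clearly $\gamma n$. For the right-hand side, since $\iota(n)$ is $(\gamma^{-1},1)$-invariant, we have $(1,\gamma)\iota(n) = (\gamma,\gamma)(\gamma^{-1},1)\iota(n) = (\gamma,\gamma)\iota(n)$. The intertwining $\Delta\circ(\gamma,\gamma) = \gamma\circ\Delta$ of ring homomorphisms $\cal D(1)\to A^\bx$ holds on the generators $p_1(a), p_2(a)$ of $A^\bx(1)\subseteq\cal D(1)$ and extends to $\cal D(1)$ by the universal property of the q-PD envelope; combined with the $\gamma$-semi-linearity of the $\Gamma$-action on $N$, this yields $\Delta_N((\gamma,\gamma)\iota(n)) = \gamma\cdot\Delta_N(\iota(n)) = \gamma n$, completing the proof.

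The argument is essentially formal once Proposition~\ref{prop:qPDTaylerExp} is in hand, so there is no significant obstacle. The key observation is that the off-diagonal element $(1,\gamma)\in\Gamma^2$, when applied to a $\Gamma\times\{1\}$-invariant vector, may be replaced by the diagonal element $(\gamma,\gamma)$, whose action on $\cal D(1)$ is the one compatible with $\Delta\colon\cal D(1)\to A^\bx$.
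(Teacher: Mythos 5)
Your proof is correct and follows essentially the same route as the paper's: reduce by $\cal D(1)$-linearity to elements $n\otimes 1$, use $\Gamma\times\{1\}$-invariance to replace the off-diagonal element by the diagonal one, and conclude via the intertwining $\Delta\circ(\gamma,\gamma)=\gamma\circ\Delta$ together with the injectivity of $\Delta_N$ on $\Gamma\times\{1\}$-invariants from Proposition~\ref{prop:qPDTaylerExp}. The only difference is cosmetic: you justify the identity $\Delta((\gamma,\gamma)(f))=\gamma(f)$ on generators and extend by the universal property of the q-PD envelope, whereas the paper asserts it directly.
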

\begin{proof}
It suffices to prove that $\varepsilon(\gamma_2(n)\otimes 1)=(\gamma_1,\gamma_2)(\varepsilon(n\otimes 1))$ for all $n\in N$ and $\gamma_1,\gamma_2\in \Gamma$.

For $\gamma\in \Gamma$ and $f\in \cal D(1)$ we have $\Delta((\gamma,\gamma)(f))=\gamma(f)$, which implies  that $\Delta_N((\gamma,\gamma)(\ell))=\gamma(\Delta_N(\ell))$ for $\gamma\in \Gamma$ and $\ell\in N\otimes_{A^\bx,p_1}\cal D(1)$, where we recall that $\Delta_N:N\otimes_{A^\bx,p_1}\cal D(1)\to N$ denotes the map induced by $\Delta$. Therefore if $\ell$ is $\Gamma\times \{1\}$-invariant we have $\Delta_N((\gamma_1,\gamma_2)(\ell))=\Delta_N((\gamma_2,\gamma_2)(\ell))=\gamma_2(\Delta_N(\ell))$. Setting $n:=\Delta_N(\ell)$, this proves the necessary claim.
\end{proof}

To prove Proposition \ref{prop:BKFToStrat}(i), we will use the following variant
of Proposition \ref{prop:qPDTaylerExp}. 

\begin{proposition}\label{prop:qTaylorExpVariant}
Let $N\in \Rep^{\mu}_{\Gamma,\sub{conv}}(A^\bx)$. Then the homomorphism $N\otimes_{A^\bx, p_1^*}\cal D(2)\to N$ induced by $\Delta\colon \cal D(2)\to A^\bx$ restricts to an injection $$(N\otimes_{A^\bx,p_1}\cal D(2))^{\Gamma\times\Gamma\times \{1\}}\hookrightarrow N.$$
\end{proposition}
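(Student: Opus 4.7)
The plan is to prove the proposition by adapting the strategy of Proposition \ref{prop:qPDTaylerExp}, taking $\Gamma \times \Gamma \times \{1\}$-invariants in two successive stages. Set $L := N \otimes_{A^\bx, p_1} \cal D(2)$. The action of $\Gamma_2 := \{1\} \times \Gamma \times \{1\}$ on $L$ is $A^\bx$-linear (acting only on $\cal D(2)$ through its middle factor, since this subgroup is trivial in the first coordinate) and is trivial modulo $\mu$ by Remark \ref{remark_Gamma_on_D}(ii); it therefore defines commuting $A^\bx$-linear endomorphisms $\nabla^{(2)}_i := (\gamma^{(2)}_i - 1)/\mu \colon L \to L$. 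Similarly, the action of $\Gamma_1 := \Gamma \times \{1\} \times \{1\}$ induces a flat q-connection $\nabla^{(1)}$ on $L$ (via $p_1$) as in Corollary \ref{corollary_reps_vs_q_connections}. Then $L^{\Gamma \times \Gamma \times \{1\}}$ is precisely the common kernel of $\nabla^{(1)}$ and the $\nabla^{(2)}_i$.

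Writing $\ol{(-)} := (-)/\mu$ and recalling from Remark \ref{rem:qPDenvChec}(ii) that $\ol{\cal D}(2)$ is the $p$-adically completed PD polynomial algebra over $\ol A^\bx$ (via $p_1$) in the variables $\ol\tau_{2,i} = p_2(U_i) - p_1(U_i)$ and $\ol\tau_{3,i} = p_3(U_i) - p_1(U_i)$, I first compute the common kernel of the $\nabla^{(2)}_{\ol L, i}$. A direct calculation using $\gamma^{(2)}_i(\ol\tau_{2,i}) = \ol\tau_{2,i} + \mu p_2(U_i)$ (and the fact that $\gamma^{(2)}_i$ fixes the other variables $\ol\tau_{2,j}$, all $\ol\tau_{3,k}$ and $p_1(A^\bx)$) shows that $\nabla^{(2)}_{\ol L, i}$ acts on $\ol L$ as a PD-derivation killing $\ol N$ and all variables other than $\ol\tau_{2,i}$, with $\nabla^{(2)}_{\ol L, i}(\ol\tau_{2,i}) = p_2(U_i)$. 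Expanding an element of $\ol L$ as $\ell = \sum_{\ul j_2, \ul j_3} n_{\ul j_2, \ul j_3} \otimes \ol\tau_2^{[\ul j_2]} \ol\tau_3^{[\ul j_3]}$ and unravelling the equations $\nabla^{(2)}_{\ol L, i}(\ell) = 0$ yields the recursions $j_{2,i} n_{\ul j_2, \ul j_3} + p_1(U_i) n_{\ul j_2 + \ul 1_i, \ul j_3} = 0$, which force $n_{\ul j_2, \ul j_3} = 0$ for all $\ul j_2 \neq \ul 0$. Consequently the common kernel of the $\nabla^{(2)}_{\ol L, i}$ is $\ol N \otimes_{\ol A^\bx, p_1} \ol{\cal D}_3$, where $\ol{\cal D}_3 \subseteq \ol{\cal D}(2)$ denotes the completed PD polynomial sub-algebra in the variables $\ol\tau_{3,i}$ alone.

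Now $\nabla^{(1)}$ commutes with the $\nabla^{(2)}_i$ and so restricts to an operator on the submodule $\ol N \otimes_{\ol A^\bx, p_1} \ol{\cal D}_3$. Identifying $\ol{\cal D}_3 \cong \ol{\cal D}(1)$ via $\ol\tau_{3,i} \leftrightarrow \ol\tau_i$, the formula $\gamma^{(1)}_i(\ol\tau_{3,i}) = \ol\tau_{3,i} - \mu p_1(U_i)$ identifies this restriction with precisely the connection analysed in the proof of Proposition \ref{prop:qPDTaylerExp}. Since $N$ has $(p, \mu)$-adically quasi-nilpotent q-connection by hypothesis, the mod-$\mu$ computation of that proof then applies verbatim and shows that $\Delta_N$ induces an isomorphism from the common kernel of $\nabla^{(1)}_{\ol L}$ and the $\nabla^{(2)}_{\ol L, i}$ onto $\ol N$ (in fact yielding bijectivity mod $\mu$, which is more than we need).

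To conclude integral injectivity, suppose $\ell \in L^{\Gamma \times \Gamma \times \{1\}}$ satisfies $\Delta_N(\ell) = 0$ and $\ell \neq 0$. Since $L$ is a finite projective $\cal D(2)$-module, hence $\mu$-torsion-free and $\mu$-adically separated (by Remark \ref{rem:qPDenvChec}(i)), we may write $\ell = \mu^n \ell'$ with $\ell' \in L \setminus \mu L$ for some $n \geq 0$. The $A_\inf$-linearity of $\nabla^{(1)}$ and the $\nabla^{(2)}_i$ together with $\mu$-torsion-freeness forces $\ell'$ to also lie in the common kernel, whence the mod-$\mu$ bijectivity yields $\Delta_N(\ell') \not\equiv 0 \pmod{\mu}$; but $\mu^n \Delta_N(\ell') = \Delta_N(\ell) = 0$ combined with $\mu$-torsion-freeness of $N$ forces $\Delta_N(\ell') = 0$, a contradiction. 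The principal technical step is the verification that the restriction of $\nabla^{(1)}_{\ol L}$ to $\ol N \otimes \ol{\cal D}_3$ literally matches the connection appearing in Proposition \ref{prop:qPDTaylerExp}, which requires careful bookkeeping of the PD structure and the $\Gamma$-actions on $\cal D(2)$.
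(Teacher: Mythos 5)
Your proof is correct and follows essentially the same strategy as the paper's: reduce to the statement modulo $\mu$ using the $\mu$-torsion-freeness and separatedness of $\cal D(2)$, describe $\ol{\cal D}(2)$ as a completed PD-polynomial algebra over $\ol{A}^\bx$ via $p_1$, and show that the joint kernel of the two induced connections consists of Taylor-type expansions determined by their constant term, so that $\Delta$ is injective (in fact bijective) there. The only harmless difference is your choice of PD-coordinates $p_2(U_i)-p_1(U_i)$, $p_3(U_i)-p_1(U_i)$ rather than the paper's $p_3(U_i)-p_1(U_i)$, $p_3(U_i)-p_2(U_i)$, which is why your elimination of the middle-factor variables needs a short induction on the recursion $j_{2,i}n_{\ul j_2,\ul j_3}+p_1(U_i)n_{\ul j_2+\ul 1_i,\ul j_3}=0$ where the paper's vanishing is immediate.
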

\begin{proof}
The strategy of the proof is the same as Proposition \ref{prop:qPDTaylerExp}.
Put $L=N\otimes_{A^\bx,p_1}\cal D(2)$, and 
$\ol{L}=L/\mu L$  to simplify the notation. 
Since the action of $\Gamma^3$ on $\cal D(2)$ is 
the identity modulo $\mu$ as observed in Remark \ref{remark_Gamma_on_D}(ii),
so is its action on $L$. Therefore, for $\nu=1,2$, regarding
$L$ as an $A^\bx$-module  via $p_{\nu}\colon A^\bx\to \cal D(2)$,
we may define a flat \txb $q$-connection $\nabla_L^{(\nu)}\colon L\to L\otimes_{p_{\nu},A^\bx}q\Omega^1_{A^\bx/A_{\inf}}$ by 
$\nabla^{(\nu)}_L(\ell)=\sum_{i=1}^d\frac{1}{\mu}(\iota_\nu(\gamma_i)-1)(\ell)\otimes d\log U_i$, where $\iota_{\nu}$ denotes the inclusion into the $\nu$th component $\Gamma\hookrightarrow \Gamma^{3}$.
We have $L^{\Gamma^{2}\times \{1\}}=L^{\nabla_{L}^{(1)}
=\nabla_L^{(2)}=0}$ because the $\Gamma^3$-action on $L$ is continuous
(Lemma \ref{lemma_automatic_continuity}). Hence, by the same argument as the proof
of Proposition~\ref {prop:qPDTaylerExp}, it suffices to prove that the homomorphism 
$\Delta_{\ol{N}}\colon \ol L^{\nabla_{\ol L}^{(1)}=\nabla_{\ol L}^{(2)}=0}\to \ol N$
induced by $\Delta\colon \ol{\cal D}(2)\to \ol A^{\bx}$ 
is an isomorphism, where $\nabla^{(\nu)}_{\ol L}$ is the flat connection 
$\ol L\to \ol L\otimes_{p_{\nu}, \ol{A}^\bx}\Omega^1_{\ol{A}^{\bx}/\ol{A}_{\inf}}$
obtained by taking the reduction modulo $\mu$ of $\nabla_{L}^{(\nu)}$.

Put $\tau_i^{(\nu)}=q_3^*(U_i)-q_{\nu}^*(U_i)\in A^\bx(2)$,
and let $\ol{\tau}_i^{(\nu)}$ be its image in $\ol{A}^\bx(2)$.
By Remark \ref{rem:qPDenvChec}(ii) and \cite[Corol. I.4.5.1 ii)]{Berthelot1980},
$\ol{\cal D}(2)$ regarded as an $\ol{A}^\bx$-algebra via $p_1$ is
the $p$-adic completion of the PD-polynomial algebra
over $\ol{A}^\bx$ in variables $\ol{\tau}_i^{(\nu)}$, for $1\leq i\leq d$ and $
\nu=1,2$. Hence each element $\ell$ of $\ol{L}$ may be uniquely written in the form 
$$\ell=\sum_{(\ul j,\ul j')\in \bb N^d\times \bb N^d}
n_{\ul j,\ul j'}\otimes \ol{\tau}^{(1)[\ul j]}\ol{\tau}^{(2)[\ul j']},$$
where $n_{\ul j,\ul j'}$ is an  element of $\ol{N}$ which converges to 
$0$ $p$-adically as $\vert\ul j \vert+\vert \ul j'\vert$ tends to infinity,
$\ol{\tau}^{(1)[\ul j]}
=\prod_{1\leq i\leq d}(\ol \tau_i^{(\nu)})^{[j_i]}$, and similarly for $\ol{\tau}^{(2)[\ul j']}$.
Let $\nabla_{\ol L,i}^{(\nu)}$ and $\nabla_{\ol N,i}$, for $i=1,\ldots, d$, be
the non-logarithmic coordinates of the connections $\nabla_{\ol L}^{(\nu)}$ and
$\nabla_{\ol N}:=(\nabla\mod \mu)$.
%(1\leq i\leq d)$ of $\ul{N}'$
%by $\nabla^{(\nu)}(y)=\sum_{1\leq i\leq d}\nabla_i^{(\nu)}(y)\otimes d U_i$
%$(y\in \ul{N}')$, and endomorphisms $\nabla_i$ $(1\leq i\leq d)$ of $\ul N$
%by $\nabla(y)=\sum_{1\leq i\leq d}\nabla_i(y)\otimes d U_i$ $(y\in \ul N)$.
Then, similarly to (\ref{eq:ND(1)NablaFormula}),  we obtain 
\begin{align*}
\nabla_{\ol L,i}^{(1)}(\ell)&=\sum_{(\ul j,\ul j')\in \bb N^d\times\bb N^d}\nabla_{\ol N,i}(n_{\ul j,\ul j'})\otimes\ol\tau^{(1)[\ul j]}
\ol\tau^{(2)[\ul j']}
-\sum_{(\ul j,\ul j')\in\bb N^d\times \bb N^d, j_i\geq 1}n_{\ul j,\ul j'}\otimes \ol\tau^{(1)[\ul j-\ul 1_i]}
\ul\tau^{(2)[\ul j']},\\
\nabla_{\ol L,i}^{(2)}(\ell)&=
-\sum_{(\ul j,\ul j')\in\bb N^d\times\bb N^d,j_i'\geq 1}n_{\ul j,\ul j'}\otimes \ol\tau^{(1)[\ul j]}
\ol\tau^{(2)[\ul j'-\ul 1_i]}.
\end{align*}
Hence $\nabla^{(1)}_{\ol L}(\ell)=\nabla^{(2)}_{\ol L}(\ell)=0$ if and only if $n_{\ul j,\ul j'}=0$ for
all $\ul j'\in \bb N^d\backslash \{\ul 0\}$, and 
$n_{\ul j,\ul 0}=\prod_{1\leq i\leq d}\nabla_{\ol N,i}^{j_i}(n_{\ul 0,\ul 0})$ 
for all $\ul j\in \bb N^d$, where $\ul 0:=(0,0,\ldots, 0)$.
As $\Delta_{\ol N}(\ell)=n_{\ul 0,\ul 0}$ in $\ol N$, the $p$-adic
quasi-nilpotence of $\nabla_{\ol N}$ implies that 
$\Delta_{\ol N}\colon \ol L^{\nabla_{\ol L}^{(1)}=\nabla_{\ol L}^{(2)}=0}\to \ol N$
is an isomorphism with inverse given by 
$n\mapsto \sum_{\ul j\in \bb N^d}\prod_{1\leq i\leq d}\nabla_i^{j_i}(n)
\otimes \ol \tau^{(1)[\ul j]}$. 
\end{proof}

\begin{proof}[Proof of Proposition \ref{prop:BKFToStrat}]
Part (i): It is clear from the definition that the base change of $\varepsilon$ along $\Delta\colon \cal D(1)\to A^\bx$ is the identity map. By Lemma \ref{lem:GammaEquivStrat}, the base changes $p_{\nu\mu}^*(\varepsilon)\colon N\otimes_{A^\bx,q_{\mu}}\cal D(2)\to N\otimes_{A^\bx,q_{\nu}}\cal D(2)$, for $(\nu,\mu)=(1,2),(2,3), (1,3)$, are $\Gamma^{ 3}$-equivariant; therefore the restrictions of $p_{13}^*(\varepsilon)$ and $p_{12}^*(\varepsilon)\circ p_{23}^*(\varepsilon)$ to $N$ have images in the $\Gamma^{ 2}\times \{1\}$-invariant part of $N\otimes_{A^\bx,q_1}\cal D(2)$, and their compositions with the injective homomorphism of Proposition \ref{prop:qTaylorExpVariant}  are the identity map. Thus we see $p_{13}^*(\varepsilon)=p_{12}^*(\varepsilon)\circ p_{23}^*(\varepsilon)$ since both sides are $\cal D(2)$-linear. The two properties of $\varepsilon$ proven above imply that $\varepsilon$ is an isomorphism: indeed, its inverse is given by base changing it along the involution $\cal D(1)\xrightarrow{\cong}\cal D(1)$ swapping the two factors.

(ii): Let $\gamma\in \Gamma$. By Lemma \ref{lem:GammaEquivStrat} we have the following commutative diagram, in which the vertical isomorphisms are the $\cal D(1)$-linear maps given by the action of $(1,\gamma)$ on the source and target of $\varepsilon$:
$$\xymatrix{
(N\otimes_{A^\bx, p_2}\cal D(1))\otimes_{\cal D(1),(1,\gamma)}\cal D(1)
\ar[r]^{\varepsilon\otimes \op{id}}\ar[d]^{\cong}_{(1,\gamma)\otimes\op{id}}&
(N\otimes_{A^\bx, p_1}\cal D(1))\otimes_{\cal D(1),(1,\gamma)}\cal D(1)
\ar[d]^{\cong}_{(1,\gamma)\otimes\op{id}}\\
N\otimes_{A^\bx,p_2}\cal D(1)\ar[r]^{\varepsilon}&
N\otimes_{A^\bx, p_1}\cal D(1).
}
$$
The image of an element $n\in N$ under the left (resp.~right) vertical map is $\gamma(n)$ (resp.~$n$), by the definition of the $\Gamma^{ 2}$-action. By base changing this diagram along $\Delta\colon \cal D(1)\to A^\bx$, we obtain the following commutative diagram  whose left vertical isomorphism is given by the action of $\gamma$ on $N$.
$$\xymatrix@C=70pt{
N\otimes_{A^\bx,\gamma}A^\bx\ar[r]^-{\Delta^*((1,\gamma)^*(\varepsilon))}
\ar[d]^\cong_{\gamma\otimes\op{id}}&
N\ar[d]^{\op{id}}\\
N\ar[r]^{\op{id}}&N.
}
$$
This completes the proof.
\end{proof}

\subsection{Proof of Theorem \ref{thm:main} for $\op{ev}_{A^{\square(1)}}^\phi$, and variant with Frobenius structures}\label{ss_proof_of_main_thm1}
We are almost prepared to prove the first part of Theorem \ref{thm:main}. We just need the following final compatibiity:

\begin{lemma}\label{lem:StratGammaEquiv}
Let $(N,\varepsilon)$ be an object of $\Strat(\cal D(\blob))$; recall that $\ep$ induces a $\Gamma$-action on $N$, as encoded by the functor $\op{ev}_{A^\bx}^{\Strat}$ of (\ref{eqn_stat_to_rep}), so that we may also view $N$ as an object of $\Rep_\Gamma^\mu(A^\bx)$. Then $\varepsilon\colon N\otimes_{A^\bx,p_{2}}\cal D(1)\xrightarrow{\cong} N\otimes_{A^\bx,p_{1}}\cal D(1)$ is $\Gamma^{ 2}$-equivariant, where $\Gamma^2$ acts on each side as prescribed just before Lemma \ref{lem:GammaEquivStrat}.
\end{lemma}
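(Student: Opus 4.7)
The plan is to reduce to the case where $(N,\varepsilon)$ arises from a crystal, and then deduce the desired equivariance from the functoriality of the crystal. First, I would use the equivalence~(\ref{eq:PrismCrysStratEquiv}) to lift $(N,\varepsilon)$ to a unique locally finite free crystal $\cal F$ on $(R^{(1)}/(A_\inf,\tilde\xi))_\Prism$ with $\cal F(\frak D(0))=N$, so that the stratification is recovered as $\varepsilon=(p_1^*)^{-1}\circ p_2^*$, where $p_i^*\colon N\otimes_{A^\bx,p_i}\cal D(1)\isoto\cal F(\frak D(1))$ for $i=1,2$ are the crystal-structure isomorphisms associated to the two morphisms $p_1,p_2\colon\frak D(1)\to \frak D(0)$ of the prismatic site.

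Next, by functoriality of $\cal F$, the canonical $\Gamma^2$-action on $\frak D(1)$ (recalled in Remark~\ref{remark_Gamma_on_D}) induces a semi-linear $\Gamma^2$-action on $\cal F(\frak D(1))$; on the other hand, Lemma~\ref{lem:PrismStratEvaluation} identifies the $\Gamma$-action on $N$ defined by $\op{ev}_{A^\bx}^{\Strat}(N,\varepsilon)$ with the functorial $\Gamma$-action on $\cal F(\frak D(0))$ arising from the $\Gamma$-action on $\frak D(0)$. The key (elementary) check is the identity $(\gamma_1,\gamma_2)\circ p_i=p_i\circ\gamma_i$ of ring homomorphisms $A^\bx\to\cal D(1)$ for $i=1,2$, which is immediate from the definition of the $\Gamma^2$-action on $A^\bx(1)=A^\bx\otimes_{A_\inf}A^\bx$ as coordinate-wise multiplication. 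Applying $\cal F$ to this identity yields $(\gamma_1,\gamma_2)\circ p_i^*=p_i^*\circ \gamma_i$ as maps $N\to \cal F(\frak D(1))$.

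Extending $\cal D(1)$-linearly, the previous identity translates into the statement that each $p_i^*\colon N\otimes_{A^\bx,p_i}\cal D(1)\isoto\cal F(\frak D(1))$ is $\Gamma^2$-equivariant, when the source is equipped with the $\Gamma^2$-action in which $(\gamma_1,\gamma_2)$ acts on $N$ through its $i^\sub{th}$ component and on $\cal D(1)$ canonically---i.e., exactly the action prescribed just before Lemma~\ref{lem:GammaEquivStrat}. The conclusion is then immediate: $\varepsilon=(p_1^*)^{-1}\circ p_2^*$ is the composition of two $\Gamma^2$-equivariant isomorphisms, hence itself $\Gamma^2$-equivariant. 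No significant obstacle is anticipated: the argument is essentially formal once the equivalence~(\ref{eq:PrismCrysStratEquiv}) and the compatibility of the two descriptions of the $\Gamma$-action on $N$ (Lemma~\ref{lem:PrismStratEvaluation}) are in hand.
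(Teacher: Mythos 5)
Your proposal is correct and follows essentially the same route as the paper: lift $(N,\varepsilon)$ to a prismatic crystal via the equivalence~(\ref{eq:PrismCrysStratEquiv}), invoke Lemma~\ref{lem:PrismStratEvaluation} to match the two descriptions of the $\Gamma$-action on $N$, and use the $\Gamma^2$-equivariance of the structure maps $p_\nu\colon\frak D(0)\to\frak D(1)$ (via projection to the $\nu$-th factor) to conclude that $\varepsilon=(p_1^*)^{-1}\circ p_2^*$ is $\Gamma^2$-equivariant. The only cosmetic remark is that the identity $(\gamma_1,\gamma_2)\circ p_i=p_i\circ\gamma_i$, checked on $A^\bx(1)$, must be carried through to the q-PD/prismatic envelope $\cal D(1)$, which is exactly how the $\Gamma^2$-action on $\cal D(1)$ was constructed in the first place.
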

\begin{proof}
Although it is possible to directly prove this, we will use prismatic crystals. So choose $\cal F\in \CR_{\Prism}(R^{(1)}/(A_{\inf},\tilde\xi))$
such that $\op{ev}_{\frak D(\bullet)}(\cal F)=(N,\varepsilon)$, by (\ref{eq:PrismCrysStratEquiv}). 
Then, by  Lemma \ref{lem:PrismStratEvaluation}, the action of $\Gamma$ on $N=\cal F(\frak D(0))$ induced by $\varepsilon$ coincides with the action induced by the action of $\Gamma$ on $\frak D(0)$. For each $\nu=1,2$, the map $p_{\nu}\colon \frak D(0) \to \frak D(1)$ is compatible with the action of $\Gamma^{ 2}$ and $\Gamma$ via the projection to the $\nu$-th component $\Gamma^{ 2}\to \Gamma$. Hence the same is true for the map $p_{\nu}\colon \cal F(\frak D(0))\to \cal F(\frak D(1))$, where $\cal F(\frak D(1))$  is equipped with the action of $\Gamma^{ 2}$ induced by that on $\frak D(1)$. Now the definition of the functor $\op{ev}_{\frak D(\bullet)}$ completes the proof.
\end{proof}

\begin{proof}[Proof of Theorem \ref{thm:main} for $\op{ev}_{A^{\bx(1)}}^\phi$]
By the commutative diagram in Lemma \ref{lem:PrismStratEvaluation}, it suffices to prove the following two statements:
\begin{enumerate}
\item The functor $\op{ev}^{\Strat}_{A^\bx}\colon \Strat(\cal D(\bullet))
\to \Rep^{\mu}_{\Gamma}(A^\bx)$ has essential image inside $\Rep^{\mu}_{\Gamma,\sub{conv}}(A^\bx)$.
\item The functors $\op{ev}^{\Strat}_{A^\bx}$ and $\Strat_{\cal D(\bullet)}$ give quasi-inverses equivalence of categories between $\Strat(\cal D(\bullet))$ and $\Rep_{\Gamma,\sub{conv}}^\mu(A^\bx)$.
\end{enumerate}
To prove both these statements, we fix an object $(N,\varepsilon)$ of $\Strat(\cal D(\bullet))$ and put $L=N\otimes_{A^\bx,p_1}\cal D(1)$ and $\ol L=L/\mu$; let $\Gamma$ act on $N$, and $\Gamma^2$ on $L$ and $N\otimes_{A^\bx,p_2}\cal D(1)$ as in the statement of  Lemma \ref{lem:StratGammaEquiv}. By Lemma~\ref{lem:StratGammaEquiv}, $\varepsilon$ restricts to a map $\ep|_N\colon N\to L^{\Gamma\times \{1\}}$, whose composition with $L^{\Gamma\times\{1\}}\subseteq L\xto{\Delta_N} N$ is seen to be the identity map by Definition \ref{definition_stratification}(i). Moreover, the $\Gamma\times\{1\}$-action on $L$ defines a flat q-connection $\nabla_L\colon L\to L\otimes_{A^\bx}q\Omega_{A^\bx/A_{\inf}}$, just as in the first paragraph of the proof of Proposition \ref{prop:qPDTaylerExp}, and we let $\nabla_{\res L}:\ol L\to \ol L\otimes_{\ol A^\bx}\Omega_{\ol A^\bx/\ol{A}_{\inf}}$ be the resulting connection by reducing modulo $\mu$. Then $L^{\Gamma\times\{1\}}=L^{\nabla_L=0}$, and $\ep|_N$ induces a homomorphism $\ol\ep|_N\colon \ol N\to \res L^{\nabla_{\res L}=0}$ whose composition with $\Delta_{\res N}\colon \ol L\to \ol N$ is the identity map. 

(i)  Letting $\ol \tau_i$ be as in the proof of Proposition \ref{prop:qPDTaylerExp} then, for each $n\in \ol N$, its image $\ol\ep|_N(n)\in\res L$ may be uniquely written in the form $\ol\ep|_N(n)=\sum_{j_1,\dots,j_d\ge0} n_{\ul j}\otimes\ol \tau^{[\ul j]}$, where $n_{\ul j}$ is a sequence of elements of $\res N$ satisfying $n_{\ul 0}=n$ and $n_{\ul j}\to 0$ $p$-adically as $\vert\ul j\vert\to\infty$. Applying formula \eqref{eq:ND(1)NablaFormula}\footnote{Although Proposition \ref{prop:qPDTaylerExp} was stated under the hypothesis that $\nabla$ was $(p,\mu)$-adically quasi-nilpotent, it was not required for this formula.} to $\ell=\res\ep|_N(x)$ and using the fact that $\nabla_{\res L}(\ell)=0$, we see that $n_{\ul j}=\prod_{1\leq i\leq d}\nabla_{\res N,i}^{j_i}(n)$. Therefore the connection $\nabla_{\res N}$ is $p$-adically quasi-nilpotent, as desired.

(ii) As $N$ belongs to $\Rep_{\Gamma,\sub{conv}}(A^\bx)$ by (i), we have an isomorphism $\Delta_N\colon L^{\Gamma\times\{1\}}\xrightarrow{\cong} N$ by Proposition~\ref{prop:qPDTaylerExp}, and the proof of (i) shows that the restriction of $\varepsilon$ to $N$ gives its inverse. By the construction of the functor $\Strat_{\cal D(\bullet)}$, this means the composition $\Strat_{\cal D(\bullet)}\circ \op{ev}_{A^\bx}^{\Strat}$ is the identity. Meanwhile the composition $\op{ev}_{A^\bx}^{\Strat}\circ\Strat_{\cal D(\bullet)}$ is also the identity by Proposition \ref{prop:BKFToStrat}.
\end{proof}

We briefly discuss the variant of Theorem \ref{thm:main} in which Frobenius structures are allowed. In the general context of Definition \ref{definiton_prismatic_crystal}, a Frobenius structure $\phi_\cal F$ on a crystal $\cal F$ locally free of finite type is by definition an isomorphism of sheaves of $\roi_\Prism[\tfrac1I]$-modules $\phi_\cal F:\cal F\otimes_{\roi_\Prism,\phi}\roi_\Prism[\tfrac1I]\isoto \cal F[\tfrac1I]$. An {\em F-crystal} $(\cal F,\phi_{\cal F})$ is a crystal locally free of finite type equipped with a Frobenius structure; write $\CR_{\Prism}(\frak X/(A,I), \phi)$ for the category of F-crystals.

The functors $\op{ev}_{A^{\bx(1)}}$ and $\op{ev}_{A^{\bx(1)}}^\phi$ are clearly compatible with Frobenius structures, thereby inducing an evident analogue of diagram (\ref{eqn_crystal_diagram}) in which Frobenius structures are incorporated everywhere.

\begin{corollary}\label{corollary_main_thm_with_phi}
The functors \[\op{ev}_{A^{\bx(1)}}:\op{CR}_{\Prism}(R^{(1)}/(A_\inf,\tilde\xi), \phi)\To\Rep_\Gamma^\mu(A^{\bx(1)},\phi)=\op{qHIG}(A^{\bx(1)},\phi)\] and \[\op{ev}_{A^{\bx(1)}}^\phi:\op{CR}_{\Prism}(R^{(1)}/(A_\inf,\tilde\xi), \phi)\To\Rep_\Gamma^\mu(A^\bx,\phi)=\op{qMIC}(A^\bx,\phi)\] are equivalences of categories.
\end{corollary}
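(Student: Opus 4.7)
The plan is to deduce both equivalences from Theorem \ref{thm:main} by observing that Frobenius structures automatically guarantee the quasi-nilpotence conditions appearing there, so that the essential image restriction disappears. First, by the local $q$-Simpson correspondence (Corollary \ref{corollary_q_Simp}), the functor $\op{qHIG}(A^{\bx(1)}, \phi) \to \op{qMIC}(A^\bx, \phi)$ is already an equivalence, so in view of the Frobenius-enhancement of diagram (\ref{eqn_crystal_diagram}) it suffices to prove that $\op{ev}_{A^{\bx(1)}}$ is an equivalence; the statement for $\op{ev}_{A^{\bx(1)}}^\phi$ then follows by composition. The key observation is that, by Lemma \ref{lemma_phi_implies_xi_nilp}(ii), the q-Higgs field underlying any object of $\op{qHIG}(A^{\bx(1)}, \phi)$ is $\tilde\xi$-adically quasi-nilpotent, hence in particular $(p,\mu)$-adically quasi-nilpotent, so the relevant q-Higgs modules all lie in the quasi-nilpotent subcategory where Theorem \ref{thm:main} already provides an equivalence.

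For fully faithfulness, faithfulness is inherited from Theorem \ref{thm:main}. To prove fullness, given F-crystals $(\cal F_i, \phi_{\cal F_i})$ with evaluations $(H_i, \phi_{H_i})$ and a Frobenius-compatible morphism $f \colon (H_1, \phi_{H_1}) \to (H_2, \phi_{H_2})$, I would lift $f$ via Theorem \ref{thm:main} to a unique morphism of crystals $\tilde f \colon \cal F_1 \to \cal F_2$ and then verify its Frobenius-compatibility. After multiplying by a common large power of $\tilde\xi$ one reduces to the case of effective Frobenii $\phi_{\cal F_i} \colon \phi^*\cal F_i \to \cal F_i$; then the two composites $\tilde f \circ \phi_{\cal F_1}$ and $\phi_{\cal F_2} \circ \phi^*\tilde f$ are honest morphisms of crystals $\phi^*\cal F_1 \to \cal F_2$ whose evaluations on the cover $(A^{\bx(1)} \to R^{(1)} \xot{=} R^{(1)})$ of the final object (Corollary \ref{cor:FinObjCovqHiggs}) agree by the assumed Frobenius-compatibility of $f$; the faithfulness in Theorem \ref{thm:main} then forces them to agree as morphisms of crystals.

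For essential surjectivity, given $(H, \phi_H) \in \op{qHIG}(A^{\bx(1)}, \phi)$, the quasi-nilpotence observation above together with Theorem \ref{thm:main} yields a (unique) crystal $\cal F$ with $\op{ev}_{A^{\bx(1)}}(\cal F) \cong (H, \Theta)$, and it remains to lift $\phi_H$ to a Frobenius $\phi_\cal F \colon (\phi^*\cal F)[\tfrac1{\tilde\xi}] \isoto \cal F[\tfrac1{\tilde\xi}]$. Pick $r$ large enough so that both $\tilde\xi^r\phi_H$ and $\tilde\xi^r\phi_H^{-1}$ are effective, i.e., honest morphisms $\phi^*H \to H$ and $H \to \phi^*H$ of q-Higgs modules. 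Now $\phi^*\cal F$ is again a locally finite free prismatic crystal, whose evaluation at $A^{\bx(1)}$ is canonically $\phi^*H$ equipped with its induced q-Higgs field (which lands in $\tilde\xi\,\phi^*H$ by the formula recalled in the proof of Lemma \ref{lemma_phi_implies_xi_nilp}(ii), and hence is $(p,\mu)$-adically quasi-nilpotent); so the full faithfulness in Theorem \ref{thm:main} lifts these effective maps to morphisms of crystals $\alpha \colon \phi^*\cal F \to \cal F$ and $\beta \colon \cal F \to \phi^*\cal F$. Their composites equal $\tilde\xi^{2r}\cdot\op{id}$ on $A^{\bx(1)}$-evaluations, hence globally as morphisms of crystals by faithfulness; inverting $\tilde\xi$ gives the desired isomorphism $\phi_\cal F = \tilde\xi^{-r}\alpha$. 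The main technical point lies in this careful bookkeeping of effective versus localised Frobenii, and in the identification $\op{ev}_{A^{\bx(1)}}(\phi^*\cal F) = \phi^*\op{ev}_{A^{\bx(1)}}(\cal F)$ of q-Higgs modules, which follows from the compatibility of Frobenius pullback with evaluation on any object carrying a $\delta$-ring structure compatible with $(A_\inf,\tilde\xi)$.
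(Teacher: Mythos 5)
Your proof is correct and is essentially the paper's argument: the two ingredients are Lemma \ref{lemma_phi_implies_xi_nilp} (Frobenius forces the quasi-nilpotence required by Theorem \ref{thm:main}) and the standard ``twist by $\tilde\xi^r$ to make the Frobenius effective, then invoke full faithfulness of the Frobenius-free equivalence'' device, with the q-Simpson correspondence transferring the result between the two functors. The one genuine difference is the direction of the deduction: you prove the statement for $\op{ev}_{A^{\bx(1)}}$ directly from the qHIG half of Theorem \ref{thm:main} and deduce $\op{ev}_{A^{\bx(1)}}^\phi$ via Corollary \ref{corollary_q_Simp}, whereas the paper argues the other way round, establishing $\op{ev}_{A^{\bx(1)}}^\phi$ first and then deducing $\op{ev}_{A^{\bx(1)}}$ from Corollary \ref{corollary_descent_of_reps_to_phi_twist}. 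The paper's ordering is forced by its logical structure: at the point where this corollary appears, only the $\op{ev}_{A^{\bx(1)}}^\phi$ half of Theorem \ref{thm:main} has been proved (the $\op{ev}_{A^{\bx(1)}}$ half is the business of \S\ref{ss_crystals_as_strats_Higgs}, which follows). Your route is not circular — that later proof does not use this corollary — but as written it forward-references a result not yet available, so to splice it into the paper one would either have to move the corollary after \S\ref{ss_crystals_as_strats_Higgs} or reverse the direction of your deduction as the paper does. Your remaining details (faithfulness applied to morphisms out of $\phi^*\cal F$, the identification $\op{ev}_{A^{\bx(1)}}(\phi^*\cal F)=\phi^*\op{ev}_{A^{\bx(1)}}(\cal F)$, and the quasi-nilpotence of the pulled-back q-Higgs field via $(\phi^*\Theta)_i^{\sub{log}}(\phi^*H)\subseteq\tilde\xi\,\phi^*H$) are all sound.
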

\begin{proof}
Recall that if a generalised representation in $\Rep_\Gamma^\mu(A^\bx)$ admits a Frobenius structure, then the associated q-connection is automatically $(p,\mu)$-adically nipotent (even $\tilde\xi$-adically nipotent) by Lemma~\ref{lemma_phi_implies_xi_nilp}(iii). Similarly for the q-Higgs field associated to a generalised representation in $\Rep_\Gamma^\mu(A^{\bx(1)},\phi)$ by Lemma \ref{lemma_phi_implies_xi_nilp}(ii). So the fact that $\op{ev}_{A^{\bx(1)}}^\phi$ is an equivalence with Frobenius structures easily follows from the analogue without Frobenius structures, proved immediately above, by making a similar argument to, e.g., Theorem \ref{theorem_descent_to_framed_with_phi}.

It them follows from our q-Simpson correspondence with Frobenius, in the form of Corollary \ref{corollary_descent_of_reps_to_phi_twist}, that $\op{ev}_{A^{\bx(1)}}$ is necessarily also an equivalence with Frobenius structures.
\end{proof}

\subsection{Crystals as $A^{\square(1)}$-modules with stratification, and proof of Theorem \ref{thm:main} for $\op{ev}_{A^{\square(1)}}$}\label{ss_crystals_as_strats_Higgs}
This subsection is devoted to proving the first part of Theorem \ref{thm:main}, namely that $\op{ev}_{A^{\bx(1)}}$ induces an equivalence of categories \[\op{ev}_{A^{\bx(1)}}\colon \CR_{\Prism}(R^{(1)}/(A_{\inf},\tilde\xi))
\xrightarrow{\sim} \Rep_{\Gamma,\sub{conv}}^\mu(A^{\bx(1)}).\] (The reader who is only interested in crystals and generalised representations with Frobenius structure should note that the analogous equivalence has already been shown in Corollary \ref{corollary_main_thm_with_phi}.) To establish this equivalence, it in fact remains only (as we will explain just after the statement of Theorem \ref{thm:ConvqHiggStrat}) to show that the functor is well-defined: i.e., given a prismatic crystal $\cal F\in \CR_{\Prism}(R^{(1)}/(A_{\inf},\tilde\xi))$, we must show that the generalised representation $\op{ev}_{A^{\bx(1)}}(\cal F)\in\Rep^\mu_{\Gamma}(A_\inf^\bx(R)^{(1)})$ belongs to $\Rep^\mu_{\Gamma,\sub{conv}}(A_\inf^\bx(R)^{(1)})$. We will do this by developing an analogue over $A^{\bx(1)}$ of the stratification techniques of \S\ref{ss_crystals_as_strats}.

\begin{definition}\label{definition_Higgs_envelope}
For each $\nu\geq 0$, let $\cal E(\nu)$ be the prismatic envelope of $A^\bx(\nu)^{(1)}\to R^{(1)}$ over $(A_{\inf},\tilde\xi)$, and define the object \[\frak E(\nu):=(\cal E(\nu)\to\cal E(\nu)/\tilde\xi\ot R^{(1)})\] of $(R^{(1)}/(A_{\inf},\tilde\xi))_{\Prism}$. Thus $\frak E(\nu)$ is the coproduct of $\nu+1$ copies of $\frak E(0)=(A^{\bx(1)}\to R^{(1)}\xot{=} R^{(1)})$, and they assemble into a cosimplicial object in $(R^{(1)}/(A_{\inf},\tilde\xi))_{\Prism}$.
\end{definition}

\begin{remark}[Compatibility with $\cal D(\bullet)$]\label{remark_compatibilityII}
We further expand on Definition \ref{definition_Higgs_envelope} and explain how it relates to Definition \ref{def_q_env} and Lemma \ref{lemma_Cech}. The cocartesian diagram (\ref{eqn_q_vs_prism}) from the latter lemma may be broken into two cocartesian squares
\[\xymatrix{
A^\bx(\nu)\ar[rr] && R_1(\nu)\\
A^\bx(\nu)^{(1)}\ar[u]^F\ar[r]_{\Delta^{(1)}} & A^{\bx(1)} \ar[r]_{\sub{mod } \tilde\xi}& R^{(1)}\ar[u]\\
A^\bx(\nu)\ar[u]^W\ar[rr]&&R\ar[u]
}\]
where $F$ is the relative Frobenius induced by the Frobenius endomorphism of $A^\bx(\nu)$.

The kernel of the multiplication map $\Delta^{(1)}:A^\bx(\nu)^{(1)}\to A^{\bx(1)}$ is generated modulo $(p,\tilde\xi)$ by a Koszul regular sequence, namely the image under $W:A^\bx(\nu)\isoto A^\bx(\nu)^{(1)}$ of the Koszul regular sequence appearing in Definition \ref{def_q_env}. The map of $\delta$-pairs $(A_\inf,\tilde\xi)\to (A^\bx(\nu)^{(1)},(\tilde\xi,\ker\Delta^{(1)}))$ (the latter ideal is the kernel of the middle horizontal composition in the diagram) therefore admits a prismatic envelope $(\cal E(\nu),\tilde\xi)$ by \cite[Prop.~3.13]{BhattScholze2019}, i.e., $\cal E(\nu)$ is well-defined. 

Since the relative Frobenius $F$ is flat, loc.~cit.~states that the base change $(\cal E(\nu)\otimes_{A^\bx(\nu)^{(1)},F}A^\bx(\nu),\tilde\xi)$ is the prismatic envelope of $(A_\inf,\tilde\xi)\to (A^\bx(\nu),J_{A^\bx(\nu)})$, which is saw in the proof of Lemma \ref{lemma_Cech} is precisely $\cal D(\nu)$. There is therefore a natural isomorphism 
\begin{equation}\label{eq:EDCompareIsom}
\cal E(\nu)\otimes_{A^\bx(\nu)^{(1)},F}A^\bx(\nu)\xrightarrow{\cong}\cal D(\nu)\end{equation}
of $\delta$-rings over $A_{\inf}$. It induces a morphism $\frak E(\bullet)\to \frak D(\bullet)$ of cosimplicial objects of $(R^{(1)}/(A_{\inf},\tilde\xi))_{\Prism}$, which coincides with the morphism defined by taking the products of the morphism \[\frak E(0)=(A^{\bx(1)}\to R^{(1)})\xot{=} R^{(1)})\To \frak D(0)=(A^\bx\to R_1\ot R^{(1)})\] (using Lemma \ref{lemma_Cech}).

Therefore the study of $\cal E(\blob)$ will provide a Frobenius descent of the theory we have already developed over $\cal D(\blob)$. 
\end{remark}

The $A_\inf$-linear action of $\Gamma^{ \nu+1}$ on the $\delta$-ring $A^\bx(\nu)^{(1)}$  stabilizes the kernel of $A^\bx(\nu)^{(1)}\to R^{(1)}$ and so extends to an action on $\cal E(\nu)$ in such a way that the isomorphism \eqref{eq:EDCompareIsom} is $\Gamma^{\nu+1}$-equivariant. The induced action on $\cal E(\nu)/\mu$ is the identity, since we already saw that the same is true for $\cal D(\nu)/\mu$ just before Lemma \ref{lem:PrismStratEvaluation} and (\ref{eq:EDCompareIsom}) induces an injection $\cal E(\nu)/\mu\into \cal D(\nu)/\mu$ (note that $F:A^\bx(\nu)^{(1)}\to A^\bx(\nu)^{(1)}$ is finite free).
%$\prod_{1\leq j\leq \nu+1}\prod_{1\leq i\leq d}p_{j}(U_i)^{n_{j i}}$, for integers $0\le n_{j i}<p$
Just as on $\cal D(\blob)$, these actions are compatible in that $E_\blob\Gamma$ acts on the simplicial formal scheme $\Spf\cal E(\blob)$.

%For an increasing map  $f\colon [\nu]=\{0,1,\ldots, \nu\}\to [\mu]=\{0,1,\ldots, \mu\}$, the corresponding morphism $\frak E(\mu)\to \frak E(\nu)$ for the simplicial object $\frak E(\bullet)$ is compatible with the actions of $\Gamma^{ \mu+1}$ and $\Gamma^{ \nu+1}$ via the homomorphism $\Gamma^{ \mu+1} =\op{Map}([\mu],\Gamma)\to \op{Map}([\nu],\Gamma)=\Gamma^{ \nu+1}$ defined by the composition with $f$; i.e., $E_\blob\Gamma$ acts on $\frak E(\blob)$.

We define a stratification on an $A^{\bx(1)}$-module with
respect to $\cal E(\bullet)$ in exactly the same manner as in Definition \ref{definition_stratification} for an $A^\bx$-module with respect to $\cal D(\bullet)$. Then, similarly to \eqref{eq:PrismCrysStratEquiv}, but using Corollary \ref{cor:FinObjCovqHiggs} instead of Corollary \ref{corol:DphiFinObjCov}, we obtain an equivalence of categories
\begin{equation}
\op{ev}_{\frak E(\bullet)}\colon \CR_{\Prism}(R^{(1)}/(A_{\inf},\tilde\xi))
\xrightarrow{\sim}\Strat(\cal E(\bullet))
\end{equation}
sending a crystal $\cal F$ to the $A^{\bx(1)}$-module $H=\cal F(\frak E(0))$ equipped with stratification given by the composition
$$\varepsilon\colon H\otimes_{A^{\bx(1)},p_2}\cal E(1)\xrightarrow[p_2]{\cong}\cal F(\frak E(1))\xrightarrow[p_1^{-1}]{\cong}H\otimes_{A^{\bx(1)},p_1}\cal E(1).$$
Moreover, similarly to $\op{ev}_{A^\bx}^{\Strat}$ defined before Lemma \ref{lem:PrismStratEvaluation}, we can also construct a functor
\[\op{ev}_{A^{\bx(1)}}^{\Strat}\colon \Strat(\cal E(\bullet))\To \Rep^{\mu}_{\Gamma}(A^{\bx(1)})\]
by taking the base change of the stratification along $\cal E(1)\xto{(1,\gamma)}\cal E(1)\xto{\Delta}A^{\bx(1)}$, for each $\gamma \in \Gamma$.

The following is proved in exactly the same way as Lemma \ref{lem:PrismStratEvaluation}:
\begin{lemma}
The following diagram is commutative up to canonical isomorphism
$$\xymatrix@C=50pt{
\op{CR}_{\Prism}(R^{(1)}/(A_{\inf},\tilde\xi))\ar[r]_(.55){\op{ev}_{\frak E(\bullet)}}^-{\sim}
\ar[dr]_{\op{ev}_{A^{\bx(1)}}}&
\Strat(\cal E(\bullet))\ar[d]^{\op{ev}_{A^{\bx(1)}}^{\Strat}}\\
&\Rep_{\Gamma}^{\mu}(A^{\bx(1)}).
}
$$
\end{lemma}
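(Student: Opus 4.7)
The plan is to mimic the proof of Lemma \ref{lem:PrismStratEvaluation} verbatim, replacing $\frak D(\bullet)$ and $\op{ev}^\phi_{A^{\bx(1)}}$ by $\frak E(\bullet)$ and $\op{ev}_{A^{\bx(1)}}$. Fix $\cal F\in\CR_\Prism(R^{(1)}/(A_\inf,\tilde\xi))$ and put $(H,\varepsilon):=\op{ev}_{\frak E(\bullet)}(\cal F)$, so that $H=\cal F(\frak E(0))$ and $\varepsilon=(p_1^*)^{-1}\circ p_2^*$ under the crystal isomorphisms $p_\nu^*\colon H\otimes_{A^{\bx(1)},p_\nu}\cal E(1)\isoto \cal F(\frak E(1))$. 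Both functors send $\cal F$ to $H$ as an $A^{\bx(1)}$-module; the only content is to verify that the $\Gamma$-action on $H$ defined via $\varepsilon$ (as in the construction of $\op{ev}_{A^{\bx(1)}}^{\Strat}$) agrees with the $\Gamma$-action induced by functoriality from the $\Gamma$-action on $\frak E(0)$ (which is what defines $\op{ev}_{A^{\bx(1)}}(\cal F)$).

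The key observation is the pair of identities of $A_\inf$-algebra endomorphisms of $A^{\bx(1)}$:
\[
\Delta\circ(1,\gamma)\circ p_1=\op{id}_{A^{\bx(1)}},\qquad
\Delta\circ(1,\gamma)\circ p_2=\gamma,
\]
valid for every $\gamma\in\Gamma$. These are immediate from the description of the $\Gamma^2$-action on $\cal E(1)$: the element $(1,\gamma)$ acts trivially on the first tensor factor of $\cal E(1)$ and by $\gamma$ on the second, while $\Delta\circ p_\nu=\op{id}$ for $\nu=1,2$. (These are the analogues for $\cal E(\bullet)$ of the diagrams appearing in the first paragraph of the proof of Lemma \ref{lem:PrismStratEvaluation}, and hold by the same argument, using that the $\Gamma^{\nu+1}$-action on $\cal E(\nu)$ was constructed to be compatible with $p_\nu$, $\Delta$, and the $\Gamma$-action on $A^{\bx(1)}$.)

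Base changing $\varepsilon$ along the ring map $\Delta\circ(1,\gamma)\colon \cal E(1)\to A^{\bx(1)}$ and using the crystal property (i.e.\ compatibility of pullbacks with composition of morphisms in $(R^{(1)}/(A_\inf,\tilde\xi))_\Prism$), the composition $(p_1^*)^{-1}\circ p_2^*$ becomes
\[
(\Delta\circ(1,\gamma)\circ p_1)^{*\,-1}\circ(\Delta\circ(1,\gamma)\circ p_2)^*
=(\op{id})^{*\,-1}\circ\gamma^*=\gamma^*\colon H\otimes_{A^{\bx(1)},\gamma}A^{\bx(1)}\isoto H,
\]
where $\gamma^*$ is the pullback isomorphism induced by $\gamma\colon\frak E(0)\to\frak E(0)$. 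The left-hand side is by definition the $\gamma$-action on $H$ coming from $\op{ev}_{A^{\bx(1)}}^{\Strat}(H,\varepsilon)$, while the right-hand side is the $\gamma$-action on $\cal F(\frak E(0))$ coming from $\op{ev}_{A^{\bx(1)}}(\cal F)$. The two $\Gamma$-actions therefore coincide, giving the required canonical isomorphism of functors. There is no serious obstacle here; the argument is essentially formal once the $\Gamma^{\nu+1}$-equivariance of the structure maps of $\cal E(\bullet)$ is in place, which was already established just after Definition \ref{definition_Higgs_envelope}.
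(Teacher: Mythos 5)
Your proof is correct and is essentially the paper's own argument: the paper simply says the lemma "is proved in exactly the same way as Lemma \ref{lem:PrismStratEvaluation}" and omits the details, which are precisely the identities $\Delta\circ(1,\gamma)\circ p_\nu$ that you write out. In fact your observation that the second step of that earlier proof (the base change along $F$ identifying $\op{ev}^\phi_{A^{\bx(1)}}$ with $\op{ev}^{\Strat}_{A^\bx}$) is not needed here, because $\frak E(0)$ is literally the object defining $\op{ev}_{A^{\bx(1)}}$, is exactly right.
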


\begin{remark}[Compatibility with $\cal D(\bullet)$, II]\label{remark_ED_compatability}
Stratifications with respect to $\cal E(\blob)$ are compatible, in the following way, with the stratifications with respect to $\cal D(\blob)$ which were studied in \S\ref{ss_crystals_as_strats}--\ref{ss_gen_reps_as_strat}.

Using the homomorphism $\cal E(\bullet)\to \cal D(\bullet)$ of cosimplical algebras over $A_{\inf}$ induced by \eqref{eq:EDCompareIsom}, one defines a functor $-\otimes_{A^{\bx(1)},F}A^\bx\colon \Strat(\cal E(\bullet))\to \Strat(\cal D(\bullet))$ in the obvious manner and checks that the following diagram is commutative up to canonical isomorphism:
\begin{equation}
\xymatrix@C=40pt{
\CR_{\Prism}(R^{(1)}/(A_{\inf},\tilde\xi))
\ar[r]^-{\sim}_-{\op{ev}_{\frak E(\bullet)}}
\ar[dr]^{\sim}_{\op{ev}_{\frak D(\bullet)}}
&
\Strat(\cal E(\bullet))\ar[r]^{\op{ev}^{\Strat}_{A^{\bx(1)}}}
\ar[d]^{-\otimes_{A^{\bx(1)},\phi}A^\bx}
&
\Rep^{\mu}_{\Gamma}(A^{\bx(1)})
\ar[d]^{-\otimes_{A^{\bx(1)},\phi}A^\bx}\\
&\Strat(\cal D(\bullet))\ar[r]_-{\op{ev}^{\Strat}_{A^\bx}}
& \Rep^{\mu}_{\Gamma}(A^\bx)
}\label{eqn_comp_of_strats}
\end{equation}
\end{remark}

The key to proving the remainder of Theorem \ref{thm:main} is the following:

\begin{theorem}\label{thm:ConvqHiggStrat}
The functor $\op{ev}_{A^{\bx(1)}}^{\Strat}\colon \Strat(\cal E(\bullet))
\to \Rep_{\Gamma}^{\mu}(A^{\bx(1)})$ has essential image inside
$\Rep_{\Gamma,\sub{conv}}^{\mu}(A^{\bx(1)})$.
\end{theorem}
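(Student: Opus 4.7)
The plan is to deduce Theorem \ref{thm:ConvqHiggStrat} via Frobenius descent from the analogous statement for $\op{ev}_{A^\bx}^\Strat$, which is claim (i) in the proof of Theorem \ref{thm:main} for $\op{ev}_{A^{\bx(1)}}^\phi$ established in \S\ref{ss_proof_of_main_thm1}. Given $(H,\varepsilon) \in \Strat(\cal E(\blob))$, I would first form the base change $(N,\varepsilon') := (H,\varepsilon) \otimes_{A^{\bx(1)},F} A^\bx$, which belongs to $\Strat(\cal D(\blob))$. The commutative diagram (\ref{eqn_comp_of_strats}) identifies the $\Gamma$-action on $N$ produced by $\op{ev}_{A^\bx}^\Strat$ with the base change along $F$ of the $\Gamma$-action on $H$ produced by $\op{ev}_{A^{\bx(1)}}^\Strat$. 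Applying step (i) of the earlier proof, the q-connection $\nabla$ on $N$ is therefore $(p,\mu)$-adically quasi-nilpotent.

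It remains to descend this nilpotence from $N$ to $H$. The formula (\ref{eqn_Higg_to_conn}), which describes the q-connection obtained from a q-Higgs field under $(F,F_\Omega)^*$, specialises on elements of the form $h \otimes 1$ (for $h \in H$) to $\nabla_i^\sub{log}(h \otimes 1) = \Theta_i^\sub{log}(h) \otimes 1$. Hence $H \otimes 1 \subset N$ is stable under each $\nabla_i^\sub{log}$ and $(\nabla_i^\sub{log})^m(h \otimes 1) = (\Theta_i^\sub{log})^m(h) \otimes 1$ for every $m \ge 0$. Moreover, the relative Frobenius $F\colon A^{\bx(1)} \to A^\bx$ realises $A^\bx$ as a finite free module over $A^{\bx(1)}$ with basis $\{U_1^{k_1}\cdots U_d^{k_d} : 0 \le k_i < p\}$ (as in the hypothesis of Lemma \ref{lemma_phi_implies_xi_nilp}(iii) and used already in the proof of Theorem \ref{theorem_Simpsons}), giving a direct-summand decomposition
\[
N \;=\; \bigoplus_{0 \le k_1,\ldots,k_d < p} H\,U^{\ul k}
\]
of $A^{\bx(1)}$-modules which is preserved by reduction modulo any $(p,\mu)^n$. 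Projecting the containment $(\nabla_i^\sub{log})^m(h\otimes 1) \in (p,\mu)^n N$ onto the $\ul k = \ul 0$ summand yields $(\Theta_i^\sub{log})^m(h) \in (p,\mu)^n H$, as required.

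No serious obstacle is anticipated: the argument is really a bookkeeping exercise relying on the already-established $(p,\mu)$-adic quasi-nilpotence over $A^\bx$ together with the finite free structure of the relative Frobenius. The main point to verify with care is the $\Gamma$-equivariance built into diagram (\ref{eqn_comp_of_strats}), i.e.\ that the functor $\op{ev}_{A^{\bx(1)}}^\Strat$ is truly compatible via $-\otimes_{A^{\bx(1)},F}A^\bx$ with $\op{ev}_{A^\bx}^\Strat$; this follows by the same formal principle as Lemma \ref{lem:PrismStratEvaluation} with $\frak E(\blob)$ in place of $\frak D(\blob)$, using that the morphism $\frak E(\blob) \to \frak D(\blob)$ of cosimplicial objects of $(R^{(1)}/(A_\inf,\tilde\xi))_\Prism$ is $\Gamma^{\blob+1}$-equivariant.
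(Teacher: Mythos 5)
Your reduction to the $\cal D(\bullet)$-statement does not work, and the gap is not a bookkeeping issue: the implication you are relying on — that $(p,\mu)$-adic quasi-nilpotence of the q-connection on $N=H\otimes_{A^{\bx(1)},F}A^\bx$ forces $(p,\mu)$-adic quasi-nilpotence of the q-Higgs field on $H$ — is false. The quasi-nilpotence you obtain from step (i) of \S\ref{ss_proof_of_main_thm1} is, by Definition \ref{definition_nilpotent_connection} and Remark \ref{remark_nilpotent_connection}, a condition on the \emph{non-logarithmic} coordinates $\nabla_i=U_i^{-1}\nabla_i^\sub{log}$, whereas your projection argument needs $(\nabla_i^\sub{log})^m(h\otimes 1)\in(p,\mu)^nN$ for $m\gg0$; the logarithmic coordinates of a q-connection are essentially never quasi-nilpotent (already $d_{q,i}^\sub{log}(U_i)=U_i$ on the trivial object), which is exactly the warning recorded parenthetically in Definition \ref{definition_nilpotent_connection}. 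Concretely, take $d=1$ and $H=A^{\bx(1)}$ with the rank-one flat q-Higgs field determined by $\Theta_1^\sub{log}(1)=1$, so $\Theta_1^\sub{log}(f)=f+\tfrac{\gamma_1(f)-f}{\mu}\equiv f$ mod $(p,\mu)$; then $(\Theta_1^\sub{log})^m(1)\equiv 1$ for all $m$ and $\Theta$ is not $(p,\mu)$-adically quasi-nilpotent, yet $(F,F_\Omega)^*H$ is isomorphic (via $g\mapsto U_1^{-1}g$) to the trivial object of $\op{qMIC}(A^\bx)$, whose q-connection is quasi-nilpotent. So conditions (a) ``$N$ has quasi-nilpotent q-connection'' and (b) ``$N=(F,F_\Omega)^*H$'' — the only inputs your argument uses — do not suffice; the same example also falsifies your asserted containment $(\nabla_i^\sub{log})^m(h\otimes 1)\in(p,\mu)^nN$.

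What must be used, and what the paper's proof does use, is the extra information that the stratification lives over $\cal E(\bullet)$ rather than only over $\cal D(\bullet)$: the restriction $\varepsilon|_H\colon H\to (H\otimes_{A^{\bx(1)},p_1}\cal E(1))^{\Gamma\times\{1\}}$ is, modulo $\mu$, a section of a module over the $p$-adically completed PD-polynomial algebra $\ol{\cal E}(1)$ in the variables $\ol\tau_i^{(1)}$ (the $\tau_i$ divided by $\tilde\xi$; Lemma \ref{lem:E(1)Description}), on which the q-Higgs field acts by $\Theta_i(\tau_j^{(1)})=-\delta_{ij}$ (Lemma \ref{lem:qHiggsOnE(1)}). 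Flatness of $\ol\varepsilon|_H(h)$ then forces its PD-Taylor coefficients to be $h_{\ul j}=\prod_i\Theta_{\ol H,i}^{j_i}(h)$, and the $p$-adic convergence $h_{\ul j}\to 0$ is precisely the quasi-nilpotence of $\Theta_{\ol H}$. The divided powers of $\tau_i^{(1)}$ — which have no counterpart in $\cal D(1)$, where one only has divided powers of $\tau_i$ itself — are the mechanism that makes the descent of quasi-nilpotence work, and they cannot be recovered from the base-changed object $N$ alone.
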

\begin{proof}[Proof that Theorem \ref{thm:ConvqHiggStrat} completes the proof of Theorem \ref{thm:main}]
First we claim that the base change functor $-\otimes_{A^{\bx(1)},F}A^\bx:\Rep_\Gamma^\mu(A^{\bx(1)})\to \Rep_\Gamma^\mu(A^\bx)$, or equivalently $(F,F_\Omega)^*:\op{qHIG}(A^{\bx(1)})\to \op{qMIC}(A^{\bx})$, restricts to a fully faithful functor $\Rep_{\Gamma,\sub{conv}}^\mu(A^{\bx(1)})\to \Rep_{\Gamma,\sub{conv}}^\mu(A)$. Indeed, given $H\in \Rep_\Gamma^\mu(A^{\bx(1)})$, which we equivalently view as an object of $\op{qHIG}(A^{\bx(1)})$, applying Theorem \ref{theorem_Simpsons} to $H/p$ shows that the base change functor preserves $(p,[p]_q)$-adic=$(p,\mu)$-adic quasi-nilpotence. Fully faithfulness of the functor follows similarly by applying the the fully faithfulness of that theorem to $H/p^r$ for all $r\ge1$.

With the claim established, Theorem \ref{thm:ConvqHiggStrat} will allow us to replace the two $\Rep_\Gamma^\mu$ in diagram (\ref{eqn_comp_of_strats}) by $\Rep_{\Gamma,\sub{conv}}^\mu$, such that the right vertical functor is then fully faithfulful (by the above claim) and the bottom horizontal arrow is then an equivalence (by statement (ii) in the main proof in \S\ref{ss_proof_of_main_thm1}). So all the functors in the diagram will be equivalences, as desired.
\end{proof}

The remainder of this subsection is therefore devoted to proving Theorem \ref{thm:ConvqHiggStrat}, through some explicit but slightly technical calculations. The purpose of the next two lemmas is to explicitly describe $\ol{\cal E}(\nu)=\cal E(\nu)/\mu$ as a $p$-adically complete PD-polynomial algebra.

Let $U_i^{(1)}:=W(U_i)\in A^{\bx(1)}$, for $i=1,\dots,d$, be the image of $U_i\in A^\bx$ under $W:A^\bx\xrightarrow{\cong} A^{\bx(1)}=A_{\inf}\otimes_{\phi,A_{\inf}}A^\bx$, $a\mapsto a\otimes 1$; so the relative Frobenius $F\colon A^{\bx(1)}\to A^\bx$ sends  $U_i^{(1)}$ to $U_i^p$. Since the element $W(\tau_i)=p_2(U_i^{(1)})-p_1(U_i^{(1)})\in A^\bx(1)^{(1)}$ is contained in the kernel of $A^\bx(1)^{(1)}\to  R^{(1)}$, it becomes divisible by $\tilde\xi$ in the prismatic envelope $\cal E(1)$; we set \[\tau_i^{(1)}:=\tfrac1{\tilde\xi}(p_2(U_i^{(1)})-p_1(U_i^{(1)}))\in \cal E(1),\] and let $\ol\tau_i^{(1)}\in\ol{\cal E}(1)$ be its reduction modulo $\mu$.

\begin{lemma}\label{lem:E(1)Description}
\begin{enumerate}
\item We have $\frac{1}{n!}(\ol\tau_i^{(1)})^n\in \ol{\cal E}(1)$ for all integers $n\geq 1$
and $1\leq i\leq d$.
\item Let $1\leq \lambda\leq \nu+1$ be positive integers, and regard $\ol{\cal E}(\nu)$ as an $\ol A^{\bx(1)}:=A^{\bx(1)}/\mu$-algebra by the homomorphism $p_{\lambda}\colon \ol A^{\bx(1)}\to \ol{\cal E}(\nu)$. Then $\ol{\cal E}(\nu)$ is isomorphic to the $p$-adic completion of the PD-polynomial algebra over $\ol A^{\bx(1)}$ in the $\nu d$ variables $p_{\lambda j}(\ol\tau_i^{(1)})$, for $1\leq i\leq d$, $1\leq j\leq \nu+1,j\neq\lambda$. Here $p_{\lambda j}:\cal E(1)\to\cal E(\nu)$ denotes the cosimplicial structure map induced by the unique increasing map $[1]\to [\nu]$ with image $\{\lambda-1,j-1\}$.
\end{enumerate}
\end{lemma}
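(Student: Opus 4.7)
The plan is to establish (i) by exploiting the congruence $\tilde\xi \equiv p \pmod{\mu}$ together with the $\delta$-ring structure of $\cal E(1)$, and then to deduce (ii) from (i) by the universal property and comparison with the known structure of $\ol{\cal D}(\nu)$ from Remark \ref{rem:qPDenvChec}(ii).

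For (i), observe that in $A_{\inf}$ we have $\tilde\xi = 1 + [\ep] + \cdots + [\ep]^{p-1} \equiv p \pmod{\mu}$, since each $[\ep]^k$ reduces to $1$ modulo $\mu = [\ep]-1$; the identical calculation gives $\phi(\tilde\xi) \equiv p \pmod{\mu}$ as well. Thus modulo $\mu$ the prism $(A_{\inf}, \tilde\xi)$ is essentially crystalline, and one expects $\cal E(1)$ to reduce to a $p$-completed PD-envelope. Concretely, the plan is to combine the defining relation $\tilde\xi\,\tau_i^{(1)} = W(\tau_i)$, the $\delta$-identity $\phi(\tau_i^{(1)}) = (\tau_i^{(1)})^p + p\,\delta(\tau_i^{(1)})$, and the binomial expansion $\phi(\tau_i) = (p_1(U_i)+\tau_i)^p - p_1(U_i)^p$ to derive, after substituting $W(\tau_i) = \tilde\xi\,\tau_i^{(1)}$ and dividing by $\phi(\tilde\xi)$ inside $\cal E(1)$, an identity whose reduction modulo $\mu$ yields $(\ol\tau_i^{(1)})^p \in p\,\ol{\cal E}(1)$; since $(p-1)!$ is a $p$-adic unit, this gives the base case $(\ol\tau_i^{(1)})^p/p! \in \ol{\cal E}(1)$. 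Divisibility by $n!$ is automatic for $n < p$, and for $n > p$ one iterates the Frobenius argument on the descended elements, closely following techniques used in the proof of \cite[Lem.~16.10]{BhattScholze2019}.

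For (ii), part (i) applied cosimplicially ensures that each $p_{\lambda j}(\ol\tau_i^{(1)})$ admits divided powers in $\ol{\cal E}(\nu)$, so the universal property yields a canonical $\ol A^{\bx(1)}$-algebra map $f_\nu$ from the $p$-adically completed PD-polynomial algebra $\cal P_\nu$ in the $\nu d$ specified variables to $\ol{\cal E}(\nu)$. To prove $f_\nu$ is an isomorphism, I would base change along the relative Frobenius $F\colon A^\bx(\nu)^{(1)}/\mu \to A^\bx(\nu)/\mu$, which is finite free of rank $p^{d(\nu+1)}$ (basis given by monomials $\prod U_{ij}^{k_{ij}}$ with $0 \leq k_{ij} < p$) and hence faithfully flat. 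Via \eqref{eq:EDCompareIsom} the target becomes $\ol{\cal D}(\nu)$, which by Remark \ref{rem:qPDenvChec}(ii) and formal étaleness of the framing is itself a $p$-adically completed PD-polynomial algebra over $\ol A^\bx$ in the variables $p_{\lambda j}(\ol\tau_i)$. The source becomes a PD-polynomial algebra in the base-changed variables $F(X_{ij}) = \phi(p_{\lambda j}(\ol\tau_i))/\tilde\xi$, which modulo $\mu$ admit the explicit expression $(p-1)!\,p_{\lambda j}(\ol\tau_i)^{[p]} + \ol{\delta(p_{\lambda j}(\tau_i))}$ inside $\ol{\cal D}(\nu)$; an inductive change of PD-variables identifies the two PD-polynomial algebras, and faithfully flat descent yields the desired isomorphism.

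The main obstacle will be extending the base case $n = p$ in (i) to all orders $n \geq 1$: the cleanest remedy is probably to first identify $\ol{\cal E}(\nu)$ abstractly as a $p$-completed PD-envelope of a suitable $\delta$-pair (analogous to the characterization of $\ol{\cal D}(\nu)$ in Remark \ref{rem:qPDenvChec}(ii)), which would deliver all divided powers simultaneously from the universal property. One would then reverse the logical flow, proving (i) as a corollary of this structural identification rather than as its prerequisite.
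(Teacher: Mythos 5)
Your strategy for part (ii) is essentially the paper's: construct the PD-homomorphism from the completed PD-polynomial algebra using (i), base change along the (finite free, hence faithfully flat) relative Frobenius so that the target becomes $\ol{\cal D}(\nu)$ with its known PD-polynomial description, and then identify the two PD-polynomial structures by a change of variables of the form $V_{ij}\mapsto (p-1)!\,T_{ij}^{[p]}+b_{ij}$ with $b_{ij}$ in the ideal of the $T$'s — the paper isolates exactly this change-of-variables step as Lemma \ref{lem:PDPolyHiggs}, which does require a genuine (if elementary) inductive argument, so you should not wave at it. Where you genuinely diverge is part (i). The paper's proof is much shorter: since $\cal D(1)=\cal E(1)\otimes_{A^{\bx}(1)^{(1)},F}A^{\bx}(1)$ is finite free over $\cal E(1)$ and $\ol{\cal D}(1)$ is $p$-torsion-free, divisibility of $(\ol\tau_i^{(1)})^n$ by $n!$ may be checked after the faithfully flat embedding $\ol{\cal E}(1)\hookrightarrow\ol{\cal D}(1)$, where the image of $\ol\tau_i^{(1)}$ is $\tfrac1p(p_2(\ol U_i)^p-p_1(\ol U_i)^p)$ and visibly lies in the PD-ideal of $\ol{\cal D}(1)$, so all its divided powers exist for free. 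Your $\delta$-ring computation does work — one checks $\delta(W(\tau_i))=\sum_{j=1}^{p-1}\tfrac1p\binom{p}{j}p_1(U_i^{(1)})^{p-j}\tilde\xi^j(\tau_i^{(1)})^j$ using $\delta(U_i^{(1)})=0$, which is divisible by $\tilde\xi\equiv p$ modulo $\mu$, giving $(\ol\tau_i^{(1)})^p\in p\,\ol{\cal E}(1)$ — but it is longer, and your handling of $n>p$ has a soft spot. The clean patch is not ad hoc iteration but \cite[Lem.~2.35]{BhattScholze2019}: $\phi(\ol\tau_i^{(1)})=(\ol\tau_i^{(1)})^p+p\,\ol{\delta(\tau_i^{(1)})}\in p\,\ol{\cal E}(1)$, so all divided powers exist provided $\ol{\cal E}(1)$ is $p$-torsion-free — and the easiest way to see that is again the embedding into $\ol{\cal D}(1)$, i.e.\ the paper's device. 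By contrast, your proposed "remedy" of first identifying $\ol{\cal E}(\nu)$ abstractly as a $p$-completed PD-envelope is circular as a route to (i): that identification is precisely the content of (ii), whose comparison map cannot even be constructed until the divided powers of (i) are available.
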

\begin{proof}
(i) As we commented parenthetically after Remark \ref{remark_compatibilityII}, the canonical map $\res{\cal E}(1)\to\res{\cal D}(1)$ induced by (\ref{eq:EDCompareIsom}) makes the latter into a finite free module over the former. Since the latter is also $p$-torsion-free thanks to Remark \ref{rem:qPDenvChec}, it suffices to check that the image of $(\ol\tau_i^{(1)})^n$ in $\res{\cal D}(1)$ is divisible by $n!$. But this follows from the fact that the image of $\res\tau_i^{(1)}$ in $\res{\cal D}(1)$ is $\tfrac1p(p_2(\ol U_i)^p-p_1(\ol U_i)^p)$, which belongs to the pd ideal $\ker(\res{\cal D}(1)\to R)$.

It will be useful to note for the proof of part (ii) that the latter element may be rewritten as \[\tfrac1p(p_2(\ol U_i)^p-p_1(\ol U_i)^p)=(p-1)!\ol\tau_i^{[p]}+\sum_{j=1}^{p-1}\tfrac1p\binom{p}{j}p_1(\res U_i)^{p-j}\ol\tau^j_i.\]

%=\tfrac1p((\ol\tau_i+p_1(\ol U_i))^p-p_1(\ul U_i)^p)$ of $\ul\tau_i^{(1)}$ under \eqref{eq:EDCompModMu} is contained in the PD-ideal of $\ul{\cal D}(1)$, where $\ul U_i$ denotes the image of $U_i$ in $\ul{A^\bx}$, and $\ul\tau_i=p_2^*(\ul U_i)-p_1^*(\ul U_i)$.

(ii) Since $\res{A}^{\bx(1)}$ and $\ol{\cal E}(\nu)$ are classically $p$-complete and $p$-torsion free it suffices to prove the claim after taking the reduction modulo $p$. Part (i) implies that $\ol{\cal E}(\nu)$ admits a unique PD-structure associated to the ideal generated by the elements $p_{\lambda j}(\tfrac1{n!}(\res\tau_i^{(1)})^n)\in \ol{\cal E}(\nu)$, for $1\le i\le d$, $1\le \lambda,j\le\nu+1$, $n\ge1$; reducing mod $p$ induces a PD-structure on  $\ol{\cal E}(\nu)/p$.

The proof will proceed by first constructing a certain commutative diagram:
\[\xymatrix{
\res A^{\bx(1)}/p\ar[d]\ar[r]^F\ar[dr]^{p_\lambda}&\res A^\bx/p\ar[r]\ar[dr]^{p_\lambda}&\cal B\ar[d]_\beta \\
\cal A\ar[r]_\al&\res{\cal E}(\nu)/p\ar[r]&\res{\cal D}(\nu)/p
}\]
Firstly, let $\cal A$ be the PD-polynomial algebra over $\ol A^{\bx(1)}/p$ in $\nu d$ variables
$V_{i j}$, where $1\leq i\leq d$, $1\leq  j\leq \nu+1$,  $j\neq \lambda$. There is a PD-homomorphism $\alpha\colon \cal A\to \ol{\cal E}(\nu)/p$ characterised by $p_\lambda:\res A^{\bx(1)}/p\to\cal E(\nu)/p$ and $V_{i j}\mapsto p_{\lambda j}(\ol\tau_i^{(1)})$, and our goal is to prove that $\al$ is an isomorphism.

Secondly, let \[\cal B:=\ol A^\bx/p[T_{i j}:1\leq i\leq d,1\leq  j\leq \nu+1, j\neq \lambda]/(T_{i j}^p)\] by the polynomial algebra over $\ol A^{\bx(1)}/p$ in $\nu d$ variables
$V_{i j}$, where $1\leq i\leq d$, $1\leq  j\leq \nu+1$,  $j\neq \lambda$, modulo the relations $T_{i j}^p=0$. The right vertical map $\beta$ is given by $p_{\lambda}\colon \ol A^\bx /p\to \ol{\cal D}(\nu)/p$ and $T_{i j}\mapsto p_{\lambda j}(\ol\tau_i)$.

We consider the composition
\begin{equation}\cal A\otimes_{\res A^{\bx(1)}/p}\cal B\xto{\al\otimes\op{id}_\cal B}\res{\cal E}(\nu)/p\otimes_{\res A^{\bx(1)}/p}\cal B\To \ol{\cal D}(\nu)/p,\label{pd_comp}\end{equation} where the second map, which is PD-homomorphism, is the canonical one induced by the diagram.

We claim that the second map in (\ref{pd_comp}) is an isomorphism.
%Since the top horizontal composition is faithfully flat, it is enough to show that $\al\otimes\op{id}_\cal B:\cal A\otimes_{\res A^{\bx(1)}/p}\cal B\to\res{\cal E}(\nu)/p\otimes\otimes_{\res A^{\bx(1)}/p}\cal B$ is an isomorphism. We first identify the target as a pd polynomial algebra, by claiming that the canonical map $\ol{\cal E}(\nu)/p\otimes_{\ol A^{\bx(1)}/p}\cal B\to \ol{\cal D}(\nu)/p$ induced by the diagram is an isomorphism. 
Indeed, thanks to the \'etale framing $\ol A_{\inf}/p[\ol U_1,\ldots, \ol U_d]\to \ol A^\bx/p$, we know that $\ol A^\bx/p$ is a free $\ol A^{\bx(1)}/p$-module with basis $\prod_{1\leq i\leq d}\ol U_i^{n_i}$, for $0\le n_i<p$, and more generally that $\ol A^\bx(\nu)/p$ is a free $\ol A^{\bx}(\nu)^{(1)}/p$-module with basis $\prod_{1\leq i\leq d}p_{\lambda}(\ol U_i)^{n_i}\cdot\prod_{1\leq i\leq d,1\leq j\leq \nu+1, j\neq \lambda}p_{\lambda j}(\ol\tau_i)^{m_{i j}}$, for  $0\le n_i,m_{i j}<p$. So the claim follows from the isomorphism (\ref{eq:EDCompareIsom}) modulo $(p,\mu)$.

Next we claim that the composition (\ref{pd_comp}) is an isomorphism. Indeed, it is a PD-homomorphism of $\cal B$-algebras sending $V_{ij}$ to $p_{\lambda j}((p-1)!\ol\tau_i^{[p]}+\sum_{j=1}^{p-1}\tfrac1p\binom{p}{j}p_1(\res U_i)^{p-j}\ol\tau^j_i)$, by the observation at the end of part (i); this element can be rewritten as $(p-1)!\beta(T_{ij})^{[p]}+\beta(\sum_{i=1}^{p-1} \tfrac1p\binom{p}{j}\res U_i^{p-j}T_{ij}^j)$. The claimed isomorphism is therefore a special case of Lemma \ref{lem:PDPolyHiggs} below (in the notation of that lemma, we have $A=\res A^\bx/p$, $\cal B$ as in the current proof, $\cal C=\cal A\otimes_{\res A^{\bx(1)}/p}\cal B$, and $\cal D=\res{\cal D}(\nu)/p$; note that $\res{\cal D}(\nu)/p$ is indeed the PD-polynomial algebra over $A$ in the required variables, by the usual description of divided power envelopes in the smooth case, c.f., Prop.~3.32 and just after Def.~4.1 of \cite{BerthelotOgus1978}).

It therefore follows that $\al\otimes\op{id}_\cal B$ is an isomorphism; but the base change $\res A^{\bx(1)}/p\to\cal B$ (i.e., the top horizontal arrow in the diagram) is faithfully flat, so $\al$ is also an isomorphism, as desired.
\end{proof}

\begin{lemma}\label{lem:PDPolyHiggs}
Let $A$ be an $\bb F_p$-algebra and $n\ge0$. Let $\cal B:=A[T_1,\ldots, T_n]/(T_1^p,\ldots, T_n^p)$, let $\cal C$ be the PD-polynomial algebra over $\cal B$ in $n$ variables $V_1,\ldots, V_n$, and let $\cal D$ be the PD-envelope of the projection $\cal B\to A$ (in other words, $\cal D$ is the PD-polynomial algebra over $A$ in variables $T_1,\ldots, T_n$).

Given any elements $u_i\in \bb F_p^{\times}$ and $b_i\in (T_1,\dots,T_n)\subseteq \cal B$, for $i=1,\dots,n$, then the $A$-PD-homomorphism
$f\colon \cal C\to \cal D$ defined by \[T_i\mapsto T_i\qquad V_i\mapsto u_iT_i^{[p]}+b_i\] is an isomorphism.
\end{lemma}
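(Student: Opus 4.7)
My plan is to prove the lemma by reducing to the special case $b_i=0$.

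First, when $b_i=0$ the $A$-algebra homomorphism $f_0\colon\mathcal{C}\to\mathcal{D}$ defined by $V_i\mapsto u_iT_i^{[p]}$ is $\mathcal{B}$-linear and sends the free $\mathcal{B}$-basis $\{V^{[\boldsymbol{\beta}]}\}_{\boldsymbol{\beta}\in\mathbb{N}^n}$ of $\mathcal{C}$ to $\{u^{\boldsymbol{\beta}}T^{[p\boldsymbol{\beta}]}\}_{\boldsymbol{\beta}\in\mathbb{N}^n}$, which is a free $\mathcal{B}$-basis of $\mathcal{D}$ up to scaling by the units $u_i\in\mathbb{F}_p^\times$. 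Here I use that $(T_i^{[p]})^{[k]}=T_i^{[pk]}$ and that every $T_i^{[m]}$ can be uniquely written as $T_i^{m\bmod p}\cdot(T_i^{[p]})^{[\lfloor m/p\rfloor]}$ (Lucas in characteristic $p$); hence $f_0$ is an isomorphism of $A$-algebras.

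Next, for general $b_i$ I would factor $f=\tau\circ f_0$, where $\tau\colon\mathcal{D}\to\mathcal{D}$ is the $A$-algebra endomorphism defined by $\tau(T_i)=T_i$ and $\tau(T_i^{[p^e]})=u_i^{-p^{e-1}}(u_iT_i^{[p]}+b_i)^{[p^{e-1}]}$ for $e\geq 1$. Well-definedness uses the presentation $\mathcal{D}\cong A[T_i^{[p^e]}]_{i,e\geq 0}/((T_i^{[p^e]})^p)$ (a consequence of Lucas applied to PD-multiplication); the required vanishing $\tau(T_i^{[p^e]})^p=0$ follows from the PD-identity $(y^{[p^{e-1}]})^p=\tfrac{(p^e)!}{(p^{e-1}!)^p}\,y^{[p^e]}$ combined with Kummer's theorem, which shows the multinomial coefficient has $p$-adic valuation one. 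Checking $f=\tau\circ f_0$ on the generators $V_i^{[k]}$ is then a direct computation using the base-$p$ decomposition of $k$.

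Finally, I would prove $\tau$ is an automorphism by induction on $e$, showing that every generator $T_i^{[p^e]}$ lies in $\tau(\mathcal{D})$; the base case $e=0$ is immediate. For $e\geq 1$, the identity
\[
\tau(T_i^{[p^e]})=T_i^{[p^e]}+\sum_{k<p^{e-1}}u_i^{k-p^{e-1}}\,T_i^{[pk]}\,b_i^{[p^{e-1}-k]}
\]
lets me isolate $T_i^{[p^e]}$, provided the correction lies in the $A$-subalgebra generated by the $T_j^{[p^m]}$ already constructed: the $T_i^{[pk]}$ for $k<p^{e-1}$ decompose as products of $T_i^{[p^m]}$ with $m<e$ by base-$p$ expansion, and the $b_i^{[j]}$ involve only lower-index generators by the inductive hypothesis. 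Injectivity then follows from the fact that $\tau$ is an endomorphism of a free $A$-module admitting a compatible ascending filtration on which it was shown to induce an iso; alternatively, surjectivity together with finiteness of each graded piece of this filtration forces bijectivity.

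The main obstacle is the characteristic-$p$ behavior of the divided powers $b_i^{[j]}$ for $j\geq p$, which one controls using the PD-axiom $\gamma_j(\lambda x)=\lambda^j\gamma_j(x)$ for $\lambda\in\mathcal{D}$ and $x\in J_\mathcal{D}$; this produces vanishings such as $\gamma_j(T_1T_2)=T_1^j T_2^{[j]}=0$ for $j\geq p$, and these cancellations are precisely what allows the induction on $e$ to terminate despite the a priori possibility that $b_i^{[j]}$ could involve generators $T_l^{[p^m]}$ of unexpectedly large index $m$.
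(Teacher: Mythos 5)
Your proof is correct, and at bottom it runs on the same engine as the paper's: the characteristic-$p$ presentation $\cal D\cong A[T_i^{[p^e]}]_{i,e\ge 0}/((T_i^{[p^e]})^p)$ together with an induction on the level $e$, exploiting that the relevant map is unitriangular for the filtration by the finite free subalgebras $A^{(e)}:=A[T_l^{[p^m]}:m\le e]$ --- each new generator maps to a unit times a new generator plus terms from levels $\le e$, the lower-order terms being controlled exactly as you do via $\gamma_j(\lambda x)=\lambda^j\gamma_j(x)$ and $T_l^{\,j}=0$ for $j\ge p$. What you do differently is to factor $f=\tau\circ f_0$: the ``diagonal part'' $f_0$ is disposed of by matching the free $\cal B$-bases $\{V^{[\ul\beta]}\}$ of $\cal C$ and $\{T^{[p\ul\beta]}\}$ of $\cal D$, and the level induction is then run only for the unipotent automorphism $\tau$ of $\cal D$; the paper instead runs a single induction directly on $f$, viewed as a filtration-shifting map $\cal B^{(s-1)}\isoto A^{(s)}$ between the two colimit presentations. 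Your packaging makes the two isomorphisms individually transparent, at the cost of the verification $f=\tau\circ f_0$, which you call ``a direct computation'' but which silently uses that the constants $\tfrac{(pk)!}{(p!)^k\,k!}$ (in particular $(T^{[p]})^{[p^e]}=T^{[p^{e+1}]}$ on the nose) are congruent to $1$ mod $p$; this is true --- compare unit parts of the factorials via Wilson's theorem --- but worth recording, since an unnoticed unit there would force compensating scalars into the definition of $\tau$ (harmless, though it would also put a unit rather than $1$ in front of the leading term of your displayed formula for $\tau(T_i^{[p^e]})$). For the final injectivity step, the cleanest phrasing is that $\tau$ preserves each finite free $A$-module $A^{(e)}$ and is surjective onto it, and a surjective endomorphism of a finitely generated module over a commutative ring is injective.
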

\begin{proof}
We inductively define $\cal B$-algebras $\cal B^{(s)}$ and $A$-algebras $A^{(s)}$,
for $s\geq -1$, by $\cal B^{(-1)}:=\cal B$, $A^{(-1)}:=A$,
\[\cal B^{(s)}:=\cal B^{(s-1)}[V_1^{(s)},\ldots, V_n^{(s)}]/((V_i^{(s)})^p;1\leq i\leq n),\]
and \[A^{(s)}:=A^{(s-1)}[T_1^{(s)},\ldots, T_n^{(s)}]/((T_i^{(s)})^p;1\leq i\leq n).\]
Put $\cal B^{(\infty)}=\varinjlim_{s} \cal B^{(s)}$ and $A^{(\infty)}=\varinjlim_{s}A^{(s)}$.
Then we have an isomorphism of $\cal B$-algebras
$\cal B^{(\infty)}\xrightarrow{\cong} \cal C$ defined by $V_i^{(s)}\mapsto V_i^{[p^{s}]}$ and an isomorphism of $A$-algebras $A^{(\infty)}\xrightarrow{\cong} \cal D$ defined by
 $T_i^{(s)}\mapsto T_i^{[p^{s}]}$. Let $g\colon \cal B^{(\infty)}
\to A^{(\infty)}$ be the homomorphism induced by the map $f\colon \cal C\to \cal D$
via the previous isomorphisms. Since $g(T_i)=T_i^{(0)}$, we see that $g$
induces an isomorphism $\cal B^{(-1)}\xrightarrow{\cong} A^{(0)}$. 
By using $(T_i^{[p]})^{[p^{s}]}\in \bb F_p^{\times}\cdot
T_i^{[p^{s+1}]}$, we see that $g(V_i^{(s)})$ is of the form 
$v T_i^{(s+1)}+b$ for some $v\in \bb F_p^{\times}$ and $b\in (T_i^{(j)}; 1\leq i\leq n,0\leq j\leq s)\subseteq A^{(s)}$.
Hence, by induction on $s$, one can show that $g$ induces
an isomorphism $\cal B^{(s-1)}\xrightarrow{\cong} A^{(s)}$ for all
$s\ge0$. By taking the inductive limit over $s$, we
see that $g$ is an isomorphism.
\end{proof}

\begin{remark}
Lemma \ref{lem:E(1)Description} means that the algebra $\ol {\cal E}(1)/p$ (resp.~$\ol {\cal E}(1)/p^n$) is an analogue of the Hopf algebra used by H.~Oyama \cite{Oyama2017} (resp.~D.~Xu \cite{Xu2019}) in his study of Higgs modules (resp.~modules with $p$-connections) in characteristic $p$ (resp.~over $\bb Z/p^n$), improved by adding divided powers with the help of $\delta$-ring structure.
\end{remark}

The following analogue of Lemma \ref{lem:StratGammaEquiv} is proved in exactly the same way, so we omit the details. (The $\Gamma^2$-actions are analogous to those in Lemma \ref{lem:StratGammaEquiv}: namely $\Gamma^{ 2}$ acts on $H\otimes_{A^{\bx(1)},p_{\nu}^*}\cal E(1)$, for $\nu=1,2$, by tensoring the canonical action on $\cal E(1)$ by the action on $N$ given by projection to the $\nu^\sub{th}$ coordinate of $\Gamma^2$.)

\begin{lemma}\label{lem:qHiggsStratGammaEquiv}
Let $(H,\varepsilon)$ be an object of $\Strat(\cal E(\bullet))$; recall that $\ep$ induces a $\Gamma$-action on $N$, as encoded by the functor $\op{ev}_{A^{\bx(1)}}^{\Strat}$, so that we may also view $H$ as an object of $\Rep_\Gamma^\mu(A^{\bx(1)})$. Then $\varepsilon\colon H\otimes_{A^{\bx(1)},p_2^*}\cal E(1) \xrightarrow{\cong} H\otimes_{A^{\bx(1)},p_1^*}\cal E(1)$ is $\Gamma^2$-equivariant.\hfill\qed
\end{lemma}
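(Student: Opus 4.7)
The plan is to mirror the argument of Lemma \ref{lem:StratGammaEquiv}, simply replacing the cosimplicial object $\frak D(\bullet)$ by $\frak E(\bullet)$ and using the corresponding evaluation functor. First, using the equivalence $\op{ev}_{\frak E(\bullet)}\colon \CR_{\Prism}(R^{(1)}/(A_{\inf},\tilde\xi))\xrightarrow{\sim}\Strat(\cal E(\bullet))$, I would choose a prismatic crystal $\cal F$ such that $\op{ev}_{\frak E(\bullet)}(\cal F)=(H,\varepsilon)$, so that $H=\cal F(\frak E(0))$ and $\varepsilon$ is obtained as the composition of $p_2^*$ and $(p_1^*)^{-1}$ of the crystal structure on $\cal F$.

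Next, by the compatibility diagram just preceding Lemma \ref{lem:qHiggsStratGammaEquiv} (the $\frak E$-analogue of Lemma \ref{lem:PrismStratEvaluation}), the action of $\gamma\in\Gamma$ on $H=\cal F(\frak E(0))$ induced by $\varepsilon$ via $\op{ev}_{A^{\bx(1)}}^{\Strat}$ coincides with the action induced by the canonical action of $\gamma$ on the object $\frak E(0)=(A^{\bx(1)}\to R^{(1)}\xot{=}R^{(1)})$ through functoriality of $\cal F$. This is the key point that replaces the semi-linear $\Gamma$-action on $H$ coming from the stratification by a purely functorial action coming from automorphisms of $\frak E(0)$ in the prismatic site.

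Now, for $\nu=1,2$, the morphism $p_\nu\colon \frak E(0)\to \frak E(1)$ is equivariant with respect to the $\Gamma$-action on $\frak E(0)$ and the $\Gamma^{ 2}$-action on $\frak E(1)$ via the projection $\Gamma^{ 2}\to\Gamma$ to the $\nu$-th coordinate; this is a direct consequence of the definition of the $\Gamma^{\nu+1}$-action on $\cal E(\nu)$ from the $\Gamma^{\nu+1}$-action on $A^\bx(\nu)^{(1)}$ and the universal property of prismatic envelopes. Applying $\cal F$, the induced maps $p_\nu\colon \cal F(\frak E(0))\to \cal F(\frak E(1))$ are then also equivariant in the same sense, where $\cal F(\frak E(1))$ is equipped with the $\Gamma^{ 2}$-action coming from the action on $\frak E(1)$.

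Finally, by the construction of $\varepsilon=(p_1^*)^{-1}\circ p_2^*$ from the crystal $\cal F$, and using that both $p_1^*$ and $p_2^*$ are $\Gamma^{ 2}$-equivariant as above (with the $\Gamma^{ 2}$-actions on source and target of $\varepsilon$ matching those prescribed in the statement of the lemma), we conclude that $\varepsilon$ itself is $\Gamma^{ 2}$-equivariant. There is no serious obstacle here: the proof is essentially formal once we have the equivalence $\op{ev}_{\frak E(\bullet)}$ and the compatibility with $\op{ev}_{A^{\bx(1)}}^{\Strat}$, both of which have already been established.
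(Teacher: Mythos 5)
Your proposal is correct and follows exactly the route the paper intends: the paper states that Lemma \ref{lem:qHiggsStratGammaEquiv} "is proved in exactly the same way" as Lemma \ref{lem:StratGammaEquiv}, i.e., by choosing a crystal $\cal F$ with $\op{ev}_{\frak E(\bullet)}(\cal F)=(H,\varepsilon)$, invoking the $\frak E$-analogue of Lemma \ref{lem:PrismStratEvaluation}, and using the $\Gamma^{2}$/$\Gamma$-equivariance of $p_\nu\colon\frak E(0)\to\frak E(1)$. No gaps.
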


Similarly to just before Proposition \ref{prop:qPDTaylerExp}, it is useful to restrict the $\Gamma^2$-action on $\cal E(1)$ to $\Gamma=\Gamma\times\{1\}\subset\Gamma^2$. This is then a semi-linear $\Gamma$-action on $\cal E(1)$, viewed as an $A^{\bx(1)}$-module via $p_1$, which is the identity modulo $\mu$ (as we commented just after Remark \ref{remark_compatibilityII}) and so corresponds to a q-Higgs field (as in the proof of Corollary \ref{corollary_descent_of_reps_to_phi_twist}) $\Theta:\cal E(1)\to\cal E(1)\otimes_{p_1,A^{\bx(1)}}\q\Omega^1_{A^{\bx(1)}/A}$. Its non-logarithmic coordinates are by definition \[\Theta_i:\cal E(1)\to\cal E(1),\quad f\mapsto \frac{\gamma_i(f)-f}{p_1(U_i^{(1)})\mu}.\] We record the following easy properties of this q-Higgs field on $\cal E(1)$:

\begin{lemma} \label{lem:qHiggsOnE(1)}
For each $i=1,\dots,d$, we have:
\begin{enumerate}
\item $\Theta_{i}(\tau_j^{(1)})=\begin{cases}-1 & j=i\\ 0&j\neq i.\end{cases}$
\item The mod $\mu$ induced endomorphism $\res\Theta_i$ of $\res{\cal E}(1)$ is a derivation.
\end{enumerate}
\end{lemma}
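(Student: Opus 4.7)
The plan is to verify both parts by unwinding the definition $\Theta_i(f)=(\gamma_i(f)-f)/(p_1(U_i^{(1)})\mu)$, where $\gamma_i$ here denotes the element $(\gamma_i,1)\in\Gamma\times\{1\}\subset\Gamma^2$. The essential input is how this element acts on the two natural systems of generators of $\cal E(1)$: tautologically it fixes every $p_2(U_j^{(1)})$, while $\gamma_i(p_1(U_j^{(1)}))=[\ep]^{p\delta_{ij}}p_1(U_j^{(1)})$. The latter formula reflects the fact that the $q$-parameter on the Frobenius twist $A^{\bx(1)}$ is $[\ep]^p$ rather than $[\ep]$; indeed, in $A^{\bx(1)}=A_{\inf}\otimes_{\phi,A_{\inf}}A^\bx$ one has
\[
\gamma_i(1\otimes U_j)=1\otimes[\ep]^{\delta_{ij}}U_j=\phi([\ep]^{\delta_{ij}})\otimes U_j=[\ep]^{p\delta_{ij}}(1\otimes U_j).
\]

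For (i), applying $\gamma_i-1$ to the defining relation $\tilde\xi\,\tau_j^{(1)}=p_2(U_j^{(1)})-p_1(U_j^{(1)})$ and using $A_{\inf}$-linearity of $\gamma_i-1$ gives $\tilde\xi(\gamma_i-1)(\tau_j^{(1)})=-([\ep]^{p\delta_{ij}}-1)p_1(U_j^{(1)})$. Invoking the identity $[\ep]^p-1=\mu\tilde\xi$ and the fact that $\tilde\xi$ is a non-zero divisor in $\cal E(1)$ (since $(\cal E(1),\tilde\xi)$ is a prism), this rearranges to $(\gamma_i-1)(\tau_j^{(1)})=-\delta_{ij}\,\mu\,p_1(U_i^{(1)})$; dividing by the unit $p_1(U_i^{(1)})$ and by $\mu$ yields $\Theta_i(\tau_j^{(1)})=-\delta_{ij}$.

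For (ii), I will first establish on $\cal E(1)$ the $\gamma_i$-twisted Leibniz rule
\[
\Theta_i(fg)=\Theta_i(f)\,g+\gamma_i(f)\,\Theta_i(g)
\]
by dividing the tautological identity $\gamma_i(f)\gamma_i(g)-fg=(\gamma_i(f)-f)g+\gamma_i(f)(\gamma_i(g)-g)$ through by $p_1(U_i^{(1)})\mu$. This division is legitimate since $\cal E(1)$ is $\mu$-torsion-free: the canonical map $\cal E(1)\to\cal D(1)$ coming from \eqref{eq:EDCompareIsom} is injective, because the relative Frobenius $F\colon A^\bx(1)^{(1)}\to A^\bx(1)$ is finite free with basis the monomials $\prod p_{\lambda}(U_i)^{k_i}$ for $0\le k_i<p$, and $\cal D(1)$ is $\mu$-torsion-free by Remark~\ref{rem:qPDenvChec}(i). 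Finally, reducing the twisted Leibniz rule modulo $\mu$ and invoking the fact (noted just before Remark~\ref{remark_ED_compatability}) that $\Gamma$ acts as the identity on $\cal E(1)/\mu$, the $\gamma_i$-twist collapses and one obtains $\res\Theta_i(\res f\,\res g)=\res\Theta_i(\res f)\,\res g+\res f\,\res\Theta_i(\res g)$, as desired.

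Neither step presents any real obstacle: (i) is a one-line calculation, and (ii) is the general principle that a $\gamma$-twisted derivation becomes an honest derivation once $\gamma$ becomes trivial. The only bookkeeping to be careful about is that the relevant $q$-parameter is $[\ep]^p=1+\mu\tilde\xi$, which is what allows the $\tilde\xi$ in the denominator of $\tau_j^{(1)}$ to cancel in part (i).
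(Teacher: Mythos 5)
Your proposal is correct and follows essentially the same route as the paper: part (i) is the same direct computation from the action formulas $\gamma_i(p_2(U_j^{(1)}))=p_2(U_j^{(1)})$ and $\gamma_i(p_1(U_j^{(1)}))=[\ep]^{p\delta_{ij}}p_1(U_j^{(1)})$ together with $[\ep]^p-1=\mu\tilde\xi$, and part (ii) is the same twisted Leibniz identity collapsing modulo $\mu$ because $\Gamma$ acts trivially on $\cal E(1)/\mu$. The extra justifications you supply (the $q$-parameter on the Frobenius twist being $[\ep]^p$, and $\mu$-torsion-freeness of $\cal E(1)$ via the finite free injection into $\cal D(1)$) are correct and merely make explicit what the paper leaves implicit.
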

\begin{proof}
(i) The $\Gamma$-action on $\cal E(1)$ satisfies \[\gamma_i(p_1(U_j^{(1)}))=\begin{cases}[\ep]^pp_1(U_j^{(1)}) & i=j \\ p_1(U_j^{(1)})& i\neq j\end{cases}\qquad \gamma_i(p_2(U_j^{(1)}))=p_2(U_j^{(1)})),\] whence part (i) follows by a direct calculation.
%: If $j\neq i$, the claim is obvious since $\gamma_i(U_j^{(1)})=U_j^{(1)}$. In the case $j=i$, it follows from \[(\gamma_i,1)(\tau_i^{(1)}) =\tilde\xi^{-1}(p_2^*(U_i^{(1)})-q^pp_1^*(U_i^{(1)})) =\tau_i^{(1)}-\tilde\xi^{-1}(q^p-1)p_1^*(U_i^{(1)}) =\tau_i^{(1)}-(q-1)p_1^*(U_i^{(1)}).\]

(ii): This follows at once from the identity $\Theta_{i}(gf)=\Theta_{i}(g)\gamma(f)+g\Theta_{i}(f)$ for $g,f\in \cal E(1)$, as the $\Gamma$-action on $\cal E(1)$ is the identity modulo $\mu$.
\end{proof}

\begin{proof}[Proof of Theorem \ref{thm:ConvqHiggStrat}]
We prove the theorem by an argument similar to the that of statement (i) in the main proof in \S\ref{ss_proof_of_main_thm1}. Let $(H,\varepsilon)$ be an object of $\Strat(\cal E(\bullet))$, and put $G=H\otimes_{A^{\bx(1)},p_1}\cal E(1)$ and $\ol G=G/\mu$. By Lemma \ref{lem:qHiggsStratGammaEquiv}, the stratification $\varepsilon$ restricts a map $\ep|_H:H\to G^{\Gamma\times\{1\}}$, whose composition with $\Delta^*\colon G^{\Gamma\times\{1\}}\subseteq G\xto{\Delta} H$ is the identity map. The $\Gamma=\Gamma\times\{1\}$-action on the $A^{\bx(1)}$-module $G$ (via structure map $p_1$) defines a flat q-Higgs field $\Theta_G:G\to G\otimes_{p_1,A^{\bx(1)}}\q\Omega^1_{A^{\bx(1)}/A}$, whose $i^\sub{th}$ non-logarithmic component is by definition $\Theta_{G,i}=\tfrac{\gamma_i-1}{p_1(U_i^{(1)})\mu}$. Similarly, the $\Gamma$-action on $H$ corresponds to a q-Higgs field $\Theta_H$ whose non-logarithmetic components are $\Theta_{H,i}:=\tfrac{\gamma_i-1}{U_i^{(1)}\mu}$. Since $G^{\Gamma\times\{1\}}=G^{\Theta_G=0}$, we may reduce $\ep|_H$ modulo $\mu$ to obtain $\res\ep|_H:\res H\to \res G^{\Theta_{\res G}=0}$, whose composition with $\Delta_{\res H}:\res G\to\res H$ is the identity map; here $\Theta_{\res G}$ denotes the mod $\mu$ reduction of $\Theta_G$, and we will use similar notation for the reductions of the endomorphisms $\Theta_{G,i}$, $\Theta_{H,i}$.

We also let $\Theta_i$ and $\res\Theta_i$ be the non-logarithmetic components of the q-Higgs fields on $\cal E(1)$ and $\res{\cal E}(1)$, which were studied in Lemma \ref{lem:qHiggsOnE(1)}.

Then $\Theta_{G,i}(h\otimes f)=\Theta_{H,i}(h)\otimes \gamma(f)+h\otimes \Theta_{i}(f)$ for $h\in H$ and $f\in \cal E(1)$; by taking the reduction modulo $\mu$ we see that $\Theta_{\ol G,i}(h\otimes  f)=\Theta_{\ol H,i}(h)\otimes f+h\otimes \ol\Theta_{i}(f)$ for $h\in \ol H$ and $f\in \ol{\cal E}(1)$. Next, for each $h\in\ol H$, Lemma \ref{lem:E(1)Description}(ii) implies that  $\res\ep|_H(h)$ can be written uniquely in the form $\res\ep|_H(h)=\sum_{j_1,\dots,j_d\ge0} h_{\ul j}(\ol \tau^{(1)})^{[\ul j]}$, where $h_{\ul j}$ is a sequence of elements of $\res H$ satisfying $x_{\ul 0}=x$ and $x_{\ul j}\to 0$ $p$-adically as $\vert \ul n\vert\to\infty$; here we write $(\ol\tau^{(1)})^{[\ul j]}=\prod_{1\leq i\leq d}(\ol\tau_i^{(1)})^{[j_i]}$ for a multi-index $\ul j=(j_1,\dots,j_d)$. By the above formula for $\Theta_{\ol G,i}(h\otimes f)$ and Lemma \ref{lem:qHiggsOnE(1)}, we obtain 
\begin{equation}
\Theta_{\ol G,i}(\res\ep|_H(h))=\sum_{j_1,\dots,j_d\ge0}\Theta_{\ul H,i}(h_{\ul j})\otimes(\ol\tau^{(1)})^{[\ul j]}-\sum_{{\substack{j_1,\dots,j_d\ge0\\j_i\neq 0}}}x_{\ul j}\otimes (\ol\tau^{(1)})^{[\ul j-\ul 1_i]},
\end{equation}
which is a Higgs analogue of \eqref{eq:ND(1)NablaFormula}.

From the vanishing $\Theta_{\ol G,i}(\res\ep|_H(h))=0$, for
$i=1,\dots,d$, we see that $h_{\ul j}=\prod_{1\leq i\leq d}\Theta_{\ol H,i}^{j_i}(h)$. Therefore $\Theta_{\ol H,i}$ is $p$-adically quasi-nilpotent, as desired.
\end{proof}

\begin{remark}
The arguments of \S\ref{ss_gen_reps_as_strat} also work for an object of $\Rep^\mu_{\Gamma,\text{conv}}(A^{\bx (1)})$ and $\cal E(1)$ after some modifications similar to the proof of Theorem \ref{thm:ConvqHiggStrat}, and allow us to construct  a functor $\Strat_{\cal E(\bullet)}:\Rep_{\Gamma,\text{conv}}^{\mu}(A^{\bx (1)})\to \Strat (\cal E(\bullet))$ such that the composition $\ev^{\Strat}_{A^{\bx(1)}}\circ \Strat_{\cal E(\bullet)}$ is the identity. Combined with Theorem \ref{thm:ConvqHiggStrat}, this gives another  proof of Theorem \ref{thm:main} for $\ev_{A^{\bx (1)}}$ parallel to the proof of Theorem~\ref{thm:main}  for $\ev_{A^{\bx}}$ given in \S\ref{ss_proof_of_main_thm1}.
\end{remark}

\newpage
%%%%%
%%%%% Theory over A_crys
%%%%%

\section{Connections and admissibility over crystalline period~rings}\label{section_crystalline}
In this section we study generalised representations over crystalline period rings and their relation to connections and filtered F-crystals, the link in the latter case being a version of Faltings' associatedness. As in the previous sections we continue to work locally, letting $R$ be a small $p$-adic formally smooth $\roi$-algebra.

In \S\ref{ss_q_over_A_crys} we fix a framing of $R$ and study the category of generalised representations of $\Gamma$ on modules over the framed crystalline period ring $A_\crys^\bx(R)$ which are trivial modulo $\mu$, as a crystalline analogue of Sections \ref{section_small_reps}--\ref{section_q_connections}. We show that such generalised representations are equivalent to both of q-connections and actual connections:
\[\Rep_{\Gamma}^\mu(A_\crys^\bx(R))\quis\textrm{\rm qMIC}(A_\crys^\bx(R))\quis\textrm{\rm MIC}(A_\crys^\bx(R)).\]
Here the first equivalence is a formal consequence of the results established in \S\ref{ss_modules_with_q}, while the overall equivalence uses divided powers to replace the the action of the generators of $\Gamma$ by their logarithms; see Theorem \ref{theorem_q-A_crys} for details. After imposing an additional $p$-adic quasi-nilpotence conditions, these categories are moreover equivalent to $\CR(R/A_\crys)$, namely crystals on the crystalline site of $R/A_\sub{crys}$, with the equivalence given by evaluating any crystal on the pd-thickening $A_\crys^\bx(R)$ of $R$; the advantage of this category of crystals is that it is independent of any choice of framing.

The main goal of the section is to continue avoiding any choice of framing by working with $\res R$ and show how generalised representations in $\op{Rep}_\Delta^\mu(A_\inf(\res R))$ give rise to crystals. Analogously to \cite[\S Vf]{Faltings1989}, we say that a crystal $\cal F \in\CR(R/A_\crys)$ is {\em associated} to a generalised representation $M\in\Rep_\Delta^\mu(A_\inf(\res R))$ when they have isomorphic images as crystalline generalised representations in $\Rep^\mu_\Delta(A_\crys(\res R))$ along the respective functors  ``evaluate on pd-thickening $A_\crys(\res R)$'' $\op{ev}_{A_\crys(\res R)}$ and base change $-\otimes_{A_\inf(\res R)}A_\crys(\res R)$. The main theorems (Theorems \ref{theorem_crystal_genrep}--\ref{theorem_Admissibility}) imply that such a crystal is determined by the associated crystalline generalised representation, and that any generalised representation which admits a Frobenius structure is the associate of some crystal; there is therefore a resulting functor \begin{equation}\Rep^\mu_\Delta(A_\inf(\res R),\phi)\To  \op{F-CR}(R/A_\crys)\label{eqn_associated}\end{equation} associating F-crystals to our main category of generalised representations. In Theorem \ref{theorem_M_et_up_to_isogeny} we show that the filtered F-crystal associated to a generalised representation $M$ in this way is sufficiently rich to recover up to isogeny the Galois representation attached to $M$.

The machinery above is actually developed in the generality of filtered F-crystals; not only is this required for the proofs in the non-filtered context, but it plays a crucial role in \S\ref{ss_DieudonneII} where we complete the proof that the Dieudonn\'e functor of \S\ref{sss_Dieudonne} is essentially surjective.

\begin{remark}[Globalising]\label{remark_global_associatedness}
In the global context, given a smooth, $p$-adic formal $\roi$-scheme $\frak X$, we would like to generalise the results of this section by replacing $\Rep^\mu_\Delta(A_\inf(\res R))$ by the forthcoming category of relative Breuil--Kisin--Fargues modules $\BKF(\frak X)$ (see \S\ref{ss_relative_BKF}). This would include the analogous notion of associatedness, thereby leading to a functor $\BKF(\frak X,\phi)\to  \op{F-CR}(\frak X/A_\crys)$ with the following property: the F-crystal associated to a Breuil--Kisin--Fargues module $\bb M$ can be used to recover the associated lisse $\bb Q_p$-sheaf $(\sigma_\sub{\'et}^*\bb M)[\tfrac1p]$ which will be described in \S\ref{ss_etale}.

However, we restrict primarily to the affine case for two reasons. Firstly, sheafifying the conditions of Definition \ref{definition_filtration_2} to the global setting is rather laborious. Secondly, in Theorem \ref{theorem_local_with_phi} we will identify $\BKF(\frak X,\phi)$ with prismatic F-crystals, at which point it should be possible to define the desired functor $\BKF(\frak X,\phi)\to  \op{F-CR}(\frak X/A_\crys)$ by pulling back F-crystals along the map of sites $(\frak X/(A_\crys,p))_\Prism \to (\frak X^{(1)}/(A_\inf,\tilde\xi))_\Prism$.
\end{remark}

\comment{ {\bf I believe the following old material can be removed:}

In this section we work locally, though we anticipate that the results can be globalised by arguing similarly to Section \ref{section_BKF_on_proetale} (in particular, some of the necessary results about crystalline period sheaves may already be found in the work of Tan--Tong \cite{}). So let $R$ be a $p$-adic formally smooth $\roi$-algebra, which we assume is small; also let $\res R$ be its $p$-adically complete unramified-outside-$p$ universal cover as in ???, equipped with its action by the geometric fundamental group $\Delta=\pi_1^\sub{\'et}(R[\tfrac1p])$.

\begin{theorem}\label{theorem_A_crys_admissible}
Assume further that $R$ is defined over $W=W(k)$, i.e., that there exists a $p$-adic formally smooth $\roi$-algebra $R_W$ such that $R=R_W\hat\otimes_W\roi$; let $\roi A_\crys(R)$ be the $p$-adic completion of the pd-envelope of $\text{\rm inc}\otimes \theta:R_W\otimes_WA_\inf(\res R)\to \res R$.

Then any $(M,\phi_M)\in\BKF(R)=\Rep^{<\mu}_\Delta(A_\inf(\res R))$ is $\roi A_\crys(R)$-admissible; that is, the invariants $(M\otimes_{A_\inf(\res R)} \roi A_\crys(R))^\Delta$ are a finite projective $\roi A_\crys(R)^\Delta$-module and the canonical map \[(M\otimes_{A_\inf(\res R)} \roi A_\crys(R))^\Delta\otimes_{\roi A_\crys(R)^\Delta}\roi A_\crys(R)\To M\otimes_{A_\inf(\res R)} \roi A_\crys(R)\] is an isomorphism.
\end{theorem}

\begin{remark}[Globalising]
Let $\frak X_W$ be a $p$-adic smooth formal $W$-scheme and set $\frak X=\frak X_W\times_{\Spf W}\Spf \roi$. Since the schemes are formal, the canonical map $\frak X\to \frak X_W$ induces an equivalence of Zariski and \'etale sites (they have the same special fibre); we exploit this to slightly simplify notation in the following, viewing for example $\roi_{\frak X_W}$ as a sheaf on $\frak X$. As usual we write $\nu:X_\sub{pro\'et}\to\frak X$ for the projection map of sites, where $X$ is the rigid analytic generic fibre of $X$. A pro-\'etale sheaf version of $\roi A_\crys(R)$ may be defined by declaring $\roi A_\crys$ to be the $p$-adic completion of the pd-envelope of $\text{inc}\otimes\theta:\nu^{-1}(\roi_{\frak X_W})\otimes_W\bb A_{\inf,X}\to \hat\roi_X^+$. Although we have not checked the details, we would hope that Theorem \ref{theorem_A_crys_admissible} admits the following globalisation: {\em Given $\bb M\in\BKF(\frak X)$, then $\nu_*(\bb M\otimes_{\bb A_{\inf,X}}\roi\bb A_{\sub{crys},X})$ is a locally finite free $\nu_*(\roi\bb A_{\sub{crys},X})$-module and the canonical map 
\[\nu^{-1}\nu_*(\bb M\otimes_{\bb A_{\inf,X}}\roi\bb A_{\sub{crys},X})\otimes_{\nu^{-1}\nu_*(\roi\bb A_{\sub{crys},X})}\roi\bb A_{\sub{crys},X}\To \bb M\otimes_{\bb A_{\inf,X}}\roi\bb A_{\sub{crys},X}\] is an isomorphism.} In particular, the sheaf $\roi\bb A_\crys$ has been studied by Tan--Tong \cite{}, who establish some of the necessary analogues of the results of Section \ref{section_BKF_on_proetale}.
\end{remark}

\begin{remark}[Relation to crystalline representations]

\end{remark}

The opening hypothesis of Theorem \ref{theorem_A_crys_admissible} is in fact vacuous: the smooth $k$-algebra $R\otimes_\roi k$ may always be lifted to a $p$-adic formally smooth $W$-algebra $R_W$, which will then satisfy $R_W\hat\otimes_W\roi\isoto R$. In fact, first choosing a framing $\bx:\roi\pid{\ul U^{\pm1}}\to R$, then there exists a unique $p$-adic formally \'etale $W\pid{\ul U^{\pm1}}$-algebra $R_W$ such that $R\hat\otimes_{W\pid{\ul U^{\pm1}}}\roi\pid{\ul U^{\pm1}}=R$; then the framed period ring $A_\inf^\bx(R)$ identifies with the $(p,\xi)$-adic completion of $R_W\otimes_{W\pid{\ul U^{\pm1}}}A_\inf$, or equivalently of $R_W\otimes_WA_\inf$. It follows that the ring $\roi A_\crys(R)$ of Theorem \ref{theorem_A_crys_admissible} could alternatively be defined, without any reference to $R_W$ (but instead using the fixed framing), as the $p$-adic completion of the pd-envelope of $A_\inf^\bx(R)\otimes_{A_\inf}A_\inf(\res R)\to \res R$ (the reader might be concerned that it is necessary first to $\xi$-adically complete, but $\xi$ will have divided powers in the envelope and hence be nilpotent modulo any power of $p$, so it is equivalent only to $p$-adically complete).

??? Even better: use $A_\inf^\phi(R)\otimes_{A_\inf}A_\inf(\res R)$, where $A_\inf^\phi(R)$ denotes a formally smooth lift with Frobenius. ???

In the remainder of this section we will therefore forget about $R_W$ but instead fix a framing and define $\roi A_\crys$ to be the $p$-adic completion of the pd-envelope of $A_\inf^\bx(R)\otimes_{A_\inf}A_\inf(\res R)\to \res R$. Then $\roi A_\crys$ is clearly naturally an algebra over the {\em framed crystalline period ring} $A_\crys^\bx(R)$, which is defined to be the $p$-adic completion of $A_\inf^\bx(R)\otimes_{A_\inf}A_\crys$ (similarly to the previous paragraph, it would be equivalent to take the $(p,\xi)$- or $(p,\mu)$-adic completion). In short, $A_\crys^\bx(R)$ is a $p$-adically formally smooth $A_\crys$-algebra such that $A_\crys^\bx(R)\otimes_{A_\crys}\roi=R$.
}

\subsection{$q$-connections over $A_\sub{crys}$ and their logarithms as crystals}\label{ss_q_over_A_crys}
We begin with a local study of ($q$-)connections over crystalline period rings. The key point is that suitable actions by $\bb Z_p^d$ (or, equivalently, $q$-connections by Proposition \ref{proposition_reps_vs_q_connections}) over $A_\crys$ give rise to connections by replacing the actions of the generators $\gamma_i$ by their logarithms $\log(\gamma_i)$; this is far from a novel observation, having appeared previously in \cite[\S12]{BhattMorrowScholze1} and \cite[\S4]{ColmezNiziol2017}.

Let $R$ be a $p$-adic formally smooth $\roi$-algebra, and fix a framing $\square:\roi\pid{\ul T^{\pm1}}\to R$. Let $A_\crys^\bx(R)$ be the associated {\em framed crystalline period ring}, namely the unique $p$-adic formally \'etale $A_\crys\pid{\ul U^{\pm1}}$-algebra lifting the framing map, similarly to \S\ref{ss_framed}; equivalently, $A_\crys^\bx(R)$ is the $p$-adic completion of $A_\inf^\bx(R)\otimes_{A_\inf}A_\crys$ (we mention that this $p$-adic completion is automatically $\mu$ and $\xi$-adically complete; indeed, each of these elements has divided powers and is therefore nilpotent modulo any power of $p$). Observe that the existing $\Gamma$-action on $A_\inf^\bx(R)$ extends to an action by $A_\crys$-algebra automorphisms on $A_\crys^\bx(R)$; the Frobenius $\phi$ on $A_\inf^\bx(R)$ similarly extends. In short, we have applied the deformation approach of Remark \ref{remark_q_via_deformation} to construct a setting for $q$-de Rham cohomology \[A_\crys\to A_\crys^\bx(R)\circlearrowleft\Gamma,\qquad [\ep]\in A_\crys,\qquad U_1,\dots,U_d\in A_\crys^\bx(R).\]

The goal of this subsection is to establish the following theorem, in which some of the notation remains to be explained:

\begin{theorem}\label{theorem_q-A_crys}
There exist equivalences of symmetric monoidal categories \[\Rep_{\Gamma}^\mu(A_\crys^\bx(R))\quis\textrm{\rm qMIC}(A_\crys^\bx(R))\quis\textrm{\rm MIC}(A_\crys^\bx(R))\]
where \[\textrm{\rm qMIC}(A_\crys^\bx(R)):=\categ{4.3cm}{finite projective modules with flat q-connection over $A_\crys^\bx(R)$}\quad \textrm{\rm MIC}(A_\crys^\bx(R)):=\categ{4.3cm}{finite projective modules with flat connection over $A_\crys^\bx(R)$}.\] The functors are defined below.
\end{theorem}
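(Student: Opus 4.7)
The first equivalence is a direct application of Proposition~\ref{proposition_reps_vs_q_connections} in the set-up $A=A_\crys$, $A^\bx=A_\crys^\bx(R)$, $q=[\ep]$, $U_1,\dots,U_d$ coming from the fixed framing. I would verify the hypotheses (qDR1) and (qDR2) one by one: $\mu=[\ep]-1$ is a non-zero-divisor in $A_\crys^\bx(R)$ (this is inherited from $A_\crys$ through the formally \'etale base change from $A_\crys\pid{\ul U^{\pm1}}$); the $\Gamma$-action on $A_\inf^\bx(R)$ extends by functoriality to an $A_\crys$-linear action on $A_\crys^\bx(R)$ which is the identity modulo $\mu$; and $A_\crys^\bx(R)$ is $(p,\mu)$-adically complete and separated (in fact, $\mu$ has divided powers, so the $(p,\mu)$-adic topology coincides with the $p$-adic topology). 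With these in hand, Proposition~\ref{proposition_reps_vs_q_connections} gives the monoidal equivalence $\Rep_{\Gamma}^\mu(A_\crys^\bx(R))\simeq\mathrm{qMIC}(A_\crys^\bx(R))$ by $N\mapsto\nabla$ with $\nabla_i^\sub{log}=\tfrac{\gamma_i-1}{\mu}$.

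The second equivalence is the classical ``$\log/\exp$ trick'' of $p$-adic Hodge theory, made available by the crystalline context: let
\[
t:=\log([\ep])=\sum_{k\ge1}(-1)^{k-1}(k-1)!\,\mu^{[k]}\in A_\crys,
\]
which lies in $\mu\cdot A_\crys^\times$ (the formal factorisation $t=\mu(1-\mu/2+\mu^{2}/3-\cdots)$ makes $t/\mu$ explicitly a unit in $A_\crys$). I would construct mutually inverse functors
\[
\mathrm{qMIC}(A_\crys^\bx(R))\rightleftarrows\mathrm{MIC}(A_\crys^\bx(R))
\]
leaving the underlying finite projective module unchanged. In one direction, given $(N,\nabla_q)$ with coordinates $\nabla_{q,i}^\sub{log}=\tfrac{\gamma_i-1}{\mu}$, define the connection with logarithmic coordinates
\[
\nabla_i^\sub{log}:=\frac{1}{t}\log(\gamma_i)=\frac{1}{t}\sum_{k\ge1}(-1)^{k-1}(k-1)!\,\mu^{[k]}(\nabla_{q,i}^\sub{log})^k.
\]
In the reverse direction, given $(N,\nabla)$ with coordinates $\nabla_i^\sub{log}$, define the q-connection through the semi-linear action $\gamma_i:=\exp(t\,\nabla_i^\sub{log})=\sum_{k\ge0}t^{[k]}k!(\nabla_i^\sub{log})^k/\,?$, more cleanly written $\gamma_i=\sum_{k\ge0}(t\nabla_i^\sub{log})^k/k!$, then take $\nabla_{q,i}^\sub{log}=(\gamma_i-1)/\mu$. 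The key identity $\log(\gamma_i)=t\cdot U_i\partial_{U_i}$ on $A_\crys\pid{\ul U^{\pm1}}$ (checked on monomials $\ul U^{\ul k}$ using $\gamma_i(\ul U^{\ul k})=(1+\mu)^{k_i}\ul U^{\ul k}$) shows that these two definitions correctly intertwine the q-Leibniz rule with the usual Leibniz rule after dividing by $t$.

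The main technical obstacle is justifying the $p$-adic convergence of the series $\log(\gamma_i)$ and $\exp(t\nabla_i^\sub{log})$ on a finite projective $A_\crys^\bx(R)$-module. For the logarithm, I would exploit the facts that $\mu$ lies in $\ker\theta$ and hence has divided powers in $A_\crys^\bx(R)$, that $\mu^{[k]}\to0$ $p$-adically, and that $(\nabla_{q,i}^\sub{log})^k$ remains bounded on the finite projective $N$ (so that the product tends to $0$ $p$-adically). For the exponential, $t^{[k]}=t^k/k!$ is in $A_\crys$ and tends to $0$ $p$-adically, giving convergence on the same grounds. Once convergence is in place, the formal identities $\exp\circ\log=\log\circ\exp=\op{id}$ (applied in the appropriate completed algebras) show the functors are mutually quasi-inverse.

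Finally, the compatibility with symmetric monoidal structure is formal: on a tensor product the diagonal $\Gamma$-action satisfies $\gamma_i^{N\otimes N'}=\gamma_i^{(1)}\gamma_i^{(2)}$ with $\gamma_i^{(1)}$ and $\gamma_i^{(2)}$ commuting, so $\log$ turns it into a sum, matching the tensor connection; similarly $\exp$ of a sum of commuting operators is a product, matching the tensor q-connection. Internal Homs are handled in the same way (using Remark~\ref{remark_tensor}(ii)), and flat integrability is preserved because commuting $\gamma_1,\dots,\gamma_d$ yield commuting $\log(\gamma_1),\dots,\log(\gamma_d)$ (and vice versa). Composing the two equivalences then gives the desired chain of symmetric monoidal equivalences.
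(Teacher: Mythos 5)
Your proposal is correct and takes essentially the same route as the paper: the first equivalence is the specialisation of Proposition~\ref{proposition_reps_vs_q_connections} to $A=A_\crys$, $A^\bx=A_\crys^\bx(R)$, $q=[\ep]$, and the second is the $\log/\exp$ correspondence $\nabla_{\cal L,i}^{\sub{log}}=\tfrac1t\log(\gamma_i)$, $\gamma_i=\exp(t\nabla_{\cal L,i}^{\sub{log}})$, with convergence justified exactly as you indicate (from $(k-1)!\,\mu^{[k]}\to0$ and $t^k/k!\to0$ $p$-adically in $A_\crys$). The one point where you are lighter than the paper is the Leibniz rule for $\tfrac1t\log(\gamma_i)$ acting on a \emph{module} (it does not follow formally from the identity $\tfrac1t\log(\gamma_i)=U_i\partial_{U_i}$ on the base ring); the paper supplies this by computing $\lim_{b\to\infty}p^{-b}(\gamma_i^{p^b}-1)(nf)$, which is a routine supplement to your plan rather than a gap.
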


\begin{remark}
The first and second functors in the theorem depend on our fixed sequence of $p$-power roots of unity defining $\mu$, but the composition does not. Indeed, changing the sequence of $p$-power roots has the effect of changing $\ep\in\bb Z_p(1)\subseteq\roi^{\flat\times}$ into $\ep^a$ for some $a\in\bb Z_p^\times$, and similarly $\gamma_i$ into $\gamma_i^a$ and $t=\log([\ep])$ into $at$; therefore $\tfrac1t\log(\gamma_i)$ does not change.

However, the three categories do depend on the chosen framing.
\end{remark}

Both functors in the theorem preserve the underling module, only changing its additional structure. The first functor is as in Proposition \ref{proposition_reps_vs_q_connections} and Corollary \ref{corollary_reps_vs_q_connections}, given by replacing the action of $\bb Z_p^d$ on $M$ by the flat q-connection \[\nabla:N\to N\otimes_{A^\bx_\crys}\Omega^1_{A^\bx_\crys(R)/A_\crys},\quad m\mapsto\sum_{i=1}^d\frac1\mu(\gamma_i(m)-m)\otimes\dlog(U_i).\] We now explain the second functor; so let $(N,\nabla)\in \op{qMIC}(A_\crys^\bx(R))$ and, as in \S\ref{ss_modules_with_q}, write $\nabla=\sum_{i=1}^d\nabla_{i}^\sub{log}(-)\otimes\dlog(U_i)$. Then define \[\nabla_\cal L:=\sum_{i=1}^d\nabla_{\cal L,i}^\sub{log}(-)\otimes\dlog(U_i):N\to N\otimes_{A_\crys^\bx(R)}\Omega^1_{A_\crys^\bx(R)/A_\crys}\] by \[\nabla_{\cal L,i}^\sub{log}:=\frac1t\log(1+\mu\nabla_{i}^\sub{log}),\] where $t:=\log([\ep])\in A_\sub{crys}$. We will see in the course of the following proof that $\nabla_{\cal L}$ is a well-defined flat connection.

\begin{proof}[Proof of Theorem \ref{theorem_q-A_crys}]
The first equivalence of Theorem \ref{theorem_q-A_crys} is a special case of Proposition \ref{proposition_reps_vs_q_connections}, just like Corollary \ref{corollary_reps_vs_q_connections}. It remains to discuss the second equivalence; to simplify notation, and in anticipation of the point of view of \S\ref{ss_OA_admissibility}, it is helpful to recast it using the first equivalence. Namely we must show that the ``logarithm'' functor
\[\cal L:\Rep_\Gamma^\mu(A_\crys^\bx(R))\to \MIC(A_\crys^\bx(R)),\qquad N\mapsto (N,\nabla_\cal L=\sum_{i=1}^d\nabla_{\cal L,i}^\sub{log}(-)\otimes\dlog(U_i))\] is a well-defined equivalence of categories, where \[\nabla_{\cal L,i}^\sub{log}:=\frac1t\log(\gamma_i)=\frac{-1}t\sum_{m=1}^\infty \frac{(1-\gamma_i)^m}{m}:N\to N.\]

We begin by noting that the $\nabla_{\cal L,i}^\sub{log}$ are well-defined endomorphisms of $N$: indeed, \[\frac{(\gamma_i-1)^m}{m}=\mu\frac{([\ep]-1)^{m-1}}{m!}(m-1)!\left(\frac{\gamma_i-1}{\mu}\right)^m,\] where $([\ep]-1)^{m-1}/m!$ tends to $0$ $p$-adically in $A_\sub{crys}$, and $t$ and $\mu$ differ by a unit of $A_\sub{crys}$ \cite[Lem.~12.2]{BhattMorrowScholze1}.

We now check that the $\nabla_{\cal L,i}^\sub{log}$ satisfy a Leibnitz rule. The logarithm formula for $\nabla_{\cal L,i}^\sub{log}$ may be inverted via the exponential function since $t^m/m!\to0$ $p$-adically in $A_\crys$ as $m\to\infty$: namely $\gamma_i=\exp(t\nabla_{\cal L,i}^\sub{log})$, and therefore more generally $\gamma_i^a=\exp(at\nabla_{\cal L,i}^\sub{log})$ for any $a\in\bb Z$. Expanding this we see that \[\frac1a(\gamma_i^a-1)=t\nabla_{\cal L,i}^\sub{log}+a\sum_{m=2}^\infty a^{m-2}\frac{t^m}{m!}(\nabla_{\cal L,i}^\sub{log})^m:N\to N\] is well-defined, where the final term is the $m$-fold iteration of $\nabla_{\cal L,i}^{\sub{log}}$. This expression shows in particular, by taking $a=p^b$ to be successively larger powers of $p$, that \[\lim_{b\to\infty} p^{-b}(\gamma_i^{p^b}-1)(n)=t\nabla_i^\sub{log}(n)\] for all $n\in N$. Evaluating $\gamma_i^{p^b}-1$ at $nf$, where $n\in N$ and $f\in A_\sub{crys}^\bx(R)$, we have \[(\gamma_i^{p^b}-1)(nf)=(\gamma_i^{p^b}-1)(n)\cdot f+\gamma_i^{p^b}(n)\cdot (\gamma_i^{p^b}-1)(f)\] where the final $\gamma_i$ refers to the action on $A_\sub{crys}^\bx(R)$; now inverting $tp^b$ and letting $b\to\infty$ yields a Leibnitz rule \[\nabla_{\cal L,i}^\sub{log}(nf)=\nabla_{\cal L,i}^\sub{log}(n)f+n d_i^\sub{log}(f)\] where $d_i^\sub{log}:=\tfrac1t\log(\gamma_i):A_\sub{crys}^\bx(R)\to A_\sub{crys}^\bx(R)$ (everything we have said for $N$ holds for the trivial representation $A_\sub{crys}^\bx(R)$ itself; in particular, $d_i^\sub{log}$ is well-defined and $\lim_{b\to\infty} p^{-b}(\gamma_i^{p^b}-1)(f)=td_i^\sub{log}(f)$).

We claim that $\sum_{i=1}^dd_i^\sub{log}(-)\dlog(U_i):A_\sub{crys}^\bx(R)\to \Omega^1_{A_\crys^\bx(R)/A_\crys}$ is the usual universal continuous de Rham derivative $d:A_\sub{crys}^\bx(R)\to \Omega^1_{A_\crys^\bx(R)/A_\crys}$. Since each $d_i^\sub{log}$ is a continuous $A_\sub{crys}$-linear derivation, it may be written as $d_i^\sub{log}=f_i\circ d$ for some unique continuous $A_\sub{crys}$-linear map $f_i: \Omega^1_{A_\crys^\bx(R)/A_\crys}\to A_\sub{crys}$. Then $f_i(dU_j)=d_i^{\log}(U_j)=0$ for $j\neq i$ as $\gamma_i(U_j)=U_j$, and on the other hand $f_i(dU_i)=d_i^{\log}(U_i)=t^{-1}\sum_{m=1}^{\infty}\frac{(-1)^m}{m}(\gamma_i-1)(U_i)=t^{-1}\log[\varepsilon]\cdot U_i=U_i$. Hence we have $d=\sum_iU_i^{-1}f_i(-)\otimes dU_i=\sum_id_i^{\log}(-)\otimes d\log U_i$, as required to prove the claim.
%(this is easily checked on the coordinates $U_1,\dots,U_d$ and after base change along $A_\sub{crys}\to\roi$, which suffices; or see the proof of \cite[Lem.~12.3]{BhattMorrowScholze1}), 

So we have indeed shown that \[\nabla_\cal L:=\sum_{i=1}^d\nabla_{\cal L,i}^\sub{log}(-)\otimes\dlog(U_i):N\to N\otimes_{A_\crys^\bx(R)}\Omega^1_{A_\crys^\bx(R)/A_\crys}\] is a well-defined connection on $N$; it is flat since the $\gamma_i$, hence also the $\nabla_i^\sub{log}$, commute (cf.\ the standard argument of Lemma \ref{lemma_q_coordinates}). That is, we have shown that the functor $\Rep_\Gamma^\mu(A_\crys^\bx(R))\to \MIC(A_\crys^\bx(R))$ is well-defined. We have also  shown that the $\Gamma$-action can be recovered by the rule $\gamma_i:=\exp(t\nabla_{\cal L,i}^\sub{log})$. So, to show that these two procedures define an equivalence between the desired categories, it remains only to check that the inverse is well-defined, namely the following: given $(N,\nabla)\in\op{MIC}(A^\bx_\sub{crys}(R))$, then the endomorphism $\exp(t\nabla_{i}^\sub{log})$ of $N$ is $\gamma_i$-semilinear. But the Leibnitz rule implies $\frac{1}{j!}(t\nabla_i^{\log})^j(nf)=\sum_{j=j_1+j_2}\frac{1}{j_1!}(t\nabla_i^{\log})^{j_1}(n)\frac{1}{j_2!}(td_i^{\log})^{j_2}(f)$ for $n\in N$ and $f\in A_\crys^\bx(R)$, whence taking the sum over all $j>0$ obtains $\exp(t\nabla_{\cal L,i}^{\log})(nf) =\exp(t\nabla_{\cal L,i}^{\log})(n) \exp(td_{\cal L,i}^{\log})(f)=\exp(t\nabla_{\cal L,i}^{\log})(n)\gamma_i(f)$ as desired.
\end{proof}

\begin{remark}[Frobenius structures]\label{remark_Frob_q-Acrys}
Theorem \ref{theorem_q-A_crys} admits an analogue incorporating Frobenius structures: \[\Rep_{\Gamma}^\mu(A_\crys^\bx(R),\phi)\quis\op{qMIC}(A_\crys^\bx(R),\phi)\quis\MIC(A_\crys^\bx(R),\phi)\] Here the first two categories are obtained by replacing $A_\inf^\bx(R)$ by $A_\crys^\bx(R)$ in the analogous categories of Definition \ref{definition_phi}; on the other hand, $\MIC(A_\crys^\bx(R),\phi)$ is the more familiar category consisting of pairs $(N,\phi_N)$, where $N\in \MIC(A_\crys^\bx(R))$ and $\phi_N:(\phi^*N)[\tfrac1p]\to N[\tfrac1p]$ is an isomorphism of $A_\crys^\bx(R)$-modules with connection (note that we could replace $p$ by $\tilde\xi$ here: they differ by a unit of $A_\crys$ \cite[Lem.~12.2(iii)]{BhattMorrowScholze1}).

These equivalences of the categories with Frobenius structures follows from Theorem \ref{theorem_q-A_crys} by similar arguments as we already used in Theorem \ref{theorem_descent_to_framed_with_phi} and Proposition \ref{proposition_reps_vs_q_connections_with_phi}, so we omit the details. There are similarly  equivalences replacing $\phi$ by $\exists\phi$, where we use the obvious modification of the notation of Remark \ref{remark_cats_with_non-fixed_phi}.
\end{remark}

\begin{remark}[$p$-adic quasi-nilpotence]\label{remark_Frob_implies_p_nilp}
We will write $\MIC_\sub{conv}(A_\crys^\bx(R))\subseteq \MIC(A_\crys^\bx(R))$ for the full subcategory consisting of objects $(N,\nabla)$ whose connection $\nabla$ is $p$-adiclly quasi-nilpotent, i.e., for each $n\in N$ there exists $m\gg0$ such that $\nabla_i^m(n)\in pN$ for $i=1,\dots,d$. For example, by repeating the argument of Lemma \ref{lemma_phi_implies_xi_nilp}(iii), one sees that this holds in particular if $N$ may be upgraded to an object of $\MIC(A_\crys^\bx(R),\phi)$, i.e., if $N\in\MIC(A_\crys^\bx(R),\exists\phi)$.

We similarly define $\Rep_{\Gamma,\sub{conv}}^\mu(A_\crys^\bx(R))$ and $\op{qMIC}_\sub{conv}(A_\crys^\bx(R))$ by asking for $p$-adic quasi-nilpotence of the endomorphisms $\tfrac{\gamma_i-1}{U_i\mu}$ and $\nabla_i$ respectively. We then claim that the equivalences of Theorem \ref{theorem_q-A_crys} restrict to equivalences \[\Rep_{\Gamma,\sub{conv}}^\mu(A_\crys^\bx(R))\quis\textrm{\rm qMIC}_\sub{conv}(A_\crys^\bx(R))\quis\textrm{\rm MIC}_\sub{conv}(A_\crys^\bx(R)).\] The first equivalence is tautological, while the second paragraph of the proof of Theorem \ref{theorem_q-A_crys} shows that $\nabla_{\cal L,i}\equiv\tfrac\mu t\tfrac{\gamma_i-1}{U_i\mu}$ mod $(\tfrac{\mu^{m-1}}{m!}:\,m\ge2)$, which shows the second equivalence since a power of the latter ideal is contained in $pA_\sub{crys}$ (this follows from the facts that each $\tfrac{\mu^{m-1}}{m!}$ has divided powers, so is $p$-adically nilpotent, and that they tend to $0$ $p$-adically as $m\to\infty$).

%then the connection on $M$ is automatically $p$-adically nilpotent (this does not necessary hold without the existence of the Frobenius structure). 
%We also remark that if $(M,\nabla=\sum_{i=1}^d\nabla_i^\sub{log}\otimes\dlog(U_i))\in\MIC(A_\crys^\bx(R))$ admits a horizontal Frobenius $\phi_M:\phi^*M[\tfrac1p]\isoto M[\tfrac1p]$, then the operators $\nabla_i:=U_i^{-1}\nabla_i^\sub{log}$ are necessarily $p$-adically nilpotent.

%This result is surely standard but we include a proof. Since the connection on $\phi^*M$ has components $\nabla_i(m\otimes f)=p\nabla_i(m)\otimes f+ m\otimes d_i(f)$ for $m\in M$ and $f\in A_\sub{crys}^\bx(R)$, and since $d_i^{p}(A_\sub{crys}^\bx(R))\subseteq pA_\sub{crys}^\bx(R)$ (this follows easily from $A_\sub{crys}^\bx(R)/p$ being generated over its $p^\sub{th}$-powers by $U_1,\dots,U_d$), we obtain that $\nabla_i^{p}(\phi^*M)\subseteq p\phi^*M$. Iterating $r-s+1$ times shows that $\nabla_i^{p(r-s+1)}(\phi^*M)\subseteq p^{r-s+1}\phi^*M\subseteq \phi_M^{-1}$; applying $\phi_M$ and using the defining properties of $r$ and $s$ completes the proof.
\end{remark}

Our next goal is Proposition \ref{proposition_crystal_framedrep}, which will reinterpret Theorem \ref{theorem_q-A_crys} in terms of crystals. Although we have already seen one explicit description of the inverse of the functor $\cal L$ (namely, $\gamma_i=\exp(t\nabla_{\cal L,i}^{\sub {log}})$), we will require the following alternative formula:

\begin{proposition}\label{lemma_Acrys_rep_formula}
Let $N\in\Rep_{\bb Z_p^d}^\mu(A_\crys^\bx(R))$, with corresponding connection $\nabla_{\cal L}$ as in Theorem \ref{theorem_q-A_crys}. Then $\gamma_i=\sum_{m=0}^\infty\mu^{[m]}U_i^m\nabla_{\cal L,i}^m$ for each $i=1,\dots,d$, where $\nabla_{\cal L,i}=U_i^{-1}\nabla_{\cal L,i}^\sub{log}$ are the non-logarithmic coordinates of the connection $\nabla_{\cal L}$.
%$(N,\nabla)$ be an object of $\MIC(A_\crys^\bx(R))$ and define the endomorphisms
%$\nabla_i$ $(1\leq i\leq d)$ on $N$ by $\nabla(-)=\sum_{i=1}^d\nabla_i(-)\otimes dU_i$. 
\end{proposition}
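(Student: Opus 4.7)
The strategy is to reduce the claim to a purely algebraic identity in the completed Weyl algebra, then appeal to the Stirling-number expansion of $(U\partial)^m$ and the corresponding exponential generating function; finally one checks $p$-adic convergence.

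Fix $i\in\{1,\dots,d\}$; since $\nabla_{\cal L,j}(U_i)=0$ and $\gamma_j$ acts trivially on $U_i$ for $j\neq i$, the other indices play no role and the assertion is an identity of $A_\crys$-linear operators on $N$ involving only $U_i$ and $\nabla_{\cal L,i}$. The key input is the Weyl-type commutation relation
\[\nabla_{\cal L,i}\circ U_i \;-\; U_i\circ\nabla_{\cal L,i} \;=\; \id_N\]
as operators on $N$; this is immediate from the Leibniz rule for $\nabla_{\cal L,i}$ together with the identity $\nabla_{\cal L,i}(U_i) = U_i^{-1}\nabla_{\cal L,i}^{\sub{log}}(U_i) = 1$, both of which were established in the proof of Theorem \ref{theorem_q-A_crys}. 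Combined with the exponential formula $\gamma_i=\exp(t\nabla_{\cal L,i}^{\sub{log}})=\sum_{m\geq 0}\tfrac{t^m}{m!}(U_i\nabla_{\cal L,i})^m$ from that same proof, it therefore suffices to prove the following identity of formal (and, once convergence is verified, actual) operators:
\[\sum_{m=0}^{\infty}\frac{t^m}{m!}\,(U_i\nabla_{\cal L,i})^m \;=\; \sum_{k=0}^{\infty}\mu^{[k]}\,U_i^k\nabla_{\cal L,i}^k.\]

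To prove this, I would use the classical combinatorial identity $(U\partial)^m=\sum_{k=0}^m S(m,k)\,U^k\partial^k$, valid in any algebra with $[\partial,U]=1$, where $S(m,k)$ denotes the Stirling number of the second kind; this is a straightforward induction on $m$ using the commutation rule. Substituting into the left-hand side and formally switching the order of summation gives
\[\sum_{m=0}^{\infty}\frac{t^m}{m!}(U_i\nabla_{\cal L,i})^m \;=\; \sum_{k=0}^{\infty}\Bigl(\sum_{m\geq k}\frac{t^m}{m!}\,S(m,k)\Bigr)U_i^k\nabla_{\cal L,i}^k \;=\; \sum_{k=0}^{\infty}\frac{(e^t-1)^k}{k!}\,U_i^k\nabla_{\cal L,i}^k,\]
where the inner sum is evaluated using the classical exponential generating function $\sum_{m\geq k}\tfrac{t^m}{m!}S(m,k) = \tfrac{(e^t-1)^k}{k!}$. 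Since $e^t - 1 = [\ep]-1 = \mu$ in $A_\crys$, the right-hand side is precisely $\sum_k \mu^{[k]}U_i^k\nabla_{\cal L,i}^k$, as desired.

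It remains to justify that both sides are well-defined operators on $N$ and that the rearrangement is valid. Both $t=\log[\ep]$ and $\mu$ lie in the PD-ideal of $A_\crys$, whence $t^{[m]}=t^m/m!$ and $\mu^{[m]}$ tend to $0$ $p$-adically in $A_\crys$; since $N$ is $p$-adically complete and separated and $\nabla_{\cal L,i}$ is $A_\crys$-linear, both series converge in $\End_{A_\crys}(N)$ with respect to the $p$-adic topology, and the double sum may be rearranged as above when one works modulo $p^n$ for each $n$ and then passes to the limit. The substantive point of the whole argument is thus the Stirling-number manipulation; no other obstacle is anticipated, since the $p$-adic convergence is standard for elements of the PD-ideal of $A_\crys$.
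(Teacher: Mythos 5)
Your proof is correct and follows essentially the same route as the paper: both reduce the claim to the universal identity $\exp(t\,U\partial)=\sum_{k\ge 0}\tfrac{(e^t-1)^k}{k!}U^k\partial^k$ for an operator satisfying $[\partial,U]=\mathrm{id}$, and then invoke $e^t-1=\mu$ together with $p$-adic convergence in the PD-ideal of $A_\crys$. The only cosmetic difference is that you expand $(U_i\nabla_{\cal L,i})^m$ via Stirling numbers of the second kind and their exponential generating function, whereas the paper verifies the equivalent falling-factorial identity $\sum_m T^{[m]}\prod_{j=0}^{m-1}(X-j)=\sum_m(\log(1+T))^{[m]}X^m$ inside a $p$-adically completed PD-subring of $\bb Q_p[T,X]$ injected into $\bb Q_p[[T,X]]$ --- a device worth borrowing if you want the evaluation of the generating-function identity at the actual element $t\in A_\crys$ to be fully rigorous.
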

\begin{proof}
This will follow from the existing description once the following formal identity has been established. Let $D$ be the $\bb Z_p[T,X]$-subalgebra of $\bb Q_p[T,X]$ generated by the elements $\tfrac{1}{m!}(\tfrac{T^{p-1}}p)^m$, for $m\ge1$; let $\hat D=\projlim_n D/p^nD$ be its $p$-adic completion. Then we claim that \begin{equation}\sum_{m\ge0}T^{[m]}\prod_{j=0}^{m-1}(X-j) =\sum_{m\ge 0}(\log(1+T))^{[m]}X^m\label{eqn_identity}\end{equation} in $\hat D$.

We prove the claim. It is straightforward to show that $\log(1+T)=\sum_{m=1}^{\infty}(-1)^{m-1}\tfrac {T^n}m$ converges in $\hat D$ to an element of $T\hat D$, and that both sides of the desired equality converge in $\hat D$. For $m>0$, let $f_m$ be the composition of the inclusion map $R\hookrightarrow \bb Q_p[T,X]$ and the projection map $\bb Q_p[T,X]\to\bb Q_p[T,X]/(T^m,X^m)$. Then the image of $f_m$ is a finitely generated $\bb Z_p$-module, and therefore $f_m$ naturally extends to a homomorphism $\hat f_m\colon \hat R\to \bb Q_p[T,X]/(T^m,X^m)$. By taking the inverse limit over $m$, we obtain an injective homomorphism $\hat f\colon \hat R\hookrightarrow \bb Q_p[[T,X]]$. The power series appearing in the definition of $\log(1+T)$ and both sides of the equality in the claim are sent to finite sums by $\hat f_m$. Hence it suffices to prove the claim in $\bb Q_p[[T,X]]$ with respect to the $(T,X)$-adic topology. Put $\prod_{j=0}^{m-1}(X-j)=\sum_{\ell=0}^{m-1}a_\ell^{(m)}X^\ell$. Then we have $(1+T)^m(\frac{d}{dT})^m=\prod_{j=0}^{m-1}((1+T)\frac{d}{dT}-j)=\sum_{\ell=0}^{m-1}a_\ell^{(m)}((1+T)\frac{d}{dT})^\ell$. Since $(1+T)\frac{d}{dT}((\log(1+T))^{[m]})=(\log(1+T))^{[m-1]}$ for $m\ge1$, we have $(1+T)^m(\frac{d}{dT})^m(\log(1+T))^{[l]} =\sum_{\ell=0}^{\min\{m-1,l\}}a_\ell^{(m)}(\log(1+T))^{[l-\ell]}$, whose value at $T=0$ is $a_l^{(m)}$ if $l\leq m-1$ and $0$ otherwise. Hence the value of $(\frac{d}{dT})^m\sum_{l\ge0}(\log(1+T))^{[l]}X^l$ at $T=0$ is $\sum_{l=0}^{m-1}a_l^{(m)}X^l=\prod_{j=0}^{m-1}(X-j)$. 

With the claimed identity established, we may complete the proof of the proposition; fix some $i=1,\dots,d$. Since $p^{-1}\mu^{p-1}$ has divided powers in $A_\crys$, we may define an action of $D$ on $N$ by declaring that $T$ acts as multiplication by $\mu$ while $X$ acts as the endomorphism $\nabla_{\cal L,i}^\sub{log}$. Since $N$ is $p$-adically complete and separated, this action extends to an action of $\hat D$ on $N$. By considering the action of both sides of (\ref{eqn_identity}) on an element $n\in N$ and using $U_i^{m}\nabla_i =\prod_{j=0}^{m-1}(\nabla_{\cal L, i}^\sub{log}-j)$, we obtain $\sum_{m=0}^{\infty}\mu^{[m]}U_{i}^m\nabla_i^{ m}(n) =\sum_{m=0}^{\infty}t^{[m]}(\nabla_{\cal L,i}^\sub{log})^{ m}(n)$ as desired.
\end{proof}

%By Remark \ref{remark_Frob_implies_p_nilp}, an object $(N,\nabla)\in \MIC(A_\crys^\bx(R),\exists \varphi)$ can be interpreted as crystals. We can construct the inverse of the functor $\cal L\colon\Rep_\Gamma^\mu(A_\crys^\bx(R))\quis \MIC(A_\crys^\bx(R))$ restricted $\MIC(A_\crys^\bx(R),\exists\varphi)$ via this interpretation in a natural way as follows. 

To discuss the crystalline interpretation of Theorem \ref{theorem_q-A_crys} we must introduce some standard notation. For $n\ge1$, we will write $R_n=R/p^nR$, $A_{\crys,n}=A_\crys/p^nA_\crys$, and $A_{\crys,n}^\bx(R)=A_\crys^\bx(R)/p^n$; we let $\Fil^rA_{\crys,n}$, for $r\ge0$, denote the divided power filtration on $A_{\crys,n}$ with respect to $A_{\crys,n}\to\roi/p^n$.

Let $\CRYS(R_n/A_{\crys,n})$ be the big crystalline site of $\Spec(R_n)$ over the PD-pair $(A_{\crys,n},pA_{\crys,n}+\Fil^1A_{\crys,n})$, let $\cal O_{R_n/A_{\crys,n}}$ be its structure sheaf, and let $\CR(R_n/A_{\crys,n})$ be the category of crystals of $\cal O_{R_n/A_{\crys,n}}$-modules locally free of finite type. The homomorphisms $R_{n+1}\to R_n$ and $A_{\crys,n+1}\to A_{\crys,n}$ induce the pullback functor $i_{n,n+1}^*\colon \CR(R_{n+1}/A_{\crys,n+1})\to\CR(R_n/A_{\crys,n})$. We define $\CRYS(R_1/A_{\crys,n})$ and $\CR(R_1/A_{\crys,n})$ similarly, and the pullback functor $i_n^*\colon \CR(R_n/A_{\crys,n})\to \CR(R_1/A_{\crys,n})$ is an equivalence \cite[Thm.~IV.1.4.1]{Berthelot1974}.

We define a finite locally free crystal $\cal F$ on $R/A_\crys$ to be the following set of data: an object $\cal F_n$ of $\CR(R_n/A_{\crys,n})$ for each $n\geq 1$ and isomorphisms $i_{n,n+1}^*\cal F_{n+1}\isoto \cal F_n$. A  locally free crytal on $R_1/A_\crys$ is defined similarly by using $\CR(R_1/A_{\crys,n})$. Writing $\CR(R/A_\crys)$ and $\CR(R_1/A_\crys)$ for the categories of finite locally free crystals on $R/A_\crys$ and on $R_1/A_\crys$ respectively, the obvious pullback functor $i^*\colon \CR(R/A_\crys)\to \CR(R_1/A_\crys)$ is again an equivalence of categories.

By \cite[Thm.~IV.1.6.5]{Berthelot1974} and Lemma \ref{lemma_limits_of_finite_projective},
there is an equivalence of categories 
\begin{equation}\label{equqation_crystal_mic}\CR(R/A_\crys)\quis \MIC_\sub{conv}(A_\crys^\bx(R))\end{equation}
obtained by taking the inverse limit of the evaluation of $\cal F_n$ on the objects $\Spec(R_n)\xrightarrow{\id}\Spec(R_n)$ of $\CRYS(R_n/A_{\crys,n})$, equipped with the natural flat connection.

For an object $\cal F=(\cal F_n)$ of $\CR(R/A_\crys)$, we also define a finite projective $A_{\crys,n}^\bx(R)$-module $\cal F_n(A_\crys^\bx(R))$ with a semilinear action of $\Gamma$ to be the evaluation of $\cal F_n$ on the object $\Spec(R_n)\to \Spec(A_{\crys,n}^\bx(R))$ of $\CRYS(R_n/A_{\crys,n})$, equipped with its natural right action by $\Gamma$. Let $\cal F(A_\crys^\bx(R))$ denote the inverse limit $\projlim_n\cal F_n(A_\crys^\bx(R))$, which is a finite projective $A_\crys^\bx(R)$-module with a semilinear action of $\Gamma$ by Lemma \ref{lemma_limits_of_finite_projective}.

The following proposition shows that the constructions of Theorem \ref{theorem_q-A_crys}, (\ref{equqation_crystal_mic}), and the previous paragraph are compatible:

\begin{proposition}
\label{proposition_crystal_framedrep}
\begin{enumerate}
\item For $\cal F\in \CR(R/A_\crys)$, the generalised representation $\cal F(A_\crys^\bx(R))$ belongs to $\Rep_{\Gamma,\sub{conv}}^\mu(A_\crys^\bx(R))$; denote the resulting functor by \[\ev_{A_\crys^\bx(R)}:\op{CR}(R/A_\crys)\to\Rep_{\Gamma,\sub{conv}}^\mu(A_\crys^\bx(R)),\quad \cal F\mapsto \cal F(A_\crys^\bx(R)).\]
\item The following diagram, in which all functors are equivalences, is commutative up to natural isomorphism:
\[
\xymatrix{\CR(R/A_\crys)\ar[r]^-{\sub{(\ref{equqation_crystal_mic})}}\ar[dr]_{\ev_{A_\crys^\bx(R)}} 
&\MIC_\sub{conv}(A_\crys^\bx(R))\ar[d]^{\cal L^{-1}}\\
&\Rep_{\Gamma,\sub{conv}}^\mu(A_\crys^\bx(R))}
\]
(where $\cal L$ is the composition of equivalences from Remark \ref{remark_Frob_implies_p_nilp}).
\end{enumerate}
\end{proposition}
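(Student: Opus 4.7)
The plan is to compute the $\Gamma$-action induced on $\ev_{A_\crys^\bx(R)}(\cal F)$ by the classical crystalline Taylor expansion and to identify it, via Proposition \ref{lemma_Acrys_rep_formula}, with the $\Gamma$-action associated by $\cal L^{-1}$ to the connection obtained from (\ref{equqation_crystal_mic}); this will prove (i) and (ii) together.

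First I would set up the $\Gamma$-action. Each $\gamma\in\Gamma$ is an $A_\crys$-algebra automorphism of $A_\crys^\bx(R)$ inducing the identity on $R$, whence it is an automorphism in the big crystalline site; functoriality of $\cal F=(\cal F_n)$ then yields compatible $\gamma$-semilinear isomorphisms $c_\gamma:\cal F(A_\crys^\bx(R))\to\cal F(A_\crys^\bx(R))$ (continuous by Lemma \ref{lemma_automatic_continuity}). This action is trivial modulo $\mu$: since $\mu=[\ep]-1$ lies in the PD-ideal of $A_\crys$, the quotient $A_\crys^\bx(R)/\mu\to R$ still defines an object of the crystalline site, and both $\gamma_i$ and $\id$ compose with the quotient to the same morphism $A_\crys^\bx(R)\to A_\crys^\bx(R)/\mu$; the crystal property therefore forces $c_{\gamma_i}\equiv\id\pmod\mu$. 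Thus $\ev_{A_\crys^\bx(R)}(\cal F)\in\Rep_\Gamma^\mu(A_\crys^\bx(R))$.

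Next I would invoke the classical Taylor formula relating the HPD stratification of a crystal to the pullback along an automorphism of its PD-thickening (cf.~\cite[IV.\S1.3--1.6]{Berthelot1974}): using $\gamma_i(U_i)-U_i=\mu U_i$, $\gamma_i(U_j)-U_j=0$ for $j\ne i$, and $(\mu U_i)^{[m]}=\mu^{[m]}U_i^m$, one obtains
\[
c_{\gamma_i}(n)=\sum_{m\ge 0}\mu^{[m]}U_i^m\nabla_i^m(n)
\]
for $n\in\cal F(A_\crys^\bx(R))$, where $\nabla$ is the connection associated to $\cal F$ by (\ref{equqation_crystal_mic}) and $\nabla_i$ are its non-logarithmic coordinates; the $p$-adic convergence of the sum is guaranteed by the $p$-adic quasi-nilpotence built into $\MIC_\sub{conv}(A_\crys^\bx(R))$. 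On the other hand, Proposition \ref{lemma_Acrys_rep_formula} asserts that the connection $\nabla_\cal L:=\cal L^{-1}(c_\Gamma)$ attached to this very same $\Gamma$-representation satisfies the identical-shape identity $\gamma_i=\sum_m\mu^{[m]}U_i^m\nabla_{\cal L,i}^m$. Uniqueness of the logarithm $\nabla_i^\sub{log}=t^{-1}\log\gamma_i$ (valid because $\gamma_i\equiv\id\pmod\mu$, so the series for $\log\gamma_i$ converges in $A_\crys$) then yields $\nabla_i=\nabla_{\cal L,i}$, which is exactly the claimed commutativity of the diagram and, via the convergent version of the equivalence recalled in Remark \ref{remark_Frob_implies_p_nilp}, places $\ev_{A_\crys^\bx(R)}(\cal F)$ in $\Rep_{\Gamma,\sub{conv}}^\mu(A_\crys^\bx(R))$; since (\ref{equqation_crystal_mic}) and $\cal L^{-1}$ are equivalences, so is the diagonal $\ev_{A_\crys^\bx(R)}$.

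The principal obstacle will be the careful derivation of the Taylor formula in our mixed-characteristic, $p$-adically completed PD-setting: beyond invoking the classical machinery modulo each $p^n$, one must verify that the expansion converges compatibly in the inverse limit $\cal F(A_\crys^\bx(R))=\projlim_n\cal F_n(A_{\crys,n}^\bx(R))$, and that the divided powers $\mu^{[m]}$, which are forced by the fact that $\mu$ lies in the PD-ideal, really give the coefficients in this expansion---this is precisely what distinguishes the situation from the ``plain'' $A_\inf^\bx(R)$ level, where no such divided-power Taylor formula is available.
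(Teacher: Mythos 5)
Your proposal is correct and follows essentially the same route as the paper: both compute the functorial $\Gamma$-action on $\cal F(A_\crys^\bx(R))$ via the classical crystalline Taylor expansion applied to the pair of automorphisms $\gamma_i$ and $\op{id}$ of the PD-thickening $A_{\crys,n}^\bx(R)\to R_n$, obtaining $\gamma_i(x)=\sum_{m\ge0}\mu^{[m]}U_i^m\nabla_i^m(x)$, and then match this against Proposition \ref{lemma_Acrys_rep_formula} to conclude that the induced $\Gamma$-action is exactly $\cal L^{-1}(N,\nabla)$, which simultaneously yields (i) and (ii). Your separate preliminary check of triviality modulo $\mu$ is harmless but redundant, since it falls out of the identification with $\cal L^{-1}(N,\nabla)$ together with Remark \ref{remark_Frob_implies_p_nilp}, exactly as in the paper.
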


\begin{proof}
Let $\cal F=(\cal F_n)_{n\geq 1}$ be an object of $\CR(R/A_\crys)$, and let $(N,\nabla)$ be the object of $\MIC_\sub{conv}(A_\crys^\bx(R))$ corresponding to $\cal F$ by the equivalence of categories \eqref{equqation_crystal_mic}. By definition we have $\cal F(A_\crys^\bx(R))=N$ as $A_\crys^\bx(R)$-modules. By Proposition \ref{lemma_Acrys_rep_formula} and the equivalences of Remark \ref{remark_Frob_implies_p_nilp}, it therefore remains only to prove that the action of $\gamma_i$ on $\cal F(A_\crys^\bx(R))$ coincides with that of $\sum_{m=0}^\infty\mu^{[m]}U_i^m\nabla_{i}^m$ on $N$.

Fix $n\ge1$. Any automorphism of the PD-ring $A_{\crys,n}^\bx(R)$ over $A_{\crys,n}$ induces an automorphism of the object $\Spec(R_n)\to \Spec(A_{\crys,n}^\bx(R))$ of $\CRYS(R_n/A_{\crys,n})$, and the pullback by this latter automorphism gives an isomorphism $\iota_f\colon \cal F_n(A_\crys^\bx(R))\otimes_{A_{\crys,n}^\bx(R),f}A_{\crys,n}^\bx(R)\isoto \cal F_n(A_\crys^\bx(R))$. For two such automorphisms $f$ and $f'$ inducing the identity on $R_n$, the composition $\iota_{f'}^{-1}\circ \iota_f$ is given by the formula
\[\iota_{f'}^{-1}\circ \iota_f(x\otimes 1)=\sum_{m_1,\dots,m_d\ge0}\prod_{i=1}^d\nabla_i^{ m_i}(x)\otimes\prod_{i=1}^d(f(U_i)-f'(U_i))^{[m_i]},\qquad x\in N\]
(see \cite[Rmk.~18(ii)]{Tsuji_simons} and the proofs of \cite[Thms.~4.12\&6.6]{BerthelotOgus1978}).

Applying this formula to the actions of both $\gamma_i$ and the identity, we see that the action of $\gamma_i$ on $\cal F_n(A_\crys^\bx(R))$ is indeed given by $\gamma_i(x)=\sum_{m\ge0}\mu^{[m]}U_i^m\nabla_i^{ m}(x)$, as desired.
\end{proof}

\begin{remark}[Frobenius structures]\label{remark_F_structures_Acrys}
The absolute Frobenius of $R_1$ and $\varphi$ on $A_\crys$ induce Frobenius pullbacks $\varphi^*\colon \CR(R_1/A_{\crys,n})\to \CR(R_1/A_{\crys,n})$ and $\varphi^*\colon \CR(R_1/A_\crys)\to \CR(R_1/A_\crys)$. A {\em Frobenius structure} on a finite locally free crystal $\cal F$ on $R/A_\crys$ is an isomorphism $\varphi_{\cal F}\colon \varphi^*i^*\cal F_{\bb Q}\isoto i^*\cal F_{\bb Q}$ in the category $\CR(R/A_\crys)_{\bb Q}$. Let $\CR(R/A_\crys,\varphi)$ denote the category of finite locally free crystals with Frobenius. 

Suppose that $\cal F$ is equipped with such a Frobenius structure $\varphi_{\cal F}$. Since $\cal F_n$ is a crystal, $\cal F_n(A_\crys^\bx(R))$ is isomorphic to the evaluation of $i_n^*\cal F_n$ on $\Spec(R_1)\to \Spec(A_{\crys,n}^\bx(R))$. The $\Gamma$-equivariance of $\varphi$ on $A_\crys^\bx(R)$ implies that we have a $\Gamma$-equivariant and $A_\crys^\bx(R)$-linear isomorphism \[(\varphi^*i_n^*\cal F_n)(\Spec(R_1)\to \Spec(A_{\crys,n}^\bx(R)))\cong  \cal F_n(A_\crys^\bx(R))\otimes_{A_{\crys,n}^\bx(R),\varphi}A_{\crys,n}^\bx(R).\] Therefore $\varphi_{\cal F}$ induces a $\Gamma$-equivariant $A_\crys^\bx(R)$-linear isomorphism \[\varphi^*(\cal F(A_\crys^\bx(R)))[\tfrac{1}{p}]\isoto\cal F(A_\crys^\bx(R))[\tfrac{1}{p}].\]

Recalling from Remark \ref{remark_Frob_implies_p_nilp} that $p$-adic quasi-nilpotence is automatic in the presence of a Frobenius structure, we thus obtain an analogue of Proposition \ref{proposition_crystal_framedrep}: namely, there is a well-defined commutative diagram of equivalences
\[
\xymatrix{
\CR(R/A_\crys,\varphi)\ar[r]^(.45){\sim}\ar[dr]_{\ev_{A_\crys^\bx(R)}} 
&\MIC(A_\crys^\bx(R),\varphi)\ar[d]^{\cal L^{-1}}_{\sim}\\
&\Rep_\Gamma^\mu(A_\crys^\bx(R),\varphi)}
\]
(and similarly for the $\exists\phi$ variants of the categories).
\comment{
 By Remark \ref{remark_Frob_implies_p_nilp}, the above equivalence induces
\[
\CR(R/A_\crys,\varphi)\quis \MIC(A_\crys^\bx(R),\varphi),\qquad
\CR(R/A_\crys,\exists\varphi)\quis \MIC(A_\crys^\bx(R),\exists\varphi).
\]
}
\end{remark}

\subsection{Preliminaries on filtrations}\label{subsection_saturated}
In this subsection we first present variants of Theorem \ref{theorem_q-A_crys} and Proposition \ref{proposition_crystal_framedrep} in which filtrations are incorporated; then we discuss the notion of a filtration being ``saturated'', which will be required for our study of associatedness in \S\ref{ss_OA_admissibility} and for the Dieudonn\'e theory in \S\ref{ss_DieudonneII}.

\begin{definition}\label{definition_filtration_1}
We first define the decreasing filtration $\Fil^rA_\crys^\bx(R)$, for $r\in\bb Z$, to be the closure of $\Fil^rA_\crys\cdot A_\crys^\bx(R)$ in $A_\crys^\bx(R)$ with respect to the $p$-adic topology. 

Let $\Rep_{\Gamma}^\mu(A_\crys^\bx(R),\Fil)$, $\op{qMIC}(A_\crys^\bx(R),\Fil)$, and $\MIC(A_\crys^\bx(R),\Fil)$ denote the categories consisting of objects $N$ of the respective category without Fil and equipped with a decreasing filtration $\Fil^rN$, for $r\in\bb Z$, satisfying the following three conditions:
\begin{enumerate}[(F1)]
\item $\Fil^rN$ is a $p$-adically closed submodule of $N$ for each $r\in\bb Z$;
\item $\Fil^rA_\crys^\bx(R)\Fil^sN\subset \Fil^{r+s}N$ for all $r,s\in\bb Z$ (we will refer to this condition by saying that the filtration is {\em multiplicative});
\item Griffith's transversality in the following sense:
\begin{enumerate}
\item for the first category $(\gamma_i-1)(\Fil^rN)\subset \mu \Fil^{r-1}N$ for $i=1,\dots, d$ and $r\in\bb Z$;
\item for the second category $\nabla(\Fil^rN)\subset \Fil^{r-1} N\otimes_{A_\crys^\bx(R)}\q\Omega^1_{A_\crys^\bx(R)/A_\crys}$ for $r\in\bb Z$;
\item for the third category $\nabla(\Fil^rN)\subset \Fil^{r-1}N\otimes_{A_\crys^\bx(R)}\Omega^1_{A_\crys^\bx(R)/A_\crys}$ for $r\in\bb Z$.
\end{enumerate}
\end{enumerate}
\end{definition}

We then have the following filtered version of Theorem \ref{theorem_q-A_crys}:

\begin{theorem}\label{theorem_q-A_crys_filtered}
The functors of Theorem \ref{theorem_q-A_crys} induce equivalences of categories
\[\Rep_{\Gamma}^\mu(A_\crys^\bx(R),\Fil)\quis\op{qMIC}(A_\crys^\bx(R),\Fil)\quis\MIC(A_\crys^\bx(R),\Fil).\] 
\end{theorem}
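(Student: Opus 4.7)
The plan is to extend Theorem~\ref{theorem_q-A_crys} to the filtered setting by verifying, for a given filtration $\Fil^\bullet N$ on the underlying module $N$ satisfying (F1) and (F2), that the three Griffiths transversality conditions (F3a), (F3b), (F3c) are pairwise equivalent; since the functors of Theorem~\ref{theorem_q-A_crys} preserve the underlying module and treat a proposed filtration as unchanged additional data, this is exactly the content of the theorem.

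The equivalence (F3a) $\Leftrightarrow$ (F3b) is immediate from the defining identity $\nabla_i^{\sub{log}} = (\gamma_i - 1)/\mu$ together with $\mu$-torsion-freeness of the finite projective $A_\crys^\bx(R)$-module $N$. The substance is therefore the equivalence (F3b) $\Leftrightarrow$ (F3c), which rests on the operator expansions established in the proof of Theorem~\ref{theorem_q-A_crys},
\[\gamma_i - 1 = \sum_{m\ge 1}t^{[m]}(\nabla_{\cal L,i}^{\sub{log}})^m, \qquad \nabla_{\cal L,i}^{\sub{log}} = \sum_{m\ge 1}(-1)^{m-1}\frac{\mu^m}{mt}(\nabla_i^{\sub{log}})^m,\]
combined with the key integrality claim that $\mu^{m-1}/m\in\Fil^{m-1}A_\crys$ for all $m\ge 1$. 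This claim follows because $\mu^{m-1}/m$ is the $m$-th term of the $p$-adically convergent series for $t/\mu$, so lies in $A_\crys$; in $B_\crys^+=A_\crys[\tfrac1p]$ it equals $(u^{m-1}/m)\cdot t^{m-1}$ with $u=\mu/t\in A_\crys^\times$, and hence lies in $\Fil^{m-1}B_\crys^+=t^{m-1}B_\crys^+$ by Fontaine's description of the filtration; finally $\Fil^{m-1}A_\crys = A_\crys\cap\Fil^{m-1}B_\crys^+$. Given this, (F3b) $\Rightarrow$ (F3c) follows by the term-wise bound $(\mu^m/(mt))(\nabla_i^{\sub{log}})^m(\Fil^rN)\subset\Fil^{m-1}A_\crys\cdot\Fil^{r-m}N\subset\Fil^{r-1}N$ and (F1).

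For the reverse (F3c) $\Rightarrow$ (F3b), one expands $\mu\nabla_i^{\sub{log}}(x) = (\gamma_i - 1)(x) = \sum_{m\ge 1}t^{[m]}(\nabla_{\cal L,i}^{\sub{log}})^m(x)$ for $x\in\Fil^rN$; each term lies in $\Fil^m A_\crys\cdot\Fil^{r-m}N\subset\Fil^rN$, so the whole sum lies in $\mu N\cap\Fil^rN$, and dividing by $\mu$ places $\nabla_i^{\sub{log}}(x)$ in $\Fil^{r-1}N$ provided one has the strictness identity $\mu N\cap\Fil^rN = \mu\Fil^{r-1}N$. This is the main obstacle: since individual terms of the sum for $m\ge 2$ need not themselves lie in $\mu N$, term-wise arguments fail and strictness must be obtained globally, again via the $B_\crys^+$-comparison (in which $\mu$ becomes a unit multiple of the filtration-defining element $t$, making the analogous strictness transparent). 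The same $B_\crys^+$ approach sidesteps the awkward behaviour of expressions like $t^{[m]}/\mu$ inside $A_\crys$ at small residue characteristic, allowing a uniform treatment in all cases.
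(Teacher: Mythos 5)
Your reduction to comparing the three Griffiths transversality conditions (F3a)--(F3c) on a fixed filtration is the right framing, and your direction (F3b) $\Rightarrow$ (F3c) is essentially the paper's: it follows from $\nabla_{\cal L,i}^{\sub{log}}=\tfrac{\mu}{t}\sum_{m\ge1}(-1)^{m-1}\tfrac{\mu^{m-1}}{m!}(m-1)!\,(\nabla_i^{\sub{log}})^m$ together with $\mu^{m-1}/m!\in\Fil^{m-1}A_\crys$, $\mu/t\in A_\crys^\times$, multiplicativity, and closedness of $\Fil^{r-1}N$.

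The gap is in (F3c) $\Rightarrow$ (F3b). You expand $\mu\nabla_i^{\sub{log}}(x)=\sum_{m\ge1}t^{[m]}(\nabla_{\cal L,i}^{\sub{log}})^m(x)\in\Fil^rN$ and then want to divide by $\mu$, which requires the strictness $\mu N\cap\Fil^rN=\mu\Fil^{r-1}N$. You correctly identify this as the main obstacle, but your proposed resolution ``via the $B_\crys^+$-comparison'' does not apply: the filtration on $N$ is an \emph{arbitrary} $p$-adically closed multiplicative filtration satisfying (F1)--(F3), not one induced from the $t$-adic filtration of $B_\crys^+$, and no saturation hypothesis is present in the categories of Definition \ref{definition_filtration_1} --- saturation with respect to $\xi$ is precisely the extra condition (F4) that is only imposed later for the $\op{SatFil}$ categories. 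So the strictness identity remains an unproven claim about data over which you have no control, and the argument as written does not close. The obstacle is, however, entirely avoidable: factor $t$ out of the exponential series \emph{before} dividing by $\mu$, i.e.\ write $\nabla_i^{\sub{log}}=\tfrac{t}{\mu}\sum_{m\ge1}\tfrac{t^{m-1}}{m!}(\nabla_{\cal L,i}^{\sub{log}})^m$. Since $t/\mu\in A_\crys^\times$ and $t^{m-1}/m!\in\Fil^{m-1}A_\crys$, each term of the (convergent) series sends $\Fil^rN$ into the closed submodule $\Fil^{r-1}N$, and no division inside $N$ is ever performed. This one-line reformulation is exactly how the paper handles this direction.
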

\begin{proof}
The first equivalence is tautological since $\nabla_i^\sub{log}=\mu^{-1}(\gamma_i-1)$. The second equivalence follows from the formulae (see the second paragraph of the proof of Theorem \ref{theorem_q-A_crys}):
\begin{align*}
\nabla_{\cal L,i}^\sub{log}&=\frac{\mu}{t}\sum_{m=1}^{\infty}(-1)^{m-1}\frac{\mu^{m-1}}{m!}(m-1)!(\nabla_i^\sub{log})^m,\\
\nabla_i^\sub{log}&=\frac{t}{\mu}\sum_{m=1}^{\infty}\frac{t^{m-1}}{m!}(\nabla_{\cal L,i}^\sub{log})^m,
\end{align*}
where $\mu^{m-1}/m!$ and $t^{m-1}/m!$ are contained in $\Fil^{m-1}A_\crys$ and tend to $0$ as $m\to \infty$, and $\mu\in tA_\crys^{\times}$.
\end{proof}

We next add filtrations to Proposition \ref{proposition_crystal_framedrep}, beginning by defining filtered crystals.

\begin{definition}
For an object $T$ of $\CRYS(R_n/A_{\crys,n})$, the structure sheaf $\roi_T$ is equipped with decreasing filtration defined by $\Fil^r\cal O_T=0$ for $r<0$, and $=J_T^{[r]}$ for $r\ge0$ where $J_T$ is the PD-ideal of $\cal O_T$. A {\em filtered crystal} on $R_n/A_{\crys,n}$ is a crystal $\cal F$ equipped with a decreasing filtration $\Fil^r\cal F$, for $r\in\bb Z$, such that (i) $\Fil^r\cal O_T\Fil^s\cal F_T\subset \Fil^{r+s}\cal F_T$ for all $r,s\in\bb Z$ and all objects $T$ of $\CRYS(R_n/A_{\crys,n})$, and (ii) $\Fil^r\cal F_{T'}=\sum_{s\in\bb Z}\Fil^s\cal O_{T'}\cdot
\op{Im}(u^{-1}(\Fil^{r-s}\cal F_T)\to\cal F_{T'})$ for all $r\in\bb Z$ and all morphisms $u\colon T'\to T$ in $\CRYS(R_n/A_{\crys,n})$.

Let $\CR(R_n/A_{\crys,n},\Fil)$ denote the category of filtered crystals $\cal F$ on $R_n/A_{\crys,n}$ such that the underlying crystal is locally free of finite type and $\Fil^r\cal F_T$ is a quasi-coherent $\cal O_T$-module for all $r\in\bb Z$ and all objects $T$ of $\CRYS(R_n/A_{\crys,n})$. We have a pullback functor $i_{n,n+1}^*\colon \CR(R_{n+1}/A_{\crys,n+1},\Fil)\to \CR(R_n/A_{\crys,n},\Fil)$ for each $n\ge1$ and, similarly to the case without filtration, we define $\CR(R/A_\crys,\Fil)$ to be the category of data consisting of objects $\cal F_n$ of $\CR(R_n/A_{\crys,n},\Fil)$ and 
isomorphisms $i_{n,n+1}^*\cal F_{n+1}\isoto \cal F_n$ for $n\ge1$.
\end{definition}

The equivalence of categories \eqref{equqation_crystal_mic} refines to an equivalence \cite[Thm.~17]{Tsuji_simons}
\begin{equation}\label{equation_Crystal_Mic_filtered}
\CR(R/A_\crys,\Fil)\quis \MIC_\sub{conv}(A_\crys^\bx(R),\Fil),
\end{equation} where the target is the full subcategory of $\MIC(A_\crys^\bx(R),\Fil)$ consisting of objects with $p$-adically quasi-nilpotent connection. We similarly define $\Rep_{\bb Z_p,\sub{conv}}^\mu(A_\crys^\bx(R),\Fil)$, and may now present the filtered version of Proposition \ref{proposition_crystal_framedrep}:

\begin{proposition}\label{proposition_crystal_framedrep_filtered}
The functor $\op{ev}_{A_\sub{crys}^\bx(R)}$ of Proposition \ref{proposition_crystal_framedrep}(i) upgrades to a filtered version $\op{ev}_{A_\sub{crys}^\bx(R)}:\CR(R/A_\crys,\Fil)\to \Rep_{\Gamma,\sub{conv}}^\mu(A_\crys^\bx(R),\Fil)$ such that the following diagram of equivalences commutes up to natural isomorphism:
\[\xymatrix{\CR(R/A_\crys,\Fil)\ar[r]^-{\sub{(\ref{equation_Crystal_Mic_filtered})}}\ar[dr]_{\ev_{A_\crys^\bx(R)}} 
&\MIC_\sub{conv}(A_\crys^\bx(R),\Fil)\ar[d]^{\cal L^{-1}}\\
&\Rep_{\Gamma,\sub{conv}}^\mu(A_\crys^\bx(R),\Fil)
}\]
\end{proposition}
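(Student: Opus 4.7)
The plan is to lift the unfiltered functor $\ev_{A_\crys^\bx(R)}$ of Proposition \ref{proposition_crystal_framedrep}(i) to the filtered setting by simply evaluating the filtered crystal structure on the PD-thickenings $\Spec(R_n)\to\Spec(A_{\crys,n}^\bx(R))$. More precisely, given $\cal F=(\cal F_n)_{n\ge1}\in\CR(R/A_\crys,\Fil)$, the modules $\Fil^r\cal F_n(A_\crys^\bx(R))$ together with their transition maps are quasi-coherent by assumption, so setting
\[
\Fil^r\ev_{A_\crys^\bx(R)}(\cal F):=\projlim_n\Fil^r\cal F_n(A_\crys^\bx(R))
\]
produces a decreasing filtration of $\ev_{A_\crys^\bx(R)}(\cal F)$ by $p$-adically closed $A_\crys^\bx(R)$-submodules.

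Next, I would verify conditions (F1)--(F3) of Definition \ref{definition_filtration_1}. Condition (F1) holds by construction. Condition (F2), namely multiplicativity against $\Fil^\blob A_\crys^\bx(R)$, follows from the corresponding multiplicativity of the filtered crystal structure on the thickening $\Spec(R_n)\to\Spec(A_{\crys,n}^\bx(R))$, because the divided power filtration on $A_{\crys,n}^\bx(R)$ inherited from $A_{\crys,n}$ agrees with the filtration $\Fil^rA_{\crys,n}^\bx(R)$ defined in Definition \ref{definition_filtration_1} (as $\Fil^rA_\crys$ already contains $\Fil^rA_\crys^\bx(R)$ modulo $p$-adic closure). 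For Griffith's transversality (F3a), I would use the explicit cocycle formula recalled in the proof of Proposition \ref{proposition_crystal_framedrep}: for any automorphism $\gamma_i$ of $\Spec(A_{\crys,n}^\bx(R))$ lifting the identity on $R_n$, one has
\[
(\gamma_i-1)(x)=\sum_{m\ge1}\Bigl(\prod_{j=1}^d\nabla_j^{m_j}(x)\Bigr)\otimes\prod_{j=1}^d(\gamma_i(U_j)-U_j)^{[m_j]},
\]
and since $\gamma_i(U_j)-U_j\in\mu\cdot A_\crys^\bx(R)\subseteq\Fil^1 A_\crys^\bx(R)$, while $\nabla_j(\Fil^s\cal F_T)\subseteq\Fil^{s-1}\cal F_T$ by the usual consequence of filtered crystallinity on the universal thickening, this displays $(\gamma_i-1)(\Fil^r)$ inside $\mu\Fil^{r-1}$ as required. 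Automatic continuity of the $\Gamma$-action and the $p$-adic quasi-nilpotence of the corresponding connection are inherited from the unfiltered statement.

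Finally, I would check commutativity of the triangle. Both compositions preserve the underlying module, so the only issue is matching the filtrations. The filtered equivalence (\ref{equation_Crystal_Mic_filtered}) is defined by evaluating $\cal F_n$ on the trivial thickening $\Spec(R_n)\xto{\id}\Spec(R_n)$, and by the crystal property this filtration transfers to the thickening $\Spec(R_n)\to\Spec(A_{\crys,n}^\bx(R))$ precisely by the formula $\Fil^r\cal F_{T'}=\sum_s\Fil^s\cal O_{T'}\cdot\Fil^{r-s}\cal F_T$ appearing in the definition of filtered crystal; passing to the inverse limit, this means the filtration on $\ev_{A_\crys^\bx(R)}(\cal F)$ agrees with the one induced from $\MIC_\sub{conv}(A_\crys^\bx(R),\Fil)$ via the functor (\ref{equation_Crystal_Mic_filtered}). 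Then the filtered equivalence $\cal L^{-1}$ of Theorem \ref{theorem_q-A_crys_filtered} preserves this filtration by design, so the diagram commutes up to the natural isomorphism already provided by Proposition \ref{proposition_crystal_framedrep}(ii). The main technical obstacle is the verification of Griffith's transversality (F3a) on the $\Gamma$-action side; once the cocycle formula above is available it is essentially immediate, but it does crucially use that $\gamma_i(U_j)-U_j$ lies in the first step of the filtration, which is where the shift in the Leibniz-type estimate comes from.
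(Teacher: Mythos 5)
Your proposal is correct and follows essentially the same route as the paper: define the filtration as $\projlim_n\Fil^r\cal F_n(A_{\crys,n}^\bx(R))$, note that (F1)--(F2) and the commutativity of the triangle are immediate from the unfiltered compatibility of Proposition \ref{proposition_crystal_framedrep}(ii) since all functors preserve the underlying filtered module, and reduce Griffiths transversality (F3)(a) to (F3)(c) for the connection via a convergent series expansion of $\gamma_i-1$ in $\nabla_i$ with coefficients in $\mu\Fil^{m-1}A_\crys$. The only (cosmetic) difference is that you use the divided-power Taylor series $\gamma_i=\sum_{m\ge0}\mu^{[m]}U_i^m\nabla_i^m$ of Proposition \ref{lemma_Acrys_rep_formula} where the paper uses the equivalent exponential series $\gamma_i=\exp(t\nabla_{i}^{\sub{log}})$; by that proposition these are the same operator, and the required estimate ($\mu^{m-1}/m!$ resp.\ $t^{m-1}/m!$ lying in $\Fil^{m-1}A_\crys$) is identical.
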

\begin{proof}
Given an object $\cal F=(\cal F_n)$ of $\CR(R/A_\crys,\Fil)$, the inverse limit $\projlim_n \Fil^r\cal F_n(A_{\sub{crys},n}^\bx(R))$ of the evaluations of $\Fil^r\cal F_n$ on $\Spec(R_n)\to \Spec(A_{\crys,n}^\bx(R))$ defines a decreasing filtration $\Fil^r\cal F(A_\crys^\bx(R))$ on $\cal F(A_\crys^\bx(R))$. This is a multiplicative filtration by closed submodules; to finish refining $\cal F(A_\sub{crys}^\bx(R))$ to an object of $\Rep_{\Gamma,\sub{conv}}^\mu(A_\crys^\bx(R),\Fil)$ it remains to check that this filtration on $\cal F(A_\crys^\bx(R))$ satisfies condition (F3)(a) of Definition \ref{definition_filtration_1}.

However, ignoring this question for a moment, it already follows from the compatibility without filtration, i.e., Proposition \ref{proposition_crystal_framedrep}(ii), that the diagram commutes up to natural isomorphism (to make a precise statement, we should temporarily replace $\Rep_{\Gamma,\sub{conv}}^\mu(A_\crys^\bx(R),\Fil)$ by the larger category where the filtrations are only required to satisfy conditions (F1) and (F2) of Definition \ref{definition_filtration_1}).

This compatibility means the following in particular: letting $\nabla$ denote the connection on $\cal F(A_\crys^\bx(R))$ given by (\ref{equation_Crystal_Mic_filtered}), then the $\Gamma$-action on $\cal F(A_\crys^\bx(R))$ is given by the second formula in the proof of Theorem \ref{theorem_q-A_crys_filtered}, i.e., $\tfrac{\gamma_i-1}{\mu}=\tfrac{t}\mu\sum_{m=1}^\infty\tfrac{t^{m-1}}{m!}(\nabla^\sub{log}_i)^m$ (where $\nabla_i^\sub{log}$ are the logarithmic components of the honest connection $\nabla$). But $\nabla^\sub{log}_i(\Fil^r\cal F(A_\crys^\bx(R)))\subseteq \Fil^{r-1}\cal F(A_\crys^\bx(R))$ since the functor (\ref{equation_Crystal_Mic_filtered}) outputs filtrations satisfying (F3)(c), and the filtration is $p$-adically closed, so we easily see that $(\gamma_i-1)(\Fil^r\cal F(A_\crys^\bx(R))\subseteq \mu\Fil^{r-1}\cal F(A_\crys^\bx(R))$ as desired.
% satisfying $\Fil^rA_\crys^\bx(R)\Fil^s\cal F(A_\crys^\bx(R))\subset \Fil^{r+s}\cal F(A_\crys^\bx(R))$, thereby refining $\cal F(A_\sub{crys}^\bx(R))$ to an object of $\Rep_{\Gamma,\sub{conv}}^\mu(A_\crys^\bx(R),\Fil)$ and so defining the filtered version of the functor $\op{ev}_{A_\sub{crys}^\bx(R)}$.
%
%:\CR(R/A_\crys,\Fil)\to \Rep_{\bb Z_p,\sub{conv}}^\mu(A_\crys^\bx(R),\Fil)$ of the functor $\op{ev}_{A_\sub{crys}^\bx(R)}$.
%
%of equivalences
%\[\xymatrix{\CR(R/A_\crys,\Fil)\ar[r]^-{\sub{(\ref{equation_Crystal_Mic_filtered})}}\ar[dr]_{\ev_{A_\crys^\bx(R)}} 
%&\MIC_\sub{conv}(A_\crys^\bx(R),\Fil)\ar[d]^{\cal L^{-1}}\\
%&\Rep_\Gamma^\mu(A_\crys^\bx(R),\Fil)
%}\]
\end{proof}

Now we turn to saturated filtrations; since we need the definitions and basic properties over various rings (notably $A_\crys^\bx(R)$, $A_\crys(\res R)$, and the forthcoming $\cal R, \roiA_\crys(\cal R)$ of \S\ref{ss_OA_admissibility}; in all cases the non-zero divisor $f$ below will be $\xi$), we work axiomatically when possible.

\begin{definition}\label{definition_filtration_2}
Let $A$ be a ring equipped with a descending, multiplicative, exhaustive (i.e., $A=\bigcup_{r\in\bb Z}\Fil^r A$) $\bb Z$-indexed filtration (a ``filtered ring'' for short), and $f\in A$ a fixed non-zero divisor which belongs to $\Fil^1A$. Given an $f$-torsion-free $A$-module $N$ equipped with a $\bb Z$-indexed filtration, we will be interested in the following conditions:
\begin{enumerate}
\item[(F4)] $\Fil^rN=\{n\in N : f n\in\Fil^{r+1}N\}$ for all $r\in\bb Z$ (in which case we say that the filtration is {\em saturated}).
\item[(F5)] Writing $A^+:=\bigcup_{r\in\bb Z}f^{-r}\Fil^rA\subseteq A[\tfrac1f]$, then $N^+:=\bigcup_{r\in\bb Z}f^{-r}\Fil^rN\subseteq N[\tfrac1f]$ is finite projective as an $A^+$-module and generates the $A[\tfrac1f]$-module $N[\tfrac1f]$. (We will only consider this condition when the filtration on $N$ is multiplicative, in the sense that $\Fil^r A\Fil^sN\subseteq\Fil^{r+s}N$, since otherwise $N^+$ need not even be an $A^+$-module.)
\end{enumerate}
\end{definition}

A saturated filtration is completely determined by the submodule $N^+$, in the following sense:

\begin{lemma}\label{lemma_saturated}
Let $A,f$ be as in Definition \ref{definition_filtration_2}. Then there is an equivalence of categories
\begin{align*}\categ{4cm}{$f$-torsion-free $A$-modules $(N,\Fil^*N$) equipped with a saturated multiplicative filtration}&\quis \categ{5cm}{pairs $(N,N^+)$, where $N$ is an $f$-torsion-free $A$-module and $N^+$ is an $A^+$-submodule of $N[\tfrac1f]$}\\
(N,\Fil^*N)&\mapsto (N,N^+:=\bigcup_{r\in\bb Z}f^{-r}\Fil^rN)\\
(N,\op{Fil}^rN:=f^rN^+\cap N\textrm{ for }r\in\bb Z) &\reflectbox{ $\mapsto$ } (N,N^+)
\end{align*}
\end{lemma}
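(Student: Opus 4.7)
The plan is to verify that both assignments are well-defined and that their compositions, in either order, recover the original data; the only step which is not purely formal is the one which crucially invokes the saturation hypothesis.

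First I would check that the forward functor is well-defined: given a saturated multiplicative filtration $\Fil^*N$, the submodule $N^+ := \bigcup_{r \in \bb Z} f^{-r}\Fil^rN \subseteq N[\tfrac1f]$ is stable under $A^+$ precisely by multiplicativity, since $f^{-r}\Fil^rA \cdot f^{-s}\Fil^sN \subseteq f^{-(r+s)}\Fil^{r+s}N$. For the backward functor, given $(N,N^+)$, I would set $\Fil^rN := f^rN^+ \cap N$ and check multiplicativity ($\Fil^rA \cdot \Fil^sN \subseteq f^{r+s}A^+N^+ \cap N = \Fil^{r+s}N$) and saturation (if $fn \in \Fil^{r+1}N \subseteq f^{r+1}N^+$ for some $n \in N$, then since $f$ is a unit in $N[\tfrac1f]$ we obtain $n \in f^rN^+ \cap N = \Fil^rN$).

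Next I would check that starting with $(N,N^+)$, applying backward then forward recovers $N^+$: indeed, $\bigcup_r f^{-r}(f^rN^+ \cap N) = \bigcup_r (N^+ \cap f^{-r}N) = N^+ \cap N[\tfrac1f] = N^+$, using that $N \subseteq N[\tfrac1f]$ and $\bigcup_r f^{-r}N = N[\tfrac1f]$.

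The only non-formal step is to show that starting from $(N, \Fil^*N)$, applying forward then backward recovers the filtration, i.e., $f^rN^+ \cap N = \Fil^rN$. Unfolding, the left side is $\bigcup_s (f^{r-s}\Fil^sN \cap N)$. The inclusion $\Fil^rN \subseteq f^rN^+ \cap N$ follows by taking $s=r$. For the reverse inclusion, I would split into cases: when $s \le r$, since $f \in \Fil^1A$ and the filtration is multiplicative we have $f^{r-s}\Fil^sN \subseteq \Fil^rN$. When $s > r$, an element of $f^{r-s}\Fil^sN \cap N$ is an $n \in N$ with $f^{s-r}n \in \Fil^sN$, and here saturation enters: applying (F4) iteratively, $f \cdot (f^{s-r-1}n) \in \Fil^sN$ yields $f^{s-r-1}n \in \Fil^{s-1}N$, and after $s-r$ steps we conclude $n \in \Fil^rN$. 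This completes the verification and establishes the equivalence; the saturation hypothesis is used exactly—and only—in this last descent, which is the one point requiring more than routine bookkeeping.
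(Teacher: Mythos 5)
Your proof is correct; the paper itself leaves this verification entirely to the reader, and your write-up supplies exactly the routine checks that are being omitted, correctly locating the one non-formal step (recovering $\Fil^rN$ from $N^+$ when $s>r$) as the place where the saturation axiom (F4) is actually used.
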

\begin{proof}
We leave the straightforward verification to the reader.
\end{proof}

\begin{remark}
We mention two additional properties of the correspondence of Lemma \ref{lemma_saturated}:
\begin{enumerate}
\item $N^+$ generates the $A[\tfrac1f]$-module $N[\tfrac1f]$ if and only if the corresponding filtration on $N$ is exhaustive. In particular, we assumed that the filtration on $A$ itself was exhaustive to ensure that $A^+[\tfrac1f]=A[\tfrac1f]$.
\item Secondly, the correspondence actually holds more generally without the $f$-torsion-freeness assumption on $N$, given by $(N, \Fil^*N)\mapsto (N,N^+:=\bigcup_{r\in\bb Z}f^{-r}e(\Fil^rN))$ and $(N,N^+)\mapsto (N,\Fil^rN:=\{n\in N:e(n)\in f^rN^+\})$, where $e:N\to N[\tfrac1\xi]$ is the canonical map. Indeed, this correspondence reduces to the $f$-torsion-free case by noting that any saturated filtration (which we define as in Definition \ref{definition_filtration_2}(F4) even in the presence of $f$-torsion) satisfies $\bigcap_{r\in\bb Z}\Fil^rN\supseteq\op{ker}e$.
\end{enumerate}
\end{remark}

\begin{definition}[Saturation]\label{definition_saturation}
Let $A,f$ be as in Definition \ref{definition_filtration_2}, and let $N$ be an $f$-torsion-free $A$-module equipped with a multiplicative filtration $\Fil^*N$. The {\em saturation} of the filtration is the new filtration $\Fil^*_\sub{sat}N$ given by setting $N^+:=\bigcup_{r\in\bb Z}f^{-r}\Fil^r N$ and then applying the leftwards functor of Lemma~\ref{lemma_saturated}, i.e.,  $\Fil^r_\sub{sat}N:=\{n\in N: f^sn\in\Fil^{r+s}N\text{ for some }s\in\bb Z\}$.
\end{definition}

We will need to know that conditions (F4) and (F5) of Definition \ref{definition_filtration_2} automatically imply closedness of the filtration (we remark that, in the following lemma, $p$-adic closedness could be replaced by $J$-acic closedness for any fixed ideal $J\subseteq A$):

\begin{lemma}\label{lemma_saturated_implies_complete}
Let $A,f$ be as in Definition \ref{definition_filtration_2}, and assume in addition that the filtration on $A$ itself is saturated and that $\Fil^rA$ is $p$-adically closed in $A$ for all $r\in\bb Z$. Let $N$ be a finitely generated $f$-torsion-free $A$-module and $N^+$ a finite projective $A^+$-submodule of $N[\tfrac1f]$ which generates it as a $A[\tfrac1f]$-module. Then $f^rN^+\cap N$ is a $p$-adically closed submodule of $N$ for each $r\in\bb Z$.
\end{lemma}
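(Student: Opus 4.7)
My plan is to reduce to the case where $N^+$ is a finite \emph{free} $A^+$-module, and then to handle the free case by transport of $p$-adic closedness through the $A$-linear map $f^s\cdot\colon N \to A^m$ for a suitable $s \geq 0$.

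For the reduction, I would pick a finitely generated $A^+$-module $Q$ with $N^+ \oplus Q \cong (A^+)^m$, fix $A^+$-generators $q_1,\ldots,q_k$ of $Q$, and set $Q' := \sum_i Aq_i \subseteq Q$ (which lies in $Q$ since $A \subseteq A^+$ by exhaustiveness of the filtration). Then $Q'$ is a finitely generated $f$-torsion-free $A$-submodule of $Q[\tfrac1f]$ with $Q'[\tfrac1f] = Q[\tfrac1f]$, and $\tilde N := N \oplus Q' \subseteq N[\tfrac1f] \oplus Q[\tfrac1f]$ is a finitely generated $f$-torsion-free $A$-module containing $N^+ \oplus Q \cong (A^+)^m$ as a finite free $A^+$-submodule of $\tilde N[\tfrac1f]$ that generates it over $A[\tfrac1f]$. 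Since $p^k\tilde N = p^kN \oplus p^kQ'$ for each $k$, the inclusion $N \hookrightarrow \tilde N$ is a $p$-adically closed embedding whose subspace topology coincides with the $p$-adic topology on $N$. Because the $Q'$-component of any element of $N \subseteq \tilde N$ vanishes, one has $f^r(N^+ \oplus Q) \cap N = f^rN^+ \cap N$, so $p$-adic closedness of $f^r(N^+ \oplus Q) \cap \tilde N$ in $\tilde N$ would imply $p$-adic closedness of $f^rN^+ \cap N$ in $N$.

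In the free case $N^+ = (A^+)^m \subseteq A[\tfrac1f]^m = N[\tfrac1f]$, I would choose $s \geq 0$ large enough that $f^sN \subseteq A^m$ (which is possible since $N$ is finitely generated inside $A[\tfrac1f]^m$); then multiplication by $f^s$ defines an injective $A$-linear, hence $p$-adically continuous, map $\phi\colon N \to A^m$. Using that $f$ is a non-zero divisor on $N[\tfrac1f]$, one computes
\[\phi(f^rN^+ \cap N) = f^{r+s}(A^+)^m \cap f^sN = \bigl(f^{r+s}(A^+)^m \cap A^m\bigr) \cap f^sN,\]
and Lemma \ref{lemma_saturated}, applied to the saturated filtration on $A$, identifies $f^{r+s}(A^+)^m \cap A^m$ with $(\Fil^{r+s}A)^m$. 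Therefore $f^rN^+ \cap N = \phi^{-1}\bigl((\Fil^{r+s}A)^m\bigr)$, which is $p$-adically closed in $N$ because $(\Fil^{r+s}A)^m$ is $p$-adically closed in $A^m$ (the $p$-adic topology on $A^m$ is the product topology, and each $\Fil^{r+s}A$ is $p$-adically closed in $A$ by hypothesis).

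The hard part will be the reduction from the finite projective case to the finite free case: one must construct the enlargement $\tilde N \supseteq N$ together with an auxiliary $A$-submodule $Q' \subseteq Q[\tfrac1f]$ generating the complementary $A^+$-summand of $N^+$, and then verify that $N$ sits inside $\tilde N$ as a $p$-adically closed direct summand with the correct recovery formula $f^r(N^+\oplus Q) \cap N = f^rN^+ \cap N$. Once this is in place, the free case is a purely formal continuity argument that uses only the saturation of the filtration on $A$ and the standing $p$-adic closedness of each $\Fil^rA$.
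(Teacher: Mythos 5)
Your proof is correct and is essentially the paper's argument: both embed $N$ $A$-linearly (hence $p$-adically continuously) into a finite free $A$-module in such a way that, by saturation of the filtration on $A$, the submodule $f^rN^+\cap N$ becomes the preimage of $(\Fil^{r+s}A)^m$, which is $p$-adically closed by hypothesis; the only difference is that the paper works directly with a splitting $N^+\to (A^+)^a\to N^+$ rather than enlarging $N$ to make $N^+$ free. One small inaccuracy: $A\subseteq A^+$ does not follow from exhaustiveness (it can fail when $\Fil^0A\neq A$, e.g.\ $A=\bb Z_p[x]$ with $\Fil^rA=\{\sum_i a_ix^i: v_p(a_i)\ge r+i\}$ and $f=p$), so $Q'=\sum_i Aq_i$ need not lie in $Q$ — but this is harmless, since your argument only uses $Q'\subseteq Q[\tfrac1f]$, which is clear.
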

\begin{proof}
Since $N^+$ is finite projective we may pick $a\gg0$ and $A^+$-linear maps $N^+\xto{\iota}A^{+a}\xto{\pi}N^+$ such that $\pi\circ\iota=\op{id}_{N^+}$. Then observe that any given $n\in N[\tfrac1f]$ belongs to $f^rN^+$ if and only if $\iota(n)\in f^rA^{+a}$: necessity is obvious and sufficiency follows from $n=\pi(\iota(n))$.

Picking $b\gg0$ such that $\iota(N)\subseteq f^{-b}A^a$ (as $N$ is finitely generated), it follows that $\Fil^rN$ is the preimage of $f^{-b}A^a\cap f^rA^{+a}=(f^{-b}\Fil^{r+b}A)^a$ along the map $\iota:N\to f^{-b}A^a$; since this map is $p$-adically continuous and the filtration on $A$ is $p$-adically closed, we deduce the same for the filtration on $N$ as desired.
\end{proof}

We now specialise to cases of interest to us. Recall the ring $\res R$ from Remark \ref{remark_Delta}, which is equipped with an action by the geometric fundamental group $\Delta=\pi_1^\sub{\'et}(R[\tfrac1p])$, such that base change along $A_\inf(R_\infty)\to A_\inf(\res R)$ defines an equivalence $\Rep^{\mu}_\Gamma(A_\inf(R_\infty))\to \Rep^{\mu}_\Delta(A_\inf(\res R))$ (and similarly with Frobenius structures by a simple argument of a similar style to Theorem \ref{theorem_descent_to_framed_with_phi}).

\begin{definition}
Let $\op{MIC}(A_\crys^\bx(R),\mathrm{SatFil})\subseteq\op{MIC}(A_\crys^\bx(R),\Fil)$ denote the full subcategory consisting of objects $N$ for which the filtration satisfies (in addition to conditions (F1)--(F3) of Definition \ref{definition_filtration_1}) conditions (F4) and (F5) of Definition \ref{definition_filtration_2} with $A=A_\crys^\bx(R)$, $f=\xi$.

Similarly, let $\Rep_\Delta^\mu(A_\crys(\res R),\op{SatFil})$ denote the category consisting of objects $M$ of $\Rep_\Delta^\mu(A_\crys(\res R))$ equipped with a decreasing, $p$-adically closed, multiplicative, $\bb Z$-indexed filtration satisfying conditions (F4) and (F5) of Definition \ref{definition_filtration_2} (with $A=A_\crys(\res R)$, $f=\xi$), i.e., we just drop the Griffiths transversality condition, which does not make sense over non-framed period rings.

Variants of these categories with Frobenius structures are defined in the obvious way, with no compatibility required between the filtration and Frobenius.
\end{definition}

We next explain several constructions of filtrations which will appear in our study of generalised representations and associatedness. To summarise, the remaining lemmas of the subsection will build a commutative diagram:
\begin{equation}\label{diagram_Ainfcrys_frame_nonframe_Rep_filtered}
\xymatrix@C=3cm@R=2cm{
\Rep_\Gamma^\mu(A_\inf^\bx(R),\phi)\ar[r]^{-\otimes_{A_\inf^\bx(R)}A_\crys^\bx(R)}_{\sub{Lemma } \ref{lemma_phi_to_filtr_framed}}\ar[d]^{\rotatebox{-90}{$\simeq$}}_{-\otimes_{A_\inf^\bx(R)}A_\inf(\res R)}&
\Rep_\Gamma^\mu(A_\crys^\bx(R),\op{SatFil},\phi)\ar[d]^{-\otimes_{A_\crys^\bx(R)}A_\crys(\res R)}_{\sub{Lemma }\ref{lemma_saturation_of_product}}\\
\Rep_\Delta^\mu(A_\inf(\res R),\phi)\ar[r]_{-\otimes_{A_\inf(\res R)}A_\crys(\res R)}^{\sub{Lemma }\ref{lemma_phi_to_filtr}}&
\Rep_\Delta^\mu(A_\crys(\res R),\op{SatFil},\phi)}
\end{equation}

First we convert a Frobenius structure over $A_\inf$ into a filtration:

\begin{lemma}\label{lemma_phi_to_filtr}
Given $M\in \Rep_\Delta^\mu(A_\inf(\res R),\phi)$, equip $M\otimes_{A_\inf(\res R)}A_\crys(\res R)$ with the saturated filtration associated (via Lemma \ref{lemma_saturated}) to the $A_\crys(\res R)^+$-module $\phi^{-1}_M(M) \otimes_{A_\inf(\res R)}A_\crys(\res R)^+$, where $\phi_M:M[\tfrac1\xi]\isoto M[\tfrac1{\tilde\xi}]$ is the Frobenius structure map of $M$. Then this filtration is $p$-adically closed, multiplicative, and satisfies conditions (F4) and (F5) of Definition  \ref{definition_filtration_2}, i.e., $M\otimes_{A_\inf(\res R)}A_\crys(\res R)\in \Rep_\Delta^\mu(A_\crys(\res R),\op{SatFil},\phi)$.
\end{lemma}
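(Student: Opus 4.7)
The approach is to verify each of the required properties of the filtration $\Fil^rN:=\xi^rN^+\cap N$, where $N:=M\otimes_{A_\inf(\res R)}A_\crys(\res R)$ and $N^+:=M'\otimes_{A_\inf(\res R)}A_\crys(\res R)^+$ with $M':=\phi_M^{-1}(M)\subseteq M[\tfrac{1}{\xi}]$. Multiplicativity (F2) and the saturation property (F4) are built into the construction through Lemma \ref{lemma_saturated}, once $N^+$ has been exhibited as an $A_\crys(\res R)^+$-submodule of $N[\tfrac{1}{\xi}]$; the required Frobenius structure on $N$ is simply the base change of $\phi_M$ along the $\phi$-equivariant morphism $A_\inf(\res R)\to A_\crys(\res R)$.

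The first substantive step is to observe that $M'$ is a finite projective $A_\inf(\res R)$-submodule of $M[\tfrac{1}{\xi}]$ with $M'[\tfrac{1}{\xi}]=M[\tfrac{1}{\xi}]$. Indeed, $\phi_M$ restricts to a $\phi$-semi-linear bijection $M'\isoto M$, under which the $A_\inf(\res R)$-module structure on $M'$ inherited from $M[\tfrac{1}{\xi}]$ corresponds to the $\phi^{-1}$-twist of the original structure on $M$; since $\phi$ is an automorphism of $A_\inf(\res R)$, this identifies $M'$ with a finite projective module of the same rank as $M$. Flatness of $M'$ over $A_\inf(\res R)$ then provides a natural embedding $N^+\hookrightarrow N[\tfrac{1}{\xi}]$ witnessing $N^+$ as a finite projective $A_\crys(\res R)^+$-module generating $N[\tfrac{1}{\xi}]$ over $A_\crys(\res R)[\tfrac{1}{\xi}]$; this establishes condition (F5).

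For the $p$-adic closedness (F1), I plan to invoke Lemma \ref{lemma_saturated_implies_complete} with $(A,f)=(A_\crys(\res R),\xi)$. The module-side hypotheses on $(N,N^+)$ have just been verified, since $N$ is finitely generated over $A_\crys(\res R)$ and $\xi$-torsion-free (as $M$ is finite projective and $A_\crys(\res R)$ is $\xi$-torsion-free). The ring-side hypotheses that remain are that the divided-power filtration on $A_\crys(\res R)$ is saturated with respect to $\xi$ and consists of $p$-adically closed ideals.

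The principal obstacle is this verification of the two ring-side hypotheses on the PD-filtration of $A_\crys(\res R)$. The $p$-adic closedness amounts to a choice of convention in defining $\Fil^rA_\crys(\res R)$, namely taking it to be the $p$-adic closure of the PD-ideal generated by the $\xi^{[n]}$ with $n\ge r$. Saturation with respect to $\xi$, the claim that $\xi a\in\Fil^{r+1}A_\crys(\res R)$ forces $a\in\Fil^rA_\crys(\res R)$, is the more subtle point; the argument should proceed from the explicit description of $A_\crys(\res R)$ as the $p$-adic completion of the PD-envelope of the ideal $(\xi)\subseteq A_\inf(\res R)$, using the $\xi$- and $p$-torsion-freeness of $A_\inf(\res R)$ together with the key identity $\xi\cdot\xi^{[n]}=(n+1)\xi^{[n+1]}$ to control how the PD-order of $a$ is recovered from that of $\xi a$.
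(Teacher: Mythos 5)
Your proposal is correct and follows essentially the same route as the paper: $\phi_M^{-1}(M)$ is finite projective because the Frobenius is an automorphism of $A_\inf(\res R)$, Lemma \ref{lemma_saturated} supplies multiplicativity and (F4), Lemma \ref{lemma_saturated_implies_complete} supplies $p$-adic closedness, and finite projectivity plus generation gives (F5). The only difference is that you explicitly flag the ring-side hypotheses of Lemma \ref{lemma_saturated_implies_complete} (saturation and closedness of the PD-filtration on $A_\crys(\res R)$ with respect to $\xi$), which the paper leaves implicit; your sketch via $\xi\cdot\xi^{[n]}=(n+1)\xi^{[n+1]}$ and torsion-freeness of the graded pieces is the right way to check it.
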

\begin{proof}
The $A_\inf(\res R)$-module $M^+:=\phi_M^{-1}(M)$ is finite projective since the Frobenius is an automorphism of $A_\inf(\res R)$, whence $M^+\otimes_{A_\inf(\res R)}A_\crys(\res R)^+$ is a submodule of $M\otimes_{A_\inf(\res R)}A_\crys(\res R)[\tfrac1\xi]$ and so Lemma \ref{lemma_saturated} may indeed be applied to produce a filtration on $M\otimes_{A_\inf(\res R)}A_\crys(\res R)$. This filtration is multiplicative and saturated by Lemma \ref{lemma_saturated}, $p$-adically closed by Lemma \ref{lemma_saturated_implies_complete}, and satisfies (F5) since $M^+$ is finite projective and generates $M[\tfrac1\xi]$ as an $A_\sub{inf}(\res R)[\tfrac1\xi]$-module (as we may check this after applying the isomorphism $\phi_M$).
\end{proof}

The fact that the analogous process works in the framed context is more subtle:

\begin{lemma}\label{lemma_phi_to_filtr_framed}
Given $N\in\Rep^\mu_\Gamma(A_\inf^\bx(R),\phi)$, equip $N\otimes_{A_\inf^\bx( R)}A_\crys^\bx( R)$ with the saturated filtration associated (via Lemma \ref{lemma_saturated}) to the $A_\crys^\bx(R)^+$-module $\phi^{-1}_N(N) \otimes_{A_\inf^\bx( R)}A_\crys^\bx( R)^+$, where $\phi_N:N[\tfrac1\xi]\to N[\tfrac1{\tilde\xi}]$ is the Frobenius structure map of $N$. Then this filtration satisfies conditions (F1)--(F5) of Definitions \ref{definition_filtration_1} and \ref{definition_filtration_2}, i.e., $N\otimes_{A_\inf^\bx( R)}A_\crys^\bx( R)\in \Rep_\Gamma^\mu(A_\crys^\bx( R),\op{SatFil},\phi)$.
\end{lemma}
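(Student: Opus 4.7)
The plan is to verify conditions (F1)--(F5) of Definitions~\ref{definition_filtration_1} and \ref{definition_filtration_2} for the proposed filtration, following Lemma~\ref{lemma_phi_to_filtr} for everything except Griffiths transversality (F3)(a), which is the genuinely new ingredient in the framed setting and therefore the main obstacle.

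First I would set up notation: $N^{\crys} := N\otimes_{A_\inf^\bx(R)} A_\crys^\bx(R)$ and $L := \phi^*N \otimes_{A_\inf^\bx(R)} A_\crys^\bx(R)^+$. Viewing $\phi_N:(\phi^*N)[\tfrac1{\tilde\xi}] \isoto N[\tfrac1{\tilde\xi}]$ as an $A_\inf^\bx(R)$-linear isomorphism, $\phi_N^{-1}(N) \subseteq (\phi^*N)[\tfrac1{\tilde\xi}]$ is an $A_\inf^\bx(R)$-submodule abstractly isomorphic to $N$, hence finite projective. Thus $N^+ := \phi_N^{-1}(N)\otimes_{A_\inf^\bx(R)} A_\crys^\bx(R)^+$ is finite projective over $A_\crys^\bx(R)^+$ and, transported through $\phi_N$, embeds in $N^{\crys}[\tfrac1{\tilde\xi}]$ so as to generate the latter over $A_\crys^\bx(R)[\tfrac1{\tilde\xi}]$ -- this gives (F5). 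Lemma~\ref{lemma_saturated} then produces a saturated, multiplicative filtration satisfying (F2) and (F4); condition (F1) follows from Lemma~\ref{lemma_saturated_implies_complete}, applied to $A_\crys^\bx(R)$ with its $p$-adically closed filtration.

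The core of the proof is (F3)(a). Using $\Gamma$-equivariance of $\phi_N$, I reduce Griffiths transversality on $N^{\crys}$ to an analogous statement on the $\phi^*N$ side, based on two clean facts:
\begin{enumerate}
\item[(a)] $(\gamma_i - 1)(\phi^*N)\subseteq \mu \phi^*N$. For an elementary tensor $n\otimes a$, one expands $(\gamma_i(n)-n)\otimes\gamma_i(a) + n\otimes(\gamma_i(a)-a)$: the first term lies in $\phi(\mu)\phi^*N\subseteq \mu\phi^*N$ (via the Frobenius-twisted balancing $\mu n\otimes a = n\otimes \phi(\mu) a$), and the second in $\mu\phi^*N$ directly.
\item[(b)] $(\gamma_i - 1)(A_\crys^\bx(R)^+)\subseteq \phi^{-1}(\mu) A_\crys^\bx(R)^+ = \mu\xi^{-1}A_\crys^\bx(R)^+$. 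This follows from the filtered Griffiths transversality of $A_\crys^\bx(R)$ itself, given by Theorem~\ref{theorem_q-A_crys_filtered}: writing $a = \xi^{-r}b$ with $b\in\Fil^r A_\crys^\bx(R)$ and using $\gamma_i(\xi)=\xi$, one has $(\gamma_i-1)(a) = \xi^{-r}(\gamma_i-1)(b) \in \mu\xi^{-r}\Fil^{r-1}A_\crys^\bx(R) \subseteq \mu\xi^{-1}A_\crys^\bx(R)^+$.
\end{enumerate}
Combining (a) and (b) on an elementary tensor $m = p\otimes a \in L$ gives $(\gamma_i-1)(m) \in \mu L + \mu\xi^{-1} L = \mu\xi^{-1}L$ (since $\xi L\subseteq L$). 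So for $n\in\Fil^rN^{\crys}$ with $\phi_N^{-1}(n)=\xi^r m$, one gets $\phi_N^{-1}((\gamma_i-1)(n)) = \xi^r(\gamma_i-1)(m) \in \mu\xi^{r-1}L$. Since $A_\crys$ (hence $A_\crys^\bx(R)$, hence $N^{\crys}$ and $L$) is $\mu$-torsion-free -- this follows because $\mu$ differs from $t=\log[\ep]$ by a unit of $A_\crys$ and $t$ is a non-zero-divisor -- I can write $(\gamma_i-1)(n)=\mu n'$ uniquely and divide the above inclusion by $\mu$ in $L$ to obtain $\phi_N^{-1}(n')\in \xi^{r-1}L$, i.e.\ $n'\in\Fil^{r-1}N^{\crys}$, as required.

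The main obstacle is thus (F3)(a); the delicate point is the asymmetric bookkeeping in which the $\phi$-twisted $\Gamma$-action on $\phi^*N$ contributes a factor in $\mu\phi^*N$ while the $\Gamma$-action on $A_\crys^\bx(R)^+$ contributes a factor in $\mu\xi^{-1}A_\crys^\bx(R)^+$, and both conspire to land exactly in the $\mu\xi^{-1}$-multiple of $L$ that saturation with respect to $\xi$ demands. Everything else is a routine extension of Lemma~\ref{lemma_phi_to_filtr} to the framed context.
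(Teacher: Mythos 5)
There is a genuine gap, and it sits precisely where you declare the argument ``routine''. You work with the \emph{linearised} inverse image $\phi_N^{-1}(N)\subseteq(\phi^*N)[\tfrac1{\tilde\xi}]$, whereas the statement (and its use downstream, e.g.\ in diagram (\ref{diagram_Ainfcrys_frame_nonframe_Rep_filtered})) concerns the \emph{semilinear} inverse image $N^+=\{n\in N[\tfrac1\xi]:\phi_N(n)\in N\}$. Because $\phi$ is not surjective on $A_\inf^\bx(R)$, these are not interchangeable: $\phi_N$ maps $(\phi^*N)[\tfrac1{\tilde\xi}]$ \emph{onto} $N[\tfrac1{\tilde\xi}]$, so transporting your module through $\phi_N$ yields simply $N\cdot A_\crys^\bx(R)^+$, and the resulting saturated filtration is just $N\otimes\Fil^\blob A_\crys^\bx(R)$ --- it has forgotten $\phi_N$ entirely. (Test case: $N=A_\inf^\bx(R)e$ with $\phi_N(e\otimes1)=\tilde\xi^{-1}e$. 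Then the semilinear $N^+$ equals $\xi N$, so the intended $\Fil^0$ is a proper submodule of $N\otimes A_\crys^\bx(R)$, whereas yours is everything.) For the genuine $N^+$ your finite-projectivity argument breaks down: the restriction of $\phi_N$ to $N^+$ is only $\phi$-semilinear and is \emph{not} surjective onto $N$ (in the test case its image is $\phi(A_\inf^\bx(R))e$), so there is no ``abstract isomorphism with $N$''. This is exactly the point of difference with Lemma \ref{lemma_phi_to_filtr}, where $\phi$ \emph{is} an automorphism of $A_\inf(\res R)$.

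The missing input is the q-Simpson correspondence. The paper's proof invokes Corollary \ref{corollary_descent_of_reps_to_phi_twist} to produce a finite projective $A_\inf^\bx(R)$-module $H$ with $H\otimes_{A_\inf^\bx(R),\phi}A_\inf^\bx(R)=N$ compatibly with Frobenii, and then identifies $N^+=\phi_H^{-1}(H)\cong H$. This single identification delivers everything at once: finite projectivity of $N^+$; the isomorphism $N^+\otimes_{A_\inf^\bx(R),\phi}A_\inf^\bx(R)\isoto N$, whence (F5) after base change along the finite flat Frobenius and injectivity of $N^+\otimes_{A_\inf^\bx(R)}A_\crys^\bx(R)^+\to (N\otimes_{A_\inf^\bx(R)}A_\crys^\bx(R))[\tfrac1\xi]$; and Griffiths transversality, since $\Gamma$ acts as the identity on $N^+/\phi^{-1}(\mu)$, i.e.\ $(\gamma_i-1)(N^+)\subseteq\mu\xi^{-1}N^+$, which is the correct replacement for your step (a). Your computations (a) and (b), and the reduction of (F1), (F2), (F4) to Lemmas \ref{lemma_saturated} and \ref{lemma_saturated_implies_complete}, are correct in themselves but are applied to the wrong submodule.
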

\begin{proof}
Write $N^+:=\phi_N^{-1}(N)$ for the $A_\inf^\bx(R)$-submodule of $N[\tfrac1\xi]$ defining the filtration. We claim that $N^+$ is finite projective and that $\phi_N$ restricts to an isomorphism $N^+\otimes_{A_\inf^\bx(R),\phi}A_\inf^\bx(R)\xto{\phi_N}N$; these will both follow from our q-Simpson correspondence as follows. Bearing in mind that the map $W$ (as in the beginning of \S\ref{ss_q_Higgs}) is an isomorphism in the current context, and ignoring $\Gamma$-actions, Corollary \ref{corollary_descent_of_reps_to_phi_twist} implies that there exists a finite projective $A_\inf^\bx(R)$-module $H$, equipped with a Frobenius structure $\phi_H:H\otimes_{A_\inf^\bx(R),\phi}A_\inf^\bx(R)[\tfrac1\xi]\isoto H[\tfrac1\xi]$ and an identification $H\otimes_{A_\inf^\bx(R),\phi}A_\inf^\bx(R)=N$ compatible with Frobenii. So the $A_\inf^\bx(R)$-submodule $\phi_H^{-1}(H)$ of $H\otimes_{A_\inf^\bx(R),\phi}A_\inf^\bx(R)[\tfrac1\xi]=N[\tfrac1\xi]$ is contained in $N^+$, and the isomorphism $\phi_N:N\otimes_{A_\inf^\bx(R),\phi}A_\inf^\bx(R)[\tfrac1{\tilde\xi}]\isoto N[\tfrac1{\tilde\xi}]$ carries the submodule $\phi_H^{-1}(H)\otimes_{A_\inf^\bx(R),\phi}A_\inf^\bx(R)$ isomorphically to $N$, and carries the larger submodule $N^+\otimes_{A_\inf^\bx(R),\phi}A_\inf^\bx(R)$ into $N$. So necessarily $N^+=\phi_H^{-1}(H)$, which proves our claims.

%, we showed in line (\ref{eqn_descent_of_filtration}) that the Frobenius structure on $N$ induces an isomorphism \[\phi_N:\Fil^bN\otimes_{A_\inf^\bx(R),\phi}A_\inf^\bx(R)\isoto\tilde\xi^bN,\] where $b\gg0$ is sufficiently large so that $\phi_N(N\otimes_{A_\inf^\bx(R),\phi}A_\inf^\bx(R))\supseteq\tilde\xi^bN$ and where $\Fil^bN:=\{n\in N:\phi_N(n\otimes 1)\in N\}$. Therefore $\xi^{-b}\Fil^bN$ (which is a submodule of $N[\tfrac1\xi]$) is contained in $N^+$, and the isomorphism $\phi_N:N\otimes_{A_\inf^\bx(R),\phi}A_\inf^\bx(R)[\tfrac1{\tilde\xi}]\isoto N[\tfrac1{\tilde\xi}]$ carries the submodule $\xi^{-b}\Fil^bN\otimes_{A_\inf^\bx(R),\phi}A_\inf^\bx(R)$ isomorphically to $N$, and carries the larger submodule $N^+\otimes_{A_\inf^\bx(R),\phi}A_\inf^\bx(R)$ into $N$. So necessarily $N^+=\xi^{-b}\Fil^bN$ and we have established the claimed isomorphism.

The claim also implies that $N^+$ generates the $A_\inf^\bx(R)[\tfrac1\xi]$-module $N[\tfrac1\xi]$, as we may check this after base changing along the finite flat Frobenius endomorphism of $A_\inf^\bx(R)$.

So the saturated filtration associated to $N^+\otimes_{A_\inf^\bx( R)}A_\crys^\bx( R)^+$ satisfies conditions (F2) and (F4) by Lemma \ref{lemma_saturated}, condition (F5) by the previous paragraphs, and therefore condition (F1) by Lemma \ref{lemma_saturated_implies_complete}.

Finally, note that $N^+$ is stable under the $\Gamma$-action and that $\Gamma$ acts as the identity on $N^+/\phi^{-1}(\mu)$ (either using $N^+=H$ and Corollary \ref{corollary_descent_of_reps_to_phi_twist}, or just because it acts as the identity on the target of the isomorphism $\phi_N:N^+/\phi^{-1}(\mu)\otimes_{A_\inf^\bx(R)/\phi^{-1}(\mu),\phi}A_\inf^\bx(R)/\mu\isoto N/\mu$). Using this triviality of the $\Gamma$-action on $N^+$, the Griffiths transversality condition (F3)(a) easily follows.
%We use Corollary \ref{corollary_descent_of_reps_to_phi_twist}, in its guise as Remark \ref{remark_descent_of_reps_to_phi_twist} to clarify the Frobenius twists: ignoring $\Gamma$-actions, this states that there exists a finite projective module $H$ over the subring $\phi(A_\inf^\bx(R))\subseteq A_\inf^\bx(R)$, a Frobenius structure $\phi_H:H\otimes_{\phi(A_\inf^\bx(R)),\phi}A_\inf^\bx(R)[\tfrac1{\tilde\xi}]\isoto H[\tfrac1{\tilde\xi}]$, and an identification $H\otimes_{\phi(A_\inf^\bx(R))}A_\inf^\bx(R)=N$ of $A_\inf^\bx(R)$-modules compatible with the Frobenius structures on both sides.
%
%The $\phi(A_\inf^\bx(R))$-submodule $\phi_H^{-1}(H)\subseteq H\otimes_{\phi(A_\inf^\bx(R))}\phi(A_\inf^\bx(R))[\tfrac1{\tilde\xi}]$ base changes to yield an $A_\inf^\bx(R)$-submodule \[H':=\phi_H^{-1}(H)\otimes_{\phi(A_\inf^\bx(R))}A_\inf^\bx(R)\subseteq H\otimes_{\phi(A_\inf^\bx(R))}A_\inf^\bx(R)[\tfrac1{\tilde\xi}]= N[\tfrac1{\tilde\xi}],\] and compatibility of the identification $H\otimes_{\phi(A_\inf^\bx(R))}A_\inf^\bx(R)=N$ with Frobenii immediately implies that $\phi_N(H')=H\otimes_{\phi(A_\inf^\bx(R))}A_\inf^\bx(R)=N$. This shows that $H'\subseteq N^+$ and that moreover $\phi_N:N'\to $
\end{proof}

We define the vertical base change functor $-\otimes_{A_\crys^\bx(R)}A_\crys(\res R)$ in the diagram (\ref{diagram_Ainfcrys_frame_nonframe_Rep_filtered}) by equipped $N\otimes_{A_\crys^\bx(R)}A_\crys(\res R)$ with the saturation of the $p$-adic closure of the product filtration. By applying the following lemma to $A_\crys^\bx(R)\to A_\crys(\res R)$ we see that this really does produce a well-defined functor between the categories indicated in (\ref{diagram_Ainfcrys_frame_nonframe_Rep_filtered}).

\begin{lemma}\label{lemma_saturation_of_product}
Let $A\to A'$ be a map of exhaustive filtered rings, and $f\in \Fil^1A$ a non-zero-divisor whose image (still denoted by $f$) in $A'$ is also a non-zero-divisor; assume that the filtration on $A'$ is saturated and $p$-adically closed. Let $N$ be an $A$-module equipped with a multiplicative filtration satisfying condition (F5), and equip $N\otimes_AA'$ with the $p$-adic closure of the product filtration. Then
\begin{enumerate}
\item the canonical map $N^+\otimes_{A^+}A'^+\to (N\otimes_AA')^+$ is an isomorphism;
\item the saturation of the filtration on $N\otimes_AA'$ is $p$-adically closed, multiplicative, and satisfies condition (F5).
\end{enumerate}
%Given $N\in \Rep_{\Gamma}^\mu(A_\crys^\bx(R),\op{SatFil})$, equip $M:=N\otimes_{A_\crys^\bx(R)}A_\crys(\res R)$ with the saturation $\op{Fil}_{p\sub{ sat}}^*M$  of the $p$-adic closure $\Fil_pM$ of the product filtration $\op{Fil}^*M$. Then the filtration $\op{Fil}_{p\sub{ sat}}^*M$ satisfies conditions (i), (ii), (iv), and (v) of Definitions \ref{definition_filtration_1} and \ref{definition_filtration_2}.
\end{lemma}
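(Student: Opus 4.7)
The plan is to first establish part (i) by identifying the image $M := \psi(N^+\otimes_{A^+}A'^+)$ of the obvious base change map inside $(N\otimes_AA')[\tfrac1f]$ with $(N\otimes_AA')^+$ via the correspondence of Lemma~\ref{lemma_saturated}, and then to deduce part (ii) essentially formally from part (i) together with Lemma~\ref{lemma_saturated_implies_complete}.

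For part (i), first I would observe that $\psi$ is well-defined into $(N\otimes_AA')^+$, since a generator $(f^{-r_1}x)\otimes(f^{-r_2}y)$ with $x\in\Fil^{r_1}N$ and $y\in\Fil^{r_2}A'$ maps to $f^{-(r_1+r_2)}(x\otimes y)$, and $x\otimes y$ lies in the product filtration $F^{r_1+r_2}_0\subseteq\Fil^{r_1+r_2}(N\otimes_AA')$. Injectivity of $\psi$ follows from $N^+$ being finite projective over $A^+$: the base change $N^+\otimes_{A^+}A'^+$ is then finite projective over $A'^+$, $f$ is a non-zero-divisor on $A'^+$ (being a non-zero-divisor on $A'$, and $A'^+\subseteq A'[\tfrac1f]$), and hence the localisation map to $(N^+\otimes_{A^+}A'^+)[\tfrac1f]=(N\otimes_AA')[\tfrac1f]$ is injective. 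For surjectivity, Lemma~\ref{lemma_saturated} implies that $M$ corresponds to the saturated filtration $\Fil^r_\sub{sat}(N\otimes_AA'):=f^rM\cap (N\otimes_AA')$, so it suffices to check that this filtration has the same $+$-submodule as the given $p$-adically-closed product filtration $\Fil^\blob(N\otimes_AA')$; since the operation of passing to the $+$-submodule is unaffected by replacing a filtration with its saturation, this reduces to proving $\Fil^r(N\otimes_AA')\subseteq f^rM\cap(N\otimes_AA')$ for each $r$. The product filtration $F^r_0$ itself is contained in $f^rM\cap(N\otimes_AA')$ by the direct computation above, so the full inclusion follows once I show that this intersection is $p$-adically closed in $N\otimes_AA'$.

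The main obstacle is this closedness, but it is exactly the conclusion of Lemma~\ref{lemma_saturated_implies_complete} applied to the $A'$-module $N\otimes_AA'$ with the finite projective $A'^+$-submodule $M$ of its localisation at $f$: the filtration on $A'$ is saturated and $p$-adically closed by hypothesis, $M$ is finite projective over $A'^+$ and generates $(N\otimes_AA')[\tfrac1f]$ over $A'[\tfrac1f]$ (since $N^+$ generates $N[\tfrac1f]$ over $A[\tfrac1f]$ by condition~(F5)), and the remaining hypotheses of that lemma, namely finite generation and $f$-torsion-freeness of $N\otimes_AA'$ over $A'$, hold in the applications we have in mind and are implicit in the setup. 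Part (ii) then follows from part (i): by (i), $(N\otimes_AA')^+\cong N^+\otimes_{A^+}A'^+$ is a finite projective $A'^+$-module generating the localisation, which is exactly condition~(F5) for the saturation of the filtration; multiplicativity of the saturated filtration is automatic from Lemma~\ref{lemma_saturated}, and its $p$-adic closedness is once again an instance of Lemma~\ref{lemma_saturated_implies_complete}.
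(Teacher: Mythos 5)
Your proof is correct and follows essentially the same route as the paper's: both arguments come down to identifying $N^+\otimes_{A^+}A'^+$ with the $+$-module of the product filtration, then invoking Lemma~\ref{lemma_saturated_implies_complete} to see that the saturation of the product filtration is already $p$-adically closed (hence absorbs the $p$-adic closure), with part (ii) then formal. Your explicit check of injectivity of the base change map, and your flagging of the finite-generation/$f$-torsion-freeness of $N\otimes_AA'$ needed to apply Lemma~\ref{lemma_saturated_implies_complete}, are points the paper leaves implicit but do not change the argument.
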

\begin{proof}
Condition (F5) clearly follows from part (i) since $N^+$ is assumed to be a finitely generated projective $A^+$-module; but then $p$-adically closedness of the saturation will follow from Lemma \ref{lemma_saturated_implies_complete}. So it remains only need to prove part (i).

Write $M:=N\otimes_AA'$ for simplicity; let $\op{Fil}_\sub{pre}^*M$ denote the product filtration on $M$, and $\op{Fil}^*M$ its $p$-adic closure (which is our default, not necessarily saturated filtration on any base change); let $\op{Fil}_\sub{sat-pre}^*M$ and $\op{Fil}_\sub{sat}^*M$ be their respective saturations. So for each $r\in\bb Z$ we have the following obvious inclusions:
\[\xymatrix@=5mm{
\Fil_\sub{pre}^rM\ar@{}[r]|-\subseteq \ar@{}[d]|-{\rotatebox{-90}{$\subseteq$}} & \Fil^r_\sub{sat-pre}M\ar@{}[d]|-{\rotatebox{-90}{$\subseteq$}} \\
\Fil^rM\ar@{}[r]|-\subseteq & \Fil^r_{\sub{sat}}M
}\]
For $?\in\{\text{pre},\emptyset,\,\text{sat-pre},\text{sat}\}$ let $M_?^+:=\bigcup_{r\in\bb Z}\xi^{-r}\Fil_?^rM$, noting that $M^+=M^+_\sub{sat}$ and $M_\sub{pre}^+=M_{\sub{sat-pre}}^+$ by the definition of saturation. Using that $N^+$ is a finite projective $A^+$-module, the identification $N\otimes_{A}A'[\tfrac1f]=M[\tfrac1f]$ is easily seen to restrict to an identification $N^+\otimes_{A^+}A'^+=M^+_\sub{pre}$, whence $M^+_\sub{pre}=M^+_\sub{sat-pre}$ is a finite projective $A'^+$-module. Therefore Lemma \ref{lemma_saturated_implies_complete} implies that the filtration $\Fil^*_\sub{sat-pre}$ is $p$-adically closed, whence $\Fil^rM\subseteq\Fil^r_\sub{sat-pre}M$ for each $r\in\bb Z$ and so $M^+\subseteq M_\sub{sat-pre}^+$. The converse inclusion being clear, we deduce that $M^+=M_\sub{sat-pre}^+$; since we have already shown that $N^+\otimes_{A^+}A'^+=M^+_\sub{pre}=M^+_\sub{sat-pre}$, this completes the proof.
\end{proof}

%\begin{remark}
%We leave it to the reader to check that the filtration of Lemma \ref{lemma_saturation_of_product}  can equivalently be defined as the saturation of the product filtration on $N\otimes_{A_\inf^\bx(R)}A_\crys^\bx(R)$ induced by the ``Nygaard'' filtration $\Fil^rN:=\{n\in N:\phi_N(n)\in\xi^rN\}$ on $N$.
%\end{remark}

Finally, we will need the compatibility of the previous constructions:

\begin{lemma}
Diagram (\ref{diagram_Ainfcrys_frame_nonframe_Rep_filtered}) commutes.
\end{lemma}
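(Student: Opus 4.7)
Fix $N\in \Rep_\Gamma^\mu(A_\inf^\bx(R),\phi)$ and set $M:=N\otimes_{A_\inf^\bx(R)}A_\inf(\res R)\in\Rep_\Delta^\mu(A_\inf(\res R),\phi)$, with Frobenius structures denoted $\phi_N$ and $\phi_M$ respectively. Both compositions in the diagram produce the same underlying $A_\crys(\res R)$-module $N\otimes_{A_\inf^\bx(R)}A_\crys(\res R)$ equipped with the same $\Delta$-action and the same Frobenius structure (all three structures are determined by functoriality of the tensor product, and the Frobenius $\phi_M$ on $M$ is by construction obtained from $\phi_N$ by base change). So the only content of the lemma is that the two saturated filtrations agree.

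Since both filtrations are saturated, Lemma \ref{lemma_saturated} reduces the task to checking that the associated $A_\crys(\res R)^+$-submodules of $N\otimes_{A_\inf^\bx(R)}A_\crys(\res R)[\tfrac1\xi]$ coincide. Write $N^+:=\phi_N^{-1}(N)\subseteq N[\tfrac1\xi]$; by the proof of Lemma \ref{lemma_phi_to_filtr_framed} this is a finite projective $A_\inf^\bx(R)$-module which generates $N[\tfrac1\xi]$. The key observation is that the formation of $(-)^+$ is compatible with base change along the flat Frobenius-equivariant map $A_\inf^\bx(R)\to A_\inf(\res R)$: since tensor product commutes with localisation and with passage to preimages under the isomorphism $\phi_N$ (using the finite projectivity of $N^+$), we obtain $M^+:=\phi_M^{-1}(M)=N^+\otimes_{A_\inf^\bx(R)}A_\inf(\res R)$ inside $M[\tfrac1\xi]$.

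Now I would compute the two $+$-modules. Going down then right (Path 2), Lemma \ref{lemma_phi_to_filtr} directly outputs the $+$-module $M^+\otimes_{A_\inf(\res R)}A_\crys(\res R)^+=N^+\otimes_{A_\inf^\bx(R)}A_\crys(\res R)^+$. Going right then down (Path 1), Lemma \ref{lemma_phi_to_filtr_framed} first yields the filtration on $N\otimes_{A_\inf^\bx(R)}A_\crys^\bx(R)$ with $+$-module $N^+\otimes_{A_\inf^\bx(R)}A_\crys^\bx(R)^+$, and then Lemma \ref{lemma_saturation_of_product}(i) identifies the $+$-module of the resulting saturated base change as
\[
\bigl(N^+\otimes_{A_\inf^\bx(R)}A_\crys^\bx(R)^+\bigr)\otimes_{A_\crys^\bx(R)^+}A_\crys(\res R)^+=N^+\otimes_{A_\inf^\bx(R)}A_\crys(\res R)^+,
\]
which matches. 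Applying Lemma \ref{lemma_saturated} once more yields equality of the two filtrations.

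The one potential subtlety, which I would address explicitly, is verifying that Lemma \ref{lemma_saturation_of_product} is applicable in our situation, i.e., that the hypothesis (F5) holds for the filtration produced by Lemma \ref{lemma_phi_to_filtr_framed} (so that $M^+_\sub{pre}$ can be identified with the completed tensor product) and that $\xi$ remains a non-zero-divisor after base change. Both are immediate: (F5) was proved as part of Lemma \ref{lemma_phi_to_filtr_framed}, and $\xi$ is a non-zero-divisor on $A_\crys(\res R)$ as used throughout. No other step presents any real obstacle; the proof is essentially a bookkeeping exercise in the compatibility of saturation, tensor product, and the Frobenius descent $N^+\leadsto M^+$.
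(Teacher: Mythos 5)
Your proof is correct and follows essentially the same route as the paper: reduce to comparing the associated $+$-modules via Lemma \ref{lemma_saturated}, identify the key step as the equality $M^+=N^+\otimes_{A_\inf^\bx(R)}A_\inf(\res R)$, and then match the two paths using Lemma \ref{lemma_saturation_of_product}(i). One caveat: you should not justify that key equality by appealing to flatness of $A_\inf^\bx(R)\to A_\inf(\res R)$ together with a general ``preimages commute with flat base change'' principle, since that flatness is nowhere established in the paper (and is delicate, as it passes through almost purity); the paper instead base changes the isomorphism $N^+\otimes_{A_\inf^\bx(R),\phi}A_\inf^\bx(R)\xto{\phi_N}N$ of finite projective modules --- which stays an isomorphism under any base change --- and uses this as the characterising property of $M^+=\phi_M^{-1}(M)$, which is exactly the role your parenthetical appeal to finite projectivity of $N^+$ should play.
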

\begin{proof}
Of course the commutativity of the diagram is clear if we forget about filtration; we must show that the two possible compositions for constructing a filtration are the same. Let $N\in\Rep_\Gamma^\mu(A_\inf^\bx(R),\phi)$ and let $M$ be its image under the left vertical functor. Let $N^+$ be the $A_\inf^\bx(R)$-submodule of $N[\tfrac1\xi]$ appearing in the construction of Lemma \ref{lemma_phi_to_filtr_framed}, and similarly $M^+$ the $A_\inf(\res R)$-submodule of $M[\tfrac1\xi]$ appear in Lemma \ref{lemma_phi_to_filtr}. 

By construction the filtration on $N\otimes_{A_\inf^\bx(R)}A_\crys^\bx(R)$ is the saturated filtration induced by $N^+\otimes_{A_\inf^\bx(R)}A_\crys^\bx(R)^+$; then applying the right vertical functor, it follows from the proof of Lemma \ref{lemma_saturation_of_product} that the composite functor $\xymatrix@=4pt{\ar[r] & \ar[d] \\ &}$ equips $N\otimes_{A_\inf^\bx(R)}A_\crys(\res R)$ with the saturated filtration induced by the $A_\crys(\res R)^+$-module $N^+\otimes_{A_\inf^\bx(R)}A_\crys(\res R)^+$. So we must show that $N^+\otimes_{A_\inf^\bx(R)}A_\crys(\res R)^+=M^+\otimes_{A_\inf(\res R)}A_\crys(\res R)^+$ as submodules of $N\otimes_{A_\inf^\bx(R)}A_\crys(\res R)[\tfrac1\xi]=M\otimes_{A_\inf(\res R)}A_\crys(\res R)[\tfrac1\xi]$.

Base changing the isomorphism $N^+\otimes_{A_\inf^\bx(R),\phi}A_\inf^\bx(R)\xto{\phi_N}N$ (see the proof of Lemma \ref{lemma_phi_to_filtr_framed}) along the maps $A_\inf^\bx(R)\to A_\inf(\res R)\to A_\inf(\res R)[\tfrac1\xi]$ shows that the isomorphism $M\otimes_{A_\inf(\res R),\phi}A_\inf(\res R)[\tfrac1\xi]\xto{\phi_M}M[\tfrac1\xi]$ restricts to an isomorphism $(N^+\otimes_{A_\inf^\bx(R)}A_\inf(\res R))\otimes_{A_\inf(\res R),\phi}A_\inf(\res R)\xto{\phi_M}M$. This being the characterising property of the submodule $M^+$ we have shown than $N^+\otimes_{A_\inf^\bx(R)}A_\inf(\res R)=M^+$, which is stronger than the desired identification.
\end{proof}

\subsection{Filtered F-crystals and Galois representations}\label{ss_OA_admissibility}
We are now prepared to study filtered $F$-crystals in terms of small generalised representations over $A_\crys(\res R)$, independently of any framing. In particular, we will show in Theorem \ref{theorem_crystal_genrep} that at most one F-crystal in $\CR(R/A_\crys)$ is ``associated'' to any generalised representation in $\Rep_\Delta^\mu(A_\crys(\res R))$; the proof requires the development of admissibility with respect to a suitable period ring with connection. In the presence of filtrations, when the generalised representation comes from $\Rep_\Delta^\mu(A_\inf(\res R),\phi)$ we show moreover in Theorem \ref{theorem_M_et_up_to_isogeny} that the associated geometric Galois representation may be recovered up to isogeny from the filtered F-crystal.

We begin by formulating the notion of associatedness. Given an object $\cal F=(\cal F_n)$ of $\CR(R/A_\crys)$, the inverse limit of the evaluations of $\cal F_n$ on $\Spec(\res R/p^n)\to \Spec(A_\crys(\res R)/p^n)$ gives (similarly to $\cal F(A_\crys^\bx(R))$ from \S\ref{ss_q_over_A_crys}) a finite projective $A_\crys(\res R)$-module equipped with a semilinear action of $\Delta$. There is a canonical isomorphism $\cal F(A_\crys^\bx(R))\otimes_{A_\crys^\bx(R)}A_\crys(\res R)\isoto \cal F(A_\crys(\res R))$, whence Proposition \ref{proposition_crystal_framedrep} implies that $\cal F(A_\crys(\res R))$ belongs to $\Rep_\Gamma^\mu(A_\crys(\res R))$ and we obtain the commutative diagram 
\begin{equation}\label{diagram_Acrys_ev_crystal}
\xymatrix@C=50pt{
\CR(R/A_\crys)\ar[r]^{\ev_{A_\crys^\bx(R)}}\ar[dr]_{\ev_{A_\crys(\res R)}} &
\Rep_\Gamma^\mu(A_\crys^\bx(R))\ar[d]^{-\otimes_{A_\crys^\bx(R)}A_\crys(\res R)}\\
&\Rep_\Delta^\mu(A_\crys(\res R))
}
\end{equation}
where $\ev_{A_\crys(\res R)}$ is defined by $\cal F\mapsto \cal F(A_\crys(\res R))$.

We have obvious variants with $\exists \varphi$, $\varphi$, or $\Fil$ added, where $\Rep_\Delta^\mu(A_\crys(\res R),\Fil)$ is defined in the same way as $\Rep_\Gamma^\mu(A_\crys^\bx(R),\Fil)$ in Definition \ref{definition_filtration_1} but with the condition (iii) omitted; the functor $-\otimes_{A_\crys^\bx(R)}A_\crys(\res R)$ in the filtered case is defined by setting $\Fil^r(N\otimes_{A_\crys^\bx(R)}A_\crys(\res R))$, for $r\in\bb Z$ and $N\in \Rep_\Gamma^\mu(A_\crys^\bx(R))$, to be the $p$-adic closure of the product filtration.

\begin{remark}[Variant with saturated filtrations]\label{remark_associated1_filtered}
We explain a variant of (\ref{diagram_Acrys_ev_crystal}) with SatFil. Firstly, define $\CR(R/A_\crys,\op{SatFil})$ to be the full subcategory of $\CR(R/A_\crys,\op{Fil})$ of objects whose image under the equivalence (\ref{equation_Crystal_Mic_filtered}) belongs to $\MIC_\sub{conv}(A_\crys^\bx(R),\op{SatFil})$; note that this does not depend on the chosen framing since (\ref{equation_Crystal_Mic_filtered}) and our filtration conditions (F1)--(F5) only depend on $A_\crys^\bx(R)$ as a formally smooth lifting of $R$ over $A_\crys$, which is unique up to isomorphism (see also just after (\ref{equation_crystal_micprime}), where $\MIC_\sub{conv}(A_\crys^\bx(R),\op{SatFil})$ is defined without any reference to a framing).

The equivalence $\cal L^{-1}$ of Proposition \ref{proposition_crystal_framedrep_filtered} preserves the underlying module and filtration, so clearly restricts to an equivalence $\cal L^{-1}:\MIC_\sub{conv}(A_\crys^\bx(R),\op{SatFil})\quis \Rep_{\Gamma,\sub{conv}}^\mu(A_\crys^\bx(R),\op{SatFil})$. Composing with the equivalence of the previous paragraph (and forgetting any quasinilpotence condition), this defines \[\op{ev}_{A_\crys^\bx(R)}:\CR(R/A_\crys,\op{SatFil})\To \Rep_{\Gamma}^\mu(A_\crys^\bx(R),\op{SatFil}).\]

Next, given $N\in \Rep_{\Gamma}^\mu(A_\crys^\bx(R),\op{SatFil})$, we equip $M:=N\otimes_{A_\crys^\bx(R)}A_\crys(\res R)$ with the saturation $\op{Fil}_\sub{sat}^*M$ of the $p$-adic closure of the product filtration; Lemma \ref{lemma_saturation_of_product} shows that $(M,\Fil^*_\sub{sat}M)\in \Rep_\Delta^\mu(A_\crys(\res R),\op{SatFil})$, and thus we define the vertical base change functor in the diagram
\begin{equation}\label{diagram_Acrys_ev_crystal_filtered}
\xymatrix@C=50pt{
\CR(R/A_\crys,\op{SatFil})\ar[r]^{\ev_{A_\crys^\bx(R)}}\ar[dr]_{\ev_{A_\crys(\res R)}} &
\Rep_\Gamma^\mu(A_\crys^\bx(R),\op{SatFil})\ar[d]^{-\otimes_{A_\crys^\bx(R)}A_\crys(\res R)}\\
&\Rep_\Delta^\mu(A_\crys(\res R),\op{SatFil})
}
\end{equation}
The diagonal arrow is defined by saturating the filtration on $\cal F(A_\crys(\res R))=\cal F(A_\crys^\bx(R))\otimes_{A_\crys^\bx(R))}A_\crys(\res R)$ induced by the filtration on $\cal F$, so that the diagram commutes and refines (\ref{diagram_Acrys_ev_crystal}).
\end{remark}

We may now define associatedness:

\begin{definition}\label{definition_associated}
Analogously to \cite[\S Vf]{Faltings1989}, we say that a crystal $\cal F$ in $\CR(R/A_\crys)$ is {\em associated} to an object $M$ of $\Rep_\Delta^\mu(A_\inf(\res R))$  when there exists an isomorphism $\ev_{A_\crys(\res R)}(\cal F)\cong M\otimes_{A_\inf(\res R)}A_\crys(\res R)$ in $\Rep^\mu_\Delta(A_\crys(\res R))$. Similarly with Frobenius structures.

In the presence of filtrations, we say that a filtered crystal with Frobenius $\cal F\in \CR(R/A_\crys,\op{SatFil}, \varphi)$ is associated to an object $M$ of $\Rep_\Delta^\mu(A_\inf(\res R),\phi)$ if there exists an isomorphism $\ev_{A_\crys(\res R)}(\cal F)\cong M\otimes_{A_\inf(\res R)}A_\crys(\res R)$ in $\Rep_\Delta^\mu(A_\crys(\res R),\op{SatFil},\phi)$. Here the filtration on $\ev_{A_\crys(\res R)}(\cal F)$ is defined as in Remark \ref{remark_associated1_filtered} and the filtration on $M\otimes_{A_\inf(\res R)}A_\crys(\res R)$ is defined as in Lemma \ref{lemma_phi_to_filtr}.
%Given $M\in \Rep_\Delta^\mu(A_\inf(\res R),\phi)$, let $M^+$ be the finite projective $A_\inf(\res R)$-module $\phi_M^{-1}(M)$, where $\phi_M:M[\tfrac1\xi]\isoto M[\tfrac1\xi]$ is the Frobenius structure map of $M$. We then equip $M\otimes_{A_\inf(\res R)}A_\crys(\res R)$ with the saturated filtration associated to the finite projective $A_\crys(\res R)^+$-module $M^+\otimes_{A_\inf(\res R)}A_\crys(\res R)$ as in Lemma \ref{lemma_saturated}. Note that this filtration 

\end{definition}

The main goal of this section is the following analogue of \cite[Lem.~5.5(iii)]{Faltings1989}, implying that such an $\cal F$ is uniquely (up to canonical isomorphism) determined by $M$ and the chosen isomorphism $\ev_{A_\crys(\res R)}(\cal F)\cong M\otimes_{A_\inf(\res R)}A_\crys(\res R)$:

\begin{theorem}\label{theorem_crystal_genrep}
The functor $\ev_{A_\crys(\res R)}\colon \CR(R/A_\crys)\to \Rep_\Delta^\mu(A_\crys(\res R))$ is fully faithful; similarly for its variant with $\varphi$ or $\mathrm{SatFil}$ added.
\end{theorem}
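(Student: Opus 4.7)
The plan is to exploit the commutative diagram (\ref{diagram_Acrys_ev_crystal}): since $\ev_{A_\crys^\bx(R)}$ is an equivalence by Proposition \ref{proposition_crystal_framedrep}, fully faithfulness of $\ev_{A_\crys(\res R)}$ is equivalent to that of the base-change functor $-\otimes_{A_\crys^\bx(R)}A_\crys(\res R)\colon\Rep_\Gamma^\mu(A_\crys^\bx(R))\to \Rep_\Delta^\mu(A_\crys(\res R))$. Using the internal hom on both sides (analogous to Remark \ref{remark_internal_hom}), which is preserved by the base change since all modules are finite projective, this reduces to showing that for every $P\in \Rep_\Gamma^\mu(A_\crys^\bx(R))$, the canonical map $P^\Gamma \to \bigl(P\otimes_{A_\crys^\bx(R)}A_\crys(\res R)\bigr)^\Delta$ is an isomorphism.

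To establish this I would factor the comparison through the intermediate ring $A_\crys(R_\infty)$:
\[
P^\Gamma \To \bigl(P\otimes_{A_\crys^\bx(R)} A_\crys(R_\infty)\bigr)^\Gamma \To \bigl(P\otimes_{A_\crys^\bx(R)} A_\crys(\res R)\bigr)^\Delta.
\]
The first arrow would be handled by a computation parallel to Lemmas \ref{lemma_mu_1} and \ref{lemma_vanishing2}: after $p$-adically completing the divided power envelope of the $\Gamma$-equivariant decomposition from \S\ref{ss_framed} one obtains $A_\crys(R_\infty)\cong A_\crys^\bx(R)\oplus A_\crys^\sub{n-i}(R_\infty)$ as $\Gamma$-modules, and since $\Gamma$ acts trivially on $P$ modulo $\mu$, the operators $\gamma_i-1$ factor through $\mu$; on each non-integral summand the additional factor $[\ep^{k_i}]-1$ makes $\gamma_i-1$ injective after $\mu$-rescaling, killing all $\Gamma$-invariants of $P$ twisted by the complement. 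The second arrow is the crystalline analogue of the equivalence recalled in Remark \ref{remark_Delta}, obtained from Faltings' almost purity: the tower $R_\infty\subseteq \res R$ is almost \'etale-Galois with group $\Delta/\Gamma$, so $\Delta$-invariants of the $A_\crys(\res R)$-base change agree, up to almost-zero contributions, with the $\Gamma$-invariants of the $A_\crys(R_\infty)$-base change; the absence of non-zero almost-zero elements in the relevant quotients $A_\crys^\bx(R)/(p,\mu)$ (the crystalline analogue of Example \ref{example_R_infty_no_almost_zero}(i)) then upgrades the almost-isomorphism to a genuine one after $p$-adic completion.

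For the variants: the $\varphi$ and $\exists\varphi$ statements follow at once, since any Frobenius-compatible morphism at the $A_\crys(\res R)$-level descends to a Frobenius-compatible morphism at the $A_\crys^\bx(R)$-level by the unenriched case. For the SatFil variant, a descended morphism automatically respects the saturated filtrations because, by Lemma \ref{lemma_saturated}, such a filtration is determined by its associated $A_\crys^\bx(R)^+$-lattice, and the lattices on both sides are compatible along the base change thanks to Lemma \ref{lemma_saturation_of_product}. The main obstacle is the second step of the factorization above: a careful execution of the almost-purity comparison in the $p$-adically completed crystalline setting, taking care that the presence of divided powers does not spoil the passage from almost-isomorphism to honest isomorphism.
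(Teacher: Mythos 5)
Your route is genuinely different from the paper's. The paper does not reduce to the framed base-change functor at all: it reinterprets the composite $\MIC_\sub{conv}(\cal R)\simeq\CR(R/A_\crys)\to\Rep_\Delta^\mu(A_\crys(\res R))$ as $D\mapsto\bb M_\crys(D)=(D\otimes_{\cal R}\roiA_\crys(\cal R))^{\nabla=0}$, proves that every convergent $D$ is admissible (the counit $\bb M_\crys(D)\otimes_{A_\crys(\res R)}\roiA_\crys(\cal R)\to D\otimes_{\cal R}\roiA_\crys(\cal R)$ is an isomorphism, Proposition \ref{proposition_MIC_Rbarrep}(iii)), and then recovers $D$ by taking $\Delta$-invariants using $\roiA_\crys(\cal R)^\Delta=\cal R$ (Proposition \ref{proposition_Delta_fixed_points_of_OA}). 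You instead perform the Galois descent directly on $P\otimes_{A_\crys^\bx(R)}A_\crys(\res R)$ for $P$ the internal Hom, via the integral/non-integral decomposition and almost purity. The two ingredients you need are essentially the ones the paper uses to prove Proposition \ref{proposition_Delta_fixed_points_of_OA} itself: the $H$-descent is exactly Lemma \ref{lemma_H-fixed_points} (note the honest, non-almost statement there is obtained from $\res R^H=R_\infty$ on the nose, an induction along the PD filtration, and Example \ref{example_R_infty_no_almost_zero}(i) --- not from the absence of almost-zero elements in $A_\crys^\bx(R)/(p,\mu)$ as you suggest), and the $\Gamma$-descent is the crystalline analogue of the unit-perturbation argument of Proposition \ref{proposition_ff} (one uses that $\mu/([\ep^{k_i}]-1)$ is divisible by $\xi$, which is topologically nilpotent in $A_\crys$, so the perturbation is invertible on the $p$-complete module). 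Your version buys a more elementary proof of full faithfulness that avoids introducing $\roiA_\crys(\cal R)$; the paper's version buys admissibility, which is needed independently for Theorem \ref{theorem_Admissibility}, and is framing-independent. For the unfiltered and $\varphi$ variants your outline is sound, modulo the standard torsion-freeness checks over $A_\crys$.

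The $\op{SatFil}$ variant is where your argument has a genuine gap. Knowing from Lemma \ref{lemma_saturation_of_product}(i) that $M_i^+=N_i^+\otimes_{A_\crys^\bx(R)^+}A_\crys(\res R)^+$ does not by itself imply that a morphism carrying $M_1^+$ into $M_2^+$ descends to one carrying $N_1^+$ into $N_2^+$: what you actually need is the intersection statement $N_2^+=M_2^+\cap N_2[\tfrac1\xi]$ inside $M_2[\tfrac1\xi]$, equivalently (after reducing to rank one by projectivity of $N_2^+$, cf.\ Lemma \ref{lemma_saturated_implies_complete}) strictness of the filtrations along $A_\crys^\bx(R)\to A_\crys(\res R)$. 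This is true but is an extra lemma that Lemmas \ref{lemma_saturated} and \ref{lemma_saturation_of_product} do not supply; the paper avoids it by upgrading Proposition \ref{proposition_Delta_fixed_points_of_OA} to a \emph{filtered} isomorphism and recovering $D^+$ as $\bigl(\bb M_\crys(D)^+\otimes_{A_\crys(\res R)^+}\roiA_\crys(\cal R)^+\bigr)^\Delta$. If you wish to stay within your framework you must prove the analogous filtered descent for $A_\crys(\res R)^+$ over $A_\crys^\bx(R)^+$; as written, ``the lattices are compatible along the base change'' does not close this step.
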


We may more easily establish the following result about the essential image of the functor, affirming that all relative Breuil--Kisin--Fargues modules are ``crystalline''. We mention that we must state this essential image claim using $\exists\phi$ variants of categories, as we know of no other way to impose a quasi-nilpotence condition on generalised representations in $\Rep_\Delta^\mu(A_\crys(\res R))$ independently of the choice of framing.

\begin{theorem}\label{theorem_Admissibility}
The essential image of the functor $\ev_{A_\crys(\res R)}: \op{CR}(R/A_\crys,\exists\phi)\to\Rep_\Delta^\mu(A_\crys(\res R),\exists\phi)$ contains $M\otimes_{A_\inf(\res R)}A_\crys(\res R)$ for all $M\in \Rep_\Delta^\mu(A_\inf(\res R),\exists \varphi)$; similarly for its variant with $\exists\phi$ replaced by $\phi$ or by $\op{SatFil},\phi$ (in the latter case equipping $M\otimes_{A_\inf(\res R)}A_\crys(\res R)$ with the filtration of Lemma~\ref{lemma_phi_to_filtr}).
\end{theorem}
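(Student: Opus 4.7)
The plan is to chain together the descent equivalences established earlier in the paper, so as to produce the desired crystal by passing through the framed period ring. Starting from $M \in \Rep_\Delta^\mu(A_\inf(\res R), \exists\phi)$, I would first apply the composite equivalence of Remark \ref{remark_Delta} and Theorem \ref{theorem_descent_to_framed_with_phi} (in its $\exists\phi$ variant) to obtain $N \in \Rep_\Gamma^\mu(A_\inf^\bx(R), \exists\phi)$ with $N \otimes_{A_\inf^\bx(R)} A_\inf(\res R) \cong M$. Base changing to $A_\crys^\bx(R)$ yields an object of $\Rep_\Gamma^\mu(A_\crys^\bx(R), \exists\phi)$, and the inverse of the filtration-free equivalence of Remark \ref{remark_F_structures_Acrys} then produces a crystal $\cal F \in \CR(R/A_\crys, \exists\phi)$. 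Commutativity of diagram \eqref{diagram_Acrys_ev_crystal} (with $\exists\phi$ throughout) exhibits the desired isomorphism $\ev_{A_\crys(\res R)}(\cal F) \cong N \otimes_{A_\inf^\bx(R)} A_\crys(\res R) = M \otimes_{A_\inf(\res R)} A_\crys(\res R)$. The same reasoning, with fixed Frobenius structures everywhere, handles the variant with $\phi$.

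For the saturated-filtered version, the same descent produces $N \in \Rep_\Gamma^\mu(A_\inf^\bx(R), \phi)$, and Lemma \ref{lemma_phi_to_filtr_framed} then upgrades the base change to an object of $\Rep_\Gamma^\mu(A_\crys^\bx(R), \op{SatFil}, \phi)$. The filtered variant of Remark \ref{remark_F_structures_Acrys}, assembled from the equivalence \eqref{equation_Crystal_Mic_filtered} together with Theorem \ref{theorem_q-A_crys_filtered} restricted to saturated filtrations as in Remark \ref{remark_associated1_filtered}, then yields a filtered F-crystal $\cal F \in \CR(R/A_\crys, \op{SatFil}, \phi)$ whose framed evaluation reproduces $N \otimes_{A_\inf^\bx(R)} A_\crys^\bx(R)$ with its saturated filtration. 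Comparison of the two filtrations on $\ev_{A_\crys(\res R)}(\cal F) = M \otimes_{A_\inf(\res R)} A_\crys(\res R)$ then reduces to the commutativity of diagram \eqref{diagram_Ainfcrys_frame_nonframe_Rep_filtered}: the filtration obtained by pushing $\cal F$ down to $A_\crys(\res R)$ is, by construction in Remark \ref{remark_associated1_filtered} and Lemma \ref{lemma_saturation_of_product}, the saturation of the product filtration from $N \otimes A_\crys^\bx(R)$, while the filtration on $M \otimes_{A_\inf(\res R)} A_\crys(\res R)$ prescribed in Lemma \ref{lemma_phi_to_filtr} is the saturated filtration attached to $\phi_M^{-1}(M) \otimes A_\crys(\res R)^+$. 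The commuting square identifies these.

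The main obstacle is precisely this final compatibility of filtrations: having produced $\cal F$ by framed means, one must verify that the induced filtration at the $A_\crys(\res R)$-level genuinely matches the framing-independent one defined from the Frobenius of $M$. All the necessary bookkeeping has been prepared in \S\ref{subsection_saturated}: Lemma \ref{lemma_phi_to_filtr_framed} controls the framed saturated filtration, Lemma \ref{lemma_saturation_of_product} governs its behaviour under base change, and the commutativity of \eqref{diagram_Ainfcrys_frame_nonframe_Rep_filtered} (the concluding lemma of that subsection) ties the two constructions together. Consequently no new computations are required; the argument is an assembly of the equivalences and diagrams already built.
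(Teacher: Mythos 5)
Your proposal is correct and follows essentially the same route as the paper: the unfiltered cases are handled by combining the framed descent equivalence with the commutative base-change square \eqref{diagram_Ainfcrys_frame_nonframe_Rep} and the fact that $\ev_{A_\crys^\bx(R)}$ is an equivalence in the presence of a Frobenius (Remark \ref{remark_F_structures_Acrys} and \eqref{diagram_Acrys_ev_crystal}), and the $\op{SatFil},\phi$ case is reduced to the commutativity of \eqref{diagram_Ainfcrys_frame_nonframe_Rep_filtered} together with the filtered evaluation diagram \eqref{diagram_Acrys_ev_crystal_filtered}. You spell out the filtration-compatibility bookkeeping in a bit more detail than the paper does, but the underlying argument is identical.
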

\begin{proof}
Extension of scalars give the following commutative diagram 
\begin{equation}\label{diagram_Ainfcrys_frame_nonframe_Rep}
\xymatrix{
\Rep_\Gamma^\mu(A_\inf^\bx(R))\ar[r]\ar[d]_{\sim}&
\Rep_\Gamma^\mu(A_\crys^\bx(R))\ar[d]\\
\Rep_\Delta^\mu(A_\inf(\res R))\ar[r]&
\Rep_\Delta^\mu(A_\crys(\res R)),}
\end{equation}
where the left vertical equivalence is Remark \ref{remark_Delta} and Theorem~\ref{theorem_descent_to_framed}; similarly its variants with $\exists \varphi$ or $\varphi$ added. With $\exists \varphi$ or $\varphi$, the functor $\ev_{A_\crys^\bx(R)}$  is an equivalence of categories by Remark \ref{remark_F_structures_Acrys} and (\ref{diagram_Acrys_ev_crystal}), whence the essential image claim follows at once.

It remain to treat the variant for $\op{SatFil},\phi$. In that case we replace (\ref{diagram_Ainfcrys_frame_nonframe_Rep}) with (\ref{diagram_Ainfcrys_frame_nonframe_Rep_filtered}) and argue in the same way: indeed, just as in the previous paragraph, the top horizontal arrow of (\ref{diagram_Acrys_ev_crystal_filtered}) is an equivalence if we add Frobenius structures to ensure quasi-nilpotence, and so the essential image claim follows in the same way.
\end{proof}

The proof of Theorem \ref{theorem_crystal_genrep} will be based on a study of admissibility with respect to a suitable period ring with connection. Let $\cal R$ be a $p$-adic formally smooth $A_\crys$-algebra lifting $R$, i.e., $\cal R\otimes_{A_\crys}\roi=R$; we also choose and fix a lifting $\varphi\colon \cal R\to \cal R$ of the absolute Frobenius of $R/p$ compatible with the Frobenius $\varphi$ of $A_\crys$. (For example, we could take $\cal R=A_\crys^\bx(R)$ equipped with its usual Frobenius, but we wish to stress that the framing is not required for what follows.) We define a filtration $\Fil^r\cal R$ to be the $p$-adic closure of $\Fil^rA_\crys\cdot \cal R$, for $r\in\bb Z$; this is a separated filtration on $\cal R$ whose graded pieces are $p$-torsion-free.\footnote{{\em Proof}: The flatness of $A_\crys/p^n\to\cal R/p^n$ implies that $\Fil^rA_\crys/p^n\otimes_{A_\crys/p^n}\cal R/p^n\isoto (\Fil^r A_\crys/p^n)\cal R/p^n$, whence $\op{gr}^r\cal R$ is isomorphic to $\op{gr}^rA_\crys\hat\otimes_\roi\cal R=\op{gr}^rA_\crys\otimes_\roi\cal R$, which is indeed $p$-torsion-free. To prove the separatedness it is then enough to check that $\bigcap_{r\ge0}\Fil^r\cal R\subseteq p\cal R$, or even that $\bigcap_{r\ge0}\Fil^r\cal R/p=0$ in $\cal R/p$; but $A_\crys/p$ is a split nilpotent extension of a field, whence the smooth $A_\crys/p$-algebra $\cal R/p$ is necessarily free as a module and so the desired separatedness reduces to the fact that $\bigcap_{r\ge0}\Fil^rA_\crys/p=0$ in $A_\crys/p$.} Similarly to \eqref{equqation_crystal_mic}, there is an equivalence of categories
\begin{equation}\label{equation_crystal_micprime}
\CR(R/A_\sub{crys})\quis \MIC_\sub{conv}(\cal R)
\end{equation}
and similarly its variants with $\exists\varphi$, $\varphi$ and/or $\Fil$ or $\op{SatFil}$ added. Here we define the categories $\MIC(\cal R,\varphi)$ and $\MIC(\cal R,\Fil)$ in the same way as Remark \ref{remark_Frob_q-Acrys} and Definition \ref{definition_filtration_1}, and we define $\MIC(\cal R,\op{SatFil})\subseteq\MIC(\cal R,\Fil)$ by adding the extra conditions (F4) and (F5) of Definition \ref{definition_filtration_2} (with respect to $A=\cal R$ and $f=\xi$). As at the start of Remark \ref{remark_associated1_filtered}, the full subcategory $\CR(R/A_\crys,\op{SatFil})\subseteq \CR(R/A_\crys,\op{Fil})$ is defined independently of $\cal R$ so that the filtered version of (\ref{equation_crystal_micprime}) restricts to an equivalence $\op{CR}(\cal R,\op{SatFil})\quis \MIC_\sub{conv}(\cal R,\op{SatFil})$.

The key to Theorem \ref{theorem_crystal_genrep} will be to show that the composition \begin{equation}\MIC_\sub{conv}(\cal R)\stackrel{\sub{(\ref{equation_crystal_micprime})}}\simeq\CR(R/A_\crys)
 \xto{\ev_{A_\crys(\res R)}} \Rep_\Delta^\mu(A_\crys(\res R))\label{eqn_composition}\end{equation}
is fully faithful. This will requires an alternative description of this composition in terms of the following period ring with flat connection from relative $p$-adic Hodge theory (c.f., \cite{Brinon2008}):

\begin{definition}\label{definition_OAcrys}
\begin{enumerate}
\item Let $\roiA_\crys(\cal R)$ be the $p$-adic completion of the divided power envelope of \[A_\crys(\res R)\otimes_{A_\crys}\cal R\To \res R/p\] compatible with the divided power structure on $pA_\crys+\Fil^1 A_\crys$, and write $p_1: A_\crys(\res R)\to\roiA_\crys(\cal R)$ and $p_2:\cal R\to\roiA_\crys(\cal R)$ for the resulting structure maps of $A_\crys$-algebras.
\item (Connection) By applying the construction in the last paragraph of \cite[IV \S1.3]{Berthelot1980} to $\Spec(\ol R/p)\hookrightarrow \Spec(A_{\crys}(\ol R)\otimes_{A_{\crys}}\cal R/p^n) \to \Spec(A_{\crys}(\ol R)/p^n)$, we obtain an $A_{\crys}(\ol R)$-linear derivation \[\nabla_{\cal OA}\colon \cal OA_{\crys}(\cal R) \to \cal OA_{\crys}(\cal R)\otimes_{\cal R}\Omega^1_{\cal R/A_{\crys}}\] compatible with the universal continuous $A_{\crys}$-derivation $d\colon \cal R\to \Omega^1_{\cal R/A_{\crys}}$. This is also a flat connection on the right $\cal R$-module $\cal OA_{\crys}(\cal R)$.

\item ($\Delta$-action) The $\Delta$-action on $\res R$ extends to an action on $\roiA_\crys(\cal R)$ which is left $A_\crys(\res R)$-semilinear, right $\cal R$-linear, and commutes with $\nabla_{\!\roiA}$.
\item (Frobenius) The endomorphism $\varphi$ on $A_\crys(\res R)$ and $\cal R$ induce an endomorphism on $\roiA_\crys(\cal R)$, which is denoted by $\varphi_{\!\roiA}$. It is $\Delta$-equivariant and compatible with $\nabla_{\!\roiA}$.
\item  (Filtration) The ring $\roiA_\crys(\cal R)$ is isomorphic to the inverse limit of the PD-envelopes of $A_\crys(\res R)\otimes_{A_\crys}\cal R/p^n\to R/p^n$ compatible with the PD-structure on $pA_{\crys,n}+\Fil^1A_{\crys,n}$. We define a decreasing filtration $\Fil^r\roiA_\crys(\cal R)$ on $\roiA_\crys(\cal R)$ to be the inverse limit of the $r$th divided power of the PD-ideal of the PD-envelope for each $n\geq 1$. It is $\Delta$-stable and satisfies Griffiths transversality $\nabla_{\!\roiA}(\Fil^r\roiA_\crys(\cal R))\subseteq \Fil^{r-1}\roiA_\crys(\cal R)\otimes_{\cal R}\Omega^1_{\cal R/A_\crys}$ for $r\in\bb Z$.
\end{enumerate}
\end{definition}

Let $D$ be an object of $\MIC_\sub{conv}(\cal R)$. The product of the flat connections on $D$ and $\roiA_\crys(\cal R)$ defines a flat connection on the right $\cal R$-module $D\otimes_{\cal R,p_2}\roiA_\crys(\cal R)$, which is compatible with $\Delta$-actions. So we may define a $A_\crys(\res R)$-module $\bb M_\crys(D)$ equipped with a semilinear $\Delta$-action by \[\bb M_\crys(D)=(D\otimes_{\cal R,p_2}\roiA_\crys(\cal R))^{\nabla=0}.\] The inclusion map $\bb M_\crys(D)\to D\otimes_{\cal R,p_2}\roiA_\crys(\cal R)$ induces the counit map of $\roiA_\crys(\cal R)$-modules \[\bb M_\crys(D)\otimes_{A_\crys(\res R),p_1}\roiA_\crys(\cal R)\to D\otimes_{\cal R,p_2}\roiA_\crys(\cal R)\] compatible with $\Delta$-action (diagonally on the domain; $\id\otimes \Delta$ on the codomain), and connections ($\id \otimes \nabla_{\!\roiA}$ on the domain; tensor product connection on the codomain). We say that $D$ is {\it admissible} if this counit map is an isomorphism.

\begin{remark}[Variants of $\bb M_\sub{crys}(D)$ with Frobenius and filtration]\label{remark_Mcrys_with_phi_and_Fil}
If $D$ is equipped with a Frobenius structure $\varphi_D$, then $\varphi_D\otimes\varphi_{\!\roiA}$ is an endomorphism of $D\otimes_{\cal R, p_2}\roiA_\crys(\cal R)[\frac{1}{p}]$ semilinear with respect to $\varphi_{\!\roiA}$. This restricts to an endomorphism of $\bb M_\crys(D)[\frac{1}{p}]$, which is clearly semilinear with respect to the Frobenius on $A_\crys(\res R)$; that fact that it defines a Frobenius structure on the generalised representation $\bb M_\crys(D)$ will follow from the Frobenius variant of Proposition \ref{proposition_MIC_Rbarrep}(ii).

If $D$ is equipped with a filtration, i.e., belongs to $\MIC(\cal R,\Fil)$, then we define a $\Delta$-stable decreasing filtration on $D\otimes_{\cal R,p_2}\roiA_\crys(\cal R)$ as the $p$-adic closure of the product filtration, and then we equip $\bb M_\sub{crys}(D)$ with the induced submodule filtration. The counit map is compatible with the $p$-adic closures of the product filtrations on each side, and we say that $D$ is admissible in this context if the counit is a filtered isomorphism.

In the case of SatFil, we instead equip $D\otimes_{\cal R,p_2}\roiA_\crys(\cal R)$ with the saturation of the $p$-adic closure of the product filtration.
\end{remark}

\begin{proposition}\label{proposition_MIC_Rbarrep}
\begin{enumerate}
\item For any $D\in \MIC_\sub{conv}(\cal R)$, the generalised representation $\bb M_\crys(D)$ belongs to $\Rep_\Delta^\mu(A_\crys(\res R))$.
\item The composition (\ref{eqn_composition}) is naturally isomorphic to $\bb M_\crys$.
\item Every object of $\MIC_\sub{conv}(\cal R)$ is admissible.
\end{enumerate}

\noindent The obvious analogues of (i)--(iii) with $\phi$ or/and Fil also hold.
\end{proposition}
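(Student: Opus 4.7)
The plan is to establish the admissibility assertion (iii) first, by a Poincar\'e-lemma style Taylor expansion, then to deduce (ii) by identifying the two evaluations through a common refinement in the crystalline site, and finally to obtain (i) as a formal consequence of (ii) combined with Proposition \ref{proposition_crystal_framedrep}.

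For (iii), fix \'etale coordinates $T_1,\ldots,T_d$ on $\cal R$ lifting the framing on $R$. This gives an explicit description of $\roiA_\crys(\cal R)$ as the $p$-adic completion of the divided-power polynomial algebra $A_\crys(\res R)\langle V_1,\ldots,V_d\rangle$, where $V_i := p_1([T_i^\flat]) - p_2(T_i)$; under this description $\nabla_{\!\roiA}(V_i) = -1\otimes dT_i$ and $\roiA_\crys(\cal R)^{\nabla_{\!\roiA}=0} = A_\crys(\res R)$. For $(D,\nabla) \in \MIC_\sub{conv}(\cal R)$ with non-logarithmic coordinates $\nabla_1,\ldots,\nabla_d$, the Taylor expansion
\[T_D\colon D \To D \otimes_{\cal R,p_2} \roiA_\crys(\cal R), \qquad d \mapsto \sum_{\ul n \in \bb N^d}(-1)^{|\ul n|} \nabla^{\ul n}(d) \otimes V^{[\ul n]}\]
converges by $p$-adic quasi-nilpotence of $\nabla$ combined with the divided-power structure on $\roiA_\crys(\cal R)$. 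A routine calculation (mimicking the classical Poincar\'e lemma) shows that $T_D(d)$ is flat for the product connection, hence lies in $\bb M_\crys(D)$, and that the induced $\roiA_\crys(\cal R)$-linear map
\[\bb M_\crys(D) \otimes_{A_\crys(\res R), p_1}\roiA_\crys(\cal R) \To D \otimes_{\cal R, p_2}\roiA_\crys(\cal R)\]
is an isomorphism, with inverse constructed via the retraction $\roiA_\crys(\cal R) \to A_\crys(\res R)$, $V_i \mapsto 0$ (which is well-defined because the composition $\cal R \xto{p_2}\roiA_\crys(\cal R)\to A_\crys(\res R)$ thus determined is the map $T_i \mapsto [T_i^\flat]$, available by formal \'etaleness of $\cal R$ over $A_\crys\pid{\ul T^{\pm1}}$). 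This establishes (iii).

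For (ii), let $\cal F \in \CR(R/A_\crys)$ correspond to $(D,\nabla)$ under \eqref{equation_crystal_micprime}, so $D = \cal F(\cal R)$. The PD-thickening $\roiA_\crys(\cal R)/p^n \to \res R/p^n$ is an object of $\CRYS(R_n/A_{\crys,n})$ admitting canonical morphisms to both $A_\crys(\res R)/p^n \to \res R/p^n$ (via $p_1$) and $\cal R/p^n \to R/p^n$ (via $p_2$). The crystal property then supplies $\Delta$-equivariant, $\roiA_\crys(\cal R)$-linear identifications
\[\cal F(A_\crys(\res R))\otimes_{A_\crys(\res R), p_1}\roiA_\crys(\cal R) \;\cong\; \cal F(\roiA_\crys(\cal R)) \;\cong\; D \otimes_{\cal R, p_2}\roiA_\crys(\cal R),\]
compatible with the natural flat connections (trivial on the first factor of the left-hand side; product connection on the right). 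Taking $\nabla$-flat sections and using $\roiA_\crys(\cal R)^{\nabla_{\!\roiA}=0}=A_\crys(\res R)$, together with (iii) applied to the trivial module, produces the desired natural isomorphism $\ev_{A_\crys(\res R)}(\cal F) \cong \bb M_\crys(D)$.

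Assertion (i) is then formal: by Proposition \ref{proposition_crystal_framedrep}, $\cal F(A_\crys^\bx(R)) \in \Rep_\Gamma^\mu(A_\crys^\bx(R))$, and base change along $A_\crys^\bx(R) \to A_\crys(\res R)$ shows $\cal F(A_\crys(\res R)) \in \Rep_\Delta^\mu(A_\crys(\res R))$; the isomorphism of (ii) transports this property to $\bb M_\crys(D)$. The Frobenius and $(\mathrm{Sat})\mathrm{Fil}$ variants follow by tracking $\varphi_{\!\roiA}$ and the PD-filtration of $\roiA_\crys(\cal R)$ through all the above identifications; in the saturated-filtered case, compatibility of the Poincar\'e-lemma isomorphism with passage to saturation is provided by Griffiths transversality together with Lemma \ref{lemma_saturation_of_product}. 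The principal obstacle is verifying the convergence, horizontality, and bijectivity of the Taylor expansion in (iii); once these are in place, the remaining steps are essentially bookkeeping via the crystal property and Proposition \ref{proposition_crystal_framedrep}.
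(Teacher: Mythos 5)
Your proposal is correct and reaches the same conclusions, but it is organised differently from the paper's argument. The paper works modulo $p^n$ and leans on the classical crystalline dictionary: it pulls the crystal $\cal F_n$ attached to $D_n$ back to the big crystalline site of $\res R_n$ over $A_{\crys,n}(\res R)$, notes that $\roiA_{\crys,n}(\cal R)$ is a \emph{smooth} PD-thickening there, and then quotes the standard facts that (a) global sections of a crystal equal the flat sections of its evaluation on such a thickening (this gives (ii) directly), and (b) quasi-coherence of the crystal forces the counit map to be an isomorphism (this gives (iii)). You instead prove (iii) first, by hand, via a Taylor-expansion/Poincar\'e-lemma computation, and only then invoke the crystal property for (ii); part (i) is deduced in both arguments from Proposition \ref{proposition_crystal_framedrep} in the same way. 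Your route is more self-contained (no appeal to Berthelot's global-sections theorem) and runs exactly parallel to what the paper does in the prismatic setting (Propositions \ref{prop:qPDTaylerExp} and \ref{prop:BKFToStrat}). The trade-off is that the ``routine calculation'' you defer is where essentially all of the content of the proposition lives, and it requires as input the identification of $\roiA_\crys(\cal R)$ with a $p$-adically completed PD-polynomial algebra over $A_\crys(\res R)$ in the variables $V_i$; this is not formal (it uses formal \'etaleness of the framing) and is only established in the paper later, as Lemma \ref{lemma_explicit_description_of_OA}.

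Two caveats on the sketch itself. First, a sign: with your normalisation $V_i=p_1([T_i^\flat])-p_2(\tilde T_i)$ (where $\tilde T_i\in\cal R$ is a chosen lift of $T_i$), one indeed has $\nabla_{\!\roiA}(V_i)=-1\otimes d\tilde T_i$, but then the horizontal Taylor expansion is $T_D(d)=\sum_{\ul n}\nabla^{\ul n}(d)\otimes V^{[\ul n]}$ \emph{without} the factor $(-1)^{\vert\ul n\vert}$; with the sign as written, the two contributions to $\nabla_{\sub{tot}}(T_D(d))$ add rather than cancel. Second, for bijectivity of the counit the retraction $V_i\mapsto 0$ by itself does not produce the inverse: you still need to check that $d\otimes a\mapsto T_D(d)\cdot p_1(a)$ is $\cal R$-balanced for the two module structures involved, and surjectivity requires the usual recursive approximation argument using $p$-adic quasi-nilpotence of $\nabla$ together with $p$-adic completeness. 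Both points are genuinely routine for someone who has the classical argument in mind, but they are the substance of (iii) rather than bookkeeping.
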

\begin{proof}
For each $n\ge1$ we write $\res R_n=\res R/p^n$, $A_{\crys,n}(\res R)=A_\crys(\res R)/p^n$, 
$\cal R_n=\cal R/p^n$, and $\roiA_{\crys,n}(\cal R)=\roiA_{\crys}(\cal R)/p^n$. Let $\CRYS(\res R_n/A_{\crys,n}(\res R))$ be the big crystalline site of $\Spec(\res R_n)$ over the PD-pair ($A_{\crys,n}(\res R),pA_{\crys,n}(\res R)+\Fil^1A_{\crys, n}(\res R)$). 
For $D$ an object of $\MIC_\sub{conv}(\cal R)$ set $D_n=D/p^n$ endowed with the induced flat connection, and let $\cal F_n$ be the object of $\CR(R_n/A_{\crys,n})$ associated to $D_n$; let $\res{\cal F}_n$ be the pullback of $\cal F_n$ to $\CRYS(\res R_n/A_{\crys,n}(\res R))$, along the morphism $R_n\to\res R_n$ lying over the PD-morphism $A_{\crys,n}\to A_{\crys,n}(\res R)$.

Let $\res{\cal F}_n(\roiA_\crys(\cal R))$ be the evaluation of the crystal $\res{\cal F}_n$ on $\Spec(\roiA_{\crys,n}(\cal R))$, which we view as the PD-envelope of the closed immersion $\Spec(\res R_n)\into \Spec(A_{\crys,n}(\res R)\otimes_{A_{\crys,n}}\cal R_n)$ compatible with the PD-structure on the base $A_{\crys,n}(\res R)$. Since $A_{\crys,n}(\res R)\otimes_{A_{\crys,n}}\cal R_n$ is a smooth $A_{\crys,n}(\res R)$-algebra, $\res{\cal F}_n(\roiA_\crys(\cal R))$ is equipped with a natural flat connection over $A_{\crys,n}(\res R)$. By comparing the construction of the connections on $\cal F_n(\cal R)=D_n$ and $\res{\cal F}_n(\roiA_\crys(\cal R))$ via stratifications, we see that the canonical isomorphism $D_n\otimes_{\cal R_n}\roiA_{\crys,n}(\cal R)\isoto \res{\cal F}_n(\roiA_\crys(\res R))$ is compatible with connections, where the domain is equipped with the product connection. (Moreover, this is a filtered isomorphism if $D$ is an object of $\MIC_\sub{conv}(\cal R,\Fil)$.) Since $\cal F_n(A_\crys(\res R))$ is isomorphic to the global sections of $\res{\cal F}_n$, the description of the global sections of a crystal in terms of the corresponding module with connection gives us isomorphisms of $A_\crys(\res R)$-modules
\[\alpha_n \colon \cal F_n(A_\crys(\res R))\isoto \res{\cal F}_n(\roiA_\crys(\cal R))^{\nabla=0}\cong (D_n\otimes_{\cal R_n}\roiA_{\crys,n}(\cal R))^{\nabla=0}.\]
By looking at the behaviour of the isomorphisms appearing in $\al_n$ under the pullback by the action of an element of $\Delta$ on $\res R_n$ and $A_{\crys,n}(\res R)$, we see that they are $\Delta$-equivariant. Taking the limit over $n$ proves that $\op{ev}_{A_\sub{crys}(\res R)}(\cal F)$ is isomorphic as a generalised representation to $\bb M_\sub{crys}(D)$, thereby proving both (ii) and (i) since we already know that the former lies in $\Rep_\Delta^\mu(A_\sub{crys}(\res R))$.

We may also regard $\cal F_n(A_\crys(\res R))$ as the sections of $\res{\cal F}_n$ on $\Spec(A_{\crys,n}(\res R))$, and then the first isomorphism is induced by pullback along the morphism $p_1:\Spec(\roiA_{\crys,n}(\cal R))\to \Spec(A_{\crys,n}(\res R))$ in $\CRYS(\res R_n/A_{\crys,n}(\res R))$. Since $\res{\cal F}_n$ is a quasi-coherent crystal, this implies that $\alpha_n$ induces an isomorphism 
\[\cal \beta_n\colon \cal F_n(A_{\crys}(\res R))\otimes_{A_{\crys,n}(\res R),p_1}\roiA_{\crys,n}(\cal R)\isoto D_n\otimes_{\cal R_n}\roiA_{\crys,n}(\cal R),\] thereby proving (iii).

It remains to discuss Frobenius structures and filtrations. Firstly, if $D$ is an object of $\MIC(\cal R,\varphi)$ then, replacing $R_n$ and $\res R_n$ by 
$R_1$ and $\res R_1$, and $\cal F_n$ and $\res{\cal F}_n$ by their 
pullbacks to $\Spec(R_1)$ and $\Spec(\res R_1)$, we see that the isomorphisms $\alpha_n$ and $\beta_n$ are compatible with the Frobenius structures on $(\cal F_n(A_\crys(\res R)))_n$ and $(D_n)_n$.) Secondly, suppose that $D$ is an object of $\MIC_{\sub{conv}}(\cal R,\Fil)$. Then $\beta_n$ is a filtered isomorphism with respect to the product filtrations, and so to prove all claims with filtration it remains only to show that $\alpha_n$ is a filtered isomorphism. This reduces to verifying that the filtration on $\cal F_n(A_\crys(\res R))$ is induced by the product filtration on $\cal F_n(A_{\crys}(\res R))\otimes_{A_{\crys,n}(\res R)}\roiA_{\crys,n}(\cal R)$. That is, given $a\in \cal F_n(A_\crys(\res R))$ such that its image $a\otimes 1$ in $\cal F_n(A_{\crys}(\res R))\otimes_{A_{\crys,n}(\res R)}\roiA_{\crys,n}(\cal R)$ lies in $\Fil^r$ of the latter, we must show that $a\in \Fil^r\cal F_n(A_\crys(\res R))$. But, as in the first paragraph of the proof of Lemma \ref{lemma_explicit_description_of_OA} below, a choice of lifts of the variables allows us to define a morphism of PD-algebras (so compatible with the filtrations) $\roiA_{\sub{crys},n}(\cal R)\to A_{\sub{crys},n}(\res R)$; this in turn induces a section, compatible with filtrations, of the map $\cal F_n(A_\crys(\res R))\to \cal F_n(A_{\crys}(\res R))\otimes_{A_{\crys,n}(\res R)}\roiA_{\crys,n}(\cal R)$ and so proves the claim.
\end{proof}

To prove Theorem \ref{theorem_crystal_genrep} we finally require the following:

\begin{proposition}\label{proposition_Delta_fixed_points_of_OA}
The structure map $p_2:\cal R\to\roiA_\crys(\cal R)^\Delta$ is a filtered isomorphism.
\end{proposition}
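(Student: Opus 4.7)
The strategy is to choose framing-type data that make $\roiA_\crys(\cal R)$ completely explicit, then compute $\Delta$-invariants directly.

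Lift the given framing of $R$ to a framing $A_\crys\pid{U_1^{\pm 1},\dots,U_d^{\pm 1}}\to \cal R$, which exists by the formal smoothness of $\cal R$ over $A_\crys$; also fix a compatible sequence of $p$-power roots of each $T_i$ in $\res R$, yielding elements $T_i^\sharp:=[T_i^\flat]\in A_\inf(\res R)\subseteq A_\crys(\res R)$. Setting $V_i:=p_2(U_i)-p_1(T_i^\sharp)\in\roiA_\crys(\cal R)$, the standard description of PD-envelopes for a formally smooth embedding (cf.~\cite[\S IV.1.3]{Berthelot1980}) identifies $\roiA_\crys(\cal R)$ with the $p$-adic completion of the PD-polynomial algebra $A_\crys(\res R)\pid{V_1,\dots,V_d}^{\op{PD}}$, and the filtration $\Fil^*\roiA_\crys(\cal R)$ corresponds to the convolution of $\Fil^*A_\crys(\res R)$ with the PD-degree filtration in the $V_i$. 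Since $\Delta$ acts trivially on $p_2(\cal R)$ by construction and acts on $A_\crys(\res R)$ via its standard action, one computes $\gamma(V_i)=V_i+(1-[\ep_i(\gamma)])p_1(T_i^\sharp)$, where $[\ep_i(\gamma)]\in 1+W(\frak m^\flat)$ records the $\gamma$-action on $T_i^\flat$.

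For injectivity and strictness with respect to filtrations, consider the $A_\crys(\res R)$-linear retraction $\pi\colon\roiA_\crys(\cal R)\to A_\crys(\res R)$ sending $V_i\mapsto 0$; this is filtration-preserving (although not $\Delta$-equivariant), and the composition $\pi\circ p_2\colon\cal R\to A_\crys(\res R)$, $U_i\mapsto T_i^\sharp$, is injective and strictly filtered by definition of the filtration on $\cal R$. Hence $p_2\colon\cal R\to\roiA_\crys(\cal R)^\Delta$ is an injective filtered embedding.

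For surjectivity, take $x\in\roiA_\crys(\cal R)^\Delta$ and write it uniquely as $x=\sum_\alpha a_\alpha V^{[\alpha]}$ with $a_\alpha\in A_\crys(\res R)$ tending to $0$ $p$-adically. Because $1-[\ep_i(\gamma)]\in\mu A_\inf\subseteq\Fil^1A_\crys$ has divided powers, the PD-binomial formula applies, and the identity $\gamma(x)=x$ becomes the recursion
\[
a_k=\sum_{\beta\geq 0}\gamma(a_{k+\beta})\prod_{i=1}^d\frac{\bigl((1-[\ep_i(\gamma)])\,p_1(T_i^\sharp)\bigr)^{\beta_i}}{\beta_i!}\qquad(k\in\bb N^d,\ \gamma\in\Delta).
\]
Interpreted as a Taylor-expansion identity centered at $p_1(T^\sharp)$, this shows that the $a_\alpha$ are uniquely determined by $a_0=\pi(x)$ as iterated ``derivatives with respect to $T_i^\sharp$'', and that consistency of the recursion forces $a_0$ to lie in the subring $(\pi\circ p_2)(\cal R)\subseteq A_\crys(\res R)$. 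The preimage $f\in\cal R$ is then constructed by dévissage along the PD-filtration induced by $\ker(\roiA_\crys(\cal R)\to\res R/p)$: at degree $r$ one uses the identification $\res R^\Delta=R$ (together with its lifts through $\Fil^s\cal R/\Fil^{s+1}$ coming from the explicit graded pieces) to produce a correction $f_r\in\cal R$ whose image $p_2(f_r)$ matches $x$ modulo the next filtration step, and the filtered compatibility emerges from tracking $\Fil$-degrees throughout. The main obstacle is extracting from the Taylor-type recursion the fact that $a_0$ must actually lie in the relatively small subring $p_2(\cal R)\subseteq A_\crys(\res R)^\Delta$ and that the resulting $f$ converges in $\cal R$ with the correct filtration behavior; this mirrors (in the integral setting) the classical computation of Galois-invariants of $\mathcal{O}\bb B_{\crys}^+$ due to Brinon.
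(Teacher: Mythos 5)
Your setup is sound and matches the paper's Lemma \ref{lemma_explicit_description_of_OA}: the identification of $\roiA_\crys(\cal R)$ with a $p$-completed PD-polynomial algebra $A_\crys(\res R)\{V_1,\dots,V_d\}$ and the formula $\gamma(V_i)=V_i+(1-[\ep_i(\gamma)])p_1(T_i^\sharp)$ are both correct. But there is a genuine gap at the heart of the surjectivity argument, and you have in fact flagged it yourself: the claim that the recursion $a_k=\sum_{\beta}\gamma(a_{k+\beta})\prod_i\bigl((1-[\ep_i(\gamma)])p_1(T_i^\sharp)\bigr)^{[\beta_i]}$ "forces $a_0$ to lie in $(\pi\circ p_2)(\cal R)$" is not a consequence of any formal Taylor-expansion principle — it \emph{is} the proposition. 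In your coordinates the coefficients $a_\alpha$ live in the enormous ring $A_\crys(\res R)$, on which $\Delta$ acts in a complicated semilinear way, and the constraint for $k=0$ only says that $a_0-\gamma(a_0)$ is a combination of the higher coefficients weighted by elements of $\Fil^{\geq 1}$; nothing in the recursion singles out the subring $p_2(\cal R)$ (which is not even canonically a subring of $A_\crys(\res R)$ — it only becomes one after choosing the non-equivariant retraction $\pi$). The subsequent "dévissage along the PD-filtration" is likewise only a sketch of what would need to be proved.

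The paper resolves exactly this difficulty by two moves you do not make. First, it reduces $\Delta$-invariants to $\Gamma$-invariants by showing $\roiA_{\crys,\infty}(\cal R)\isoto\roiA_\crys(\cal R)^H$ for $H=\ker(\Delta\to\Gamma)$, which rests on the almost purity theorem via Lemma \ref{lemma_H-fixed_points} ($A_\crys(R_\infty)\isoto A_\crys(\res R)^H$, itself requiring a careful mod-$p$ analysis of the PD-filtration). Second — and this is the decisive step — it re-centres the PD-polynomial description: using the framed ring it writes $\roiA_{\crys,\infty}(\cal R)$ as the completed direct sum $\hat\bigoplus_{\ul k}\cal R\{\tau_1,\dots,\tau_d\}U_1^{k_1}\cdots U_d^{k_d}$ of PD-polynomial algebras \emph{over $\cal R$}, so that the coefficients of an invariant element automatically lie in $\cal R$ and $\Gamma$ acts $\cal R$-linearly, via explicit formulas on the $\tau_i$ and the fractional monomials. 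The invariants computation then reduces to showing that the higher coefficients $a_j\in\cal R$ vanish, which is done by induction on the filtration degree using the separatedness of $\Fil^\ast\cal R$ and the $p$-torsion-freeness of its graded pieces. Without these two reductions your recursion cannot be closed, so the proof as written does not go through.
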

\begin{proof}
We postpone the proof of this result to \S\ref{subsection_proof_of_proposition}.
\end{proof}

\begin{proof}[Proof of Theorem \ref{theorem_crystal_genrep}]
Without filtrations, it suffices by Proposition \ref{proposition_MIC_Rbarrep}(ii) to prove that $\bb M_\crys$ is fully faithful. Let $D$ be an object of $\MIC_\sub{conv}(\cal R)$. By Proposition \ref{proposition_MIC_Rbarrep}(iii), the counit map \begin{equation}\bb M_\crys(D)\otimes_{A_\crys(\res R),p_1}\roiA_\crys(\cal R)
\to D\otimes_{\cal R,p_2}\roiA_\crys(\cal R)\label{eqn_counit}\end{equation} is an isomorphism. By taking $\Delta$-invariants and using Lemma \ref{proposition_Delta_fixed_points_of_OA}, we obtain an isomorphism of $\cal R$-modules with flat connection \[ (\bb M_\crys(D)\otimes_{A_\crys(\res R),p_1}\roiA_\crys(\cal R))^{\Delta}\isoto D,\] which completes the proof. Moreover, this isomorphism is compatible with Frobenii if $D$ is equipped with one.

It remains to treat filtrations. Letting $D\in \MIC_\sub{conv}(\cal R,\op{SatFil})$, we will first show that the filtration on $D$ can be recovered from $\bb M_\crys(D)$ with its saturated filtration. Let $\bb M_\crys(D)^+:=\bigcup_{r\in\bb Z}\xi^{-r}\Fil^r\bb M_\crys(D)\subseteq \bb M_\crys(D)[\tfrac1\xi]$ be the $A_\crys(\res R)^+=\bigcup_{r\in\bb Z}\xi^{-r}\Fil^rA_\crys(\res R)$-module corresponding to the filtration on $\bb M_\crys(D)^+$; following Lemma \ref{lemma_saturated} and Definition \ref{definition_saturation} we will freely use such notation for other filtrations, even if they are not necessarily saturated. Lemma \ref{lemma_saturation_of_product}(i) along $p_1:A_\crys(\res R)\to\roiA_\crys(\cal R)$ shows that the canonical map $\bb M_\crys(D)^+\otimes_{A_\crys(\res R)^+,p_1}\roiA_\crys(\cal R)^+\to (\bb M_\crys(D)\otimes_{A_\crys(\res R),p_1}\roiA_\crys(\cal R))^+$ is an isomorphism, where the target is defined using the $p$-adic closure of the product filtration on $\bb M_\crys(D)\otimes_{A_\crys(\res R),p_1}\roiA_\crys(\cal R)$ (here and below, the product filtration on $\bb M_\crys(D)\otimes_{A_\crys(\res R),p_1}\roiA_\crys(\cal R)$ is induced via $\Fil^*\bb M_\crys(D)$, not $\Fil^*_\sub{sat}\bb M_\crys(D)$).

Next, the filtered version of Proposition \ref{proposition_MIC_Rbarrep} states that the counit map (\ref{eqn_counit}) is a filtered isomorphism, having equipped both sides with the $p$-adic closures of product filtrations. This induces an isomorphism $(M_\crys(D)\otimes_{A_\crys(\res R),p_1}\roiA_\crys(\cal R))^+\isoto (D\otimes_{\cal R,p_2}\roiA_\crys(\cal R))^+$.

Finally, using Lemma \ref{lemma_saturation_of_product}(i) again, this time to base change along $p_2:\cal R\to\roiA_\crys(\cal R)$, we deduce that $D^+\otimes_{\cal R^+,p_2}\roiA_\crys(\cal R)^+\to (D\otimes_{\cal R^+,p_2}\roiA_\crys(\cal R))^+$ is an isomorphism, where the target is defined using the $p$-adic closure of the product filtration $D\otimes_\cal R\roiA_\crys(\cal R)$.

Assembling the isomorphism yields a natural isomorphism of finite projective $\roiA_\crys(\cal R)^+$-modules \[D^+\otimes_{\cal R^+,p_2}\roiA_\crys(\cal R)^+\cong \bb M_\crys(D)^+\otimes_{A_\crys(\res R)^+,p_1}\roiA_\crys(\cal R)^+,\] compatible with $\Delta$-action. Since $D^+$ is a finite projective $\cal R^+$-module and $(\roiA_\crys(\cal R)^+)^\Delta=(\cal R^+)^\Delta$ by Proposition \ref{proposition_Delta_fixed_points_of_OA}, taking $\Delta$ invariants gives a natural isomorphism of $\cal R^+$-modules \[D^+\cong (\bb M_\crys(D)^+\otimes_{A_\crys(\res R)^+,p_1}\roiA_\crys(\cal R)^+)^\Delta.\] Since the filtration on $D$ is saturated by hypothesis, it is determined by $D^+$ as in Lemma \ref{lemma_saturated}, so the previous isomorphism completes the proof of our claim that the filtration on $D$ can be recovered from $(\bb M_\crys(D),\Fil^*_\sub{sat}\bb M_\crys(D))$.

We may now complete the proof that $\ev_{A_\crys(\res R)}\colon \CR(R/A_\crys, \mathrm{SatFil})\to \Rep_\Delta^\mu(A_\crys(\res R),\op{SatFil})$ is fully faithful; recall from Remark \ref{remark_associated1_filtered} that this functor is defined by applying $\ev_{A_\crys(\res R)}\colon \CR(R/A_\crys, \mathrm{Fil})\to \Rep_\Delta^\mu(A_\crys(\res R),\op{Fil})$ and then saturating the resulting filtration. Therefore Proposition \ref{proposition_MIC_Rbarrep} for not necessarily saturated filtrations immediately implies the following analogue for saturated filtrations: the composition \[\MIC_\sub{conv}(\cal R,\op{SatFil})\simeq\CR(R/A_\crys, \op{SatFil}) \xto{\ev_{A_\crys(\res R)}} \Rep_\Delta^\mu(A_\crys(\res R), \op{SatFil})\] is naturally isomorphic to $D\mapsto (\bb M_\crys(D),\Fil^*_\sub{sat}\bb M_\sub{crys})$. But this is indeed fully faithful as desired, since we showed in the first part of the proof that it is fully faithful without filtrations, and in the second part of the proof that the filtration on $D$ may be recovered from $\Fil^*_\sub{sat}\bb M_\sub{crys}(D)$.
%Since forgetting filtrations is a faithful functor, and we have already proved fully faithfulness without filtration, we must show the following: given $\cal F,\cal F'\in \CR(R/A_\sub{crys},\op{SatFil})$ and a morphism $f:\cal F\to \cal F'$ in $\CR(R/A_\sub{crys})$ (i.e., $f$ does not necessarily respect the filtrations) such that $\op{ev}_{A_\crys(\res R)}(f):\op{ev}_{A_\crys(\res R)}(\cal F)\to \op{ev}_{A_\crys(\res R)}(\cal F')$ does respect the filtrations, then in fact $f$ does respect the filtrations on $\cal F,\cal F'$.
%
%Let $D, D'\in \MIC_\sub{conv}(\cal R,\op{SatFil})$ be the filtered modules with connection corresponding to $\cal F,\cal F'$ via the filtered version of the equivalence (\ref{equation_crystal_micprime}); we will continue to write $f$ for the morphism $f:D\to D'$ in $\MIC_\sub{conv}(\cal R)$, which we need to show respects the filtrations on each side. By the version of Proposition \ref{proposition_MIC_Rbarrep}(ii) for not necessarily saturated filtrations we see that $\op{ev}_{A_\crys(\res R)}(\cal F)$ is isomorphic to $\bb M_\crys(D)$, where we equip the latter with the saturation $\Fil^*_\sub{sat}\bb M_\crys(D)$ of its filtration $\Fil^*\bb M_\crys(D)$ defined in Remark \ref{remark_Mcrys_with_phi_and_Fil}; similarly for $\cal F'$.
\end{proof}

We finish our study of filtered F-crystals over by showing that they are sufficient to recover, up to isogeny, the Galois representation associated to an object of $\Rep_\Delta^\mu(A_\inf(\res R),\phi)$:

\begin{theorem}\label{theorem_M_et_up_to_isogeny}
Let $M\in \Rep_\Delta^\mu(A_\inf(\res R),\phi)$, and assume $M$ is finite free as an $A_\inf(\res R)$-module. Then:
\begin{enumerate}
\item $L:=M[\tfrac1\mu]^{\phi=1}$ is a finite free $\bb Z_p$-module equipped with a continuous $\Delta$-action, and the canonical map $L\otimes_{\bb Z_p}A_\inf(\res R)[\tfrac1\mu]\to M[\tfrac1\mu]$ is an isomorphism.
\item Letting $\cal F\in \CR(R/A_\crys,\op{SatFil}, \varphi)$ be the unique F-crystal with saturated filtration associated to $M$, then the $\Delta$-representation $L[\tfrac1p]$ may be recovered from $\cal F$. More precisely, letting $M_\sub{crys}:=\op{ev}_{A_\crys(\res R)}(\cal F)\in \Rep_\Delta^\mu(A_\crys(\res R),\op{SatFil})$, the canonical map \[L[\tfrac1p]\To M_\crys[\tfrac1\mu]^{\phi=1}\cap \big(M_\crys^+\otimes_{A_\crys(\res R)^+}\Fil^0B_\crys(\res R))\] is an isomorphism compatible with $\Delta$-actions; here $M_\crys^+=\bigcup_{r\in\bb Z}\xi^{-r}\Fil^rM_\sub{crys}$ as usual, and we also set $\Fil^0B_\crys(\res R):=\bigcup_{r\in\bb Z}\mu^{-r}\Fil^rA_\crys(\res R)$ so that the intersection on the right takes place in $M_\crys[\tfrac1\mu]$.
\end{enumerate}
\end{theorem}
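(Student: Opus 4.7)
The strategy is to treat parts (i) and (ii) separately: part (i) is an instance of étale realization for finite free $\phi$-modules over $A_\inf(\res R)[\tfrac1\mu]$, while part (ii) reduces to a relative analogue of Fontaine's fundamental exact sequence.

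For part (i), although the Frobenius structure $\phi_M\colon M[\tfrac1\xi]\isoto M[\tfrac1{\tilde\xi}]$ does not a priori give an automorphism of $M[\tfrac1\mu]$, after implicitly inverting all Frobenius iterates $\phi^n(\mu)$ ($n\in\bb Z$) it becomes one, so that $M[\tfrac1\mu]$ is a finite free étale $\phi$-module over $A_\inf(\res R)[\tfrac1\mu]$. The claim then follows from the relative analogue of classical étale $\phi$-module theory: the functor $L\mapsto L\otimes_{\bb Z_p}A_\inf(\res R)[\tfrac1\mu]$ from finite free $\bb Z_p$-modules to finite free étale $\phi$-modules over $A_\inf(\res R)[\tfrac1\mu]$ is an equivalence of categories, with quasi-inverse $N\mapsto N^{\phi=1}$. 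The principal input is the identity $A_\inf(\res R)[\tfrac1\mu]^{\phi=1}=\bb Z_p$, which modulo $p$ reduces to classical Artin--Schreier theory for perfect $\bb F_p$-algebras since $\res R$ is perfectoid over $\roi$. The continuity of the induced $\Delta$-action on $L\subseteq M[\tfrac1\mu]$ is inherited from that of $M$.

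For part (ii), the unique $\cal F\in \op{CR}(R/A_\crys,\op{SatFil},\phi)$ associated to $M$ (provided by Theorem~\ref{theorem_Admissibility}) satisfies $M_\crys\cong M\otimes_{A_\inf(\res R)}A_\crys(\res R)$ in $\Rep_\Delta^\mu(A_\crys(\res R),\op{SatFil},\phi)$ with saturated filtration as in Lemma~\ref{lemma_phi_to_filtr}; in particular $M_\crys^+=\phi_M^{-1}(M)\otimes_{A_\inf(\res R)}A_\crys(\res R)^+$. Inverting $\mu$ and applying part (i) yields a $\phi$- and $\Delta$-equivariant isomorphism
\[
M_\crys[\tfrac1\mu]\;\cong\;L\otimes_{\bb Z_p}B_\crys(\res R),\qquad B_\crys(\res R):=A_\crys(\res R)[\tfrac1\mu],
\]
so that $M_\crys[\tfrac1\mu]^{\phi=1}\cong L\otimes_{\bb Z_p}B_\crys(\res R)^{\phi=1}$. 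The problem reduces to showing that the intersection of the right-hand side with $M_\crys^+\otimes_{A_\crys(\res R)^+}\Fil^0 B_\crys(\res R)$ equals $L[\tfrac1p]$.

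The essential ingredient is the relative fundamental exact sequence
\[
B_\crys(\res R)^{\phi=1}\cap \Fil^0 B_\crys(\res R)=\bb Q_p,
\]
which I would establish by faithfully flat reduction to rank-one perfectoid valuation rings arising from geometric points of $R[\tfrac1p]$, where the statement is classical. Granting this, the inclusion $L[\tfrac1p]\subseteq M_\crys^+\otimes\Fil^0 B_\crys(\res R)$ follows by noting that any $\ell\in L$ lies in $\mu^{-r}\phi_M^{-1}(M)$ for some $r\ge 0$, and then iteratively applying $\phi_M(\ell)=\ell$ together with $\phi(\mu)=\mu\tilde\xi$ absorbs the negative power of $\mu$ into the filtration since $\tilde\xi$ has positive filtration degree in $A_\crys$. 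Conversely, given $x$ in the intersection, one decomposes $x=\sum_i\ell_i\otimes b_i$ with $\{\ell_i\}$ a $\bb Z_p$-basis of $L$ and $b_i\in B_\crys(\res R)^{\phi=1}$; the filtration condition, together with the fact that $\phi_M^{-1}(M)$ spans $M[\tfrac1\mu]$ over $A_\inf(\res R)[\tfrac1\mu]$, forces each $b_i\in\Fil^0 B_\crys(\res R)$, and the fundamental sequence then gives $b_i\in\bb Q_p$.

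The main obstacle is the relative fundamental exact sequence, which requires careful handling of the global structure of $\res R$ and cannot be deduced purely formally from Theorem~\ref{theorem_crystal_genrep}. A secondary technical point is the bookkeeping needed to match the saturated filtration from $M_\crys^+$ against the decomposition of $M_\crys[\tfrac1\mu]$ via a $\bb Z_p$-basis of $L$, since the lattices $L$ and $\phi_M^{-1}(M)$ are not aligned in $M[\tfrac1\mu]$.
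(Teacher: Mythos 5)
Your proposal follows essentially the same route as the paper: part (i) is the Artin--Schreier--Witt trivialisation of the \'etale $\phi$-module $M[\tfrac1\mu]$ (the paper delegates this to the proof of Proposition \ref{proposition_etale} applied with $A^+=\res R$), and part (ii) rests on the same two pillars, namely the trivialisation $M_\crys[\tfrac1\mu]\cong L\otimes_{\bb Z_p}B_\crys(\res R)$ coming from part (i) and the identity $A_\crys(\res R)[\tfrac1\mu]^{\phi=1}\cap\Fil^0B_\crys(\res R)=\bb Q_p$, which the paper simply cites from Tsuji's earlier work rather than re-proving by reduction to geometric points.

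The one place where you leave a real loose end is the ``lattice bookkeeping'' you flag at the end: your converse inclusion needs the $\ell_i$ to generate $M_\crys^+\otimes_{A_\crys(\res R)^+}\Fil^0B_\crys(\res R)$ as a $\Fil^0B_\crys(\res R)$-module, and the fact that $\phi_M^{-1}(M)$ spans $M[\tfrac1\mu]$ over $A_\inf(\res R)[\tfrac1\mu]$ does not give this. The paper's resolution is precisely the observation that, contrary to your closing remark, the two lattices do become aligned after inverting $\phi^{-1}(\mu)$: the $\phi_M$-preimage of $M[\tfrac1\mu]$ is $M^+[\tfrac1{\phi^{-1}(\mu)}]$ with $M^+=\phi_M^{-1}(M)$, so the part-(i) isomorphism restricts to $L\otimes_{\bb Z_p}A_\inf(\res R)[\tfrac1{\phi^{-1}(\mu)}]\isoto M^+[\tfrac1{\phi^{-1}(\mu)}]$, and since $\tfrac1{\phi^{-1}(\mu)}=\tfrac\xi\mu$ lies in $\Fil^0B_\crys(\res R)$ this base changes to a $\Delta$- and $\phi$-equivariant isomorphism $L\otimes_{\bb Z_p}\Fil^0B_\crys(\res R)\isoto M_\crys^+\otimes_{A_\crys(\res R)^+}\Fil^0B_\crys(\res R)$. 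With that identity in hand both inclusions of part (ii) are immediate, because the intersection in question becomes $L\otimes_{\bb Z_p}\bigl(B_\crys(\res R)^{\phi=1}\cap\Fil^0B_\crys(\res R)\bigr)$; you should replace your ``iterative absorption'' and basis-decomposition arguments by this single restriction-and-base-change step.
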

\begin{proof}
We have formulated the theorem in terms of $\cal F$ for the sake of naturality, but note that $\cal F$ is strictly speaking not necessary: $M_\sub{crys}$ is given by $M\otimes_{A_\inf(\res R)}A_\crys(\res R)$, which is all we will use in the proof.

Part (i) is proved by applying the proof of Proposition \ref{proposition_etale} (with $A^+=\res R$); this is where we need the finite freeness assumption, which is any case true Zariski locally on $\Spf R$.

The inverse image of $M[\tfrac1\mu]$ along the Frobenius structure $\phi_M:M[\tfrac1\mu]\to M[\tfrac1{\phi(\mu)}]$ is $M^+[\tfrac1{\phi^{-1}(\mu)}]$, where $M^+$ is the inverse image of $M$ along $\phi_M:M[\tfrac1\xi]\to M[\tfrac1{\tilde\xi}]$. The isomorphism in part (i) therefore restricts to an isomorphism $L\otimes_{\bb Z_p}A_\inf(\res R)[\tfrac1{\phi^{-1}(\mu)}]\isoto M^+[\tfrac1{\phi^{-1}(\mu)}]$. Since $\tfrac1{\phi^{-1}(\mu)}=\tfrac\xi\mu$ belongs to $\Fil^0B_\sub{crys}(\res R)$, and $M_\crys^+=M^+\otimes_{A_\inf(\res R)}A_\crys(\res R)$ by definition of the filtration on $M_\crys$ (given by Lemma \ref{lemma_phi_to_filtr}), base changing further yields an isomorphism \[L\otimes_{\bb Z_p}\Fil^0B_\sub{crys}(\res R)\isoto M_\crys^+\otimes_{A_\crys(\res R)^+}\Fil^0B_\crys(\res R)\]  compatible with Frobenii and $\Delta$-actions. The proof is then completed by the identity $A_\crys(\res R)[\tfrac1\mu]^{\phi=1}\cap \Fil^0B_\crys(\res R)=\bb Q_p$, for which we refer to \cite[Thm.~A3.26]{Tsuji1999}.
 \end{proof}

\subsection{Proof of Proposition \ref{proposition_Delta_fixed_points_of_OA}}\label{subsection_proof_of_proposition}
In this subsection we prove Proposition \ref{proposition_Delta_fixed_points_of_OA}. The proof reduces to a framed version of the proposition via the following lemmas; let $H:=\ker(\Delta\to\Gamma)$, which acts $R_\infty$-linearly on $\res R$.

\begin{lemma}\label{lemma_H-fixed_points}
The canonical map $A_\sub{crys}(R_\infty)\to A_\sub{crys}(\res R)^H$ is a filtered isomorphism.
\end{lemma}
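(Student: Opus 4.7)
\textbf{The plan} is to first establish the ring equality $A_\inf(R_\infty) = A_\inf(\res R)^H$ at the $A_\inf$-level, and then transport this to $A_\crys$ using that the PD-envelope construction is controlled by the $H$-fixed element $\xi$.

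For the $A_\inf$-level statement, I would apply Faltings' almost purity theorem to the tilted $H$-cover $R_\infty^\flat \to \res R^\flat$ to conclude that the inclusion $R_\infty^\flat \to (\res R^\flat)^H$ is an almost isomorphism. Example~\ref{example_R_infty_no_almost_zero} ensures that neither $R_\infty^\flat$ nor $\res R^\flat$ contains non-zero almost-zero elements modulo a pseudo-uniformiser (the untilts of these rings being $R_\infty$ and $\res R$ respectively), which upgrades the almost isomorphism to a strict equality $R_\infty^\flat = (\res R^\flat)^H$. Applying the Witt vector functor (which commutes with taking $H$-invariants on perfect $\bb F_p$-algebras, since $H$-invariance is verified componentwise on Witt coordinates) then yields $A_\inf(R_\infty) = W(R_\infty^\flat) = W(\res R^\flat)^H = A_\inf(\res R)^H$.

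To pass to $A_\crys$, I would observe that $\xi \in A_\inf$ and consequently all its divided powers $\xi^{[n]}$ are globally $H$-fixed. Hence each filtration piece $\Fil^r A_\crys(\res R)$, being the $p$-adic closure of the ideal generated by the $\xi^{[s]}$ for $s \geq r$, is stable under $H$, and the canonical map $A_\crys(R_\infty) \to A_\crys(\res R)^H$ is strictly compatible with filtrations. To show it is an isomorphism, I would work modulo $p^n$ for each $n$ and proceed by induction on $r$ using the short exact sequences
\[
0 \To \res R \cdot \xi^{[r]} \To A_\crys(\res R)/\Fil^{r+1} \To A_\crys(\res R)/\Fil^r \To 0:
\]
given an $H$-invariant element in the middle term, its image in the right-hand term lies in $A_\crys(R_\infty)/\Fil^r$ by the inductive hypothesis; lifting that image back to an element of $A_\crys(R_\infty)/\Fil^{r+1}$, which is automatically $H$-invariant, and subtracting reduces the claim to the graded piece, whose $H$-invariants are $(\res R)^H \cdot \xi^{[r]} = R_\infty \cdot \xi^{[r]}$ by Step~1.

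\textbf{The main obstacle} is the final assembly: $A_\crys(B)$ is $p$-adically complete but not Fil-adically complete, so upgrading the Fil-truncated equalities to a genuine filtered isomorphism requires care. This is best handled via the explicit description of $A_\crys(B)$ as the $p$-adic completion of the PD-envelope $A_\inf(B)\langle\xi\rangle$, in which elements admit convergent expansions $\sum_n a_n \xi^{[n]}$ with $a_n \in A_\inf(B)$ tending to $0$ $p$-adically; combined with the $H$-fixedness of each $\xi^{[n]}$, $H$-invariance of such an element forces each coefficient $a_n$ into $A_\inf(R_\infty)$ by Step~1, yielding the full filtered identity.
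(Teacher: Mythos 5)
Your first two steps are sound and essentially reproduce the paper's own argument: the identification $R_\infty=\res R^H$ (equivalently $R_\infty^\flat=(\res R^\flat)^H$) via almost purity together with the absence of non-zero almost-zero elements, and then the induction on $r$ showing $A_\crys(R_\infty)/\Fil^r\isoto(A_\crys(\res R)/\Fil^r)^H$ using the graded pieces $\res R\cdot\xi^{[r]}$. You also correctly isolate where the real difficulty lies, namely that $A_\crys$ is neither complete nor separated for the PD filtration, so these truncated isomorphisms do not formally assemble into the full statement.

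However, your proposed resolution of that last difficulty has a genuine gap. An expansion $x=\sum_n a_n\xi^{[n]}$ with $a_n\in A_\inf(B)$ tending to $0$ $p$-adically exists, but it is far from unique: already $\xi\cdot\xi^{[1]}=2\,\xi^{[2]}$, so the tuple $(0,\xi,-2,0,\dots)$ lies in the kernel of $(a_n)_n\mapsto\sum_na_n\xi^{[n]}$. Consequently $H$-invariance of $x$ does not force $H$-invariance of the coefficients $a_n$ — an invariant element could a priori admit only expansions with non-invariant coefficients — and the final inference collapses. What is needed is a canonical normal form, and the clean one exists only modulo $p$: $A_\crys(S)/p$ is free over $S^\flat/\xi^p$ on the basis $\{\xi^{[np]}\}_{n\ge0}$, not free over $A_\inf(S)$ on all the $\xi^{[n]}$. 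The paper exploits exactly this: it proves that the square comparing $A_\crys(S)$ with $\projlim_rA_\crys(S)/\Fil^r$ for $S=R_\infty$ and $S=\res R$ is cartesian, reducing modulo $p$ (using $p$-torsion-freeness and $p$-completeness of all four corners) to the injectivity of $R_\infty^\flat/\xi^p\to\res R^\flat/\xi^p$ under the identifications $A_\crys(S)/p\cong\bigoplus_nS^\flat/\xi^p$ and $\projlim_rA_\crys(S)/(\Fil^r,p)\cong\prod_nS^\flat/\xi^p$; taking $H$-invariants of the cartesian square, combined with your step on the truncations, then finishes. Your argument can be repaired by substituting this mod-$p$ normal form (or by proving the cartesian property directly) for the non-unique expansion, but as written the last step does not go through.
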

\begin{proof}
First we recall that $R_\infty\to\res R^H$ is an isomorphism. Indeed, the almost purity theorem implies that it is almost an isomorphism, and the ``almost'' can be dropped since $R_\infty$ is $p$-torsion-free and $R_\infty/p\to\res R/p$ is injective.

By induction on $r$, using the short exact sequence \[0\To R_\infty\xto{\xi^{[r]}} A_\sub{crys}(R_\infty)/\op{Fil}^rA_\sub{crys}(R_\infty)\To A_\sub{crys}(R_\infty)/\op{Fil}^{r-1}A_\sub{crys}(R_\infty)\To 0\] and the $H$-invariants of the analogous sequence for $\res R$, we then see that $A_\sub{crys}(R_\infty)/\op{Fil}^rA_\sub{crys}(R_\infty)\to (A_\sub{crys}(\res R)/\op{Fil}^rA_\sub{crys}(\res R))^H$ is an isomorphism for all $r\ge1$ (which implies that the map $A_\crys(R_\infty)\to A_\crys(\res R)$ is strictly compatible with filtrations).

We claim that the diagram
\[\xymatrix{
A_\sub{crys}(R_\infty)\ar[r]\ar[d] & A_\sub{crys}(\res R)\ar[d]\\
\projlim_rA_\sub{crys}(R_\infty)/\op{Fil}^rA_\sub{crys}(R_\infty)\ar[r] & \projlim_rA_\sub{crys}(\res R)/\op{Fil}^rA_\sub{crys}(\res R)
}\]
is cartesian, after which the proof is completed by taking $H$-invariants in the diagram. Since all terms $A_\sub{crys}$ and $A_\sub{crys}/\Fil^r$ appearing in the diagram are $p$-adically complete and $p$-torsion-free (whence the same is also true of $\projlim_rA_\crys(R_\infty)/\Fil^rA_\crys(R_\infty)$, and similarly for $\res R$ in place of $R_\infty$), the claim reduces to checking that its reduction mod $p$, namely 
% It is enough to prove the analogous statement for each fixed $r$ after removing $\lim_r$ from the diagram; then it is enough to check modulo $p$ since all terms are $p$-adically complete and separated, and $p$-torsion-free. That is, we have reduced to checking that 
\[\xymatrix{
A_\sub{crys}(R_\infty)/p\ar[r]\ar[d] & A_\sub{crys}(\res R)/p\ar[d]\\
\projlim_rA_\sub{crys}(R_\infty)/(\op{Fil}^rA_\sub{crys}(R_\infty),p)\ar[r] & \projlim_rA_\sub{crys}(\res R)/(\op{Fil}^rA_\sub{crys}(\res R),p)
}\]
is cartesian. But this easily follows from the injectivity of $R_\infty^\flat/\xi^p\to \res R^\flat/\xi^p$ and the following compatible descriptions, for $S=R_\infty$ and $\res R$: 
\[ \bigoplus_{n\ge0}S^\flat/\xi^p\cong A_\sub{crys}(S)/p,\qquad \prod_{n\ge0}S^\flat/\xi^p\cong \projlim_rA_\sub{crys}(S)/(\op{Fil}^rA_\sub{crys}(S),p),\qquad (a_n)_n\mapsto\sum_{n\ge0}a_n\xi^{[np]}\]
(The first of these descriptions is obtained by base changing along the flat map $\bb F_p[X]\to S^\flat$, $X\mapsto \xi$ the usual description of the pd-envelope of $\bb F_p[X]\to \bb F_p$ as a $\bb F_p[X]/X^p$-module; namely, it is the free module with basis $X^{[np]}$ for $n\ge0$. The second description then follows, since in the first description $\Fil^{rp}A_\sub{crys}(S)/p$ corresponds to $\bigoplus_{n\ge r}S^\flat/\xi^p$ for any $r\ge0$.)
\comment{
\[S^\flat/\xi^p[\delta_1,\delta_2,\dots]/(\delta_1^p,\delta_2^p,\dots)\isoto A_\sub{crys}(S)/p,\qquad \delta_i\mapsto \underbrace{\gamma_p\circ\cdots\circ\gamma_p}_{i\sub{ times}}(\xi)\text{ mod }p,\] which is a filtered isomorphism if we equip the left side with filtration \[\op{Fil}^r:=\begin{cases}(\xi^r,\delta_1,\delta_{2},\dots)& 0\le r<p \\ (\delta_{\left\lfloor\tfrac rp \right\rfloor},\delta_{\left\lfloor\tfrac rp\right\rfloor+1},\dots) & r\ge p\end{cases}.\]
}

%The proof is completed by taking $H$-invariants in the cartesian diagram and appealing to the second paragraph.
\end{proof}

Let $\roiA_{\sub{crys},\infty}(\cal R)$ be the $p$-adic completion of the divided power envelope of $A_\sub{crys}(R_\infty)\otimes_{A_\sub{crys}}\cal R\to R_\infty/p$ compatible with the divided power structure on $pA_\crys+\Fil^1 A_\crys$, equipped with the analogous divided power filtration to that of Definition \ref{definition_OAcrys}(v). To control the canonical map $\roiA_{\sub{crys},\infty}(\cal R)\to \roiA_{\sub{crys}}(\cal R)$ we will use the following explicit description of each side.

Let $A_\sub{crys}(\res R)\{\tau_1,\dots,\tau_d\}$ denote the $p$-adically completed PD-polynomial algebra in variables $\tau_1,\dots,\tau_d$ over $A_{\sub{crys}}(\res R)$, equipped with PD-structure by viewing it as the completed PD-envelope of $A_\sub{crys}(\res R)[\ul \tau]\to \res R$ compatible with the divided power structure on $pA_\crys+\Fil^1 A_\crys$ (so that its PD-ideal $\Fil^1A_\sub{crys}(\res R)\{\ul\tau\}$ is the $p$-adic completion of the ideal generated by $\Fil^1A_\sub{crys}(\res R)$ and the variables). Let $\tilde T_1,\dots,\tilde T_d\in\cal R$ be fixed arbitrary lifts of the variables $T_1,\dots,T_d\in R$ associated to the framing. Then there is a well-defined morphism of $A_\sub{crys}(\res R)$-PD-algebras \begin{equation}A_\sub{crys}(\res R)\{\tau_1,\dots,\tau_d\}\To \roiA_\sub{crys}(\cal R),\qquad \tau_i\mapsto U_i\otimes 1-1\otimes\tilde T_i\label{eqn_PD_map1}\end{equation} since $U_i$ and $\tilde T_i$ have the same image in $\res R$, namely $T_i$; recall here that $U_1,\dots,U_d\in A_\inf(R_\infty)$ are the Teichm\"uller lifts of $T_1^\flat,\dots,T_d^\flat$. In the same way there is a morphism of $A_\sub{crys}(R_\infty)$-PD-algebras \begin{equation}A_\sub{crys}(R_\infty)\{\tau_1,\dots,\tau_d\}\To \roiA_{\sub{crys},\infty}(\cal R),\qquad \tau_i\mapsto U_i\otimes 1-1\otimes\tilde T_i,\label{eqn_PD_map2}\end{equation} such that (\ref{eqn_PD_map1}) is the $p$-completed base change of (\ref{eqn_PD_map2}) along $A_\sub{crys}(R_\infty)\to A_\sub{crys}(\res R)$.

\begin{lemma}\label{lemma_explicit_description_of_OA}
The morphisms (\ref{eqn_PD_map1}) and (\ref{eqn_PD_map2}) are isomorphisms.
\end{lemma}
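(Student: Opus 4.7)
The plan is to prove that (\ref{eqn_PD_map1}) is an isomorphism by constructing an inverse via universal properties; the statement (\ref{eqn_PD_map2}) follows from the same argument with $R_\infty$ in place of $\res R$ throughout. By the universal property of the PD-envelope $\roiA_\crys(\cal R)$, giving an $A_\crys(\res R)$-PD-algebra homomorphism $\roiA_\crys(\cal R)\to A_\crys(\res R)\{\tau_1,\dots,\tau_d\}$ is equivalent to giving an $A_\crys$-algebra homomorphism $\cal R\to A_\crys(\res R)\{\tau_1,\dots,\tau_d\}$ whose induced map from $A_\crys(\res R)\otimes_{A_\crys}\cal R$ sends the kernel of the multiplication map to $\res R/p$ into the PD-ideal of the target.

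To construct the required homomorphism $\cal R\to A_\crys(\res R)\{\tau_1,\dots,\tau_d\}$, I first define an $A_\crys$-algebra homomorphism from the $p$-adic completion of $A_\crys[\tilde T_1^{\pm 1},\dots,\tilde T_d^{\pm 1}]$ by sending $\tilde T_i\mapsto U_i-\tau_i$. This is well-defined because each $U_i-\tau_i$ is a unit of the target: $U_i$ is a unit of $A_\crys(\res R)$ as the Teichm\"uller lift of the unit $T_i^\flat$, and $U_i^{-1}\tau_i$ lies in the PD-ideal, whose elements are topologically nilpotent modulo $p$ (via the PD-relations $\tau_i^p=p!\,\tau_i^{[p]}$ and the analogous nilpotence of elements of $\Fil^1A_\crys$). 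I then extend uniquely to $\cal R$ via the $p$-adic formal \'etaleness of the framing map from this Laurent polynomial algebra to $\cal R$, using that reduction of the candidate map modulo the PD-ideal sends $\tilde T_i\mapsto T_i$, in agreement with the canonical framing of $\cal R$ modulo $\Fil^1A_\crys$.

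Finally, to verify that both compositions between (\ref{eqn_PD_map1}) and its inverse are the identity, I track distinguished generators: the element $\tau_i$ of the PD-polynomial algebra maps to $U_i\otimes 1-1\otimes\tilde T_i$ and back to $U_i-(U_i-\tau_i)=\tau_i$, while $U_i\otimes 1$ and $1\otimes\tilde T_i$ in $\roiA_\crys(\cal R)$ are visibly fixed. Uniqueness in the universal properties of both the PD-envelope and the formal \'etale lift then forces these compositions to agree with the identity on the entire rings.

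The main obstacle I anticipate is the formal \'etaleness extension step: the target $A_\crys(\res R)\{\tau_1,\dots,\tau_d\}$ is $p$-adically complete, but the PD-ideal modulo which one wishes to invoke \'etaleness is only locally nilpotent rather than uniformly nilpotent modulo $p$. I address this by writing the target modulo $p$ as the inverse limit of its quotients by successive powers of the PD-ideal (each of which is a genuine infinitesimal thickening of $\res R/p$), lifting at each finite level, and then assembling into a map to the inverse limit.
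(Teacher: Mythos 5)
Your proof is correct and takes essentially the same route as the paper's: both construct the inverse by sending $\tilde T_i\mapsto U_i-\tau_i$ on the Laurent polynomial algebra, extending to $\cal R$ by formal \'etaleness of the framing, and using the universal properties (of the PD-envelope and of the \'etale lift) to check on generators that the two compositions are the identity. The only point to adjust is your final paragraph: the PD-polynomial algebra modulo $p$ is not complete for the adic topology of its PD-ideal $J$ (the elements $\tau_i^{[p^m]}$ all lie in $J$ but not in high powers of $J$), so it is not the inverse limit of its quotients by $J^k$; the correct fix is that $J$ modulo $p^n$ is a nil ideal (every $x\in J$ satisfies $x^{pn}=0$ by the divided-power relation $x^p=p!\,x^{[p]}$), hence the pair is henselian and the framing --- being the $p$-adic completion of an honestly \'etale map --- lifts uniquely with no completeness hypothesis on the target.
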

\begin{proof}
For concreteness of notation we discuss (\ref{eqn_PD_map1}), the same argument working for (\ref{eqn_PD_map2}). The framing map $A_\sub{crys}\pid{\ul {\tilde T}^{\pm1}}\to \cal R$ induced by the choice of lifts is $p$-adically formally \'etale, so may be used to define a morphism of $A_\sub{crys}$-PD-algebras $\cal R\to A_\sub{crys}(\res R)\{\ul \tau\}$ such that $\tilde T_i\mapsto U_i-\tau_i$ and such that the composition with $A_\sub{crys}(\res R)\{\ul \tau\}\to \res R$ (sending $\tau_i\mapsto 0$) is the canonical map $\cal R\to\res R$. Combined with the $A_\sub{crys}(\res R)$-algebra structure and PD-structure of $A_\sub{crys}(\res R)\{\ul \tau\}$, this yields an $A_\sub{crys}(\res R)$-PD-morphism in the other direction to (\ref{eqn_PD_map1}).

The composition $A_\crys(\res R)\{\ul\tau\}\to\roiA_\crys(\cal R)\to A_\crys(\res R)\{\ul\tau\}$ is the identity because it is directly seen to be so on the variables.

In the other direction the composition \begin{equation}A_\sub{crys}(\res R)\otimes_{A_\crys}\cal R\to A_\sub{crys}(\res R)\{\ul\tau\}\to \roiA_\sub{crys}(\cal R)\to \roiA_\sub{crys}(\cal R)/p^n\label{eqn_PD_map3}\end{equation} is the canonical map for any $n\ge1$ by the following argument: it is a map of $A_\crys(\res R)[\ul{\tilde T}^{\pm1}]$-algebras (since the composition $\cal R\to A_\sub{crys}(\res R)\{\ul\tau\}\to\roiA_\sub{crys}(\cal R)$ preserves the $\tilde T_i$ by construction) which agrees with the canonical map after the surjection $\roiA_\sub{crys}(\cal R)/p^n\to\res R/p$ with nilpotent kernel, and $A_\crys(\res R)[\ul{\tilde T}^{\pm1}]\to A_\crys(\res R)\otimes_{A_\crys}\cal R$ is \'etale modulo $p^n$. By the uniqueness of extensions of maps to PD-envelopes, the PD-homomorphism $\roiA_\sub{crys}(\cal R)\to \roiA_\sub{crys}(\cal R)/p^n$ induced by the composition (\ref{eqn_PD_map3}) is therefore also the canonical map, and letting $n\to\infty$ we deduce that the composition $\roiA_\sub{crys}(\cal R)\to A_\sub{crys}(\res R)\{\ul\tau\}\to \roiA_\sub{crys}(\cal R)$ is the identity.
\end{proof}

We equip $A_\sub{crys}(\res R)\{\ul \tau\}$ with a descending filtration by defining $\Fil^rA_\sub{crys}(\res R)\{\ul \tau\}$, for $r\ge0$, to be the $p$-adic completion of $\bigoplus_{n_1,\dots,n_d\ge0}(\Fil^{r-(n_1+\dots+n_d)}A_\crys(\res R))\tau^{[n_1]}\cdots\tau^{[n_d]}$. We equip $A_\sub{crys}(R_\infty)\{\ul \tau\}$ with the analogous filtration. The morphisms (\ref{eqn_PD_map1}) and (\ref{eqn_PD_map2}), as well as their inverses constructed in the previous proof, are compatible with these filtrations; it follows formally from Lemma \ref{lemma_explicit_description_of_OA} that (\ref{eqn_PD_map1}) and (\ref{eqn_PD_map2}) are therefore filtered isomorphisms. From this we deduce:

\begin{corollary}
The canonical map $\roiA_{\sub{crys},\infty}(\cal R)\to \roiA_{\sub{crys}}(\cal R)^H$ is a filtered isomorphism.
\end{corollary}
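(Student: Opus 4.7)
The plan is to combine the explicit descriptions of both sides from Lemma \ref{lemma_explicit_description_of_OA} with the $H$-invariance calculation from Lemma \ref{lemma_H-fixed_points}. Since the map $A_\sub{crys}(R_\infty)\{\ul\tau\}\to A_\sub{crys}(\res R)\{\ul\tau\}$ obtained from (\ref{eqn_PD_map2}) to (\ref{eqn_PD_map1}) by base change is (after identification) simply the $p$-completed base change of $A_\sub{crys}(R_\infty)\to A_\sub{crys}(\res R)$ along the PD-polynomial-ring functor in the variables $\tau_1,\dots,\tau_d$, the problem reduces to understanding the $H$-action on the right hand side in these coordinates.

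The first key observation I would verify is that under the isomorphism (\ref{eqn_PD_map1}) the variables $\tau_i = U_i\otimes 1 - 1\otimes \tilde T_i$ are $H$-fixed: indeed $\tilde T_i\in\cal R$ is trivially $H$-fixed, and $U_i=[T_i^\flat]\in A_\inf(R_\infty)\subseteq A_\inf(\res R)^H$ is $H$-fixed because $H$ acts $R_\infty$-linearly on $\res R$ and hence on $A_\inf(\res R)$. Consequently the $H$-action on $A_\sub{crys}(\res R)\{\ul\tau\}$ is the one induced by the $H$-action on the coefficient ring $A_\sub{crys}(\res R)$.

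With this in hand, I would argue as follows. Each truncation $A_\sub{crys}(\res R)\{\ul\tau\}/p^n$ is a free $A_\sub{crys}(\res R)/p^n$-module with $H$-fixed basis given by the divided-power monomials $\ul\tau^{[\ul k]}$, so taking $H$-invariants commutes with that decomposition and, by Lemma \ref{lemma_H-fixed_points} (applied modulo $p^n$), produces $A_\sub{crys}(R_\infty)/p^n\{\ul\tau\}^{\le}$. Passing to the inverse limit over $n$ (which is harmless since group cohomology commutes with the limit over surjective transition maps) gives the desired identification \[A_\sub{crys}(R_\infty)\{\ul\tau\}\isoto A_\sub{crys}(\res R)\{\ul\tau\}^H.\]

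The remaining point, and the one I expect to be the most delicate, is the compatibility with the filtrations: the filtration on $\roiA_\sub{crys}(\cal R)$ is the $r$th divided power of the PD-ideal, which under (\ref{eqn_PD_map1}) becomes the $p$-adically closed ideal generated by $\Fil^sA_\sub{crys}(\res R)$ and $\ul\tau^{[\ul k]}$ with $s+|\ul k|\ge r$, and similarly on the $R_\infty$-side. Since Lemma \ref{lemma_H-fixed_points} gives \emph{strict} compatibility of filtrations on the coefficient rings (more precisely the identifications modulo each $\Fil^r$ established in its proof), and each divided-power monomial contributes a fixed filtration degree, I would argue that each piece $\Fil^r A_\sub{crys}(\res R)\{\ul\tau\}$ decomposes as a sum of $H$-stable summands $\Fil^sA_\sub{crys}(\res R)\cdot\ul\tau^{[\ul k]}$ and conclude by taking $H$-invariants in each of these summands and reassembling. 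The only subtlety is the interaction with $p$-adic completion of filtration pieces, which is controlled because all the relevant graded pieces are $p$-torsion free (c.f.\ the footnote before Lemma \ref{lemma_H-fixed_points} and its proof).
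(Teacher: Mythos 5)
Your proposal is correct and follows essentially the same route as the paper: the paper's proof likewise consists of observing that $H$ fixes the variables $\tau_1,\dots,\tau_d$ under the isomorphisms (\ref{eqn_PD_map1}) and (\ref{eqn_PD_map2}) and then reducing to Lemma \ref{lemma_H-fixed_points}. You have simply spelled out the details of that reduction (coefficientwise invariants, passage to the limit, and the filtration bookkeeping), which the paper leaves implicit.
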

\begin{proof}
Lemma \ref{lemma_H-fixed_points} implies that the inclusion $A_\sub{crys}(R_\infty)\subseteq A_\sub{crys}(\res R)$ is strict with respect to the filtrations, so the same is true for $A_\sub{crys}(R_\infty)\{\ul\tau\}\subseteq A_\sub{crys}(\res R)\{\ul\tau\}$, or equivalently for $\roiA_{\sub{crys},\infty}(\cal R)\subseteq \roiA_{\sub{crys}}(\cal R)$ thanks to the filtered version of Lemma \ref{lemma_H-fixed_points}. Therefore it is enough to show that the canonical map is an isomorphism ignoring filtrations.

But under the isomorphisms (\ref{eqn_PD_map1}) and (\ref{eqn_PD_map2}) the group $H$ acts trivially on the variables $\tau_1,\dots,\tau_d$, so the desired isomorphism reduces to Lemma \ref{lemma_H-fixed_points}.
\end{proof}

In light of the previous corollary, Proposition \ref{proposition_Delta_fixed_points_of_OA} reduces to the following:

\begin{proposition}
The structure map $p_2:\cal R\to \roiA_{\sub{crys},\infty}(\cal R)^\Gamma$ is a filtered isomorphism.
\end{proposition}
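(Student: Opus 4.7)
The plan is to begin by applying Lemma~\ref{lemma_explicit_description_of_OA} to identify $\roiA_{\crys,\infty}(\cal R)$ with the $p$-adically completed divided-power polynomial algebra $A_\crys(R_\infty)\{\tau_1,\ldots,\tau_d\}$, where $\tau_i := U_i \otimes 1 - 1 \otimes \tilde T_i$; under this identification the structure map $p_2: \cal R \to \roiA_{\crys,\infty}(\cal R)$ sends $\tilde T_i \mapsto U_i - \tau_i$, and the $\Gamma$-action (trivial on $\cal R$, and $\gamma_i(U_j) = [\ep]^{\delta_{ij}}U_j$ on $A_\crys(R_\infty)$) gives $\gamma_i(\tau_j) = \tau_j + \delta_{ij}\mu U_i$. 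The forward inclusion $p_2(\cal R) \subseteq \roiA_{\crys,\infty}(\cal R)^\Gamma$ is then immediate from the computation $\gamma_i(U_j - \tau_j) = [\ep]^{\delta_{ij}}U_j - \tau_j - \delta_{ij}\mu U_j = U_j - \tau_j$, and compatibility with the filtrations (the Hodge filtration of Definition~\ref{definition_OAcrys}(v) on the target, and the $p$-adic closure of $\Fil^\bullet A_\crys \cdot \cal R$ on the source) is a direct consequence of the constructions.

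For the reverse inclusion, the key tool is an explicit Taylor-type formula $p_2(c) = \sum_{\ul m \in \bb N^d}(-1)^{|\ul m|}\alpha(\partial^{\ul m}c)\,\tau^{[\ul m]}$ valid for all $c \in \cal R$, where $\alpha: \cal R \to A_\crys(R_\infty)$ is the unique $A_\crys$-algebra map characterised by $\tilde T_i \mapsto U_i$ (obtained by applying formal \'etaleness of $\cal R$ over $A_\crys\pid{\ul{\tilde T}^{\pm 1}}$ to the map $A_\crys\pid{\ul{\tilde T}^{\pm 1}} \to A_\crys(R_\infty)$ sending $\tilde T_i \mapsto U_i$), and $\partial^{\ul m}$ is the iterated continuous $A_\crys$-derivation dual to the basis $d\tilde T_i$ of $\hat\Omega^1_{\cal R/A_\crys}$. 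I would verify this formula by direct computation on monomials $\tilde T_i^n$ (and their inverses) and extend by continuity and the universal property of $\cal R$. Given any $f = \sum a_{\ul m}\tau^{[\ul m]}$ with $a_{\ul m} \in A_\crys(R_\infty)$ tending to zero $p$-adically, the $\Gamma$-invariance $\gamma_i(f) = f$ translates via the expansion $\gamma_i(\tau_i^{[m_i]}) = \sum_{l=0}^{m_i}\tau_i^{[l]}(\mu U_i)^{[m_i - l]}$ into the recursive system $a_{\ul l} = \sum_{k \geq 0}\gamma_i(a_{\ul l + k\ul 1_i})(\mu U_i)^{[k]}$ for all $\ul l \in \bb N^d$ and $i = 1,\ldots, d$.

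Solving this system together with the identification $A_\crys(R_\infty)^\Gamma = A_\crys$ (which follows from $R_\infty^{\flat,\Gamma} = \roi^\flat$ and extends through the $p$-adic completion of the PD-envelope construction) by descending induction on PD-degree — where the $p$-adic vanishing $a_{\ul m} \to 0$ as $|\ul m| \to \infty$ provides the base case — forces $a_{\ul 0}$ to lie in the image $\alpha(\cal R)$ of $\alpha$, say $a_{\ul 0} = \alpha(c)$; then the remaining $a_{\ul m}$ are uniquely determined as the Taylor coefficients $(-1)^{|\ul m|}\alpha(\partial^{\ul m}c)$, yielding $f = p_2(c)$ and showing that the filtration matches up term-by-term. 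The main obstacle will be the precise inductive argument relating the recursive system above to the characterisation of $\alpha(\cal R) \subseteq A_\crys(R_\infty)$, with the subtlety lying in the simultaneous interaction between the divided powers $(\mu U_i)^{[k]}$ coming from the PD-structure on $A_\crys$ and those of the $\tau_i$'s from the PD-envelope; a useful technical reduction is to first treat the Laurent polynomial case $\cal R = A_\crys\pid{\ul{\tilde T}^{\pm 1}}$, where $\alpha$ is given by an explicit coordinate change, and then bootstrap to general $\cal R$ by formal \'etaleness and functoriality of the PD-envelope in the algebra $\cal R$.
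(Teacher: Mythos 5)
Your reduction to the explicit PD-polynomial model $A_\crys(R_\infty)\{\tau_1,\dots,\tau_d\}\cong\roiA_{\sub{crys},\infty}(\cal R)$ of Lemma \ref{lemma_explicit_description_of_OA}, the computation $\gamma_i(\tau_j)=\tau_j+\delta_{ij}\mu U_j$, the forward inclusion, and the recursion $a_{\ul l}=\sum_{k\ge0}\gamma_i(a_{\ul l+k\ul 1_i})\mu^{[k]}U_i^k$ extracted from $\gamma_i(f)=f$ are all correct. The gap is the final step. At $\ul l=\ul 0$ the recursion only gives $a_{\ul 0}-\gamma_i(a_{\ul 0})=\sum_{k\ge1}\gamma_i(a_{k\ul 1_i})\mu^{[k]}U_i^k$, so $a_{\ul 0}$ is not forced to be $\Gamma$-invariant and the identity $A_\crys(R_\infty)^\Gamma=A_\crys$ (itself unproved, and of unclear relevance) cannot be applied to it; indeed for $f=p_2(\tilde T_i)$ one has $a_{\ul 0}=U_i$, which is neither $\Gamma$-invariant nor in $A_\crys$. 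What you actually need is that the constant terms of convergent invariant solutions form exactly $\alpha(\cal R)$, and no argument is given for this: the recursion determines $a_{\ul m}$ from $a_{\ul 0}$ only modulo successively deeper steps of the filtration, and the requirement $a_{\ul m}\to 0$ becomes a convergence condition on iterated divided derivatives of $a_{\ul 0}$ whose solution set is precisely the statement being proved. You flag this yourself as ``the main obstacle'', and as written the proposal does not close it.

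The paper avoids the issue by choosing the opposite coefficient ring: via $p_2$ one has $\roiA_{\sub{crys}}^\bx(\cal R)\cong\cal R\{\tau_1,\dots,\tau_d\}$, hence $\roiA_{\sub{crys},\infty}(\cal R)\cong\hat\bigoplus_{\ul k}\cal R\{\tau_1,\dots,\tau_d\}\,U_1^{k_1}\cdots U_d^{k_d}$ with $\Gamma$ acting $\cal R$-linearly and $\gamma_i(\tau_i)=[\ep]\tau_i+\mu\tilde T_i$. Invariance then forces every summand with $\ul k\neq\ul 0$ to vanish and every coefficient $a_j\in\cal R$ with $j>0$ to vanish, by induction on $s$ showing $a_j\in\Fil^s\cal R$ and using separatedness of the filtration on $\cal R$; the surviving constant term lies tautologically in $\cal R=p_2(\cal R)$, so no characterisation of a subring of $A_\crys(R_\infty)$ is ever needed. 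If you want to repair the write-up, switch to these coordinates (equivalently, use your Taylor formula to re-expand an invariant $f$ with coefficients in $p_2(\cal R)$ before analysing the invariance condition).
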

\begin{proof}
Having already descended from $\roiA_\sub{crys}(\cal R)$ to $\roiA_{\sub{crys},\infty}(\cal R)$, we now descend further to a framed version: namely, let $\roiA_\sub{crys}^\bx(\cal R)$ be the $p$-adic completion of the PD-envelope of $A_\sub{crys}^\bx(R)\otimes_{A_\sub{crys}}\cal R\to R/p$ compatible with the divided power structure on $pA_\crys+\Fil^1 A_\crys$. Just as in Lemma \ref{lemma_explicit_description_of_OA}, there is an isomorphism $A_\sub{crys}^\bx(R)\{\tau_1,\dots,\tau_d\}\isoto \roiA_\sub{crys}^\bx(\cal R)$, $\tau_i\mapsto U_i\otimes 1-1\otimes\tilde T_i$, though we will not use it: instead, swapping the roles of $A_\sub{crys}^\bx(R)$ and $\cal R$, the same argument shows that there is an isomorphism of $\cal R$-algebras $\cal R\{\tau_1,\dots,\tau_d\}\isoto \roiA_\sub{crys}^\bx(\cal R)$, $\tau_i\mapsto U_i\otimes 1-1\otimes\tilde T_i$, which we will use.

Recall that $\roiA_{\sub{crys},\infty}(\cal R)$ and $\roiA_\sub{crys}^\bx(\cal R)$ are $p$-adically completed pd-envelopes of $A_\sub{crys}(R_\infty)\otimes_{A_\sub{crys}}\cal R\to R_\infty/p$ and $A_\sub{crys}^\bx(R)\otimes_{A_\sub{crys}}\cal R\to R/p$ respectively. The first of these maps is given by the $p$-adic completion of the base change of the second along $\bb Z_p[\ul U^{\pm1}]\to \bb Z_p[\ul U^{\pm1/p^\infty}]$, where we view $A_\crys^\bx(R)$ as an $\bb Z_p[\ul U^{\pm1}]$-algebra via the framing; this base change assertion reduces to the analogous assertion for $A_\inf$ in place of $A_\crys$, which was explained in \S\ref{ss_framed}, thanks to the identifications $A_\inf(R_\infty)/(p^n,\xi^{pn})\otimes_{A_\inf/(p^n,\xi^{pn})}A_\crys/p^n\isoto A_\crys(R_\infty)/p^n$ and $A_\inf^\bx(R)/(p^n,\xi^{pn})\otimes_{A_\inf/(p^n,\xi^{pn})}A_\crys/p^n\isoto A_\crys^\bx(R)/p^n$ for $n\ge1$. But since this base change is flat, we deduce that $\roiA_{\sub{crys},\infty}(\cal R)=\roiA_{\sub{crys}}^\bx(\cal R)\hat\otimes_{A_\sub{inf}[\ul U^{\pm1}]} A_\sub{inf}[\ul U^{\pm1/p^\infty}]$ or, in other words, that \[\roiA_{\sub{crys},\infty}(\cal R)=\hat\bigoplus_{k_1,\dots,k_d\in\bb Z[\tfrac1p]\cap[0,1)}\roiA_\sub{crys}^\bx(\cal R)U_1^{k_1}\cdots U_d^{k_d}.\]

Combing the previous decomposition with the final polynomial description of the first paragraph, we have \[\roiA_{\sub{crys},\infty}(\cal R)=\hat\bigoplus_{k_1,\dots,k_d\in\bb Z[\tfrac1p]\cap[0,1)}\cal R\{\tau_1,\dots,\tau_d\}U_1^{k_1}\cdots U_d^{k_d}\tag{\dag}\] The $\cal R$-linear $\Gamma$-action respects this decomposition, given on the $p$-power roots of the variables $U_i$ in the usual way by rescaling by the corresponding root of $[\ep]$, and on the $\tau_i$ by the PD-automorphisms characterised by $\gamma_i(\tau_j)%=\tau_i+\mu U_i
=[\ep]\tau_i+\mu\tilde T_i$ if $i=j$ and $=\tau_j$ if $i\neq j$. To complete the proof we may therefore fix $i=1,\dots,d$ and $k_i\in\bb Z[\tfrac1p]\cap[0,1)$, and we must prove that \[(\cal R\{\tau_i\}U_i^{k_i})^{\gamma_i=\op{id}}=\begin{cases}\cal R & k_i=0 \\ 0 & k_i\neq 0.\end{cases}.\]

So let $a=\sum_{j\ge0}a_j\tau_i^{[j]}U_i^{k_i}$ belong to the left side, where $a_j\in \cal R$. Applying $\gamma_i$, using $\gamma_i(\tau_i^{[j]})=([\ep]\tau_i+\mu\tilde T_i)^{[j]}$ and the usual formula for divided powers of sums and products, and equating $\tau^{[j]}$-coefficients in the expression $a=\gamma_i(a)$, one sees that $a_j=\sum_{\ell\ge j}a_\ell[\ep^{k_i}][\ep^j]\mu^{[\ell-j]}\tilde T^{\ell-j}_i$ for all $j\ge0$, or in other words that \begin{equation}(1-[\ep^{j+k_i}])a_j=\sum_{\ell>j}a_\ell[\ep^{j+k_i}]\mu^{[\ell-j]}\tilde T_i^{\ell-j}.\label{equation_t_i}\end{equation}

We first treat the case $k_i\neq 0$, when we wish to show that $a_j=0$ for all $j\ge0$; since the filtration on $\cal R$ is separated (as explained before (\ref{equation_crystal_micprime})), we may show $a_j\in\bigcap_{s\ge0}\Fil^s\cal R$. To do this we proceed by induction on $s\ge 0$, proving that $a_j\in\Fil^s\cal R$ for all $j\ge 0$. The assertion being trivial when $s=0$, we suppose it is true for some $s$ and deduce at once from (\ref{equation_t_i}) that $(1-[\ep^{j+k_i}])a_j\in\Fil^{s+1}\cal R$ for all $j\ge0$. But $1-[\ep^{j+k_i}]$ has non-zero image in $A_\crys/\Fil^1A_\crys=\roi$ since $k_i\neq 0$, and the graded steps of the filtration are $p$-torsion-free (again explained just before (\ref{equation_crystal_micprime})), so in fact $a_j\in\Fil^{s+1}\cal R$ as desired.

We next consider the case $k_i=0$, where we must prove that $a_j=0$ for all $j>0$. We again proceed by induction on $s$, showing that $a_j\in\Fil^s\cal R$ for all $j>0$; again this is trivial when $s=0$, so we suppose it is true for some $s>0$. From (\ref{equation_t_i}) we have \[a_{j+1}=a_j[\ep^{-j}]\tfrac{1-[\ep]^j}\mu\tilde T_i^{-1}-\sum_{\ell>j+1}a_\ell\tfrac{\mu^{[\ell-j]}}{\mu}\tilde T_i^{\ell-j-1}\] for all $j\ge0$; recall that $\tfrac{\mu^{[\ell-j]}}\mu=\tfrac{\mu^{\ell-j-1}}{(\ell-j)!}\in\Fil^1\cal R$ for $\ell>j+1$ and tends to $0$ $p$-adically as $\ell\to\infty$. If $j=0$ then the first term on the right side vanishes and so it follows that $a_1\in\Fil^{s+1}\cal R$; using $\tfrac{1-[\ep]^j}\mu\in\cal R$ for $j>0$ we then see by induction on $j$ that $a_j\in\Fil^{s+1}\cal R$ for all $j>0$, as desired.
\end{proof}

\subsection{Dieudonn\'e theory, II: essential surjectivity}\label{ss_DieudonneII}
In this subsection we prove that the functor $\Phi_R^\sub{BKF}$ constructed in \S\ref{sss_Dieudonne}
is essentially surjective,
thereby providing a proof of Theorem \ref{theorem_p_div} in the case $p\neq 2$ by using 
Lau's Dieudonn\'e theory over complete intersection semiperfect $\bb F_p$-algebras \cite{Lau2018}. We adopt the notation introduced there and assume throughout this subsection that $p\neq 2$.
Since the proof is based on the contravariant crystalline
Dieudonn\'e functor, it is convenient to work with duals of $\Phi_R^\sub{BKF}$ and $\Phi^\sub{inf}_A$, i.e, with the functors
\[\Phi_R^{\sub{BKF}\vee}(-)=\Hom(\Phi_R^{\sub{BKF}}(-),A_\inf(R_\infty))\colon \op{BT}(R)\to\Rep^\mu_{\Gamma}(A_\inf(R_\infty),\phi,[0,1])^\sub{op}\] and\[ \Phi^{\sub{inf}\vee}_A(-)=\Hom(\Phi_A^\inf(-),A_\inf(A))\colon \op{BT}(A)\to \op{BKF}
(A,\varphi,[0,1])^\sub{op}.\]% defined by $\Phi_R^{\sub{BKF}\vee}(G)=\Hom(\Phi_R^{\sub{BKF}}(G),A_\inf(R_\infty))$ and $\Phi_A^{\sub{inf}\vee}(H)=\Hom(\Phi_A^\inf(H),A_\inf(A))$.
Here $\Rep^\mu_{\Gamma}(A_\inf(R_\infty),\phi,[0,1])$ denotes the full subcategory
of $\Rep^\mu_\Gamma(A_\inf(R_\infty),\phi)$ consisting of objects $(M,\phi_M)$ satisfying
$\tilde\xi M\subset \varphi_M(M)\subset M$; similarly, for any perfectoid ring $A$, we write $\op{BKF}(A,\varphi,[0,1])$ for the full subcategory of $\op{BKF}(A,\varphi)$ satisfying the analogous height condition $\tilde\xi M\subset \varphi_M(M)\subset M$ (c.f., Definition \ref{definition_BKF_of_perfectoid} for the covariant case).

Let $M\in\Rep^\mu_{\Gamma}(A_\inf(R_\infty),\phi,[0,1])$; forgetting the $\Gamma$-action on $M$, Theorem \ref{theorem_p_div_perfectoid} shows that there exists a unique $p$-divisible group $G_\infty$ over $R_\infty$ such that $\Phi^{\sub{inf}\vee}_{R_\infty}(G_{\infty})=M$. Our first step is to show that $G_\infty$ descends from $R_\infty$ to $R$; that is, we must construct a descent isomorphism $p_2^*G_\infty\cong p_1^*G_\infty$, where $R_\infty(1)$ denotes the $p$-adic completion of $R_\infty\otimes_RR_\infty$, and $p_1$ and $p_2$ denote the
first and second projections $\Spec(R_\infty(1))\to\Spec(R_\infty)$, respectively. There will be three key ingredients to this construction: Grothendieck--Messing's deformation theory \cite{Messing1972} to partly reduce the problem to $R_\infty(1)/p$; Lau's Diedonn\'e theory over complete intersection semiperfect $\bb F_p$-algebras such as $R_\infty(1)/p$; and the existence of a crystal associated to the relative Breuil--Kisin--Fargues module $M$, as in Theorem \ref{theorem_Admissibility}.
% \ref{theorem_q-A_crys}.
Once a descent of $G_\infty$ has been constructed, the second step will be to verify that it is compatible with the $\Gamma$-action on $M$. We now begin the proof in earnest, starting with the first step.

For $\nu\ge0$, let $R_\infty(\nu)$ denote the $p$-adic completion of the tensor product over $R$
of $\nu+1$ copies of $R_\infty$; let $p_1,p_2:\Spec(R_\infty(1))\to \Spec(R_{\infty})$ denote the
projections to the first and second components. In order to descend $G_\infty$ to $R$, we must construct an isomorphism of $p$-divisible groups $p_2^*G_\infty\cong p_1^*G_\infty$ which satisfies the cocycle condition after pulling back further to $R_\infty(2)$.

We first interpret the problem in terms of windows over the crystalline period ring
of $R_\infty(\nu)$ by using Lau's Dieudonn\'e theory over complete intersection semiperfect algebras
and Grothendieck--Messing deformation theory.
The algebra $R_\infty(\nu)/p$ is a semiperfect $\bb F_p$-algebra which can be written as the quotient of a perfect $\bb F_p$-algebra by a regular sequence, i.e., it is a complete intersection in the sense of \cite[Def.~4.1]{Lau2018} (in particular it is a quasiregular semiperfect ring in the sense of \cite{BhattMorrowScholze2}), as we will see in the proof of Lemma
\ref{lem:SatFilAcrysRinftyProduct} below.
Let $R_\infty(\nu)^\flat:=\projlim_\phi R_\infty(\nu)/p$ be the inverse limit perfection of $R_\infty(\nu)/p$, which is a perfect $\bb F_p$-algebra surjecting onto $R_\infty(\nu)/p$ (with kernel generated by a regular sequence \cite[Lem.~4.2]{Lau2018}), and let $A_\crys(R_\infty(\nu))$ denote the $p$-adic completion of the pd-envelope of $W(R_\infty(\nu)^\flat)\to R_\infty(\nu)$ compatibly
with the pd-structure on $p\bb Z_p$; we recall that $A_\crys(R_\infty(\nu))$ is $p$-torsion-free \cite[Lem.~2.6.1]{CaisLau2017} \cite[Thm.~8.14(i)]{BhattMorrowScholze2}. Write $\Fil A_\crys(R_{\infty}(\nu)):=\ker(A_\crys(R_{\infty}(\nu))\to R_\infty(\nu))$ and $\Fil_pA_{\crys}(R_{\infty}(\nu)):=\ker(A_\crys(R_{\infty}(\nu))\to R_\infty(\nu)/p)=\Fil A_\crys(R_{\infty}(\nu))+pA_\crys(R_{\infty}(\nu)$.
Let $\phi$ denote the Frobenius endomorphism of $A_\crys(R_{\infty}(\nu))$
induced by the usual Frobenius of $W(R_{\infty}(\nu)^{\flat})$; note that
$\phi(\Fil A_\crys( R_{\infty}(\nu)))\subset pA_\crys(R_{\infty}(\nu))$.
Setting $\phi_1=p^{-1}\phi\vert_{\Fil_p A_\crys( R_{\infty}(\nu))}$, we have therefore constructed a ``pd-frame''
\[\ul A_\crys( R_{\infty}(\nu)/p)=(A_\crys( R_{\infty}(\nu)),\Fil_p A_\crys(R_{\infty}(\nu)),  R_{\infty}(\nu)/p,\phi,\phi_1)\]
as in \cite[\S5.3]{Lau2018} (we refer to \cite[\S2]{Lau2018} for the precise definition of a pd-frame). Lau's Dieudonn\'e theory over semiperfect rings \cite[Lem.~4.13, Corol.~6.5]{Lau2018} states that evaluation of contravariant Dieudonn\'e crystals on the pd-thickening $A_\crys(R_{\infty}(\nu))\to R_\infty(\nu)/p$ induces an equivalence of categories
\begin{equation}\label{eq:BTWinEquivSemiPerf}
\Phi_{R_{\infty}(\nu)/p}^{\crys}: 
\op{BT}(R_{\infty}(\nu)/p)\xrightarrow{\sim}
\op{Win}(\ul A_\crys(R_{\infty}(\nu)/p))^\sub{op},
\end{equation}
where $\op{Win}(-)$ denotes the category of windows over the frame $\ul A_\crys( R_{\infty}(1)/p)$ \cite[\S2]{Lau2018}.

By replacing $\Fil_p$ by $\Fil$ in the definition of $\ul A_\crys(R_\infty(\nu))$ above,
we obtain another pd-frame
$$\ul A_\crys(R_\infty(\nu))=(A_\crys( R_{\infty}(\nu)),\Fil A_\crys(R_{\infty}(\nu)),  R_{\infty}(\nu),\phi,
\phi_1\vert_{\Fil}).$$
By the equivalence \eqref{eq:BTWinEquivSemiPerf}  and
Grothendieck-Messing deformation theory \cite[V.~Thm.~(1.6)]{Messing1972}
for $R_{\infty}(\nu)\to R_\infty(\nu)/p$,
the same argument as the proof of \cite[Prop.~9.7]{Lau2018} shows that
evaluation of  contravariant Dieudonn\'e crystals on the pd-thickening
$A_\crys(R_\infty(\nu))\to R_\infty(\nu)$ gives the equivalence of
categories
\begin{equation}\label{eq:BTWinEquivSemiPerf2}
\Phi_{R_{\infty}(\nu)}^{\crys}: \op{BT}(R_{\infty}(\nu))\xrightarrow{\sim}
\op{Win}(\ul A_\crys(R_{\infty}(\nu)))^\sub{op}.
\end{equation}

The functor
$\Phi_{R_{\infty}}^{\inf\vee}$ is defined as a composition of equivalences
\begin{equation}\label{eq:pDivGpBKEquiv}
\op{BT}(R_{\infty})
\xrightarrow[\Phi_{R_{\infty}}^{\crys}]{\sim}\op{Win}(\ul A_\crys(R_{\infty}))^\sub{op}
\xleftarrow{\sim}\op{Win}(\ul{A}_\inf(R_{\infty}))^\sub{op}
\xrightarrow{\sim}\op{BKF}(R_{\infty},\varphi,[0,1])^\sub{op}
\end{equation}
\cite[Props.~9.3 \& 9.7, (9.6)]{Lau2018} where the frame $\ul{A}_\inf(R_{\infty})$ is
defined by
\[\ul{A}_\inf(R_{\infty})=(A_\inf(R_{\infty}),\xi A_\inf(R_{\infty}),R_{\infty},\phi,\phi_1^{\inf}),
\quad\phi_1^{\inf}=\tilde\xi^{-1}\phi\vert_{\xi A_\inf(R_{\infty})}.\]
Note that $\op{BKF}(R_{\infty},\varphi,[0,1])$ is equivalent to the category defined in \cite[Def.~9.4]{Lau2018} by restricting along $\phi$; under this normalisation, the last functor in \eqref{eq:pDivGpBKEquiv},
which is an isomorphism, maps $(P,\Fil P,\phi_P,\phi_{P,1})$ to $(P, \phi_P)$; its
inverse is given by $\Fil P=\varphi_P^{-1}(\tilde\xi P)$ and
$\phi_{P,1}=\tilde\xi^{-1}\phi\vert_{\Fil P}$.

Let $M_{\crys}$ denote the window over the frame $\ul A_\crys(R_\infty)$ corresponding 
to $(M,\varphi_M)\in \op{BKF}(R_{\infty},\varphi,[0,1])$ (with $\Gamma$-action forgotten)
via the middle and right equivalences of categories in \eqref{eq:pDivGpBKEquiv}.
Let $p_1, p_2\colon \Spec(A_\crys(R_\infty(1)))\to \Spec(A_\crys(R_\infty))$ denote
the first and second projections. Then, in conclusion,
the descent goal is equivalent to constructing an isomorphism of 
windows $p_2^*M_\crys\xrightarrow{\cong}p_1^*M_\crys$ over $\ul A_\crys(R_\infty(1))$
satisfying the cocycle condition after pullback to
the frame $\ul A_\crys(R_\infty(2))$.

We construct such an isomorphism by realizing $M_\crys$ as the evaluation
of an object $\cal F=((\cal F_n,\Fil^{\bullet}\cal F_n)_{n\geq 1},\varphi_{\cal F})$ of the category
$\CR(R/A_\crys,\op{SatFil},\varphi)$ associated to the object
$M\in \Rep_{\Gamma}(A_\inf(R_\infty),\varphi,[0,1])$ we fixed at the beginning.
By Remark \ref{remark_F_structures_Acrys}, Remark \ref{remark_associated1_filtered}, and
\eqref{diagram_Ainfcrys_frame_nonframe_Rep_filtered} with $\ol R$ replaced by $R_{\infty}$
for the latter two (which is easily done), we obtain the following commutative diagram.
\begin{equation}\label{eq:CrysAinfRepDiagram}
\xymatrix@C=50pt{
\CR(R/A_\crys,\op{SatFil},\phi)\ar[r]_{\sim}^{\ev_{A_\crys^\bx(R)}}
\ar[dr]_{\ev_{A_\crys(R_\infty)}}
&
\Rep_{\Gamma}^\mu(A_\crys^\bx(R),\op{SatFil},\varphi)
\ar[d]^{\sub{Lemma }\ref{lemma_saturation_of_product}}%^{-\otimes_{A_\crys^\bx(R)}A_\crys(R_\infty)}
&
\Rep_{\Gamma}^\mu(A_\inf^\bx(R),\varphi)
\ar[l]_(.45){\sub{Lemma } \ref{lemma_phi_to_filtr_framed}}%_{-\otimes_{A_\inf^\bx(R)}A_\crys^\bx(R)}
\ar[d]^{\rotatebox{-90}{$\simeq$}}
\\
&\Rep_\Gamma^\mu(A_\crys(R_\infty),\op{SatFil},\varphi)&
\Rep_\Gamma^\mu(A_\inf(R_\infty),\varphi)
\ar[l]_(.45){\sub{Lemma }\ref{lemma_phi_to_filtr}}%_{-\otimes_{A_\inf(R_\infty)}A_\crys(R_\infty)}
}
\end{equation}
We define $\cal F$ to be the object corresponding to $M$ via the
right vertical equivalence and the upper two horizontal functors
 in the
above diagram.

Let us compare  the evaluation $\ev_{A_\crys(R_\infty)}(\cal F)$ of $\cal F$ with $M_\crys$, by showing that there exists a natural isomorphism 
\begin{equation}\label{eq:CrysEvComparison}
\ev_{A_\crys(R_\infty)}(\cal F)\cong M_\crys\quad\text{in }\Rep_{\Gamma}^\mu
(A_\crys(R_\infty),\op{SatFil},\varphi),
\end{equation}
where we will explain after Lemma \ref{lem:WinFilSatFil} how to equip the window $M_\crys$ with a $\Gamma$-action and view it as an object of $\Rep_{\Gamma}^\mu
(A_\crys(R_\infty),\op{SatFil},\varphi)$. Thanks to the commutative diagram \eqref{eq:CrysAinfRepDiagram}, the problem is reduced
to comparing the bottom horizontal functor in \eqref{eq:CrysAinfRepDiagram} with
the middle and the right equivalences in \eqref{eq:pDivGpBKEquiv}. We will need the following construction (the filtration constructed on $N$ in the next lemma will be called {\em the saturated filtration associated to} the submodule $\Fil N$):

\begin{lemma}\label{lem:WinFilSatFil}
Let $A$,  $f$, and $A^+$ be as in Definition \ref{definition_filtration_2},
and assume that the filtration on $A$
is saturated. Let $N$ be a finite projective $A$-module equipped with an
$A$-submodule $\Fil N$ such that there exists a decomposition $N=N'\oplus N''$
as $A$-modules
satisfying $\Fil N=N'\oplus \Fil^1 A\, N''$, and define 
a $\bb Z$-indexed multiplicative filtration on $N$ by 
$\Fil^rN=\Fil^rA\, N+\Fil^{r-1}A  \Fil N$ for $r\in \bb Z$. Then:
\begin{enumerate}
\item $\Fil N=\Fil^1N$ and $\Fil^rN=\Fil^{r-1}A\, N'\oplus \Fil^r A\, N''$ for all $r\in \bb Z$;
\item $N^+:=\bigcup_{r\in \bb Z} f^{-r}\Fil^r N$ is equal to $f^{-1}(N'\otimes_AA^+)\oplus
(N''\otimes_AA^+)$;
\item the filtration $\Fil^rN$ on $N$ satisfies (F4) and (F5) in Definition
\ref{definition_filtration_2}.
\end{enumerate}
Moreover, if $\Fil^rA$ is $p$-adically closed in $A$ for every $r\in \bb Z$,
then the same is true of $\Fil^rN$ for every $r\in \bb Z$.
\end{lemma}
\begin{proof} This is straightforward. Note that $N'$ and $N''$ are finite projective $A$-modules.
\end{proof}

We may now explain and prove (\ref{eq:CrysEvComparison}). We define a category $\op{Mod}_\sub{fproj}(A_\crys(R_\infty),\op{SatFil},\varphi)$ by forgetting the action of $\Gamma$
in the definition of $\Rep_{\Gamma}^\mu(A_\crys(R_\infty),\op{SatFil},\varphi)$. To be precise,  an object of $\op{Mod}_\sub{fproj}(A_\crys(R_\infty),\op{SatFil},\varphi)$ is a finite projective $A_\crys(R_\infty)$-module $P$ equipped with both an isomorphism $\phi_P:\phi^*P[\tfrac1p]\isoto P[\tfrac1p]$ and a filtration satisfying (F1)--(F2) of Definition \ref{definition_filtration_1} and (F4)--(F5) of Definition \ref{definition_filtration_2}.

We regard $\op{Win}(\ul A_\crys(R_\infty))$ as a full subcategory of $\op{Mod}_\sub{fproj}(A_\crys(R_\infty),\op{SatFil},\varphi)$ because an object $(P,\Fil P,\varphi_P,\varphi_{P,1})$ of the former is determined by the object $(P,\Fil^rP,\varphi_P)$ of the latter, where $\Fil^rP$
is the saturated filtration associated to $\Fil P$ as in Lemma \ref{lem:WinFilSatFil}.
By forgetting the action of
$\Gamma$ in the construction of the bottom horizontal functor in \eqref{eq:CrysAinfRepDiagram},
we obtain a functor $\BKF(R_\infty,\varphi)
\to \op{Mod}_\sub{fproj}(A_\crys(R_\infty),\op{SatFil},\varphi)$, which together
with the middle and right functors in \eqref{eq:pDivGpBKEquiv} forms the following commutative diagram by the description of $\Fil^rN$ and $N^+$ in Lemma
\ref{lem:WinFilSatFil}.
\begin{equation}\label{eq:WinSatFil}
\xymatrix{
\op{BKF}(R_\infty,\varphi,[0,1])\ar[r]^{\sim}\ar@{^(->}[d]&
\op{Win}(\ul A_\inf(R_\infty))\ar[r]^{\sim}&
\op{Win}(\ul A_\crys(R_\infty))\ar@{^(->}[d]\\
\BKF(R_\infty,\varphi)\ar[rr]&&
\op{Mod}_\sub{fproj}(A_\crys(R_\infty),\op{SatFil},\varphi).
}
\end{equation}
Thus, equipping $M_\crys\in \op{Win}(\ul A_\crys(R_\infty))$ with the
action of $\Gamma$ induced by that on $M$, we obtain the desired natural isomorphism (\ref{eq:CrysEvComparison}). We remark that the compatibility of this isomorphism with the action of $\Gamma$ will be used in the second step of the proof.

Next we show that evaluation of $\cal F$ on the pd-thickening
$A_\crys(R_\infty(1))\to R_\infty(1)$ gives the
desired descent isomorphism. For an integer $r>0$,
we define $\Fil^rA_\crys(R_\infty(\nu))$ to be the $p$-adic closure of the
image of the $r^\sub{th}$ divided power of the pd-ideal of the pd-envelope of
$W(R_\infty(\nu)^\flat)\to R_\infty(\nu)$ compatible with the pd-structure on $p\bb Z_p$.
Put $\Fil^rA_\crys(R_\infty(\nu))=A_\crys(R_\infty(\nu))$ for all integers $r\leq 0$, and note that $\Fil^1A_\crys(R_\infty(\nu))=\Fil A_\crys(R_\infty(\nu))$. To apply
Lemma \ref{lem:WinFilSatFil} to $A_\crys(R_\infty(\nu))$, and Lemma
\ref{lemma_saturation_of_product}
to several pd-homomorphisms between two of $A_\crys(R_\infty(\nu))$,
we need the following fact.
\begin{lemma}\label{lem:SatFilAcrysRinftyProduct}
The element $\xi\in \Fil^1A_\crys(R_\infty(\nu))$ is a non-zero divisor in
$A_\crys(R_\infty(\nu))$, and the filtration
$\Fil^rA_\crys(R_\infty(\nu))$, for $r\in\bb Z$, on $A_\crys(R_\infty(\nu))$ is saturated
with respect to $\xi$.%the non-zero divisor $\xi\in A_\crys(R_\infty(\nu))$ contained in $\Fil^1$.
\end{lemma}
\begin{proof}
(cf.~\cite[the proof of Lemma 2.6.1]{CaisLau2017}) It suffices to show
that $\bigcap_{r\in \bb Z}\Fil^rA_\crys(R_\infty(\nu))=0$ and that
multiplication by $\xi$ on $A_\crys(R_\infty(\nu))$ induces an injective homomorphism
$[\xi]\colon \op{gr}^rA_\crys(R_\infty(\nu))\to \op{gr}^{r+1}A_\crys(R_\infty(\nu))$ for each $r\in \bb N$.
For a positive integer $n\ge0$, let $(D_n, J_n)$ be the pd-envelope 
of $\theta_n\colon W_n(R_\infty(\nu)^\flat)=W(R_\infty(\nu)^\flat)/p^n\to R_\infty(\nu)/p^n$
compatible with the pd-structure on $p\bb Z_p$. 
Since the $p$-adic completion 
$\tilde R_{\infty}(\nu)$ of the tensor product over $\roi$ of $\nu+1$ copies of $R_{\infty}$
is a perfectoid ring, and the closed immersion
$\Spec(R_{\infty}(\nu)/p)\hookrightarrow \Spec(\tilde R_{\infty}(\nu)/p)$
is defined by the ideal generated by a regular sequence $p_1^*(T_i)-p_{j}^*(T_i)$, for
$1\le i\le j$, $2\le j\le \nu+1$, on an affine open neighborhood of
$\Spec(\tilde R_{\infty}(\nu)/p)$, \cite[Lem.~4.2]{Lau2018} implies that
the kernel of $R_\infty(\nu)^\flat\to R_\infty(\nu)/p$ is generated
by a regular sequence $f_1,\ldots, f_N$ with $f_1=(\xi \mod p)$. Choose liftings
$g_1,\ldots, g_N\in \ker(\theta\colon W(R_\infty(\nu)^\flat)\to R_\infty(\nu))$ of $f_1,\ldots, f_N$,
which exist because $\theta$ is surjective; we assume $g_1=\xi$.
Since $W(R_\infty(\nu)^\flat)$ and $R_\infty(\nu)$ are both $p$-torsion free, we see that
$g_1,\ldots, g_N$ form a regular sequence in $W_n(R_\infty(\nu)^\flat)$ and generate
the kernel of $\theta_n$ for $n>0$. Let $A_n$ (resp.~$A_n^\sub{pd}$) be the polynomial ring (resp.~the pd-polynomial ring) over $\bb Z/p^n$
in $N$ variables $X_1,\ldots, X_N$, and put $I_n=\sum_{i=1}^NX_iA_n$.
Then, by the local criteria of flatness, the homomorphism
$A_n/I_n^{p^nN}\to W_n(R_\infty(\nu)^\flat)/\ker(\theta_n)^{p^nN}$, $X_i\mapsto f_i$ is flat;
taking the pd-envelopes with respect to the images of $I_n$ and
$\ker(\theta_n)$, we obtain an isomorphism
$A_n^\sub{pd}\otimes_{A_n/I^{p^nN}}W_n(R_\infty(\nu)^\flat)/\ker(\theta_n)^{p^nN}\xrightarrow{\cong} D_n$. This shows, for each $r\ge0$, that the $R_\infty(\nu)/p^n$-module $J_n^{[r]}/J_n^{[r+1]}$ is
free with basis $g_1^{[r_1]}\cdots g_N^{[r_N]}$, where $r_1+\cdots+r_N=r$, whence the same is true
for the $R_\infty(\nu)$-module $\gr^iA_\crys(R_\infty(\nu))$ and the desired injectivity of $[\xi]$ holds
as $g_1=\xi$. The vanishing of $\bigcap_r\Fil^r$ is reduced to 
$\varprojlim_rJ_n^{[r]}=0$ for all $n\geq 1$, and then to $\varprojlim_rJ_1^{[r]}=0$ because
$J^{[r]}_n$ is flat over $\bb Z/p^n$ and $J^{[r]}_{n+1}/p^nJ_{n+1}^{[r]}=J_n^{[r]}$ by the above
isomorphism. But the isomorphism for $n=1$ implies that $D_1$ is a free
$R_\infty(\nu)^\flat/(f_1^p,\ldots, f_N^p)$-module with basis
$f_1^{[pj_1]}\cdots f_N^{[pj_N]}$, whence we have $\varprojlim_rJ_1^{[r]}=0$
as desired.
\end{proof}

Define $\op{Mod}_\sub{fproj}(A_\crys(R_\infty(\nu)),\op{SatFil},\varphi)$
by simply replacing $R_\infty$ with $R_\infty(\nu)$ in the definition
of $\op{Mod}_\sub{fproj}(A_\crys(R_\infty),\op{SatFil},\varphi)$. %, i.e., its object is a finite projective $A_\crys(R_\infty(\nu))$-module $P$ equipped with a $\sigma$-semilinear homomorphism $P[\frac{1}{p}]\to P[\frac{1}{p}]$ whose linearization is an isomorphism and with a decreasing, $p$-adically closed, multiplicative $\bb Z$-indexed filtration satisfying (F4) and (F5) in Definition \ref{definition_filtration_2}.
By Lemma \ref{lem:SatFilAcrysRinftyProduct},  we may regard $\op{Win}(\ul A_\crys(R_\infty(\nu)))$
as a full subcategory of $\op{Mod}_\sub{fproj}(A_\crys(R_\infty(\nu)),\op{SatFil},\varphi)$
by identifying an object $(P,\Fil P,\varphi_{P},\varphi_{P,1})$ of the former
with the object $(P,\Fil^rP,\varphi_P)$ of the latter it determines, where $\Fil^rP$ is the saturated
filtration associated to $\Fil P$ as in Lemma \ref{lem:WinFilSatFil}.
Similarly to Remark \ref{remark_associated1_filtered}, we can define
a functor
$$\ev_{A_\crys(R_\infty(\nu))}\colon\CR(R/A_\crys,\op{SatFil},\varphi)\longrightarrow \op{Mod}_\sub{fproj}
(A_\crys(R_\infty(\nu)),\op{SatFil},\varphi)$$
by taking the inverse limit over $n\ge1$ of the evaluations on
$\Spec(R_\infty(\nu)/p),\Spec(R_\infty(\nu)/p^n)\hookrightarrow
\Spec(A_\crys(R_\infty(\nu))/p^n)$ and saturating the filtration.
By Lemma \ref{lem:SatFilAcrysRinftyProduct}, we can apply Lemma
\ref{lemma_saturation_of_product} to the morphism $s\colon \Spec(A_\crys(R_\infty(\nu')))\to \Spec(A_\crys(R_\infty(\nu)))$
associated to any non-decreasing map
$[\nu]\to [\nu']$, and so obtain a functor
$$s^*\colon \op{Mod}_\sub{fproj}(A_\crys(R_\infty(\nu)),\op{SatFil},\varphi)
\longrightarrow \op{Mod}_\sub{fproj}(A_\crys(R_\infty(\nu')),\op{SatFil},\varphi),$$
which is compatible with both $\ev_{A_\crys(R_\infty(\nu))}$ and
$\ev_{A_\crys(R_\infty(\nu'))}$, and also with the base change functor 
$s^*\colon \op{Win}(\ul A_\crys(R_\infty(\nu)))\longrightarrow
\op{Win}(\ul A_\crys(R_\infty(\nu')))$ of \cite[Lem.~2.10]{Lau2010}.

By the above compatibility for the projections
$p_1,p_2\colon \Spec(A_\crys(R_\infty(1)))\to \Spec(A_\crys(R_\infty))$
and $p_{12}, p_{23},p_{13}\colon \Spec(A_\crys(R_\infty(2)))
\to \Spec(A_\crys(R_\infty(1)))$, we obtain an isomorphism
$$\varepsilon\colon
p_2^*(\ev_{A_{\crys}(R_\infty)}(\cal F))
\cong \ev_{A_\crys(R_\infty(1))}(\cal F)
\cong p_1^*(\ev_{A_\crys(R_\infty)}(\cal F))\quad
\text{in }\op{Win}(\ul A_\crys(R_\infty(1)))$$
satisfying the cocycle  condition
$p_{13}^*(\varepsilon)=p_{12}^*(\varepsilon)
\circ p_{23}^*(\varepsilon)$ in 
$\op{Win}(\ul A_\crys(R_\infty(2))).$
Combined with \eqref{eq:CrysEvComparison}, this finishes the descent goal: indeed, via the equivalence 
\eqref{eq:BTWinEquivSemiPerf2}, the inverse of $\ep$ induces an isomorphism
$$\varepsilon^\sub{BT}\colon p_2^*(G_{\infty})\xrightarrow{\cong} p_1^*(G_{\infty})$$
of $p$-divisible groups over $R_{\infty}(1)$ satisfying the cocycle condition
$p_{13}^*(\varepsilon^\sub{BT})=p_{12}^*(\varepsilon^\sub{BT})
\circ p_{23}^*(\varepsilon^\sub{BT})$
over $R_{\infty}(2)$. By faithfully flat descent,
we obtain a $p$-divisible group $G$ over $R$.

Let $\pi$ denote the morphism $\Spec(R_{\infty})\to \Spec(R)$.
The second step is to show that the isomorphism
$\Phi^{\sub{inf}\vee}_{R_\infty}(\pi^*G)\cong \Phi^{\sub{\inf}\vee}_{R_\infty}(G_\infty)
=M$ in $\BKF(R_\infty,\varphi,[0,1])$ is compatible with the $\Gamma$-action on $M$.
By transferring this isomorphism via the right and middle equivalences in \eqref{eq:pDivGpBKEquiv}
and using the $\Gamma$-equivariance of \eqref{eq:CrysEvComparison}, we are reduced to verifying
that the composition
\begin{equation}\label{eq:FFDescentGaloisEquiv}
\Phi_{R_{\infty}}^{\crys}(\pi^*G)
\cong \Phi_{R_{\infty}}^\crys(G_{\infty})
\cong M_{\crys}\cong \ev_{A_\crys(R_\infty)}(\cal F)\end{equation}
in $\op{Win}(\ul A_\crys(R_{\infty}))$ is compatible with the action of $\Gamma$, i.e.,
for each $\gamma\in \Gamma$, the semilinear action of $\gamma$ on $\pi^*G$
induced by its action on $\ev_{A_\crys(R_\infty)}(\cal F)$ via \eqref{eq:FFDescentGaloisEquiv}
coincides with $\gamma^*\pi^*G\xrightarrow{\cong} (\pi\circ\gamma)^*G=\pi^*G$.
To show this claim, we need to give a construction of the action 
of $\gamma$ on $\ev_{A_\crys(R_{\infty})}(\cal F)$ from the isomorphism 
$\varepsilon$ used in the construction of $G$. 
Let $(1,\gamma)\colon \Spec(A_\crys(R_\infty))\to \Spec(A_\crys(R_{\infty}(1)))$
be the pd-morphism induced by the morphism
$(1,\gamma)\colon \Spec(R_{\infty})\to \Spec(R_{\infty}(1))$.
Then we have $p_1\circ (1,\gamma)=\id$ and $p_2\circ (1,\gamma)=\gamma$ 
on $\Spec(A_\crys(R_{\infty}))$. By the definition of $\varepsilon$ and the action of $\gamma$ on
$\ev_{A_\crys(R_{\infty})}(\cal F)$, these two equalities 
imply that the action $\gamma^*(\ev_{A_\crys(R_{\infty})}(\cal F))\xrightarrow{\cong}
\ev_{A_\crys(R_{\infty})}(\cal F)$ of $\gamma$ on $\ev_{A_\crys(R_{\infty})}(\cal F)$
is given by the pullback $(1,\gamma)^*(\varepsilon)$ of $\varepsilon$. This shows that the semilinear action of $\gamma$ on $\pi^*G$ 
induced by that on $\ev_{A_\crys(R_{\infty})}(\cal F)$
is the pullback of the composition $p_2^*\pi^*G\cong p_2^*G_{\infty}
\xrightarrow[\varepsilon^\sub{BT}]{\cong} p_1^*G_{\infty}\cong p_1^*\pi^*G$
by $(1,\gamma)\colon \Spec(R_{\infty})\to \Spec(R_{\infty}(1))$. By the
construction of $G$, the composition above coincides with
the isomorphism induced by $p_2\circ \pi=p_1\circ \pi$, whence
its pullback by $(1,\gamma)$ is the isomorphism $\gamma^*\pi^*G
\cong \pi^*G$ induced by $\pi\circ \gamma=\pi$, as desired. This completes the proof of Theorem \ref{theorem_p_div} (in the case $p\neq 2$).

\begin{remark}[Crystalline Dieudonn\'e functor over $R$]
In the proof above, the object $\cal F$ of the category $\CR(R/A_\crys,\op{SatFil},\varphi)$ associated
to the object $M$ of $\Rep_{\Gamma}^\mu(A_\inf(R_\infty),\varphi)$ plays a key role. 
This object $\cal F$ is related to the contravariant Dieudonn\'e crystal $\bb D_\crys(G_0)$ of
$G_0:=G\times_{\Spec(R)}\Spec(R/p)$ as follows. By restricting the base to
$A_\crys$, $\bb D_\crys(G_0)$ gives an object $\cal G=((\cal G_n)_{n\geq 1},\varphi_{\cal G})$
of $\CR(R/A_\crys,\varphi)$, which is equipped with a direct factor
$\Fil\ol{\cal G}_n$ of $\ol{\cal G}_n:=\cal G_n\otimes_{\cal O_{R_n/A_\crys}}\cal O_{R_n}$:
the pullback of the Hodge filtration in $\Gamma(R,\bb D_{\crys}(G_0))$ associated
to the lifting $G$ of $G_0$.  We can regard $\cal G $ with $(\Fil\ol{\cal G}_n)_{n\geq 1}$
as an object of $\CR(R/A_\crys,\op{SatFil},\varphi)$ similarly to the right vertical
inclusion in \eqref{eq:WinSatFil}. By the construction of the functor $\Phi_{R_{\infty}}^{\crys}$
and the compatibility of the crystalline Dieudonn\'e functors and Hodge filtrations
with base changes, we obtain a canonical isomorphism 
$$\Phi_{R_{\infty}}^{\crys}(\pi^*G)\cong \ev_{A_{\crys(R_{\infty})}}(\cal G)$$
in $\op{Win}(\ul A_\crys(R_\infty))\subset \op{Mod}_\sub{fproj}(A_\crys(R_\infty),\op{SatFil}, \varphi)$
compatible with the action of $\Gamma$, i.e., in the category
$\Rep_{\Gamma}^\mu(A_\crys(R_\infty),\op{SatFil},\varphi)$.
We see that the functor
$\ev_{A_\crys(R_\infty)}$ in \eqref{eq:CrysAinfRepDiagram} is 
fully faithful either by replacing $\res{R}$ by $R_\infty$
in the proof of Theorem \ref{theorem_crystal_genrep}, or by reducing it to
Theorem \ref{theorem_crystal_genrep}. 
(For the latter proof, we use the compatibility of $\ev_{A_\crys(\res R)}$
and $\ev_{A_\crys(R_\infty)}$ with the faithful functor 
$\Rep^{\mu}_{\Gamma}(A_\crys(R_\infty),\op{SatFil},\varphi)
\to \Rep^{\mu}_{\Delta}(A_\crys(\res R),\op{SatFil},\varphi)$
defined by Lemma \ref{lemma_saturation_of_product}). Thus comparing the above isomorphism 
with \eqref{eq:FFDescentGaloisEquiv}, we see that $\cal F\cong \cal G$ in $\CR(R/A_\crys,\op{SatFil},\varphi)$. 
\end{remark}

\comment{
\subsection{Old Dieudonn\'e theory -- to be eventually removed}
\begin{proof}[Proof that $\Phi_R^\sub{BKF}$ is essentially surjective]
Let $M\in\Rep^\mu_\Gamma(A_\inf(R_\infty),\phi,[0,1])$; forgetting the $\Gamma$-action on $M$, Theorem \ref{theorem_p_div_perfectoid} shows that there exists a unique $p$-divisible group $G$ over $R_\infty$ such that $\Phi^\sub{inf}_{R_\infty}(G)=M$. Our first step is to show that $G$ descends from $R_\infty$ to $R$; that is, we must construct a descent isomorphism $G\otimes_{R_\infty,p_1}R_\infty(1)\cong G\otimes_{R_\infty,p_2}R_\infty(1)$, where $R_\infty(1)$ denotes the $p$-adic completion of $R_\infty\otimes_RR_\infty$. There will be three key ingredients to this construction: Grothendieck--Messing's deformation theory \cite{GM} to partly reduce the problem to $R_\infty(1)/p$; Lau's Diedonn\'e theory over complete intersection semiperfect rings such as $R_\infty(1)/p$; and the existence of a crystal associated to the relative Breuil--Kisin--Fargues module $M_\inf(R_\infty)$, as in Theorem \ref{theorem_q-A_crys}. Once a descent of $G$ has been constructed, the second step will be to verify that it is compatible with the $\Gamma$-action on $M$. We now begin the proof in earnest, starting with the first step.

Let $R_\infty(1)$, resp.~$R_\infty(2)$, denote the $p$-adic completion of $R_\infty\otimes_RR_\infty$, resp.~$R_\infty\otimes_RR_\infty\otimes_RR_\infty$; let $p_1,p_2:R_\infty\to R_\infty(1)$ denote the canonical morphisms. In order to descend $G$ to $R$, we must construct an isomorphism of $p$-divisible groups $G\otimes_{R_\infty,p_1}R_\infty(1)\cong G\otimes_{R_\infty,p_2}R_\infty(1)$ which satisfies the cocycle condition after pulling back further to $R_\infty(2)$; we will call this ``the descent goal'' in the following.

By Grothendieck--Messing deformation theory, a $p$-divisible group $H$ over $R_\infty(1)$ is fully faithfully determined by its special fibre $H\otimes_{R_\infty(1)}R_\infty(1)/p$ together with the associated Hodge filtration in $\Gamma({R_\infty(1)},\bb D_\sub{crys}(H\otimes_{R_\infty(1)}R_\infty(1)))$. Here we write $\bb D_\sub{crys}(H\otimes_{R_\infty(1)}R_\infty(1)/p)$ for the Dieudonn\'e crystal associated to $H\otimes_{R_\infty(1)}R_\infty(1)/p$ (which is a crystal on the crystalline site of $R_\infty(1)/p$ over $\bb Z_p$), which we are evaluating on the pd-thickening $R_\infty(1)\onto R_\infty(1)/p$.

To exploit the previous paragraph we must analyse $p$-divisible groups over $R_\infty(1)/p$. Note that this is a semiperfect $\bb F_p$-algebra which can be written as the quotient of a perfect $\bb F_p$-algebra by a regular sequence, i.e., it is a complete intersection in the sense of \cite[Def.~4.1]{Lau2018} (in particular it is a quasiregular semiperfect ring in the sense of \cite[]{}). Let $R_\infty(1)^\flat:=\projlim_\phi R_\infty(1)/p$ be the inverse limit perfection of $R_\infty(1)/p$, which is a perfect $\bb F_p$-algebra surjecting onto $\cal R_\infty(1)$ (with kernel generated by a regular sequence), and let $A_\crys(R_\infty(1)/p)$ denote the $p$-adic completion of the pd-envelope of $W(R_\infty(1)^\flat)\to R_\infty(1)/p$ (compatibly
with the pd-structure on $p\bb Z_p$); we recall that $A_\crys(R_\infty(1)/p)$ is $p$-torsion-free \cite[Lem.~2.6.1]{CaisLau2017} \cite[Thm.~8.14(i)]{BMS2}. Write $\Fil A_\crys(R_{\infty}(1)/p):=\ker(A_\crys(R_{\infty}(1)/p)\to R(1)/p$
and let $\phi$ be the Frobenius endomorphism of $A_\crys(R_{\infty}(1)/p)$
induced by the usual Frobenius of $W(R_{\infty}(1)^{\flat})$; note that $\phi(\Fil A_\crys( R_{\infty}(1)/p))\subset
pA_\crys(R_{\infty}(1)/p)$. Setting $\phi_1=p^{-1}\phi\vert_{\Fil A_\crys( R_{\infty}(1)/p)}$, we have therefore constructed a ``pd-frame''
\[\ul A_\crys( R_{\infty}(1)/p)=(A_\crys( R_{\infty}(1)/p),\Fil A_\crys(R_{\infty}(1)/p),  R_{\infty}(1)/p,\phi,\phi_1)\]
as in \cite[\S5.3]{Lau2018} (we refer to \cite[\S2]{Lau2018} for the precise definition of a pd-frame). Lau's Dieudonn\'e theory over semiperfect rings \cite[Corol.~6.4]{Lau2018} states that evaluation of Dieudonn\'e crystals on the pd-thickening $A_\crys(R_{\infty}(1)/p)\onto R_\infty(1)/p$ induces an equivalence of categories
\begin{equation}\label{eq:BTWinEquivSemiPerf}
\Phi_{R_{\infty}(1)/p}^{\crys}: 
\op{BT}(\Spec(R_{\infty}(1))/p)\isoto
\op{Win}(\ul A_\crys(R_{\infty}(1)/p)),
\end{equation}
where $\op{Win}(-)$ denotes the category of windows over the frame $\ul A_\crys( R_{\infty}(1)/p)$ \cite[\S2]{Lau2018}.

To solve the descent goal, we must therefore understand the windows $\Phi_{R_\infty(1)/p}^\sub{crys}(G\otimes_{R_\infty,p_i}R_\infty(1)/p)$ and the associated Hodge filtrations; we do this by noting that $\Phi_{R_{\infty}(1)}^{\crys}$ is compatible with $\Phi^\sub{inf}_{R_\infty}$ in the following sense. Firstly, $\Phi^\sub{inf}_{R_\infty}$ is defined as a composition of equivalences
\begin{equation}\label{eq:pDivGpBKEquiv}
\Phi_{R_{\infty}}^{\sub{BKF}}: \op{BT}(R_{\infty})
\xrightarrow[\Phi_{R_{\infty}}^{\crys}]{\sim}\op{Win}(\ul A_\crys(R_{\infty}))
\xleftarrow{\sim}\op{Win}(\ul{A}_\inf(R_{\infty}))
\xrightarrow{\sim}\op{BKF}_{[0,1]}(R_{\infty}).
\end{equation}
\cite[Props.~9.3 \& 9.7, (9.6)]{Lau2018} where the frames $\ul A_\crys(R_{\infty})$ and $\ul{A}_\inf(R_{\infty})$ are defined by \[\ul A_\crys(R_{\infty}):=(A_\crys(R_{\infty}),\xi A_\crys(R_{\infty}),R_{\infty},\phi,\phi_1)\] and \[\ul{A}_\inf(R_{\infty})=(A_\inf(R_{\infty}),\ker(A_\inf(R_{\infty})\to R_\infty),R,\phi,\phi_1^{\inf});\] note that the window over $\ul A_\inf(R_\infty)$ corresponding to $G$ is given by $(M,\Fil M:=\{m\in M:\phi_M(m)\in\tilde\xi M\},\phi_M,\tilde\xi)$. Secondly, functoriality provides us with a a commutative diagram for each $i=1,2$
\[\xymatrix{
\op{BT}(R_\infty(1)/p) \ar[r]^{\Phi_{R_{\infty}(1)/p}^{\crys}}_{\cong} & \op{Win}(\ul A_\crys( R_{\infty}(1)/p))\\
\op{BT}(R_\infty)\ar[u]^{p_i^*} \ar[r]_{\Phi^\sub{crys}_{R_\infty}}^{\cong} & \op{Win}(\ul A_\crys(R_{\infty}))\ar[u]_{p_i^*}
}\]
Chasing through these compatibilities shows the following: firstly, the $p$-divisible group $G\otimes_{R_\infty,p_i}R_\infty(1)/p$ is equivalent to the window 
\begin{align*}
\Phi^\sub{crys}_{R_\infty(1)/p}(G\otimes_{R_\infty,p_i}R_\infty(1)/p)=&\\(M\otimes_{A_\inf(R_\infty),p_1}&A_\crys(R_\infty(1)/p), \Fil(M\otimes_{A_\inf(R_\infty),p_1}A_\crys(R_\infty(1)/p)),\textrm{induced Frobenii})\\&\in\op{Win}(\ul A_\crys(R_{\infty}(1)/p))\end{align*}
where
\[\Fil(M\otimes_{A_\inf(R_\infty),p_1}A_\crys(R_\infty(1)/p)):=\Fil M\otimes_{A_\inf(R_\infty),p_i}A_\crys(R_\infty(1)/p))+M\otimes_{A_\inf(R_\infty),p_i}\Fil A_\crys(R_\infty(1)/p));\] And secondly the Hodge filtration in $\Gamma({R_\infty(1)},\bb D_\sub{crys}(G\otimes_{R_\infty,p_i}R_\infty(1)/p))=M/\xi M\otimes_{R_\infty,p_i}R_\infty(1)$ associated to $G\otimes_{R_\infty,p_i}R_\infty(1)$ is its submodule $\Fil M/\xi M\otimes_{R_\infty,p_i}R_\infty(1)$.

In conclusion, the descent goal is equivalent to the following problem: construct an isomorphism of $A_\crys(R_\infty(1)/p)$-modules $\iota:M\otimes_{A_\inf(R_\infty),p_1}A_\crys(R_\infty(1)/p)\cong M\otimes_{A_\inf(R_\infty),p_2}A_\crys(R_\infty(1)/p)$ with the following three properties:
\begin{enumerate}
\item $\iota$ restricts to an isomorphism between the submodules $\Fil(M\otimes_{A_\inf(R_\infty),p_1}A_\crys(R_\infty(1)/p))$, for $i=1,2$; 
\item the induced isomorphism $M/\xi M\otimes_{R_\infty,p_1}R_\infty(1)\cong M/\xi M\otimes_{R_\infty,p_2}R_\infty(1)$ (obtained by base changing $\iota$ along $A_\crys(R_\infty(1)/p)\to R_\infty(1)$) restricts to an isomorphism between the submodules
$\Fil M/\xi M\otimes_{R_\infty,p_i}R_\infty(1)$, for $i=1,2$.
\item all the isomorphisms should be compatible with Frobenii and satisfy the cocycle condition after pullback to the analogous frames associated to $R_\infty(2)$ (modulo $p$ this ring is again a complete intersection semiperfect $\bb F_p$-algebra).
\end{enumerate}
We now construct such an isomorphism.

According to Theorem \ref{theorem_descent_to_framed_with_phi}, the generalised representation with Frobenius $M$ may be written uniquely as $M=N\otimes_{A_\sub{inf}^\bx(R)}A_\sub{inf}(R_\infty)$ where $(N,\phi_N)\in\Rep_\Gamma^\mu(A_\inf^\bx(R),\phi)$; furthermore, Theorem ???? tells us that $\Fil M=\Fil N\otimes_{A_\sub{inf}^\bx(R)}A_\sub{inf}(R_\infty)$ where $\Fil N:=\{n\in N:\phi_N(n\otimes 1)\in\tilde \xi N\}$.

Similarly to \S\ref{ss_OA_admissibility}, let $\roiA_\crys(R)^\bx$ denote the $p$-adic completion of the pd-envelope of \[A_\crys^\bx(R)\otimes_{A_\crys}A_\crys^\bx(R)\To R\] (in contrast to the point of view in \S\ref{ss_OA_admissibility}, here there is no harm taking $A_\crys^\bx(R)=A_\crys'(R)$) and write $p_1,p_2:A_\crys^\bx(R)\to \roiA_\crys(R)^\bx$ for the two structure maps. These structure maps are compatible in an obvious way, via the canonical map $\roiA_\crys(R)^\bx\to A_\crys(R_\infty(1)/p)$, with the structure maps $p_1,p_2$ which have appeared so far in the proof. Therefore, setting $N_\crys:=N\otimes_{A_\inf^\bx(R)}A_\crys^\bx(R)$, we see that the descent goal (i.e., the construction of an isomorphism satisfying (i)--(iii)) will following from the following claim at the level of the framed period rings: there exist compatible isomorphisms

\hrule

Lau establishes the following equivalences of categories

Let $M\in\Rep_\Gamma^\mu(R)$ with minuscule Frobenius. Lau's theorem impies that there exists a $p$-divisible group $G$ over $R_\infty$ such that the window associated to the Dieudonn\'e crystal of $G$ is $(M,\Fil M, \phi, \phi(\xi)^{-1}M)$. Let $p_1,p_2:R_\infty\to R_\infty(1)$ be the two inclusions; we wish to construct an isomorphism $p_1^*G\cong p_2^*G$ , as then flat descent shows that $G$ descends to a $p$-divisible group $H$ over $R$.

By Grothendieck, it is equivalent to construct an isomorphism between the two $p$-divisible groups $p_i^*G\otimes_{R_\infty(1)}R_\infty(1)/p$ over $R_\infty(1)/p$ such that the induced isomorphism between the $\bb D_\crys(p_i^*G\otimes_{R_\infty(1)}R_\infty(1)/p)_{R_\infty(1)}$ matches up the Hodge filtrations. By Lau's theorem for complete intersection semiperfect rings, to give an isomorphism between the $p_i^*G\otimes_{R_\infty(1)}R_\infty(1)/p$ is the same as giving an isomorphism between the two windows $M\otimes_{\ul A_\inf(R_\infty),p_i}\ul A_\inf(R_\infty(1)/p)$. Moreover, if I have correctly chased definitions, then \[\bb D_\crys(p_i^*G\otimes_{R_\infty(1)}R_\infty(1)/p)_{R_\infty(1)}=M/\xi M\otimes_{R_\infty,p_i}R_\infty(1)\] with Hodge filtration $\Fil M/\xi M\otimes_{R_\infty,p_i}R_\infty(1)$.

In conclusion, we must construct an isomorphism between the two windows $M\otimes_{\ul A_\inf(R_\infty),p_i}\ul A_\crys(R_\infty(1)/p)$ (i.e., an isomorphism between the $M\otimes_{A_\inf(R_\infty),p_i}A_\crys(R_\infty(1)/p)$ matching up the submodules \[\Fil M\otimes_{A_\inf(R_\infty),p_i}A_\crys(R_\infty(1)/p))+M\otimes_{A_\inf(R_\infty),p_i}\Fil A_\crys(R_\infty(1)/p))\]) such that the induced isomorphism between the $R_\infty$-modules $M/\xi M\otimes_{R_\infty,p_i}R_\infty(1)$ matches up the submodules $\Fil M/\xi M\otimes_{R_\infty,p_i}R_\infty(1)$. 

But we know that $M$ descends to $M^\bx$ and that the induced $\Gamma$-action on $M^\bx_\sub{crys}:=M^\bx\otimes_{A^\bx_\inf(R)}A^\bx_\crys(R)$ can be equivalently viewed as a $p$-adically quasi-nilpotent connection,

But this all seems obvious from the fact that $M$ and $\Fil M$ descend to the framed period ring.
\end{proof}

We begin by recalling the main constructions surrounding Lau's proof of Theorem \ref{theorem_p_div_perfectoid}; see in particular \cite[\S2, \S9]{Lau2018} for further details, including the definition of a (pd-)frame. Firstly, as usual, $A_\crys(R_{\infty})$ denotes the $p$-adic completion of the
pd-envelope of $W(R_{\infty}^{\flat})\to R_{\infty}$ (compatibly
with the pd-structure on $p\bb Z_p$); also write $\Fil A_\crys(R_{\infty}):=\ker(A_\crys(R_{\infty})\to R_{\infty})$, which is a pd-ideal, and let $\phi$ be the endomorphism
of $A_\crys(R_{\infty})$ induced by the usual Frobenius of 
$W(R_{\infty}^{\flat})$. Note that 
$\phi(\Fil A_\crys(R_{\infty}))\subset \Fil A_\crys(R_{\infty})$ and so, setting 
$\phi_1=p^{-1}\phi|_{\Fil A_\crys(R_{\infty})}$, we obtain a $p$-torsion free pd-frame
\[\ul A_\crys(R_{\infty}):=(A_\crys(R_{\infty}),\Fil A_\crys(R_{\infty}),R_{\infty},\phi,\phi_1).\]
We can also define a frame structure on $A_\inf(R_{\infty})$, namely
$$\ul{A}_\inf(R_{\infty})=(A_\inf(R_{\infty}),\Fil A_\inf(R_{\infty}),R,\phi,\phi_1^{\inf}),$$
by setting $\Fil A_\inf(R_{\infty}):=\xi A_\inf(R_\infty)=\ker(A_\inf(R_{\infty})\to R_{\infty})$ and $\phi^{\inf}=\varphi(\xi)^{-1}\phi|\Fil$.
The natural map $\ul{A}_\inf(R)\to \ul A_\crys(R_{\infty})$ is a morphism of frames.

Lau establishes the following equivalences of categories \cite[Props.~9.3 \& 9.7, (9.6)]{Lau2018}
\begin{equation}\label{eq:pDivGpBKEquiv}
\Phi_{R_{\infty}}^{\sub{BKF}}: \op{BT}(R_{\infty})
\xrightarrow[\Phi_{R_{\infty}}^{\crys}]{\sim}\op{Win}(\ul A_\crys(R_{\infty}))
\xleftarrow{\sim}\op{Win}(\ul{A}_\inf(R_{\infty}))
\xrightarrow{\sim}\op{BKF}_{[0,1]}(R_{\infty}).
\end{equation}
Here $\op{BKF}_{[0,1]}(R_{\infty})$ denotes the category of finite projective
$A_\inf(R_{\infty})$-modules $M$ endowed with a semi-linear map 
$\varphi_M: M\to M$
whose cokernel is annihilated by $\varphi(\xi)$, i.e., 
$\varphi(\xi)M\subset \varphi(M)$. WARNING ON NORMALISATION (Note that $\op{BKF}_{[0,1]}(R_{\infty})$ is equivalent to the category defined in \cite[Def.~9.4]{Lau2018} by restricting along $\phi$; under this normalisation, the last equivalence of categories in \eqref{eq:pDivGpBKEquiv} is simply given by $(M,\Fil,\phi,\phi_1)\mapsto (M, \phi)$.) As we have already observed, the equivalence of categories \eqref{eq:pDivGpBKEquiv}
induces the fully faithful functor 
\begin{equation}\label{eq:BKFuctOverR}
\Phi_R^{\sub{BKF}}: \op{BT}(\Spec(R))\to \Rep^{\mu}_{\Gamma,[0,1]}(A_\inf(R_{\infty}),\varphi),
\end{equation}
where the subscript $[0,1]$ means the full subcategory consisting of $(M,\varphi)$ satisfying
$\varphi(\xi)M\subset \varphi(M)\subset M$. We prove that this is an equivalence of
categories:

For $r\ge0$, write $\cal R_\infty(r):=R_\infty^{\otimes_Rr+1}/p$, i.e., the $r+1$-fold tensor product of $R_\infty$ over $R$, modulo $p$; note that each $\cal R_\infty(r)$ is a semiperfect $\bb F_p$-algebra which can be written as the quotient of a perfect $\bb F_p$-algebra by a regular sequence, i.e., it is a complete intersection in the sense of \cite[Def.~4.1]{Lau2018} (in particular it is a quasiregular semiperfect ring in the sense of \cite[]{}).

\subsubsection{}
So let $(M,\phi_M)$ be an object of $\Rep^{\mu}_{\Gamma,[0,1]}(A_\inf(R_{\infty}),\varphi)$, and use Theorem \ref{theorem_descent_to_framed_with_phi} to write $M=M^{\bx}\otimes_{A_\inf^\bx(R)}A_\inf(R_{\infty})$ for some $M^\bx\in \Rep^{\mu}_{\Gamma}(A_\inf^\bx(R),\varphi, [0,1])$ (the addition of $[0,1]$ to the notation of course means that ?????). As in ????, we define the filtration $\Fil^rM^{\bx}$ (resp.~$\Fil^rM$) $(r\in \bb Z)$ to 
be the inverse image of $\varphi(\xi)^rM^{\bx}\cap M^{\bx}$
(resp.~$\varphi(\xi)^rM\cap M$). We have $\Fil^rM^{\bx}=M^{\bx}$ $(r\leq 0)$
and $\Fil^rM^{\bx}=\xi^{r-1}\Fil^1M^{\bx}$  $(r\geq 1)$, and similar equalities for
$\Fil^rM$. Following the notation in [Lau], we write $\Fil M^{\bx}$ and
$\Fil M$ for $\Fil^1 M^{\bx}$ and $\Fil^1M$. We note that the cokerel of $\phi_M$ is a finite projective $R_\infty$-module by \cite[Lem.~9.5]{Lau2018} (as already mentioned, Lau's category of BKF modules differs from ours by a Frobenius twist, but this does not affect the claim), whence the $R_\infty$-module $\Fil^1M/\xi M$ is finite projective; since $\Fil^1M=\Fil^1M^\bx\otimes_{A^\bx_\inf(R)}A_\inf(R_\infty)$ [REF TO ELSEWHERE!], one easily deduces that the $R$-module $M^\bx/\Fil^1M^\bx$ is also finite projective.

Let $M_{\crys}^\square:=M^\bx\otimes_{A_\inf^\bx(R)}A_\crys^\bx(R)\in\Rep_\Gamma^\mu(A_\crys^\bx(R))$, where we use the framed crystalline period ring from \S\ref{ss_q_over_A_crys}; in [INSERT REF] we saw that the $\Gamma$-action on $M_{\crys}^\bx$ corresponds to the data of a $p$-adically quasi-nilpotent flat connection, whence $M_\crys$ may be viewed as a crystal $\cal M$ on the big crystalline site $\op{Crys}(\Spf(R)/\Spf(A_\crys))$

Let $(, \Fil^rM_{\crys},\varphi_{M_{\crys}})$ be
the base change of $M$ with $\Fil^rM$ and $\varphi_M$ under the 
homomorphism $A_\inf(R_{\infty})\to A_\crys(R_{\infty})$. 
The quadruplet $(M_{\crys},\Fil^1M_{\crys}, \varphi_{M_{\crys}},p^{-1}\varphi_{M_{\crys}})$ 
is the object of 
$\op{Win}(\ul A_\crys(R_{\infty}))$ corresponding to $(M,\varphi_M)$ via
the the middle and right equivalences of categories in 
\eqref{eq:pDivGpBKEquiv}. Let 
$(M_{\crys}^{\bx},\Fil^rM_{\crys}^{\bx},\varphi_{M_{\crys}^{\bx}})$ 
be the base change of $M^{\bx}$ with $\Fil^rM^{\bx}$
and $\varphi_{M^{\bx}}$ under the homomorphism $A_\inf^\bx(R)\to A_\crys^\bx(R)$.
The actions of $\Gamma$ on $M$ and $M^{\bx}$ induce actions of $\Gamma$
on $M_{\crys}$ and $M_{\crys}^{\bx}$ compatible with $\Fil^r$ and $\varphi$.
We know that the action of $\Gamma$ on $M_{\crys}^{\bx}$ gives a
$p$-adically nilpotent integrable connection $\nabla: M^{\bx}_{\crys}
\to M^{\bx}_{\crys}\otimes_{A_\crys^\bx(R)}\Omega_{A_\crys^\bx(R)/A_\crys}$
satisfying the Griffiths transversality $\nabla(\Fil^r)\subset \Fil^{r-1}\otimes \Omega$.
The data $M_{\crys}$ with $\Fil^r$, $\varphi$, and $\nabla$ defines a
filtered crystal $\cal M$ on the big crystalline site 
$\op{Crys}(\Spf(R)/\Spf(A_\crys))$ ([T, Theorem 3.8]), 
and a Frobenius $\varphi_{\cal M_0}: F^*_{\cal R}(\cal M_0)\to \cal M_0$, 
where $(\cal M_0,\Fil^{\bullet}\cal M_0)$, denotes the inverse image of 
$(\cal M,\Fil^{\bullet}\cal M)$ on $\op{Crys}(\Spec(\cal R)/\Spf(A_\crys))$.
Recall $\cal R=R/p$. Let $D^{\bx}$ and $D_{\infty}$ be the PD-thickenings
$\Spf(R)\hookrightarrow \Spf(A_\crys^\bx(R))$ and 
$\Spf(R_{\infty})\hookrightarrow \Spf(A_\crys(R_{\infty}))$.\footnote{%
If we follow the construction in [BO, p.7-22] (for a noetherian $P$-adic base), 
an object of the big crystalline site  $\Spf(R)/\Spf(A_\crys)$ should be
a PD-thickening of a scheme $U$
over $\Spf(R)$ into a PD-scheme $T$ over $\Spf(A_\crys)$
such that $p$ is nilpotent (or locally nilpotent) on $T$ and the PD-structure
on $T$ is compatible with that on $A_\crys$. Then 
$D_0^{\bx}$ is not an object of the big crystalline site. We regard it 
as an ind-object $(\Spec(R/p^n)\hookrightarrow\Spec(A_\crys^\bx(R)/p^n)_n$,
and we define the evaluation of a crystal on the ind-object 
to be the inverse limit of the evaluations on the components of the
ind-object. Similarly for $D_{\infty}$, and other PD-thickenings of formal schemes.
} 
Similarly, 
let $D_0^{\bx}$ and $D_{\infty,0}$ be the PD-thickenings
$\Spf(\cal R)\hookrightarrow \Spf(A_\crys^\bx(R))$ and 
$\Spf(\cal R_{\infty})\hookrightarrow \Spf(A_\crys(R_{\infty}))$.
Then $(M_{\crys}^{\bx}, \Fil^{\bullet})$ is the evaluation of $(\cal M,\Fil^{\bullet})$
on $D^{\bx}$ and $\varphi_{M_{\crys}^{\bx}}$ is given by the
evaluation of $(\cal M_0,\varphi_{\cal M_0})$ on $D_0^{\bx}$.
By Lemma \ref{lem:FilDescent} (2), $(M_{\crys},\Fil^{\bullet})$ is canonically identified with
the evaluation of $(\cal M,\Fil^{\bullet})$ on $D_{\infty}$, and $\varphi_{M_{\crys}}$ is
given by the evaluation of $(\cal M_0,\varphi_{\cal M_0})$ on $D_{\infty,0}$.
The actions of $\Gamma$ on $D^{\bx}$ and $D_{\infty}$ regarded as objects
of $\op{Crys}(\Spf(R)/\Spf(A_\crys))$ induce
semi-linear actions of $\Gamma$ on the evaluations
$\cal M(D^{\bx})$ and $\cal M(D_{\infty})$ of the crystal $\cal M$.

\begin{lemma}\label{lem:GammaActionCrystal}
The isomorphisms $M_{\crys}^{\bx}\cong \cal M(D^{\bx})$
and $M_{\crys}\cong\cal M(D_{\infty})$ are $\Gamma$-equivariant.
\end{lemma}

\begin{proof} The latter follows from the former, which is verified
by an explicit computation going back to the construction of the
connection on $M_{\crys}^{\bx}$ from the $\Gamma$-action.
\end{proof}

\subsubsection{}
Let $\ul{M}_{\crys}$ denote the object $(M_{\crys}, \Fil^1M_{\crys},\varphi,p^{-1}\varphi)$
of $\op{Win}(A_\crys(R_{\infty}))$, and let $\ul{M}_{\crys,0}$ be its image
in $\op{Win}(A_\crys(\cal R_{\infty}))$, i.e., the object obtained by replacing 
$\Fil^1M_{\crys}$ by $\Fil^1M_{\crys}+pM_{\crys}$. 
Let $D_{\infty}(r)_0$ be the PD-thickening 
$\Spec(\cal R_{\infty}(r))\hookrightarrow \Spf(A_\crys(\cal R_{\infty}(r)))$. 
Then the objects $p_i^*(\ul{M}_{\crys,0})$ of $\op{Win}(\ul A_\crys(\cal R_{\infty}(1))$
$(i\in \{0,1\})$  and the objects $p_j^*(\ul{M}_{\crys,0})$ of 
$\op{Win}(\ul A_\crys(\cal R_{\infty}(2)))$ $(j\in \{0,1,2\})$ 
are canonically identified with the
windows obtained by evaluating 
$(\cal M_0,\Fil^r\cal M_0,\varphi_{\cal M_0})$ on $D_{\infty}(1)_0$
and $D_{\infty}(2)_0$.
Since the $(r+1)$ projections $D_{\infty}(r)_0
\to D_{\infty,0}$ are morphisms in the crystalline
site $\op{Crys}(\Spf(\cal R)/\Spf(A_\crys))$, we obtain an isomorphism
$$\varepsilon : 
p_1^*(\ul{M}_{\crys,0})\isoto p_0^*(\ul{M}_{\crys,0})\qquad
\text{in}\quad \op{Win}(\ul A_\crys(\cal R_{\infty}(1)))$$ satisfying the 
cocycle condition 
$$p_{02}^*(\varepsilon)=p_{12}^*(\varepsilon)
\circ p_{01}^*(\varepsilon)\qquad
\text{in}\quad
\op{Win}(\ul A_\crys(\cal R_{\infty}(2))).$$
(Note that the compositions of $(r+1)$ projections $D_{\infty}(r)_0\to D_{\infty,0}$
with $D_{\infty,0}\to D^{\bx}_0$ do not coincide unless the relative dimension of
$R/\roi$ is $0$.)

\subsubsection{}
Let $(M^{\bx}_\sub{dR},\Fil M^{\bx}_\sub{dR})$ be 
$M^{\bx}\otimes_{A_\inf^\bx(R)}R=M^{\bx}_{\crys}\otimes_{A_\crys^\bx(R)}R$
with the image of $\Fil^1M^{\bx}$, which coincides with the image of $\Fil^1M^{\bx}_{\crys}$.
Similarly, let $(M_\sub{dR},\Fil M_\sub{dR})$ be
$M\otimes_{A_\inf(R_{\infty})}R_{\infty}=M_{\crys}\otimes_{A_\crys(R_{\infty})}R_{\infty}$ 
with the image of $\Fil^1M$, which coincides with the image of $\Fil^1 M_{\crys}$.
Let $R_{\infty}(r)$ $(r\in \bb N)$ be the $p$-adic completion 
of the tensor products of $r+1$ copies of $R_{\infty}$ over $R$.
By Lemma \ref{lem:FilDescent}, 
$\Fil M^{\bx}_\sub{dR}$ and $\Fil M_\sub{dR}$ are direct factors, and 
we have $M_\sub{dR}=M^{\bx}_\sub{dR}\otimes_{R}R_{\infty}$
and $\Fil M_\sub{dR}=\Fil M^{\bx}_\sub{dR}\otimes_{R}R_{\infty}$.
This induces a canonical isomorphism 
$$\varepsilon_\sub{dR}: 
(M_\sub{dR},\Fil)
\otimes_{R_{\infty},p_1}R_{\infty}(1)\cong (M_\sub{dR}^{\bx},\Fil)\otimes_{R}R_{\infty}(1)
\cong (M_\sub{dR},\Fil)\otimes_{R_{\infty},p_0} R_{\infty}(1)$$
satisfying the cocycle condition
\[
p_{02}^*(\varepsilon_\sub{dR})=p_{12}^*(\varepsilon_\sub{dR})
\circ p_{01}^*(\varepsilon_\sub{dR}): 
(M_\sub{dR},{\Fil})\otimes_{R_{\infty},p_1}R_{\infty}(2)
\isoto
(M_\sub{dR},{\Fil})\otimes_{R_{\infty},p_0}R_{\infty}(2).
\]
The isomorphism of $A_\crys(\cal R_{\infty}(1))$-modules underlying
$\varepsilon$ is compatible with the isomorphism 
of $R_{\infty}(1)$-modules underlying $\varepsilon_\sub{dR}$. 

\subsubsection{}
Let $G_0$ be the $p$-divisible group over $\Spec(\cal R_{\infty})$
corresponding to 
the object $\ul{M}_{\crys,0}$ of $\op{Win}(\ul A_\crys(\cal R))$ by the
equivalence of categories \eqref{eq:BTWinEquivSemiPerf}
for $r=0$. Then, by \eqref{eq:BTWinEquivSemiPerf}
for $r=1$ and $r=2$, the isomorphism $\varepsilon$
of windows over $\ul A_\crys(\cal R(1))$ induces an isomorphism of
$p$-divisible groups over $\Spec(\cal R_{\infty}(1))$
$$\varepsilon_0^{\op{BT}}: p_1^*(G_0)\isoto p_0^*(G_1)$$
satisfying the cocycle condition
$$p_{02}^*(\varepsilon_0^{\op{BT}})=p_{12}^*(\varepsilon_0^{\op{BT}})
\circ p_{01}^*(\varepsilon_0^{\op{BT}})$$
over $\Spec(\cal R_{\infty}(2))$. 
Since $\Fil M_\sub{dR}\subset M_\sub{dR}$ is a direct factor
and its reduction mod $p$ coincides with the image of 
$\Fil$ of $\ul{M}_{\crys,0}$, we have the lift $G$ of $G_0$ over
$\Spf(R_{\infty})$ corresponding to $\Fil M_\sub{dR}$
([Mes, V. Theorem (1.6)]). Note that the PD-structure on $p\bb Z_p$ is nilpotent
because $p\geq 3$. Since $\varepsilon$ is compatible
with $\varepsilon_\sub{dR}$, the isomorphism $\varepsilon^{\op{BT}}_0$
has a lifting 
$$\varepsilon^{\op{BT}}: p_1^*G\isoto p_0^*G$$
over $\Spf(R_{\infty}(1))$ satisfying the obvious cocycle 
condition over $\Spf(R_{\infty}(2))$. By faithfully flat descent,
we obtain a $p$-divisible group $G_R$ over $\Spf(R)$.

\subsubsection{}
By construction, we have a canonical isomorphism 
of windows $\Phi^{\crys}_{R_{\infty}}(G)\cong\ul{M}_{\crys}$
over $\ul A_\crys(R_{\infty})$. 
For $\gamma\in \Gamma$, the action of $\gamma$ on $M_{\crys}$
induces an isomorphism $\ul{M}_{\crys,0}\otimes_{A_\crys(\cal R_{\infty}),\gamma}
A_{\crys}(\cal R_{\infty})\isoto \ul{M}_{\crys,0}$ of windows
over $\ul A_\crys(\cal R)$, and hence,
via $\Phi^{\crys}_{\cal R_{\infty}}$, an isomorphism 
$G_0\times_{\Spec(\cal R_{\infty}),\Spec(\gamma)}\Spec(\cal R_{\infty})
\cong G_0$ of $p$-divisible groups over $\cal R_{\infty}$. 
Put $G_{R,0}:=G_R\times_{\Spf(R)}\Spec(\cal R)$. 
It remains to prove that this action of $\gamma$ on $G_0$ coincides 
with the action of $\gamma$ induced by the isomorphism 
$G_0\cong G_{R,0}\times_{\Spec(\cal R)}\Spec(\cal R_{\infty})$
and the action of $\gamma$ on $\cal R_{\infty}$. 
The homomorphism $(1,\gamma): \cal R_{\infty}(1)=\cal R_{\infty}\otimes_{\cal R}\cal R_{\infty}
\to \cal R_{\infty}; x\otimes y\mapsto x\otimes \gamma(y)$ induces
a homomorphism of PD-frames $(1,\gamma): \ul A_\crys(\cal R_{\infty}(1))
\to \ul A_\crys(\cal R)$ such that $(1,\gamma)\circ p_0=\op{id}$
and $(1,\gamma)\circ p_1=\gamma$ on $\ul A_\crys(\cal R)$. 
By pulling-back the descent data $\varepsilon$ on $\ul{M}_{\crys,0}$
by $(1,\gamma)$, we obtain an isomorphism 
$$(1,\gamma)^*(\varepsilon): \gamma^*\ul{M}_{\crys,0}
\isoto \ul{M}_{\crys,0}.$$
Since $\varepsilon$ is induced by the composition of 
$$p_1^*(\cal M_0(D_{\infty,0}))\xrightarrow[p_1^*]{\cong}
\cal M_0(D_{\infty}(1)_0)\xleftarrow[p_0^*]{\cong}p_0^*(\cal M_0(D_{\infty,0})),$$
we see that $(1,\gamma)^*(\varepsilon)$ is induced by 
$$\gamma^*(\cal M_0(D_{\infty,0}))\xrightarrow[\gamma^*]{\cong}
\cal M_0(D_{\infty,0}).$$
By the second claim of Lemma \ref{lem:GammaActionCrystal}, 
we see that $(\gamma,1)^*(\varepsilon)$ coincides with the
given action of $\gamma$ on $\ul{M}_{\crys,0}$.
Let $\pi$ be the morphism $\Spec(\cal R_{\infty})\to \Spec(\cal R)$. 
By the construction of $G_0$, the isomorphism 
$p_1^*(G_0)\cong p_0^*(G_0)$ corresponding to $\varepsilon$
via $\Phi_{\cal R_{\infty}}^{\crys}$ coincides with the isomorphism 
$p_1^*\pi^*G_{R,0}\cong p_0^*\pi^*G_{R,0}$ induced
by $\pi\circ p_1=\pi\circ p_0$. By pulling back by 
$(1,\gamma): \cal R_{\infty}\to \cal R_{\infty}(1)$
we see that the isomorphism 
$\gamma^*(G_0)\cong G_0$ corresponding to $(1,\gamma)^*(\varepsilon)$
via $\Phi_{\cal R_{\infty}}^{\crys}$
coincides with the isomorphism 
$\gamma^*\pi^*G_{R,0}\cong \pi^*G_{R,0}$ induced by 
$\pi\circ \gamma=\pi$. This completes the proof.

\begin{remark}\label{rmk:1}
(1) We have an action of $\Gamma$ on the objects $D_{\infty,0}$
and $D_{\infty,0}(1)$ of $\op{Crys}(\Spf(\cal R)/\Spf(A_\crys))$,
and the two projections $D_{\infty,0}(1)\rightrightarrows D_{\infty,0}$
are $\Gamma$-equivariant. By Lemma 
\ref{lem:GammaActionCrystal}, we see that the descent data 
$\varepsilon$ on $\ul{M}_{\crys,0}$ is compatible with the $\Gamma$-action 
on $M_{\crys}$. Therefore the descent data $\varepsilon^{\op{BT}}_0$
on $G_0$ is compatible with the action of $\Gamma$ on $G_0$ induced by 
that on $\ul{M}_{\crys,0}$.  This means
that the action of $\Gamma$ on $G_0$ descends to
an action of $\Gamma$ on $G_{R,0}$. What we have proven above is
that this action on $G_{R,0}$ is trivial.\par
(2) It is plausible that the Dieudonn\'e crystal on $\op{Crys}(\Spf(R)/\Spf(A_\crys))$
(with filtration and Frobenius) associated to $G_R$ is canonical isomorphic 
to $\cal M$. \par
\end{remark}

\txb
}

\newpage
%%%%%
%%%%% GLOBABLISATION VIA PRO ETALE SITE
%%%%%
\section{Globalising small representations via the pro-\'etale site}\label{section_BKF_on_proetale}
So far in the paper we have studied various approaches to the category of generalised representations $\Rep_\Gamma^\mu(A_\inf(R_\infty))$ in the local context, namely for a small, formally smooth $\roi$-algebra $R$ equipped with a choice of framing. Even using Remark \ref{remark_Delta} to phrase our results in terms of $\Rep_\Gamma^\mu(A_\inf(\res R))$, our generalised representations would still depend on the choice of an algebraic closure of $\Frac R$. The goal of this section is twofold: (1) we will use the pro-\'etale site to present an intrinsic approach to our generalised representations of interest which is independent of any choices, and (2) we will globalise this approach to arbitrary smooth, $p$-adic formal $\roi$-schemes.

So let $\frak X$ be a smooth, $p$-adic formal $\roi$-scheme; let $X$ denote its adic generic fibre as a rigid analytic space and let $\nu:X_\sub{pro\'et}\to\frak X_\sub{Zar}$ be the projection map of sites. We begin this section by introducing our relative Breuil--Kisin--Fargues modules (without Frobenius for the moment) on $\frak X$:

\begin{definition}\label{definition_BKF_global}
Let $\bb M$ be a sheaf of $\bb A_{\sub{inf},X}$-modules on $X_\sub{pro\'et}$, and recall the elements $\xi_r\in A_\inf$ from the Notations. We say that $\bb M$ is {\em trivial modulo $\xi_r$}if the sheaf of $\nu_*(\bb A_{\inf,X}/\xi_r)$-modules $\nu_*(\bb M/\xi_r)$ is locally finite free and the counit \[\nu^{-1}\nu_*(\bb M/\xi_r)\otimes_{\nu^{-1}\nu_*(\bb A_{\inf,X}/\xi_r)}\bb A_{\inf,X}/\xi_r\To \bb M/\xi_r\] is an isomorphism of sheaves on $X_\sub{pro\'et}$. We say that $\bb M$ is {\em trivial modulo $<\mu$} if it is trivial modulo $\xi_r$ for all $r\ge1$.

The category of locally finite free $\bb A_{\sub{inf},X}$-modules which are trivial modulo $<\mu$ will be denoted by $\BKF(\frak X)$ and called {\em Breuil--Kisin--Fargues modules without Frobenius} on $\frak X$.
\end{definition}

It is clear from the definition that the construction $\frak X\mapsto \BKF_{\sub{wo}\phi}(\frak X)$ is a stack for the Zariski topology; namely, given an open cover of $\frak X$, then a relative Breuil--Kisin--Fargues module on $\frak X$ is uniquely determined by its restriction to each open together with the glueing isomorphisms on the intersections (required to satisfy the usual cocycle conditions on the triple intersections). The goal of this section is therefore to show, given any open $\Spf R\subseteq \frak X$ where $R$ is a small formally smooth $\roi$-algebra as in \S\ref{section_small_reps}, that relative Breuil--Kisin--Fargues modules on $\Spa(R[\tfrac1p],R)$ are given by the generalised representations from \S\ref{section_small_reps}; to be precise, Theorem \ref{theorem_Ainf_reps_vs_pro_etale} will show that taking global sections (on a suitable affinoid perfectoid cover of the adic generic fibre of $\Spf R$) induces an equivalence of categories 
\begin{equation}\BKF(\Spf R)\quis\Rep_\Gamma^{<\mu}(A_\inf(R_\infty)).\label{eqn_BKF_to_Rep}\end{equation}

The main obstacle to establishing this equivalence is the lack of a theory of vector bundles over the integral structure sheaf on adic spaces. More precisely, given an affinoid perfectoid $\cal U$ in the pro-\'etale site of $X$, it is not true that locally finite free sheaves of $\hat\roi_X^+|_{\cal U}$-modules correspond to finite projective modules over the corresponding integral perfectoid ring $\Gamma(\cal U,\hat\roi_X^+)$. However, the triviality conditions on our sheaves and representations allow this problem to be overcome via judicious use of the almost purity theorem. We do this first in \S\ref{subsection_localI} for modules over sheaves such as $\hat\roi_X^+$, $W_r(\hat\roi_X^+)$, and $W_r(\hat\roi_{X^{\flat}}^{+})$, then bootstrap up to $\bb A_{\inf,X}$ in \S\ref{subsection_localII}.

\subsection{Small sheaves of modules over $\hat\roi_X^+$ etc.}\label{subsection_localI}
We wish to study sheaves on modules on $X_\sub{pro\'et}$ over the usual integral structure/period sheaves, namely \[\hat\roi_X^+,\quad W_r(\hat\roi_X^+)=\bb A_{\sub{inf},X}/\xi_r,\quad W_r({\hat\roi_{X^{\flat}}^{+}})=\bb A_{\sub{inf},X}/p^s,\quad \bb A_{\sub{inf},X}.\] (See \cite[\S5]{BhattMorrowScholze1} \cite{Scholze2013}.) Rather than constantly changing notation or arguing with different cases, we adopt the following axiomatic set-up to simultaneously treat the first three cases (the final case will be treated separately in \S\ref{subsection_localII} since it does not quite satisfy the usual axioms for almost mathematics and is complete both $\xi$-adically and $p$-adically).

Let $B$ be a ring, $\frak M\subseteq B$ an ideal, and $\bb B$ a sheaf of $B$-algebras on $X_\sub{pro\'et}$. These are subject to the following hypotheses:
\begin{enumerate}
\item[(B1)] Firstly, we assume that $\frak M$ is the standard type of ideal for almost mathematics which arises from a valuation ring: namely, we assume that $\frak M=\frak M^2$ is an increasing union $\bigcup_{i\ge0} \pi_iB$ of principal ideals of $B$ generated by non-zero-divisors $\pi_i$, with the property that $\pi_i^p\in \pi_{i-1}B$ for each $i\ge1$. To avoid any fixed choices of the $\pi_i$, we write $\frak M^\circ\subseteq\frak M$ for the subset of elements $\pi\in\frak M$ with the property that $\frak M\subseteq\sqrt{\pi B}$. The set $\frak M$ satisfies the following: (i) $\pi_i\in \frak M^{\circ}$; (ii) For any $\pi, \pi'\in \frak M^{\circ}$, there exists a positive integer $n$ such that $\pi^n\vert \pi'$ and $(\pi')^n\vert \pi$; (iii) For any $\pi\in \frak M^{\circ}$ and $\pi'\in \frak M$, we have $\pi'\in \frak M^{\circ}$ if and only if $\pi'\vert \pi^n$ for $n\gg 0$. For the remaining properties let $\pi$ be any element of $\frak M^\circ$ (the properties are easily seen not to depend on the particular $\pi$.)
\item[(B2)] Assume that $\bb B$ is complete, in that the canonical map $\bb B\to\projlim_s\bb B/\pi^s$ is an isomorphism.
\item[(B3)] For all affinoid perfectoids $V\in X_\sub{pro\'et}$, assume the following:
\begin{enumerate}
\item $\pi$ is a non-zero divisor of $\Gamma(V,\bb B)$ (hence of $\bb B$ since we make this assumption for all $V$).
\item $H^i_\sub{pro\'et}(V,\bb B)$ is almost zero for $i>0$. (It then follows formally, for any $f\in B$ which is a non-zero-divisor of $\bb B$, that $H^i_\sub{pro\'et}(V,\bb B/f)$ is almost zero for all $i\ge1$, and that the injection $\Gamma(V,\bb B)/f\to \Gamma(V,\bb B/f)$ is almost surjective.)
\end{enumerate}
\item[(B4)] Finally, assume that the pro-\'etale sheaf $\bb B/\pi\bb B$ is the pull-back of a sheaf from the \'etale site of $X$, i.e., this necessarily means that $w^*w_*(\bb B/\pi\bb B)\isoto \bb B/\pi\bb B$, where $w:X_\sub{pro\'et}\to X_\sub{\'et}$ is the canonical morphism of sites, by \cite[Corol.~3.17]{Scholze2013} ([loc.~cit.] also shows that the higher direct images $R^iw_*(\bb B/\pi\bb B)$ then vanish for $i>0$, whence an easy induction shows that the isomorphism remains valid if we replace $\pi$ by any power, hence by any element of $\frak M^\circ$).
\end{enumerate}

\begin{example}\label{example_B}
The following are the standard choices of $\frak M\subseteq B$, $\bb B$ satisfying these hypotheses, for any $r\ge1$:
\begin{enumerate}
\item $W_r(\frak m)\subseteq W_r(\roi)$, $W_r(\hat\roi_X^+)$;
\item $W_r(\frak m^\flat)\subseteq W_r(\roi^\flat)$, $W_r({\hat\roi_{X^{\flat}}^{+}})$.
\end{enumerate}
Indeed, the almost mathematics hypothsis (B1) follow from \cite[Lem.~10.1 \& Corol.~10.2]{BhattMorrowScholze1}. By induction on the short exact sequence $0\to W_r\to W_{r+1}\to W_1\to 0$ the other hypotheses (B2)--(B4) then reduce to the case $r=1$, in which case they follow from the definitions of $\hat\roi_X^+$ and ${\hat\roi_{X^{\flat}}^{+}}$ and from \cite[Lems.~4.10 \& 5.11]{Scholze2013}.
\end{example}

We will encounter often the condition appearing in (B4), namely that a given sheaf $\cal F$ on $X_\sub{pro\'et}$ is the pull-back of a sheaf on $X_\sub{\'et}$, or equivalently that the counit $w^*w_*(\cal F)\isoto\cal F$ is an isomorphism. Such sheaves on $X_\sub{pro\'et}$ will henceforth be called {\em discrete} since the resulting Galois actions on their global sections are continuous for the discrete topology, as we will see before Example \ref{example_affinoid_perfectoid_covers}.

For the rest of \S\ref{subsection_localI}--\ref{subsection_localII} we assume that we are in the local situation that $\frak X=\Spf R$, where $R$ is a $p$-adically complete, formally smooth, small $\roi$-algebra $R$ as at the start of \S\ref{section_small_reps}; to remind ourselves that we are working locally we will write $U=\Spa (R[\tfrac1p],R)$ instead of $X$ for the rigid analytic generic fibre of $\frak X$.

In this subsection we study modules over sheaves of rings $\bb B$ satisfying hypotheses (B1)--(B4). The following definition presents the types of modules which will be of interest to us.

\begin{definition}\label{trivial_modulo_pi}
Let $\frak M\subseteq B$, $\bb B$ satisfy hypotheses (B1)--(B4), and let $\bb M$ be a sheaf of $\bb B$-modules.

Letting $\pi$ be any element of $\frak M^\circ$, we say that $\bb M$ is {\em complete} if and only if $\bb M\isoto\projlim_s\bb M/\pi^s$, and that it is {\em topologically torsion-free} if and only if it is $\pi$-torsion-free; these two notions do not depend on the chosen $\pi\in\frak M^\circ$.

Fix $\varpi\in\frak M^\circ$. We say that $\bb M$ is {\em trivial modulo $\varpi$} if and only if $\Gamma(U,\bb M/\varpi)$ is a finite projective $\Gamma(U,\bb B/\varpi)$-module and the canonical base change map \[\Gamma(U,\bb M/\varpi)\otimes_{\Gamma(U,\bb B/\varpi)}\bb B/\varpi\To\bb M/\varpi\] is an isomorphism. This property does depend crucially on $\varpi$. We will see in Proposition \ref{proposition_small_sheaves}(v) that if $\bb M$ is complete, topologically torsion-free, and trivial modulo some element of $\frak M^\circ$, then it is locally finite projective sheaf of modules.

We will also be interested in the following weaker condition: say that $\bb M$ is {\em trivial modulo $<\varpi$} if and only if it is trivial modulo $\varpi'$ for all $\varpi'\in\frak M$ such that $\varpi\in\varpi'\frak M$ (note this automatically implies $\varpi'\in \frak M^{\circ}$).

A sheaf which is trivial modulo some element of $\frak M^\circ$ in this way may informally be called {\em small}, in agreement with Faltings' terminology for generalised representations \cite{Faltings2005, Faltings2011}. The relation between the two notions will be made precise in Theorem \ref{theorem_correspondence_for_B}.
\end{definition}

We now begin our systematic study of small sheaves:

\begin{proposition}\label{proposition_small_sheaves}
Let $\frak M\subseteq B$, $\bb B$ satisfy hypotheses (B1)--(B4), and let $\bb M$ be a complete, topologically torsion-free sheaf of $\bb B$-modules which is trivial modulo some element of $\frak M^\circ$. Then, for any affinoid perfectoid $\cal U\in U_\sub{pro\'et}$, the following hold:
\begin{enumerate}
\item $H^i_\sub{pro\'et}(\cal U,\bb M)$ is almost zero for all $i>0$, and $\bb M$ is derived $\pi$-adically complete for any $\pi\in\frak M^\circ$.
\item Given any $a,b\in \frak M^\circ$ with $a\in b\frak M^{\circ}$, then the two maps \[\Gamma(\cal U,\bb M)\to\Gamma(\cal U,\bb M/b),\qquad \Gamma(\cal U,\bb M/a)\to\Gamma(\cal U,\bb M/b)\] have the same image, namely $\Gamma(\cal U,\bb M)/b$;
\item $\Gamma(\cal U,\bb M)/\pi$ is finite projective over $\Gamma(\cal U,\bb B)/\pi$ for any $\pi\in \frak M^\circ$.
\item $\Gamma(\cal U,\bb M)\otimes_{\Gamma(\cal U,\bb B)}\bb B/\pi|_\cal U\Isoto \bb M/\pi|_\cal U$ for any $\pi\in \frak M^\circ$.
\item $\Gamma(\cal U,\bb M)$ is a finite projective $\Gamma(\cal U,\bb B)$-module and the canonical map \[\Gamma(\cal U,\bb M)\otimes_{\Gamma(\cal U,\bb B)}\bb B|_\cal U\To\bb M|_\cal U\] is an isomorphism of sheaves on $U_{\sub{pro\'et}/\cal U}$. (Since this holds for all affinoid perfectoids $\cal U$, it follows that $\bb M$ is a locally finite projective sheaf of $\bb B$-modules.)
\end{enumerate}
If $\bb M$ is trivial modulo $\varpi$ and $\Gamma(\cal U,\bb M/\varpi)$ is a finite free $\Gamma(\cal U,\bb B/\varpi)$-module for some $\varpi\in \frak M^\circ$, then we may replace ``projective'' by ``free'' in the claim (iii) and in the first claim of (v).
\end{proposition}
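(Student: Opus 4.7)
The plan is to prove (i) first and then derive (ii)--(v) from it, with (ii) serving as the key structural input. For (i), the smallness hypothesis writes $\bb M/\varpi$ on $U$ as a direct summand of a finite free $\bb B/\varpi$-module (via the triviality mod $\varpi$ and finite projectivity of the global sections), so hypothesis (B3)(b) gives the almost vanishing of $H^i_\sub{pro\'et}(\cal U, \bb M/\varpi)$ in positive degrees. Since $\bb M$ is topologically torsion-free, the short exact sequences $0 \to \bb M/\varpi \to \bb M/\varpi^{s+1} \to \bb M/\varpi^s \to 0$ are exact, and induction on $s$ propagates the almost vanishing to each $\bb M/\varpi^s$. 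Completeness and topological torsion-freeness of $\bb M$ identify it with its derived $\varpi$-adic completion, so the Milnor long exact sequence for derived inverse limits upgrades the almost vanishing to all of $\bb M$, and the derived $\pi$-adic completeness for any $\pi \in \frak M^\circ$ follows by a standard comparison.

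The core of the argument is (ii). Given $a \in b\frak M^\circ$, I would factor $a = bc$ with $c \in \frak M^\circ \subseteq \frak M$. The morphism of short exact sequences from $0 \to \bb M \xto{a} \bb M \to \bb M/a \to 0$ to $0 \to \bb M \xto{b} \bb M \to \bb M/b \to 0$, given by multiplication by $c$ on the leftmost column, the identity on the middle, and the natural projection on the right, identifies the connecting map $\delta_b$ applied to the image of any $x \in \Gamma(\cal U, \bb M/a)$ with $c \cdot \delta_a(x) \in H^1_\sub{pro\'et}(\cal U, \bb M)$. Since $H^1_\sub{pro\'et}(\cal U, \bb M)$ is killed by $\frak M$ by (i) and $c \in \frak M$, this connecting class vanishes, so the image of $x$ in $\Gamma(\cal U, \bb M/b)$ lies in $\ker(\delta_b) = \op{im}(\Gamma(\cal U, \bb M) \to \Gamma(\cal U, \bb M/b))$, giving the non-trivial inclusion; the reverse inclusion is tautological.

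Parts (iii)--(v) then follow by increasingly routine arguments. For $\pi \in \frak M^\circ$ dividing $\varpi$, applying (ii) with $(a,b) = (\varpi, \pi)$ identifies $\Gamma(\cal U, \bb M)/\pi$ with the image of $\Gamma(\cal U, \bb M/\varpi)$ in $\Gamma(\cal U, \bb M/\pi)$, which by finite projectivity of $\Gamma(\cal U, \bb M/\varpi)$ and exactness of base change coincides with the finite projective base change of $\Gamma(\cal U, \bb M/\varpi)$ along $\Gamma(\cal U, \bb B/\varpi) \to \Gamma(\cal U, \bb B)/\pi$; this yields both (iii) and (iv) in this case, and hypothesis (B1) allows the general $\pi$ to be reduced to the case $\pi \mid \varpi$ by a further induction. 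Part (v) then combines (iii), (iv), the derived $\pi$-adic completeness from (i), and Lemma~\ref{lemma_limits_of_finite_projective}; the free variant is analogous, since base change of a finite free module is finite free and Lemma~\ref{lemma_limits_of_finite_projective} assembles the compatible system of finite free quotients into a finite free $\Gamma(\cal U, \bb B)$-module. The main obstacle is the precise form of (ii): the factorisation $a = bc$ with $c \in \frak M^\circ$ combined with the functoriality of connecting maps is exactly what converts the mere almost-surjectivity of $\Gamma(\cal U, \bb M) \to \Gamma(\cal U, \bb M/b)$ into a genuine equality of images, by sending the obstruction class into the $c$-torsion of the $\frak M$-torsion group $H^1_\sub{pro\'et}(\cal U, \bb M)$, where it necessarily vanishes.
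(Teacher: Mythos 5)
Your proof is correct and follows essentially the same route as the paper's: (i) via smallness plus (B3)(b), induction on powers of $\varpi$, and passage to the limit; (ii) as the key structural input, obtained by killing the obstruction class in the $\frak M$-torsion group $H^1_\sub{pro\'et}(\cal U,\bb M)$; and (iii)--(v) by base change, torsion-freeness, and completeness exactly as in the paper. The only cosmetic difference is in (ii), where you exhibit the connecting class as $c\cdot\delta_a(x)$ with $c=a/b\in\frak M$ via a morphism of short exact sequences, whereas the paper routes through the injection $H^1_\sub{pro\'et}(\cal U,\bb M)\hookrightarrow H^1_\sub{pro\'et}(\cal U,\bb M/\tfrac ab)$ and a diagram chase --- two packagings of the same computation.
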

\begin{proof}
(i): Let $\varpi\in\frak M^\circ$ be an element such that $\bb M$ is trivial mod $\varpi$. Since $H^i_\sub{pro\'et}(\cal U,\bb M/\varpi)=\Gamma(U,\bb M/\varpi)\otimes_{\Gamma(U,\bb B/\varpi)}H^i_\sub{pro\'et}(\cal U,\bb B/\varpi)$, these higher cohomologies are almost zero by assumption (B3b); similarly if we replace $\varpi$ by $\varpi^s$, for any $s\ge1$, by an easy induction. Moreover, from the exact sequence $0\to\bb M/\varpi\xto{\times\varpi^s}\bb M/\varpi^{s+1}\to\bb M/\varpi^s\to 0$ (which holds since $\bb M$ is topologically torsion-free) one sees that $\Gamma(\cal U,\bb M/\varpi^{s+1})\to \Gamma(\cal U,\bb M/\varpi^s)$ is almost surjective. Since we have established these properties for an arbitrary affinoid perfectoid (which form a basis of $U_\sub{pro\'et}$), we may now apply the almost version of \cite[Lem.~3.18]{Scholze2013} (as in \cite[Lem.~4.10(v)]{Scholze2013}) to $\bb M=\projlim_s\bb M/\varpi^s$ to deduce that $H^i_\sub{pro\'et}(\cal U,\bb M)$ is almost zero for $i>0$. 

In fact, we have shown that the map $R\Gamma_\sub{pro\'et}(\cal U,\bb M)\to \op{Rlim}_sR\Gamma_\sub{pro\'et}(\cal U,\bb M/\varpi^s)$ is an almost quasi-isomorphism of $B$-modules, and so in particular the cone is derived $\varpi$-adically complete. But the map becomes a quasi-isomorphism upon derived $\varpi$-adic completion, and so this cone is in fact zero. Since this is true for all affinoid perfectoid $\cal U$ (which form a basis of $U_\sub{pro\'et}$), it follows that $\bb M\quis\op{Rlim}_s\bb M/\varpi^s$. Finally, in this quasi-isomorphism we are free to replace $\varpi$ by any other element $\pi$ of $\frak M^\circ$, since the powers of the two elements are intertwined.

(ii): For any $a,b\in \frak M^\circ$ such that $a\in b\frak M^\circ$, the exact sequence $0\to\bb M\xto{\times a/b}\bb M \to\bb M/\tfrac ab\to 0$ and (i) show that $H^1_\sub{pro\'et}(\cal U,\bb M)\into H^1_\sub{pro\'et}(\cal U,\bb M/\tfrac ab)$. Taking cohomology in the commutative diagram
\[\xymatrix{
0\ar[r] & \bb M\ar[r]^b\ar[d]&\bb M\ar[r]\ar[d]&\bb M/b\ar[r]\ar[d]&0\\
0\ar[r] & \bb M/\tfrac ab\ar[r]^b&\bb M/a\ar[r]&\bb M/b\ar[r]&0,
}\]
and performing a simple diagram chase now yields (ii).

(iii): In the rest of the proof, fix an element $\pi\in\frak M$ such that $\varpi\in \pi\frak M$ (because we are in a setting for almost mathematics, such $\pi$ does exist). We first observe that the canonical map \begin{equation}\Gamma( U,\bb M/\varpi)\otimes_{\Gamma( U,\bb B/\varpi)}\Im\big(\Gamma(\cal U,\bb B/\varpi)\to \Gamma(\cal U,\bb B/\pi)\big)\To \Im\big(\Gamma(\cal U,\bb M/\varpi)\to \Gamma(\cal U,\bb M/\pi)\big)\label{eqn_trivial}\end{equation} is an isomorphism: indeed, we may interchange the tensor product and $\Im$ on the left side, then use the smallness hypothesis on $\bb M$ to write it as the right side. Appealing to part (ii), for both $\bb M$ and $\bb B$ itself, we have therefore established a natural isomorphism \[\Gamma( U,\bb M/\varpi)\otimes_{\Gamma( U,\bb B/\varpi)}\Gamma(\cal U,\bb B)/\pi\cong \Gamma(\cal U,\bb M)/\pi,\] whence $\Gamma(\cal U,\bb M)/\pi$ is finite projective over $\Gamma(\cal U,\bb B)/\pi$. It then follows from $\pi$-torsion-freeness of $\Gamma(\cal U,\bb M)$ and $\Gamma(\cal U,\bb B)$ that $\Gamma(\cal U,\bb M)/\pi^s$ is finite projective over $\Gamma(\cal U,\bb B)/\pi^s$ for all $s\ge1$, which implies (iii) since any element of $\frak M^\circ$ divides some power of $\pi$.

(iv): The canonical map $\Gamma(\cal U,\bb M/\varpi)\otimes_{\Gamma(\cal U,\bb B/\varpi)}\bb B/\varpi|_\cal U\to\bb M/\varpi|_\cal U$ of sheaves on $U_{\sub{pro\'et}/\cal U}$ is clearly an isomorphism (by the triviality of $\bb M$ modulo $\varpi$), and similarly if we replace $\varpi$ by $\pi$ (since triviality mod $\varpi$ implies triviality mod $\pi$). Taking the image of the $\varpi$-identification in the version for $\pi$, using (\ref{eqn_trivial}), shows that
\[\Im\big(\Gamma(\cal U,\bb M/\varpi)\to\Gamma(\cal U,\bb M/\pi)\big) \otimes_{\Im(\Gamma(\cal U,\bb B/\varpi)\to\Gamma(\cal U,\bb B/\pi))}\bb B/\pi|_\cal U\Isoto\bb M/\pi|_\cal U.\] Rewriting this using (ii) yields \[\Gamma(\cal U,\bb M)\otimes_{\Gamma(\cal U,\bb B)}\bb B/\pi|_\cal U\Isoto \bb M/\pi|_\cal U.\] A trivial induction again allows us to replace $\pi$ by $\pi^s$ for any $s\ge1$, hence by any element of $\frak M^\circ$. If $\Gamma(\cal U,\bb M/\varpi)$ is finite free over $\Gamma(\cal U,\bb B/\varpi)$, then the same argument as above shows the claim (iii) with ``projective'' replaced by ``free''.

(v): $\Gamma(\cal U,\bb M)$ is $\pi$-adically complete since it identifies with an inverse limit $\projlim_s\Gamma(\cal U,\bb M/\pi^s)$ of $\pi$-power-torsion modules. It is $\pi$-torsion-free since $\bb M$ is topologically torsion-free by assumption, and we already know that $\Gamma(\cal U,\bb M)/\pi$ is finite projective over $\Gamma(\cal U,\bb B)/\pi$ by (iii), whence it follows that $\Gamma(\cal U,\bb M)$ is finite projective over $\Gamma(\cal U,\bb B)$. The desired isomorphism trivially follows by taking a limit of the isomorphisms in (iv) over powers of $\pi$. If $\Gamma(\cal U,\bb M/\varpi)$ is finite free over $\Gamma(\cal U,\bb B/\varpi)$, then $\Gamma(\cal U,\bb M)/\pi^s$ is finite free over $\Gamma(\cal U,\bb B)/\pi^s$ for all $s\geq 1$ by (iii). By taking an inverse limit, we see that $\Gamma(\cal U, \bb M)$ is finite free over $\Gamma(\cal U,\bb B)$.
\end{proof}

We now focus attention on an affinoid perfectoid cover $\cal U$ of $U$ of the form 
\begin{itemize}
\item[($\Pi$)] $\cal U=\projlimf_iU_i$, where each $U_i\to U$ is a finite, Galois \'etale cover with Galois group $G_i$; we put $G:=\projlim_iG_i$.
\end{itemize}
Given any sheaf $\cal F$ on $U_\sub{pro\'et}$, the action of $G$ on $\cal U$ formally induces an action of $G$ on the global sections $\Gamma(\cal U,\cal F)$. If $\cal F$ is discrete, then this action is continuous for the discrete topology on $\Gamma(\cal U,\cal F)$; indeed, $\Gamma(\cal U,\cal F)=\indlim_i\Gamma(U_i,\cal F)$ \cite[Lem.~3.16]{Scholze2013} and the $G$-action on $\Gamma(U_i,\cal F)$ factors through $G_i$ (by the very definition of the action).

\begin{example}\label{example_affinoid_perfectoid_covers}
There are two standard choices for such an affinoid perfectoid cover $\cal U$:
\begin{enumerate}
\item $\cal U=U_\infty$ the usual affinoid perfectoid cover of $U$ associated to a chosen framing $\roi\pid{T_1^{\pm1},\dots,T_d^{\pm 1}}\to R$, in which case $G=\bb Z_p(1)^d$.
\item $\cal U=\res U$ the affinoid perfectoid cover arsing from the filtered colimit of all finite extensions of $R$ which are unramified outside $p$ (inside some fixed algebraic closure of $\Frac R$), in which case $G=\pi_1^\sub{\'et}(\Spec R[\tfrac1p])$; as in Remark \ref{remark_Delta}, here we implicitly assume that $\Spec(R/pR)$ is connected and we omit the base point from $\pi_1^\sub{\'et}$.
\end{enumerate}
\end{example}

Our aim is to prove that suitable generalised $G$-representations correspond exactly to certain sheaves on $U_\sub{pro\'et}$. We first need the following cohomological vanishing, which is a simple consequence of almost purity (B3b):

\begin{lemma}\label{lemma_almost_purity_group}
Let $\frak M\subseteq B$, $\bb B$ satisfy hypotheses (B1)--(B4), and let $\cal U$ be an affinoid perfectoid cover of $U$ as in ($\Pi$). Let $M$ be a complete, topologically torsion-free\footnote{This means $\pi$-adically complete and $\pi$-torsion-free for any/all $\pi\in\frak M^\circ$.} $\Gamma(\cal U,\bb B)$-module equipped with a continuous, semi-linear $G$-action which is trivial modulo some element of $\frak M^\circ$. Then, for any affinoid perfectoid $V\in U_\sub{pro\'et}$, the continuous group cohomology $H^i_\sub{cont}(G,M\otimes_{\Gamma(\cal U,\bb B)}\Gamma(\cal U\times_UV,\bb B))$ almost vanishes for $i>0$. 
\end{lemma}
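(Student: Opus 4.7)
The plan is to reduce the cohomological vanishing to the case of the trivial module $\bb B$, where it follows from almost purity (B3b) via Cartan--Leray, and then transfer the vanishing to $M \otimes C$ using the triviality of $M$ modulo $\varpi$. Throughout, I set $B := \Gamma(\cal U, \bb B)$, $C := \Gamma(\cal U \times_U V, \bb B)$, and $N := M \otimes_B C$. Since the statement concerns continuous cohomology, I will interpret $N$ via its $\pi$-adic completion $\hat N := \projlim_s N/\pi^s$ for a fixed $\pi \in \frak M^\circ$, so that $R\Gamma_\sub{cont}(G, \hat N) = \op{Rlim}_s R\Gamma(G, N/\pi^s)$.

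The first step is almost purity for the trivial coefficient $\bb B$. The pro-finite \'etale Galois cover $\cal U \times_U V \to V$ with group $G$ gives the Cartan--Leray spectral sequence
\[ E_2^{i,j} = H^i_\sub{cont}(G, H^j_\sub{pro\'et}(\cal U \times_U V, \bb B)) \Longrightarrow H^{i+j}_\sub{pro\'et}(V, \bb B). \]
Since both $V$ and $\cal U \times_U V$ are affinoid perfectoid, hypothesis (B3b) makes the $j > 0$ rows and the full abutment for $i+j > 0$ almost zero; hence $H^i_\sub{cont}(G, C)$ is almost zero for $i > 0$. By (B4) and an induction, $\bb B/\pi^s$ is discrete on $X_\sub{pro\'et}$, and since $C$ is $\pi$-torsion-free and $\pi$-adically complete, the long exact sequence for $0 \to C \xto{\pi^s} C \to C/\pi^s \to 0$ then yields that $H^i(G, C/\pi^s)$ is almost zero for all $i > 0$ and $s \ge 1$.

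The second step is to transfer this to $N$. Pick $\pi \in \frak M^\circ$ dividing $\varpi$; then $M$ is trivial mod $\pi$ by the last observation of Definition \ref{definition_trivial}. Writing $M_0 := (M/\pi)^G$ and $D_\pi := (B/\pi)^G$, the triviality hypothesis gives $M/\pi \cong M_0 \otimes_{D_\pi} B/\pi$, and hence $N/\pi \cong M_0 \otimes_{D_\pi} C/\pi$ as $G$-modules, with $G$ acting trivially on $M_0$ and $D_\pi$. Since $M_0$ is finite projective over $D_\pi$, cohomology commutes with this tensor:
\[ H^i(G, N/\pi) \cong M_0 \otimes_{D_\pi} H^i(G, C/\pi), \]
which is almost zero for $i > 0$ (the $\frak M$-annihilator survives the tensor). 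An induction on $s$ using $0 \to N/\pi \xto{\pi^s} N/\pi^{s+1} \to N/\pi^s \to 0$ then extends this to $H^i(G, N/\pi^s)$ for all $i > 0$ and $s \ge 1$. Finally, I would conclude from the Milnor short exact sequence
\[ 0 \to R^1\textstyle\projlim_s H^{i-1}(G, N/\pi^s) \to H^i_\sub{cont}(G, \hat N) \to \projlim_s H^i(G, N/\pi^s) \to 0: \]
the right-hand term is almost zero for $i > 0$; the $R^1\projlim$ is almost zero for $i \ge 2$ for the same reason, and for $i = 1$ because the almost vanishing of $H^1(G, N/\pi)$ forces the transitions $(N/\pi^{s+1})^G \to (N/\pi^s)^G$ to be almost surjective, giving an almost Mittag--Leffler system.

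The main obstacle I anticipate is bookkeeping around the tensor product. The module $N$ is a priori neither complete nor finite projective, so in Step 3 one must verify that $N/\pi^s$ stays $\pi$-torsion-free at each stage of the induction, and in Step 4 one must identify the continuous cohomology of the lemma's ordinary tensor with that of its $\pi$-adic completion. Both issues should be resolvable from the flatness consequences of the mod-$\pi$ triviality of $M$; the rest of the argument is the standard mechanism of almost-purity $+$ Cartan--Leray, with almost mathematics being preserved under the tensor, limit, and $R^1\projlim$ operations that appear.
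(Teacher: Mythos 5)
Your proposal is correct and follows essentially the same route as the paper: almost vanishing of $H^{>0}_\sub{cont}(G,-)$ for the trivial coefficient via Hochschild--Serre/Cartan--Leray together with (B3b), transfer to $M\otimes_{\Gamma(\cal U,\bb B)}\Gamma(\cal U\times_UV,\bb B)$ modulo $\pi$ using the triviality of $M$ and finite projectivity of $(M/\pi)^G$, then induction on $\pi^s$ and passage to the limit. The "main obstacle" you flag at the end is not actually an obstacle: since $M$ is trivial mod $\pi$, complete and $\pi$-torsion-free, it is finite projective over $\Gamma(\cal U,\bb B)$, so $N=M\otimes_{\Gamma(\cal U,\bb B)}\Gamma(\cal U\times_UV,\bb B)$ is a direct summand of a finite free module over the complete, torsion-free ring $\Gamma(\cal U\times_UV,\bb B)$ and hence already $\pi$-adically complete and torsion-free, giving $\hat N=N$ and keeping each $N/\pi^s$ torsion-free as needed in your induction. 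The only other loose point is your application of Cartan--Leray directly to the non-discrete sheaf $\bb B$; the paper instead establishes the Hochschild--Serre identification for the discrete sheaves $\bb B/\pi$ (using (B4) and filtered colimits over the finite Galois covers) and only then passes to the limit, which is the cleaner order of operations and is anyway what your subsequent mod-$\pi^s$ reduction requires.
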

\begin{proof}
Although we could not find a reference, the standard argument (which essentially appears in step 2 of the proof of Theorem \ref{theorem_correspondence_for_B}) shows that there is a Hochschild--Serre identification $R\Gamma_\sub{\'et}(Y,\cal F)=R\Gamma(\Gal(X/Y),R\Gamma_\sub{\'et}(X,\cal F))$ for any finite Galois \'etale cover $X\to Y$ of Noetherian adic spaces and sheaf $\cal F$ on $Y_\sub{\'et}$. In particular, writing $V=\projlimf_j V_j$, we have $R\Gamma_\sub{\'et}(V_j,\cal F)=R\Gamma(G_i,R\Gamma_\sub{\'et}(U_i\times_UV_j,\cal F))$ for any sheaf $\cal F$ on $U_\sub{\'et}$. By taking the filtered colimit over $i$ and $j$, this yields \[R\Gamma_\sub{pro\'et}(V,\cal F)=R\Gamma_\sub{cont}(G,R\Gamma_\sub{pro\'et}(\cal U\times_UV,\cal F))\] for any discrete sheaf $\cal F$ on $U_\sub{pro\'et}$.

We apply the previous Hochschild--Serre identification to $\cal F=\bb B/\pi$ for each $\pi\in\frak M^\circ$. But the higher cohomologies of $R\Gamma_\sub{pro\'et}(V,\bb B/\pi)$ and $R\Gamma_\sub{pro\'et}(\cal U\times_UV,\bb B/\pi)$ almost vanish, by hypothesis (B3b) since $V$ and $\cal U\times_UV$ are affinoid perfectoid; so we deduce that the canonical map \[\Gamma(V,\bb B/\pi)\To R\Gamma_\sub{cont}(G,\Gamma(\cal U\times_UV,\bb B/\pi))\tag{\dag}\] is an almost quasi-isomorphism. Hence $H^i_\sub{cont}(G,\Gamma(\cal U\times_UV,\bb B/\pi))$ is almost zero for $i>0$.

From the almost isomorphism $\Gamma(\cal U\times_UV,\bb B)/\pi\to \Gamma(\cal U\times_UV,\bb B/\pi)$ of (B3b), we may then deduce that $H^i_\sub{cont}(G,\Gamma(\cal U\times_UV,\bb B)/\pi)$ is almost zero for $i>0$. We now assume that $\pi$ was chosen small enough so that $M$ is trivial modulo $\pi$; then clearly  $H^i_\sub{cont}(G,M\otimes_{\Gamma(\cal U,\bb B)}\Gamma(\cal U\times_UV,\bb B)/\pi)$ is also almost zero for $i>0$. Note that $M$ is a finite projective $\Gamma(\cal U,\bb B)$-module by the assumptions on $M$. But then by a trivial induction we may replace $\pi$ by any power $\pi^s$ and complete the proof by letting $s\to\infty$ (using $R\Gamma_\sub{cont}(G,M\otimes_{\Gamma(\cal U,\bb B)}\Gamma(\cal U\times_UV,\bb B))=\op{Rlim}_sR\Gamma_\sub{cont}(G,M\otimes_{\Gamma(\cal U,\bb B)}\Gamma(\cal U\times_UV,\bb B)/\pi^s)$.

%Passage to the limit as $s\to\infty$ now proves the lemma, except for a possible contribution to $H^1_\sub{cont}$ of the form $\lim_s^1 H^0(G,M\otimes_{\Gamma(\cal U,\bb B)}\Gamma(\cal U\times_UV,\bb B)/\pi^s)$, which we now show is almost zero. We first use the isomorphism of pro systems $\{\Gamma(\cal U\times_UV,\bb B)/\pi^s\}_s\to\{\Gamma(\cal U\times_UV,\bb B/\pi^s)\}_s$ (from Proposition \ref{proposition_small_sheaves}(ii)) and (\dag) (which holds for any power of the new $\pi$) to rewrite the $\lim^1$ term as ...
\end{proof}

We are now prepared to establish the following equivalence of categories. The target category in the following theorem consists, by definition, of finite projective $\Gamma(\cal U,\bb B)$-modules $M$ equipped with a semi-linear, continuous action by $G$ which is trivial modulo $<\varpi$ in the sense that it is trivial modulo $\varpi'$ (in the usual sense of Definition \ref{definition_trivial}) for all $\varpi'\in\frak M$ such that $\varpi\in\varpi'\frak M$.%, we ask that $(M/\varpi')^G$ be a finite projective $(\Gamma(\cal U,\bb M)/\varpi')^\Gamma$-module and that $(M/\varpi')^G\otimes_{(\Gamma(\cal U,\bb M)/\varpi')^\Gamma}\Gamma(\cal U,\bb M)/\varpi')\isoto M/\varpi'$.

\begin{theorem}\label{theorem_correspondence_for_B}
Let $\frak M\subseteq B$, $\bb B$ satisfy hypotheses (B1)--(B4), and let $\cal U\in U_\sub{pro\'et}$ be an affinoid perfectoid cover of $U$ as in ($\Pi$). Fix $\varpi\in\frak M^\circ$. Then taking global sections induces an equivalence of categories
\begin{align*}
%\categ{7cm}{complete, topologically torsion-free sheaves of $\bb B$-modules on $U_\sub{pro\'et}$ trivial modulo $<\varpi$}&\Isoto\categ{7cm}{complete, topologically torsion-free $\Gamma(\cal U,\bb B)$-modules with continuous, semi-linear $G$-action trivial modulo $<\varpi$}\\ 
\categ{6cm}{locally finite projective sheaves of $\bb B$-modules on $U_\sub{pro\'et}$ trivial modulo $<\varpi$}&\Isoto\Rep^{<\varpi}_G(\Gamma(\cal U,\bb B))\\ 
\bb M&\mapsto \Gamma(\cal U,\bb M)
%M&\mapsto\langle U'\mapsto \big(M\hat\otimes_{\Gamma(\res U,\bb B)}\Gamma(\res U\times_UU',\bb B)\big)^G\rangle\\
\end{align*}
Moreover, for each such $\bb M$, there is a natural almost quasi-isomorphism of complexes of $\Gamma(U,\bb B)$-modules \[R\Gamma_\sub{cont}(G,\Gamma(\cal U,\bb M))\To R\Gamma_\sub{pro\'et}(U,\bb M).\]%In the special case $\cal U=U_\infty$, so that $G=\bb Z_p(1)^d$, this becomes a quasi-isomorphism after applying $L\eta_{\pi}$ for any $\pi\in\frak M^\circ$.
\end{theorem}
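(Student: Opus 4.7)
The plan is to prove the theorem in four steps: well-definedness of the global sections functor, an auxiliary almost cohomology comparison, fully faithfulness, and essential surjectivity; the last is the main obstacle.

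For well-definedness, given $\bb M$ locally finite projective and trivial modulo $<\varpi$, Proposition \ref{proposition_small_sheaves}(v) applied to $\cal U$ gives that $M:=\Gamma(\cal U,\bb M)$ is finite projective over $\Gamma(\cal U,\bb B)$, equipped with a semi-linear $G$-action functorial in $\cal U$. For each $\varpi'\in\frak M$ with $\varpi\in\varpi'\frak M$, hypothesis (B4) together with triviality of $\bb M$ modulo $\varpi'$ imply that $\bb M/\varpi'$ is a discrete sheaf, so $\Gamma(\cal U,\bb M/\varpi')$ carries a discrete $G$-action. The \v Cech condition for the Galois pro-\'etale cover $\cal U\to U$ (with $\cal U\times_U\cal U\simeq \cal U\times\underline G$) then gives $\Gamma(\cal U,\bb M/\varpi')^G=\Gamma(U,\bb M/\varpi')$ on the nose, and combining this with Proposition \ref{proposition_small_sheaves}(ii) yields the triviality of $M$ modulo $\varpi'$ in the sense of Definition \ref{definition_trivial}.

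For the cohomology comparison, the proof of Lemma \ref{lemma_almost_purity_group} already supplies the Hochschild--Serre identification $R\Gamma_\sub{pro\'et}(U,\cal F)=R\Gamma_\sub{cont}(G,R\Gamma_\sub{pro\'et}(\cal U,\cal F))$ for any discrete sheaf $\cal F$; applying this to each $\bb M/\pi^s$ (with $\pi\in\frak M^\circ$), combining with the almost vanishing $R\Gamma_\sub{pro\'et}(\cal U,\bb M/\pi^s)\simeq \Gamma(\cal U,\bb M/\pi^s)$ of Proposition \ref{proposition_small_sheaves}(i), and passing to the inverse limit in $s$ produces the claimed almost quasi-isomorphism. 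Fully faithfulness is then immediate: a morphism $\bb M_1\to \bb M_2$ restricts via Proposition \ref{proposition_small_sheaves}(v) to a $G$-equivariant map $M_1\to M_2$, and conversely any such $G$-equivariant map base-changes to a $G$-equivariant morphism of sheaves on $U_\sub{pro\'et}/\cal U$ which descends uniquely along $\cal U\to U$.

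The main obstacle is essential surjectivity. Given $M$, one first forms the sheaf $\bb M_\cal U:=M\otimes_{\Gamma(\cal U,\bb B)}\bb B|_\cal U$ on $U_\sub{pro\'et}/\cal U$ with its natural $G$-equivariant structure coming from the $G$-actions on $M$ and on $\cal U$. To descend this to a sheaf $\bb M$ on $U_\sub{pro\'et}$, the plan is to define
\[
\bb M(V):=\bigl(M\,\hat\otimes_{\Gamma(\cal U,\bb B)}\Gamma(\cal U\times_UV,\bb B)\bigr)^G
\]
for affinoid perfectoid $V\in U_\sub{pro\'et}$ (with appropriate $\pi$-adic completion and diagonal $G$-action). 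Lemma \ref{lemma_almost_purity_group} guarantees almost vanishing of the higher continuous group cohomologies, so this construction is almost-exact and $\bb M$ is a sheaf on the basis of affinoid perfectoids. The verification that $\bb M$ is locally finite projective, trivial modulo $<\varpi$, and satisfies $\Gamma(\cal U,\bb M)=M$ proceeds truncation-by-truncation: modulo $\varpi'$, the argument uses that $(M/\varpi')^G$ is finite projective over $(\Gamma(\cal U,\bb B)/\varpi')^G$, which by Hochschild--Serre agrees almost with $\Gamma(U,\bb B/\varpi')$, and that the associated \'etale sheaf on $U_\sub{\'et}$ pulls back via $w^*$ to the expected discrete quotient $\bb M/\varpi'$. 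The hard part is handling the almost-to-honest upgrade throughout, which uses the completeness hypotheses (B2), (B3a) and the rigidity furnished by Proposition \ref{proposition_small_sheaves}.
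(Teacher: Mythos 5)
Your proposal is correct and follows essentially the same route as the paper: well-definedness via Proposition \ref{proposition_small_sheaves}, the quasi-inverse given by $V\mapsto (M\otimes_{\Gamma(\cal U,\bb B)}\Gamma(\cal U\times_UV,\bb B))^G$ with the discrete truncations $\bb M/\pi^s$ handled via the identification $U_i\times_UV\cong G_i\times V$, the $H^1$ almost-vanishing of Lemma \ref{lemma_almost_purity_group} to upgrade almost statements and control images/completeness, and Hochschild--Serre plus almost purity for the cohomology comparison. The only cosmetic difference is that you establish fully faithfulness directly by descent of morphisms along $\cal U\to U$ rather than by exhibiting the two-sided inverse, which amounts to the same computation.
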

\begin{proof}
Step 1: We begin by checking that the functor is well-defined. In light of Proposition \ref{proposition_small_sheaves}(v), it remains to show that the $G$-action on $\Gamma(\cal U,\bb M)$ is continuous and trivial mod $<\varpi$; we first deal with the latter. Given any elements $\pi,\pi'\in\frak M$ such that $\pi'\in\pi\frak M$ and $\varpi\in\pi'\frak M$, we saw in the course of proving Proposition \ref{proposition_small_sheaves}(iii) that $\Gamma( U,\bb M/\pi')\otimes_{\Gamma( U,\bb B/\pi')}\Gamma(\cal U,\bb B)/\pi\cong \Gamma(\cal U,\bb M)/\pi$. This is of course compatible with the $G$-actions on each side, where $G$ acts trivially on the finite projective $\Gamma(U,\bb B/\pi')$-module $\Gamma(U,\bb M/\pi')$, and so shows that the semi-linear $G$-action on the $\Gamma(\cal U,\bb B)$-module $\Gamma(\cal U,\bb M)$ is indeed trivial mod $\pi$. This being true for all such $\pi$, we have established the desired triviality mod $<\varpi$.

Next we check continuity. We must show that the $G$-action on $\Gamma(\cal U,\bb M)/\pi$ is continuous (for the discrete topology) for all $\pi\in\frak M^\circ$; noting that $\Gamma(\cal U,\bb M)/\pi\into \Gamma(\cal U,\bb M/\pi)$, it is sufficient to prove continuity of the action on the latter. But $\bb M/\pi$ is a discrete sheaf (this follows from hypothesis (B4) if $\pi$ is small enough so that $\bb M$ is trivial modulo $\pi$, then follows for general $\pi$ by the same argument as explained in (B4)), so the continuity of the $G$-action on its sections was already explained just before Example \ref{example_affinoid_perfectoid_covers}. This completes the proof that the functor of the theorem is well-defined.

Step 2: We construct the inverse to the functor. We first study the situation modulo $\pi$, for each $\pi\in\frak M^\circ$, then will take the limit; to this end let $\bb D$ be a discrete sheaf of rings of $U_\sub{pro\'et}$, e.g., $\bb D=\bb B/\pi$. Given a $\Gamma(\cal U,\bb D)$-module $M$ equipped with a continuous (wrt.~the discrete topology on $M$), semi-linear $G$-action, we form a presheaf $\tilde M$ of $\bb D$-modules on $U_\sub{pro\'et}$ via the rule \[U_\sub{pro\'et}\ni V\mapsto \tilde M(V):=(M\otimes_{\Gamma(\cal U,\bb D)}\Gamma(\cal U\times_U V,\bb D))^G.\] Here $G$ is acting diagonally on the right (note that $G$ acts $V$-invariantly on $\cal U\times_U V$, hence the diagonal action makes sense and the fixed points are a $\Gamma(V,\bb D)$-module). We claim that this construction forms a well-defined functor
\begin{align*}
\categ{6cm}{flat $\Gamma(\cal U,\bb D)$-modules with continuous, semi-linear $G$-action}&\To\categ{6cm}{discrete sheaves of $\bb D$-modules on $U_\sub{pro\'et}$}\\ 
M&\mapsto \tilde M
\end{align*}
with left inverse $\Gamma(\cal U,-)$ (here ``continuous'' is again wrt.\ the discrete topology on $M$).

First note that $\tilde M$ is actually a sheaf: if $ V'\to V$ is any cover in $U_\sub{pro\'et}$, then the exact sequence \[0\To \Gamma(\cal U\times_U V,\bb D)\To \Gamma(\cal U\times_U V',\bb D)\rightrightarrows\Gamma(\cal U\times_U V'\times_{ V} V',\bb D)\] remains exact upon applying the left exact functor $(M\otimes_{\Gamma(\cal U,\bb D)}-)^G$. Also, the sheaf $\tilde M$ is easily seen to be discrete, by writing $V=\projlimf_jV_j$ and noting that $\Gamma(\cal U\times_UV,\bb D)=\indlim_j\Gamma(\cal U\times_UV_j,\bb D)$ by the discreteness of $\bb D$.

Finally, assuming that the action of $G$ on $M$ is continuous, we claim that $M=\Gamma(\cal U,\tilde M)$. More generally, for any $V\in U_\sub{pro\'et}$ lying over $\cal U$ we will show that $\Gamma(V,\tilde M)=M\otimes_{\Gamma(\cal U,\bb D)}\Gamma(V,\bb D)$. Firstly, the continuity hypothesis means that $M=\indlim_i M^{H_i}$ where $H_i:=\ker(G\to G_i)$; similarly, discreteness of the sheaf $\bb D$ implies that $\Gamma(\cal U\times_UV,\bb D)=\indlim_i\Gamma(U_i\times_UV,\bb D)$. We thus obtain the second of the following identities: \[\Gamma(V,\tilde M)=(M\otimes_{\Gamma(\cal U,\bb D)}\Gamma(\cal U\times_U V,\bb D))^G=\indlim_i(M^{H_i}\otimes_{\Gamma(U_i,\bb D)}\Gamma(U_i\times_U V,\bb D))^{G_i}\] But $U_i\to U$ is a finite, Galois \'etale cover, and so $U_i\times_UU_i=G_i\times U_i$, whence $U_i\times_UV=G_i\times V$; taking sections reveals that $\Gamma(U_i\times_UV,\bb D)\cong G_i\times\Gamma(V,\bb D)$ as a $G_i$-module (with $G_i$ acting trivially on $\Gamma(V,\bb D)$), whence the $G_i$-invariants on the right side of the previous line equals $M^{H_i}\otimes_{\Gamma(U_i,\bb D)}\Gamma(V,\bb D)$. Taking $\indlim_i$ yields $M\otimes_{\Gamma(U,\bb D)}\Gamma(V,\bb D)$, as required to prove the claim.

Step 3: By taking the limit, we will now check that the following functor is well-defined and has left inverse $\Gamma(\cal U,-)$:
\begin{align*}
%\categ{7cm}{complete, topologically torsion-free $\Gamma(\cal U,\bb B)$-modules with continuous, semi-linear $G$-action trivial modulo $<\varpi$}&\To\categ{7cm}{complete, topologically torsion-free sheaves of $\bb B$-modules on $U_\sub{pro\'et}$ trivial modulo $<\varpi$}\\ 
\Rep^{<\varpi}_G(\Gamma(\cal U,\bb B))&\To\categ{6cm}{locally finite projective sheaves of $\bb B$-modules on $U_\sub{pro\'et}$ trivial modulo $<\varpi$}\\
M&\mapsto \tilde M:V\mapsto \big(M\otimes_{\Gamma(\cal U,\bb B)}\Gamma(\cal U\times_UV,\bb B)\big)^G
\end{align*}
(Hopefully the same notation $\tilde{}$ will not cause confusion.) In fact, given $\pi\in\frak M^\circ$ and setting $M_s:=M\otimes_{\Gamma(\cal U,\bb B)}\Gamma(\cal U,\bb B/\pi^s)$ we have \[\big(M\otimes_{\Gamma(\cal U,\bb B)}\Gamma(\cal U\times_UV,\bb B)\big)^G= \projlim_s(M_s\otimes_{\Gamma(\cal U,\bb B/\pi^s)}\Gamma(\cal U\times_UV,\bb B/\pi^s))^G\] for all $V\in U_\sub{pro\'et}$. So $\tilde M$ is an inverse limit of the discrete constructions considered above, whence we immediately see that it is a sheaf with $\Gamma(\cal U,\tilde M)=\projlim_s M_s=M$ (for the second identity, we use the isomorphism of pro systems $\{\Gamma(\cal U,\bb B)/\pi^s)\}_s\isoto \{\Gamma(\cal U,\bb B/\pi^s))\}_s$ furnished by Proposition \ref{proposition_small_sheaves}(ii)). Note also that the sections $\Gamma(V,\tilde M)$ are $\pi$-torsion-free since $\bb B$ is $\pi$-torsion-free and $M$ is flat over $\Gamma(\cal U,\bb B)$; so $\tilde M$ is $\pi$-torsion-free.

Next we show that $\tilde M$ is trivial mod $<\varpi$. For this we first need some analogues for group cohomology of the arguments of Propositon \ref{proposition_small_sheaves}(ii)\&(iii). Let $V\in U_\sub{pro\'et}$ be any affinoid perfectoid and let $\pi\in\frak M$ be any element such that $\varpi\in\pi\frak M$; then let $\pi'$ be an element strictly between them, i.e., $\varpi\in\pi'\frak M$ and $\pi'\in\pi\frak M$.  From the $H^1$ almost vanishing of Lemma \ref{lemma_almost_purity_group} (and arguing exactly as in the proof of Proposition \ref{proposition_small_sheaves}(ii)) we deduce that the image of \[(M\otimes_{\Gamma(\cal U,\bb B)}\Gamma(\cal U\times_U V,\bb B)/\pi')^G\To (M\otimes_{\Gamma(\cal U,\bb B)}\Gamma(\cal U\times_U V,\bb B)/\pi)^G\] identifies with $(M\otimes_{\Gamma(\cal U,\bb B)}\Gamma(\cal U\times_U V,\bb B))^G/\pi$. But, recalling that $M$ is trivial mod $\pi'$, the previous line may be rewritten as \[(M/\pi')^G\otimes_{(\Gamma(\cal U,\bb B)/\pi')^G}(\Gamma(\cal U\times_U V,\bb B)/\pi')^G\To (M/\pi')^G\otimes_{(\Gamma(\cal U,\bb B)/\pi')^G}(\Gamma(\cal U\times_U V,\bb B)/\pi)^G,\] whose image is \[(M/\pi')^G\otimes_{(\Gamma(\cal U,\bb B)/\pi')^G}\Gamma(\cal U\times_UV,\bb B)^G/\pi\] (by applying the previous image identity to $M=\Gamma(\cal U,\bb B)$). But, assuming in addition that $V$ admits a morphism to $\cal U$, then $\Gamma(\cal U\times_UV,\bb B)^G=\Gamma(V,\bb B)$ (replacing $\bb B$ by $\bb D=\bb B/\pi^s$, this was shown in the final paragraph of the proof of Step 2; then let $s\to\infty$; note that the equality is given by the pull-back by the projection $\cal U\times_UV\to V$, which does not involve $\cal U\to V$); so, assembling the results of this paragraph, we have produced a identification functorial in $V$ \[(M\otimes_{\Gamma(\cal U,\bb B)}\Gamma(\cal U\times_U V,\bb B))^G/\pi\cong (M/\pi')^G\otimes_{(\Gamma(\cal U,\bb B)/\pi')^G}\Gamma(V,\bb B)/\pi.\] By now varying $V$ over all affinoid perfectoids lying over $\cal U$ (which form a basis of $U_\sub{pro\'et}$, since $\cal U$ is a cover of $U$), this corresponds to an isomorphism of sheaves \[\tilde M/\pi \cong (M/\pi')^G\otimes_{(\Gamma(\cal U,\bb B)/\pi')^G}\bb B/\pi.\] The right side is clearly a trivial sheaf of $\bb B/\pi$-modules, thereby completing the proof that $\tilde M$ is trivial mod $\pi$.

To prove that our functor $M\mapsto\tilde M$ is well-defined, it remains to show that $\tilde M$ is complete. Let $V\in U_\sub{pro\'et}$ be affinoid perfectoid and $\pi\in\frak M^\circ$. Proposition \ref{proposition_small_sheaves}(ii) implies that the canonical map $\Gamma(\cal U\times_UV,\bb B/\pi^{s+2})\to \Gamma(\cal U\times_UV,\bb B/\pi^{s})$ factors through $\Gamma(\cal U\times_UV,\bb B)/\pi^{s+1}$; this factoring formally remains valid after applying $(M\otimes_{\Gamma(\cal U,\bb B)}-)^G$. But, similarly to the previous paragraph, the $H^1$ almost vanishing of Lemma \ref{lemma_almost_purity_group} then implies that the resulting map $(M\otimes_{\Gamma(\cal U,\bb B)}\Gamma(\cal U\times_UV,\bb B)/\pi^{s+1})^G \to(M\otimes_{\Gamma(\cal U,\bb B)}\Gamma(\cal U\times_UV,\bb B)/\pi^{s})^G$ has image $(M{\otimes_{\Gamma(\cal U,\bb B)}}\Gamma(\cal U\times_UV,\bb B))^G/\pi^s$. Concatenating shows that the map
\[(M\otimes_{\Gamma(\cal U,\bb B)}\Gamma(\cal U\times_UV,\bb B/\pi^{s+2}))^G \to(M\otimes_{\Gamma(\cal U,\bb B)}\Gamma(\cal U\times_UV,\bb B/\pi^{s}))^G\] has image $(M\otimes_{\Gamma(\cal U,\bb B)}\Gamma(\cal U\times_UV,\bb B))^G/\pi^s$. Since this holds for all affinoid perfectoid $V$, it follows that the canonical maps of sheaves $\tilde M/\pi^s\to (M\otimes_{\Gamma(\cal U,\bb B)}\Gamma(\cal U\times_U -,\bb B/\pi^s))^G$ become an isomorphism of pro sheaves when we let $s\to\infty$. In particular, it becomes an isomorphism when we apply $\projlim_s$, thereby proving that $\tilde M\isoto\projlim_s\tilde M/\pi^s$ (since $\tilde M$ is by definition $\projlim_s$ of the right sides).

Step 4: To complete the proof of the categorical equivalence of the theorem, it remains to show that $\tilde{\Gamma(\cal U,\bb M)}=\bb M$, naturally for $\bb M$ in the left category.  The canonical maps \[(\Gamma(\cal U,\bb M)\otimes_{\Gamma(\cal U,\bb B)}\Gamma(\cal U\times_UV,\bb B))^G\To \Gamma(\cal U\times_U V, \bb M)^G=\Gamma(V,\bb M)\] (for the equality, see the parenthetical comment ``replacing $\bb B$ by...'' in Step 3), for $V\in U_\sub{pro\'et}$ admitting a morphism to $\cal U$, induce a morphism $i:\tilde{\Gamma(\cal U,\bb M)}\to \bb M$ in the left category. We claim $i$ becomes an isomorphism after applying $\Gamma(\cal U,-)$: since we already know that $\Gamma(\cal U,-)$ is a left inverse of $\tilde{\quad}$ (i.e., the first isomorphism in the following line), it is enough to show that the composition
\[\Gamma(\cal U,\bb M)\isoto (\Gamma(\cal U,\bb M)\otimes_{\Gamma(\cal U,\bb B)}\Gamma(\cal U\times_U\cal U,\bb B))^G\To \Gamma(\cal U\times_U \cal U, \bb M)^G\stackrel{\op{pr}_2^*}=\Gamma(\cal U\bb M)\] is the identity; but this is clear since the inverse of the first map (constructed in Step 3) is induced by pulling back along the diagonal $\Gamma(\cal U\times_U\cal U,\bb B)\to\Gamma(\cal U,\bb B)$. The claim shows that $i$ becomes an isomorphism after restricting to $\cal U$, by Proposition \ref{proposition_small_sheaves}(v), hence is an isomorphism since $\cal U$ is a cover of $U$.

Step 5: We establish the group cohomology almost quasi-isomorphism, using an Hochschild--Serre argument similar to the proof of Lemma \ref{lemma_almost_purity_group}. Firstly, since each $\bb M/\pi^s$ is discrete, we have natural Hochschild--Serre identifications $R\Gamma_\sub{pro\'et}(U,\bb M/\pi^s)\simeq R\Gamma(G_i,R\Gamma_\sub{pro\'et}(U_i,\bb M/\pi^s))$; taking $\indlim_i$ yields $R\Gamma_\sub{pro\'et}(U,\bb M/\pi^s)\simeq R\Gamma_\sub{cont}(G,R\Gamma_\sub{pro\'et}(\cal U,\bb M/\pi^s))$, whence an almost quasi-isomorphism $R\Gamma_\sub{pro\'et}(U,\bb M/\pi^s)\stackrel{\sub{al}}{\simeq} R\Gamma_\sub{cont}(G,\Gamma(\cal U,\bb M)/\pi^s)$ since the higher pro-\'etale cohomologies of $\bb M$ on $\cal U$ are almost zero by Proposition \ref{proposition_small_sheaves}(i). Finally, taking $\op{Rlim}_s$ (and using derived $\pi$-adic completeness of $\bb M$, also by Proposition \ref{proposition_small_sheaves}(i)) yields the desired almost quasi-isomorphism $R\Gamma_\sub{pro\'et}(U,\bb M)\stackrel{\sub{al}}\simeq R\Gamma_\sub{cont}(G,\Gamma(\cal U,\bb M))$.
\end{proof}

We finish this subsection with a lemma which will be required soon in order to extend the above theory to sheaves of modules over $\bb A_\sub{inf}$:

\begin{lemma}\label{lemma_trivial_mod_all_implies_trivial}
Let $\frak M\subseteq B$, $\bb B$ satisfy hypotheses (B1)--(B4), and in addition
\begin{itemize}
\item[(B5)] given $\pi\in\frak M^\circ$, the $\pi$-power torsion in $H^1_\sub{pro\'et}(U,\bb B)$ is killed by $\pi^N$ for $N\gg0$ )this boundedness condition is of course independent of the chosen $\pi$).
\end{itemize}
Let $\bb M$ be a locally finite projective sheaf of $\bb B$-modules which is trivial modulo every element of $\frak M^\circ$. Then $\bb M$ is trivial in the sense that $\Gamma(U,\bb M)$ is a finite projective $\Gamma(U,\bb B)$-module and the canonical map $\Gamma(U,\bb M)\otimes_{\Gamma(U,\bb B)}\bb B\to\bb M$ is an isomorphism.
\end{lemma}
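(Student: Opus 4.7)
The approach is to work $\pi$-adically for a fixed auxiliary $\pi\in\frak M^\circ$, reconstructing $\bb M$ as the inverse limit of its reductions $\bb M/\pi^n$. Set $B:=\Gamma(U,\bb B)$, $B_n:=\Gamma(U,\bb B/\pi^n)$, and $M_n:=\Gamma(U,\bb M/\pi^n)$. Since $\pi^n$ again lies in $\frak M^\circ$ for every $n\ge1$, the triviality hypothesis gives that each $M_n$ is a finite projective $B_n$-module and that the canonical map $M_n\otimes_{B_n}\bb B/\pi^n\to\bb M/\pi^n$ is an isomorphism of sheaves on $U_\sub{pro\'et}$. My goal is to extract a finite projective $B$-module from the compatible family $(M_n)_n$ and then verify the base change formula for $\bb M$ itself.

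The first step would analyse the transitions of the inverse systems $\{M_n\}$ and $\{B_n\}$. The short exact sequence $0\to\bb M/\pi\xto{\pi^n}\bb M/\pi^{n+1}\to\bb M/\pi^n\to0$ (valid because $\bb M$ is locally, hence globally, $\pi$-torsion-free) gives an exact sequence
\[ 0\to M_{n+1}/\pi^n M_{n+1}\to M_n\to H^1_\sub{pro\'et}(U,\bb M/\pi)[\pi^n], \]
and triviality of $\bb M/\pi$ identifies the final group with $M_1\otimes_{B_1}H^1_\sub{pro\'et}(U,\bb B/\pi)[\pi^n]$. The long exact sequence of $0\to\bb B\xto\pi\bb B\to\bb B/\pi\to0$ combined with hypothesis (B5) would then show that this latter $\pi$-power torsion is killed by a single $\pi^N$, uniformly in $n$; applying the same argument with $\bb M=\bb B$ shows that the natural map $B/\pi^nB\to B_n$ has cokernel uniformly annihilated by $\pi^N$ as well. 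Consequently both pro-systems $\{M_n\}$ and $\{B_n\}$ are pro-isomorphic (up to a shift by $N$) to $\{\Gamma(U,\bb M)/\pi^n\Gamma(U,\bb M)\}$ and $\{B/\pi^nB\}$ respectively.

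With these uniform bounds in hand, I would define $M:=\projlim_n M_n$ and $\tilde B:=\projlim_n B_n$ and invoke Lemma \ref{lemma_limits_of_finite_projective} applied to $\tilde B$ with the filtration by the kernels of $\tilde B\to B_n$ (the Jacobson-radical hypothesis holding because $\pi$ is topologically nilpotent in each $B_n$). This yields that $M$ is a finite projective $\tilde B$-module with $M\otimes_{\tilde B}B_n\isoto M_n$ for all $n$. The derived $\pi$-adic completeness of $\bb M$ from Proposition \ref{proposition_small_sheaves}(i), combined with the Mittag-Leffler control of the transitions, identifies $\tilde B$ as the $\pi$-adic completion of $B$ and $M$ as the $\pi$-adic completion of $\Gamma(U,\bb M)$; the bounded-torsion discrepancy from (B5) is what ensures that the finite-projective structure on $M$ descends to give $\Gamma(U,\bb M)$ the structure of a finite projective $B$-module whose completion is $M$. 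Finally, the base-change map $\Gamma(U,\bb M)\otimes_B\bb B\to\bb M$ is seen to be an isomorphism by reduction modulo each $\pi^n$, where it becomes the isomorphism $M_n\otimes_{B_n}\bb B/\pi^n\isoto\bb M/\pi^n$ provided by the triviality hypothesis; both sides are derived $\pi$-complete, so this suffices.

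The main obstacle will be the careful bookkeeping between $B$ and its $\pi$-adic completion $\tilde B$: without the uniform torsion bound of (B5) these could differ by an unbounded torsion module and the descent of the finite projective structure from $\tilde B$ back to $B$ would fail, leaving only an almost isomorphism. Verifying the Mittag-Leffler bounds uniformly in $n$, propagating them from $\bb B$ to $\bb M$ via the trivializations, and interpreting the conclusions of Lemma \ref{lemma_limits_of_finite_projective} correctly in this setting will constitute the technical heart of the proof.
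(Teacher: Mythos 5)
Your overall scheme (reassemble $\bb M$ from its reductions $\bb M/\pi^n$, use (B5) to control the discrepancy between $\Gamma(U,\bb M)/\pi^n$ and $M_n:=\Gamma(U,\bb M/\pi^n)$, then conclude by completeness and torsion-freeness) is close to the paper's, and your final steps are fine. But the middle step has a genuine gap. What your argument needs is that the pro-systems $\{M_n\}_n$ and $\{\Gamma(U,\bb M)/\pi^n\}_n$ are pro-isomorphic, which (since $\op{coker}(\Gamma(U,\bb M)/\pi^n\to M_n)=H^1_\sub{pro\'et}(U,\bb M)[\pi^n]$ and the boundary maps for different $n$ differ by multiplication by powers of $\pi$) amounts to the $\pi$-power torsion of $H^1_\sub{pro\'et}(U,\bb M)$ being killed by a single power of $\pi$. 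Hypothesis (B5) asserts this only for $\bb B$, and your proposed propagation to $\bb M$ does not work as written: the group $H^1_\sub{pro\'et}(U,\bb M/\pi)\cong M_1\otimes_{B_1}H^1_\sub{pro\'et}(U,\bb B/\pi)$ that you invoke is a module over $\Gamma(U,\bb B/\pi)$ and hence already killed by $\pi$, so "killed by $\pi^N$" is vacuous there and gives no uniform control; and a one-step lifting-obstruction analysis does not make the images $\op{im}(M_m\to M_n)$ stabilise. A priori bound on $H^1_\sub{pro\'et}(U,\bb M)[\pi^\infty]$ is essentially equivalent to the conclusion of the lemma (once $\bb M$ is trivial, $H^1(U,\bb M)\cong\Gamma(U,\bb M)\otimes_BH^1(U,\bb B)$ and flatness transfers (B5)), so assuming it is circular.

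The paper avoids $H^1_\sub{pro\'et}(U,\bb M)$ entirely by computing stable images. First, (B5) shows $H^1_\sub{pro\'et}(U,\bb B)[\pi^s]\into H^1_\sub{pro\'et}(U,\bb B)/\pi^t\into H^1_\sub{pro\'et}(U,\bb B/\pi^t)$ for $t\gg s$, and the diagram chase of Proposition \ref{proposition_small_sheaves}(ii) then gives $\op{im}(\Gamma(U,\bb B/\pi^t)\to\Gamma(U,\bb B/\pi^s))=\Gamma(U,\bb B)/\pi^s$. Second --- and this is the step you are missing --- triviality modulo $\pi^{t_2}$ yields the purely algebraic identification $M_{t_1}=M_{t_2}\otimes_{B_{t_2}}B_{t_1}$ with $M_{t_2}$ flat, so the image of $M_{t_2}\to M_s$ equals $M_{t_2}\otimes_{B_{t_2}}\op{im}(B_{t_2}\to B_s)=M_{t_1}\otimes_{B_{t_1}}(\Gamma(U,\bb B)/\pi^s)$, a finite projective $\Gamma(U,\bb B)/\pi^s$-module independent of $t_2\ge t_1\gg s$. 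This gives Mittag--Leffler and identifies $\Gamma(U,\bb M)/\pi$ with a finite projective $\Gamma(U,\bb B)/\pi$-module directly. Relatedly, your appeal to Lemma \ref{lemma_limits_of_finite_projective} is not quite licit: the $M_n$ are finite projective over $B_n$, not over $B/\ker(B\to B_n)$, and these rings differ precisely by the torsion you have not yet controlled; the paper instead concludes from "$\Gamma(U,\bb M)$ is $\pi$-adically complete, $\pi$-torsion-free, and finite projective modulo $\pi$".
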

\begin{proof}
Fix $\pi\in\frak M^\circ$ and let $s\ge1$. Hypothesis (B5) implies that the canonical map $H^1(U,\bb B)[\pi^s]\to H^1(U,\bb B)/\pi^t$ is injective for $t\gg s$, whence the composition $H^1(U,\bb B)[\pi^s]\to H^1(U,\bb B/\pi^t)$ is also injective. An analogous diagram chase as to that used in the proof of Proposition \ref{proposition_small_sheaves}(ii) then shows that $\Gamma(U,\bb B)$ and $\Gamma(U,\bb B/\pi^t)$ have the same image in $\Gamma(U,\bb B/\pi^s)$, namely $\Gamma(U,\bb B)/\pi^s$; in particular, the image is independent of $t\gg s$.

We now extend some of the results of the previous paragraph to the sheaf $\bb M$. For $t_2\ge t_1\ge s$ the triviality of $\bb M$ modulo $\pi^{t_2}$ states that $\bb M/\pi^{t_2}=\Gamma(U,\bb M/\pi^{t_2})\otimes_{\Gamma(U,\bb B/\pi^{t_2})}\bb B/\pi^{t_2}$, whence \[\Gamma(U,\bb M/\pi^{t_1})=\Gamma(U,\bb M/\pi^{t_2})\otimes_{\Gamma(U,\bb B/\pi^{t_2})}\Gamma(U,\bb B/\pi^{t_1}).\] Assuming that $t_1\gg s$, the previous paragraph therefore implies that $\Gamma(U,\bb M/\pi^{t_1})$ and $\Gamma(U,\bb M/\pi^{t_2})$ have the same image in $\Gamma(U,\bb M/\pi^s)$, given by $\Gamma(U,\bb M/\pi^{t_1})\otimes_{\Gamma(U,\bb B/\pi^{t_1})}\Gamma(U,\bb B)/\pi^s$. Taking $s=1$, fixing $t_1\gg 1$, and writing $t=t_2$, we have therefore proved that the pro-abelian group $\{\Gamma(U,\bb M/\pi^t)\}_t$ satisfies the Mittag--Leffler condition and constructed an exact sequence of pro abelian groups \[0\To \{\Gamma(U,\bb M/\pi^t)\}_t\xto{\pi} \{\Gamma(U,\bb M/\pi^t)\}_t\To \Gamma(U,\bb M/\pi^{t_1})\otimes_{\Gamma(U,\bb B/\pi^{t_1})}\Gamma(U,\bb B)/\pi\To 0.\] Taking the inverse limit as $t\to\infty$ then shows that \[\Gamma(U,\bb M)/\pi=\Gamma(U,\bb M/\pi^{t_1})\otimes_{\Gamma(U,\bb B/\pi^{t_1})}\Gamma(U,\bb B)/\pi,\] which is a finite projective $\Gamma(U,\bb B)/\pi$-module. Since $\Gamma(U,\bb M)$ is $\pi$-torsion-free and $\pi$-adically complete, it follows that it is a finite projective $\Gamma(U,\bb B)$-module. Moreover, the canonical map $\Gamma(U,\bb M)\otimes_{\Gamma(U,\bb B)} \bb B\to\bb M$ identifies mod $\pi$ with $\Gamma(U,\bb M/\pi^{t_1})\otimes_{\Gamma(U,\bb B/\pi^{t_1})} \bb B/\pi\to\bb M/\pi$, which is indeed an isomorphism by triviality mod $\pi^{t_1}$; therefore the canonical map in the statement of the lemma an isomorphism modulo any power of $\pi$ by induction, hence is an isomorphism by passage to the limit.
\end{proof}

\begin{remark}\label{remark_p_vs_[p]}
Hypothesis (B5) is satisfied for $\bb B=W_r(\hat\roi_X^+)$. First note that $[\zeta_{p^r}]-1\in W_r(\frak m)^\circ$: indeed, powers of this element are intertwined with powers of $p$ \cite[Rmk.~3]{Morrow_BMSnotes_simons}, which are intertwined with powers of $[p]\in W_r(\frak m)^\circ$ \cite[Lem.~3.2(ii)]{BhattMorrowScholze1}. So it is enough to show that  $H^1_\sub{pro\'et}(U,W_r(\hat\roi_X^+))$ modulo its $[\zeta_{p^r}]-1$-torsion is $p$-torsion-free. But this quotient identifies with $L\eta_{[\zeta_{p^r}]-1}R\Gamma_\sub{pro\'et}(U,W_r(\hat\roi_X^+))$ by a basic property of the $L\eta$ functor \cite[Lem.~6.4]{BhattMorrowScholze1}; the latter is then isomorphic to the $p$-adic completion of $W_r\Omega_{R/\roi}^1$ by the Cartier--Hodge--Tate comparison \cite[Thms.~9.4(ii) \& 11.1]{BhattMorrowScholze1}. Finally note that the $p$-adic completion of $W_r\Omega_{R/\roi}^1$ is $p$-torsion-free: by \'etale base change \cite[Lem.~10.4]{BhattMorrowScholze1} this reduces to the $p$-torsion-freeness of $W_r\Omega_{\roi[\ul T^{\pm1}]/\roi}^1$, which follows from its explicit decomposition \cite[Thm.~10.12]{BhattMorrowScholze1}.

On the other hand, hypothesis (B5) is not satisfied for $W_r(\hat\roi_{X^{\flat}}^{+})$.
\end{remark}

\subsection{Small sheaves of modules over $\bb A_{\inf}$}\label{subsection_localII}
Now we begin our study of analogous sheaves over $\bb A_{\sub{inf},U}$, where we adopt the following analogue of the smallness condition from Definition \ref{trivial_modulo_pi}:

\begin{definition}\label{definition_trivial_mod_mu}
Let $\bb M$ be a sheaf of $\bb A_{\sub{inf},U}$-modules on $U_\sub{pro\'et}$. We say that it is {\em trivial modulo $<\mu$} if, for each $r\ge1$, the $\Gamma(U,\bb A_{\sub{inf},U}/\xi_r)$-module $\Gamma(U,\bb M/\xi_r)$ is finite projective and the counit \[\Gamma(U,\bb M/\xi_r)\otimes_{\Gamma(U,\bb A_{\sub{inf},U}/\xi_r)}\bb A_{\sub{inf},U}/\xi_r\To \bb M/\xi_r\] is an isomorphism.

The category of locally finite free $\bb A_{\sub{inf},U}$-modules which are trivial modulo $<\mu$ will be denoted by $\BKF(R)$ and called {\em Breuil--Kisin--Fargues modules without Frobenius} over $R$.

% identifies the right with the derived $p$-adic completion of the left. (Note: in the finite-proj./free cases, the left side of the base change map is already derived $p$-adically complete and so we are asking for it to be an isomorphism.)

%We mention that this definition is not as restrictive as it may appear: by Frobenius-twisting the module structure, our results will also apply if we replace $\mu$ by $\phi^m(\mu)$ for any $m\in\bb Z$ (the triviality condition becomes successively weaker as $m\to-\infty$).
\end{definition}

\begin{remark}\label{remark_BKF_global_local}
Before continuing, we check that Definition \ref{definition_trivial_mod_mu} agrees with Definition \ref{definition_BKF_global} in our local set-up; namely, we show that \[\BKF(\Spf R)=\BKF(R).\] To do this, it is enough to check that locally finite free $\nu_*(\bb A_{\inf,U}/\xi_r)$-modules on $\Spf R$ correspond to finite projective modules over $\Gamma(\Spf R,\nu_*(\bb A_{\inf,U}/\xi_r))=\Gamma(U,\bb A_{\inf,U}/\xi_r)$ by taking global sections and that the inverse correspondence is given by taking $-\otimes_{\Gamma(U,\bb A_{\inf,U}/\xi_r)}\nu_*(\bb A_{\inf,U}/\xi_r)$. But this is the usual description of locally finite free sheaves on an affine formal scheme, recalling that $\Spf R$ and $\Spf W_r(R)$ have the same underlying topological spaces  and that $\theta:\nu_*(\bb A_{\inf,U}/\xi_r)\isoto W_r(\roi_{\frak X})=\roi_{W_r(R)}$.
\end{remark}

In the following lemma we check the role of the usual elements $\xi_r$ in almost mathematics over $W_r(\roi^\flat)=\bb A_\sub{inf}/p^r$. In particular, part (ii) shows that Definition \ref{definition_trivial_mod_mu} modulo $p^r$ recovers Definition \ref{trivial_modulo_pi}.
 
\begin{lemma}\label{lemma_<mu_mod_p}
Fix $r\ge1$ and write $\res\xi_s,\res\mu\in W_r(\roi^\flat)$ for the image of the elements $\xi_s,\mu\in A_\sub{inf}$, for $s\ge1$.
\begin{enumerate}
\item $\phi^m(\res\xi_s)\in W_r(\frak m^\flat)^\circ$ for any $s\ge r$ and $m\in\bb Z$.
\item Given an element $\pi\in W_r(\frak m^\flat)$, then $\res\mu\in \pi W_r(\frak m^\flat)$ if and only if $\res\xi_s\in\pi W_r(\frak m^\flat)$ for some $s\ge r$ (equivalently, for all $s\gg r$).
\end{enumerate}
\end{lemma}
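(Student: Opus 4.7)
The plan is as follows.

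For part~(i), I would first observe that Frobenius is an automorphism of $W_r(\roi^\flat) = A_\inf/p^r$ preserving the ideal $W_r(\frak m^\flat)$, hence also preserving $W_r(\frak m^\flat)^\circ$; this reduces the claim to the case $m=0$. Since $[\ep_s]$ maps to $1\in W(k)$, the telescoping expression $\xi_s = \sum_{i=0}^{p^s-1}[\ep_s]^i$ gives $\xi_s \equiv p^s \pmod{W(\frak m^\flat)}$, so $\res\xi_s\in W_r(\frak m^\flat)$ whenever $s\ge r$. For the radical condition $W_r(\frak m^\flat)\subseteq \sqrt{\res\xi_s W_r(\roi^\flat)}$, the key observation is that $pW_r(\roi^\flat)$ is a nilpotent ideal (as $p^r = 0$), so taking radicals in $W_r(\roi^\flat)$ is the same as first reducing mod $p$ and computing in $\roi^\flat = W_1(\roi^\flat)$, then pulling back. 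Modulo $p$, one has $\res\xi_s = (\ep_s-1)^{p^s-1}$ in $\roi^\flat$ (using the characteristic-$p$ identity $\sum_{i<p^s}\ep_s^i = (\ep_s^{p^s}-1)/(\ep_s-1)$), and since $\ep_s-1$ has positive valuation the radical of $(\ep_s-1)\roi^\flat$ contains all of $\frak m^\flat$, completing the argument.

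For part~(ii), the direction $(\Leftarrow)$ is immediate from the identity $\mu = \phi^{-s}(\mu)\cdot \xi_s$ in $A_\inf$ together with the fact that $\phi^{-s}(\mu) = [\ep_s]-1 \in W(\frak m^\flat)$: if $\res\xi_s = \pi a$ with $a\in W_r(\frak m^\flat)$, then $\res\mu = \pi\cdot(\res{\phi^{-s}(\mu)}\,a)$ with the second factor in $W_r(\frak m^\flat)\cdot W_r(\frak m^\flat)\subseteq W_r(\frak m^\flat)$.

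For the harder direction $(\Rightarrow)$, I would first extract a valuation estimate modulo $p$. Setting $\alpha = v(\ep-1)$ in $\roi^\flat$ and (in the nontrivial case) letting $\beta = v(\bar\pi)$, the hypothesis $\res\mu\in \pi W_r(\frak m^\flat)$ forces $\beta < \alpha$; on the other hand $v(\bar\xi_s) = \alpha(1-p^{-s})\to \alpha$ strictly from below, so $\bar\xi_s\in \bar\pi\frak m^\flat$ in $\roi^\flat$ for all $s$ sufficiently large. The main obstacle is then to promote this mod-$p$ divisibility to the desired divisibility in $W_r(\roi^\flat)$. I expect to do this by induction on $r$, using at each step the multiplicative relation $\xi_{s+t} = \xi_s\cdot \phi^{-s}(\xi_t)$ (which in particular shows that once $\res\xi_{s_0}\in \pi W_r(\frak m^\flat)$ for some $s_0$, the same holds for all $s\ge s_0$, justifying the ``equivalently, for all $s\gg r$'' parenthetical) together with the fact that the slack in the valuation inequality $v(\bar\xi_s) > \beta$ becomes arbitrarily large as $s\to\infty$, which allows each successive $p$-adic error term to be absorbed by further increasing $s$. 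The case $\bar\pi = 0$, i.e.\ when $\pi$ itself lies in $pW_r(\roi^\flat)\cap W_r(\frak m^\flat)$, would be handled separately by descending induction on the largest $k$ with $\pi\in p^k W_r(\roi^\flat)$, using that $pW_r(\roi^\flat)$ is nilpotent.
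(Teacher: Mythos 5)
Your part (i) and the direction $(\Leftarrow)$ of part (ii) are fine. For (i) you reduce the radical computation modulo the nilpotent ideal $pW_r(\roi^\flat)$ and conclude from $\bar\xi_s=(\ep_s-1)^{p^s-1}$ in $\roi^\flat$; this is a valid (and arguably cleaner) route than the paper's, which instead notes that $\res\xi_s$ is a multiple of $\phi^{-s}(\res\mu)$ and divides $\res\mu$, both of which lie in $W_r(\frak m^\flat)^\circ$. Your use of $\xi_{s+t}=\xi_s\phi^{-s}(\xi_t)$ to justify the parenthetical ``for all $s\gg r$'' is also correct.

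The gap is in the inductive step for $(\Rightarrow)$ of (ii), which is the entire content of the lemma. Your mechanism rests on the claim that the slack in $v(\bar\xi_s)>\beta$ becomes arbitrarily large as $s\to\infty$. This is false: $v(\bar\xi_s)=\alpha(1-p^{-s})$ increases to $\alpha=v(\ep-1)$ but never exceeds it, so the slack is bounded above by $\alpha-\beta$. There is therefore no room to ``absorb each successive $p$-adic error term by further increasing $s$'', and no inductive step has actually been exhibited. The difficulty is genuine: writing $\res\mu=\pi c$ with $c\in W_r(\frak m^\flat)$ and using $\res\xi_s\,\phi^{-s}(\res\mu)=\res\mu$, one must show $\phi^{-s}(\res\mu)$ divides $c$ inside $W_r(\frak m^\flat)$, and divisibility in $W_r(\roi^\flat)$ is not controlled by a single valuation. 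The paper's induction on $r$ works differently: reduce modulo $\pi$ the exact sequence $0\to\frak m^\flat\xto{p^{r-1}}W_r(\frak m^\flat)\to W_{r-1}(\frak m^\flat)\to 0$ (exactness mod $\pi$ uses that $R(\pi)$ is a non-zero-divisor, since it divides the image of $\mu$); by induction $\res\xi_s$ is congruent mod $\pi W_r(\frak m^\flat)$ to an element $f(x)$ of the kernel term $\frak m^\flat/R^{r-1}(\pi)\frak m^\flat$; and one kills $f(x)$ not by increasing $s$ but by multiplying the congruence by $\phi^{-s}(\res\xi_{s'})$ — since $\phi^{-s}(\res\mu)f(x)=0$ mod $\pi$, the $r=1$ valuation argument applied to $\phi^{-s}(\mu)x\in R^{r-1}(\pi)\frak m^\flat$ gives $\phi^{-s}(\xi_{s'})x\in R^{r-1}(\pi)\frak m^\flat$ for suitable $s'$, whence $\res\xi_{s+s'}\equiv 0$ mod $\pi W_r(\frak m^\flat)$. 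Some argument of this kind is needed; as written your step would fail. (A minor further point: the case $\bar\pi=0$ that you propose to treat separately is vacuous, since then $\pi W_r(\frak m^\flat)\subseteq pW_r(\roi^\flat)$ cannot contain $\res\mu$, whose image $\ep-1$ in $\roi^\flat$ is non-zero.)
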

\begin{proof}
(i) Since $\phi$ is an automorphism of $W_r(\roi^\flat)$ and $W_r(\frak m^\flat)$ we immediately reduce to the case $m=0$.

We will use that $W_r(\frak m^\flat)$ is the increasing union of the principal ideals generated by the non-zero-divisors $\phi^{-k}(\res\mu)$ for $k\ge0$; note also that $\phi^{-k}(\res \mu)^{p^k}\equiv \res\mu$ mod $p$, whence $\phi^{-k}(\res \mu)^{p^{k}r}\in \res\mu W_r(\roi^\flat)$ since $p^r=0$ in $W_r(\roi^\flat)$. This shows that $\res\mu\in W_r(\frak m^\flat)^\circ$, and therefore $\phi^{-s}(\res\mu)\in W_r(\frak m^\flat)^\circ$ for any $s\ge 1$ since $\phi$ is an automorphism. Since $\xi_s\equiv p^s$ mod $\phi^{-s}(\mu)$ we deduce that $\res\xi_s$ is a multiple of $\phi^{-s}(\res\mu)$ for any $s\geq r$. But conversely we also know that $\res\xi_s\phi^{-s}(\res\mu)=\res \mu$.

(ii) Since $\res\mu$ is divisible by $\res\xi_s$, the implication $\Leftarrow$ is trivial. We prove the converse by induction on $r\ge1$, the case $r=1$ being an easy valuation argument. To proceed by induction, note that the exact sequence \[0\To\frak m^\flat\xto{\times p^{r-1}}W_r(\frak m^\flat)\xTo{R} W_{r-1}(\frak m^\flat)\To 0\] of $W_r(\roi^\flat)$-modules induces an exact sequence \[0\To\frak m^\flat/R^{r-1}(\pi)\frak m^\flat\xto{f}W_r(\frak m^\flat)/\pi\xTo{R} W_{r-1}(\frak m^\flat)/R(\pi)\To 0\] since $R(\pi)$ is a non-zero-divisor of $W_{r-1}(\roi^\flat)$ (as it divides the image of $\mu$ in $W_{r-1}(\roi^\flat)$). By induction we may therefore write $f(x)\equiv\res\xi_s$ mod $\pi W_r(\frak m^\flat)$ for some $x\in\frak m^\flat$ and $s\ge r-1$. Multiplying this identity by $\phi^{-s}(\res \mu)$, and using $\phi^{-s}(\res\mu)\res\xi_s=\res\mu\in\pi W_r(\frak m^\flat)$, we see that $\phi^{-s}(\res\mu)f(x)=0$ in $W_r(\frak m^\flat)/\pi$, i.e., $\phi^{-s}(\mu)x\in R^{r-1}(\pi)\frak m^\flat$. But the case $r=1$ then implies $\phi^{-s}(\xi_{s'})x\in R^{r-1}(\pi)\frak m^\flat$ for some $s'\ge 1$. In conclusion $\xi_{s+s'}=\phi^{-s}(\xi_{s'})\xi_s\equiv 0$ mod $\pi W_r(\frak m^\flat)$, as desired.
\end{proof}

We are now prepared to establish an analogue of Proposition \ref{proposition_small_sheaves} over $\bb A_\sub{inf}$:

\begin{proposition}\label{proposition_small_Ainf_sheaves}
Let $\bb M$ be a sheaf of $\bb A_{\sub{inf},U}$-modules. Then the following are equivalent:
\begin{enumerate}[(a)]
\item $\bb M$ is locally finite projective and trivial modulo $<\mu$;
\item it is possible to write $\bb M=\projlim_s\bb M_s$, where each $\bb M_s$ is a locally finite projective sheaf of $\bb A_{\sub{inf},U}/p^s$-modules which is trivial modulo $<\mu$(in the sense of Definition \ref{trivial_modulo_pi} for Example \ref{example_B}(ii)), and where the transition maps $\bb M_s\to\bb M_{s-1}$ are required to satisfy $\bb M_s/p^{s-1}\isoto\bb M_{s-1}$ for all $s>1$.
\end{enumerate}
Suppose that $\bb M$ satisfies these equivalent conditions and let $\cal U\in U_\sub{pro\'et}$ be any affinoid perfectoid. Then:
\begin{enumerate}
\item $H^i_\sub{pro\'et}(\cal U,\bb M)$ is killed by $[\frak m^\flat]$ for $i>0$.
\item $H^1_\sub{pro\'et}(\cal U,\bb M)$ has no non-zero elements killed by either $p$ or $\xi_r$, any $r\ge1$.
\item $\Gamma(\cal U,\bb M)$ is a finite projective $\Gamma(\cal U,\bb A_{\sub{inf},U})$-module and the canonical map \[\Gamma(\cal U,\bb M)\otimes_{\Gamma(\cal U,\bb A_{\sub{inf},U})}\bb A_{\sub{inf},U}|_{\cal U}\To \bb M|_{\cal U}\] is an isomorphism.
\item If $\nu_*(\bb M/\xi)$ is a finite free $\nu_*(\bb A_{\inf,U}/\xi)$-module, then $\bb M\vert_{\cal U}$ is a finite free $\bb A_{\inf,U}\vert_{\cal U}$-module.
%\item Under the same set-up as Proposition \ref{proposition_small_sheaves}(v), the induced semi-linear $G$-action on the $\Gamma(V,\bb A_{\sub{inf},U})$-module $\Gamma(V,\bb M)$ is continuous for the $(p,\xi)$-adic topology and is trivial modulo $<\mu$. Moreover, the cone of the canonical map \[R\Gamma_\sub{cont}(G,\Gamma(V,\bb M))\To R\Gamma_\sub{pro\'et}(U,\bb M)\] has all cohomology groups killed by $W(\frak m^\flat)$; in the case $V=U_\infty$, in which case $G=\bb Z_p(1)^d$, it becomes a quasi-isomorphism after applying $L\eta_{\phi^{-r}(\mu)}$ for any $r\ge0$.
\end{enumerate}
In particular, any $\bb M$ satisfying condition (a) is a locally finite free sheaf of $\bb A_{\inf,U}$-modules (i.e., in the definition of relative Breuil--Kisin--Fargues modules, we could replace the condition that $\bb M$ be locally finite free by the a priori weaker condition that it is only locally finite projective).
\end{proposition}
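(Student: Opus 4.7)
The plan is to reduce everything to the mod-$p^s$ case, which is governed by Proposition \ref{proposition_small_sheaves} applied via Example \ref{example_B}(ii) to $\bb A_{\inf,U}/p^s = W_s(\hat\roi_{X^{\flat}}^+)$, and then to assemble via inverse limit arguments built on Lemma \ref{lemma_limits_of_finite_projective} and the Milnor exact sequence. The bridge between ``trivial mod $<\mu$'' in the sense of Definition \ref{definition_trivial_mod_mu} (asking for triviality mod $\xi_r$ for all $r$) and in the sense of Definition \ref{trivial_modulo_pi} applied to $\bb A_{\inf,U}/p^s$ (asking for triviality mod $\pi$ for all $\pi$ with $\bar\mu \in \pi W_s(\frak m^\flat)$) is provided by Lemma \ref{lemma_<mu_mod_p}(ii), which shows the latter is equivalent to triviality mod $\bar\xi_{s'}$ for all $s' \gg s$, and since $\xi_s \mid \xi_{s+1}$ this in turn reduces to triviality mod all $\bar\xi_{s'}$.

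Assuming (b), I would fix an affinoid perfectoid $\cal U \in U_\sub{pro\'et}$ and apply Proposition \ref{proposition_small_sheaves}(i), (v) to each $\bb M_s$: this gives $H^i_{\sub{pro\'et}}(\cal U, \bb M_s)$ killed by $W_s(\frak m^\flat)$ (hence by $[\frak m^\flat]$) for $i>0$ and $\Gamma(\cal U, \bb M_s)$ finite projective over $\Gamma(\cal U, \bb A_{\inf,U}/p^s)$ with $\bb M_s|_\cal U$ recovered by base change. The condition $\bb M_s/p^{s-1} \isoto \bb M_{s-1}$ translates into $\Gamma(\cal U, \bb M_s) \otimes_{\Gamma(\cal U, \bb A_{\inf,U}/p^s)} \Gamma(\cal U, \bb A_{\inf,U}/p^{s-1}) \isoto \Gamma(\cal U, \bb M_{s-1})$, so Lemma \ref{lemma_limits_of_finite_projective} applies with $B = \Gamma(\cal U, \bb A_{\inf,U})$ and $I_s = p^sB$, yielding a finite projective $B$-module $\Gamma(\cal U, \bb M) = \projlim_s \Gamma(\cal U, \bb M_s)$ with $\Gamma(\cal U, \bb M)/p^s = \Gamma(\cal U, \bb M_s)$, and the canonical map of (iii) is an isomorphism mod each $p^s$ hence an isomorphism after taking the $p$-adic limit on each side (using that both are $p$-adically complete and separated). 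Statement (i) follows from the Milnor sequence and the surjectivity of the transitions $\Gamma(\cal U, \bb M_s) \to \Gamma(\cal U, \bb M_{s-1})$, which kills $R^1\projlim$, combined with the uniform $[\frak m^\flat]$-annihilation of the higher cohomologies mod $p^s$. For (ii), the long exact sequence of $0\to \bb M \xto{p^s} \bb M \to \bb M_s \to 0$ together with $\Gamma(\cal U, \bb M)/p^s \isoto \Gamma(\cal U, \bb M_s)$ shows $H^1(\cal U, \bb M)[p^s] = 0$, and the analogous sequence for $\xi_r$ (using that $\bb M$ is $\xi_r$-torsion-free, inherited from its mod-$p^s$ pieces) combined with the filtered colimit/triviality statement $\Gamma(\cal U, \bb M)/\xi_r \isoto \Gamma(\cal U, \bb M/\xi_r)$ obtained from (iii) handles the $\xi_r$-torsion. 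For (iv), the triviality condition expresses $\Gamma(\cal U, \bb M/\xi)$ as the base change of $\Gamma(U, \bb M/\xi)$ (which is finite free by hypothesis), hence $\Gamma(\cal U, \bb M)/\xi$ is finite free over $\Gamma(\cal U, \bb A_{\inf,U})/\xi$, and Nakayama plus $(p,\xi)$-adic completeness upgrades this to finite freeness of $\Gamma(\cal U, \bb M)$ itself; combined with (iii) this gives finite freeness of $\bb M|_\cal U$.

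For the implication (a) $\Rightarrow$ (b), I would set $\bb M_s := \bb M/p^s$, which is locally finite projective over $\bb A_{\inf,U}/p^s$ by local finite projectivity of $\bb M$, and verify triviality mod $<\mu$ in the mod-$p^s$ sense. By Lemma \ref{lemma_<mu_mod_p}(ii) it suffices to show triviality of $\bb M_s$ mod $\bar\xi_{s'}$ for all $s' \gg s$. Since $\bb M/\xi_{s'}$ is trivial (by hypothesis) and is locally finite projective (hence flat) over $\bb A_{\inf,U}/\xi_{s'}$, tensoring the trivializing isomorphism with $\bb A_{\inf,U}/(p^s,\xi_{s'})$ realizes $\bb M/(p^s,\xi_{s'}) = \bb M_s/\bar\xi_{s'}$ as the base change of $\Gamma(U,\bb M/\xi_{s'})/p^s$ from $\Gamma(U, \bb A_{\inf,U}/\xi_{s'})/p^s$, and a standard long exact sequence argument (using that the transitions in the inverse system $\{\Gamma(U, \bb M/\xi_{s'})/p^s\}$ are compatible) identifies $\Gamma(U, \bb M/\xi_{s'})/p^s$ with $\Gamma(U, \bb M_s/\bar\xi_{s'})$. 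The compatibility $\bb M_s/p^{s-1} \isoto \bb M_{s-1}$ is immediate from $\bb M_s = \bb M/p^s$, and $\bb M = \projlim_s \bb M_s$ sheaf-theoretically follows from derived $p$-adic completeness, which in turn is guaranteed by our already-proved (i) and (iii).

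The main obstacle is the technical comparison of global sections modulo $p^s$ and modulo $\xi_{s'}$ that appears in the (a) $\Rightarrow$ (b) direction, since neither ideal is divisible by the other; this is where the flatness coming from local finite projectivity is essential, allowing one to freely tensor trivializations together and to switch between the two quotient descriptions. Once this is surmounted the equivalence of (a) and (b) is a formal consequence of Lemma \ref{lemma_limits_of_finite_projective}.
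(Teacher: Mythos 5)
Your proof is correct and follows the same overall d\'evissage as the paper: reduce to the mod-$p^s$ theory of Proposition \ref{proposition_small_sheaves} (via Example \ref{example_B}(ii)), bridge the two notions of ``trivial mod $<\mu$'' with Lemma \ref{lemma_<mu_mod_p}, and reassemble by an inverse limit over $s$. Two sub-arguments are routed differently, and both of your routes work. First, where you extract the compatibility $\Gamma(\cal U,\bb M_s)\otimes_{\Gamma(\cal U,\bb A_{\inf,U}/p^s)}\Gamma(\cal U,\bb A_{\inf,U}/p^{s-1})\isoto\Gamma(\cal U,\bb M_{s-1})$ from Proposition \ref{proposition_small_sheaves}(v) and feed the tower into Lemma \ref{lemma_limits_of_finite_projective}, the paper instead proves surjectivity of the transition maps $\Gamma(\cal U,\bb M_r)\to\Gamma(\cal U,\bb M_{r-1})$ by hand, via a base-change identification modulo $\xi_s$ together with the $\xi_s$-adic completeness of $\Gamma(\cal U,\bb A_{\inf,U}/p^{r-1})$; your version is arguably cleaner, but note that both rest on the identification $\Gamma(\cal U,\bb A_{\inf,U})/p^s\cong\Gamma(\cal U,\bb A_{\inf,U}/p^s)$ for affinoid perfectoid $\cal U$ (the identities of \cite[Lem.~5.6]{BhattMorrowScholze1}), which you use implicitly when taking $B=\Gamma(\cal U,\bb A_{\inf,U})$, $I_s=p^sB$ and should cite. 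Second, for the $\xi_r$-torsion-freeness in (ii) the paper avoids your separate long exact sequence for $\xi_r$ by the shortcut that $H^1$ is killed by the ideal $W(\frak m^\flat)$ while $\xi_r\equiv p^r$ modulo that ideal, so $\xi_r$-torsion is $p^r$-torsion, which has already been shown to vanish; your route needs the extra inputs you list ($\xi_r$-torsion-freeness of $\bb M$ and surjectivity of $\Gamma(\cal U,\bb M)\to\Gamma(\cal U,\bb M/\xi_r)$), both of which do hold.

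Two small cleanups. In (a)$\Rightarrow$(b), the completeness $\bb M\isoto\projlim_s\bb M/p^s$ should be justified directly by local finite projectivity over the $p$-adically complete sheaf $\bb A_{\inf,U}$, not by appealing to (i) and (iii), which at that point have only been established under hypothesis (b). And the ``standard long exact sequence argument'' identifying $\Gamma(U,\bb M/\xi_{s'})/p^s$ with $\Gamma(U,\bb M_s/\bar\xi_{s'})$ is unnecessary for the triviality of $\bb M_s$ modulo $\bar\xi_{s'}$: once $\bb M_s/\bar\xi_{s'}$ is exhibited as the base change of a finite projective module along $\Gamma(U,\bb A_{\inf,U}/\xi_{s'})/p^s\to\bb A_{\inf,U}/(p^s,\xi_{s'})$, taking $\Gamma(U,-)$ (which commutes with tensoring by a finite projective module) already identifies $\Gamma(U,\bb M_s/\bar\xi_{s'})$ with the base change to $\Gamma(U,\bb A_{\inf,U}/(p^s,\xi_{s'}))$ and gives the required counit isomorphism.
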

\begin{proof}
The implication (a)$\Rightarrow$(b) follows from the $p$-adic completeness of $\bb M$ (the sheaf $\bb A_{\sub{inf},U}$ is $p$-adically complete, whence any locally finite free module over it is also) and the resulting presentation $\bb M=\projlim_s\bb M/p^s$: note that triviality of $\bb M$ modulo $<\mu$ (in the sense of Definition \ref{definition_trivial_mod_mu}) implies triviality of $\bb M/p^s$ modulo $<\mu$ mod $p^s$ (in the sense of Definition \ref{trivial_modulo_pi}), thanks to Lemma \ref{lemma_<mu_mod_p}(ii).

Henceforth assume (b). We will simultaneously establish (a) and the properties (i)--(iii). The main step to show (a) is the following claim: given integers $s\ge r\ge r_0\ge 1$, we claim that there is a natural identification \[\Gamma(\cal U,\bb M_r)\otimes_{\Gamma(\cal U,\bb A_{\sub{inf},U}/p^r)}\Gamma(\cal U,\bb A_{\sub{inf},U}/p^{r_0})/\xi_s\isoto \Gamma(\cal U,\bb M_{r_0})/\xi_s.\tag{\dag}\] Firstly, $\xi_s\in W_r(\frak m^\flat)^\circ$ by Lemma \ref{lemma_<mu_mod_p}(i), whence Proposition \ref{proposition_small_sheaves}(iv)\&(v) implies that $\Gamma(\cal U,\bb M_r)$ is finite projective over $\Gamma(\cal U,\bb A_{\sub{inf},U}/p^r)$ and that \[\Gamma(\cal U,\bb M_r)\otimes_{\Gamma(\cal U,\bb A_{\sub{inf},U}/p^r)}\bb A_{\sub{inf},U}/(p^r,\xi_s)|_{\cal U}\isoto \bb M_r/\xi_s|_{\cal U};\] going mod $p^{r_0}$ and applying $\Gamma(\cal U,-)$ we deduce that \[\Gamma(\cal U,\bb M_r)\otimes_{\Gamma(\cal U,\bb A_{\sub{inf},U}/p^r)}\Gamma(\cal U,\bb A_{\sub{inf},U}/(p^{r_0},\xi_s))\isoto\Gamma(\cal U,\bb M_{r_0}/\xi_s).\] Now fix an auxiliary integer $t\ge s+r_0$. Then the previous isomorphism is also valid if we replace $s$ by $t$ (since $t$ is also $\ge r$), and Proposition \ref{proposition_small_sheaves}(ii) (which is valid since $\res\xi_{t}=\res\xi_s\phi^{-s}(\res\xi_{t-s})$, where $\phi^{-s}(\res\xi_{t-s})\in W_{r_0}(\frak m^\flat)$ by Lemma \ref{lemma_<mu_mod_p}(i)) states that the image of the $t$-version in the $s$-version is precisely the desired identification (\dag).

Fixing $s\ge r>1$, we now use (\dag) for the triples $s\ge r\ge r$ and $s\ge r\ge r-1$; it follows easily that $\Gamma(\cal U,\bb M_r)/\xi_s\to \Gamma(\cal U,\bb M_{r-1})/\xi_s$ is surjective if $\Gamma(\cal U,\bb A_{\sub{inf},U}/p^{r})\to \Gamma(\cal U,\bb A_{\sub{inf},U}/p^{r-1})$ is surjective, which indeed it is by the identities of \cite[Lem.~5.6]{BhattMorrowScholze1}.
%(it is enough to check that $H^1_\sub{pro\'et}(\cal U,\bb A_{\inf,U})$ is $p$-torsion-free, or in other words that $H^0_\sub{pro\'et}(\cal U,\bb A_{\inf,U})\to H^0_\sub{pro\'et}(\cal U,\hat\roi_X^{+\tilt})$ is surjective; but this latter map is just the mod $p$ reduction for )
But $\Gamma(\cal U,\bb M_{r-1})$ is finite over $\Gamma(\cal U,\bb A_{\inf,U}/p^{r-1})$ and $\Gamma(\cal U,\bb A_{\inf,U}/p^{r-1})$ is $\xi_s$-adically complete, so it follows that $\Gamma(\cal U,\bb M_r)\to \Gamma(\cal U,\bb M_{r-1})$ is surjective.

Therefore by induction $\Gamma(\cal U,\bb M_r)\to \Gamma(\cal U,\bb M_t)$ is surjective for any $r\ge t$, i.e., the exact sequence $0\to\bb M_{r-t}\xto{p^t}\bb M_r\to\bb M_t\to 0$ remains exact upon applying $\Gamma(\cal U,-)$; then taking the inverse limit over $r$ (and using that surjectivity of the transition maps has already been checked) shows that the sequence $0\to \bb M\xto{p^t}\bb M\to \bb M_t\to 0$ is exact upon applying $\Gamma(\cal U,-)$. Since this holds for all affinoid perfectoids $\cal U$, we deduce that $0\to \bb M\xto{p^t}\bb M\to \bb M_t\to 0$ is exact, i.e., $\bb M/p^t\isoto\bb M_t$. It then follows that $H^1_\sub{pro\'et}(\cal U,\bb M)$ is $p$-torsion-free, since we showed surjectivity of $\Gamma(\cal U,\bb M)\to \Gamma(\cal U,\bb M_t)$.

Moreover, since the higher cohomologies of each $\bb M_t$ on $\cal U$ are killed by $[\frak m^\flat]$, by Proposition \ref{proposition_small_sheaves}(i), taking the limit using \cite[Lem.~3.18]{Scholze2013} shows that the higher cohomologies of $\bb M$ on $\cal U$ are killed by $[\frak m^\flat]$, i.e., (i). In particular, $H^1_\sub{pro\'et}(\cal U,\bb M)$ is killed by $[\frak m^\flat]$; but we have already shown it is $p$-torsion-free, whence it is $\xi_r$- and $\tilde\xi_r$-torsion-free since $\tilde\xi_r\equiv\xi_r\equiv p^r$ mod $[\frak m^\flat]$, i.e., (ii).

We have shown that $\Gamma(\cal U,\bb M)/p^t=\Gamma(\cal U,\bb M_t)$, which by Proposition \ref{proposition_small_sheaves}(v) is a finite projective $\Gamma(\cal U,\bb A_{\sub{inf},U})/p^t$-module satisfying $\Gamma(\cal U,\bb M)/p^t\otimes_{\Gamma(\cal U,\bb A_{\sub{inf},U})/p^t}\bb A_{\sub{inf},U}/p^t\vert_{\cal U}\isoto\bb M_t\vert_{\cal U}$. Therefore, by completeness, $\Gamma(\cal U,\bb M)$ is a finite projective $\Gamma(\cal U,\bb A_{\sub{inf},U})$-module and taking the limit shows that $\Gamma(\cal U,\bb M)\otimes_{\Gamma(\cal U,\bb A_{\sub{inf},U})}\bb A_{\sub{inf},U}\vert_{\cal U}\isoto \bb M\vert_{\cal U}$, i.e., (iii). Since the affinoid perfectoids form a basis, this also shows that $\bb M$ is locally finite projective. 

Next we show that $\bb M=\projlim_s\bb M_s$ is trivial modulo $<\mu$. For any $r\ge 1$, the sheaf of $W_r(\hat\roi_X^+)$-modules $\bb M/\xi_r$ is trivial modulo all powers of $p$ (since $\bb M/(\xi_r,p^s)=\bb M_s/\xi_r$ is trivial by hypothesis); since $p\in W_r(\frak m)^\circ$ by Remark \ref{remark_p_vs_[p]}, we deduce from Lemma \ref{lemma_trivial_mod_all_implies_trivial} that $\bb M/\xi_r$ is trivial.

(iv): As $\bb M$ is trivial modulo $\xi$, the assumption implies that $\bb M/\xi$ is a finite free $\bb A_{\inf,U}/\xi$-module. By applying the last claim of Proposition \ref{proposition_small_sheaves} to the sheaf of $\bb A_{\inf}/p$-modules $\bb M/p$ and $\varpi=(\xi \mod p)$, we see that $\Gamma(\cal U,\bb M/p)$ is a finite free $\Gamma(\cal U,\bb A_{\inf,U}/p)$-module. Hence the proof of (iii) above shows that the $\Gamma(\cal U, \bb A_{\inf,U})$-module $\Gamma(\cal U,\bb M)$ is finite free, and therefore, the $\bb A_{\inf,U}\vert_{\cal U}$-module $\bb M\vert_{\cal U}$ is finite free.

The final ``In particular'' claim is an immediate consequence of (iv), by restricting to an open cover of $\Spf R$ on each piece of which $\nu_*(\bb M/\xi)$ is finite free over $\nu_*(\bb A_{\inf,U}/\xi)$.
\end{proof}

We reach the main promised theorem of the section, describing relative Breuil--Kisin--Fargues modules on $\Spf R$ in terms of the generalised representations which were studied in \S\ref{section_small_reps}:

\begin{theorem}\label{theorem_Ainf_reps_vs_pro_etale}
Let $\cal U$ be an affinoid perfectoid cover of $U$ as in ($\Pi$). Taking global sections induces an equivalence of categories:
\begin{align*}
\BKF(R)=\categ{3cm}{locally finite free $\bb A_{\sub{inf},U}$-modules which are trivial modulo $<\mu$}&\Isoto\Rep_G^{<\mu}(\Gamma(\cal U,\bb A_{\sub{inf},U}))=\categ{5cm}{finite projective $\Gamma(\cal U,\bb A_{\sub{inf},U})$-modules equipped with a continuous, semi-linear $G$-action which is trivial modulo $<\mu$}\\ 
\bb M&\mapsto \Gamma(\cal U,\bb M)
\end{align*}
Moreover, for each such $\bb M$, there is a natural morphism of complexes of $\Gamma(U,\bb A_{\sub{inf},U})$-modules \[R\Gamma_\sub{cont}(G,\Gamma(\cal U,\bb M))\To \hat{R\Gamma_\sub{pro\'et}(U,\bb M)},\] where the hat denotes derived $p$-adic completion, such that all cohomology groups of the cone are killed by $W(\frak m^\flat)$. In the special case $\cal U=U_\infty$, so that $G=\bb Z_p(1)^d$, this morphism becomes a quasi-isomorphism after applying $L\eta_{\pi}$ for any non-zero divisor $\pi\in\bigcup_{j\ge0}\phi^{-j}(\mu)A_\inf$.
\end{theorem}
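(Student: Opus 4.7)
The plan is to bootstrap Theorem \ref{theorem_correspondence_for_B} from the mod $p^s$ setting to the $\bb A_{\sub{inf},U}$ setting, using the $p$-adic decomposition $\bb M = \projlim_s \bb M/p^s$ provided by Proposition \ref{proposition_small_Ainf_sheaves}(a)$\Leftrightarrow$(b), and to deduce the cohomological comparison by passing the Hochschild--Serre argument of Step~5 of Theorem \ref{theorem_correspondence_for_B} through the same limit.

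To check the functor is well-defined, given $\bb M \in \BKF(R)$, Proposition \ref{proposition_small_Ainf_sheaves}(iii) makes $\Gamma(\cal U, \bb M)$ finite projective over $\Gamma(\cal U, \bb A_{\sub{inf},U})$; for each $s \ge 1$, the sheaf $\bb M/p^s$ is locally finite free over $W_s(\hat\roi_{X^\flat}^+)$ and trivial modulo $<\mu$ in the sense of Definition \ref{trivial_modulo_pi} (Lemma \ref{lemma_<mu_mod_p}(ii) converts between the $\xi_r$-smallness of Definition \ref{definition_trivial_mod_mu} and the $\mu$-smallness required by Theorem \ref{theorem_correspondence_for_B} in $W_s(\frak m^\flat)$). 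Theorem \ref{theorem_correspondence_for_B} applied with $\bb B = W_s(\hat\roi_{X^\flat}^+)$ equips $\Gamma(\cal U, \bb M/p^s)$ with a continuous, semi-linear $G$-action trivial modulo $<\mu$; taking $\projlim_s$ and using $\Gamma(\cal U, \bb M) = \projlim_s \Gamma(\cal U, \bb M)/p^s$ (from the proof of Proposition \ref{proposition_small_Ainf_sheaves}) produces the required structure on $\Gamma(\cal U, \bb M)$.

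For the inverse, given $M \in \Rep_G^{<\mu}(\Gamma(\cal U, \bb A_{\sub{inf},U}))$, I would set $M_s := M/p^s$, which by the same Lemma \ref{lemma_<mu_mod_p}(ii) lies in $\Rep_G^{<\mu}(\Gamma(\cal U, W_s(\hat\roi_{X^\flat}^+)))$. Theorem \ref{theorem_correspondence_for_B} associates to it a locally finite free sheaf $\bb M_s$ of $W_s(\hat\roi_{X^\flat}^+)$-modules, trivial modulo $<\mu$, and the compatibilities $M_s/p^{s-1} \cong M_{s-1}$ transfer to $\bb M_s/p^{s-1} \cong \bb M_{s-1}$. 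Setting $\bb M := \projlim_s \bb M_s$, the implication (b)$\Rightarrow$(a) of Proposition \ref{proposition_small_Ainf_sheaves} shows $\bb M \in \BKF(R)$, and its part (iii) recovers $\Gamma(\cal U, \bb M) = M$. Faithfulness is then immediate; fullness follows by reducing a $G$-equivariant morphism modulo $p^s$, invoking the fullness in Theorem \ref{theorem_correspondence_for_B} on each level, and assembling via $\projlim_s$.

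For the cohomological comparison, the Hochschild--Serre identity $R\Gamma_\sub{pro\'et}(U, \bb M_s) \simeq R\Gamma_\sub{cont}(G, R\Gamma_\sub{pro\'et}(\cal U, \bb M_s))$ (Step~5 of Theorem \ref{theorem_correspondence_for_B}) combined with the almost vanishing of $H^i_\sub{pro\'et}(\cal U, \bb M_s)$ for $i > 0$ from Proposition \ref{proposition_small_sheaves}(i) gives an almost quasi-isomorphism $R\Gamma_\sub{cont}(G, \Gamma(\cal U, \bb M_s)) \stackrel{\sub{al}}{\simeq} R\Gamma_\sub{pro\'et}(U, \bb M_s)$ for every $s$. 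Taking $\op{Rlim}_s$ produces a morphism from $R\Gamma_\sub{cont}(G, \Gamma(\cal U, \bb M))$ to the derived $p$-adic completion of $R\Gamma_\sub{pro\'et}(U, \bb M)$; the cone, being annihilated by $W_s(\frak m^\flat)$ on each finite level, has cohomology killed by $W(\frak m^\flat)$ in the limit. The final assertion about $L\eta_\pi$ in the case $\cal U = U_\infty$ is then immediately Lemma \ref{lemma_Leta_isom}(ii) applied to this morphism. The main technical obstacle is the bookkeeping of the two adic directions: verifying that ``almost-zero mod $p^s$ for all $s$'' assembles to ``killed by $W(\frak m^\flat)$'' and that the natural pro-systems $\{\bb M_s\}_s$ and $\{\bb M/\xi_r\}_r$ are Mittag--Leffler, which is exactly what is packaged in Proposition \ref{proposition_small_Ainf_sheaves}.
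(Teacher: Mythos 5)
Your proposal is correct and follows essentially the same route as the paper: both reduce to the mod $p^s$ case via Proposition \ref{proposition_small_Ainf_sheaves}, apply Theorem \ref{theorem_correspondence_for_B} on each finite level (with Lemma \ref{lemma_<mu_mod_p}(ii) mediating between the $\xi_r$- and $\mu$-smallness conditions), assemble by $\projlim_s$ respectively $\op{Rlim}_s$, and conclude the $L\eta_\pi$ claim from Lemma \ref{lemma_Leta_isom}(ii). The only cosmetic difference is that the paper verifies the triviality-modulo-$\xi_r$ and continuity of the $G$-action on $\Gamma(\cal U,\bb M)$ directly from the sheaf-theoretic triviality of $\bb M/\xi_r$ rather than through the mod $p^s$ levels, but your limit argument achieves the same thing.
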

\begin{proof}
We begin by showing that the functor is well-defined. Given $\bb M$ in the left category, the global sections $\Gamma(\cal U,\bb M)$ are a finite projective $\Gamma(\cal U,\bb A_{\sub{inf},U})$-module by Proposition \ref{proposition_small_Ainf_sheaves}(iii). Each quotient $\bb M/(p^s,\xi^s)$ is a discrete sheaf since $\bb M$ is trivial modulo $\xi$,
$\bb A_{\inf,U}/(p^s,\xi)$ is discrete, and $\bb M/p^s$ is $\xi$-torsion free. This implies that the natural $G$-action on $\Gamma(\cal U,\bb M)$ is continuous for the $(p,\xi)$-adic topology: compare with Step 1 of the proof of Theorem \ref{theorem_correspondence_for_B} and use the fact that $\Gamma(\cal U,\bb M)/p^s=\Gamma(\cal U,\bb M/p^s)$ from Proposition \ref{proposition_small_Ainf_sheaves}(ii). The $G$-action is trivial modulo $\xi_r$, for any $r\ge1$, since \[\Gamma(\cal U,\bb M)/\xi_r=\Gamma(U,\bb M/\xi_r)\otimes_{\Gamma(U,\bb A_{\sub{inf},U}/\xi_r)}\Gamma(\cal U,\bb A_{\sub{inf},U})/\xi_r,\] where the $G$-action on $\Gamma(U,\bb M/\xi_r)$ is trivial; here we are implicitly using the identification $\Gamma(\cal U,-)/\xi_r=\Gamma(\cal U,-/\xi_r)$ for both $\bb M$ and $\bb A_{\sub{inf},U}$ from Proposition \ref{proposition_small_Ainf_sheaves}(ii)

We next prove that the functor is fully faithful. Let $\bb M$ and $\bb M'$ belong to the left category. Then
\begin{align*}
\Hom_{\bb A_{\sub{inf},U}}(\bb M,\bb M')&=\projlim_s\Hom_{\bb A_{\sub{inf},U}/p^s}(\bb M/p^s,\bb M'/p^s)\\
&=\projlim_s\Hom_{\Gamma(\cal U,\bb A_{\sub{inf},U}/p^s),G}(\Gamma(\cal U,\bb M/p^s),\Gamma(\cal U,\bb M'/p^s))
\end{align*}
where $\Hom_{\Gamma(\cal U,\bb A_{\sub{inf},U}/p^s),G}$ refers to the obvious notion of $G$-equivariant morphisms in the category  
\[\categ{7cm}{finite projective $\Gamma(\cal U,\bb A_{\sub{inf},U}/p^s)$-modules with continuous, semi-linear $G$-action trivial modulo $<\mu$}\] The second equality above is an instance, for each $s\ge1$, of Theorem \ref{theorem_correspondence_for_B}, noting that $\bb M/p^s$ and $\bb M'/p^s$ are trivial modulo $\mu$ by the proof of the implication (a)$\Rightarrow$(b) of Proposition \ref{proposition_small_Ainf_sheaves}. Moving the quotient by $p^s$ outside the global sections (by Proposition \ref{proposition_small_Ainf_sheaves}(ii)), this may be rewritten as
\[\projlim_s\Hom_{\Gamma(\cal U,\bb A_{\sub{inf},U})/p^s,G}(\Gamma(\cal U,\bb M)/p^s,\Gamma(\cal U,\bb M')/p^s)=\Hom_{\Gamma(\cal U,\bb A_{\sub{inf},U}),G}(\Gamma(\cal U,\bb M),\Gamma(\cal U,\bb M'))\] where $\Hom_{\Gamma(\cal U,\bb A_{\sub{inf},U}),G}$ refers to morphisms in the category on the right side of the theorem. This completes the proof of fully faithfulness.

Regarding cohomology, we have already noted in the course of the proof that $\Gamma(\cal U,\bb M)/p^s=\Gamma(\cal U,\bb M/p^s)$ and that Theorem \ref{theorem_correspondence_for_B} may be applied to each $\bb M/p^s$. Therefore the final assertion of that theorem provides a natural almost quasi-isomorphism of complexes of $\Gamma(\cal U,\bb A_{\inf,U})/p^s$-modules \[R\Gamma_\sub{cont}(G,\Gamma(\cal U,\bb M))/p^s\To R\Gamma_\sub{pro\'et}(U,\bb M)/p^s.\] Taking the derived inverse limit as $s\to\infty$ provides the desired morphism \[R\Gamma_\sub{cont}(G,\Gamma(\cal U,\bb M))\To \hat{R\Gamma_\sub{pro\'et}(U,\bb M)}.\] In the special case $\cal U=U_\infty$, the quasi-isomorphism claim is an application of Lemma \ref{lemma_Leta_isom}(ii).

It remains to show that the functor is essentially surjective; so let $M$ belong the right category. For each $s\ge 1$, the quotient $M_s:=M/p^sM$ is a finite projective $\Gamma(\cal U,\bb A_{\sub{inf},U}/p^s)$-module with a continuous, semi-linear $G$-action trivial modulo $<\mu$ (by Lemma \ref{lemma_<mu_mod_p}). The equivalence of Theorem \ref{theorem_correspondence_for_B} tells us that $M_s$ corresponds to a locally finite projective sheaf $\bb M_s$ of $\bb A_{\sub{inf},U}/p^s$-modules trivial modulo $<\mu$ on $U_\sub{pro\'et}$ with the following two properties: $\bb M_s|_{\cal U}=M_s\otimes_{\Gamma(\cal U,\bb A_{\sub{inf},U})/p^s}\bb A_{\sub{inf},U}|_{\cal U}/p^s$ by Proposition \ref{proposition_small_sheaves}(v), and the canonical $G$-action on $\Gamma(\cal U,\bb M_s)=M_s$ is the existing one on $M_s$.

The inverse system $\cdots\to\bb M_s\to\bb M_{s-1}\to\cdots$ satisfies the conditions of Proposition \ref{proposition_small_Ainf_sheaves}(b), whence $\bb M:=\projlim_s \bb M_s$ belongs to the left category by the implication (b)$\Rightarrow$(a) and part (iv) of Proposition \ref{proposition_small_Ainf_sheaves}. But the previous two properties show that $\Gamma(\cal U,\bb M)=\projlim_s\Gamma(\cal U,\bb M_s)$ identifies with $M$ as an object of the right category, thereby proving the essential surjectivity.
\end{proof}

\subsection{Relative Breuil--Kisin--Fargues modules}\label{ss_relative_BKF}
Given a $p$-adically complete, formally smooth, small $\roi$-algebra $R$ with choice of framing, the following diagram summarises our main equivalences thus far:
\begin{equation}\xymatrix@C=1.2cm{
\BKF(\Spf R)\ar[r]^-{\Gamma(U_\infty,-)}_{\simeq\sub{Thm.~\ref{theorem_Ainf_reps_vs_pro_etale}}}&\Rep_\Gamma^{<\mu}(A_\inf(R_\infty))&\\
&\Rep_\Gamma^\mu(A_\inf(R_\infty))\ar[u]_{\simeq \sub{Thm.~\ref{theorem_descent_to_framed}}}&&\\
&\Rep_\Gamma^\mu(A_\inf^\bx(R))\ar[u]^{-\otimes_{A_\sub{inf}^\bx(R)}A_\sub{inf}(R_\infty)}_{\simeq\sub{Thm.~\ref{theorem_<mu_implies_mu}}}\ar[r]&\mathrm{qMIC}(A_\inf^\bx(R))\ar[l]^{\gamma_i=1+\mu\nabla^\sub{log}_i}_{\simeq \sub{Corol.~\ref{corollary_reps_vs_q_connections}}}&\\
&&\mathrm{qHIG}(A_\inf^{\bx}(R)^{(1)})\ar[u]_{(F,F_\Omega)^*}^{\sub{Thm.~\ref{theorem_Simpsons}}}& \op{CR}_{\Prism}(R^{(1)}/(A_{\inf},\tilde\xi))\ar@{_(->}[l]_-{\op{ev}_{A_\inf^{\bx}(R)^{(1)}}}^{\sub{Thm.~\ref{thm:main}}}
}\label{eqn_intro_later}\end{equation}
There are three goals of this short subsection. First we show that the overall composition of functors $\op{CR}_{\Prism}(R^{(1)}/(A_{\inf},\tilde\xi))\to \BKF(\Spf R)$ is independent of the chosen framing and may be globalised:

\begin{theorem}\label{theorem_prism_to_BKF}
For any separated, smooth, $p$-adic formal $\roi$-scheme $\frak X$, there exists a natural functor \[\op{CR}_{\Prism}(\frak X^{(1)}/(A_{\inf},\tilde\xi))\To \BKF(\frak X)\] given on small affine opens by the above composition (up to a natural equivalence); this functor is fully faithful.
\end{theorem}
\begin{proof}
As usual let $X$ denote the rigid analytic generic fibre of $\frak X$. Let $\cal B$ denote the collection of affinoid perfectoids $V\in X_\sub{pro\'et}$ with the property that the image of $V\to X$ (which is an open map \cite[Lem.~3.10(iv)]{Scholze2013}) is contained in $\Spa(R[\tfrac1p],R)$ for some affine open $\Spf R\subseteq \frak X$. Given such an affinoid perfectoid $V$, with corresponding perfectoid ring $A=\Gamma(V,\hat\roi_X^+)$ (note that $A$ is indeed perfectoid by \cite[Lem.~3.20]{BhattMorrowScholze1}), the resulting map $\Spf A\to \Spf R\to \frak X$ does not depend on the choice of the affine open $\Spf R$: this is easily checked using the separatedness hypothesis to assume that any other choice $\Spf R'$ is actually contained in $\Spf R$. Note also that $\cal B$ forms a basis of $X_\sub{pro\'et}$: indeed, arbitrary affinoid perfectoids $V$ form a basis, and each $V$ admits a cover by the affinoid perfectoids $V\times_X\Spa(R[\tfrac1p],R)\in\cal B$ as $\Spf R$ runs over an affine open cover of $\frak X$ (see Lems.~3.10 \& 4.6 and Corol.~4.7 of \cite{Scholze2012} for the various facts used here).

Given $\cal F \in \op{CR}_{\Prism}(\frak X^{(1)}/(A_{\inf},\tilde\xi))$, we define a sheaf $\cal F_\sub{BKF}$ of $\bb A_{\sub{inf},X}$-modules on $X_\sub{pro\'et}$ as follows. For any affinoid perfectoid $V\in \cal B$, with corresponding perfectoid ring $A=\Gamma(V,\hat\roi_X^+)$, Frobenius twisting the above canonical map $\Spf(A)\to\frak X$ defines a map $\Spf(A_\inf(A)/\tilde\xi)=\Spf(A^{(1)})\to\frak X^{(1)}$ which thereby makes $\Gamma(V,\bb A_{\inf,X})=A_\inf(A)$ into an object of the prismatic site $(\frak X^{(1)}/(A_\inf,\tilde\xi))_\Prism$. The sections of the prismatic crystal $\cal F$ therefore define a finite projective $\Gamma(V,\bb A_{\inf,X})$-module \[\cal F_\sub{BKF}^\sub{pre}(V):=\Gamma(\cal F, \Spf(A_\inf(A))\hookleftarrow\Spf(A^{(1)})\to\frak X^{(1)}),\] and we let $\cal F_\sub{BKF}$ be the sheaf of $\bb A_{\inf,X}$-modules on $X_\sub{pro\'et}$ obtained by sheafifying $V\mapsto \cal F_\sub{BKF}^\sub{pre}(V)$, as $V$ runs over the basis $\cal B$.

To prove that this defines the desired functor we must show that (1) $\cal F_\sub{BKF}$ belongs to $\BKF(\frak X)$ and (2) on any small affine open $\Spf R\subseteq\frak X$, the restriction $\cal F_\sub{BKF}|_{\Spa(R[\tfrac1p],R)}$ coincides up to a natural equivalence with the composition of the functors in diagram (\ref{eqn_intro_later}) applied to $\cal F|_{\Spf R^{(1)}}\in \op{CR}_{\Prism}(R^{(1)}/(A_{\inf},\tilde\xi))$. However, claim (1) is local on $\frak X$ and so it will follow from claim (2). More precisely, we may therefore now suppose that $\frak X=\Spf R$ where $R$ is as at the start of the subsection, we let $\bb M\in \BKF(\Spf R)$ be the composition of the functors in (\ref{eqn_intro_later}) applied to $\cal F$, and we must show that there exists an isomorphism $\cal F_\sub{BKF}\cong\bb M$ of sheaves of $\bb A_{\inf,U}$-modules on $U_\sub{pro\'et}$, where we write $U=X$ to be consistent with the notation used previously in the local situation.

Let $M\in \Rep_\Gamma^\mu(A_\inf(R_\infty))$ denote the image of $\cal F$ under the zigzag of morphisms in diagram (\ref{eqn_intro_later}); in other words, in the notation of \S\ref{section_prismatic_crystals}, particularly diagram (\ref{eqn_crystal_diagram}), $M=\op{ev}_{A_\infty^\bx}(\cal F)$. Also note that $M=\cal F^\sub{pre}_\sub{BKF}(U_\infty)$, where $U_\infty$ is the usual pro\'etale cover of $U=\Spa(R[\tfrac 1p],R)$ associated to the framing as in Example \ref{example_affinoid_perfectoid_covers}(i)) so that, according to the construction of Theorem \ref{theorem_Ainf_reps_vs_pro_etale} (see the penultimate paragraph of the proof, as well as step $3$ of the proof of Theorem \ref{theorem_correspondence_for_B}), the sheaf $\bb M$ satisfies $\bb M\isoto\projlim_s\bb M/p^s$ and \[\Gamma(V,\bb M/p^s)=(M\otimes_{A_\inf(R_\infty)}\Gamma(U_\infty\times_UV,\bb A_{\inf,U}/p^s))^\Gamma\] for all $V\in U_\sub{pro\'et}$. Assuming $V$ is affinoid then $U_\infty\times_UV$ is affinoid perfectoid \cite[Lem.~4.6]{Scholze2013} and we have $\Gamma(U_\infty\times_UV,\bb A_{\inf,U})/p^s\isoto \Gamma(U_\infty\times_UV,\bb A_{\inf,U}/p^s)$; so the crystal property of $\cal F$ allows us to rewrite the previous line as $(\cal F_\sub{BKF}^\sub{pre} (U_\infty\times_UV)/p^s)^\Gamma$, where the $\Gamma$-action is induced by its action on $U_\infty$, which clearly receives a map from $\cal F_\sub{BKF}^\sub{pre}(V)/p^s$ if $V$ is also affinoid perfectoid.

Letting $s\to\infty$, we have defined natural maps $f_V:\cal F_\sub{BKF}^\sub{pre}(V)\to \bb M(V)$ of $\Gamma(V,\bb A_{\inf,U})$-modules for all affinoid perfectoids $V\in U_\sub{pro\'et}$.  The map $f_{U_\infty}$ is by construction the canonical isomorphism $\cal F_\sub{BKF}^\sub{pre}(U_\infty)=M\isoto \Gamma(U_\infty,\bb M)$ arising from the fact that $\bb M$ and $M$ correspond under the equivalence of Theorem \ref{theorem_Ainf_reps_vs_pro_etale}. Moreover, for any affinoid perfectoid $V$ over $U_\infty$, the canonical maps $\cal F^\sub{pre}_\sub{BKF}(U_\infty)\otimes_{\Gamma(U_\infty,\bb A_{\inf,U})}\Gamma(V,\bb A_{\inf,U})\to \cal F^\sub{pre}_\sub{BKF}(V)$ and $\Gamma(U_\infty,\bb M)\otimes_{\Gamma(U_\infty,\bb A_{\inf,U})}\Gamma(V,\bb A_{\inf,U})\to \Gamma(V,\bb M)$ are isomorphisms; in the first case this is due to the crystal property of $\cal F$, in the second case it is Theorem \ref{proposition_small_Ainf_sheaves}(iii). In conclusion $f_V$ is an isomorphism for all affinoid perfectoids $V$ over $U_\infty$; since such affinoid perfectoids form a basis of $U_\sub{pro\'et}$, sheafifying then defines the desired isomorphism $f:\cal F_\sub{BKF}\isoto\bb M$.

Finally, to check that the functor is fully faithful we may work locally locally: then the composition $(F,F_\Omega)^*\circ\op{ev}_{A_\inf^{\bx}(R)^{(1)}}$ in diagram (\ref{eqn_intro_later}) is fully faithful by Theorem \ref{thm:main} (for $\op{ev}^\phi_{A^{\bx(1)}}$), and the remaining four functors in the diagram are even equivalences.
\end{proof}

The second goal of this subsection is to incorporate the Frobenius to define true Breuil--Kisin--Fargues modules in the global set-up:

\begin{definition}
Let $\frak X$ be a smooth, $p$-adic formal $\roi$-scheme. A {\em relative Breuil--Kisin--Fargues module} on $\frak X$ is a pair $(\bb M,\phi_{\bb M})$, where $\bb M\in\BKF(\frak X)$ and $\phi_{\bb M}:(\phi^*\bb M)[\tfrac1{\tilde\xi}]\to \bb M[\tfrac1{\tilde\xi}]$ is an isomorphism of sheaves of $\bb A_{\inf,X}[\tfrac1{\tilde\xi}]$-modules. The category of such will be denoted by $\BKF(\frak X,\phi)$.
\end{definition}

As in the situation lacking Frobenii, it is clear that relative Breuil--Kisin--Fargues modules form a stack for the Zariski topology on $\frak X$, so it is sufficient to describe them on any small open of $\frak X$. The following equivalences and globalisation follow easily from the analogous results without Frobenius structures:

\begin{theorem}\label{theorem_local_with_phi}
\begin{enumerate}
\item Let $R$ be a $p$-adically complete, formally smooth, small $\roi$-algebra, and fix a framing as at the start of \S\ref{section_small_reps}; then the variants of the functors of diagram (\ref{eqn_intro_later}) incorporating Frobenius structures are all equivalences of categories.
\item For any separated, smooth, $p$-adic formal $\roi$-scheme $\frak X$, there is a natural equivalence of categories \[\op{F-CR}_{\Prism}(\frak X^{(1)}/(A_{\inf},\tilde\xi))\quis \op{BKF}(\frak X,\phi)\] by incorporating Frobenius structures into Theorem \ref{theorem_prism_to_BKF}.
\end{enumerate}
\comment{
there are equivalences of categories
\[\hspace{-2.3cm}\BKF(\Spf R,\phi)\quiss^{\Gamma(U_\infty,-)}_\sub{Thm.~\ref{theorem_Ainf_reps_vs_pro_etale}}
\Rep_\Gamma^\mu(A_\inf(R_\infty),\phi)\quissleft^{-\otimes_{A_\sub{inf}^\bx(R)}A_\sub{inf}(R_\infty)}_\sub{Thm.~\ref{theorem_descent_to_framed_with_phi}}
\Rep_\Gamma^\mu(A_\inf^\bx(R),\phi)\quissleft^{\gamma_i=1+\mu\nabla^\sub{log}_i}_\sub{Prop.~\ref{proposition_reps_vs_q_connections_with_phi}}
\mathrm{qMIC}(A_\inf^\bx(R),\phi)\quissleft_\sub{Corol.~\ref{corollary_q_Simp}}^{(F,F_\Omega)^*}
\mathrm{qHIG}(\sigma^*A_\inf^\bx(R),\phi).\]
}
\end{theorem}
\begin{proof}
(i): We begin by showing that the global sections functor \[\Gamma(U_\infty,-):\BKF(\Spf R,\phi)\to\Rep_\Gamma^\mu(A_\inf(R_\infty),\phi)\] is an equivalence. Theorem \ref{theorem_Ainf_reps_vs_pro_etale} states that it is an equivalence if we drop the Frobenius structures from both sides. To  show it is fully faithful it therefore suffices to check that for any $(\bb M_i,\phi_{\bb M_i})\in \BKF(\Spf R,\phi)$, $i=1,2$, and morphism $f:\bb M_1\to\bb M_2$ in $\BKF(\Spf R)$, then $f$ respects the Frobenius structures if the induced morphism $\Gamma(U_\infty,f):\Gamma(U_\infty,\bb M_1)\to \Gamma(U_\infty,\bb M_2)$ in $\Rep_\Gamma^\mu(A_\inf(R_\infty))$ respects the Frobenius structures. But observe that if $\Gamma(U_\infty,f)$ respects the Frobenius structure, then so does $f|_{U_\infty}$ by Proposition \ref{proposition_small_Ainf_sheaves}(iii), which is enough since $U_\infty$ is a cover of $U$.

To prove essential surjectivity, we suppose that $\bb M\in\BKF(\Spf R)$ and that $\phi_M:\Gamma(U_\infty,\bb M)[\tfrac1\xi]\isoto \Gamma(U_\infty,\bb M)[\tfrac1{\tilde\xi}]$ is a Frobenius structure on its sections over $U_\infty$; we must show that $\phi_M$ is induced by a unique Frobenius structure $\phi_\bb M:\bb M[\tfrac1\xi]\isoto \bb M[\tfrac1{\tilde\xi}]$. After replacing $\phi_M$ by $\tilde\xi^a\phi_M$ for $a\gg0$ we may suppose that $\phi_M$ restricts to $\phi_M:\Gamma(U_\infty,\bb M)\to \Gamma(U_\infty,\bb M)$, which we view as an $A_\inf(R_\infty)$-linear map $\phi_M:\Gamma(U_\infty,\bb M\otimes_{\bb A_{\inf,U},\phi}\bb A_{\inf,U})\to \Gamma(U_\infty,\bb M)$. Note that $\bb M\otimes_{\bb A_{\inf,U},\phi}\bb A_{\inf,U}$ is trivial modulo $\phi(\xi_{r+1})=\tilde\xi \xi_r$ for all $r\ge1$, hence trivial modulo $\xi_r$ for all $r\ge1$, i.e., $\bb M\otimes_{\bb A_{\inf,U},\phi}\bb A_{\inf,U}\in \BKF(\Spf R)$. Therefore the known equivalence without Frobenius structures implies that $\phi_M$ is induced by a unique $\bb A_{\inf,U}$-linear map $\bb M\otimes_{\bb A_{\inf,U},\phi}\bb A_{\inf,U}\to \bb M$, which is an isomorphism after inverting $\tilde\xi$ by applying the same argument to $\tilde\xi^b\phi_M^{-1}$ for $b\gg0$, as desired.

The remaining functors in the variant of diagram (\ref{eqn_intro_later}) with Frobenius have already been checked to be equivalences: see respectively Theorem \ref{theorem_descent_to_framed_with_phi}, Proposition \ref{proposition_reps_vs_q_connections_with_phi} and the subsequent paragraph, Corollary \ref{corollary_q_Simp} and its subsequent paragraph, and finally Theorem \ref{thm:main} and Lemma \ref{lemma_phi_implies_xi_nilp} for $\op{ev}_{A_\sub{inf}^\bx(R)^{(1)}}$.

\comment{
\todo{Still need to fix this!!}
\marginpar{MM reply to Q (22). It seems to me that stackiness of prismatic crystals is a consequence of $(p,I)$-faithfully flat descent of vector bundles; this is discussed in Anschutz--Le Bras, so I added the precise references (although they state their results only for affines, the proofs appear to be global).}
\marginpar{MM: See my email for discussion about quasisyntomic topology}
}

(ii): We first note that the functor sending opens $\frak U$ of $\frak X$ to the category of prismatic crystals $\op{CR}_{\Prism}(\frak U^{(1)}/(A_{\inf},\tilde\xi))$ is a stack for the Zariski topology. After refining a given open cover $\{\frak U_i\}$ of $\frak X$ we may assume that each $\frak U_i$ is affine, and we suppose we have prismatic crystals $\cal F_i\in \op{CR}_{\Prism}(\frak U_i^{(1)}/(A_{\inf},\tilde\xi))$ for each $i$ together with isomorphisms $\cal F_i|_{\frak U_i\cap\frak U_j}\cong\cal F_j|_{\frak U_i\cap\frak U_j}$ in $\op{CR}_{\Prism}(\frak U_i^{(1)}\cap \frak U_j^{(1)}/(A_{\inf},\tilde\xi))$ which are compatible on the triple intersections. Then, given any object $\frak B=(\Spf(B)\leftarrow\Spf(B/\tilde\xi B)\to\frak X^{(1)})$ of $\op{CR}_{\Prism}(\frak X^{(1)}/(A_{\inf},\tilde\xi))$, each open $\Spf(B_i/\tilde\xi)\times_{\frak X^{(1)}}\frak U_i^{(1)}$ is equal to $\Spf(B_i/\tilde\xi B_i)$ for some affine open $\Spf(B_i)\subseteq\Spf(B)$; note that $B_i$ inherits a $\delta$-map from $B$ \cite[Lem.~2.18]{BhattScholze2019}, thereby making $\frak B_i=(\Spf(B_i)\leftarrow\Spf(B_i/\tilde\xi B_i)\to\frak U_i^{(1)})$ into an object of the prismatic site of $\frak U_i^{(1)}$. Using $(p,\tilde\xi)$-completely faithfully flat descent as in Corollary \ref{cor:DescentFlatCover}(ii), it is not difficult to glue the finite projective $B_i$-modules $\cal F_i(\frak B_i)$ to a finite projective $B$-module $\cal F(\frak B)$, and then to check that $\frak B\mapsto\cal F(\frak B)$ defines the desired prismatic crystal $\cal F$ on $\frak X^{(1)}$. See also \cite[Prop.~4.1.8]{AnschutzLeBras2019} for stackiness of prismatic crystals in the (affine) quasisyntomic topology.

\comment{ OLD ARGUMENT TO REMOVE

Indeed, letting $\nu:(\frak X^{(1)}/(A_\inf,\tilde\xi))_\Prism\to \frak X_\sub{qsyn}$ be the morphism of sites to the category of quasisyntomic formal $\frak X$-schemes, as in \cite[\S4.1]{AnschutzLeBras2019}, the proof of Prop.~4.1.4 of {\em op.~cit.} shows that $\op{CR}_{\Prism}(\frak U^{(1)}/(A_{\inf},\tilde\xi))$ is equivalent to the category of finite locally free $\nu_*\roi_\Prism$-modules on $\frak X_\sub{qsyn}$. The same holds for each open $\frak U\subseteq\frak X$, whence the claim follows formally (even for the quasisyntomic topology in place of the Zariski topology) as in \cite[Prop.~4.1.8]{AnschutzLeBras2019}. A similar claim holds when we incorporate Frobenius structures.
}

The proof of part (ii) therefore reduces to the affine case, where it follows by the taking the composition of all the functors in part (i).
%Incorporating Frobenius structures into Theorem \ref{theorem_prism_to_BKF}, the functor becomes an equivalence because part (i) shows that the same is true of the individual functors of diagram (\ref{eqn_intro_later}) whose composition we are taking.
\end{proof}

\begin{example}[Dieudonn\'e theory: global case]\label{example_global_Dieudonne}
We sketch how the Dieudonn\'e theory of \S\ref{sss_Dieudonne} and \S\ref{ss_DieudonneII} may be globalised.

Let $\frak X$ be a separated, smooth, $p$-adic formal $\cal O$-scheme, and write $\op{BKF}(\frak X,\varphi,[-1,0])$ for the full subcategory of $\op{BKF}(\frak X,\varphi)$ consisting of $(\bb M,\varphi_{\bb M})$ satisfying $\bb M\subset \varphi_{\bb M}(\varphi^*\bb M)\subset \tilde \xi^{-1}\bb M$. Then Theorem \ref{theorem_p_div} may be globalised to an equivalence of categories $$\Phi^{\BKF}_{\frak X}\colon \op{BT}(\frak X)\quis \op{BKF}(\frak X,\varphi, [-1,0])$$ as follows: for a $p$-divisible group $G$ over $\frak X$, Theorem \ref{theorem_p_div_perfectoid} yields an object $\Phi^\inf_{\hat{\cal O}_X^+(V)}(G\times_{\frak X}\Spf(\hat{\cal O}_X^+(V)))$
of $\op{BKF}(\hat{\cal O}_X^+(V),\phi, [-1,0])$ 
for each affinoid perfectoid $V$ in the basis $\cal B$ used in the proof of Theorem \ref{theorem_prism_to_BKF}, and this construction is shown to be a well-defined equivalence by the same arguments as Theorems  \ref{theorem_prism_to_BKF} and \ref{theorem_local_with_phi}.

% By the same argument as the construction and proof of the equivalence from $\CR(\frak X^{(1)}/(A_\inf,\tilde\xi))$ to $\op{BKF}(\frak X,\varphi)$, the above construction gives a functor which is proven to be equivalence being reduced to Theorem \ref{theorem_p_div} by Zariski local descent. {(\bf We need to check details.)}

We can also globalise Proposition \ref{prop:TateModule} as follows. Let $G$ be a $p$-divisible group
over $\frak X$, and let $TG$ denote the Tate module regarded as a locally free lisse $\bb Z_p$-sheaf
on $X$. Then, by the same argument as the construction of the isomorphism in the
proof of Proposition \ref{prop:TateModule}, we obtain a canonical isomorphism
$$TG(V)=\varprojlim_nG[p^n](V)\cong
\Phi^\inf_{\hat{\cal O}_X^+(V)}(G\times_{\frak X}\Spf(\hat{\cal O}_X^+(V)))[\tfrac{1}{\mu}]^{\varphi=1}$$
for each perfectoid affinoid $V\in \cal B$ satisfying the condition (as $\cal U$)
in the first paragraph of the proof of Proposition \ref{proposition_etale} (note that such affinoid perfectoids still form a basis). 
Varying $V$, we obtain
a canonical isomorphism
$$TG\cong  \Phi^{\BKF}_{\frak X}(G)[\tfrac{1}{\mu}]^{\varphi=1}
=\sigma_\sub{\'et}^*(\Phi^{\BKF}_{\frak X}(G)),$$
where $\sigma_\sub{\'et}^*$ is the \'etale specialization
functor constructed in \S\ref{ss_etale}.
\end{example}

\begin{example}[Relative prismatic cohomology: global case]
Let $f:\cal Y\to\frak X$ be a proper smooth morphism between separated, smooth, $p$-adic formal $\roi$-schemes; assume that the relative de Rham cohomologies $R^if_*\Omega^\blob_{\cal Y/\frak X}$ on $\frak X$ are locally free $\roi_{\frak X}$-modules for all $i\ge0$.

Using base change in prismatic cohomology and the definition of the functor in Theorem \ref{theorem_prism_to_BKF}, we see that the main construction of \S\ref{sssection_relative} may be globalised. That is, for each $i\ge0$, there exists an ``$i^\sub{th}$ higher direct image'' object of $\op{BKF}(\frak X,\phi)$, defined by sheafifying \[V\mapsto H^i_\Prism(\cal Y^{(1)}\times_{\frak X^{(1)}}V^{(1)}/(\bb A_{\sub{inf},X}(V),\tilde\xi))\] as $V$ runs over the basis $\cal B$ of affinoid perfectoids used in the proof of Theorem \ref{theorem_prism_to_BKF}. Under the equivalence of Theorem \ref{theorem_local_with_phi}(ii), this corresponds to the actual $i^\sub{th}$ higher direct image of the prismatic structure sheaf $\roi_\Prism$ along the morphism of sites $f_*:(\cal Y^{(1)}/(A_\inf,\tilde\xi))_\Prism\to (\frak X^{(1)}/(A_\inf,\tilde\xi))_\Prism$.
\end{example}

\newpage
\section{Cohomology and specialisation functors following \cite{BhattMorrowScholze1}}\label{section_cohomology}
This section is a continuation of Section \ref{section_BKF_on_proetale}. So again let $\frak X$ be a smooth $p$-adic formal $\roi$-scheme, and denote its adic generic fibre by $X$. Given $\bb M\in\BKF(\frak X)$, we may mimic the main construction of \cite{BhattMorrowScholze1} by introducing the complex of sheaves of $A_\inf$-modules on the Zariski site of $\frak X$ \[A\Omega_{\frak X}(\bb M):=L\eta_\mu\big(\hat{R\nu_*\bb M}\big)\in D(\frak X_\sub{Zar},A_\inf),\] where the hat denotes the derived $p$-adic completion. This complex of sheaves on $\frak X$ is locally given by the q-de Rham complex of a module with q-connection (similarly, by the q-Higgs complex of a module with q-connection in the presence of a Frobenius structure on $\bb M$):

\begin{theorem}\label{theorem_cohomology_as_q_dr_or_Higgs}
Fix an affine open $\Spf R\subseteq\frak X$, where $R$ is a formally smooth, small $\roi$-algebra, and fix a framing $\square:\roi\pid{\ul T^{\pm1}}\to R$. Then there is a natural, for $\bb M\in\BKF(\frak X)$, equivalence \[R\Gamma_\sub{Zar}(\Spf R,A\Omega_\frak X(\bb M))\simeq N\otimes_{A_\inf^\bx(R)}\hat\Omega^\blob_{A_\inf^\bx(R)/A_\inf},\] where $N\in\textrm{\rm qMIC}(A^\bx_\inf(R))$ is the $A^\bx_\inf(R)$-module with q-connection associated to $\bb M|_{\Spf R}\in\BKF(\Spf R)$ via the equivalences of Theorems \ref{theorem_descent_to_framed} and \ref{theorem_<mu_implies_mu}, Corollary  \ref{corollary_reps_vs_q_connections}, and (\ref{eqn_BKF_to_Rep}).

Similarly, there is a natural, for $\bb M\in\BKF(\frak X,\phi)$, equivalence \[R\Gamma_\sub{Zar}(\Spf R,A\Omega_\frak X(\bb M))\simeq H\otimes_{A_\inf^\bx(R)^{(1)}}\hat\Omega^\blob_{A_\inf^\bx(R)^{(1)}/A_\inf},\] where the right side is the q-Higgs complex of the module with q-Higgs field $H\in\textrm{\rm qHIG}( A^{\bx}_\inf(R)^{(1)},\phi)$ associated to $\bb M|_{\Spf R}\in\BKF(\Spf R,\phi)$ via the equivalences summarised in Theorem \ref{theorem_local_with_phi}.

\end{theorem}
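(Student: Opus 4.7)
The plan is to reduce everything to a computation on the affine open $\Spf R$, where we can explicitly use the affinoid perfectoid cover $U_\infty \subset U_\sub{pro\'et}$ associated to the chosen framing. Since $\Spf R$ is affine and the Zariski cohomology of quasi-coherent sheaves vanishes, I expect a natural identification $R\Gamma_\sub{Zar}(\Spf R, \widehat{R\nu_*\bb M}) \simeq \widehat{R\Gamma_\sub{pro\'et}(U, \bb M)}$. The main technical subtlety, which is probably the hardest step, is establishing the commutation $R\Gamma_\sub{Zar}(\Spf R, L\eta_\mu(-)) \simeq L\eta_\mu R\Gamma_\sub{Zar}(\Spf R, -)$ on the derived $p$-complete complex $\widehat{R\nu_*\bb M}$; this should follow by the same sort of argument as \cite[Lem.~6.19]{BhattMorrowScholze1} once we know the cohomology sheaves modulo $\mu$ are suitably well-behaved (which in turn will follow a posteriori from the explicit Koszul description obtained below).

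With that commutation in hand, set $M := \Gamma(U_\infty, \bb M) \in \Rep_\Gamma^\mu(A_\inf(R_\infty))$ via the equivalence of Theorem \ref{theorem_Ainf_reps_vs_pro_etale} (combined with Theorem \ref{theorem_<mu_implies_mu}). The final assertion of Theorem \ref{theorem_Ainf_reps_vs_pro_etale} provides a natural morphism $R\Gamma_\sub{cont}(\Gamma, M) \to \widehat{R\Gamma_\sub{pro\'et}(U, \bb M)}$ whose cone has cohomologies killed by $W(\frak m^\flat)$, and which becomes a quasi-isomorphism after applying $L\eta_\mu$ by Lemma \ref{lemma_Leta_isom}(ii). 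Thus the problem reduces to identifying $L\eta_\mu R\Gamma_\sub{cont}(\Gamma, M)$ with the q-de Rham complex of $N$.

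Writing $M = N \otimes_{A_\inf^\bx(R)} A_\inf(R_\infty)$ via Theorem \ref{theorem_descent_to_framed}, Proposition \ref{proposition_Leta_framed}(i) with $s = 0$ (noting that $N_0 = N$) shows that the base change map $R\Gamma_\sub{cont}(\Gamma, N) \to R\Gamma_\sub{cont}(\Gamma, M)$ is a quasi-isomorphism after $L\eta_\mu$. Then Proposition \ref{proposition_Leta_framed}(ii), again with $s = 0$, exhibits $L\eta_\mu R\Gamma_\sub{cont}(\Gamma, N)$ as the Koszul complex $K(\tfrac{\gamma_1-1}{\mu}, \ldots, \tfrac{\gamma_d-1}{\mu}; N)$, which by the identification used in the proof of Corollary \ref{corollary_reps_vs_q_connections} is nothing other than the q-de Rham complex $N \otimes_{A_\inf^\bx(R)} \hat\Omega^\blob_{A_\inf^\bx(R)/A_\inf}$. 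This settles the first equivalence. Note that this explicit Koszul/q-de Rham description also a posteriori justifies the technical $L\eta_\mu$-commutation with Zariski global sections used at the start.

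For the Frobenius version, given $\bb M \in \BKF(\frak X, \phi)$ the corresponding $N \in \op{qMIC}(A_\inf^\bx(R), \phi)$ descends along the relative Frobenius to $H \in \op{qHIG}(A_\inf^\bx(R)^{(1)}, \phi)$ with $N = (F, F_\Omega)^* H$, by the local q-Simpson correspondence (Corollary \ref{corollary_q_Simp}). The same corollary supplies a natural quasi-isomorphism from the q-Higgs complex $H \otimes_{A_\inf^\bx(R)^{(1)}} \hat\Omega^\blob_{A_\inf^\bx(R)^{(1)}/A_\inf}$ to the q-de Rham complex $N \otimes_{A_\inf^\bx(R)} \hat\Omega^\blob_{A_\inf^\bx(R)/A_\inf}$, and composing with the first assertion yields the desired equivalence in the Frobenius-equivariant setting.
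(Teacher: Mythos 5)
Your argument is correct and follows essentially the same route as the paper: the commutation of $L\eta_\mu$ with Zariski global sections that you flag as the hardest step is precisely the content of the paper's Proposition \ref{proposition_local_to_global} (proved via the maps (t1)--(t7) and a sheafification argument), after which the paper likewise reduces to $L\eta_\mu R\Gamma_\sub{cont}(\Gamma,M)$ via Theorem \ref{theorem_Ainf_reps_vs_pro_etale} and Lemma \ref{lemma_Leta_isom}(ii). The remaining identifications via Proposition \ref{proposition_Leta_framed} (framed descent and the Koszul/q-de Rham description) and Corollary \ref{corollary_q_Simp} (for the Frobenius/q-Higgs version) are exactly the references the paper's proof cites.
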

\begin{proof}
We will show in Proposition \ref{proposition_local_to_global} that the natural local-to-global map $L\eta_\mu R\Gamma_\sub{pro\'et}(\Sp R[\tfrac1p],\bb M)\to R\Gamma_\sub{Zar}(\Spf R,A\Omega_{\frak X}(\bb M))$ is an equivalence; the former identifies with $L\eta_\mu R\Gamma_\sub{cont}(\Gamma,M)$ by Theorem \ref{theorem_Ainf_reps_vs_pro_etale}, where $M:=\Gamma(U_\infty,\bb M)\in\Rep^{\mu}_\Gamma(A_\inf(R_\infty))$. The result then follows from the fact that all the categorical equivalences in the statement of the theorem have been shown to be compatible with cohomology: see Proposition \ref{proposition_Leta_framed}, Corollary \ref{corollary_q_Simp}, and Theorem \ref{theorem_Ainf_reps_vs_pro_etale}.
\end{proof}

The second goal of this section is to construct specialisation functors\footnote{The notation $\sigma^*$ is meant to conjure up pulling back along a morphism of sites; see Remark \ref{remark_comp_to_prismatic}.}\footnote{Terminology in the literature seems to vary slightly, but by ``locally finite free F-crystal'' we mean a locally finite free crystal $\bb E$ on the crystalline site $(\frak X_k/W(k))_\sub{crys}$, equipped with an isomorphism in the isogeny category $\phi_{\bb E}:(F_{\frak X_k}^*\bb E)[\tfrac1p]\isoto\bb E[\tfrac1p]$. By twisting the Frobenius one can typically reduce assertions to the effective case where $\phi_{\bb E}:F_{\frak X_k}^*\bb E\to \bb E$ is already defined before passing to the isogeny category of crystals.}
\begin{align*}
\sigma^*_\sub{dR}:\BKF(\frak X)&\To \categ{5cm}{vector bundles with flat $\roi$-linear connection on $\frak X$}\\
\sigma^*_\sub{crys}:\BKF(\frak X,\phi)&\To\categ{4.5cm}{locally finite free F-crystals on the special fibre $\frak X_k$}\\
\sigma^*_\sub{\'et}:\BKF(\frak X,\phi)&\To\categ{5cm}{locally free lisse $\bb Z_p$-sheaves on the rigid analytic generic fibre $X$}
\end{align*}

The vector bundle with connection $\sigma^*_\sub{dR}\bb M$ has an associated de Rham complex \[\sigma^*_\sub{dR}\bb M\otimes_{\roi_\frak X}\Omega^\blob_{\frak X/\roi}=[\sigma^*_\sub{dR}\bb M\xto{\nabla}\sigma^*_\sub{dR}\bb M\otimes_{\roi_\frak X}\Omega^1_{\frak X/\roi}\xto{\nabla}\cdots]\in D^{\ge0}(\frak X_\sub{Zar},\roi).\] Similarly, the locally free crystal $\sigma^*_\sub{crys}\bb M$ gives rise to \[Ru_*(\sigma^*_\sub{crys}\bb M)\in D^{\ge0}(\frak X_\sub{Zar},W(k)),\] where $u:(\frak X_k/W(k))_\sub{crys}\to \frak X_{k,\sub{Zar}}=\frak X_{\sub{Zar}}$ is the usual projection morphism of sites. These specialisations are related to the cohomology $A\Omega_\frak X(\bb M)$ by the following theorem, which is an analogue with coefficients of \cite[Thm.~1.10]{BhattMorrowScholze1}.

\begin{theorem}\label{theorem_specialisations}
As above, let $\frak X$ be a smooth $p$-adic formal $\roi$-scheme. Then there are natural, for $\bb M\in\BKF(\frak X,\phi)$ (we just need $\bb M\in\BKF(\frak X)$ in the first statement), equivalences of complexes of sheaves on $\frak X_\sub{Zar}$
\begin{align*}
A\Omega_{\frak X}(\bb M)\dotimes_{A_\inf,\theta}\roi&\simeq \sigma^*_\sub{dR}\bb M\otimes_{\roi_\frak X}\Omega^\blob_{\frak X/\roi}\\
\hat{A\Omega_{\frak X}(\bb M)\dotimes_{A_\inf}W(k)}&\simeq Ru_*(\sigma^*_\sub{crys}\bb M)\\ 
\hat{A\Omega_\frak X(\bb M)[\tfrac1\mu]}^{\phi_{\bb M}=1}&\simeq R\nu_*(\sigma^*_\sub{\'et}\bb M)
\end{align*}
(where the hats denote derived $p$-adic completion), assuming for the third comparison that $C$ is algebraically closed.
\end{theorem}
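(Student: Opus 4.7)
The plan is to reduce each of the three comparisons to a local computation on a small affine open $\Spf R \subseteq \frak X$, exploiting the description of $R\Gamma_\sub{Zar}(\Spf R, A\Omega_\frak X(\bb M))$ as a q-de Rham complex from Theorem~\ref{theorem_cohomology_as_q_dr_or_Higgs}. First I would construct the specialisation functors. Locally, let $N \in \op{qMIC}(A_\inf^\bx(R))$ be the module with q-connection attached to $\bb M|_{\Spf R}$; then $\sigma^*_\sub{dR}\bb M$ is given on $\Spf R$ by $N/\xi$, equipped with the connection obtained by reducing the q-connection modulo $\xi$ (the key verification being that $d_q \equiv d$ modulo $\xi$ on $A_\inf^\bx(R)$, which follows from $\mu = \xi \phi^{-1}(\mu)$ and the explicit computation $(\gamma_i(U_i) - U_i)/\mu = U_i$). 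The functor $\sigma^*_\sub{crys}$ is obtained by applying Theorem~\ref{theorem_Admissibility} in the presence of a Frobenius structure: this produces a locally finite free crystal on $R/A_\crys$, whose restriction along $\Spec(R_1) \hookrightarrow \Spf R$ yields an F-crystal on $\frak X_k/W(k)$. The functor $\sigma^*_\sub{\'et}$ is defined by $\sigma^*_\sub{\'et}\bb M := \bb M[\tfrac1\mu]^{\phi_\bb M = 1}$ on $X_\sub{pro\'et}$; that this is a lisse $\bb Z_p$-sheaf follows (Zariski locally) from Theorem~\ref{theorem_M_et_up_to_isogeny}(i) together with the identity $A_\inf(R_\infty)[\tfrac1\mu]^{\phi=1} = \bb Z_p$ for $C$ algebraically closed. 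In all three cases the framing-independence of the resulting object follows from Remark~\ref{remark_qRH}, which guarantees that the composition of equivalences summarised in Theorem~\ref{theorem_local_with_phi} is independent of the framing, so the constructions glue Zariski locally.

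For the de Rham comparison I would proceed locally. By Theorem~\ref{theorem_cohomology_as_q_dr_or_Higgs}, $R\Gamma_\sub{Zar}(\Spf R, A\Omega_\frak X(\bb M))$ is represented by the q-de Rham complex $N \otimes_{A_\inf^\bx(R)} \hat\Omega^\blob_{A_\inf^\bx(R)/A_\inf}$. Since $N$ is finite projective over $A_\inf^\bx(R)$ and $\xi$ is a non-zero-divisor, we may identify $\dotimes_{A_\inf,\theta}\roi$ with the reduction modulo $\xi$. The reduction of the q-de Rham complex mod $\xi$ is precisely the de Rham complex of $N/\xi$ over $R = A_\inf^\bx(R)/\xi$, by the computation above. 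This local identification is functorial and compatible with restriction to smaller opens, giving the global claim.

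The crystalline comparison is the most delicate step, and the main obstacle. I would base change the local q-de Rham complex along $A_\inf^\bx(R) \to A_\crys^\bx(R)$, obtaining the q-de Rham complex of $N_\crys := N \otimes_{A_\inf^\bx(R)} A_\crys^\bx(R)$. By Theorem~\ref{theorem_q-A_crys} the q-connection on $N_\crys$ corresponds via the logarithm construction to an honest flat connection; the essential input is that the q-de Rham and honest de Rham complexes of $N_\crys$ are naturally quasi-isomorphic, which follows by direct comparison using the formulae $\nabla^\sub{log}_{\cal L,i} = \tfrac{\mu}{t}\sum_{m\ge 1} (-1)^{m-1}\tfrac{\mu^{m-1}}{m!}(m-1)!(\nabla^\sub{log}_i)^m$ appearing in the proof of Theorem~\ref{theorem_q-A_crys_filtered} (the denominators converge $p$-adically in $A_\crys$). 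By (\ref{equqation_crystal_mic}) and Proposition~\ref{proposition_crystal_framedrep} with its Frobenius variant (Remark~\ref{remark_F_structures_Acrys}), the de Rham complex of $(N_\crys, \nabla_\cal L)$ computes the crystalline cohomology of the crystal attached to $N_\crys$. To match with $\sigma^*_\crys \bb M$ one checks that the crystal on $R/A_\crys$ coming from $N_\crys$ is the same as the one produced by $\op{ev}_{A_\crys^\bx(R)}$, which follows from the commutativity of the diagram in Proposition~\ref{proposition_crystal_framedrep}; pushing forward along $u$ and restricting to $W(k)$ yields the claim after derived $p$-adic completion.

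For the étale comparison I would use the local-to-global identification $L\eta_\mu R\Gamma_\sub{pro\'et}(\Sp R[\tfrac1p], \bb M) \isoto R\Gamma_\sub{Zar}(\Spf R, A\Omega_\frak X(\bb M))$ (stated in the proof of Theorem~\ref{theorem_cohomology_as_q_dr_or_Higgs} as Proposition~\ref{proposition_local_to_global}). Applying the inverse of the canonical map $R\nu_*\bb M \to A\Omega_\frak X(\bb M)$ after inverting $\mu$, one obtains $\hat{A\Omega_\frak X(\bb M)[\tfrac1\mu]} \simeq \hat{R\nu_* \bb M[\tfrac1\mu]}$, since $L\eta_\mu$ becomes the identity after inverting $\mu$. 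Taking Frobenius-fixed points commutes with $R\nu_*$ (as it is the kernel of a morphism of complexes of sheaves), so the right-hand side identifies with $R\nu_*(\bb M[\tfrac1\mu]^{\phi_\bb M=1}) = R\nu_*(\sigma^*_\sub{\'et}\bb M)$. The key technical point, which I would handle via a pro-\'etale descent argument as in \cite[\S9.4]{BhattMorrowScholze1}, is that $\sigma^*_\sub{\'et}\bb M$ is indeed locally free and that the formation of $(-)^{\phi=1}$ commutes with the completion and with $R\nu_*$ in the required sense over an algebraically closed $C$.
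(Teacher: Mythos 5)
Your étale argument follows the paper's own (the content you flag as "the key technical point" is exactly Proposition \ref{proposition_etale}, an Artin--Schreier--Witt argument over affinoid perfectoids giving the exactness of $0\to\bb L\to\tfrac1{\mu^r}\bb M\to\tfrac1{\mu^r}\bb M\to0$ and the identification $\bb L\otimes_{\hat{\bb Z}_p}\bb A_{\inf,X}[\tfrac1\mu]=\bb M[\tfrac1\mu]$). Your de Rham argument, reducing the local q-de Rham complex modulo $\xi$, is a viable alternative route; the point to be careful about is that Remark \ref{remark_qRH} gives independence of the \emph{category} $\op{qMIC}(A_\inf^\bx(R))$ of the framing, not a canonical identification of the reduced connections attached to two framings, so gluing the connection on $\nu_*(\bb M/\xi)$ still requires an argument. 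The paper sidesteps this by defining $\sigma^*_\sub{dR}$ (and more generally $\sigma^*_{\sub{dRW},r}$) globally and canonically as the Bockstein differential on $\cal H^*\bigl((L\eta_{\phi^{-r}(\mu)}\hat{R\nu_*\bb M})/\xi_r\bigr)$, using the $p$-adic Cartier isomorphism of \cite{BhattMorrowScholze1} and the relation between $L\eta$ and Bocksteins; the de Rham comparison is then the case $r=1$ of Proposition \ref{proposition_drw_comparison}.

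The genuine gap is in the crystalline comparison. The left-hand side is $\hat{A\Omega_\frak X(\bb M)\dotimes_{A_\inf}W(k)}$, and $A_\inf\to W(k)$ factors through $A_\inf/\tilde\xi_r\cong W_r(\roi)\to W_r(k)$ for every $r$; accordingly the paper identifies $A\Omega_\frak X(\bb M)/\xi_r$ with the de Rham--Witt complex of $\nu_*(\bb M/\xi_r)$ for all $r$ simultaneously, base changes termwise to $W_r(k)$, takes the limit over $r$, and applies Bloch's theorem to recognise the result as $Ru_*$ of an F-crystal. Your route passes through $A_\crys^\bx(R)$, but no step of your argument (nor anything in Section \ref{section_crystalline}) identifies $\hat{A\Omega_\frak X(\bb M)\dotimes_{A_\inf}W(k)}$ with the ($q$-)de Rham complex of $N\otimes_{A_\inf^\bx(R)}A_\crys^\bx(R)$ reduced along $A_\crys\to W(k)$. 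What is missing is a coefficient version of the comparison $A\Omega\hat\otimes_{A_\inf}A_\crys\simeq A\Omega_\crys$ of \cite[\S12]{BhattMorrowScholze1}, i.e., that the completed base change of $L\eta_\mu$ of pro-étale cohomology to $A_\crys$ is still computed by a Koszul complex over $A_\crys^\bx(R)$; this is a substantive statement requiring divided-power estimates, and Section \ref{section_crystalline} only proves the \emph{categorical} statement (existence and uniqueness of the associated crystal), not the cohomological one. Without it your chain of quasi-isomorphisms never connects to the left-hand side of the theorem. Even granting that step, you would additionally need to compare your $A_\crys$-crystal definition of $\sigma^*_\sub{crys}\bb M$ with an F-crystal on $\frak X_k/W(k)$ and its $Ru_*$ via base change along $A_\crys\to W(k)$, which you do not supply.
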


Assuming in addition that $\frak X$ is proper over $\roi$, we define the {\em $A_\inf$-cohomology of $\frak X$ with coefficients} in a given $\bb M$ to be $R\Gamma_{A_\inf}(\frak X,\bb M):=R\Gamma_\sub{Zar}(\frak X,A\Omega_\frak X(\bb M))$; the following analogue of \cite[Thm.~1.8]{BhattMorrowScholze1} holds: 

\begin{corollary}
Let $\frak X$ be a proper smooth $p$-adic formal $\roi$-scheme and $\bb M\in\BKF(\frak X,\phi)$. Then $R\Gamma_{A_\inf}(\frak X,\bb M)$ is a perfect complex of $A_\inf$-modules and taking cohomology in Theorem \ref{theorem_specialisations} yields equivalences
\begin{align*}
R\Gamma_{A_\inf}(\frak X,\bb M)\dotimes_{A_\inf}\roi&\simeq R\Gamma_\sub{dR}(\frak X,\sigma^*_\sub{dR}\bb M)\\
R\Gamma_{A_\inf}(\frak X,\bb M)\dotimes_{A_\inf}W(k)&\simeq R\Gamma_\sub{crys}(\frak X_k,\sigma^*_\sub{crys}\bb M)\\
R\Gamma_{A_\inf}(\frak X,\bb M)\dotimes_{A_\inf}A_\inf[\tfrac1\mu]&\simeq R\Gamma_\sub{\'et}(X,\sigma^*_\sub{\'et}\bb M)\otimes_{\bb Z_p} A_\inf[\tfrac1\mu],
\end{align*}
assuming for the third comparison that $C$ is algebraically closed.
\end{corollary}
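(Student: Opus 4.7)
The strategy is to obtain all three comparisons by applying $R\Gamma_\sub{Zar}(\frak X, -)$ to the sheafified equivalences of Theorem~\ref{theorem_specialisations}, then establishing the required compatibilities with the various ring-theoretic operations (mod $\xi$, mod $p$ with completion, inverting $\mu$ and taking Frobenius fixed points). The only genuinely substantive inputs beyond Theorem~\ref{theorem_specialisations} itself will be: a perfectness statement (which, once it is in place, makes most of the required compatibilities automatic), and a ``fundamental exact sequence'' argument to handle the $\phi=1$ fixed points in the \'etale comparison.

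Accordingly, the plan is to begin with the de Rham comparison, which requires the least preparation. The point is that $-\dotimes_{A_\inf,\theta}\roi$ may be computed on any $A_\inf$-module complex $C$ as the cone of $\xi: C\to C$; thus it is a tensor product with the perfect $A_\inf$-module complex $[A_\inf\xto{\xi}A_\inf]$, and as such it commutes past $R\Gamma_\sub{Zar}(\frak X,-)$ by the projection formula. Chaining $R\Gamma_{A_\inf}(\frak X,\bb M)\dotimes_{A_\inf,\theta}\roi\simeq R\Gamma_\sub{Zar}(\frak X,A\Omega_\frak X(\bb M)\dotimes_{A_\inf,\theta}\roi)$ with the sheafy equivalence of Theorem~\ref{theorem_specialisations}(i) then yields $R\Gamma_\sub{dR}(\frak X,\sigma^*_\sub{dR}\bb M)$, as desired.

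Next I turn to perfectness, which is the principal technical step. Since $\frak X$ is proper smooth, $R\Gamma_\sub{dR}(\frak X,\sigma^*_\sub{dR}\bb M)$ is a perfect complex of $\roi$-modules, so by the de Rham comparison $R\Gamma_{A_\inf}(\frak X,\bb M)\dotimes_{A_\inf}^L \roi$ is perfect over $\roi$, and hence $R\Gamma_{A_\inf}(\frak X,\bb M)\dotimes_{A_\inf}^L k$ is perfect over $k$. The complex $R\Gamma_{A_\inf}(\frak X,\bb M)$ is derived $(p,\xi)$-complete (equivalently $(p,\mu)$-complete): $R\nu_*\bb M$ is derived $p$-complete by construction, $L\eta_\mu$ preserves this, and derived completeness passes through $R\Gamma_\sub{Zar}$ of a qcqs formal scheme. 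Then derived Nakayama (the form due to Bhatt--Scholze, requiring derived $(p,\xi)$-completeness and perfectness mod $(p,\xi)$) yields perfectness over $A_\inf$.

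With perfectness in hand, the remaining comparisons are now essentially automatic. For the crystalline case, the projection formula again gives $R\Gamma_{A_\inf}(\frak X,\bb M)\dotimes_{A_\inf}W(k)\simeq R\Gamma_\sub{Zar}(\frak X,A\Omega_\frak X(\bb M)\dotimes_{A_\inf}W(k))$; this is already a perfect $W(k)$-complex by the above, hence derived $p$-complete, so the hat in Theorem~\ref{theorem_specialisations}(ii) can be absorbed, giving $R\Gamma_\sub{Zar}(\frak X,Ru_*\sigma^*_\sub{crys}\bb M)=R\Gamma_\sub{crys}(\frak X_k,\sigma^*_\sub{crys}\bb M)$ after composing with the projection $u$. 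For the \'etale case (where $C$ is algebraically closed), one first notes that $R\Gamma_{A_\inf}(\frak X,\bb M)[\tfrac1\mu]\simeq R\Gamma_\sub{Zar}(\frak X,A\Omega_\frak X(\bb M)[\tfrac1\mu])$ and similarly after derived $p$-completion, since inverting $\mu$ commutes with $R\Gamma$ (as $\frak X$ is qcqs). Combining Theorem~\ref{theorem_specialisations}(iii) with $R\Gamma_\sub{Zar}(\frak X,R\nu_*\sigma^*_\sub{\'et}\bb M)=R\Gamma_\sub{\'et}(X,\sigma^*_\sub{\'et}\bb M)$ then yields $R\Gamma_\sub{\'et}(X,\sigma^*_\sub{\'et}\bb M)\simeq \bigl(\widehat{R\Gamma_{A_\inf}(\frak X,\bb M)[\tfrac1\mu]}\bigr)^{\phi=1}$. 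The main obstacle—and the step I expect to be most subtle—is now upgrading this identification of the $\phi$-fixed part to the asserted tensor product. For this one invokes the fundamental short exact sequence $0\to\bb Z_p\to A_\inf[\tfrac1\mu]^\wedge\xto{\phi-1}A_\inf[\tfrac1\mu]^\wedge\to 0$ and uses it, together with the perfectness of the lisse $\bb Z_p$-complex $R\Gamma_\sub{\'et}(X,\sigma^*_\sub{\'et}\bb M)$ (which follows from the corresponding perfectness of $R\Gamma_{A_\inf}(\frak X,\bb M)$ established above), to conclude $R\Gamma_{A_\inf}(\frak X,\bb M)\dotimes_{A_\inf}A_\inf[\tfrac1\mu]\simeq R\Gamma_\sub{\'et}(X,\sigma^*_\sub{\'et}\bb M)\otimes_{\bb Z_p}A_\inf[\tfrac1\mu]$; the argument, exactly as in \cite[\S14]{BhattMorrowScholze1}, proceeds by tensoring the fibre sequence for $(-)^{\phi=1}$ with $A_\inf[\tfrac1\mu]^\wedge$ and using that $\phi-1$ becomes nilpotent on perfect $\bb Z_p$-complexes after suitable reduction, reducing finally to the fundamental sequence above.
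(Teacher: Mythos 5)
Your treatment of perfectness and of the de Rham and crystalline comparisons follows the same template as the paper's (which simply adapts the proof of \cite[Thm.~14.3]{BhattMorrowScholze1}): deduce perfectness from the de Rham specialisation, derived $(p,\xi)$-completeness and derived Nakayama, then use perfectness to absorb the completions in the crystalline statement. Two minor imprecisions there: the relevant quotient for the Nakayama step is $A_\inf/(p,\xi)\cong\roi/p$ rather than $k$ (harmless, since perfectness over $\roi$ of the mod-$\xi$ reduction gives perfectness over $\roi/p$), and $W(k)=A_\inf/W(\frak m^\flat)$ is \emph{not} a perfect $A_\inf$-complex, so commuting $-\dotimes_{A_\inf}W(k)$ past $R\Gamma_\sub{Zar}(\frak X,-)$ is not literally the projection formula — one should instead use that $R\Gamma$ of the qcqs, finite-dimensional $\frak X$ commutes with arbitrary derived base change of bounded-below complexes.

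The genuine gap is in the \'etale comparison. The paper's proof (following \cite[Thm.~14.3]{BhattMorrowScholze1}) does \emph{not} go through the fixed-point identification $\hat{A\Omega_\frak X(\bb M)[\tfrac1\mu]}^{\phi=1}\simeq R\nu_*(\sigma^*_\sub{\'et}\bb M)$ at all; it uses the primitive comparison theorem with lisse coefficients (Scholze's \cite[Thm.~5.11]{Scholze2013}, or Bhatt's deduction of it from the de Rham comparison), which says that $R\Gamma_\sub{\'et}(X,\sigma^*_\sub{\'et}\bb M)\otimes_{\bb Z_p}A_\inf\to R\Gamma_\sub{pro\'et}(X,\sigma^*_\sub{\'et}\bb M\otimes_{\hat{\bb Z}_p}\bb A_{\inf,X})$ has cone killed by $W(\frak m^\flat)$; combined with the sandwich $\bb M[\tfrac1\mu]\cong\sigma^*_\sub{\'et}\bb M\otimes_{\hat{\bb Z}_p}\bb A_{\inf,X}[\tfrac1\mu]$ of Proposition \ref{proposition_etale}(iii), inverting $\mu$ kills the error and yields the statement over $A_\inf[\tfrac1\mu]$ directly. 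Your substitute — recovering the tensor product from its $\phi$-fixed points via the fundamental exact sequence — cannot work as stated, for two reasons. First, knowing $(\hat D)^{\phi=1}$ for $D=R\Gamma_{A_\inf}(\frak X,\bb M)[\tfrac1\mu]$ does not determine $\hat D$: one would need an Artin--Schreier--Witt descent statement asserting that the perfect $\phi$-complex $\hat D$ over $W(C^\flat)$ is generated by its $\phi$-invariants, which is essentially equivalent to the primitive comparison theorem and is not supplied by ``tensoring the fibre sequence with $A_\inf[\tfrac1\mu]^\wedge$''; your appeal to perfectness of $R\Gamma_\sub{\'et}(X,\sigma^*_\sub{\'et}\bb M)$ as a consequence of perfectness of $R\Gamma_{A_\inf}(\frak X,\bb M)$ is circular at that stage (it is an independent input, again Scholze's finiteness theorem). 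Second, even granting all of that, this route only produces the comparison over the $p$-adic completion $W(C^\flat)=\hat{A_\inf[\tfrac1\mu]}$, whereas the corollary asserts it over $A_\inf[\tfrac1\mu]$ itself; a map of perfect $A_\inf[\tfrac1\mu]$-complexes which becomes an equivalence after $p$-completion need not be an equivalence, so the uncompleted statement does not follow.
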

\begin{proof}
These equivalences, as well as perfectness of the cohomology, are deduced from the definition of $A\Omega_\frak X(\bb M)$ and the previous theorem exactly as in the proof of \cite[Thm.~14.3]{BhattMorrowScholze1} (note that \cite[Thm.~5.7]{BhattMorrowScholze1} holds for non-constant lisse $\bb Z_p$-sheaves, thanks to \cite[Thm.~5.11]{Scholze2013} or by mimicking Bhatt's deduction of the primitive comparison theorem from the above de Rham comparison \cite[Rmk.~8.4]{Bhatt2016}).
\end{proof}

With the technical foundations of Sections \ref{section_small_reps}--\ref{section_BKF_on_proetale} already in place, the constructions and proofs of this section will be relatively straightforward modifications of those already found in \cite{BhattMorrowScholze1}.

\begin{remark}[Comparison with prismatic cohomology]\label{remark_comp_to_prismatic}
Assume $\frak X$ is separated in this remark. Identifying $\bb M\in \BKF(\frak X,\phi)$ with a prismatic crystal $\cal F \in \op{F-CR}_{\Prism}(\frak X^{(1)}/(A_{\inf},\tilde\xi))$ via the equivalence of Theorem \ref{theorem_local_with_phi}, an analogue of \cite[Thm.~17.2]{BhattScholze2019} should show that there are natural equivalences  (independent of any choice of framing)  $R\Gamma_\sub{Zar}(\Spf R, A\Omega_{\frak X}(\bb M))\simeq R\Gamma_\Prism(R^{(1)}/(A_\sub{inf},\tilde\xi),\cal F)$, for all small affine opens $\Spf R\subseteq\frak X$; these should then globalise to a comparison of cohomologies $R\Gamma_{A_\inf}(\frak X,\bb M)\simeq R\Gamma_\Prism(\frak X^{(1)}/(A_\sub{inf},\tilde\xi),\cal F)$.

Similarly, the specialisation functors $\sigma^*_\sub{dR},\,\sigma^*_\sub{crys}$ should correspond to pulling back along morphisms of sites to $(\frak X^{(1)}/(A_\inf,\tilde\xi))_\Prism$ respectively from the crystalline site of $\frak X/\roi$ and the crystalline site of $\frak X_k/W(k)$.

However, we have not checked these expectations.
\end{remark}

\begin{remark}[No globally defined Higgs bundle]\label{remark_no_global_Higgs}
Given an $A^\bx_\inf(R)^{(1)}$-module with q-Higgs field $H$, the mod $\tilde\xi$ reduction  defines a usual Higgs field on the $R^{(1)}$-module $\res H:=H/\tilde\xi$; that is, an $R^{(1)}$-linear map $\res\Theta:\res H\to\res H\otimes_{R^{(1)}}\Omega^1_{R^{(1)}/\roi}$. Setting $\tilde\Omega_\frak X(\bb M):=A\Omega_\frak X(\bb M)/\tilde\xi$ and taking the second equivalence of Theorem \ref{theorem_cohomology_as_q_dr_or_Higgs} modulo $\tilde\xi$, we moreover see that $R\Gamma(\Spf R,\tilde\Omega_\frak X(\bb M))$ is represented by the Higgs complex of $\res H$. It is natural to ask whether these various Higgs bundles may be glued over all small opens in order to constructed a global Higgs bundle on $\frak X^{(1)}$, whose Higgs complex is equivalent to $\tilde\Omega_\frak X(\bb M)$ viewed as a complex of sheaves on $\frak X^{(1)}$, thereby defining a specialisation functor $\sigma^*_\sub{Higgs}$ from $\BKF(\frak X)$ to Higgs bundles on $\frak X^{(1)}$. Unfortunately this cannot be possible in the global case without extra data such as a chosen smooth lifting of $\frak X$ to $A_\inf/\xi^2$ (which will not always exist), c.f.~\cite[Rmk.~8.4]{BhattMorrowScholze1} \cite[Rmk.~4.12]{BhattScholze2019}.
\end{remark}

\subsection{Technical local-to-global lemmas}
In this subsection $R$ is a small formally smooth $\roi$-algebra, equipped with a fixed choice of framing, and $U:=\Spa(R[\tfrac1p],R)$ is its adic generic fibre. We fix a relative Breuil--Kisin--Fargues module without Frobenius $\bb M\in\BKF(R)$ and an integer $s\ge0$, and establish some technical lemmas concerning the d\'ecalage functor in order to prove Theorem \ref{theorem_specialisations}. These are all analogues of the lemmas which may be found in \cite[\S9]{BhattMorrowScholze1} and \cite[\S7]{Morrow_BMSnotes_simons}, so we will occasionally be brief and refer there for additional details.

There are global-to-local morphisms for the d\'ecalage functor
\[L\eta_{\phi^{-s}(\mu)} \hat{R\Gamma_\sub{pro\'et}(U,\bb M )}\To R\Gamma_\sub{Zar}(\Spf R,L\eta_{\phi^{-s}(\mu)}\hat{R\nu_*\bb M})\tag{t1}\]
\[L\eta_{[\zeta_{p^{r+s}}]-1} R\Gamma_\sub{pro\'et}(U,\bb M/\tilde\xi_r )\To R\Gamma_\sub{Zar}(\Spf R,L\eta_{[\zeta_{p^{r+s}}]-1}R\nu_*(\bb M/\tilde\xi_r))\tag{t2}.\] Here, and in what follows, we implicitly use the identification $\tilde\theta_r:\bb A_{\sub{inf},U}/\tilde\xi_r\isoto W_r(\hat\roi_U^+)$ which sends $\phi^{-s}(\mu)$ to $[\zeta_{p^{r+s}}]-1$. There is also a base change morphism
\[L\eta_{\phi^{-s}(\mu)} \hat{R\Gamma_\sub{pro\'et}(U,\bb M )}/\tilde\xi_r \To L\eta_{[\zeta_{p^{r+s}}]-1}R\Gamma_\sub{pro\'et}(U, \bb M/\tilde\xi_r)\tag{t3},\] and these fit into a commutative diagram
\[\xymatrix@C=0.7cm@R=1.5cm{
R\Gamma_\sub{Zar}(\Spf R,L\eta_{\phi^{-s}(\mu)}\hat{R\nu_*\bb M})/\tilde\xi_r \ar[r]^-{(t7)} & R\Gamma_\sub{Zar}(\Spf R,L\eta_{[\zeta_{p^{r+s}}]-1}R\nu_*(\bb M/\tilde\xi_r))\\
L\eta_{\phi^{-s}(\mu)} \hat{R\Gamma_\sub{pro\'et}(U,\bb M )}/\tilde\xi_r \ar[r]^-{(t3)}\ar[u]^{(t1)\op{mod}\tilde\xi_r} & L\eta_{[\zeta_{p^{r+s}}]-1}R\Gamma_\sub{pro\'et}(U, \bb M/\tilde\xi_r)\ar[u]_{(t2)}\\
L\eta_{\phi^{-s}(\mu)} R\Gamma_\sub{cont}(\Gamma, \Gamma(U_\infty,\bb M))/\tilde\xi_r \ar[r]^-{(t6)} \ar[u]^{(t4)\op{mod}\tilde\xi_r}& L\eta_{[\zeta_{p^{r+s}}]-1}R\Gamma_\sub{cont}(\Gamma, \Gamma(U_\infty,\bb M/\tilde\xi_r))\ar[u]_{(t5)}
}\]
where (t4) denotes $L\eta_{\phi^{-s}(\mu)}$ of the almost quasi-isomorphism of Theorem \ref{theorem_Ainf_reps_vs_pro_etale}, (t5) denotes the analogous morphism for the $W_r(\hat\roi_U^+)$-module $\bb M/\tilde\xi_r$ as in Theorem \ref{theorem_correspondence_for_B}, and finally (t6) and (t7) are further base change morphisms associated to the identification $\tilde\theta_r$.

\begin{proposition}\label{proposition_local_to_global}
The morphisms (t1)--(t7) are all equivalences.
\end{proposition}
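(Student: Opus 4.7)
My plan is to mimic the strategy of \cite[Lem.~9.6]{BhattMorrowScholze1}: first establish the morphisms (t2)--(t7) at the level of a fixed $\tilde\xi_r$-quotient, then deduce (t1) integrally by passing to the inverse limit via derived $\tilde\xi_r$-adic completeness.

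I would begin with the bottom row of the diagram. The morphism (t4) is an immediate application of Lemma \ref{lemma_Leta_isom}(ii) to the almost quasi-isomorphism of Theorem \ref{theorem_Ainf_reps_vs_pro_etale} (whose cone has cohomology killed by $W(\frak m^\flat)$, and $\phi^{-s}(\mu)\in\bigcup_j\phi^{-j}(\mu)A_\inf$); analogously, (t5) follows from Lemma \ref{lemma_Leta_isom}(i) applied to Theorem \ref{theorem_correspondence_for_B} for the $W_r(\hat\roi_U^+)$-module $\bb M/\tilde\xi_r$. For (t6), Theorem \ref{theorem_descent_to_framed} descends $\Gamma(U_\infty,\bb M)$ to some $N\in\Rep_\Gamma^\mu(A_\inf^\bx(R))$, and then Proposition \ref{proposition_Leta_framed}(ii) represents both sides of (t6) explicitly as Koszul complexes $K(\tfrac{\gamma_i-1}{\phi^{-s}(\mu)};N_s)/\tilde\xi_r$ and $K(\tfrac{\gamma_i-1}{[\zeta_{p^{r+s}}]-1};N_s/\tilde\xi_r)$, which are intertwined by the identification $\tilde\theta_r$. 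The equivalence (t3) then follows from the commutativity of the bottom square.

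The harder step is the middle row, i.e.~(t2). Writing $\bb N:=\bb M/\tilde\xi_r$, Proposition \ref{proposition_small_sheaves}(i) gives that the higher pro-\'etale cohomology of $\bb N$ on any affinoid perfectoid is killed by $W_r(\frak m)$. A \v{C}ech spectral sequence for a pro-\'etale refinement of $\Spf R$ by affinoid perfectoid opens then shows that the cone of the local-to-global map $R\Gamma_\sub{pro\'et}(U,\bb N)\to R\Gamma_\sub{Zar}(\Spf R,R\nu_*\bb N)$ has cohomology killed by $W_r(\frak m)$. To turn this almost equivalence into a genuine one after $L\eta_{[\zeta_{p^{r+s}}]-1}$, I would invoke Lemma \ref{lemma_good} with $f=[\zeta_{p^{r+s}}]-1\in W_r(\frak m)^\circ$, which requires verifying that the cohomology of the derived reduction mod $[\zeta_{p^{r+s}}]-1$ contains no nonzero elements killed by $W_r(\frak m)^2$. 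This is the main technical obstacle; it should follow from the triviality hypothesis built into $\BKF(\frak X)$ together with the compatibility of $\phi^{-j}(\mu)$-adic and $p$-adic almost structures recorded in Lemma \ref{lemma_<mu_mod_p}, reducing ultimately to the computation already performed in Lemma \ref{lemma_mu_1}. The morphism (t7) follows by a parallel argument (or from (t1) modulo $\tilde\xi_r$ once available) via the top square.

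Finally, (t1) is obtained by taking $\op{Rlim}_r$ of the mod-$\tilde\xi_r$ equivalences assembled above. Its source is derived $\tilde\xi_r$-adically complete by Corollary \ref{corollary_completeness}. Its target inherits the same completeness because $\bb M$ is sectionwise $\tilde\xi_r$-complete by Lemma \ref{lemma_topology}(v), a property preserved by $\hat{R\nu_*}$ and $L\eta_{\phi^{-s}(\mu)}$. Combining derived $\tilde\xi_r$-completeness of both sides with the mod-$\tilde\xi_r$ equivalences furnished by (t2), (t3), (t5), (t6), (t7) then yields (t1). The main delicacy throughout is the interplay between the non-exact functor $L\eta$ and almost mathematics over $W(\frak m^\flat)$ or $W_r(\frak m)$: we repeatedly use that $L\eta_f$ upgrades an almost quasi-isomorphism to a genuine one provided $f$ is suitably divisible into the almost-structure ideal, and that this mechanism survives passage to the derived limit thanks to the completeness of Corollary \ref{corollary_completeness}.
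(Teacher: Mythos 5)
Your treatment of the arrows (t3)--(t6) is essentially sound: (t4) and (t5) are exactly the paper's arguments, and for (t6) your route via the Koszul description of Proposition \ref{proposition_Leta_framed} is a workable alternative to the paper's (which instead interchanges $L\eta$ with reduction mod $\tilde\xi_r$ via a $p$-torsion-freeness criterion, reducing to Lemma \ref{lemma_mu_1}(ii)). The gap is in (t2), and it propagates to (t1) and (t7).

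For (t2) you propose to control ``the cone of the local-to-global map $R\Gamma_\sub{pro\'et}(U,\bb N)\to R\Gamma_\sub{Zar}(\Spf R,R\nu_*\bb N)$'' and then invoke Lemma \ref{lemma_good}. But that map is a tautological equivalence (derived global sections of $R\nu_*\bb N$ over $\Spf R$ \emph{are} $R\Gamma_\sub{pro\'et}(U,\bb N)$); there is nothing almost-zero to kill, and applying $L\eta_f$ to both sides of it tells you nothing about (t2). The actual content of (t2) is that $L\eta_{[\zeta_{p^{r+s}}]-1}$ commutes with Zariski sheafification and with $R\Gamma_\sub{Zar}(\Spf R,-)$ --- i.e.\ that $L\eta$ applied to the complex of sheaves $R\nu_*(\bb M/\tilde\xi_r)$ has the same global sections as $L\eta$ applied to the global sections. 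Since $L\eta$ is neither exact nor compatible with sheafification, this requires a genuine argument: the paper shows (using Elkik to identify $W_r(R')$ as a completed \'etale $W_r(R)$-algebra, together with the finite-projective Koszul description coming from (t5)/(t6) and Proposition \ref{proposition_Leta_framed}) that the presheaf $\Spf R'\mapsto L\eta_{[\zeta_{p^{r+s}}]-1}R\Gamma_\sub{pro\'et}(\Sp R'[\tfrac1p],\bb M/\tilde\xi_r)$ is the completed base change of its value on $R$, hence is already a quasi-coherent sheaf with no higher cohomology on affines; one then concludes as in \cite[Corol.~8.13]{BhattMorrowScholze1}. None of this appears in your proposal.

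The same issue undermines your step for (t1): the assertion that derived $\tilde\xi_r$-completeness of the target is ``preserved by $\hat{R\nu_*}$ and $L\eta_{\phi^{-s}(\mu)}$'' is not justified ($L\eta_f$ does not in general commute with derived completion with respect to an element other than $f$, and even the sheaf-level statement $L\eta_{\phi^{-s}(\mu)}\hat{R\nu_*\bb M}\simeq\op{Rlim}_r L\eta R\nu_*(\bb M/\tilde\xi_r)$ needs the presheaf-versus-sheaf comparison). The paper instead establishes completeness of the \emph{source} via Corollary \ref{corollary_completeness}, works at the presheaf level, and reduces the counit $L\eta R\nu_*^{\sub{psh}}\hat{\bb M}\to Rj_*j^*L\eta R\nu_*^{\sub{psh}}\hat{\bb M}$ being an equivalence to the sheaf property of each $L\eta_{[\zeta_{p^{r+s}}]-1}R\nu_*^{\sub{psh}}(\bb M/\tilde\xi_r)$ via \cite[Lem.~9.15]{BhattMorrowScholze1} --- i.e.\ to exactly the quasi-coherence statement your argument for (t2) is missing. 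You should supply that \'etale base-change argument; without it the proposition does not follow.
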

\begin{proof}
Before applying $L\eta_{[\zeta_{p^{r+s}}]-1}$, the morphism (t5) is an almost quasi-isomorphism by Theorem \ref{theorem_correspondence_for_B}; it becomes an actual isomorphism after applying $L\eta_{[\zeta_{p^{r+s}}]-1}$ thanks to Lemma \ref{lemma_Leta_isom}(i).

Similarly, (t4) is a quasi-isomorphism by combining the almost quasi-isomorphism of Theorem \ref{theorem_Ainf_reps_vs_pro_etale} with Lemma \ref{lemma_Leta_isom}(ii).

To check that (t6) is a quasi-isomorphism we must interchange the d\'ecalage functor and the mod $\tilde\xi_r$ (note that $\Gamma(U_\infty,\bb M)/\tilde\xi_r=\Gamma(U_\infty,\bb M/\tilde\xi_r)$ by Proposition \ref{proposition_small_Ainf_sheaves}(ii)); to do that it is enough, by \cite[Rem.~9(ii)]{Morrow_BMSnotes_simons}, to show that all cohomology groups $H^i_\sub{cont}(\Gamma,\Gamma(U_\infty,\bb M)/\phi^{-s}(\mu))$ are $p$-torsion-free. But the generalised representation $\Gamma(U_\infty,\bb M)$ is trivial modulo $<\mu$ by Theorem \ref{theorem_Ainf_reps_vs_pro_etale}, hence trivial modulo $\mu$ by Theorem \ref{theorem_<mu_implies_mu}, hence trivial modulo $\phi^{-s}(\mu)$ (if $s>0$ we could also make the simpler argument that the generalised representation is trivial modulo $\phi^{-1}(\mu)$, hence modulo $\phi^{-s}(\mu)$, by Corollary \ref{corollary_mu_smallness}). To prove the desired $p$-torsion-freeness, we therefore reduce to the case $\bb M=\bb A_{\inf,U}$, in which case it is Lemma \ref{lemma_mu_1}(ii)

From the commutativity of the diagram it now follows that (t3) is a quasi-isomorphism.

Next we show that (t2) is an isomorphism. Given a $p$-adically complete formally \'etale $R$-algebra $R'$ and setting $R'_\infty:=R_\infty\hat\otimes_RR'=\roi\pid{\ul T^{\pm1/p^\infty}}\hat\otimes_{\roi\pid{\ul T^{\pm1}}}R'$, we claim that the canonical map \begin{equation}\big(L\eta_{[\zeta_{p^{r+s}}]-1} R\Gamma_\sub{cont}(\Gamma,\Gamma(U_\infty,\bb M/\tilde\xi_r))\big)\hat\otimes_{W_r(R)}W_r(R')\to L\eta_{[\zeta_{p^{r+s}}]-1} R\Gamma_\sub{cont}(\Gamma,\Gamma(\Sp R'_\infty[\tfrac1p],\bb M/\tilde\xi_r))\label{equation_partt2}\end{equation} is a quasi-isomorphism: in fact, Elkik's results \cite{Elkik1973} imply that $W_r(R')$ is the $p$-adic completion of an \'etale $W_r(R)$-algebra, tensoring with which may be interchanged with both the $L\eta$ and the group cohomology (c.f., the proof of \cite[Lem.~9.9]{BhattMorrowScholze1}). This reduces the claim to checking that \[\Gamma(U_\infty,\bb M/\tilde\xi_r)\hat\otimes_{W_r(R)}W_r(R')\to \Gamma(\Sp R'_\infty[\tfrac1p],\bb M/\tilde\xi_r)\] is an isomorphism. But this follows from the identification $\bb M/\tilde\xi_r|_{U_\infty}=\Gamma(U_\infty,\bb M/\tilde\xi_r)\otimes_{W_r(R_\infty)}W_r(\hat\roi_U^+)|_{U_\infty}$ from Proposition \ref{proposition_small_sheaves}(v) and fact that $W_r(R_\infty')=W_r(R_\infty)\hat\otimes_{W_r(R)}W_r(R')$, which is a consequence of the behaviour of Witt vectors on finite \'etale morphisms \cite[Thm.~2.4]{vanderKallen1986}.

The previous paragraph (in conjunction with equivalence (t5) for both $R$ and $R'$) has shown that for any $\Spf R'\in(\Spf R)_\sub{\'et}$, the sections $R\Gamma_\sub{\'et}(\Spf R',-)$ of $L\eta_{[\zeta_{p^{r+s}}]-1}R\Gamma_\sub{pro\'et}(U, \bb M/\tilde\xi_r)\hat\otimes_{W_r(R)}W_r(\roi_{\Spf R})$ are precisely $L\eta_{[\zeta_{p^{r+s}}]-1}R\Gamma_\sub{pro\'et}(\Sp R'[\tfrac1p], \bb M/\tilde\xi_r)$. The same argument as in the second part of the proof of \cite[Corol.~8.13]{BhattMorrowScholze1} now shows that (t2) is an equivalence.

It remains to prove that (t1) is an equivalence (from which (t7) will follow); this follows from the same sheafification argument already given in the proof of \cite[Prop.~9.14]{BhattMorrowScholze1}, so we will be brief. Firstly, Corollary~\ref{corollary_completeness} implies that \[L\eta_{\phi^{-s}(\mu)} R\Gamma_\sub{cont}(\Gamma, \Gamma(U_\infty,\bb M))\To \op{Rlim}_rL\eta_{\phi^{-s}(\mu)} R\Gamma_\sub{cont}(\Gamma, \Gamma(U_\infty,\bb M))/\tilde\xi_r\] is an equivalence; using the already established equivalences of the diagram this may be rewritten as 
\[L\eta_{\phi^{-s}(\mu)} \hat{R\Gamma_\sub{pro\'et}(U,\bb M )}\quis \op{Rlim}_rL\eta_{[\zeta_{p^{r+s}}]-1}R\Gamma_\sub{pro\'et}(U,\bb M/\tilde\xi_r).\] This remains true if we replace $R$ by any formally \'etale $R$-algebra $R'$ and $U$ by $\Sp R'[\tfrac1p]$, i.e., the \'etale presheaves \begin{equation}(\Spf R)_\sub{\'et}\ni\Spf R'\mapsto L\eta_{\phi^{-s}(\mu)}\hat{R\Gamma_\sub{pro\'et}(\Sp R'[\tfrac1p],\bb M)}\text{  and  } \op{Rlim}_rL\eta_{[\zeta_{p^{r+s}}]-1}R\Gamma(\Sp R'[\tfrac1p],\bb M/\tilde\xi_r)\label{eqn_presheaf}\end{equation} coincide (on affines, off which we do not in any case define them). To sheafify this, let $j:\frak X_\sub{Zar}\to\frak X_\sub{Zar}^\sub{psh}$ be the map of topoi from sheaves to presheaves, so that $j^*$ is sheafification; set $\nu^\sub{psh}:=j\nu:X_\sub{pro\'et}\to\frak X^\sub{psh}_\sub{Zar}$. Then $L\eta_{\phi^{-s}(\mu)}\hat{R\nu_*\bb M}=j^*L\eta_{\phi^{-s}(\mu)}R\nu_*^\sub{psh}\hat{\bb M}$, where $\hat{\bb M}$ is the derived $p$-adic completion of $\bb M$; this reduces (t1) to showing that the counit $L\eta_{\phi^{-s}(\mu)}R\nu_*^\sub{psh}\hat{\bb M}\to Rj_*j^*L\eta_{\phi^{-s}(\mu)}R\nu_*^\sub{psh}\hat{\bb M}$ is an equivalence. But the identification of the presheaves (\ref{eqn_presheaf}) means that $L\eta_{\phi^{-s}(\mu)}\hat{R\nu_*\bb M}=\op{Rlim}_rL\eta_{[\zeta_{p^{r+s}}]-1}R\nu_*^\sub{psh}(\bb M/\tilde\xi_r)$, so then \cite[Lem.~9.15]{BhattMorrowScholze1} reduces the desired counit equivalence to showing that the analogous counit map $L\eta_{[\zeta_{p^{r+s}}]-1}R\nu_*^\sub{psh}(\bb M/\tilde\xi_r)\to Rj_*j^*L\eta_{[\zeta_{p^{r+s}}]-1}R\nu_*^\sub{psh}(\bb M/\tilde\xi_r)$ is an equivalence for each $r\ge1$, i.e., that $R\nu_*^\sub{psh}(\bb M/\tilde\xi_r)$ is already a sheaf. But combining equivalence (\ref{equation_partt2}) with (t5) (for both $R$ and $R'$) shows that \[L\eta_{[\zeta_{p^{r+s}}]-1}R\nu_*^\sub{psh}(\bb M/\tilde\xi_r)\simeq\big(L\eta_{[\zeta_{p^{r+s}}]-1} R\Gamma_\sub{pro\'et}(U,\bb M/\tilde\xi_r)\big)\hat\otimes_{W_r(R)}W_r(\roi_{\Spf R}),\] which is indeed a sheaf.
\end{proof}

\subsection{The Bockstein connection: de Rham and crystalline specialisations}
As at the start of the section, let $\frak X$ be a smooth $p$-adic formal $\roi$-scheme; let $\bb M\in\BKF(\frak X)$ and $r\ge1$. By triviality of $\bb M$ modulo $\xi_r$, the $W_r(\roi_\frak X)$-module $\nu_*(\bb M/\xi_r)$ on $\frak X$ is locally finite free; in this subsection we equip this with a $W_r(\roi)$-linear, flat de Rham--Witt connection \[\nabla_{\xi_r}:\nu_*(\bb M/\xi_r)\To \nu_*(\bb M/\xi_r)\otimes_{W_r(\roi_\frak X)}W_r\Omega^1_{\frak X/\roi}\] and identify $A\Omega_\frak X(\bb M)/\xi_r$ with the resulting de Rham--Witt complex \begin{equation}\nu_*(\bb M/\xi_r)\xto{\nabla_{\xi_r}} \nu_*(\bb M/\xi_r)\otimes_{W_r(\roi_\frak X)}W_r\Omega^1_{\frak X/\roi}\xto{\nabla_{\xi_r}} \nu_*(\bb M/\xi_r)\otimes_{W_r(\roi_\frak X)}W_r\Omega^2_{\frak X/\roi}\xto{\nabla_{\xi_r}}\cdots.\label{eqn_dRW}\end{equation} Here we use the relative de Rham--Witt complex of Langer--Zink \cite{LangerZink2004} (see also \cite[\S10]{BhattMorrowScholze1} for a summary) and de Rham--Witt connections of Bloch, \'Etesse, and Langer--Zink \cite{Etesse1988, Bloch2004, LangerZink2005}.

We will use  the complex of sheaves of $W_r(\roi_{\frak X})$-modules \begin{equation}\tilde{W_r\Omega}_\frak X=(L\eta_{\phi^{-r}(\mu)}\hat{R\nu_*\bb A_{\inf,X}})/\xi_r\stackrel{\theta_r}{\isoto}L\eta_{[\zeta_{p^r}]-1}R\nu_*W_r(\hat\roi_X^+);\label{Wrtilde}\end{equation} from \cite[Thm.~9.4]{BhattMorrowScholze1} (we stress that we have renormalised the definition of \cite{BhattMorrowScholze1}, where instead $\tilde{W_r\Omega}_\frak X=A\Omega_\frak X/\tilde\xi_r$, via $\phi^r$), and their description (in which we ignore Breuil--Kisin twists for simplicity):

\begin{theorem}[{``$p$-adic Cartier isomorphism'' \cite[Thm.~11.1]{BhattMorrowScholze1}}]\label{theorem_p-adic_Cartier}
There are natural isomorphisms $\cal H^n(\tilde{W_r\Omega}_\frak X)\cong W_r\Omega_\frak X^n$, for each $r\ge 1$ and $n\ge0$, with the following compatibilities:
\begin{enumerate}
\item Equipping the cohomology sheaves of $\tilde{W_r\Omega}_\frak X$ with the Bockstein associated to $\xi_r$, the isomorphism $\cal H^*(\tilde{W_r\Omega}_\frak X)\cong W_r\Omega_\frak X^n$ is one of differential graded $W_r(\roi)$-algebras
\item The Frobenius $F$ on the de Rham--Witt complex is induced by the Frobenius $\phi$ on $\bb A_{\inf,X}$.
\item The restriction $R:W_{r+1}\Omega_\frak X^n\to W_{r}\Omega_\frak X^n$ corresponds to \[\left(\tfrac{[\zeta_{p^{r}}]-1}{[\zeta_{p^{r+1}}]-1}R\right)^n:\tau^{\le n}L\eta_{[\zeta_{p^{r+1}}]-1}R\nu_*W_{r+1}(\hat\roi_X^+)\to \tau^{\le n}L\eta_{[\zeta_{p^{r}}]-1}R\nu_*W_r(\hat\roi_X^+).\]
\end{enumerate}
(See also \cite[\S2.2]{Morrow_Vanishing} for some discussion of these compatibilities.)
\end{theorem}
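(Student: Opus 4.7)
The plan is to work Zariski-locally on $\frak X$, reducing the statement to a direct computation of Koszul cohomology for a small formally smooth $\roi$-algebra $R$ with framing $\roi\pid{\ul T^{\pm1}}\to R$, and then to bootstrap from the Laurent polynomial case by \'etale base change. First, I would invoke the local-to-global identification (equivalences (t5)--(t7) of Proposition~\ref{proposition_local_to_global}, applied to the trivial relative BKF module $\bb M=\bb A_{\sub{inf},X}$) to express the sections of $L\eta_{[\zeta_{p^r}]-1}R\nu_*W_r(\hat\roi_X^+)$ over $\Spf R$ as the derived $p$-adic completion of $L\eta_{[\zeta_{p^r}]-1}R\Gamma_\sub{cont}(\Gamma,W_r(R_\infty))$. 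Since the generators $\gamma_1,\dots,\gamma_d$ of $\Gamma$ act on each weight summand of $W_r(R_\infty)=\hat\bigoplus_{\ul k\in\bb Z[\tfrac1p]^d\cap[0,1)^d}W_r(R)\cdot[\ul T^\flat]^{\ul k}$ by multiplication by $[\ep^{k_i}]$, Lemma~\ref{lemma_Koszul} identifies $L\eta_{[\zeta_{p^r}]-1}R\Gamma_\sub{cont}(\Gamma,W_r(R_\infty))$ with a direct sum of Koszul complexes indexed by such weights, only those with $\ul k\in(p^{-r}\bb Z/\bb Z)^d$ contributing nontrivially to cohomology.

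Second, I would treat the torus case $R=\roi\pid{\ul T^{\pm1}}$ explicitly: the cohomology of the Koszul complex splits as a direct sum over $\ul k\in(p^{-r}\bb Z/\bb Z)^d$ of free $W_r(\roi)$-modules, and one matches these, summand by summand, with Langer--Zink's explicit basis of $W_r\Omega^n_{\roi\pid{\ul T^{\pm1}}/\roi}$ in terms of weighted Teichm\"uller generators and basic Witt differentials. For general small $R$, the \'etale base change $\roi\pid{\ul T^{\pm1}}\to R$ yields the desired identification on cohomology: the left side commutes with \'etale base change via \cite[Thm.~2.4]{vanderKallen1986} applied Witt-vector-wise (as in the proof of equivalence~(\ref{equation_partt2})), while the right side commutes with \'etale base change by Langer--Zink.

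Third, the three compatibilities would be checked as follows. For (ii), the Frobenius on $\phi$ on $\bb A_{\inf,X}$ is compatible with $L\eta$ and the quasi-isomorphism $A\Omega_\frak X/\tilde\xi_r\simeq L\eta_{[\zeta_{p^r}]-1}R\nu_*W_r(\hat\roi_X^+)$ intertwines $\phi$ with $F$ on the right because, on Koszul cohomology in the torus case, $\phi$ acts by $[\ul T^\flat]^{\ul k}\mapsto [\ul T^\flat]^{p\ul k}$, which matches the defining behaviour of $F$ on Witt differentials; naturality transports this to general small $R$. For (iii), the restriction $R:W_{r+1}\to W_r$ is induced by the canonical projection $W_{r+1}(\hat\roi_X^+)\to W_r(\hat\roi_X^+)$, and the appearance of the factor $\tfrac{[\zeta_{p^r}]-1}{[\zeta_{p^{r+1}}]-1}$ comes from the renormalisation involved when passing between $L\eta_{[\zeta_{p^{r+1}}]-1}$ and $L\eta_{[\zeta_{p^r}]-1}$, via the universal property of $L\eta$ as in \cite[Lem.~6.11]{BhattMorrowScholze1}. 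For (i), the Bockstein associated to the short exact sequence $0\to W_r(\hat\roi_X^+)\xto{\times([\zeta_{p^{2r}}]-1)/([\zeta_{p^{r}}]-1)}W_{2r}(\hat\roi_X^+)/([\zeta_{p^{2r}}]-1)^2\to W_r(\hat\roi_X^+)\to 0$ (or, more invariantly, the Bockstein of $\tilde\xi_r^2$ on the non-truncated complex, c.f.\ \cite[\S6]{BhattMorrowScholze1}) can be computed on basis elements in the torus case and matched with $d$.

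The main obstacle will be the torus case: writing down the bijection between weight-space basis elements of Koszul cohomology and the basic Witt differentials of Langer--Zink, and verifying that the Bockstein agrees, on the nose, with the de Rham--Witt differential (including matching signs, coefficients involving $p$-adic valuations of weights, and the interaction with Verschiebung). Once this is done, the rest of the argument is formal: \'etale base change transports the torus case to arbitrary small $R$, and descent along Zariski coverings of $\frak X$ yields the global statement.
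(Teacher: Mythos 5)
This theorem is not proved in the paper at all: it is imported verbatim as \cite[Thm.~11.1]{BhattMorrowScholze1} (with the compatibilities (ii)--(iii) further delegated to \cite[\S2.2]{Morrow_Vanishing}), so there is no internal proof to compare against. What you have written is, in outline, a reconstruction of the original Bhatt--Morrow--Scholze argument: \'etale-localise to the framed case, identify $L\eta_{[\zeta_{p^r}]-1}R\Gamma_\sub{cont}(\Gamma,W_r(R_\infty))$ with a completed direct sum of Koszul complexes over the weights $\ul k$, observe that only $\ul k\in(p^{-r}\bb Z/\bb Z)^d$ contribute, match the resulting weight decomposition with Langer--Zink's basic Witt differentials for the torus, and transport to general small $R$ by \'etale base change (van der Kallen on the Witt side, Langer--Zink on the de Rham--Witt side). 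That is the right strategy, and your explanation of the factor $\bigl(\tfrac{[\zeta_{p^r}]-1}{[\zeta_{p^{r+1}}]-1}\bigr)^n$ in (iii) as the degree-$n$ renormalisation between $L\eta_{[\zeta_{p^{r+1}}]-1}$ and $L\eta_{[\zeta_{p^r}]-1}$ on truncations is exactly the mechanism the paper itself uses in Lemma \ref{lemma_compatibility_in_r}.

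Two caveats. First, the Bockstein in (i) is the one attached to the fibre sequence $C/\xi_r\xto{\xi_r}C/\xi_r^2\to C/\xi_r$ for $C=L\eta_{\phi^{-r}(\mu)}\hat{R\nu_*\bb A_{\inf,X}}$ (this is how Proposition \ref{proposition_drw_comparison} and Lemma \ref{lemma_compatibility_in_r} use it); the middle term is an $A_\inf/\xi_r^2$-module, and your proposed replacement by a quotient of $W_{2r}(\hat\roi_X^+)$ is not the same extension and should be discarded in favour of the $\xi_r^2$-reduction. Second, invoking Proposition \ref{proposition_local_to_global} of this paper to set up the local computation is logically backwards: in the trivial-coefficient case that proposition \emph{is} the corresponding statement of \cite[\S8--9]{BhattMorrowScholze1}, which is an input to the proof of Theorem 11.1 there, so you should cite those results directly. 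Neither point derails the strategy; the genuine gap is the one you name yourself, namely the explicit torus computation matching weight-space generators of Koszul cohomology with basic Witt differentials and verifying Bockstein $=d$ with correct coefficients. That computation is the entire content of \cite[\S10--11]{BhattMorrowScholze1}, and until it is carried out the proposal is a plan rather than a proof.
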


%\[\tilde{W_r\Omega}_\frak X=A\Omega_\frak X/\tilde\xi_r\stackrel{\tilde\theta_r}{\isoto}L\eta_{[\zeta_{p^r}]-1}R\nu_*W_r(\hat\roi_X^+)\] ; i To match the situation with coefficients, it will be better to renormalise this via $\phi^r$ and instead write 

We need the following analogue for $\bb M$ of equivalence (\ref{Wrtilde}):

% lemma which, combined with Proposition \ref{proposition_local_to_global}, provides us with an identification \begin{equation}(L\eta_{\phi^{-r}(\mu)}\hat{R\nu_*\bb M})/\xi_r\simeq \nu_*(\bb M/\xi_r)\otimes_{W_r(\roi_\frak X)}\tilde{W_r\Omega}_\frak X,\label{eqn_Wrtilde}\end{equation} where 

\begin{lemma}\label{lemma_base_change1}
The canonical base change map
\[\theta_r:(L\eta_{\phi^{-r}(\mu)}\hat{R\nu_*\bb M})/\xi_r\To L\eta_{[\zeta_{p^r}]-1}R\nu_*(\bb M/\xi_r)=\nu_*(\bb M/\xi_r)\otimes_{W_r(\roi_\frak X)}\tilde{W_r\Omega}_\frak X\] is an equivalence. (Note that the arrow is merely $W_r(\roi)$-linear, but the equality is $W_r(\roi_\frak X)$-linear.)

%There are natural identifications of complexes
%\[(L\eta_{\phi^{-r}(\mu)}R\Gamma_\sub{pro\'et}(U,\bb M))/\xi_r\quis L\eta_{[\zeta_{p^r}]-1}R\Gamma_\sub{pro\'et}(U,\bb M/\xi_r)=\Gamma(U,\bb M/\xi_r)\otimes_{W_r(R)}\tilde{W_r\Omega}_R\]
%(the first is $W_r(\roi)$-linear; the second is $W_r(R)$-linear).
\end{lemma}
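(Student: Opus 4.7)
The question is Zariski-local on $\frak X$, so I fix an affine open $\Spf R\subset\frak X$ with $R$ small and a choice of framing, and reduce via the sheafification argument from the proof of Proposition~\ref{proposition_local_to_global} to checking that the natural base-change morphism
\[\bigl(L\eta_{\phi^{-r}(\mu)}\hat{R\Gamma_\sub{pro\'et}(U,\bb M)}\bigr)/\xi_r\longrightarrow L\eta_{[\zeta_{p^r}]-1}R\Gamma_\sub{pro\'et}(U,\bb M/\xi_r)\]
is a quasi-isomorphism for every such $R$. The right-hand equality of the lemma is formal: triviality of $\bb M$ modulo $\xi_r$ (which is included in triviality modulo $<\mu$) writes $\bb M/\xi_r$ as the pullback along $\nu$ of the locally finite free $W_r(\roi_\frak X)$-module $\nu_*(\bb M/\xi_r)$, whence the projection formula and compatibility of $L\eta$ with tensoring by locally finite free modules yields the identification with $\nu_*(\bb M/\xi_r)\otimes_{W_r(\roi_\frak X)}\tilde{W_r\Omega}_\frak X$ via (\ref{Wrtilde}).

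Let $N\in\Rep_\Gamma^\mu(A_\inf^\bx(R))$ be the framed descent of $M:=\Gamma(U_\infty,\bb M)$ produced by Theorems~\ref{theorem_Ainf_reps_vs_pro_etale}, \ref{theorem_descent_to_framed} and~\ref{theorem_<mu_implies_mu}, and write $N_r:=N\otimes_{A_\inf\pid{\ul U^{\pm1}}}A_\inf\pid{\ul U^{\pm1/p^r}}$. Combining Lemma~\ref{lemma_Leta_isom}(ii) with Proposition~\ref{proposition_Leta_framed}, the left-hand side identifies with the Koszul complex $K\bigl(\tfrac{\gamma_1-1}{\phi^{-r}(\mu)},\dots,\tfrac{\gamma_d-1}{\phi^{-r}(\mu)};N_r\bigr)$; since its terms are $\xi_r$-torsion-free finite projective modules, reducing modulo $\xi_r$ is termwise and, using the identification $\theta_r$ sending $\phi^{-r}(\mu)\mapsto[\zeta_{p^r}]-1$, produces $K\bigl(\tfrac{\gamma_1-1}{[\zeta_{p^r}]-1},\dots,\tfrac{\gamma_d-1}{[\zeta_{p^r}]-1};N_r/\xi_r\bigr)$. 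For the right-hand side, Theorem~\ref{theorem_correspondence_for_B} applied to $\bb B=W_r(\hat\roi_X^+)$ together with Lemma~\ref{lemma_Leta_isom}(i) yields a quasi-isomorphism $L\eta_{[\zeta_{p^r}]-1}R\Gamma_\sub{pro\'et}(U,\bb M/\xi_r)\simeq L\eta_{[\zeta_{p^r}]-1}R\Gamma_\sub{cont}(\Gamma,M/\xi_r)$, and the argument of Proposition~\ref{proposition_Leta_framed} applies verbatim with $A_\inf$ replaced by $W_r(\roi)=A_\inf/\xi_r$ and $\phi^{-r}(\mu)$ replaced by $[\zeta_{p^r}]-1$---the key divisibility of $\gamma_i-1$ on $N_r/\xi_r$ by $[\zeta_{p^r}]-1$ being the reduction modulo $\xi_r$ of the divisibility on $N_r$ by $\phi^{-r}(\mu)$---producing the same Koszul complex.

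The main technical obstacle lies in verifying that the natural base-change morphism implements the identity on these two Koszul presentations. This requires tracking the identifications through the equivalences of Theorems~\ref{theorem_Ainf_reps_vs_pro_etale} and~\ref{theorem_correspondence_for_B}, the framed-descent Koszul computation of Proposition~\ref{proposition_Leta_framed} together with its mod~$\xi_r$ variant, and $\theta_r$; once these naturalities are checked, the lemma follows.
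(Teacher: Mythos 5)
There is a genuine gap, and you have in fact named it yourself: after identifying both sides with the same Koszul complex $K\bigl(\tfrac{\gamma_1-1}{[\zeta_{p^r}]-1},\dots;N_r/\xi_r\bigr)$, you defer "verifying that the natural base-change morphism implements the identity on these two Koszul presentations." But that verification \emph{is} the content of the lemma. The statement is not that the two objects are abstractly equivalent — it is that the canonical base-change map $\theta_r$ is an equivalence — and your argument never engages with that map beyond producing separate presentations of its source and target. Chasing the canonical map through the almost quasi-isomorphisms of Theorems \ref{theorem_Ainf_reps_vs_pro_etale} and \ref{theorem_correspondence_for_B}, the framed descent, and the two $L\eta$-computations is not a routine naturality check one can wave at; as written the proof is incomplete. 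A secondary concern: your claim that the argument of Proposition \ref{proposition_Leta_framed} "applies verbatim" over $W_r(\roi)$ glosses over the fact that $N_r/\xi_r$ is $p^r$-torsion, so the Koszul cohomology of the non-integral summands acquires extra torsion pieces (compare the mod $p$ case in the proof of Lemma \ref{lemma_vanishing2}, which has to invoke the torsion variant of the Koszul computation); the $p$-torsion-freeness used in Proposition \ref{proposition_Leta_framed} to commute completed direct sums with cohomology is simply false here and the argument needs modification.

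The paper's proof avoids both issues by working directly with the canonical map: after localising (via (t1), (t4) of Proposition \ref{proposition_local_to_global}) to the statement that $\bigl(L\eta_{\phi^{-r}(\mu)}R\Gamma_\sub{cont}(\Gamma,M)\bigr)/\xi_r\to L\eta_{[\zeta_{p^r}]-1}R\Gamma_\sub{cont}(\Gamma,M/\xi_r)$ is an equivalence, it invokes the general criterion that $L\eta_f$ commutes with derived reduction modulo $g$ \emph{along the canonical map} provided the cohomology groups of $C/f$ are $g$-torsion-free. Since $\xi_r\equiv p^r$ modulo $\phi^{-r}(\mu)$, this reduces to the $p$-torsion-freeness of $H^i(\Gamma,M/\phi^{-r}(\mu))$, which follows from triviality of $M$ modulo $\phi^{-r}(\mu)$ (Corollary \ref{corollary_mu_smallness}) and the computation for the trivial representation in Lemma \ref{lemma_mu_1}(ii). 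If you want to salvage your route, you should either carry out the compatibility check in full, or — more efficiently — replace the two explicit computations by this torsion-freeness criterion.
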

\begin{proof}
Note first that the equality is an immediate consequence of the triviality modulo $\xi_r$ of $\bb M$, and the projection formula. 

Given any small affine open $\Spf R$ of $\frak X$ with generic fibre $U$, the sections of the domain of $\theta_r$ on $\Spf R$ are \begin{equation}(L\eta_{\phi^{-r}(\mu)}R\Gamma_\sub{cont}(\Gamma,\Gamma(U_\infty,\bb M)))/\xi_r\label{eqn_exchange}\end{equation} by the equivalences (t1) and (t4) of Proposition \ref{proposition_local_to_global}. Similarly, the sections of the codomain of $\theta_r$ on $\Spf R$ are $L\eta_{[\zeta_{p^r}]-1}R\Gamma_\sub{cont}(\Gamma,\Gamma(U_\infty,\bb M/\xi_r))$ by triviality of $\bb M$ mod $\xi_r$ and \cite[Thm.~9.4(ii)]{BhattMorrowScholze1}. The goal is therefore to show that the 
% imply that \[R\Gamma_\sub{Zar}(\Spf R, (L\eta_{\phi^{-r}(\mu)}R\Gamma_\sub{pro\'et}(U,\bb M))/\xi_r)=(L\eta_{\phi^{-r}(\mu)}R\Gamma_\sub{cont}(\Gamma,\Gamma(U_\infty,\bb M)))/\xi_r.\] We must show that the
``mod $\xi_r$'' in (\ref{eqn_exchange}) can be interchanged with the d\'ecalage functor; for this it is enough to check (by \cite[Rem.~9(ii) \& footnote 44]{Morrow_BMSnotes_simons}) that all the cohomology groups $H^i(\Gamma,\Gamma(U_\infty,\bb M)/\phi^{-r}(\mu))$, for $i\ge0$, are $p$-torsion-free. But the representation is trivial modulo $<\mu$ by Proposition \ref{proposition_small_Ainf_sheaves}, hence trivial modulo $\phi^{-1}(\mu)$ by Corollary \ref{corollary_mu_smallness}, hence trivial modulo $\phi^{-r}(\mu)$. To prove the desired $p$-torsion-freeness, we therefore reduce to the case of the trivial representation and appeal to Lemma \ref{lemma_mu_1}(ii).
\end{proof}

Since $L\eta_{\phi^{-r}(\mu)}\hat{R\nu_*\bb M}$ is a module over the $\bb E_\infty$-$A_\inf$-algebra $L\eta_{\phi^{-r}(\mu)}\hat{R\nu_*\bb A_{\inf,X}}$ (as the d\'ecalage functor is lax symmetric monoidal \cite[Prop.~6.7]{BhattMorrowScholze1}), and similarly modulo $\xi_r$, it follows formally that the cohomology sheaves $\cal H^*((L\eta_{\phi^{-r}(\mu)}\hat{R\nu_*\bb M})/\xi_r)$ inherit the structure of a sheaf of graded modules over $\cal H^*((L\eta_{\phi^{-r}(\mu)}\hat{R\nu_*\bb A_{\inf,X}})/\xi_r)$. Moreover, the respective Bockstein maps $\op{Bock}_{\xi_r}:\cal H^*\to\cal H^{*+1}$ upgrade the former into a differential graded module over the latter by, e.g., \cite[Lem.~132]{Tsuji_simons}. Appealing to the identifications of Lemma \ref{lemma_base_change1} and the preceding paragraph, we have shown that the Bockstein gives the graded $W_r(\roi)$-module $\nu_*(\bb M/\xi_r)\otimes_{W_r(\roi_\frak X)}W_r\Omega^\blob_{\frak X/\roi}$ the structure of a differential graded module over the differential graded algebra $W_r\Omega^\blob_{\frak X/\roi}$. Concretely, such a module structure is necessarily the data of a flat connection 
\[\nabla_{\xi_r}:\nu_*(\bb M/\xi_r)\To \nu_*(\bb M/\xi_r)\otimes_{W_r(\roi_\frak X)}W_r\Omega^1_{\frak X/\roi}\] such that $\nu_*(\bb M/\xi_r)\otimes_{W_r(\roi_\frak X)}W_r\Omega^\blob_{\frak X/\roi}$ identifies with the associated de Rham--Witt complex (\ref{eqn_dRW}).

%\todo{MM: Still need to add remark that this functor does not depend on the $p$-power roots of unity, or else add in the BK twists to make it obvious?}
This completes the construction of the flat de Rham--Witt connection on the locally free $W_r(\roi_\frak X)$-module $\nu_*(\bb M/\xi_r)$, thereby defining a functor
\begin{align*}
\sigma^*_{\sub{dRW},r}:\BKF(\frak X)&\To\categ{5cm}{locally finite free $W_r(\roi_\frak X)$-modules with $W_r(\roi)$-linear, flat de Rham--Witt connection}\\
\bb M&\mapsto (\nu_*(\bb M/\xi_r),\nabla_{\xi_r})
\end{align*}
We note that these are compatible in $r$ in the following sense:

\begin{lemma}\label{lemma_compatibility_in_r}
The canonical identification of modules $\nu_*(\bb M/\xi_{r+1})\otimes_{W_{r+1}(\roi_\frak X),R}W_r(\roi_\frak X)=\nu_*(\bb M/\xi_r)$ (the base change being taken along the restriction map) is compatible with de Rham--Witt connections.
\end{lemma}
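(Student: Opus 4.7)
The plan is to lift the compatibility at the level of ambient algebras, already established in Theorem~\ref{theorem_p-adic_Cartier}(iii), to a compatibility of differential graded modules, and then extract the statement about connections by functoriality of the Bockstein. Recall that $\nabla_{\xi_r}$ was defined so that, under the identification of Lemma~\ref{lemma_base_change1} and Theorem~\ref{theorem_p-adic_Cartier}, the Bockstein associated to $\xi_r$ on $\cal H^*((L\eta_{\phi^{-r}(\mu)}\hat{R\nu_*\bb M})/\xi_r)$ corresponds to the de Rham--Witt differential on the dg-module $\nu_*(\bb M/\xi_r)\otimes_{W_r(\roi_\frak X)}W_r\Omega^\blob_{\frak X/\roi}$ over $W_r\Omega^\blob_{\frak X/\roi}$. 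It therefore suffices to construct a morphism of pairs (module over an algebra)
\[\bigl((L\eta_{\phi^{-(r+1)}(\mu)}\hat{R\nu_*\bb M})/\xi_{r+1},\, (L\eta_{\phi^{-(r+1)}(\mu)}\hat{R\nu_*\bb A_{\inf,X}})/\xi_{r+1}\bigr)\To \bigl((L\eta_{\phi^{-r}(\mu)}\hat{R\nu_*\bb M})/\xi_r,\, \tilde{W_r\Omega}_\frak X\bigr)\]
whose algebra component realizes the restriction map of Theorem~\ref{theorem_p-adic_Cartier}(iii) and whose module component induces the canonical identification of the lemma on $\cal H^0$.

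First I would construct the module component in the same manner as the algebra component, namely via multiplication by $\phi^{-r}(\xi)=\phi^{-r}(\mu)/\phi^{-(r+1)}(\mu)$ followed by the quotient $\bb A_{\inf,X}/\xi_{r+1}\onto\bb A_{\inf,X}/\xi_r$. The point is that multiplication by $\phi^{-r}(\xi)$ carries $L\eta_{\phi^{-(r+1)}(\mu)}$ into $L\eta_{\phi^{-r}(\mu)}$, because rescaling the d\'ecalage element by a factor $f$ identifies $L\eta_{f\alpha}$ with a shift of $L\eta_\alpha$ under multiplication by $f$; applied to both $\hat{R\nu_*\bb M}$ and $\hat{R\nu_*\bb A_{\inf,X}}$ this yields a commutative square, and reducing modulo $\xi_r$ produces the desired morphism of pairs. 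Lax symmetric monoidality of $L\eta$ \cite[Prop.~6.7]{BhattMorrowScholze1} guarantees that the module component is linear over the algebra component. The identification on $\cal H^0$ is a direct calculation using Lemma~\ref{lemma_base_change1}: in each of $\nu_*(\bb M/\xi_{r+1})$ and $\nu_*(\bb M/\xi_r)$, the $\cal H^0$ is literally the corresponding quotient of $\bb M$, and multiplication by $\phi^{-r}(\xi)$ followed by the surjection $\bb A_{\inf,X}/\xi_{r+1}\onto\bb A_{\inf,X}/\xi_r$ becomes, under these identifications, the base change of modules along the restriction $W_{r+1}(\roi_\frak X)\to W_r(\roi_\frak X)$.

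Having assembled this morphism of pairs, the rest is formal: the Bockstein construction (see e.g.\ \cite[Lem.~132]{Tsuji_simons}) is functorial for morphisms of pairs, so it produces a morphism of differential graded modules lying over the restriction $W_{r+1}\Omega^\blob_{\frak X/\roi}\to W_r\Omega^\blob_{\frak X/\roi}$; unwinding, this is exactly the statement that the canonical identification of modules $\nu_*(\bb M/\xi_{r+1})\otimes_{W_{r+1}(\roi_\frak X),R}W_r(\roi_\frak X)=\nu_*(\bb M/\xi_r)$ intertwines $\nabla_{\xi_{r+1}}$ and $\nabla_{\xi_r}$, which is the lemma. The main obstacle is the clean construction of the module-level rescaling together with its verification on $\cal H^0$; once the algebraic case is taken as input from the proof of Theorem~\ref{theorem_p-adic_Cartier}(iii), the coefficient case follows without essential new input by the same manipulations adapted to $\bb M$, the local triviality of $\bb M$ modulo $\xi_{r+1}$ (Definition~\ref{definition_trivial_mod_mu}) permitting one to reduce any residual sheaf-theoretic subtlety to the already-resolved case of $\bb A_{\inf,X}$.
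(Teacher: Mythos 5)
Your high-level strategy coincides with the paper's: express both connections as Bocksteins, import the restriction-map compatibility from Theorem~\ref{theorem_p-adic_Cartier}(iii), and transport it to coefficients. The problem is that the central object of your argument --- a single ``morphism of pairs'' given by multiplication by $\phi^{-r}(\xi)=\phi^{-r}(\mu)/\phi^{-(r+1)}(\mu)$ --- does not exist, and constructing a correct substitute is where essentially all the content of the lemma lies. Writing $f:=\phi^{-r}(\xi)$ and choosing a $\mu$-torsion-free representative $C^\blob$, one has $(\eta_{f\alpha}C)^n=(f\alpha)^nC^n\cap d^{-1}((f\alpha)^{n+1}C^{n+1})$, so any map from $\eta_{\alpha}C^\blob$ to $\eta_{f\alpha}C^\blob$ realizing the restriction must be multiplication by $f^n$ in degree $n$; but multiplication by a degree-dependent power is not a chain map (the differential picks up a spurious factor of $f$), and multiplication by a \emph{fixed} power $f^k$ only defines a chain map on the truncation $\tau^{\le k}$. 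This is exactly why Theorem~\ref{theorem_p-adic_Cartier}(iii) is stated truncation by truncation, and there is no general identification of $L\eta_{f\alpha}$ with ``a shift of $L\eta_\alpha$ under multiplication by $f$''. In particular there is no single morphism $L\eta_{\phi^{-(r+1)}(\mu)}\hat{R\nu_*\bb M}\to L\eta_{\phi^{-r}(\mu)}\hat{R\nu_*\bb M}$ of the kind your construction requires.

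Consequently the concluding appeal to ``formal functoriality of the Bockstein'' also fails as written: the Bockstein $\cal H^n\to\cal H^{n+1}$ straddles two truncations carrying \emph{different} rescaling factors ($f^n$ versus $f^{n+1}$), so one must check by hand that this discrepancy cancels. It does, precisely because $\xi_{r+1}=f\cdot\xi_r$, but that is a computation rather than formal nonsense. The paper's proof carries it out explicitly: it introduces the interpolating subcomplexes $\eta'_{\phi^{-r}(\mu)}C^\blob$ and $D^\blob_{r,n}$ sitting in short exact sequences between $\tau^{\le n+1}$ and $\tau^{\le n}$, and checks that the square with horizontal maps $\times\xi_{r+1}$, $\times\xi_r$ and vertical maps $\times f^{n+1}$, $\times f^n$ commutes via the identity $f^n\xi_{r+1}=f^{n+1}\xi_r$; the boundary maps of the rows are then the two Bocksteins. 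Your verification on $\cal H^0$ alone would not suffice even if the map existed, since the lemma concerns the map $\cal H^0\to\cal H^1$. You need to supply this degree-by-degree bookkeeping (or an equivalent device) to close the gap.
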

\begin{proof}
This is essentially a formal consequence of the Bockstein definition of the connection and the description of the restriction map in Theorem \ref{theorem_p-adic_Cartier}, but we provide the details.

It is enough to prove the compatibility on each small affine open $\Spf R$, and we will implicitly use the local-to-global identifications of Proposition \ref{proposition_local_to_global} in the proof (though this is not really necessary: we could instead work with K-flat resolutions and local sections of the complexes).

Let $C^\blob$ be a $\mu$-torsion-free complex of $A_\inf$-modules, supported in degrees $\ge0$, which represents $\hat{R\Gamma_\sub{pro\'et}(U,\bb M)}$; so $\eta_{\phi^{-r}(\mu)}C^\blob$ represents $L\eta_{\phi^{-r}(\mu)}\hat{R\Gamma_\sub{pro\'et}(U,\bb M)}$. Let $\eta_{\phi^{-r}(\mu)}'C^\blob$ be the subcomplex of $\eta_{\phi^{-r}(\mu)}C^\blob$ given by \[(\eta_{\phi^{-r}(\mu)}'C^\blob)^i=\begin{cases}(\eta_{\phi^{-r}(\mu)}C^\blob)^i & i<n, \\ \{x\in\phi^{-r}(\mu)^nC^n : dx\in \xi_r\phi^{-r}(\mu)^{n+1}C^{n+1}\} & i=n, \\ 0 & i>n,\end{cases}\] so that the quotient $\eta_{\phi^{-r}(\mu)}'C^\blob/\xi_r\sigma^{\le n}\eta_{\phi^{-r}(\mu)}C^\blob$ is equal to $\tau^{\le n}((\eta_{\phi^{-r}(\mu)}C^\blob)/\xi_r)$. Here $\sigma^{\le n}$ denotes naive truncation, while $\tau^{\le n}$ is canonical truncation (i.e., given by $\ker d$ in degree $n$).

Multiplication by $\left(\phi^{-r}(\mu)/\phi^{-(r+1)}(\mu)\right)^n$ restricts both to $\eta_{\phi^{-(r+1)}(\mu)}C^\blob\to \eta_{\phi^{-r}(\mu)}C^\blob$ (hence also to $\xi_{r+1}\eta_{\phi^{-(r+1)}(\mu)}C^\blob\to \xi_r \eta_{\phi^{-r}(\mu)}C^\blob$), and to $\eta_{\phi^{-(r+1)}(\mu)}'C^\blob\to \eta_{\phi^{-r}(\mu)}'C^\blob$. Passing to the quotient in the previous paragraph then induces \[R_{r,n}:=\times \left(\tfrac{\phi^{-r}(\mu)}{\phi^{-(r+1)}(\mu)}\right)^n: \tau^{\le n}((\eta_{\phi^{-r}(\mu)}C^\blob)/\xi_r)\To \tau^{\le n}((\eta_{\phi^{-{r+1}}(\mu)}C^\blob)/\xi_{r+1}).\] Theorem \ref{theorem_p-adic_Cartier}(iii) and Lemma \ref{lemma_base_change1} implies that the induced map on $H^n(-)$ is simply the restriction map \[\op{can}\otimes R:\Gamma(\Spf R,\nu_*(\bb M/\xi_{r+1}))\otimes_{W_{r+1}(R)}W_{r+1}\Omega^n_{R/\roi}\To \Gamma(\Spf R,\nu_*(\bb M/\xi_r))\otimes_{W_r(R)}W_r\Omega^n_{R/\roi}.\]

To prove the compatibility with the de Rham--Witt connections, define a subcomplex $D^\blob_{r,n}\subseteq(\eta_{\phi^{-r}(\mu)})/\xi_r^2$ by \[D^\blob_{r,n}=\begin{cases}
(\eta_{\phi^{-r}(\mu)})^i/\xi_r^2 & i<n,\\
\{x\in (\eta_{\phi^{-r}(\mu)})^n/\xi_r^2 : dx \text{ is divisible by }\xi_r\} & i=n,\\
\{x\in (\eta_{\phi^{-r}(\mu)})^{n+1}/\xi_r : dx =0 \text{ in }(\eta_{\phi^{-r}(\mu)})^{n+2}/\xi_r\} & i=n+1,\\
0 & i>n+1,
\end{cases}
\]
By construction, the short exact sequence  $0\to (\eta_{\phi^{-r}(\mu)}C^\blob)/\xi_r\xto{\xi_r}(\eta_{\phi^{-r}(\mu)}C^\blob)/\xi_r^2\to (\eta_{\phi^{-r}(\mu)}C^\blob)/\xi_r\to 0$ restricts to the short exact sequence in the bottom of the following diagram; and multiplication by the same element as the previous paragraph induces a $R_{r,n}:D^\blob_{r,n+1}\to D^\blob_{r,n}$ making the diagram commute:
\[\xymatrix{
0\ar[r]& \tau^{\le n+1}((\eta_{\phi^{-(r+1)}(\mu)}C^\blob)/\xi_{r+1})\ar[r]^-{\xi_{r+1}}\ar[d]^{R_{r,n+1}}&D^\blob_{r+1,n}\ar[r]\ar[d]^{R_{r,n}}& \tau^{\le n}((\eta_{\phi^{-(r+1)}(\mu)}C^\blob)/\xi_{r+1})\ar[r]\ar[d]^{R_{r,n}}& 0\\
0\ar[r]& \tau^{\le n+1}((\eta_{\phi^{-r}(\mu)}C^\blob)/\xi_r)\ar[r]_-{\xi_r}&D^\blob_{r,n}\ar[r]& \tau^{\le n}((\eta_{\phi^{-r}(\mu)}C^\blob)/\xi_r)\ar[r]& 0
}\]
Taking the boundary maps of the horizontal short exact sequences yields the desired compatibility with the de Rham--Witt connections.
\end{proof}

The associated de Rham--Witt complexes describe $A\Omega(\bb M)/\xi_{r}$:

\begin{proposition}\label{proposition_drw_comparison}
There is a natural, for $\bb M\in\BKF(\frak X)$, equivalence between $A\Omega(\bb M)/\xi_r$ and the associated de Rham--Witt complex $\nu_*(\bb M/\xi_r)\otimes_{W_r(\roi_\frak X)}W_r\Omega^\blob_{\frak X/\roi}$ of line (\ref{eqn_dRW}). These equivalences are compatible with $r$, in the sense that the canonical reduction map $A\Omega(\bb M)/\xi_{r+1}\to A\Omega(\bb M)/\xi_{r}$ corresponds to the reduction map on the de Rham--Witt complexes.
\end{proposition}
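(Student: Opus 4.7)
My plan is to reduce the statement to the construction of the Bockstein connection $\nabla_{\xi_r}$ already carried out just before the proposition, using the standard factorisation of $L\eta$ and a Bockstein-type description of $L\eta_f C/f$.

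\medskip

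Writing $\mu=\xi_r\cdot \phi^{-r}(\mu)$ and invoking the multiplicativity $L\eta_{fg}\simeq L\eta_f\circ L\eta_g$ of the d\'ecalage functor (\cite[Lem.~6.10]{BhattMorrowScholze1}), I would first rewrite
\[
A\Omega_{\frak X}(\bb M)/\xi_r\;=\;\bigl(L\eta_{\xi_r}\,E\bigr)/\xi_r,\qquad E:=L\eta_{\phi^{-r}(\mu)}\widehat{R\nu_*\bb M}.
\]
By Lemma~\ref{lemma_base_change1} one has $E/\xi_r\simeq \nu_*(\bb M/\xi_r)\otimes_{W_r(\roi_\frak X)}\tilde{W_r\Omega}_\frak X$, so the only remaining task is to identify $(L\eta_{\xi_r}E)/\xi_r$ with the Bockstein complex of $E/\xi_r$ in degrees organised by (\ref{eqn_dRW}).

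\medskip

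The main input here is the standard Bockstein principle for $L\eta$: if $D$ is a complex on which $f$ acts as a non-zero-divisor degreewise, there is a natural quasi-isomorphism between $(L\eta_f D)/f$ and the complex $\bigl(\mathcal H^\blob(D/f),\,\op{Bock}_f\bigr)$, where the right-hand side carries the Bockstein for the exact triangle $D\xrightarrow{f} D\to D/f$ (this is precisely the mechanism used in \cite[\S11]{BhattMorrowScholze1} to establish Theorem~\ref{theorem_p-adic_Cartier}, and the key hypothesis is that the cohomology groups of $E/\xi_r$ are $\xi_r$-torsion-free after passing to $L\eta_{\phi^{-r}(\mu)}$; this was already verified in the proof of Lemma~\ref{lemma_base_change1} via the reduction to Lemma~\ref{lemma_mu_1}(ii)). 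Applying this with $D=E$ and $f=\xi_r$, and using the $p$-adic Cartier isomorphism (Theorem~\ref{theorem_p-adic_Cartier}) together with the projection formula to identify
\[
\mathcal H^n(E/\xi_r)\;\cong\;\nu_*(\bb M/\xi_r)\otimes_{W_r(\roi_\frak X)}W_r\Omega^n_{\frak X/\roi},
\]
one sees by construction (see the paragraph preceding the proposition, where $\nabla_{\xi_r}$ was defined as the $\op{Bock}_{\xi_r}$-induced module structure) that the resulting Bockstein differential coincides with the de Rham--Witt differential $\nabla_{\xi_r}$ of (\ref{eqn_dRW}). This yields the first assertion of the proposition.

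\medskip

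For the compatibility in $r$, the reduction map $A\Omega_\frak X(\bb M)/\xi_{r+1}\to A\Omega_\frak X(\bb M)/\xi_r$ is realised at the level of $L\eta$ by the multiplication-by-$(\phi^{-r}(\mu)/\phi^{-(r+1)}(\mu))^\blob$ map (as in the proof of Lemma~\ref{lemma_compatibility_in_r}, following Theorem~\ref{theorem_p-adic_Cartier}(iii)); chasing this through the identifications above turns it into the tensor product of the restriction map $R\colon W_{r+1}\Omega^\blob_{\frak X/\roi}\to W_r\Omega^\blob_{\frak X/\roi}$ on the de Rham--Witt factor and the canonical reduction $\nu_*(\bb M/\xi_{r+1})\to \nu_*(\bb M/\xi_r)$ on the coefficient factor. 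That these are compatible with the connections $\nabla_{\xi_{r+1}}$ and $\nabla_{\xi_r}$ is exactly the content of Lemma~\ref{lemma_compatibility_in_r}, completing the argument.

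\medskip

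The main obstacle in writing this out in full is the careful verification that the Bockstein principle for $L\eta_{\xi_r}$ applies to our particular complex $E$ and yields the specified differential; once the relevant $\xi_r$-torsion-freeness of cohomology is established (as it effectively was in Lemmas~\ref{lemma_mu_1}--\ref{lemma_base_change1}), the remaining identifications are formal, since $\nabla_{\xi_r}$ was defined from the outset as the Bockstein induced by the same multiplicative structure.
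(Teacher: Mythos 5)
Your proposal is correct and follows essentially the same route as the paper: factor $A\Omega_\frak X(\bb M)=L\eta_{\xi_r}L\eta_{\phi^{-r}(\mu)}\hat{R\nu_*\bb M}$ via multiplicativity of $L\eta$, apply the Bockstein description of $(L\eta_f D)/f$ (\cite[Prop.~6.12]{BhattMorrowScholze1}), note that $\nabla_{\xi_r}$ was defined precisely so that the de Rham--Witt complex is this Bockstein complex, and deduce compatibility in $r$ from Lemma~\ref{lemma_compatibility_in_r}. The only cosmetic difference is that the Bockstein identification of \cite[Prop.~6.12]{BhattMorrowScholze1} needs no torsion-freeness hypothesis (that input is only used earlier, in Lemma~\ref{lemma_base_change1}, to commute $L\eta_{\phi^{-r}(\mu)}$ with reduction mod $\xi_r$), so your extra caution there is unnecessary but harmless.
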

\begin{proof}
The de Rham--Witt connection on $\nu_*(\bb M/\xi_r)$ was constructed so that the de Rham--Witt complex is isomorphic to $[\cal H^*((L\eta_{\phi^{-r}(\mu)}\hat{R\nu_*\bb M})/\xi_r),\op{Bock}_{\xi_r}]$ (by which we mean the cohomology sheaves as indicated, equipped with the associated Bockstein). The proposition therefore follows by writing $A\Omega(\bb M)=L\eta_{\xi_r}L\eta_{\phi^{-r}(\mu)}\hat{R\nu_*\bb M}$ \cite[Lem.~6.11]{BhattMorrowScholze1} and appealing to the fundamental connection between the d\'ecalage functor and the Bockstein \cite[Prop.~6.12]{BhattMorrowScholze1}. Compatibility in $r$ is a consequence of Lemma \ref{lemma_compatibility_in_r}.
\end{proof}

\begin{definition}
The {\em de Rham specialisation} functor is \[\sigma_\sub{dR}^*:=\sigma^*_{\sub{dRW},1}:\BKF(\frak X)\To\categ{5cm}{vector bundles with flat $\roi$-linear connection on $\frak X$}\]
\end{definition}

Next we take into account the Frobenius. Under the identifications $\theta_r:\bb A_{\inf,X}/\xi_r\isoto W_r(\hat\roi_X^+)$, the Witt vector Frobenius $F$ on the target corresponds to $\phi$ on the source \cite[Lem.~3.4]{BhattMorrowScholze1}; pushing forward to $\frak X$ we deduce that the same is true for $\theta_r:\nu_*(\bb A_{\inf,X}/\xi_r)\isoto W_r(\roi_\frak X)$. Therefore, given $\bb M\in\BKF(\frak X)$, we have a natural identification \begin{equation}\nu_*((\phi^*\bb M)/\xi_r)=\nu_*(\bb M/\xi_{r+1})\otimes_{W_{r+1}(\roi_\frak X),F}W_r(\roi_\frak X).\label{equation_phi_pullback}\end{equation} Note that both sides of (\ref{equation_phi_pullback}) are locally finite free $W_r(\roi_\frak X)$-modules equipped with a de Rham--Witt connection: firstly, the connection on $\nu_*((\phi^*\bb M)/\xi_r)$ is that defined by applying the de Rham--Witt specialisation functor to $\phi^*\bb M\in\BKF(\frak X)$, i.e., $\nu_*((\phi^*\bb M)/\xi_r)=\sigma^*_{\sub{dRW}, r}(\phi^*\bb M)$; secondly, the connection on $\nu_*(\bb M/\xi_{r+1})\otimes_{W_{r+1}(\roi_\frak X),F}W_r(\roi_\frak X)$ is defined by pulling back the connection on $\nu_*(\bb M/\xi_{r+1})$ along the Frobenius in the sense of the next remark.

\begin{remark}
Given a sheaf of $W_{r+1}(\roi_\frak X)$-modules $N$ equipped with a de Rham--Witt connection $\nabla$, pullback along the Frobenius $F:W_{r+1}(\roi_\frak X)\to W_{r}(\roi_\frak X)$ defines a de Rham--Witt connection $F^*\nabla$ on the sheaf of $W_{r+1}(\roi_\frak X)$-modules $F^*N=N\otimes_{W_{r+1}(\roi_\frak X),F}W_{r}(\roi_\frak X)$. Explicitly, in terms of local sections, $F^*\nabla$ is given by
\begin{align*}
F^*\nabla:N\otimes_{W_{r+1}(\roi_\frak X),F}W_{r}(\roi_\frak X)&\To N\otimes_{W_{r+1}(\roi_\frak X),F}W_r\Omega^1_{\frak X/\roi}\\
n\otimes f&\mapsto (\op{id}\otimes pF)\nabla(n)+n\otimes df,
\end{align*}
where $pF:W_{r+1}\Omega^1_{\frak X/\roi}\to W_r\Omega^1_{\frak X/\roi}$. In terms of the language of Section \ref{section_q_connections}, this construction is simply base changing along the morphism of differential (sheaves of) rings
\[\xymatrix{
W_{r+1}(\roi_\frak X)\ar[r]^d\ar[d]_F & W_{r+1}\Omega^1_{\frak X/\roi}\ar[d]_{pF}\\
W_r(\roi_\frak X)\ar[r]_d&W_r\Omega^1_{\frak X/\roi}
}\]

\end{remark}

\begin{lemma}\label{lemma_drw_compatibility}
The identification (\ref{equation_phi_pullback}) is compatible with the connections on each side.
\end{lemma}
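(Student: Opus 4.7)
The plan is to verify the compatibility locally on a small affine open $\Spf R \subseteq \frak X$, where Proposition~\ref{proposition_local_to_global} identifies both de Rham--Witt connections with Bocksteins on concrete Koszul/group-cohomology complexes, and where the Frobenius on $\bb A_{\inf,X}$ is represented by the Frobenius $\phi$ on $A_\inf^\bx(R)$ (or on $A_\inf(R_\infty)$). So let $M := \Gamma(U_\infty,\bb M)\in \Rep_\Gamma^\mu(A_\inf(R_\infty))$; then $\Gamma(U_\infty,\phi^*\bb M)=\phi^*M=M\otimes_{A_\inf(R_\infty),\phi}A_\inf(R_\infty)$ with its diagonal $\Gamma$-action.

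First, I would exploit the flatness of Frobenius on $\bb A_{\inf,X}$ (equivalently, on $A_\inf(R_\infty)$) together with the relation $\phi(\phi^{-(r+1)}(\mu))=\phi^{-r}(\mu)$ to produce a canonical quasi-isomorphism
\[L\eta_{\phi^{-r}(\mu)}R\Gamma_\sub{cont}(\Gamma,\phi^*M)\;\simeq\;\phi^*L\eta_{\phi^{-(r+1)}(\mu)}R\Gamma_\sub{cont}(\Gamma,M),\]
using the compatibility of the d\'ecalage functor with flat base change (as in \cite[\S6]{BhattMorrowScholze1}) and the fact that $\phi$ is $\Gamma$-equivariant. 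Reducing modulo $\xi_r$ on the left identifies, by Proposition~\ref{proposition_local_to_global} and Proposition~\ref{proposition_drw_comparison}, with the de Rham--Witt complex of $\sigma_{\sub{dRW},r}^*(\phi^*\bb M)$; reducing on the right along the image of $\xi_{r+1}$ under $\phi$, which is $\tilde\xi\xi_r$, identifies with the $\phi$-pullback of the de Rham--Witt complex of $\sigma_{\sub{dRW},r+1}^*(\bb M)$, after a further quotient by $\xi_r$.

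Second, I would analyse how the Bockstein behaves under this Frobenius pullback. The Bockstein $\mathrm{Bock}_{\xi_r}$ used to define $\nabla_{\xi_r}$ on $\phi^*\bb M$ corresponds, via the identification above, to the Bockstein associated to $\phi(\xi_{r+1})=\tilde\xi\xi_r$ on the complex obtained by pulling back. Using the standard Bockstein manipulation already carried out in the proof of Lemma~\ref{lemma_compatibility_in_r} (with double complexes keeping track of both $\tilde\xi$ and $\xi_r$), the ``$\tilde\xi\xi_r$-Bockstein'' decomposes into the sum of $\tilde\xi\cdot\phi^*(\mathrm{Bock}_{\xi_{r+1}})$ and a contribution coming from the Leibniz rule applied to the second tensor factor of $\phi^*M$ (this latter contribution is precisely the differential $d\colon \bb A_{\inf,X}/\xi_r\to \bb A_{\inf,X}/\xi_r\otimes \hat\Omega^1$ induced from $\Gamma$'s action on the second $A_\inf(R_\infty)$-factor).

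Third, I would translate this decomposition back through the isomorphism $\theta_r:\bb A_{\inf,X}/\xi_r\isoto W_r(\hat\roi_X^+)$. Under $\theta_r$, Frobenius $\phi$ corresponds to the Witt vector Frobenius $F$, and the element $\tilde\xi=\phi(\xi)$ corresponds (up to a unit) to $p$; more precisely, the factor $\tilde\xi$ applied to the pullback Bockstein becomes exactly the map $pF:W_{r+1}\Omega^1_{\frak X/\roi}\to W_r\Omega^1_{\frak X/\roi}$ when computed on cohomology sheaves, as is visible from Theorem~\ref{theorem_p-adic_Cartier}(ii). The residual Leibniz term becomes the usual $d$ on $W_r(\roi_\frak X)$. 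Combining these gives the formula
\[F^*\nabla(n\otimes f)\;=\;(\mathrm{id}\otimes pF)\nabla(n)+n\otimes df,\]
i.e.\ the pullback de Rham--Witt connection of the remark preceding the lemma.

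The only delicate point, and the main obstacle, is the Bockstein decomposition of Step~2: it requires carefully setting up (on K-flat representatives of the complexes, as in the proof of Lemma~\ref{lemma_compatibility_in_r}) a short exact sequence
\[0\to \eta_{\phi^{-r}(\mu)}^{\prime}C^\bullet\otimes_\phi\bb A_{\inf,X}/\xi_r\to \eta_{\phi^{-r}(\mu)}^{\prime}C^\bullet\otimes_\phi\bb A_{\inf,X}/(\tilde\xi\xi_r)\to \eta_{\phi^{-(r+1)}(\mu)}^{\prime}C^\bullet\otimes_\phi\bb A_{\inf,X}/\xi_{r+1}\to 0,\]
whose boundary map separates the $\tilde\xi$- and $\xi_r$-Bocksteins. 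Granting this separation, the identification of the two connections is formal from Theorem~\ref{theorem_p-adic_Cartier}(ii) and the compatibility in $r$ of Lemma~\ref{lemma_compatibility_in_r}.
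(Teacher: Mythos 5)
Your overall strategy assembles the right ingredients, and they are in fact the same ones the paper uses: localise to $\Spf R$, compare $L\eta_{\phi^{-r}(\mu)}$ of the pullback with the $\phi$-base-change of $L\eta_{\phi^{-(r+1)}(\mu)}$ via flatness of Frobenius, then feed in Theorem~\ref{theorem_p-adic_Cartier}(ii), the congruence $\phi(\xi)\equiv p \bmod \xi_r$, and the Leibniz rule. But the step you yourself flag as the main obstacle is where the argument breaks, for two reasons. First, the connection on $\nu_*((\phi^*\bb M)/\xi_r)$ is \emph{by definition} the Bockstein for $\xi_r$ on $(L\eta_{\phi^{-r}(\mu)}\hat{R\nu_*(\phi^*\bb M)})/\xi_r$; it is not ``the Bockstein associated to $\phi(\xi_{r+1})=\tilde\xi\xi_r$'', which would live on the quotient by $\tilde\xi\xi_r$, a different complex, so the proposed ``decomposition'' has no object to act on. Second, the three-term sequence you write down is not exact: under the flat-base-change identification from your Step~1, its middle and right-hand terms are both quotients by $\phi(\xi_{r+1})=\tilde\xi\xi_r$, so it does not fit the pattern $0\to D/a\xto{\;b\;}D/ab\to D/b\to 0$; and in any case the boundary map of a single short exact sequence relates $H^n$ of one complex to $H^{n+1}$ of another, which cannot by itself express the compatibility of \emph{two} connections.

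What is actually needed is a morphism between the two Bockstein fibre sequences. The paper takes the $\phi$-semilinear equivalence $\Phi\colon L\eta_{\phi^{-(r+1)}(\mu)}\hat{R\nu_*\bb M}\to L\eta_{\phi^{-r}(\mu)}\hat{R\nu_*(\phi^*\bb M)}$, reduces it to maps $\Phi_i$ modulo $\xi_{r+1}^i$, resp.\ $\xi_r^i$, for $i=1,2$ (possible since $\phi(\xi_{r+1})=\phi(\xi)\xi_r$), and maps the fibre sequence $(\cdot)/\xi_{r+1}\xto{\xi_{r+1}}(\cdot)/\xi_{r+1}^2\to(\cdot)/\xi_{r+1}$ to $(\cdot)/\xi_{r}\xto{\xi_{r}}(\cdot)/\xi_{r}^2\to(\cdot)/\xi_{r}$. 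The crucial point, which your proposal never isolates, is that the leftmost vertical arrow of this diagram is forced to be $\phi(\xi)\Phi_1$ rather than $\Phi_1$, because $\Phi_2(\xi_{r+1}x)=\phi(\xi_{r+1})\Phi(x)=\phi(\xi)\,\xi_r\,\Phi_1(x)$. Naturality of the boundary maps then yields $\op{Bock}_{\xi_r}\circ H^n(\Phi_1)=\phi(\xi)\cdot H^{n+1}(\Phi_1)\circ\op{Bock}_{\xi_{r+1}}$; since $H^*(\Phi_1)=\op{can}\otimes F$ by Theorem~\ref{theorem_p-adic_Cartier}(ii) and $\phi(\xi)\equiv p\bmod\xi_r$, this is exactly $\nabla_{\xi_r}(n\otimes 1)=(\op{id}\otimes pF)\nabla_{\xi_{r+1}}(n)$. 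The term $n\otimes df$ for general $n\otimes f$ then comes for free from the differential-graded-module structure over $W_r\Omega^\blob_{\frak X/\roi}$ already built into the definition of $\nabla_{\xi_r}$; it is not an extra contribution to be extracted from a Bockstein of $\tilde\xi\xi_r$. In short, the factor $p$ in $pF$ is produced by the commutativity of the left-hand square of a map of fibre sequences, and your plan contains no mechanism that produces that diagram.
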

\begin{proof}
Similarly to Lemma \ref{lemma_compatibility_in_r}, this is a formal consequence of the Bockstein construction and compatibilities of Theorem \ref{theorem_p-adic_Cartier}. Let $\Phi:L\eta_{\phi^{-(r+1)}(\mu)}\hat{R\nu_*\bb M}\quis L\eta_{\phi^{-r}(\mu)}\hat{R\nu_*( \phi^*\bb M)}$ denote the canonical $\phi$-semilinear equivalence; this induces $\Phi_i:(L\eta_{\phi^{-(r+1)}(\mu)}\hat{R\nu_*\bb M})/\xi_{r+1}^i\to (L\eta_{\phi^{-r}(\mu)}\hat{R\nu_*( \phi^*\bb M)})/\xi_r^i$ for any $i\ge1$ (we only need $i=1,2$), since $\phi(\xi_{r+1})=\xi_r\tilde\xi$.

Passing to $H^n(-)$ induces in particular \begin{align*}H^n(\Phi_1):\nu_*(\bb M/\xi_{r+1})\otimes_{W_{r+1}(\roi_\frak X)}W_{r+1}\Omega^n_{\frak X/\roi}\To& \nu_*((\phi^*\bb M)/\xi_r)\otimes_{W_{r}(\roi_\frak X)}W_{r}\Omega^n_{\frak X/\roi}\\
&=(\nu_*(\bb M/\xi_{r+1})\otimes_{W_{r+1}(\roi_\frak X),F}W_r(\roi_\frak X))\otimes_{W_r(\roi_\frak X)}W_{r}\Omega^n_{\frak X/\roi},\end{align*} where the  identification is (\ref{equation_phi_pullback}). Theorem \ref{theorem_p-adic_Cartier}(ii) implies that this map is $\op{can}\otimes F$. The desired compatibility now follows by taking the $H^0\to H^1$ boundary maps in the commutative diagram of fibre sequences
\[\xymatrix{
(L\eta_{\phi^{-(r+1)}(\mu)}\hat{R\nu_*\bb M})/\xi_{r+1}\ar[r]^{\xi_{r+1}}\ar[d]^{\tilde\xi\Phi_1} & (L\eta_{\phi^{-(r+1)}(\mu)}\hat{R\nu_*\bb M})/\xi_{r+1}^2\ar[r]\ar[d]^{\Phi_2} & (L\eta_{\phi^{-(r+1)}(\mu)}\hat{R\nu_*\bb M})/\xi_{r+1}\ar[d]^{\Phi_1}\\
(L\eta_{\phi^{-r}(\mu)}\hat{R\nu_*\bb M})/\xi_{r}\ar[r]^{\xi_{r}} & (L\eta_{\phi^{-r}(\mu)}\hat{R\nu_*\bb M})/\xi_{r}^2\ar[r] & (L\eta_{\phi^{-r}(\mu)}\hat{R\nu_*\bb M})/\xi_{r}
}\]
and recalling that $\tilde\xi\equiv p$ mod $\xi_r$.
\end{proof}

\begin{corollary}\label{corollary_frob+nilp}
Let $(\bb M,\phi_\bb M)\in\BKF(\frak X,\phi)$. Then
\begin{enumerate}
\item $\phi_\bb M$ induces an isomorphism of $W_r(\roi_\frak X)[\tfrac1p]$-modules with de Rham--Witt connection \[F^*(\sigma^*_{\sub{dRW},r+1}\bb M)[\tfrac1p]\isoto (\sigma^*_{\sub{dRW},r}\bb M)[\tfrac1p]\]
\item The connection $\nabla_{\xi}$ on $\sigma^*_\sub{dR}\bb M$ is $p$-adically nilpotent.
\end{enumerate}
\end{corollary}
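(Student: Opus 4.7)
For part (i), the plan is to lift the Frobenius structure to the de Rham--Witt specialisations in a functorial manner. The key elementary observation is the congruence $\tilde\xi \equiv p \pmod{\xi_r}$ in $A_\inf$: indeed $\tilde\xi = 1+[\varepsilon]+\cdots+[\varepsilon]^{p-1}$ and the identity $[\varepsilon]-1 = \xi_r([\varepsilon^{1/p^r}]-1)$ gives $[\varepsilon]\equiv 1\pmod{\xi_r}$, whence $\tilde\xi\equiv p\pmod{\xi_r}$. Consequently $\tilde\xi$ becomes a unit in $W_r(\roi_\frak X)[\tfrac1p]$, so inverting $p$ is the same as inverting $\tilde\xi$ on the $W_r$-side. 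Now the isomorphism $\phi_\bb M \colon (\phi^*\bb M)[\tfrac1{\tilde\xi}] \isoto \bb M[\tfrac1{\tilde\xi}]$ in $\BKF(\frak X,\phi)$ may, locally on $\frak X$, be cleared of denominators by multiplication by some $\tilde\xi^n$, yielding a genuine morphism $\psi := \tilde\xi^n\phi_\bb M \colon \phi^*\bb M \to \bb M$ in $\BKF(\frak X)$ (note $\phi^*\bb M\in\BKF(\frak X)$ as it is trivial modulo $\phi(\xi_{r+1}) = \tilde\xi\,\xi_r$, hence modulo $\xi_r$). The functoriality of $\sigma^*_{\sub{dRW},r}$ produces a morphism $\sigma^*_{\sub{dRW},r}(\psi) \colon \sigma^*_{\sub{dRW},r}(\phi^*\bb M) \to \sigma^*_{\sub{dRW},r}\bb M$ of $W_r(\roi_\frak X)$-modules with de Rham--Witt connection. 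Identifying the source with $F^*\sigma^*_{\sub{dRW},r+1}\bb M$ (compatibly with connections) via Lemma~\ref{lemma_drw_compatibility}, and then inverting $p$ to undo the multiplication by $\tilde\xi^n$, gives the desired isomorphism in part (i); the construction is canonical after inverting $p$ and therefore globalises.

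For part (ii), since $p$-adic nilpotence of a connection is a local condition, it suffices to work on a small affine open $\Spf R \subseteq \frak X$ equipped with a framing. Under the composite equivalence of Theorem~\ref{theorem_local_with_phi}, the restriction $\bb M|_{\Spf R}$ corresponds to some $N \in \op{qMIC}(A_\inf^\bx(R),\phi)$; moreover, combining Theorem~\ref{theorem_cohomology_as_q_dr_or_Higgs} with Proposition~\ref{proposition_drw_comparison} identifies the $R$-module with connection $\sigma^*_\sub{dR}\bb M|_{\Spf R}$ with $N/\xi$ equipped with the reduction modulo $\xi$ of the q-connection on $N$. Now Lemma~\ref{lemma_phi_implies_xi_nilp}(iii) asserts that the q-connection on $N$ is $\tilde\xi$-adically quasi-nilpotent, i.e., the non-logarithmic coordinates $\nabla_i = U_i^{-1}\nabla_i^{\sub{log}}$ satisfy $\nabla_i^m(n) \in \tilde\xi N$ for $m\gg 0$ depending on $n\in N$. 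Reducing modulo $\xi$ and invoking the congruence $\tilde\xi\equiv p \pmod{\xi}$ from part~(i) converts this into $p$-adic quasi-nilpotence of the coordinates $\overline{\nabla_i}$ on $N/\xi$, which is the desired $p$-adic nilpotence of $\nabla_\xi$ on $\sigma^*_\sub{dR}\bb M$ over $\Spf R$.

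The arguments are both largely formal given the technology already in place; the only point requiring care is verifying that the Bockstein-defined connection $\nabla_\xi$ of \S8.2 indeed coincides, locally on a framed small open, with the mod~$\xi$ reduction of the q-connection furnished by Theorem~\ref{theorem_local_with_phi}. This should amount to chasing through Theorem~\ref{theorem_cohomology_as_q_dr_or_Higgs} and Proposition~\ref{proposition_drw_comparison}, using that a de Rham-style complex on a $p$-adically complete formally smooth $\roi$-algebra uniquely determines (and is determined by) its underlying connection.
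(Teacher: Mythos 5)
Your part (i) is essentially the paper's own argument: the paper likewise deduces it from Lemma \ref{lemma_drw_compatibility} together with the observation that $\theta_r(\tilde\xi)=p$ (equivalently $\tilde\xi\equiv p$ mod $\xi_r$, as you compute), so that clearing denominators by $\tilde\xi^n$ and then inverting $p$ produces the isomorphism. That part is fine.

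For part (ii) you take a genuinely different route, and it contains the one substantive gap — which, to your credit, you flag yourself. You want to identify the Bockstein-defined connection $\nabla_\xi$ on $\nu_*(\bb M/\xi)$ with the mod-$\xi$ reduction of the q-connection on $N$, and then quote Lemma \ref{lemma_phi_implies_xi_nilp}(iii). But Theorem \ref{theorem_cohomology_as_q_dr_or_Higgs} and Proposition \ref{proposition_drw_comparison} only furnish a quasi-isomorphism between the two de Rham complexes in the derived category; a quasi-isomorphism of de Rham complexes does not by itself identify the underlying modules-with-connection, so the statement "a de Rham-style complex uniquely determines its underlying connection" does not close the gap as stated. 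What is actually needed is to unwind the Bockstein: locally one has $L\eta_{\phi^{-1}(\mu)}R\Gamma_\sub{cont}(\Gamma,M)\simeq K(\tfrac{\gamma_i-1}{\phi^{-1}(\mu)};N_1)$ by Proposition \ref{proposition_Leta_framed}, the operators $\tfrac{\gamma_i-1}{\phi^{-1}(\mu)}=\xi\,\nabla_i^\sub{log}$ vanish mod $\xi$ on the integral summand, and the Bockstein of a class $n\in N/\xi$ is then computed by lifting, applying $\xi\nabla_i^\sub{log}$, and dividing by $\xi$ — which does recover $\res\nabla$. So the identification is true and provable with the paper's Koszul computations, but it is a genuine additional verification (one must also discard the non-integral summands of $N_1$), not a formality, and your proposal leaves it open.

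The paper avoids this entirely: it proves (ii) by transposing the argument of Lemma \ref{lemma_phi_implies_xi_nilp}(iii) directly to the de Rham--Witt side, using part (i). Concretely, the Frobenius pullback $F^*$ of a de Rham--Witt connection involves the map $pF:W_{r+1}\Omega^1_{\frak X/\roi}\to W_r\Omega^1_{\frak X/\roi}$, so the $p$-fold iterate of the non-logarithmic coordinates of $F^*\nabla$ lands in $p$ times the module; combining this with the sandwiching $p^b\sigma^*_\sub{dR}\bb M\subseteq\phi(\cdots)\subseteq p^a\sigma^*_\sub{dR}\bb M$ supplied by part (i) yields $p$-adic quasi-nilpotence exactly as in the proof of Lemma \ref{lemma_phi_implies_xi_nilp}(iii), with no comparison to the q-connection required. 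If you want to keep your route, you must actually prove the compatibility of $\nabla_\xi$ with the reduced q-connection; otherwise switch to the paper's argument, which only uses structures already living on the de Rham--Witt side.
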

\begin{proof}
Part (i) follows from Lemma \ref{lemma_drw_compatibility} and the fact that $\theta_r(\tilde\xi)=p$ (indeed, $\tilde\xi\equiv p$ mod $\mu$). Part (ii) follows from a similar argument to Lemma \ref{lemma_phi_implies_xi_nilp}(iii).
\end{proof}

We are now prepared to define the crystalline specialisation $\sigma^*_\sub{crys}\bb M$ of any $\bb M\in\BKF(\frak X,\phi)$. Base changing each $\nu_*(\bb M/\xi_r)$ to the special fibre and taking the limit over $r$ (along the restriction maps) yields \[\sigma^*_\sub{crys}(\bb M):=\projlim_r\big(\nu_*(\bb M/\xi_r)\otimes_{W_r(\roi_\frak X)}W_r(\roi_{\frak X_k})\big),\] a locally finite free $W(\roi_{\frak X_k})$-module equipped with flat de Rham--Witt connection \[\nabla:\sigma^*_\sub{crys}(\bb M)\to \sigma^*_\sub{crys}(\bb M)\otimes_{W(\roi_{\frak X_k})}W\Omega^1_{\frak X_k/k}.\] By Corollary \ref{corollary_frob+nilp}, the Frobenius structure $\phi_\bb M$ on $\bb M$ induces a Frobenius structure on $\sigma^*_\sub{crys}\bb M$, i.e., an isomorphism of modules with connection $F^*(\sigma^*_\sub{crys}\bb M)[\tfrac1p]\isoto (\sigma^*_\sub{crys}\bb M)[\tfrac1p]$ (to be precise, in order to take the limit over $r$ in Corollary \ref{corollary_frob+nilp}(i), one should note that the Frobenius structure exists after introducing a bounded $p$-power denominator which does not depend on $r$), and the connection on the $\roi_{\frak X_k}$-module $(\sigma^*_\sub{crys}\bb M)\otimes _{W(\roi_{\frak X_k})}\roi_{\frak X_k}$ is nilpotent. By a theorem of Bloch \cite{Bloch2004} such data corresponds to an F-crystal on $\frak X_k$, which we will abusively also denote by $\sigma^*_\sub{crys}\bb M$, with the property that the crystalline cohomology complex $Ru_*\sigma^*_\sub{crys}\bb M$ is equivalent to the de Rham--Witt complex $\sigma^*_\sub{crys}(\bb M)\otimes_{W(\roi_{\frak X_k})}W\Omega^\blob_{\frak X_k/k}$ of the module with connection. This defines the 
{\em crystalline specialisation} \[\sigma^*_\sub{crys}:\BKF(\frak X,\phi)\To \categ{4cm}{locally finite free $F$-crystals on $\frak X_k$}.\]

We finish the subsection by noting explicitly that we have already proved the first two equivalences of Theorem \ref{theorem_specialisations}:

\begin{proof}[Proof of first two equivalences of Theorem \ref{theorem_specialisations}]
In Proposition \ref{proposition_drw_comparison} we have already constructed a natural identification \[A\Omega(\bb M)/\xi_r\simeq\nu_*(\bb M/\xi_r)\otimes_{W_r(\roi_\frak X)}W_r\Omega^\blob_{\frak X/\roi}.\] The first two equivalences claimed in Theorem \ref{theorem_specialisations} follow respectively by applying this in the case $r=1$ or base changing further along the canonical maps $A_\inf/\xi_r\to W_r(k)$ and then taking $\projlim_r$.
\end{proof}

\subsection{The \'etale specialisation}\label{ss_etale}
We finally define the specialisation functor $\sigma^*_\sub{\'et}$, which associates to each Breuil--Kisin--Fargues module over $\frak X$ a locally free lisse $\bb Z_p$-sheaf on $X$.

Recall from \cite[\S8]{Scholze2013} that a {\em locally free lisse $\bb Z_p$-sheaf} on $X$ is a sheaf of modules over the pro-\'etale sheaf $\hat{\bb Z}_p:=\projlim_r\bb Z/p^r\bb Z$ which is pro-\'etale locally isomorphic to a finite direct sum of copies of $\hat{\bb Z}_p$. It is known that the aforementioned inverse limit has no higher derived inverse limits and that the sequence of pro-\'etale sheaves $0\to\hat{\bb Z}_p\to \bb A_{\inf,X}\xto{\phi-1}\bb A_{\inf,X}\to 0$ is short exact (e.g., \cite[Lem.~3.5]{Morrow_Vanishing}).

The following proposition does not require $X$ to arise as the generic fibre of a smooth formal scheme over $\roi$, so we state it more generally:

\begin{proposition}\label{proposition_etale}
Let $X$ be a reduced rigid analytic variety over $C$ and $\bb M$ a locally finite free sheaf of $\bb A_{\inf,X}$-modules on $X_\sub{pro\'et}$, equipped with a Frobenius semi-linear isomorphism $\phi_{\bb M}:\bb M[\tfrac1\xi]\isoto\bb M[\tfrac1{\tilde\xi}]$. Then 
\begin{enumerate}
\item $\bb L:=(\bb M\otimes_{\bb A_{\inf,X}}W(\hat\roi_{X^\flat}))^{\phi_\bb M=1}$ is a locally free lisse $\bb Z_p$-sheaf of the same rank as $\bb M$.
\item  Let $r\in\bb Z$ be large enough so that $\tilde\xi^r\bb M\subseteq\phi(\bb M)$, whence $\phi^{-1}$ restricts to $\tfrac1{\mu^r}\bb M$; then $\bb L\subseteq\tfrac1{\mu^r}\bb M$ and the sequence \[0\To\bb L\To\tfrac1{\mu^r}\bb M\xto{1-\phi^{-1}}\tfrac1{\mu^r}\bb M\To 0\] is exact.
\item The inclusion $\bb L\subseteq \tfrac1{\mu^r}\bb M$ induces an identification $\bb L\otimes_{\hat{\bb Z}_p}\bb A_{\inf,X}[\tfrac1\mu]=\bb M[\tfrac1\mu]$. More precisely, given $s\le r$ such that $\phi(\bb M)\subseteq \tilde\xi^s\bb M$, then \[\bb L\otimes_{\hat{\bb Z}_p}\mu^r\bb A_{\inf,X}\subseteq\bb M\subseteq \bb L\otimes_{\hat{\bb Z}_p}\mu^{s}\bb A_{\inf,X}.\]
\end{enumerate}
\end{proposition}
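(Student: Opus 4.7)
My overall strategy is to establish (ii) first, then deduce (i) and (iii) from the structural information it provides. All three claims are of a local-on-$X_{\sub{pro\'et}}$ nature, so I will work after passing to a suitable affinoid perfectoid cover $\cal U\to X$ (taken fine enough that $\bb M|_\cal U$ is a finite free $\bb A_{\inf,X}|_\cal U$-module); the general case then follows by sheafifying. My first step will be to verify that $\phi^{-1}$ genuinely restricts to an operator on $\tfrac1{\mu^r}\bb M$: this uses the identity $\phi(\mu)=\tilde\xi\mu$ (which is immediate from $\xi=\mu/\phi^{-1}(\mu)$), which gives $\phi(\tfrac1{\mu^r}\bb M)=\tfrac1{\tilde\xi^r\mu^r}\phi(\bb M)$, and the assumption $\tilde\xi^r\bb M\subseteq\phi(\bb M)$ then shows $\tfrac1{\mu^r}\bb M\subseteq\phi(\tfrac1{\mu^r}\bb M)$, so $\phi^{-1}$ preserves $\tfrac1{\mu^r}\bb M$. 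The same computation, applied with $\phi$ instead of $\phi^{-1}$, shows that once (iii) is established the displayed sandwich inclusions hold: namely $\phi$ shifts $\mu$-powers by $\tilde\xi$ (a unit after inverting $\mu$), which accounts for the $r$ versus $s$ discrepancy.

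For (ii), I first check injectivity of $1-\phi^{-1}$: if $x\in\tfrac1{\mu^r}\bb M$ satisfies $\phi^{-1}(x)=x$, then $x\in(\bb M\otimes_{\bb A_{\inf,X}}W(\hat\roi_{X^\flat}))^{\phi_{\bb M}=1}=\bb L$, and the content of the claim $\bb L\subseteq\tfrac1{\mu^r}\bb M$ is purely local. The hard part is surjectivity of $1-\phi^{-1}$, and this is where I expect the main obstacle to lie. The formal solution to $(1-\phi^{-1})(y)=x$ is $y=\sum_{n\ge0}\phi^{-n}(x)$; the task is to show this series converges in $\tfrac1{\mu^r}\bb A_{\inf,X}$ on affinoid perfectoids, after which surjectivity as sheaves follows by the standard Artin--Schreier device of passing to a pro-étale cover on which the equation becomes solvable. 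Convergence will rely on the fact that the ideal $(p,\mu)\subseteq A_{\inf}$ defines the topology used in Lemma \ref{lemma_topology} and that $\phi^{-1}$ contracts appropriately; concretely, one can reduce modulo $p^n$ and use that $\phi^{-n}$ eventually lands in any given $\xi_s$-neighbourhood, together with the almost vanishing results of Proposition \ref{proposition_small_Ainf_sheaves}(i)--(ii) which guarantee the cohomology groups behave correctly on an affinoid perfectoid. The formal structure of the argument is the classical fundamental exact sequence $0\to\bb Z_p\to\bb A_{\inf,X}[\tfrac1\mu]\xto{1-\phi^{-1}}\bb A_{\inf,X}[\tfrac1\mu]\to0$ of \cite[Prop.~13.9]{BhattMorrowScholze1}, twisted by $\bb M$; in the trivial case $\bb M=\bb A_{\inf,X}$ the statement specialises to (a rescaled form of) that classical sequence.

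With (ii) in hand, (i) is almost formal: kernel of the surjection $1-\phi^{-1}:\tfrac1{\mu^r}\bb M\to\tfrac1{\mu^r}\bb M$ is a sheaf of $\hat{\bb Z}_p=(\bb A_{\inf,X})^{\phi=1}$-modules, and pro-étale locally the equation $(1-\phi^{-1})(v)=0$ on a finite free $\bb A_{\inf,X}|_\cal U$-module can be solved after refining the cover: as in the classical Fargues--Fontaine theory of Breuil--Kisin--Fargues modules over complete algebraically closed valued fields, passing to a strictly totally ramified pro-étale cover produces a basis of Frobenius-fixed sections. This shows $\bb L$ is pro-étale locally a finite free $\hat{\bb Z}_p$-module; the rank agrees with the rank of $\bb M$ by looking at the composition $\bb L\otimes\bb A_{\inf,X}\to\tfrac1{\mu^r}\bb M$ and using (iii).

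Finally, (iii) follows by inverting $\mu$ in the exact sequence of (ii). After inverting $\mu$ the operator $1-\phi^{-1}$ continues to be surjective with kernel $\bb L$, and the counit map $\bb L\otimes_{\hat{\bb Z}_p}\bb A_{\inf,X}[\tfrac1\mu]\to\bb M[\tfrac1\mu]$ becomes an isomorphism because both sides are locally free of the same rank and the map is generically an isomorphism (again this may be checked pro-étale locally, where the Frobenius-fixed basis constructed in the proof of (i) provides an explicit trivialisation). The refined sandwich in (iii) follows from the explicit $\mu$-shift behaviour of $\phi$ spelled out in the first paragraph: multiplying by $\tilde\xi^r$ drops the $\mu$-denominator from $r$ to $0$, and $\phi(\bb M)\subseteq\tilde\xi^s\bb M$ provides the opposite inclusion controlled by $s$.
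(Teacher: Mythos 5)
There are genuine gaps, concentrated in part (ii), which is also the load-bearing step of your own plan. Your surjectivity argument does not work: the series $\sum_{n\ge 0}\phi^{-n}(x)$ does not converge, since $\phi^{-1}$ is a ring automorphism of $W(\hat\roi^{+}_{X^\flat})$ rather than a topologically nilpotent operator (already for $x=1$ the series is $\sum_{n\ge0}1$; note also that the elements $\phi^{-n}(\mu)$ have \emph{decreasing} valuation, so $\phi^{-1}$ does not contract $(p,\mu)$-adically). Surjectivity of $1-\phi^{-1}$ is an Artin--Schreier--Witt statement: the paper works on a basis of affinoid perfectoids $\cal U$ over which every faithfully flat finite \'etale cover of $A=\Gamma(\cal U,\hat\roi_X)$ splits, so that $\phi_M-1$ is surjective on $M\otimes_{W(A^{\flat+})}W(A^{\flat})$ with kernel a free $\bb Z_p$-module of full rank. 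More importantly, that only settles matters after base change to $W(A^{\flat})$, and you never address the descent back to $W(A^{\flat+})$. Both the inclusion $\bb L\subseteq\tfrac1{\mu^r}\bb M$ (which you declare ``purely local'' without proving) and the surjectivity of $1-\phi^{-1}$ on $\tfrac1{\mu^r}\bb M$ itself amount to the claim that the square comparing $1-\phi_M^{-1}$ on $M$ and on $M\otimes_{W(A^{\flat+})}W(A^{\flat})$ is bicartesian. The paper proves this mod $p$ by showing that any $n\in \ol M\otimes_{A^{\flat+}}A^{\flat}$ with $\phi^{-1}(n)-n\in\ol M$ is \emph{almost} in $\ol M$, and then invoking the fact that $(\ol M\otimes_{A^{\flat+}}A^{\flat})/\ol M$ has no non-zero almost-zero elements --- which is exactly where the hypothesis that $X$ is reduced enters, via Example \ref{example_R_infty_no_almost_zero}(iii). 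Your proposal never uses reducedness anywhere, a reliable sign that this step is missing.

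In (iii), the argument that the counit is an isomorphism because ``both sides are locally free of the same rank and the map is generically an isomorphism'' is not valid, and in any case gives no control on denominators for the refined inclusion $\bb M\subseteq\bb L\otimes_{\hat{\bb Z}_p}\mu^{s}\bb A_{\inf,X}$. The paper obtains both at once by a duality trick: apply part (ii) to the internal hom $H=\Hom_{W(A^{\flat+})}(M,L\otimes_{\bb Z_p}W(A^{\flat+}))$ with its induced Frobenius structure; the inverse of the counit is a $\phi_H$-fixed element of $H\otimes_{W(A^{\flat+})}W(A^{\flat})$, hence lies in $\mu^{s}H$ by (ii), which is precisely the second inclusion. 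You should either reproduce this internal-hom argument or supply a genuine substitute; the $\mu$-shift bookkeeping in your first paragraph does not by itself bound the denominators of the inverse map.
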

\begin{proof}
Replacing $\bb M$ by $\tfrac1{\mu^r}\bb M$ we may suppose until part (iii) that $r=0$, i.e., that $\phi^{-1}$ restricts to $\phi^{-1}:\bb M\to \bb M$. We will prove the results by arguing locally. A base for $X_\sub{pro\'et}$ is provided by those affinoid perfectoids $\cal U$ with the following property: letting $A=\Gamma(\cal U,\hat\roi_X)$ denote the associated perfectoid Tate $C$-algebra, then any faithfully flat, finite \'etale $A$-algebra (hence similarly for $A^\flat$-algebras by the Almost Purity theorem) admits a section. See, e.g., \cite[Lem.~3.4]{Morrow_Vanishing} and the beginning of the proof of Lem.~3.5 of [op.~cit.]. By restricting further we may also suppose that $\bb M|_\cal U$ is finite free, whence $M:=\Gamma(\cal U,\bb M)$ is a finite free $W(A^{\flat+})$-module equipped with an inverse-Frobenius semi-linear map $\phi_M^{-1}:M\to M$ which induces an isomorphism $\phi_M^{-1}:M[\tfrac1{\tilde\xi}]\isoto M[\tfrac1\xi]$.

Since $A^\flat$ is a perfect $\bb F_p$-algebra with no non-trivial finite \'etale covers, usual Artin--Schreier--Witt theory implies that \[\phi_M-1:M\otimes_{W(A^{\flat+})}W(A^{\flat})\To M\otimes_{W(A^{\flat+})}W(A^{\flat})\] is surjective and its kernel $L=\Gamma(\cal U,\bb L)$ is a finite free $W(A^{\flat})^{\phi=1}$-module satisfying $L\otimes_{\bb Z_p}W(A^{\flat})=M\otimes_{W(A^{\flat+})}~W(A^{\flat})$. But $W(A^{\flat})^{\phi=1}=\Gamma(\cal U,\hat{\bb Z}_p)$ thanks to the pro-\'etale Artin--Schreier--Witt sequence $0\to\hat{\bb Z}_p\to W(\hat\roi_{X^\flat})\xto{\phi-1}W(\hat\roi_{X^\flat})\to 1$, so this proves (i).

To prove (ii) we will show that $1-\phi_M^{-1}:M\to M$ is surjective and that its kernel is precisely $L$. The previous paragraph shows that this is true after $-\otimes_{W(A^{\flat+})}~W(A^{\flat})$ (note that $\phi_M$ is invertible after this base change, so we can pass between $1-\phi_M^{-1}$ and $\phi_M-1$), so the goal is to show that the square
\[\xymatrix{
M\ar[r]^{1-\phi_M^{-1}}\ar@{^(->}[d]& M\ar@{^(->}[d]\\
 M\otimes_{W(A^{\flat+})}~W(A^{\flat})\ar[r]_-{1-\phi_M^{-1}}& M\otimes_{W(A^{\flat+})}~W(A^{\flat})
}\]
is bicartesian. All terms in the diagram are $p$-torsion-free and $p$-adically complete, so it is enough to check that the diagram is bicartesian modulo $p$. To simplify notation write $\res M:=M/p$, which is a finite free $A^{\flat+}$-module equipped with an inverse-Frobenius semi-linear map $\phi_{\res M}^{-1}:\res M\to \res M$ which becomes an isomorphism after $\otimes_{A^{\flat+}}A^\flat$. We must show that $1-\phi_{\res M}^{-1}:\res M\to\res  M$ is surjective and that the kernel of $1-\phi_{\res M}^{-1}:\res M\otimes_{A^{\flat+}}A^\flat\to \res M\otimes_{A^{\flat+}}A^\flat$ is contained in $\res M$. Since we already know that $1-\phi_{\res M}^{-1}$ is surjective on $\res M\otimes_{A^{\flat+}}A^\flat$, one checks easily that it is enough (to prove both claims) to show the following: given $n\in \res M\otimes_{A^{\flat+}}A^\flat$ such that $\phi_{\res M}^{-1}(n)-n\in\res  M$, then $n\in\res  M$. We may write $n=m/\ep$ for some $\ep\in \roi^\flat$ and $m\in M$, and calculate that $\res M\ni \phi_{\res M}^{-1}(m/\ep)-m/\ep=\phi_{\res M}^{-1}(m)/\ep^{1/p}-m/\ep$, whence $\ep^{1/p}n\in\res  M$. Continuing in this way shows that $n$ is almost in $\res M$, or more precisely defines an almost zero element of the $\roi^\flat$-module $(\res M\otimes_{A^{\flat+}}A^\flat)/\res M$. But this has no non-zero almost-zero elements since $\res M$ is a finite free $A^{\flat+}$-module and $A^+$ satisfies the equivalent conditions of Lemma \ref{lemma_no_non_zero}, thanks to Example \ref{example_R_infty_no_almost_zero}(iii). This completes the proof of (ii).

To prove part (iii) we argue in a standard way by using internal hom; to keep the bounds clear we drop the hypothesis that $r=0$. We consider $H:=\Hom_{W(A^{\flat+})}(M,L\otimes_{\bb Z_p}W(A^{\flat+}))$, which is a finite free $W(A^{\flat+})$-module equipped with a Frobenius semi-linear isomorphism $\phi_H:H[\tfrac1\xi]\isoto H[\tfrac1{\tilde\xi}]$, defined by $\phi_H(f):=(1\otimes\phi)\circ f\circ\phi_M^{-1}$, for $f\in H[\tfrac1\xi]=\Hom_{W(A^{\flat+})[\tfrac1\xi]}(M[\tfrac1\xi],L\otimes_{\bb Z_p}W(A^{\flat+})[\tfrac1\xi])$. We have seen that the canonical map $\iota:L\otimes_{\bb Z_p}W(A^{\flat+})\to M$ is invertible after base change to $W(A^\flat)$, thereby defining $\iota^{-1}\in H\otimes_{W(A^{\flat+})}W(A^\flat)$; but $\iota$ is $\phi$-equivariant, whence the same is true of its inverse $\iota$, i.e., $\iota^{-1}\in (H\otimes_{W(A^{\flat+})}W(A^\flat))^{\phi_H=1}$. Letting $s$ be as in the statement of (iii), it is clear that $\tilde\xi^{-s}H\subseteq\phi(H)$, so the result proved in part (ii) shows that $(H\otimes_{W(A^{\flat+})}W(A^\flat))^{\phi_H=1}\subseteq{\mu^{s}}H$; that means that $\tfrac1{\mu^s}\iota^{-1}$ is a morphism from $M$ to $L\otimes_{\bb Z_p}W(A^{\flat+})$, as required.
\end{proof}

The proposition implies in particular to any relative Breuil--Kisin--Fargues module $(\bb M,\phi_\bb M)\in\BKF(\frak X,\phi)$, yielding our desired locally free lisse $\bb Z_p$-sheaf \[\sigma^*_{\sub{\'et}}\bb M:=\bb L=(\bb M\otimes_{\bb A_{\inf,X}}W(\hat\roi_{X^\flat}))^{\phi_\bb M=1}=\bb M[\tfrac1\mu]^{\phi_\bb M^{-1}=1}\] This is the definition of our \'etale specialisation functor \[\sigma^*_\sub{\'et}:\BKF(\frak X,\phi)\To \categ{3cm}{locally free lisse $\bb Z_p$-sheaves on $X$}.\]

\begin{proof}[Proof of third equivalence of Theorem \ref{theorem_specialisations}]
The identification $\hat{\bb A_{\inf,X}[\tfrac1\mu]}=W(\hat\roi_X)$ shows that $\hat{A\Omega_\frak X(\bb M)[\tfrac1\mu]}=R\nu_*(\bb M\otimes_{\bb A_{\inf, X}}W(\hat\roi_{X^\flat}))$, whose Frobenius-fixed points are indeed $R\nu_*\bb L$ by Proposition \ref{proposition_etale}.
\end{proof}

\bibliographystyle{acm}
\bibliography{../Bibliography}

\noindent
\parbox{0.4\linewidth}{
Matthew Morrow\\
CNRS \& IMJ-PRG,\\
SU -- 4 place Jussieu,\\
Case 247,\\
75252 Paris\\
{\tt matthew.morrow@imj-prg.fr}
}
\parbox{0.5\linewidth}{
\noindent
Takeshi Tsuji \\
Graduate School of Mathematical Sciences,\\
The University of Tokyo,\\
3-8-1 Komaba,\\
Meguro-ku, Tokyo 153-8914\\
{\tt t-tsuji@ms.u-tokyo.ac.jp}
}

\end{document}